\documentclass[11pt]{amsart}
\usepackage[T1]{fontenc}
\usepackage[utf8]{inputenc}
\usepackage{amsmath,amssymb,amsthm,mathrsfs}
\usepackage[margin=1in]{geometry}
\usepackage[colorlinks=true,linkcolor=blue,citecolor=blue,urlcolor=blue]{hyperref}

\newtheorem{theorem}{Theorem}[section]
\newtheorem{proposition}[theorem]{Proposition}
\newtheorem{lemma}[theorem]{Lemma}
\newtheorem{corollary}[theorem]{Corollary}
\theoremstyle{definition}
\newtheorem{definition}[theorem]{Definition}
\theoremstyle{remark}
\newtheorem{remark}[theorem]{Remark}

\newcommand{\R}{\mathbb R}
\newcommand{\Z}{\mathbb Z}

\newcommand{\dd}{\,\mathrm{d}}
\newcommand{\groupid}{\mathbf{e}}
\newcommand{\latticemin}{\lambda_{\min}}
\newcommand{\extmin}{\lambda_{\mathrm{extmin}}}
\newcommand{\qform}{\mathfrak{q}}
\newcommand{\norm}[1]{\left\lVert #1\right\rVert}
\newcommand{\abs}[1]{\left\lvert #1\right\rvert}
\newcommand{\SY}{\mathcal S_{\mathrm Y}}
\newcommand{\Q}{\mathcal Q}
\newcommand{\etaY}{\eta_{\mathrm Y}}
\newcommand{\SR}{\mathcal S_{\mathrm R}}
\newcommand{\Qall}{\mathcal Q_{\mathrm{all}}}
\DeclareMathOperator{\SL}{SL}
\DeclareMathOperator{\SO}{SO}
\DeclareMathOperator{\Lie}{Lie}
\DeclareMathOperator{\Ad}{Ad}
\DeclareMathOperator{\diag}{diag}
\DeclareMathOperator{\Leb}{Leb}
\DeclareMathOperator{\rank}{rank}
\DeclareMathOperator{\End}{End}
\DeclareMathOperator{\Sym}{Sym}
\DeclareMathOperator{\inj}{inj}
\DeclareMathOperator{\vol}{vol}
\DeclareMathOperator{\Height}{ht}

\title{Effective equidistribution of polynomial trajectories in
  $\operatorname{SL}_3(\mathbb R)/\operatorname{SL}_3(\mathbb Z)$}

\author{Hao Xing}
\address{CUNY Graduate Center, New York, NY 10016, USA}
\email{thinkhowxh@gmail.com}

\begin{document}

\begin{abstract}
For a fixed matrix-polynomial trajectory in
$\operatorname{SL}_3(\mathbb R)/\operatorname{SL}_3(\mathbb Z)$,
we give a necessary and sufficient intrinsic Diophantine condition
for its time averages to equidistribute at a power rate on its
homogeneous orbit closure. The condition uses a lattice in the
space of polynomial functions on the normalized orbit-closure
group, each admitting a representative of total degree at most
two in the matrix entries. These polynomial tests have a common
degree bound of two, independent of the curve degree and optimal
in general. The proof begins with a classification of the possible
orbit-closure groups and derives lower bounds for polynomial
coefficient vectors from the intrinsic condition. It combines
quantitative nilmanifold equidistribution, unipotent tangent
approximation, and effective flow estimates, with arithmetic
separation and quantitative nondivergence controlling cusp
excursions and periodic-orbit obstructions.
\end{abstract}

\maketitle

\section{Introduction and main theorem}
\label{sec:mainresults}

Let $G=\SL_3(\R)$, $\Gamma=\SL_3(\Z)$, and $X=G/\Gamma$. Let $\Phi\in\SL_3(\R[t])$ be a nonconstant matrix-polynomial
curve with $\Phi(0)=\groupid$. For $x\in X$ and $f\in C_c(X)$,
consider the time average
\begin{equation}\label{eq:average}
 A_T^\Phi f(x)=\frac1T\int_0^T f(\Phi(t)x)\dd t.
\end{equation}

Ratner \cite{Ratner1991} proved that orbits of one-parameter
$\Ad$-unipotent subgroups on homogeneous spaces of finite volume
equidistribute with respect to invariant probability measures
on their homogeneous orbit closures.
Shah \cite[Theorem 1.1]{Shah1994} established the corresponding
conclusion for trajectories defined by regular algebraic maps
from $\R$, including matrix-polynomial curves. In the present
setting, his theorem gives
\[
 A_T^\Phi f(x)\longrightarrow\mu_Y(f)
 \qquad(T\longrightarrow\infty),
 \qquad
 Y=\overline{\{\Phi(t)x:t\ge0\}},
\]
where $Y$ is a homogeneous orbit of finite volume and $\mu_Y$
is its invariant probability measure.

Qualitative equidistribution alone does not imply a power estimate
in a fixed derivative norm of finite order. For example, a linear
flow on a two-dimensional torus with a Liouville slope is
equidistributed but admits no such estimate; see
Section~\ref{subsec:liouvilleobstruction}.

In this paper, we give a necessary and sufficient intrinsic
Diophantine criterion for a fixed pair $(\Phi,x)$ to admit a power
estimate uniform over smooth functions supported in each fixed
compact subset of $X$. The criterion is formulated using a fixed
lattice of polynomial functions on the normalized orbit-closure
group. These functions admit representatives of total degree at
most two in the matrix entries. The common degree bound of two
is independent of the curve degree and optimal in general.

\subsection{The intrinsic condition and the main theorem}

For the base point $x=g\Gamma\in G/\Gamma$, left translation
by $g^{-1}$ sends $x$ to the identity coset $\Gamma$ and replaces
$\Phi(t)$ by $g^{-1}\Phi(t)g$. Let $J_g$ be the closed subgroup
generated by this normalized curve. Let $\mathbf H_g$ be the smallest
$\mathbb Q$-algebraic subgroup containing $J_g$
(Lemma~\ref{app:rational-envelope}), and put
$H_g=\mathbf H_g(\R)^\circ$, where $\circ$ denotes the identity
component in the real topology. The normalized homogeneous orbit
closure is $H_g\Gamma/\Gamma$, as stated in
\eqref{eq:actualsupport}. We call $H_g$ the normalized
orbit-closure group. The subgroup $J_g$ may be proper in $H_g$.
The intrinsic condition below requires a polynomial lower bound
for the distance of nonzero integral polynomial tests on $H_g$
from the subspace of right-$J_g$-invariant functions.

\begin{definition}[Normalized polynomial test spaces]
\label{def:polynomialtestspaces}
Let $\mathbf H\le\SL_3$ be a connected algebraic subgroup defined
over $\mathbb Q$ (Section~\ref{app:basic}), and put
$H=\mathbf H(\R)^\circ$. For each integer $m\ge1$, define
\[
 \mathscr P_{\mathbf H}^{(m)}
 =\{P|_H:
     P\in\R[x_{ij}:1\le i,j\le3],\
     \deg P\le m,\ P(\groupid)=0\}.
\]
Here $\deg P$ denotes total degree in the matrix entries.
The degree bound applies to an ambient representative;
representatives agreeing on $H$ define the same test.

By Lemma~\ref{lem:polynomial-functions}, restrictions of polynomials
with rational coefficients give $\mathscr P_{\mathbf H}^{(m)}$
a $\mathbb Q$-structure. For each $m$, fix a rational basis
$p_{m,1},\ldots,p_{m,r_m}$ and equip
$\mathscr P_{\mathbf H}^{(m)}$ with the inner product for which
this basis is orthonormal. Define
\[
 \mathscr L_{\mathbf H}^{(m)}
 =\bigoplus_{i=1}^{r_m}\Z p_{m,i}.
\]
We call the elements of $\mathscr L_{\mathbf H}^{(m)}$
\emph{integral polynomial tests} with degree cutoff $m$.
For a closed subgroup $J\le H$, let
$\mathscr K_{J,\groupid}^{(m)}$ denote the subspace of
right-$J$-invariant functions:
\begin{equation}\label{eq:intrinsickernelidentity}
 \mathscr K_{J,\groupid}^{(m)}
 =\{p\in\mathscr P_{\mathbf H}^{(m)}:
       p(bj)=p(b)\ \text{for all }b\in H,\ j\in J\}.
\end{equation}
For degree cutoff $2$, abbreviate
\[
 \mathscr P_{\mathbf H}=\mathscr P_{\mathbf H}^{(2)},
 \qquad
 \mathscr L_{\mathbf H}=\mathscr L_{\mathbf H}^{(2)},
 \qquad
 \mathscr K_{J,\groupid}=\mathscr K_{J,\groupid}^{(2)}.
\]
\end{definition}

\begin{definition}[Intrinsic Diophantine condition]\label{def:intrinsicdiophantine}
The pair $(J,\groupid)$ has type $(c_0,\tau)$ if $0<c_0\le1$, $\tau\ge0$,
and
\begin{equation}\label{eq:intrinsicnormalizedcondition}
 \operatorname{dist}(p,\mathscr K_{J,\groupid})
       \ge c_0\norm p^{-\tau}
       \qquad(0\ne p\in\mathscr L_{\mathbf H}).
\end{equation}
We say it has \emph{finite intrinsic type} if such $c_0,\tau$ exist.
\end{definition}

Finite intrinsic type and the admissible exponent $\tau$ are
independent of the chosen lift $g$ and rational basis, although
$c_0$ may change; see Lemma~\ref{lem:intrinsicconjugacy}.

To specify the derivative norms, fix a basis
$Z_1,\ldots,Z_8$ of $\mathfrak{sl}_3(\R)$. For
$Z\in\mathfrak{sl}_3(\R)$ and integers $s\ge0$, put
\[
D_Zf(y)=\left.\frac{\mathrm d}{\mathrm dr}\right|_{r=0}
 f(\exp(rZ)y),\qquad
 \norm f_{C_G^s}
 =\sum_{j=0}^s\ \sum_{i_1,\ldots,i_j=1}^8
       \norm{D_{Z_{i_1}}\cdots D_{Z_{i_j}}f}_\infty.
\]
The term $j=0$ is $\norm f_\infty$. The degree of a matrix-polynomial
curve is the maximum degree of its entries.

\begin{theorem}
\label{thm:intrinsic}
Let $\Phi\in\SL_3(\R[t])$ be nonconstant, with $\Phi(0)=\groupid$ and
$\deg\Phi\le d$ for an integer $d\ge1$, and let $x\in X$.
Choose $g\in G$ with $x=g\Gamma$ and define
\begin{equation}\label{eq:normalizedcurve}
 \Psi_g(t)=g^{-1}\Phi(t)g,\qquad
 J_g=\overline{\langle\Psi_g(\R)\rangle}.
\end{equation}
Let $\mathbf H_g$ be the smallest $\mathbb Q$-algebraic subgroup
of $\SL_3$ containing $J_g$, and set $H_g=\mathbf H_g(\R)^\circ$.
Then $\mathbf H_g$ is connected, its radical is unipotent,
$H_g\Gamma/\Gamma$ is closed with finite invariant volume, and
\begin{equation}\label{eq:actualsupport}
 Y:=\overline{\{\Phi(t)x:t\ge0\}}
   =g\,\overline{J_g\Gamma/\Gamma}
   =gH_g\Gamma/\Gamma.
\end{equation}
Let $\mu_Y$ be the unique $gH_gg^{-1}$-invariant probability
measure on $Y$. Suppose that $(J_g,\groupid)$ has intrinsic type
$(c_0,\tau)$ in $\mathscr P_{\mathbf H_g}$ as in
Definition~\ref{def:intrinsicdiophantine}.

There are an integer $s=s(\mathbf H_g,d)\ge1$ and a number
$\vartheta_{\mathbf H_g,d}>0$ such that for every compact
$\Omega\subset X$ there is
$C=C(\Phi,g,c_0,\tau,\Omega,\mathscr L_{\mathbf H_g})>0$ with
\begin{equation}\label{eq:intrinsicmain}
 \left|\frac1T\int_0^T f(\Phi(t)x)\dd t-\mu_Y(f)\right|
 \le C\norm f_{C_G^s}T^{-\vartheta_{\mathbf H_g,d}/(\tau+1)}
 \qquad(T\ge2)
\end{equation}
for every $f\in C_c^\infty(X)$ supported in $\Omega$.
\end{theorem}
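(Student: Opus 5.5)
We first reduce to the normalized problem. Translating by $g^{-1}$ replaces $f$ by $f_g=f(g\,\cdot)$, whose $C_G^s$ norm is bounded by $C(g,s)\norm f_{C_G^s}$ and whose support lies in the compact set $g^{-1}\Omega$. So it suffices to treat the curve $\Psi=\Psi_g$ based at $\Gamma$. The structural claims are qualitative and are treated separately. Connectedness of $\mathbf H_g$ holds because $J_g$ is generated by a connected curve through $\groupid$. The radical is unipotent because $J_g$ is generated by $\Ad$-unipotent-type polynomial elements, so it has no nontrivial $\mathbb Q$-characters on its envelope. Closedness and finite volume of $H_g\Gamma/\Gamma$ follow from Borel--Harish-Chandra. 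The identity \eqref{eq:actualsupport} then follows from Shah's theorem together with Lemma~\ref{app:rational-envelope}.

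With this in hand, we classify the possible $\mathbf H_g$ up to $\Gamma$-conjugacy. There are finitely many types: $\SL_3$; $\SL_2\ltimes$ unipotent radical; $\SO(Q)$ for a rational ternary form; unipotent groups, namely the Heisenberg group, $\R^2$, and lines; and products of a unipotent group with an $\SL_2$-type factor. In each case we express the obstruction to equidistribution through degree-$\le2$ tests.

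The key translation step will be the following. A non-$J$-invariant integral test $p\in\mathscr L_{\mathbf H}$ gives a polynomial $t\mapsto p(h\Psi(t))$ in $t$. By the intrinsic condition \eqref{eq:intrinsicnormalizedcondition}, its coefficient vector, measured in $t$ at scale $T$, is bounded below by $c_0\norm p^{-\tau}$.

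In the unipotent or nilmanifold case, we invoke Green--Tao quantitative equidistribution of polynomial sequences on nilmanifolds. Failure of $\delta$-equidistribution yields a horizontal character $\eta$ of height at most $\delta^{-O(1)}$ with $\norm{\eta\circ\Psi}_{C^\infty[T]}\lesssim\delta^{-O(1)}$. Horizontal characters correspond to degree-one tests, or degree two in the Heisenberg case, so they are elements of $\mathscr L_{\mathbf H}$. Our lower bound then forces $\delta\gtrsim T^{-c/(\tau+1)}$.

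In the semisimple-containing cases we argue in three steps. First, we partition $[0,T]$ into windows of length $T^{\kappa}$, on which $\Psi$ is well approximated by $\Psi(t_0)\exp(s\,\Psi(t_0)^{-1}\Psi'(t_0))$, which is a unipotent tangent orbit up to an error that is polynomial in $T$. Second, on each window we apply an effective equidistribution theorem for unipotent orbits in $\SL_3(\R)/\SL_3(\Z)$ (Lindenstrauss--Margulis--Mohammadi--Shah type, or the effective Ratner results for quadratic-form groups). This gives polynomial equidistribution unless the base point lies near a periodic orbit of a proper intermediate subgroup of bounded small discriminant. Third, we use quantitative nondivergence (Kleinbock--Margulis), applied to the polynomial coordinates of $\Psi$ on $\bigwedge^1$ and $\bigwedge^2$ of $\Z^3$, to control the time spent in the cusp. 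A periodic-orbit obstruction of discriminant $D$ at time $t$ is detected by a rational vector or a rational quadratic form. After arithmetic separation, that object is the same for most windows, so it yields an integral test of degree $\le2$ and norm $\lesssim D^{O(1)}$ that is nearly $J$-invariant along the curve. The intrinsic type then bounds $D$ below by $T^{c/(\tau+1)}$. Optimizing $\kappa$ together with the resulting exponents gives $\vartheta_{\mathbf H_g,d}$.

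The main obstacle is this last arithmetic step. We must show that every obstruction the effective unipotent theorems can output, including degenerate tangent directions where $\Psi'$ fails to be nilpotent-generic and intermediate $\SO(2,1)$ or parabolic orbits, is captured by a test of degree at most two. We must also show that these obstructions persist coherently across windows, so that they come from a single global test and not a window-dependent one. We would first attempt this through the classification: list the stabilizers of vectors in $\R^3$, $\bigwedge^2\R^3$ and $\Sym^2\R^3$ that contain $\Ad$-unipotent directions, and check that each one is cut out by quadratic matrix-coefficient functions.
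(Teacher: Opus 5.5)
You have the paper's overall architecture: Shah normalization, the support classification, Green--Tao on nilmanifolds (where your argument is essentially the paper's), and then tangent smoothing combined with effective flow theorems and Kleinbock--Margulis. However, the steps that carry the non-nilmanifold cases are misstated or missing.

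\emph{The translation step is false as stated.} The intrinsic condition controls the function-valued coefficients $b\mapsto[t^\nu]\bigl(p(b\Psi(t))-p(b)\bigr)$ of Lemma~\ref{lem:intrinsiccoefficients}. It does not control the coefficients of the scalar polynomial $t\mapsto p(h\Psi(t))$ at a single point. For example, $\Phi_{\mathrm{reg}}$ in \eqref{eq:regularexample} generates $G$, so $\mathscr K_{J,\groupid}=\{0\}$ and type $(1,0)$ holds; yet the nonzero test $p(b)=b_{21}$ vanishes identically along the curve. What nondivergence and periodic-orbit detection need at the given base point is growth of every integral vector in suitable representations. For $\rho$ with entries of degree at most two and no $H$-fixed vector, one gets $\bigl(\sum_\nu\norm{A_\nu w}^2\bigr)^{1/2}\ge c\norm w^{-\tau}$ (Lemma~\ref{lem:intrinsictransfer}), and hence $\sup_{[T,2T]}\norm{\rho(\Psi(t)h)w}\gg T^{1/(\tau+1)}$ (Lemma~\ref{lem:separatedgrowth}). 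This has to be done:
\begin{itemize}
\item for $\mathrm{Std}_3$, $\wedge^2\mathrm{Std}_3$ and $\rho_{\mathrm{sym}}$ on $X$;
\item for the block and affine standard representations on affine supports;
\item through $\Sym^2$ with exponent $2\tau+1$ on principal covers, since the standard $\SL_2$ representation does not descend to $\SO(\qform)$.
\end{itemize}

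\emph{Your windows use the wrong direction and scale.} $\Psi'(t_0)\Psi(t_0)^{-1}$ is generally not nilpotent: for $\Phi_{\mathrm{reg}}$ its characteristic polynomial is $\lambda^3-1$ for every $t$. On windows of length $T^\kappa$ the multiplicative error is not small once $q\ge1$; for $\Psi=\exp(t^2N/2)$ it is $\exp(s^2N/2)$. You must flow along the leading nilpotent coefficient $N$ of $\Psi'\Psi^{-1}$ on windows of length $Lt^{-q}$ (Lemma~\ref{lem:leading}). The Jordan type of $N$ then determines the flow input and its obstruction. For $N^2=0$ this is Yang's theorem, whose obstruction is purely cuspidal. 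For $N^2\ne0$ it is the Lindenstrauss--Mohammadi--Wang--Yang theorem, whose obstruction is a nearby periodic principal $\SL_2$-orbit.

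\emph{(i) Coherence of obstructions across windows.} Your plan to show that a periodic-orbit obstruction ``is the same for most windows'' has no support. The moving point can approach different periodic $H_{\mathrm{pr}}$-orbits at different times, and the integral witnesses are unbounded. The paper needs no coherence. Each exceptional time yields an integral symmetric matrix $w$ with $\rho_{\mathrm{sym}}(a^{\mathrm R}_{-\log L}P(t)h)w=c_wQ_0+E_w$, $|c_w|\le M$ and $\norm{E_w}\le M\varepsilon$ (Proposition~\ref{prop:extraction}). Compressing the $Q_0$-line turns $w$ into a short vector of a unimodular six-dimensional lattice, so all exceptional times lie in one short-vector set. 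Kleinbock--Margulis bounds its measure, using the $\rho_{\mathrm{sym}}$ growth above and a thick point for the higher exterior powers. No counting of orbits and no bound on $\norm w$ is needed (Proposition~\ref{prop:separatedregular}). The affine block case uses the same device with an integral vector in $\Z^3$ and the dilation $D_L$.

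\emph{(ii) Affine supports have no flow input in your plan.} A point of a proper closed orbit lies on a periodic orbit of a proper subgroup, so effective theorems on $X$ can only return their exceptional alternative. You need estimates on $A_{\mathrm{aff}}/(\Gamma_2(N)\ltimes\Z^2)$, separately for translation, block and regular tangents. The first is Fourier analysis in the torus fibers, and the second is Lin's theorem. The regular case requires an estimate uniform in the fiber translation and in the moving base matrix, which the paper proves itself (Proposition~\ref{prop:regularaffineflow}). The proof uses fiber Fourier expansion and exponential-sum bounds: Weil's bound for rational phases after fourfold van der Corput differencing, and mean-value bounds for real quadratic sums over the moduli.

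\emph{Smaller points.} ``No nontrivial $\mathbb Q$-characters'' does not give a unipotent radical, since a $\mathbb Q$-anisotropic central torus has none; also the values $\Psi(t)$ need not be unipotent. The correct argument is that $J_g$ is generated by unipotent one-parameter subgroups (Proposition~\ref{app:shah-normalization}). Hence $J_g$ maps trivially to the quotient of $\mathbf H_g/R_u(\mathbf H_g)$ by its derived group, a $\mathbb Q$-torus that minimality then forces to be trivial. Identifying $\overline{J_g\Gamma/\Gamma}$ with $H_g\Gamma/\Gamma$ uses Shah's arithmetic identification in that same proposition, not Lemma~\ref{app:rational-envelope} alone.
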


Conversely, a power estimate in a fixed finite-order derivative
norm forces finite intrinsic type.

\begin{proposition}
\label{prop:intrinsicnecessity}
Let $\Phi,x,g,\Psi_g,J_g,\mathbf H_g,Y$ be as in
Theorem~\ref{thm:intrinsic}, without assuming intrinsic type.
Suppose there are an integer $s\ge1$ and $\delta>0$ such that for
every compact $\Omega\subset X$ some $C_\Omega>0$ satisfies
\begin{equation}\label{eq:conversepower}
 \left|\frac1T\int_0^T f(\Phi(t)x)\dd t-\mu_Y(f)\right|
 \le C_\Omega\norm f_{C_G^s}T^{-\delta}\qquad(T\ge2)
\end{equation}
for every $f\in C_c^\infty(X)$ supported in $\Omega$.
Then for every fixed integer $m\ge1$ there exist $c_m>0$ and
$\tau_m\ge0$ such that
\begin{equation}\label{eq:converseintrinsic}
 \operatorname{dist}(p,\mathscr K_{J_g,\groupid}^{(m)})
 \ge c_m\norm p^{-\tau_m}
 \qquad(0\ne p\in\mathscr L_{\mathbf H_g}^{(m)}).
\end{equation}
Consequently finite intrinsic type with degree cutoff $2$ is
equivalent to the existence of a power estimate of the form
\eqref{eq:conversepower}.
\end{proposition}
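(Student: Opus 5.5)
We argue by contraposition, using each integral test $p\in\mathscr L^{(m)}_{\mathbf H_g}$ to build a smooth test function whose time averages are far from $\mu_Y$ for a long time whenever $p$ is very close to $\mathscr K^{(m)}_{J_g,\groupid}$. Write $p=k+e$ with $k\in\mathscr K^{(m)}_{J_g,\groupid}$ and $\norm e=\operatorname{dist}(p,\mathscr K^{(m)}_{J_g,\groupid})=:\epsilon$. The observation is that the map $\Gamma$-periodizing a function of $p$ is unavailable ($p$ is not $\Gamma$-invariant), so instead we exploit that the curve $t\mapsto\Psi_g(t)\in J_g$ satisfies $p(\Psi_g(t))=k(\Psi_g(t))+e(\Psi_g(t))=k(\groupid)+e(\Psi_g(t))=e(\Psi_g(t))$, since $k$ is right-$J_g$-invariant and $k(\groupid)=p(\groupid)-e(\groupid)=0$. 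Now $e(\Psi_g(t))$ is a polynomial in $t$ of degree at most $md$ with coefficients $O(\epsilon)$ (the coefficients of $\Psi_g$ are fixed), so $\abs{p(\Psi_g(t))}\le C_1\epsilon(1+t)^{md}$. Thus for $t\le T_\epsilon:=c\,\epsilon^{-1/(md)}$ the curve stays in the region $\{b\in H_g:\abs{p(b)}\le 1/10\}$ of $H_g$ near the zero set of $p$.

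Next I transport this to $X$. Fix a small neighbourhood $U$ of $\groupid$ in $H_g$ on which $b\mapsto b\Gamma$ is injective; but the curve leaves $U$, so I instead work on a fundamental domain: the key point is that $p$ has integral (rational, bounded denominator) coefficients, and the plan is to use the classification of $H_g$ together with the arithmetic structure to replace $p$ by a function on $H_g\Gamma/\Gamma$. Concretely, I would pass to the representation $V_m=\Sym^{\le m}$ of $\SL_3$ on polynomials of degree $\le m$, where $p$ corresponds to a vector $v_p$ in an integral lattice, $p(b)=\langle \rho(b)w_p,\, u\rangle$ for fixed $u$ (evaluation at $\groupid$) and the right-$J_g$-invariance of $k$ says $\rho(J_g)$ fixes $w_k$. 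The averages of $F(y)=\phi(\text{local value of }p)$ are then compared with $\mu_Y(F)$: along the trajectory $F(\Psi_g(t)\Gamma)$ is near $\phi(0)$ for $t\le T_\epsilon$, while $\mu_Y(F)$ differs from $\phi(0)$ by a fixed amount because $p\notin\mathscr K^{(m)}$ means $p$ is nonconstant on $H_g$ (in fact any nonzero $p$ vanishing at $\groupid$ is nonconstant). Taking $F=\chi\cdot\phi\circ p$ supported in a fixed compact $\Omega$ (localized via a bump $\chi$ on a fixed injectivity ball translated by $g$), with $\norm{F}_{C^s_G}\ll \norm p^{s}$ since $p$ has coefficients of size $\norm p$, the estimate \eqref{eq:conversepower} at $T=T_\epsilon$ gives $\eta\le C_\Omega\norm p^{s}\epsilon^{\delta/(md)}$ for a fixed gap $\eta>0$, i.e. $\epsilon\ge c_m\norm p^{-smd/\delta}$, yielding $\tau_m=smd/\delta$.

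The main obstacle is the localization step: the trajectory is only controlled by $p$ on $H_g$, not on $X$, and a single injectivity ball does not contain the trajectory up to time $T_\epsilon$. I would handle this by restricting to a short initial window: the trajectory spends time $\gg1$ in the fixed ball $\{\Psi_g(t):t\le t_0\}$, which is too little. So instead I would use the set $E_\epsilon=\{t\le T_\epsilon: \Psi_g(t)\Gamma\in B\}$ with $B$ a fixed ball in $H_g\Gamma/\Gamma$, and lift to the unique sheet $\gamma$ with $\Psi_g(t)\gamma^{-1}\in B$; the integrality of $p$ gives $p(b\gamma)$ as a polynomial in $b$ with coefficients $\ll\norm{\gamma}^m\norm p$, and the required uniformity in $\gamma$ is exactly where the lattice structure $\mathscr L^{(m)}$ and Lemma~\ref{lem:polynomial-functions} enter. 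If this proves too lossy, fallback: use equidistribution itself to show the proportion of $t\le T$ in $B$ is $\asymp\mu_Y(B)$, and use that on those times $\gamma$ ranges over a polynomially bounded set (by nondivergence and polynomial growth of $\Psi_g$), absorbing the loss into a larger $\tau_m$. The final equivalence then follows by combining this with Theorem~\ref{thm:intrinsic} at $m=2$.
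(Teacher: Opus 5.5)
Your decomposition $p=k+e$ and the closing deduction at $m=2$ are fine. The proof breaks at the localization step you flag, and neither fix you sketch closes it. The scalar fact $p(\Psi_g(t))\approx 0$ does not yield a usable test function on $H_g/\Lambda$, $\Lambda=H_g\cap\Gamma$. On a fixed ball, $\chi\cdot\phi\circ p$ evaluates $p$ at the lift $b$ in the ball, not at $b\gamma_t=\Psi_g(t)$, so it does not see the trajectory. If you instead sum over sheets, $\sum_\gamma\phi(p(b\gamma)/\eta)$, nothing limits the number of nonzero terms at a point. The condition $|(R(\gamma)p)(b)|<\eta$ is a single linear condition on the lattice vector $R(\gamma)p$, and the stabilizer of $p$ in $\Lambda$ can already be infinite (e.g.\ a linear $p$ on the torus of Section~\ref{subsec:liouvilleobstruction}, where $R(\gamma)p=p+p(\gamma)1$). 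Truncating to $\norm{\gamma}\ll T^d$ gives a test function that depends on the horizon $T$, and its $C^s$ norm grows like a positive power of $T$. That loss is not a power of $\norm p$, so it cannot be absorbed into $\tau_m$. It multiplies $T^{-\delta}$ in \eqref{eq:conversepower}, and for small $\delta$ the error bound no longer decays, so no contradiction follows.

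The missing idea is to work with the whole vector and to index by lattice vectors rather than by sheets. Your decomposition already gives, in $W=\R1\oplus\mathscr P^{(m)}_{\mathbf H_g}$,
\[
 \norm{R(\Psi_g(t))p-p}=\norm{R(\Psi_g(t))e-e}\le C_\Psi T^{md}\Delta_p,
 \qquad \Delta_p=\operatorname{dist}(p,\mathscr K^{(m)}_{J_g,\groupid}),
\]
since $R(J_g)k=k$. Let $M$ be the $R(\Lambda)$-invariant monomial lattice of Lemma~\ref{lem:polynomial-functions}, with $p$ scaled into $M$ and $P=\norm p$. The paper then uses
\[
 F_{p,\varepsilon}(b\Lambda)=\chi(b\Lambda)\sum_{v\in M}\varphi\bigl((R(b)v-p)/\varepsilon\bigr),
\]
which is well defined on the quotient with no truncation. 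Its three key properties are these.
\begin{itemize}
\item Since $R(b)M$ has shortest vector bounded below on a fixed chart, at most one summand is nonzero. Hence $\norm{F_{p,\varepsilon}}_{C_H^s}\ll(P/\varepsilon)^s$, independently of $T$.
\item Write $\Psi_g(t)=b_t\gamma_t$ with $b_t$ in the chart and $\gamma_t\in\Lambda$. The summand $v=R(\gamma_t)p$ forces $F_{p,\varepsilon}=\chi$ along the trajectory whenever $C_\Psi T^{md}\Delta_p\le\varepsilon/2$.
\item Only the $O(P^r)$ candidates with $\norm{v}\le C_1P$ can contribute to the mean. For these, chart separation gives $\sup_z\norm{R(b(z))v-p}\ge c_1P$, because a nonzero normalized $p$ is not right-$H_g$-invariant. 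The polynomial sublevel bound then yields $\mu(F_{p,\varepsilon})\ll P^r(\varepsilon/P)^\beta$.
\end{itemize}
Take $\varepsilon=\varepsilon_*P^{-B_0}$ with $\beta(B_0+1)>r$, and $T\asymp P^{s(B_0+1)/\delta}$. This gives $\tau_m=B_0+mds(B_0+1)/\delta$. The $T$-independence of the test function is exactly what lets the decay $T^{-\delta}$ be exploited, and your scalar construction lacks it.
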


If the normalized curve generates $H_g$ as a closed subgroup, then
$J_g=H_g$ and $\mathscr K_{J_g,\groupid}=\{0\}$.
Thus the intrinsic condition holds automatically with
$(c_0,\tau)=(1,0)$, giving a power rate; see
Corollary~\ref{cor:intrinsicfullgeneration}. In particular, if the
image of $\Phi$ generates $G$ as a closed subgroup, then $Y=X$
and the theorem applies at every $x\in X$. Normalized curves whose
coefficients lie in a number field give another sufficient
criterion; see Corollary~\ref{cor:intrinsicalgebraic}.

\begin{remark}[Optimality of the degree cutoff]
\label{rem:intrinsicoptimality}
Degree cutoff $2$ is the smallest common degree cutoff for all
normalized orbit-closure groups in this setting.
Section~\ref{subsec:degreeonecounterexample} gives a toral trajectory
satisfying finite intrinsic type with degree cutoff $1$ but
admitting no power estimate uniform over smooth test functions
in any fixed derivative norm of finite order.
\end{remark}

\subsection{Related work}
\label{subsec:previousresults}

Effective equidistribution for horocycle flows and expanding
closed horocycles has a long history; see
\cite{Sarnak1981,Burger1990,FlaminioForni2003,Strombergsson2013}.
For effective estimates for expanding horospherical averages
on finite-volume semisimple quotients, see
\cite{KleinbockMargulis2012,Edwards2021} and the preprint
appendix to \cite{Zheng2015}. In the nilpotent setting,
Green and Tao \cite{GreenTao2012} establish a quantitative
Leibman theorem for discrete polynomial sequences on compact
nilmanifolds.

Effective results on the distribution of closed semisimple
orbit measures and adelic homogeneous subsets are developed
in \cite{EMV2009,EMMV2020}. Effective density in semisimple
quotients is treated in
\cite{LindenstraussMargulis2014,LindenstraussMohammadi2023}.
Arithmetic avoidance for unipotent trajectories is studied
by Lindenstrauss, Margulis, Mohammadi, and Shah
\cite{LMMS2024}, building on the Dani--Margulis linearization
method \cite{DaniMargulis1993}.

More recent work concerns polynomially effective
equidistribution for non-horospherical unipotent dynamics.
Chow and Yang \cite{ChowYang2024} treat a special family
of expanding translates in $\SL_3(\R)/\SL_3(\Z)$, and
Yang \cite{Yang2025} treats singular unipotent orbit
segments under quantitative control of diagonal excursions.
Lindenstrauss, Mohammadi, and Wang \cite{LMW2026}
establish equidistribution alternatives on arithmetic
quotients of $\SL_2(\mathbb C)$ and
$\SL_2(\R)\times\SL_2(\R)$; Lindenstrauss, Mohammadi,
Wang, and Yang \cite{LMWY2025} treat regular unipotent
dynamics in arithmetic quotients of $\R$-quasi-split
semisimple groups of absolute rank two. Lin
\cite{Lin2026} establishes equidistribution alternatives for expanding translates for
certain unipotent subgroups in arithmetic quotients of
perfect groups, under representation hypotheses. For related
effective equidistribution results for random walks on
simple homogeneous spaces, see B\'enard and He
\cite{BenardHe2025}.

On affine homogeneous spaces, effective results for several
families of expanding translates, horocycle lifts, and
unipotent orbits are obtained in
\cite{Strombergsson2015,BV2016,Vinogradov2016,
StrombergssonVishe2020,Kim2024,SSV2026}.

\vspace{3mm}
The present paper gives a necessary and sufficient intrinsic
Diophantine criterion for a power rate in the equidistribution of
fixed matrix-polynomial trajectories on their homogeneous orbit
closures, using polynomial tests of degree at most two on the
normalized orbit-closure group. We also establish, in
Proposition~\ref{prop:regularaffineflow}, a regular affine flow
estimate uniform in the initial translation and varying base
matrices under the stated lower bound for the shortest vector
of the renormalized base lattice.

\subsection{Main ideas and organization of the proof}
\label{subsec:readingguide}

The proof first normalizes the homogeneous orbit closure using
Shah's results \cite{Shah1994,Shah1991}, as summarized in
Lemma~\ref{lem:orbitnormalization}. The support classification
in Proposition~\ref{prop:supportclassification} reduces the
problem to unipotent supports, proper semisimple supports,
the ambient quotient, and rational conjugates of the affine
group $\SL_2(\R)\ltimes\R^2$ and its contragredient embedding.
The algebraic and lattice facts used in these reductions are
collected in Appendix~\ref{app:algebraic-background}.

The common arithmetic step converts intrinsic type into lower
bounds for the nonconstant coefficients of polynomial tests.
Lemma~\ref{lem:intrinsiccoefficients} constructs a finite
coefficient map whose kernel is the invariant subspace in
Definition~\ref{def:intrinsicdiophantine}. For each test, distance
from the kernel is comparable to the norm of its coefficient vector.
Lemma~\ref{lem:intrinsictransfer} transfers these bounds to the
relevant representations. The support classification yields
the common degree bound for the tests.

On unipotent supports, which are compact nilmanifolds,
Lemma~\ref{lem:continuousleibman} derives a continuous-time form
of Green and Tao's quantitative Leibman theorem, stated in
Section~\ref{subsec:nilinput}. The coefficient bounds exclude
its horizontal-character obstruction. For the remaining supports,
the leading coefficient of the right logarithmic derivative
$\Psi'\Psi^{-1}$ is a nonzero nilpotent matrix $N$.
Lemma~\ref{lem:leading} gives the matrix-polynomial form of
Shah's unipotent increment limit. Tangent approximation reduces
the polynomial average to growing segments of $\exp(sN)$ at
moving starting points. Quantitative nondivergence
\cite[Theorem~5.2]{KM1998} controls the exceptional times.
We average the flow estimates over the remaining times and sum
the resulting bounds over dyadic intervals.

For the ambient quotient, Yang's estimate
(Theorem~\ref{thm:yang}) handles singular tangents ($N^2=0$),
and the rank-two expanding-translate estimate
(Theorem~\ref{prop:regularinput}) handles regular tangents
($N^2\ne0$). Arithmetic separation, nondivergence, and cusp
compression control the compactness and periodic-orbit
alternatives and the required starting thresholds.

Proper semisimple supports reduce through rational coordinates
and finite covers to horocycle averages on fixed rank-one
quotients. Zheng's theorem supplies the fixed-lattice estimate
recorded in Section~\ref{subsec:congruencecovers}, which applies
to both compact and noncompact $\SL_2(\R)$ quotients.

The affine argument distinguishes translation, block, and
regular tangents. Translation tangents are treated by Fourier
analysis in the torus fibers. The block estimate is the
specialization of Lin's theorem justified in
Section~\ref{subsec:affineblock}; regular tangents require
Proposition~\ref{prop:regularaffineflow}. Its proof uses Fourier
expansion in the fibers: the zero mode is controlled by the base
horocycle estimate, and oscillatory integrals and exponential
sums give uniform cancellation for the nonzero modes.
The uniformity in the initial translation and varying base
matrices permits the polynomial transfer in
Proposition~\ref{prop:regularaffinereduction}.

For necessity, an integral polynomial test close to the invariant
subspace remains almost invariant along the curve for a long
time. Smooth tests supported in a fixed compact set detect this
behavior, while polynomial sublevel estimates control their
invariant integrals. A power bound therefore forces finite
intrinsic type. This argument is independent of the effective
flow estimates.

Section~\ref{sec:refinement} establishes normalization and the
intrinsic coefficient bounds. Sections~\ref{sec:tools}
and~\ref{sec:inputs} contain the common averaging arguments and
quantitative inputs. The estimates for each support type are
organized as follows.

\begin{center}
\renewcommand{\arraystretch}{1.18}
\begin{tabular}{@{}p{0.29\textwidth}p{0.66\textwidth}@{}}
\hline
Support type & Estimate used in the final proof\\
\hline
Unipotent & Proposition~\ref{prop:separatedunipotent}
(Section~\ref{sec:unipotentproof}).\\
Proper semisimple & Propositions~\ref{prop:congruencemodular}
and~\ref{prop:compactpolynomial}
(Section~\ref{sec:modularproof}).\\
Ambient quotient $X$ & Propositions~\ref{prop:separatedsingular}
and~\ref{prop:separatedregular}
(Section~\ref{sec:ambientproof}).\\
Affine & Proposition~\ref{prop:affineintrinsic}, combining the three
tangent cases in Section~\ref{sec:leviaffine}. The regular-flow proof
is in Section~\ref{sec:regularaffine}.\\
\hline
\end{tabular}
\end{center}

Section~\ref{subsec:mainassembly} assembles these
estimates and compares norms;
Section~\ref{subsec:intrinsicnecessity} proves
the converse.
Section~\ref{sec:applications} gives examples of polynomial curves
generating their normalized orbit-closure groups, the number-field
criterion, examples concerning necessity and the degree cutoff,
and an application to compact rank-one quotients.

\par\medskip
\noindent\textbf{Statement of AI use and acknowledgment.}
The author began this project by studying special cases of
Theorem~\ref{thm:intrinsic}. Since mid-2026, ChatGPT models from GPT-5.5 to GPT 6 Astra were subsequently used in formulating the main theorem, developing
and reviewing arguments, assisting with the literature review,
and revising the exposition. This work would not be possible without the foundational works of pioneering mathematicians in homogeneous dynamics and number theory. The author is responsible for the mathematical statements, proofs, and citations in the paper.

\subsection*{Notation and conventions}

We use $\SL_n$ to denote the special linear algebraic group,
with base field specified by the context. We write $\groupid$ for
the group identity and $I_n$ for the $n$ by $n$ identity matrix;
thus $\groupid=I_n$ in $\SL_n$.

In addition to $G,\Gamma,X$ from above, put
$G_2=\SL_2(\R)$, $\Gamma_2=\SL_2(\Z)$, and $X_2=G_2/\Gamma_2$.
Write $m_X,m_{X_2}$ for the invariant probability measures induced
by Haar measure. We use the continuous averages $A_T^\Phi$ defined
in \eqref{eq:average}.
We normalize $\Phi(0)=\groupid$; a general
initial value is absorbed by replacing the curve with
$\Phi(t)\Phi(0)^{-1}$ and the starting point with $\Phi(0)x$.

For a fixed group element $b$, let $L_b(y)=by$ denote left
translation on the quotient, and write $F_b=L_b^*F=F\circ L_b$
for its pullback on functions. Thus $F_b(y)=F(by)$.

An orbit is called \emph{periodic} when it is closed and carries
a finite invariant measure. Orbit volumes are measured using fixed
Haar measures on the acting groups, with the normalization specified
for each cited estimate. On each periodic orbit, the invariant
measure used in averages is normalized to have total mass one.

Write $E_{ij}$ for the matrix unit with entry one in position
$(i,j)$ and zero elsewhere. Unless a two-dimensional representation
is specified, $E_{ij}$ denotes a $3\times3$ matrix unit. Closures refer to
the real Lie-group or quotient topology unless Zariski closure
is specified. For a subfield $F\subset\R$,
$\mathbf H(F[t])$ denotes matrices over $F[t]$ satisfying the
defining equations of $\mathbf H$ (Section~\ref{app:basic}).
The radical of an algebraic group is its largest connected normal
solvable subgroup, and $R_u$ denotes its largest connected normal
unipotent subgroup (Section~\ref{app:levi}).

For a lattice $\Lambda\subset\R^n$, write
$\latticemin(\Lambda)=\min_{0\ne v\in\Lambda}\norm v$.
We also write $\latticemin(b\Gamma_n)=\latticemin(b\Z^n)$, where
$\Gamma_n=\SL_n(\Z)$. The capped exterior systole $\extmin(y)$ on $X$
is defined in \eqref{eq:systole}.

All vector norms are Euclidean in the specified bases, exterior
powers have the induced Euclidean norms, and matrix norms are
operator norms unless otherwise stated. For an invertible map $A$,
$s_{\min}(A)=\inf_{\norm v=1}\norm{Av}$. The notation $U\ll_d V$
means $|U|\le C_dV$, with dependence only on $d$ and fixed conventions.
The constant may increase from line to line. We keep dependence on
the curve, lift, and arithmetic separation explicit. The Lie algebra
$\Lie(H)$ is the tangent space at the identity, and $\Ad(g)Z=gZg^{-1}$.

\section{Normalized supports and intrinsic coefficient bounds}
\label{sec:refinement}

\subsection{Normalization and independence of the lift}

The following two reductions justify the intrinsic formulation
used in the introduction. They use only qualitative orbit theory
and finite-dimensional linear algebra.
The rationality assertion concerns the normalized group
$\mathbf H_g$; the acting group $gH_gg^{-1}$ need not be defined
over $\mathbb Q$.

\iffalse
Finite volume of a prescribed real-algebraic orbit alone would
not give this conclusion. The support identification is qualitative
and does not provide an algorithm for recovering $\mathbf H_g$
or $c_0$ from finite-precision coefficients.
\fi

\begin{lemma}
\label{lem:orbitnormalization}
For the data in \eqref{eq:normalizedcurve}, the group $\mathbf H_g$
is connected, its radical is unipotent, and $H_g\Gamma/\Gamma$
is closed with finite invariant volume. The orbit-closure identity
\eqref{eq:actualsupport} holds. In particular,
$\mathbf H_g/R_u(\mathbf H_g)$ is semisimple.
\end{lemma}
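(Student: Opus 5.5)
We follow Shah's theorems \cite{Shah1991,Shah1994} and carry them to the normalized picture by left translation. Since $L_{g^{-1}}$ is a homeomorphism of $X$ sending $\Phi(t)x$ to $\Psi_g(t)\Gamma$, we have $Y=g\,\overline{\{\Psi_g(t)\Gamma:t\ge0\}}$. It therefore suffices to work with $\Psi_g$ at the base point $\Gamma$.

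The first step is to identify this orbit closure with $\overline{J_g\Gamma/\Gamma}$. Shah's theorem for regular algebraic curves gives a closed subgroup $F\le G$ such that $\overline{\Psi_g(\R_{\ge0})\Gamma}=F\Gamma/\Gamma$ is a finite-volume orbit. Moreover, $F$ is the smallest such subgroup whose orbit through $\Gamma$ is closed and contains the curve. This closure is $\Psi_g(t)$-invariant for $t\ge0$, because polynomial orbits are asymptotically invariant under their own values. Hence $F\supseteq\Psi_g(\R_{\ge0})$. Since $\Psi_g(\R)$ lies in the Zariski closure of $\Psi_g(\R_{\ge0})$, and this Zariski closure lies in the Zariski closure of $F$, we get $F\supseteq J_g$. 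Minimality then gives $F\Gamma=\overline{J_g\Gamma}$.

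Next we apply Shah's structure theorem (via Ratner). A closed finite-volume orbit $F\Gamma/\Gamma$ whose stabilizer is generated by unipotent-type elements, together with $J_g$, arises from a $\mathbb Q$-group: $F^\circ=\mathbf F(\R)^\circ$ for a connected $\mathbb Q$-subgroup $\mathbf F$ with no nontrivial $\mathbb Q$-characters. Indeed, $F\cap\Gamma$ is a lattice in $F$, so by Borel density the Zariski closure of $F\cap\Gamma$ is a $\mathbb Q$-group, and it contains $F$ up to its compact part. Here we use Lemma~\ref{app:rational-envelope}. This gives $\mathbf H_g\subseteq\mathbf F$, since $\mathbf F$ is a $\mathbb Q$-group containing $J_g$; we use that $J_g$ is connected, being generated by a connected curve through $\groupid$. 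Conversely, $H_g\Gamma$ is closed and contains $J_g\Gamma$, so $F\Gamma\subseteq H_g\Gamma$, and minimality forces $F^\circ=H_g$. This proves \eqref{eq:actualsupport}.

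For connectedness, note that $\mathbf H_g^\circ$ is again a $\mathbb Q$-group containing the connected group $J_g$. Minimality therefore gives $\mathbf H_g=\mathbf H_g^\circ$.

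For the radical, finite volume of $H_g\Gamma/\Gamma$ implies that $\mathbf H_g$ has no nontrivial $\mathbb Q$-characters. We want more, namely that the radical is unipotent. Suppose a $\mathbb Q$-torus $\mathbf S$ in the radical were central modulo the unipotent radical. Then consider the quotient $\mathbf H_g\to\mathbf H_g/[\mathbf H_g,\mathbf H_g]R_u$. This quotient is a $\mathbb Q$-torus with no $\mathbb Q$-characters, so it is $\mathbb Q$-anisotropic, and its real points modulo an arithmetic group form a compact torus. The image of $J_g$ in this torus is generated by a polynomial curve $\Psi_g$. A polynomial map into a torus $(\R^\times)^k$ or $\SO(2)^k$ has to be constant, because units of $\R[t]$ are constants and polynomial maps into a compact group are bounded, hence constant. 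Since $\Psi_g(0)=\groupid$, the image of $J_g$ is trivial. Then the kernel of this quotient map is a smaller $\mathbb Q$-group containing $J_g$, which contradicts minimality unless the torus is trivial. Thus $\mathbf H_g=[\mathbf H_g,\mathbf H_g]R_u$. This forces the radical to be unipotent, and hence $\mathbf H_g/R_u(\mathbf H_g)$ is semisimple.

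The main obstacle is the careful invocation of Shah's minimality together with the rationality of the orbit-closure group, in particular justifying $F^\circ=\mathbf F(\R)^\circ$. We handle this by citing \cite[Prop.~3.2]{Shah1991}.
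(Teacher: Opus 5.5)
Your overall strategy for the orbit identification matches the paper's: Shah's equidistribution theorem gives the closed finite-volume orbit $F\Gamma/\Gamma$, and \cite[Proposition~3.2]{Shah1991} supplies the arithmetic identification. Step~3 as written, however, is circular.

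Borel density together with $F^\circ=\mathbf F(\R)^\circ$, for the Zariski closure $\mathbf F$ of $F\cap\Gamma$, only gives $\mathbf H_g\subseteq\mathbf F$, that is, $H_g\subseteq F^\circ$. For the reverse inclusion you use that $H_g\Gamma$ is closed. But closedness (and finite volume) of $H_g\Gamma/\Gamma$ is one of the assertions of the lemma, and nothing before that point proves it.

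The paper avoids this by applying Shah's proposition directly to $J_g$. This group is generated by unipotent one-parameter subgroups (\cite[Proposition~2.1(1)]{Shah1994}) and satisfies $\overline{J_g\Gamma/\Gamma}=F\Gamma/\Gamma$ with $F$ connected; the paper checks $F^\circ\Gamma=F\Gamma$ using full support of $\mu_F$. The proposition then identifies $F$ itself with $\mathbf H_g(\R)^\circ$, so closedness and finite volume are inherited from Shah's $F$.

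If you prefer to keep your weaker form of the citation, you can repair the argument by reordering it around your own radical step:
\begin{itemize}
\item That step shows $\mathbf H_g=[\mathbf H_g,\mathbf H_g]R_u(\mathbf H_g)$ using only minimality, so $\mathbf H_g$ has no nontrivial $\mathbb Q$-characters.
\item Borel--Harish-Chandra and Chevalley's theorem then give that $H_g\cap\Gamma$ is a lattice in $H_g$ and that $H_g\Gamma$ is closed. Here $\mathbf H_g$ is the stabilizer of a rational vector with discrete $\Gamma$-orbit.
\item Finally $F^\circ\subseteq F\Gamma\subseteq H_g\Gamma$, and a Baire category argument gives $F^\circ\subseteq H_g$.
\end{itemize}

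Step~2 has a smaller slip of the same kind. The inclusion $F\supseteq\Psi_g(\R)$ is part of Shah's theorem, but your justification of it does not work. $F$ is closed only in the real topology, so lying in its Zariski closure says nothing. Also, invariance of $F\Gamma$ under $\Psi_g(t)$ would only give $\Psi_g(t)\in F\Gamma$.

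Your treatment of the radical is correct and is a genuinely different, more elementary route than the paper's, which takes unipotence of the radical from the same citation of Shah. The map $\pi\circ\Psi_g$ is a regular map from the affine line to the torus $\mathbf H_g/[\mathbf H_g,\mathbf H_g]R_u(\mathbf H_g)$ sending $0$ to the identity. Over $\mathbb C$ its coordinates are units of $\mathbb C[t]$, so it is trivial. Minimality applied to the $\mathbb Q$-subgroup $\ker\pi\supseteq J_g$ then forces the torus to be trivial. Since a perfect reductive group is semisimple, $\mathbf H_g/R_u(\mathbf H_g)$ is semisimple and the radical is unipotent.

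This argument uses neither finite volume nor $\mathbb Q$-anisotropy. The case distinction between $(\R^\times)^k$ and $\SO(2)^k$ is unnecessary, and it also misses $\mathbb C^\times$ factors; complexifying handles every real form at once.
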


\begin{proof}
A determinant-one matrix-polynomial map is regular algebraic.
The subgroup generated by $\Psi_g(\mathbb R)$ is connected:
each set of words of a fixed length in the curve and its inverse
is a continuous image of a product of connected sets and contains
$\groupid$. Their union is connected, and so is its closure $J_g$.
The group $J_g$ is generated by algebraic unipotent one-parameter
subgroups by \cite[Proposition 2.1(1)]{Shah1994}
(Proposition~\ref{app:shah-normalization}).
Shah's polynomial equidistribution theorem
\cite[Theorem 1.1 and Proposition 2.1(2)]{Shah1994}
gives a closed subgroup $F$ containing $\Psi_g(\R)$ such that
$F\Gamma/\Gamma$ is closed and carries an $F$-invariant probability
measure $\mu_F$, to which the trajectory averages converge.
The support of $\mu_F$ is a nonempty $F$-invariant subset of
$F\Gamma/\Gamma$. Since $F$ acts transitively on this orbit,
$\mu_F$ has full support there.

We may take $F$ connected. Indeed, $\Psi_g(\R)$ is connected
and contains $\groupid$, so it lies in $F^\circ$.
The subset $C=F^\circ\Gamma/\Gamma$ is open and closed in
$F\Gamma/\Gamma$: its preimage under the quotient map from $F$
is $F^\circ(F\cap\Gamma)$, a union of connected components of $F$.
All trajectory averages are supported on $C$, so closedness
of $C$ implies that their limiting probability $\mu_F$ is
also supported there. Full support of $\mu_F$ therefore gives
$C=F\Gamma/\Gamma$, equivalently $F^\circ\Gamma=F\Gamma$.
Replacing $F$ by $F^\circ$ preserves both the orbit and
the containment $\Psi_g(\R)\subset F$.

Since $J_g\subset F$, convergence to a measure with full support
gives
\[
 \overline{\{\Psi_g(t)\Gamma:t\ge0\}}
 =\overline{J_g\Gamma/\Gamma}=F\Gamma/\Gamma.
\]
The arithmetic identification
\cite[Proposition 3.2]{Shah1991} now gives
$F=\mathbf H_g(\R)^\circ$ and shows that the radical of
$\mathbf H_g$ is unipotent.

Let $\mathbf H_g^\circ$ denote the algebraic identity component
of $\mathbf H_g$. By Lemma~\ref{app:components}, it is defined
over $\mathbb Q$. Since $J_g$ is connected, it lies in
$\mathbf H_g^\circ(\R)$. The minimality of $\mathbf H_g$
among $\mathbb Q$-algebraic subgroups containing $J_g$ therefore
gives $\mathbf H_g=\mathbf H_g^\circ$, proving that
$\mathbf H_g$ is algebraically connected.

Since the radical of $\mathbf H_g$ is unipotent, it equals
$R_u(\mathbf H_g)$. The quotient
$\mathbf H_g/R_u(\mathbf H_g)$ therefore has trivial radical
and is semisimple; see Proposition~\ref{app:levi-decomposition}.

Left translation by $g$ proves \eqref{eq:actualsupport}.
\end{proof}

\begin{lemma}
\label{lem:polynomial-functions}
Let $\mathbf H\le\SL_n$ be connected and defined over $\mathbb Q$,
and put $H=\mathbf H(\R)^\circ$. For an integer $m\ge0$, define
\[
 W_m=\{P|_H:
       P\in\R[x_{ij}:1\le i,j\le n],\ \deg P\le m\}.
\]
Thus $W_m$ consists of functions on $H$ represented by polynomials
of total degree at most $m$ in the matrix entries. Polynomials
agreeing on $H$ represent the same element of $W_m$; the degree
bound is imposed on an ambient representative.

This space has a rational structure given by the restrictions of
polynomials with rational coefficients. Right translation
$R(g)p(b)=p(bg)$ preserves $W_m$. In a rational basis, its matrix
entries are polynomials with rational coefficients of degree at
most $m$ in the entries of $g$. The integer span $M_m$ of the
restrictions to $H$ of matrix-entry monomials of degree at most
$m$ is a full lattice in $W_m$, preserved by
$R(H\cap\SL_n(\mathbb Z))$. Any two lattices obtained as integer
spans of rational bases of $W_m$ are commensurable, meaning that
their intersection has finite index in each.
\end{lemma}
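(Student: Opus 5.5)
The lemma collects several elementary facts, and I would prove them in the order stated. Let $V_m\subset\R[x_{ij}]$ be the finite-dimensional space of polynomials of total degree at most $m$, and let $\rho:V_m\to\mathcal F(H)$ be restriction to $H$. Then $W_m=\rho(V_m)$ is finite-dimensional.

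\textbf{Rational structure.} The key input is that $H$ is Zariski dense in $\mathbf H$: $\mathbf H$ is connected and defined over $\mathbb Q$, hence over $\R$, so $\mathbf H(\R)$ is Zariski dense and its identity component $H$ has finite index. Therefore $\ker\rho=V_m\cap I(\mathbf H)$. Since $\mathbf H$ is defined over $\mathbb Q$, the ideal $I(\mathbf H)$ is generated by rational polynomials. Hence $I(\mathbf H)\cap V_m$ is a $\mathbb Q$-rational subspace of $V_m$, because it is cut out by the $\mathbb Q$-linear conditions of vanishing on $\mathbf H$. It follows that $W_m\cong V_m/\ker\rho$ inherits the $\mathbb Q$-structure $\rho(V_m(\mathbb Q))$, and $W_m(\mathbb Q)\otimes\R\to W_m$ is an isomorphism.

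\textbf{Translation invariance.} For $g$ in $H$ and a monomial $P$ of degree $k\le m$, the function $b\mapsto P(bg)$ is again a polynomial of degree $k$ in the entries of $b$. This holds because $(bg)_{ij}=\sum_l b_{il}g_{lj}$ is linear in $b$ and linear in $g$. Hence $R(g)$ maps $V_m$ into $V_m$. On $V_m$, in the monomial basis, its matrix entries are polynomials with integer coefficients of degree at most $m$ in $g$. Right translation by $g\in H$ preserves $H$ and hence preserves $\ker\rho$, so it descends to $W_m$. To pass to a rational basis of $W_m$, choose it as the image of a rational subset of monomials forming a basis modulo $\ker\rho$; this is possible by the rationality of $\ker\rho$. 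Expressing the image of any other monomial in that basis uses fixed rational coefficients. Composing gives matrix entries that are rational-coefficient polynomials of degree at most $m$ in $g$.

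\textbf{Lattices.} Take $M_m=\rho(V_m(\Z))$. It spans $W_m$ and lies in the $\mathbb Q$-form, so it is a finitely generated subgroup of $W_m(\mathbb Q)$ that spans the $\mathbb Q$-vector space $W_m(\mathbb Q)$. Such a subgroup is a lattice of full rank in $W_m(\mathbb Q)$, hence discrete and full rank in $W_m$. For $\gamma\in H\cap\SL_n(\Z)$, the matrix of $R(\gamma)$ on $V_m$ in the monomial basis is integral by the computation above, so $R(\gamma)$ maps $V_m(\Z)$ to itself and hence preserves $M_m$. Finally, any two lattices spanned by rational bases are full $\Z$-lattices in the same $\mathbb Q$-space $W_m(\mathbb Q)$. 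Each is contained in $\frac1N$ times the other for some integer $N$, obtained by clearing denominators of the change-of-basis matrix. Their intersection contains $N$ times each of them, so it has finite index in both.

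\textbf{Main obstacle.} The only delicate point is the identification $\ker\rho=V_m\cap I(\mathbf H)$ together with its rationality. This requires Zariski density of $H=\mathbf H(\R)^\circ$ in the connected $\mathbb Q$-group $\mathbf H$, which I would cite from the appendix's basic facts (Section~\ref{app:basic}, Lemma~\ref{app:components}). It also requires that $I(\mathbf H)$ be generated over $\mathbb Q$, which is the meaning of "defined over $\mathbb Q$".
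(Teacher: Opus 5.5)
Your proposal is correct and follows essentially the same route as the paper: Zariski density of $H$ (Lemma~\ref{app:components}) identifies the restriction kernel with the rational subspace of degree-$\le m$ polynomials vanishing on $\mathbf H$, the substitution $b\mapsto bg$ gives the translation matrices, a common-denominator argument gives discreteness of $M_m$ while integral monomial expansions handle $\gamma\in H\cap\SL_n(\mathbb Z)$, and denominator clearing gives commensurability. The only point to tighten is that integrality yields $R(\gamma)M_m\subseteq M_m$; applying the same argument to $\gamma^{-1}$, as the paper does, gives equality.
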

\begin{proof}
By Lemma~\ref{app:components}, $H$ is Zariski dense in $\mathbf H$.
Consequently, the kernel of restriction to $H$ consists exactly
of the ambient polynomials of degree at most $m$ that vanish on
$\mathbf H$. Since $\mathbf H$ is defined over $\mathbb Q$, these
relations are the real span of relations with rational coefficients.
The quotient by this kernel therefore gives $W_m$ its rational
structure.

Substitution $b\mapsto bg$ preserves degree in the entries of $b$.
Expanding a monomial of degree at most $m$ gives coefficients
that are polynomials of degree at most $m$ in the entries of $g$.
Expressing the resulting monomials in a fixed rational basis of
$W_m$ uses rational linear combinations, which proves the assertion
about the translation matrices.

The monomial functions defining $M_m$ are a finite rational spanning
set. In a rational basis, their coordinates have a common denominator,
so their integer span is discrete. Since they span $W_m$, this is a
full lattice. If $g\in H\cap\SL_n(\mathbb Z)$, both $g$ and $g^{-1}$
have integral entries. Substitution by either matrix expresses each
monomial as an integer combination of monomials of degree at most
$m$, so $R(g)M_m=M_m$.

Finally, let $L$ and $L'$ be the integer spans of two rational bases.
The change-of-basis matrix and its inverse have rational entries.
Clearing their denominators gives integers $a,a'\ge1$ such that
$aL'\subset L$ and $a'L\subset L'$. Thus $L\cap L'$ contains
$aL'$ and $a'L$, which have finite index in $L'$ and $L$,
respectively. This proves commensurability.
\end{proof}

For $\mathbf H\le\SL_3$ and $m\ge1$, the normalized test space
of Definition~\ref{def:polynomialtestspaces} is
\[
 \mathscr P_{\mathbf H}^{(m)}
 =\{p\in W_m:p(\groupid)=0\},\qquad
 W_m=\R1\oplus\mathscr P_{\mathbf H}^{(m)}.
\]
Right translation need not preserve $\mathscr P_{\mathbf H}^{(m)}$.
In particular, the coefficient functions for degree cutoff $2$ in
Lemma~\ref{lem:intrinsiccoefficients} are taken in the full
space $W_2=\R1\oplus\mathscr P_{\mathbf H}$.
If a function $p$ on $H$ is
invariant under every right translation, evaluation at $\groupid$
gives $p(b)=p(\groupid)$ for every $b\in H$. Hence a normalized
invariant function is zero, as used in
Corollary~\ref{cor:intrinsicfullgeneration}.

Conjugation by an integral matrix is linear in the matrix entries
and fixes $\groupid$. It therefore preserves the degree bound,
rational coefficients, and normalization, giving the rational
isomorphism of test spaces used in the following lemma.

\begin{lemma}
\label{lem:intrinsicconjugacy}
The existence of finite intrinsic type for $(\Phi,x)$ and its
exponent $\tau$ do not depend on the lift $g$ of $x$ or the rational
basis used for $\mathscr P_{\mathbf H_g}$. The constant $c_0$ may change.
\end{lemma}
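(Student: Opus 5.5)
Two separate invariances have to be checked: a change of rational basis for a fixed group $\mathbf H_g$, and a change of lift $g$. In both cases I will show that the data is moved by a linear map $A$ with the following two properties. First, $A$ carries the integral lattice of one description into a lattice commensurable with the integral lattice of the other. Second, $A$ distorts norms and distances by at most bounded factors. Under these two conditions, inequality \eqref{eq:intrinsicnormalizedcondition} transfers with the same $\tau$ and a modified $c_0$.

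\emph{Change of basis.} Fix $\mathbf H=\mathbf H_g$ and two rational bases of $\mathscr P_{\mathbf H}$, with lattices $L,L'$ and inner products $\langle\cdot,\cdot\rangle,\langle\cdot,\cdot\rangle'$. The identity map is a linear isomorphism of a finite-dimensional space, so $\norm{p}'\asymp\norm p$, and distances to the fixed subspace $\mathscr K_{J_g,\groupid}$ are comparable with the same constants. By Lemma~\ref{lem:polynomial-functions}, there is an integer $a\ge1$ with $aL'\subset L$. Take $0\ne p'\in L'$. Then $ap'\in L$, and since $\mathscr K$ is a linear subspace,
\[
 \operatorname{dist}(ap',\mathscr K)=a\operatorname{dist}(p',\mathscr K).
\]
Applying the type condition to $ap'$ gives
\[
 \operatorname{dist}'(p',\mathscr K)\gg a^{-1}c_0\norm{ap'}^{-\tau}\gg c_0'\norm{p'}'^{-\tau}.
\]
The constant $c_0'$ can be shrunk so that $c_0'\le1$. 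Hence the exponent $\tau$ is unchanged and only $c_0$ changes.

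\emph{Change of lift.} Any other lift has the form $g'=g\gamma$ with $\gamma\in\Gamma$. Then
\[
 \Psi_{g'}=\gamma^{-1}\Psi_g\gamma,\qquad J_{g'}=\gamma^{-1}J_g\gamma .
\]
Conjugation by $\gamma$ is a $\mathbb Q$-automorphism of $\SL_3$, so it maps $\mathbb Q$-subgroups containing $J_g$ bijectively onto $\mathbb Q$-subgroups containing $J_{g'}$. Minimality therefore gives $\mathbf H_{g'}=\gamma^{-1}\mathbf H_g\gamma$ and $H_{g'}=\gamma^{-1}H_g\gamma$.

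Define $A:\mathscr P_{\mathbf H_g}\to\mathscr P_{\mathbf H_{g'}}$ by $(Ap)(b)=p(\gamma b\gamma^{-1})$. If $P$ is an ambient representative of $p$, then $P(\gamma x\gamma^{-1})$ is a representative of $Ap$. Since $\gamma$ and $\gamma^{-1}$ have integer entries, this representative has the same total degree and rational coefficients, and it still vanishes at $\groupid$. So $A$ is a rational linear isomorphism, with inverse given by conjugation by $\gamma^{-1}$.

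Next, $A$ maps the right-$J_g$-invariant subspace onto the right-$J_{g'}$-invariant subspace. Indeed, if $j'=\gamma^{-1}j\gamma$ with $j\in J_g$, then
\[
 (Ap)(bj')=p(\gamma b\gamma^{-1}j)=p(\gamma b\gamma^{-1})=(Ap)(b).
\]
The converse inclusion follows by the same computation with $\gamma^{-1}$.

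Because $A$ is rational, $A(\mathscr L_{\mathbf H_g})$ is the integer span of a rational basis of $\mathscr P_{\mathbf H_{g'}}$. It is therefore commensurable with $\mathscr L_{\mathbf H_{g'}}$, again by Lemma~\ref{lem:polynomial-functions}. Since $A$ is a linear isomorphism mapping one kernel onto the other, there is $C\ge1$ with
\[
 C^{-1}\operatorname{dist}(p,\mathscr K_{J_g,\groupid})\le\operatorname{dist}(Ap,\mathscr K_{J_{g'},\groupid})\le C\operatorname{dist}(p,\mathscr K_{J_g,\groupid}),
\]
and similarly for norms. Combining this with the commensurability argument from the basis step transfers the type condition from $g$ to $g'$ with the same $\tau$. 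The argument is symmetric in $g$ and $g'$, so finite type holds for one lift exactly when it holds for the other.

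The only point requiring care is the identification $\mathbf H_{g'}=\gamma^{-1}\mathbf H_g\gamma$. It depends on conjugation by $\gamma\in\SL_3(\mathbb Z)$ being defined over $\mathbb Q$, so that it preserves $\mathbb Q$-algebraic subgroups and hence the minimality. Everything else is finite-dimensional linear algebra with constants depending on $\gamma$, which is why $c_0$ may change while $\tau$ does not.
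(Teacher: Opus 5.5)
Your proof is correct and is essentially the paper's argument. Minimality over $\mathbb Q$ gives $\mathbf H_{g'}=\gamma^{-1}\mathbf H_g\gamma$, the conjugation pullback is a rational isomorphism of the normalized test spaces carrying one invariant subspace onto the other, and clearing denominators plus norm comparison transfers the inequality with the same $\tau$. The only differences are cosmetic: you treat the basis change first, and you define the pullback in the direction $\mathscr P_{\mathbf H_g}\to\mathscr P_{\mathbf H_{g'}}$, which is the inverse of the paper's $T_\gamma$.
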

\begin{proof}
Let $g'=g\gamma$ be another lift of $x$, with $\gamma\in\Gamma$.
Then $\Psi_{g'}(t)=\gamma^{-1}\Psi_g(t)\gamma$, so
\[
 J_{g'}=\gamma^{-1}J_g\gamma,\qquad
 \mathbf H_{g'}=\gamma^{-1}\mathbf H_g\gamma.
\]
The second equality follows from minimality over $\mathbb Q$,
since conjugation by $\gamma$ is defined over $\mathbb Q$.

Define the pullback
\[
 T_\gamma:\mathscr P_{\mathbf H_{g'}}
          \longrightarrow\mathscr P_{\mathbf H_g},
 \qquad
 (T_\gamma p)(b)=p(\gamma^{-1}b\gamma).
\]
Conjugation is linear in the matrix entries and fixes $\groupid$,
so this map preserves the degree cutoff and the normalization
$p(\groupid)=0$. It is a rational linear isomorphism, with inverse
given by pullback under $b\mapsto\gamma b\gamma^{-1}$.
Conjugation carries right multiplication by $j\in J_g$ to
right multiplication by $\gamma^{-1}j\gamma\in J_{g'}$.
Thus the pullback and its inverse give
$T_\gamma\mathscr K_{J_{g'},\groupid}
       =\mathscr K_{J_g,\groupid}.$

Let $\mathscr L$ and $\mathscr L'$ be the lattices chosen in
$\mathscr P_{\mathbf H_g}$ and $\mathscr P_{\mathbf H_{g'}}$,
respectively. Write $\norm{\cdot}_g$ and $\norm{\cdot}_{g'}$
for their chosen Euclidean norms, and
$\operatorname{dist}_g$ and $\operatorname{dist}_{g'}$
for the corresponding distances.
The matrix of $T_\gamma$ in the chosen rational bases has
rational entries. Clearing its denominators gives an integer
$m\ge1$ such that
$mT_\gamma\mathscr L'\subset\mathscr L.$
This is the denominator-clearing argument at the end of the proof
of Lemma~\ref{lem:polynomial-functions}.
Choose $C_\gamma\ge1$ such that
$\norm{T_\gamma p}_g\le C_\gamma\norm{p}_{g'}$ for all $p$.

Suppose that $(J_g,\groupid)$ has type $(c_0,\tau)$.
For $0\ne p\in\mathscr L'$, the vector $mT_\gamma p$ is a
nonzero element of $\mathscr L$. Consequently,
\[
\begin{aligned}
 \operatorname{dist}_{g'}(p,\mathscr K_{J_{g'},\groupid})
 \ge
 \frac{\operatorname{dist}_g
       (mT_\gamma p,\mathscr K_{J_g,\groupid})}
      {mC_\gamma}\ge
 \frac{c_0}{mC_\gamma}\norm{mT_\gamma p}_g^{-\tau}\ge
 c_0(mC_\gamma)^{-(\tau+1)}\norm{p}_{g'}^{-\tau}.
\end{aligned}
\]
To see the first inequality, for every $k\in\mathscr K_{J_{g'},\groupid}$, the norm bound for
$T_\gamma$ gives $\norm{mT_\gamma p-mT_\gamma k}_g
 \le mC_\gamma\norm{p-k}_{g'}.$
Since $T_\gamma\mathscr K_{J_{g'},\groupid}
=\mathscr K_{J_g,\groupid}$ and these are linear subspaces,
$mT_\gamma\mathscr K_{J_{g'},\groupid}
 =\mathscr K_{J_g,\groupid}.$
Taking infima over $k\in\mathscr K_{J_{g'},\groupid}$ therefore yields $\operatorname{dist}_g
   (mT_\gamma p,\mathscr K_{J_g,\groupid})
 \le mC_\gamma
   \operatorname{dist}_{g'}
   (p,\mathscr K_{J_{g'},\groupid}).$

Hence $(J_{g'},\groupid)$ has the same
admissible exponent $\tau$, with a possibly smaller constant.
Applying the same calculation to $T_\gamma^{-1}$ proves
the converse.

To compare two rational bases of $\mathscr P_{\mathbf H_g}$,
write $\mathscr L_1,\mathscr L_2$ for their integer spans and
$\norm{\cdot}_1,\norm{\cdot}_2$ for the corresponding Euclidean
norms. The subspace
$\mathscr K=\mathscr K_{J_g,\groupid}$ is the same for both choices.
Since the change-of-basis matrix has rational entries, there is
an integer $m\ge1$ such that $m\mathscr L_2\subset\mathscr L_1$.
Choose $C\ge1$ such that
$\norm v_1\le C\norm v_2$ for every $v\in\mathscr P_{\mathbf H_g}$.

Suppose the intrinsic condition holds for the first basis:
\[
 \operatorname{dist}_1(q,\mathscr K)
 \ge c_0\norm q_1^{-\tau}
 \qquad(0\ne q\in\mathscr L_1).
\]
For $0\ne p\in\mathscr L_2$, we have
$0\ne mp\in\mathscr L_1$. Norm comparison and the identity
$m\mathscr K=\mathscr K$ therefore give
\[
 \operatorname{dist}_2(p,\mathscr K)
 \ge \frac{\operatorname{dist}_1(mp,\mathscr K)}{mC}
 \ge \frac{c_0\norm{mp}_1^{-\tau}}{mC}
 \ge c_0(mC)^{-(\tau+1)}\norm p_2^{-\tau}.
\]
Thus the intrinsic condition holds for the second basis with
the same exponent $\tau$. The inverse change-of-basis matrix
also has rational entries, so interchanging the two bases proves
the converse. Hence the admissible exponent $\tau$ is independent
of the rational basis, although the constant $c_0$ may change.
\end{proof}

\subsection{From intrinsic type to coefficient bounds}
\label{subsec:intrinsicproof}

For use at a representative $h\in H$, we extend the notation of
Definition~\ref{def:intrinsicdiophantine}. Given a connected
rational group $\mathbf H$, put $H=\mathbf H(\R)^\circ$.
For a closed subgroup $J\le H$ and $h\in H$, define
\begin{equation}\label{eq:intrinsickernel}
 \mathscr K_{J,h}
 =\{p\in\mathscr P_{\mathbf H}:
     p(bh^{-1}jh)=p(b)\ \text{for all }b\in H,\ j\in J\}.
\end{equation}
We say that $(J,h)$ has type $(c_0,\tau)$ if $0<c_0\le1$,
$\tau\ge0$, and
\begin{equation}\label{eq:intrinsicdiophantine}
 \operatorname{dist}(p,\mathscr K_{J,h})
       \ge c_0\norm p^{-\tau}
       \qquad(0\ne p\in\mathscr L_{\mathbf H}).
\end{equation}
This agrees with Definition~\ref{def:intrinsicdiophantine} when
$h=\groupid$. For a general degree cutoff $m\ge1$, the same definition
in $\mathscr P_{\mathbf H}^{(m)}$ gives
$\mathscr K_{J,h}^{(m)}$, consistently with
Definition~\ref{def:polynomialtestspaces} when $h=\groupid$.

We express the intrinsic condition using a finite coefficient map.
Its kernel is the invariant subspace in \eqref{eq:intrinsickernel},
so a lower bound for distance to the kernel gives lower bounds for
polynomial coefficients. The second lemma transfers these bounds to
representations without fixed vectors.
Both lemmas apply to any connected rational $\mathbf H\le\SL_3$.
Throughout this subsection, $H=\mathbf H(\R)^\circ$,
$\Psi\in\mathbf H(\R[t])$ is nonconstant and normalized with
$\deg\Psi\le d$, $h\in H$, and
$J=\overline{\langle\Psi(\R)\rangle}$.
For a representation $\rho$ on $V$, write
$V^H=\{v\in V:\rho(b)v=v\text{ for all }b\in H\}$.

\begin{lemma}[Coefficient criterion for intrinsic type]
\label{lem:intrinsiccoefficients}
Put $\mathscr W_{\mathbf H}=\R1\oplus\mathscr P_{\mathbf H}$, with
the orthonormal basis $1$ followed by the chosen basis of
$\mathscr P_{\mathbf H}$. Right translation defines a representation
\[
 (R(g)p)(b)=p(bg)\qquad(g,b\in H)
\]
on $\mathscr W_{\mathbf H}$. Its matrix entries are rational
polynomial functions of the entries of $g$, of degree at most $2$.
For fixed $\Psi$ and $h$ as above, let
$B_\nu:\mathscr P_{\mathbf H}\to\mathscr W_{\mathbf H}$,
$1\le\nu\le2d$, be the uniquely determined linear maps in the
polynomial expansion
\[
 R(\Psi(t)h)p-R(h)p
 =\sum_{\nu=1}^{2d}t^\nu B_\nu(p)
 \qquad(p\in\mathscr P_{\mathbf H}).
\]
Equivalently, for every $b\in H$,
 $p(b\Psi(t)h)-p(bh)
 =\sum_{\nu=1}^{2d}t^\nu\bigl(B_\nu(p)\bigr)(b).$
Thus $B_\nu(p)$ is the function on $H$ giving the coefficient of
$t^\nu$ in this pointwise expansion (the dependence of $B_\nu$
on $\Psi$ and $h$ is suppressed in the notation). Define the
finite coefficient map by
\begin{equation}\label{eq:intrinsiccoefficientmap}
 \mathscr C_{\Psi,h}:\mathscr P_{\mathbf H}
       \longrightarrow\mathscr W_{\mathbf H}^{2d},
 \qquad
 \mathscr C_{\Psi,h}(p)=(B_1(p),\ldots,B_{2d}(p)).
\end{equation}
Then $\ker\mathscr C_{\Psi,h}=\mathscr K_{J,h}$. If
\eqref{eq:intrinsicdiophantine} holds, this map is nonzero and
\begin{equation}\label{eq:intrinsiccoefficientbound}
 \norm{\mathscr C_{\Psi,h}(p)}\ge c_0\sigma\norm p^{-\tau}
       \qquad(0\ne p\in\mathscr L_{\mathbf H}),
\end{equation}
where $\sigma>0$ is the least positive singular value of
$\mathscr C_{\Psi,h}$, and its target is equipped with the orthogonal
direct-sum norm. Conversely,
a coefficient bound of this form implies the distance condition
with the same exponent and a changed constant.
If $\Psi,h$ have entries in a subfield $F\subset\R$, the matrices
representing $B_\nu$ in the chosen rational bases of
$\mathscr P_{\mathbf H}$ and $\mathscr W_{\mathbf H}$ have entries
in $F$.
\end{lemma}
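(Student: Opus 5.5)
The plan is to check the routine algebraic assertions first, then identify the kernel, and finally derive the metric bound from linear algebra.

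\emph{Algebraic assertions.} By Lemma~\ref{lem:polynomial-functions} with $m=2$, right translation preserves $W_2=\mathscr W_{\mathbf H}$. Its matrix entries in a rational basis are rational polynomials of degree at most $2$ in the entries of $g$. Substituting $g=\Psi(t)h$ and using $\deg\Psi\le d$, each entry of $R(\Psi(t)h)$ becomes a polynomial in $t$ of degree at most $2d$. Its constant term is the corresponding entry of $R(\Psi(0)h)=R(h)$, because $\Psi(0)=\groupid$. Therefore $R(\Psi(t)h)p-R(h)p$ is a polynomial in $t$ with vanishing constant term and values in the finite-dimensional space $\mathscr W_{\mathbf H}$. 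Its coefficients are unique, and they depend linearly on $p$; these coefficients define the maps $B_\nu$. If $\Psi$ and $h$ have entries in $F$, then the matrix of $R(\Psi(t)h)$ has entries in $F[t]$, since the coefficients of $R$ are rational. Hence the matrices of the $B_\nu$ have entries in $F$.

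\emph{Kernel identity.} Suppose $\mathscr C_{\Psi,h}(p)=0$. Then $p(b\Psi(t)h)=p(bh)$ for all $b\in H$ and $t\in\R$. Replace $b$ by $bh^{-1}$. This gives $p(b\,h^{-1}\Psi(t)h)=p(b)$ for all $b$, so $p$ is invariant under right translation by $h^{-1}\Psi(t)h$ for every $t$. The set of $j\in H$ with $R(h^{-1}jh)p=p$ is a closed subgroup, because $R$ is a continuous representation. It contains $\Psi(\R)$, so it contains $J$, and therefore $p\in\mathscr K_{J,h}$. Conversely, if $p\in\mathscr K_{J,h}$, then $\Psi(t)\in J$ for every $t$, so the expansion vanishes identically and every $B_\nu(p)=0$.

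\emph{Metric bound.} Write $\mathscr C=\mathscr C_{\Psi,h}$. Let $\pi$ denote orthogonal projection onto $\mathscr K^\perp$, where $\mathscr K=\ker\mathscr C$. Then $\operatorname{dist}(p,\mathscr K)=\norm{\pi p}$. The restriction of $\mathscr C$ to $\mathscr K^\perp$ is injective, so
\[
 \sigma\norm{\pi p}\le\norm{\mathscr C p}\le\norm{\mathscr C}\,\norm{\pi p},
\]
where $\sigma$ is the least positive singular value. Next, $\mathscr C\ne0$. Otherwise $\mathscr K$ is the whole space $\mathscr P_{\mathbf H}$, and the distance condition fails for any nonzero lattice vector. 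Such a vector exists because $\mathscr P_{\mathbf H}\neq0$, as $\Psi$ is nonconstant in $H$ and the coordinate functions $x_{ij}-\delta_{ij}$ restrict to a nonzero element of $\mathscr P_{\mathbf H}$. With $\mathscr C\neq0$, the value $\sigma>0$ is well defined. Combining the left inequality with \eqref{eq:intrinsicdiophantine} gives \eqref{eq:intrinsiccoefficientbound}. The right inequality gives the converse: the distance condition holds with constant $c/\norm{\mathscr C}$, capped at $1$ if necessary.

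\emph{Main obstacle.} The only delicate step is the closure step in the kernel identity. One has to pass from invariance under the curve points to invariance under the closed generated group $J$. I would handle it by observing that the stabilizer of $p$ under the continuous finite-dimensional representation $R$ is a closed subgroup of $H$. Conjugating by $h$ preserves closedness, so that subgroup contains $h^{-1}Jh$, which is the invariance required in \eqref{eq:intrinsickernel}.
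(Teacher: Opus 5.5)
Your proof is correct and follows the paper's argument. The algebraic assertions come from Lemma~\ref{lem:polynomial-functions} with $m=2$ and substitution of $\Psi(t)h$, the kernel identity from the representation law followed by passage to the closed generated group $J$, and the two-sided bound from $\sigma\operatorname{dist}(p,\mathscr K_{J,h})\le\norm{\mathscr C_{\Psi,h}(p)}\le\norm{\mathscr C_{\Psi,h}}\operatorname{dist}(p,\mathscr K_{J,h})$. The only differences are presentational: you phrase the closure step through the closed stabilizer of $p$ under $j\mapsto R(h^{-1}jh)$, where the paper says $R(h)p$ is fixed by $R(J)$. You also spell out the closedness and the nonvanishing of $\mathscr P_{\mathbf H}$, which the paper states only briefly.
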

\begin{proof}
Apply Lemma~\ref{lem:polynomial-functions} with $m=2$ and
$W_2=\mathscr W_{\mathbf H}$. It gives the asserted rational
polynomial entries of $R(g)$ and their degree bound. Direct
substitution gives $R(g_1)R(g_2)=R(g_1g_2)$.
Substituting $\Psi(t)h$ into the entries of $R(g)$ gives a
polynomial family of operators of degree at most $2d$ in $t$,
with constant term $R(h)$ because $\Psi(0)=\groupid$. Restricting
its positive-degree coefficient operators to $\mathscr P_{\mathbf H}$
defines the maps $B_\nu$ and hence \eqref{eq:intrinsiccoefficientmap}.
Evaluation at $b\in H$ gives the stated pointwise expansion.
If $\Psi$ and $h$ have entries in $F$, the matrices representing
these coefficient operators also have entries in $F$.

The equality $\mathscr C_{\Psi,h}(p)=0$ holds precisely when
$R(\Psi(t)h)p=R(h)p$ for every $t$. By the representation law,
this is equivalent to $R(h)p$ being fixed by every $R(\Psi(t))$,
hence by $R(J)$. Explicitly, this condition says
$p(bjh)=p(bh)$ for every $b\in H$ and $j\in J$.
Replacing $b$ by $bh^{-1}$ gives
$p(bh^{-1}jh)=p(b)$, which is exactly
\eqref{eq:intrinsickernel}.
Singular-value decomposition now gives
\[
 \sigma\operatorname{dist}(p,\mathscr K_{J,h})
 \le\norm{\mathscr C_{\Psi,h}(p)}
 \le\norm{\mathscr C_{\Psi,h}}\operatorname{dist}(p,\mathscr K_{J,h}).
\]
The condition excludes a zero map, since the nonconstant curve
ensures $\mathscr P_{\mathbf H}\ne0$. This proves both conversions.
\end{proof}

The constant terms of the coefficient functions $B_\nu(p)$ in
\eqref{eq:intrinsiccoefficientmap} are needed for the
horizontal-character tests. For exact coefficients, the lemma
determines $\mathscr K_{J,h}$ as the kernel of a finite matrix,
without first determining $J$. It does not give a procedure for
deciding exact zero relations from numerical approximations.
Rational changes of basis give equivalent norms and commensurable
lattices; clearing
a fixed denominator shows that the exponent $\tau$ is unchanged.
Likewise, replacing $h$ by $h\gamma$, $\gamma\in H\cap\Gamma$,
maps the old invariant subspace to the new one by the rational
invertible map $p\mapsto p(\,\cdot\,\gamma^{-1})-p(\gamma^{-1})$.
Thus existence of finite type and its exponent depend on the
starting point $h\Gamma$, independently of its chosen lift.

\begin{lemma}[Transfer to representation coefficient bounds]
\label{lem:intrinsictransfer}
Let $V_{\mathbb Q}$ be a finite-dimensional $\mathbb Q$-vector
space, put $V=V_{\mathbb Q}\otimes_{\mathbb Q}\R$, and fix a
$\mathbb Q$-basis $v_1,\ldots,v_r$ of $V_{\mathbb Q}$.
Let $\rho:H\to\mathrm{GL}(V)$ be a representation such that,
in this basis, each matrix entry of $\rho$ is the restriction
to $H$ of a polynomial with rational coefficients and total
degree at most two in the ambient matrix entries.

Equip $V$ with the inner product for which $v_1,\ldots,v_r$
is orthonormal, and define $V_\Z=\bigoplus_{i=1}^r\Z v_i.$ Assume that $\rho$ has no nonzero $H$-invariant vectors. Then the matrix entries of $\rho(\Psi(t)h)$ are
polynomials in $t$ of degree at most $2d$. Specifically, there exist uniquely determined coefficient operators
$A_1,\ldots,A_{2d}\in\End(V)$  such that we have the expansion
\[
 \rho(\Psi(t)h)=\rho(h)+\sum_{\nu=1}^{2d}t^\nu A_\nu,
\]
whose constant term is $\rho(h)$ because $\Psi(0)=\groupid$.
The operators $A_\nu$ depend on $\rho$, $\Psi$, and $h$.
Under \eqref{eq:intrinsicdiophantine}, there is $c_\rho>0$ such that
\begin{equation}\label{eq:intrinsicrepresentationbound}
 \left(\sum_{\nu=1}^{2d}\norm{A_\nu w}^2\right)^{1/2}
       \ge c_\rho\norm w^{-\tau}
       \qquad(0\ne w\in V_\Z).
\end{equation}
The constant is obtained from $c_0,\tau$, the coefficient map in
Lemma~\ref{lem:intrinsiccoefficients}, and fixed linear maps.
\end{lemma}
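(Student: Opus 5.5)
The plan is to realize each integer vector $w\in V_\Z$ as a finite family of integral polynomial tests, namely its matrix coefficients, and then apply Lemma~\ref{lem:intrinsiccoefficients} to those tests. The polynomial expansion is immediate. The matrix entries of $\rho$ are polynomials of degree at most two in the ambient entries, and $\Psi(t)h$ has entries of degree at most $d$ in $t$. So $\rho(\Psi(t)h)$ is a polynomial in $t$ of degree at most $2d$, and its constant term is $\rho(\Psi(0)h)=\rho(h)$. Uniqueness of the $A_\nu$ is uniqueness of polynomial coefficients.

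For the bound, let $v_1^*,\ldots,v_r^*$ be the dual basis. For $w\in V$ and $1\le i\le r$ define the function $p_{w,i}$ on $H$ by
\[
 p_{w,i}(b)=v_i^*\bigl(\rho(b)w\bigr)-v_i^*(w)
 =\sum_{j}w_j\bigl(\rho_{ij}(b)-\delta_{ij}\bigr).
\]
Since each $\rho_{ij}$ is a rational polynomial of degree at most two and $\rho(\groupid)=I$, we get $p_{w,i}\in\mathscr P_{\mathbf H}$. The map $w\mapsto p_{w,i}$ is linear, with a rational matrix in the chosen bases. Fix a common denominator $D\ge1$. Then for $w\in V_\Z$ the tests $q_{w,i}=Dp_{w,i}$ lie in $\mathscr L_{\mathbf H}$ and satisfy $\norm{q_{w,i}}\le C\norm w$. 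Next I use the representation law. For every $b$,
\[
 p_{w,i}(b\Psi(t)h)-p_{w,i}(bh)
 =\sum_\nu t^\nu\, v_i^*\bigl(\rho(b)A_\nu w\bigr).
\]
Hence $B_\nu(p_{w,i})=\Lambda_i(A_\nu w)$, where $\Lambda_i:V\to\mathscr W_{\mathbf H}$ is the fixed linear map $u\mapsto\bigl(b\mapsto v_i^*(\rho(b)u)\bigr)$. It follows that
\[
 \sum_i\norm{\mathscr C_{\Psi,h}(q_{w,i})}
 \le DC'\Bigl(\sum_\nu\norm{A_\nu w}^2\Bigr)^{1/2}.
\]

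To finish, I need at least one $q_{w,i}$ to be nonzero when $w\ne0$. If all $p_{w,i}$ vanished, then $v_i^*(\rho(b)w)=v_i^*(w)$ for all $i$ and $b$, so $\rho(b)w=w$ for every $b\in H$. This contradicts the hypothesis that $\rho$ has no nonzero invariant vectors. Take such an $i$. By \eqref{eq:intrinsiccoefficientbound} from Lemma~\ref{lem:intrinsiccoefficients},
\[
 \norm{\mathscr C_{\Psi,h}(q_{w,i})}\ge c_0\sigma\norm{q_{w,i}}^{-\tau}\ge c_0\sigma (C\norm w)^{-\tau}.
\]
Combining this with the previous display gives \eqref{eq:intrinsicrepresentationbound} with $c_\rho=c_0\sigma C^{-\tau}/(DC')$. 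That constant depends only on $c_0,\tau$, the coefficient map, and the fixed maps $w\mapsto p_{w,i}$ and $\Lambda_i$.

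The only step needing care is the nonvanishing of some $q_{w,i}$, since this is where the absence of invariant vectors enters. Without it, $w$ could lie in the kernel of every coefficient map and the lower bound would fail. Everything else is bookkeeping of denominators and operator norms, which I would treat as in the proof of Lemma~\ref{lem:intrinsicconjugacy}.
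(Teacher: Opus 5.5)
Your proposal is correct and follows the paper's proof essentially step by step. Both arguments use the matrix coefficients $b\mapsto v_i^*(\rho(b)w)-v_i^*(w)$ as integral tests after clearing a fixed denominator, identify $B_\nu$ of each test with $b\mapsto v_i^*(\rho(b)A_\nu w)$ via the representation law, obtain a nonzero test from the absence of $H$-fixed vectors, and conclude with the coefficient bound of Lemma~\ref{lem:intrinsiccoefficients}.
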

\begin{proof}
Let $\ell_i$ be the dual coordinate functionals and put
\[
 p_{i,w}(b)=\ell_i(\rho(b)w)-\ell_i(w).
\]
By the rational structure in Lemma~\ref{lem:polynomial-functions},
a fixed integer $Q\ge1$ makes $Qp_{i,w}\in\mathscr L_{\mathbf H}$
for every integral $w$, and $\norm{p_{i,w}}\le C\norm w$.
For each $w\ne0$ at least one $p_{i,w}$ is nonzero, since otherwise
$w$ would be $H$-fixed (Section~\ref{app:lie}). Choose such an
index $i$. The homomorphism property of $\rho$ gives
\[
 p_{i,w}(b\Psi(t)h)-p_{i,w}(bh)
 =\ell_i\bigl(\rho(b)(\rho(\Psi(t)h)-\rho(h))w\bigr).
\]
Comparing the coefficient functions in Lemma~\ref{lem:intrinsiccoefficients}
with the operator expansion above yields
\[
 \bigl(B_\nu(p_{i,w})\bigr)(b)
 =\ell_i(\rho(b)A_\nu w)
 \qquad(b\in H,\ 1\le\nu\le2d).
\]
The norm of this function in $\mathscr W_{\mathbf H}$ is at most
a fixed constant times $\norm{A_\nu w}$. Applying
\eqref{eq:intrinsiccoefficientbound} to $Qp_{i,w}$ therefore gives
\[
 C Q\left(\sum_\nu\norm{A_\nu w}^2\right)^{1/2}
 \ge c_0\sigma\norm{Qp_{i,w}}^{-\tau}
 \ge c_0\sigma(QC\norm w)^{-\tau},
\]
which proves the assertion.
\end{proof}

\begin{remark}
\label{rem:coefficientlift}
In the notation of Lemma~\ref{lem:intrinsictransfer}, let
$\gamma\in H\cap\Gamma$ satisfy $\rho(\gamma)V_\Z=V_\Z$.
The matrices $h$ and $h\gamma$ represent the same point of
$H\Gamma/\Gamma$. The coefficient operators for the latter
representative are given by
\[
 \rho(\Psi(t)h\gamma)
 =\rho(h\gamma)+\sum_{\nu=1}^{2d}t^\nu A'_\nu,
 \qquad A'_\nu=A_\nu\rho(\gamma).
\]
For $0\ne w\in V_\Z$, the vector $\rho(\gamma)w$ is again a
nonzero element of $V_\Z$. Applying the original coefficient bound
to this vector and using
$\norm{\rho(\gamma)w}\le\norm{\rho(\gamma)}_{\mathrm{op}}\norm w$
gives the same exponent $\tau$, with constant
$c_\rho\max\{1,\norm{\rho(\gamma)}_{\mathrm{op}}\}^{-\tau}$.
For a finite nonempty family $\mathcal R$ satisfying this lattice condition
and sharing a separation constant $c$, one may therefore use
\[
 c\min_{\rho\in\mathcal R}
       \max\{1,\norm{\rho(\gamma)}_{\mathrm{op}}\}^{-\tau}.
\]
The calculation with $\gamma^{-1}$ gives the converse, so the
admissible exponent is independent of the representative under
these lattice-preservation hypotheses.
\end{remark}

The individual support estimates below are stated using coefficient
bounds. Lemmas~\ref{lem:intrinsiccoefficients} and
\ref{lem:intrinsictransfer} show that these bounds follow from the
intrinsic hypothesis of Theorem~\ref{thm:intrinsic}.
The representation degrees are checked where each representation is
introduced. The degree bounds for the ambient representations are
verified in Lemma~\ref{lem:degreecutoff}.

\subsection{Derivative norms}

The chain rule gives $D_ZF_b=(D_{\Ad(b)Z}F)_b$,
so each fixed-order derivative norm of $F_b$ is bounded by
a constant depending on $b$ and the order times the
corresponding norm of $F$.

For a closed orbit $Y$ and directions
$W=(W_1,\ldots,W_r)$, define
\[
 \mathcal N_{W,s,Y}(f)
 =\sum_{j=0}^s\ \sum_{i_1,\ldots,i_j=1}^r
       \sup_{y\in Y}|D_{W_{i_1}}\cdots D_{W_{i_j}}f(y)|.
\]
The term $j=0$ is $\sup_Y|f|$.
If $W_i=\sum_a w_{ia}Z_a$ and $b_W=1+\sum_{i,a}|w_{ia}|$, then
\begin{equation}\label{eq:intrinsicpointwise}
 \mathcal N_{W,s,Y}(f)\le(s+1)b_W^s\norm f_{C_G^s}.
\end{equation}
This follows by expanding each ordered derivative. Supremum norms
make restriction to a proper orbit explicit.

The space $C_b^s$ consists of functions whose derivatives of orders
at most $s$, in the indicated Lie algebra directions, are continuous
and bounded; $C_c^\infty$ denotes smooth compactly supported functions.

Let $\SY$ and $\SR$ be the fixed Sobolev norms in
Theorem~\ref{thm:yang} and Theorem~\ref{prop:regularinput}, and put
\begin{equation}\label{eq:allnorm}
 \Qall(f)=\SY(f)+\SR(f)+\norm f_{C_G^1},\qquad
 \Q(f)=\SY(f)+\norm f_{C_G^1}.
\end{equation}
The norms $\SY,\SR$ and their derivative orders are the ones in
Theorem~\ref{thm:yang} and Theorem~\ref{prop:regularinput}. They do
not vary with $\Phi,g,c,\tau,T$. In each norm, a finite
collection of Lie derivatives is measured in weighted $L^2$ or
supremum norms, with weights bounded on compact sets. Thus there is
a fixed integer $s_*$ such that, for every compact $\Omega\subset X$,
\begin{equation}\label{eq:localsobolev}
 \Qall(f)\le C_\Omega\norm f_{C_G^{s_*}}
 \qquad\bigl(f\in C_c^\infty(X),\ \operatorname{supp}f\subseteq\Omega\bigr).
\end{equation}
Indeed the weights are bounded on $\Omega$ and each $L^2$ norm is
at most its supremum times the square root of the volume of $\Omega$.
Adding lower-order derivative terms to either norm preserves every
estimate in which that norm appears on the right-hand side.

\section{Algebraic lemmas and averaging estimates}
\label{sec:tools}

\subsection{The leading tangent and its normalization}

The logarithmic derivative $B(t)=\Psi'(t)\Psi(t)^{-1}$ records the
velocity after translating $\Psi(t)$ back to the identity. Its
highest coefficient controls the increment on the shrinking scale
$t^{-q}$, which is why we call $N$ the leading tangent. In dimension
three, a nonzero nilpotent is \emph{singular} when $N^2=0$ (rank one)
and \emph{regular} when $N^2\ne0$ (one Jordan block of size three),
as in Lemma~\ref{app:nilpotent-charts}.

The following is the matrix-polynomial form of Shah's lemma
\cite[Lemma 4.1]{Shah1994}.

\begin{lemma}[Unipotent tangent lemma]
\label{lem:leading}
Let $n\in\{2,3\}$ and let $\Psi\in\SL_n(\R[t])$ be nonconstant,
with $\Psi(0)=\groupid$ and $\deg\Psi\le d$. Write
\begin{equation}\label{eq:logderivative}
 B(t)=\Psi'(t)\Psi(t)^{-1}
      =t^qN+\sum_{j=0}^{q-1}t^jN_j,\qquad N\ne0.
\end{equation}
Then $0\le q\le nd-1$ and $N^{d+1}=0$. For $q\ge1$,
\begin{equation}\label{eq:tangentlimit}
 \lim_{t\to+\infty}\Psi(t+st^{-q})\Psi(t)^{-1}=\exp(sN).
\end{equation}
For $q=0$ one has $\Psi(t)=\exp(tN)$ and the increment identity
is exact. In particular, full generation of $\SL_n(\R)$ excludes
$q=0$.
\end{lemma}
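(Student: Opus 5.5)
We will work directly with the logarithmic derivative $B(t)=\Psi'(t)\Psi(t)^{-1}$ and its adjugate form. Since $\det\Psi=1$, we have $\Psi(t)^{-1}=\operatorname{adj}\Psi(t)$, whose entries are polynomials of degree at most $(n-1)d$. Hence $B$ is a matrix polynomial of degree at most $(d-1)+(n-1)d=nd-1$, which gives $q\le nd-1$. Moreover $B\ne0$, because $\Psi$ is nonconstant and $\Psi'=B\Psi$. In addition, $\operatorname{tr}B=(\det\Psi)'/\det\Psi=0$, so every coefficient of $B$, and in particular $N$, lies in $\mathfrak{sl}_n(\R)$.

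The first substantive step is nilpotency of $N$. We compare growth rates in the ODE $\Psi'=B\Psi$. Suppose $N$ had a nonzero eigenvalue $\lambda$ with eigenvector $v$ in the complexification. Take a left eigenvector $w$ with $wN=\lambda w$, and consider the row-valued polynomial $u(t)=w\Psi(t)$. It satisfies $u'=wB\Psi$, with $wB(t)=\lambda t^q w+O(t^{q-1})$. When $q\ge0$ this forces $u$ to behave like $\exp(\lambda t^{q+1}/(q+1))$ along a suitable ray, which contradicts polynomial growth.

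To make this rigorous, we apply Levinson/Hukuhara asymptotics for linear ODEs with polynomial coefficients, or more elementarily, we prove the following claim. For a polynomial solution of $u'=uC(t)$ with $C(t)=t^qN+\dots$, write $u(t)=t^k u_0+\dots$ with $u_0\ne0$. Comparing top-degree terms gives $u_0N=0$ when $q\ge0$, because $\deg u'<\deg(uC)$ unless the leading part cancels. Applying this to the rows of $\Psi$, arranged so that $\Psi=\operatorname{adj}$-invertible, gives the required relations, and iterating on the filtration by degree shows that $N$ annihilates a flag. This yields nilpotency. The refined claim $N^{d+1}=0$ follows by taking the leading-degree parts in $\Psi'=B\Psi$ and its iterates. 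Write $M_k$ for the coefficient of $t^k$ in $\Psi$, with degree $\le d$. Since $\deg\Psi'<\deg B\Psi$ in each column unless cancellation occurs, we obtain the relations $NM_d=0$, $NM_{d-1}+\dots$, and so on. Inductively, $N^{j}$ kills the top $j$ degree layers. Together with invertibility of $\Psi$, meaning that the columns of the $M_k$ span, this gives $N^{d+1}=0$. We expect this bookkeeping to be the main obstacle, and we will follow the proof of Shah's Lemma~4.1 closely.

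The limit \eqref{eq:tangentlimit} is then a routine ODE computation. Fix $t$ and set $F(s)=\Psi(t+st^{-q})\Psi(t)^{-1}$. Then $F(0)=\groupid$ and
\[
F'(s)=t^{-q}B(t+st^{-q})F(s).
\]
For $s$ in a compact set, $t^{-q}B(t+st^{-q})=N+O(1/t)$ uniformly as $t\to\infty$, when $q\ge1$, since $t+st^{-q}=t(1+O(t^{-q-1}))$. Gronwall's inequality then gives $F(s)\to\exp(sN)$. For $q=0$, the matrix $B\equiv N$ is constant, so $\Psi(t)=\exp(tN)$ exactly, and $\Psi(t+s)\Psi(t)^{-1}=\exp(sN)$. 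In that case the closed subgroup generated is a one-parameter unipotent group, which is abelian and hence not $\SL_n(\R)$. This excludes $q=0$ under full generation.
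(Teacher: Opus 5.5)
Your degree bound $q\le nd-1$, the trace remark, the case $q=0$, and the Gronwall argument for \eqref{eq:tangentlimit} are all correct; the Gronwall argument does not even need nilpotency. The gap is the claim $N^{d+1}=0$, and in fact already the nilpotency of $N$: none of the mechanisms you sketch proves it.

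\begin{itemize}
\item In the growth heuristic, $u=w\Psi$ satisfies $u'=(wB)\Psi$. The lower-order rows of $wB$ multiply $\Psi$, not $u$, so there is no closed equation forcing growth like $\exp(\lambda t^{q+1}/(q+1))$.
\item Comparing top degrees for the polynomial solutions $\Psi(t)v$ of $u'=Bu$ only shows that $N$ kills the span of their leading coefficients. That span can be a proper subspace: for $\Psi=I_2+tE_{12}$ it is $\R e_1$. So no flag results.
\item The inductive claim that $N^j$ annihilates the top $j$ coefficient layers $M_d,\dots,M_{d-j+1}$ is false. Take the curve $(I_3+tE_{31})\exp\bigl(t(E_{12}+E_{23})\bigr)$ of \eqref{eq:regularexample}. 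There $d=3$, $q=1$, $N=E_{32}-E_{21}$ and $M_2=\tfrac12E_{13}+E_{32}$, yet $N^2M_2=-\tfrac12E_{33}\ne0$.
\end{itemize}

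The coefficient relations coming from $\Psi'=B\Psi$ mix $N$ with the lower coefficients $N_j$. These need not commute with $N$, so powers of $N$ cannot be pushed through the relations layer by layer.

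The missing idea is to iterate the logarithmic derivative rather than the coefficients of $\Psi$. Put $F_j=\Psi^{(j)}\Psi^{-1}$, so that
\[
 F_1=B,\qquad F_{j+1}=F_j'+F_jB .
\]
By induction, $\deg F_j\le jq$ and the coefficient of $t^{jq}$ in $F_j$ is $N^j$. The step works because $F_j'$ has degree below $(j+1)q$; for $q=0$ each $F_j=N^j$ is constant. Since $\deg\Psi\le d$, we have $\Psi^{(d+1)}=0$, hence $F_{d+1}=0$, and $N^{d+1}=0$ follows directly.

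The same $F_j$ give the exact Taylor identity
\[
 \Psi(t+st^{-q})\Psi(t)^{-1}=I_n+\sum_{j=1}^d\frac{s^j}{j!}t^{-jq}F_j(t).
\]
This yields the limit with an error that is polynomial in $s$ and explicit in the coefficients; that is the constant $M(P)$ used in Lemma~\ref{lem:smoothing} with $|s|$ as large as a power of $T$. Gronwall only bounds the error up to a factor exponential in $|s|$. That is enough for the qualitative statement here, but not for the later quantitative use.
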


\begin{proof}
The cofactor formula gives $\deg\Psi^{-1}\le(n-1)d$, hence
$\deg B\le nd-1$. For $F_j=\Psi^{(j)}\Psi^{-1}$,
\begin{equation}\label{eq:recurrence}
 F_1=B,\qquad F_{j+1}=F_j'+F_jB.
\end{equation}
If $q\ge1$, induction gives $\deg F_j\le jq$ with coefficient
$N^j$ at $t^{jq}$. Since $F_{d+1}=0$, this gives $N^{d+1}=0$.
The exact Taylor formula
\begin{equation}\label{eq:taylor}
 \Psi(t+st^{-q})\Psi(t)^{-1}
      =I_n+\sum_{j=1}^d\frac{s^j}{j!}t^{-jq}F_j(t)
\end{equation}
proves the asserted limit. If $q=0$, solve $\Psi'=N\Psi$ with
$\Psi(0)=\groupid$. Polynomiality of $\exp(tN)$ again gives
$N^{d+1}=0$, and its image is a single unipotent one-parameter
subgroup.
\end{proof}

We shall use the following normalization of a nonzero nilpotent matrix in later sections:
\begin{lemma}
\label{lem:normalization}
Let $n\in\{2,3\}$ and let
$0\ne N\in\mathfrak{sl}_n(\R)$ be nilpotent.
There exists $b_0\in\SL_n(\R)$ such that
\begin{equation}\label{eq:mainnormalizers}
 b_0^{-1}Nb_0=
 \begin{cases}
 \varepsilon E_{12},\quad\varepsilon\in\{1,-1\},&n=2,\\
 E_{23},&n=3,\ N^2=0,\\
 E_{12}+E_{23},&n=3,\ N^2\ne0.
 \end{cases}
\end{equation}
\end{lemma}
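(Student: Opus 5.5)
The plan is to use the Jordan normal form over $\R$. A nilpotent matrix has only the eigenvalue $0$, so its Jordan form is real and there is $a\in\mathrm{GL}_n(\R)$ with $a^{-1}Na$ in Jordan form. Since $N\ne0$, the admissible forms are limited. For $n=2$, the only nonzero nilpotent Jordan form is $E_{12}$. For $n=3$, the forms are a single block of size two together with a zero block, which has rank one and is the case $N^2=0$, or a single block of size three, which is the case $N^2\ne0$. I will reorder the Jordan basis so that the size-two block sits in rows and columns $2,3$; this gives $E_{23}$ in the singular case. In the regular case the form is $E_{12}+E_{23}$. In every case the resulting matrix depends only on the conjugacy type of $N$, which is determined by the rank of $N$, or equivalently by whether $N^2=0$.

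The remaining step is to pass from $\mathrm{GL}_n(\R)$ to $\SL_n(\R)$. Write $\delta=\det a$. I will look for a diagonal $D$ that commutes with, or at least rescales in a controlled way, the normal form $M$, and then set $b_0=aD$ with $\det D=\delta^{-1}$.

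\emph{Case $n=3$, $N^2=0$.} The matrix $D=\diag(\delta^{-1},1,1)$ commutes with $E_{23}$, since it acts as the identity on the span of $e_2,e_3$. Hence $b_0=aD$ has determinant one and still conjugates $N$ to $E_{23}$.

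\emph{Case $n=3$, $N^2\ne0$.} Take $D=\lambda I$ with $\lambda=\delta^{-1/3}$, the real cube root. This $D$ is central, so conjugation is unchanged, and $\det(aD)=1$.

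\emph{Case $n=2$.} Scalars only correct $\abs{\delta}$, because $\det(\lambda I)=\lambda^2>0$. I will first replace $a$ by $a\cdot\abs{\delta}^{-1/2}$, reducing to $\det a=\pm1$. If $\det a=-1$, I compose with $D=\diag(1,-1)$, which satisfies $D^{-1}E_{12}D=-E_{12}$. This explains the sign $\varepsilon$.

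The only point needing care is this sign obstruction in dimension two. It is unavoidable, because $E_{12}$ and $-E_{12}$ are not $\SL_2(\R)$-conjugate. In odd dimension three, a real cube root, or the centralizing diagonal in the singular case, removes the determinant issue entirely. Everything else is standard linear algebra, so I expect no step to be a serious obstacle beyond bookkeeping of this determinant adjustment.
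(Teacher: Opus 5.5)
Your proposal is correct and follows essentially the paper's route. The paper writes the Jordan basis down explicitly, as $(\varepsilon v,w)$ with $v=Nw$, as $(v_1,Nv_3,v_3)$ with $v_1\in\ker N$, and as $(N^2v,Nv,v)$. It then fixes the determinant exactly as you do: by $\abs{\det}^{-1/2}$ together with the sign $\varepsilon$ when $n=2$, by rescaling the kernel vector $v_1$ (your $\diag(\delta^{-1},1,1)$) when $N^2=0$, and by a real cube root when $N^2\ne0$. Invoking the Jordan normal form instead of constructing the basis by hand is only a cosmetic difference.
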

\begin{proof}
For $n=2$, choose $v=Nw\ne0$, let
$\varepsilon=\operatorname{sgn}\det(v,w)$, and take
$b_0=|\det(v,w)|^{-1/2}(\varepsilon v,w)$.
Then $\det b_0=1$ and $b_0^{-1}Nb_0=\varepsilon E_{12}$.
For $n=3$ and $N^2=0$, choose $v_3$ with $Nv_3\ne0$, put
$v_2=Nv_3$, and choose $v_1\in\ker N\setminus\R v_2$.
Rescale $v_1$ to give $b_0=(v_1,v_2,v_3)$ determinant one.
For $N^2\ne0$, choose $v$ with $N^2v\ne0$ and put
$b_*=(N^2v,Nv,v)$ and $b_0=(\det b_*)^{-1/3}b_*$, using the
real cube root. The respective conjugates are $E_{23}$ and
$E_{12}+E_{23}$.
\end{proof}

\subsection{Coefficient growth and tangent normalization}

For an integer $D\ge1$, define the coefficient-comparison constant
\begin{equation}\label{eq:vandermondeconstant}
 \kappa_D=\min\{1,[(D+1)^{1/2}\norm{V_D^{-1}}_{\mathrm{op}}]^{-1}\},
 \qquad V_D=((1+i/D)^j)_{0\le i,j\le D}.
\end{equation}
Applying $V_D^{-1}$ to vector-valued evaluations on $[1,2]$
shows that, for arbitrary vectors $v_0,\ldots,v_D$,
\[
 \sup_{1\le r\le2}\left\|\sum_{j=0}^D r^jv_j\right\|
 \ge\kappa_D\left(\sum_{j=0}^D\|v_j\|^2\right)^{1/2}.
\]
The same comparison holds in every Euclidean dimension. The
following lemma converts coefficient separation into growth
uniformly over nonzero lattice vectors.

\begin{lemma}
\label{lem:separatedgrowth}
Let $D\ge1$ and let $F(t)=R+\sum_{j=1}^D t^j A_j$ be a polynomial endomorphism of
a Euclidean space with specified lattice $\Z^n$, where $R$ is
invertible. Suppose $c>0$, $\tau\ge0$, and
\[
 \left(\sum_{j=1}^D\norm{A_jw}^2\right)^{1/2}
      \ge c\norm w^{-\tau}\qquad(0\ne w\in\Z^n).
\]
Put $\alpha=(\tau+1)^{-1}$ and $m=s_{\min}(R)>0$.
With $\kappa_D$ defined in \eqref{eq:vandermondeconstant},
\begin{equation}\label{eq:separatedgrowth}
 \sup_{t\in[T,2T]}\norm{F(t)w}
 \ge \kappa_D m^{1-\alpha}c^\alpha T^\alpha
       \qquad(T\ge1,\ 0\ne w\in\Z^n).
\end{equation}
\end{lemma}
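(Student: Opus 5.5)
The plan is to rescale time to the fixed interval $[1,2]$, apply the Vandermonde comparison defining $\kappa_D$ in \eqref{eq:vandermondeconstant}, and then balance two competing lower bounds by a weighted geometric mean.

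\emph{Rescaling and coefficient comparison.} Fix $T\ge1$ and $0\ne w\in\Z^n$, and write $t=rT$ with $r\in[1,2]$. Then
\[
 F(rT)w=\sum_{j=0}^D r^j v_j,\qquad v_0=Rw,\quad v_j=T^jA_jw\ (1\le j\le D),
\]
which is a vector-valued polynomial in $r$. The comparison inequality stated just before the lemma holds in every Euclidean dimension. Applying it gives
\[
 \sup_{t\in[T,2T]}\norm{F(t)w}
 \ge\kappa_D\Bigl(\norm{Rw}^2+\sum_{j=1}^D T^{2j}\norm{A_jw}^2\Bigr)^{1/2}.
\]

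\emph{Two lower bounds.} First, $\norm{Rw}\ge m\norm w$ by the definition of $m=s_{\min}(R)$. Second, since $T\ge1$ we have $T^{2j}\ge T^2$ for $j\ge1$. The coefficient separation hypothesis, which applies because $w$ is a nonzero lattice vector, then gives
\[
 \Bigl(\sum_{j\ge1}T^{2j}\norm{A_jw}^2\Bigr)^{1/2}\ge cT\norm w^{-\tau}.
\]
Together these yield
\[
 \sup_{t\in[T,2T]}\norm{F(t)w}\ge\kappa_D\max\{m\norm w,\ cT\norm w^{-\tau}\}.
\]

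\emph{Balancing.} Bound the maximum below by the weighted geometric mean with weights $1-\alpha$ and $\alpha$:
\[
 \max\{a,b\}\ge a^{1-\alpha}b^{\alpha},\qquad a=m\norm w,\ b=cT\norm w^{-\tau}.
\]
The power of $\norm w$ in this mean is $(1-\alpha)-\tau\alpha$, which vanishes because $1-\alpha=\tau\alpha$ when $\alpha=(\tau+1)^{-1}$. What remains is exactly $\kappa_D m^{1-\alpha}c^\alpha T^\alpha$, uniformly in $w$, which is \eqref{eq:separatedgrowth}.

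There is no serious obstacle. The only points to watch are that the comparison must be applied to the full coefficient vector including the constant term $Rw$, so that both lower bounds are available simultaneously, and that $T\ge1$ is what allows discarding the higher powers $T^j$ in favour of $T$.
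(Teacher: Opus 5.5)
Your proposal is correct and follows essentially the same route as the paper: rescale to $F(Tr)w$ on $1\le r\le 2$, apply the comparison inequality for $\kappa_D$ to the full coefficient vector including $Rw$, bound the result below by $\kappa_D\max\{m\norm w,\,cT\norm w^{-\tau}\}$, and then use the weighted geometric mean with weights $1-\alpha$ and $\alpha$. That choice of weights eliminates the power of $\norm w$, exactly as in the paper.
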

\begin{proof}
Apply polynomial norm comparison to $F(Tr)w$ on $1\le r\le2$,
retaining both the constant and nonconstant coefficients. With
$u=\norm w$, its supremum is at least
\[
 \kappa_D\max\{mu,Tcu^{-\tau}\}
 \ge\kappa_D(mu)^{1-\alpha}(Tcu^{-\tau})^\alpha
 =\kappa_Dm^{1-\alpha}c^\alpha T^\alpha.
\]
The constants in this bound are independent of $T$ and $w$.
\end{proof}

The following lemma records how coefficient bounds change when
the leading tangent is put into the normal form of
Lemma~\ref{lem:normalization}.

\begin{lemma}
\label{lem:coefficientnormalization}
Let $L\le\SL_n(\R)$ be a matrix group, let
$\Psi:\R\to L$ be a matrix-polynomial curve with
$\Psi(0)=\groupid$, and let $g,b_0\in L$.
Let $\mathcal R$ be a finite nonempty family of representations
$\rho:L\to\mathrm{GL}(V_\rho)$ on nonzero finite-dimensional
Euclidean spaces, with specified full lattices $V_{\rho,\Z}$.
For each $\rho$, fix an integer $D_\rho\ge1$ and suppose
$\rho(\Psi(t)g)$ is polynomial in $t$ of degree at most $D_\rho$. Put
$P(t)=b_0^{-1}\Psi(t)b_0$ and $h=b_0^{-1}g$, and define the
coefficient operators by
\[
\begin{aligned}
 \rho(\Psi(t)g)
 &=\rho(g)+\sum_{j=1}^{D_\rho}t^jA_{\rho,j},\\
 \rho(P(t)h)
 &=\rho(h)+\sum_{j=1}^{D_\rho}t^j\widetilde A_{\rho,j}.
\end{aligned}
\]
Then
\begin{equation}\label{eq:coefficientnormalization}
 P(t)h=b_0^{-1}\Psi(t)g,\qquad
 \widetilde A_{\rho,j}=\rho(b_0^{-1})A_{\rho,j}.
\end{equation}
Suppose $c>0$ and $\tau\ge0$ satisfy
\[
 \left(\sum_{j=1}^{D_\rho}\norm{A_{\rho,j}w}^2\right)^{1/2}
 \ge c\norm w^{-\tau}
 \qquad(\rho\in\mathcal R,\ 0\ne w\in V_{\rho,\Z}).
\]
The normalized coefficients $\widetilde A_{\rho,j}$ satisfy the same bound with $c$
replaced by
\[
 c_*=c\min\left\{1,\min_{\rho\in\mathcal R}
                s_{\min}(\rho(b_0^{-1}))\right\}>0.
\]
In particular, the admissible separation exponent $\tau$ is
preserved by this normalization.
\end{lemma}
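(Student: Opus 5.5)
The plan is a direct computation followed by a norm comparison, with no dynamical input.

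\emph{Identity.} Since $h=b_0^{-1}g$ and $P(t)=b_0^{-1}\Psi(t)b_0$, we have
\[
 P(t)h=b_0^{-1}\Psi(t)b_0b_0^{-1}g=b_0^{-1}\Psi(t)g.
\]
Applying the homomorphism $\rho$ gives $\rho(P(t)h)=\rho(b_0^{-1})\rho(\Psi(t)g)$. This is a polynomial in $t$ of degree at most $D_\rho$.

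\emph{Matching coefficients.} Compare the two expansions coefficient by coefficient. The constant terms agree, since $\rho(b_0^{-1})\rho(g)=\rho(h)$. The coefficient of $t^j$ gives $\widetilde A_{\rho,j}=\rho(b_0^{-1})A_{\rho,j}$. Uniqueness of polynomial coefficients justifies this matching, because the identity holds for all real $t$. This proves \eqref{eq:coefficientnormalization}.

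\emph{Transferring the bound.} Fix $\rho\in\mathcal R$ and $0\ne w\in V_{\rho,\Z}$. Put $s_\rho=s_{\min}(\rho(b_0^{-1}))$. This is positive because $\rho(b_0^{-1})$ is invertible, with inverse $\rho(b_0)$. For each $j$,
\[
 \norm{\widetilde A_{\rho,j}w}=\norm{\rho(b_0^{-1})A_{\rho,j}w}\ge s_\rho\norm{A_{\rho,j}w}.
\]
Squaring, summing over $j$ and taking square roots gives
\[
 \Bigl(\sum_j\norm{\widetilde A_{\rho,j}w}^2\Bigr)^{1/2}\ge s_\rho c\norm w^{-\tau}\ge c_*\norm w^{-\tau},
\]
since $c_*\le cs_\rho$ for every $\rho$. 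The minimum defining $c_*$ is taken over a finite nonempty family of positive numbers together with $1$, so $c_*>0$.

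The lattice $V_{\rho,\Z}$ and the exponent $\tau$ are unchanged, so the exponent is preserved.

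I expect no real obstacle here. The only points that need care are that the lattice and the $\norm w$ factor are left unchanged, since the normalization acts on the left of $A_{\rho,j}$ and not on $w$, and that positivity of $s_{\min}$ comes from invertibility of $\rho(b_0^{-1})$.
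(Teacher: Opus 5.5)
Your proposal is correct and follows essentially the same route as the paper's proof: derive $P(t)h=b_0^{-1}\Psi(t)g$, apply $\rho$ to get $\rho(P(t)h)=\rho(b_0^{-1})\rho(\Psi(t)g)$ and match coefficients, then bound the normalized coefficient norms below by $s_{\min}(\rho(b_0^{-1}))$ times the original ones. Your remarks on uniqueness of coefficients and on positivity of $s_{\min}$ via invertibility make explicit what the paper leaves implicit.
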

\begin{proof}
The first identity in \eqref{eq:coefficientnormalization} follows
from the definitions of $P$ and $h$. Applying $\rho$ gives
$\rho(P(t)h)=\rho(b_0^{-1})\rho(\Psi(t)g)$, which proves
polynomiality of the normalized family and the coefficient identity.
For every $\rho\in\mathcal R$ and $0\ne w\in V_{\rho,\Z}$,
\[
 \left(\sum_j\norm{\widetilde A_{\rho,j}w}^2\right)^{1/2}
 \ge s_{\min}(\rho(b_0^{-1}))
       \left(\sum_j\norm{A_{\rho,j}w}^2\right)^{1/2}
 \ge c_*\norm w^{-\tau}.
\]
%Applying the same argument to the inverse coordinate change proves the converse with a possibly different constant.
\end{proof}

\subsection{Averaging the tangent and summing dyadic intervals}

For a normalized curve $P$ in either dimension, let $q$ be its
logarithmic-derivative degree and define
\begin{equation}\label{eq:incrementM}
 M(P)=1+\sum_{j=1}^d\frac1{j!}
       \sum_{k=0}^{jq-1}
       \norm{[t^k]\bigl(P^{(j)}(t)P(t)^{-1}\bigr)}_{\mathrm{op}}.
\end{equation}
The symbol $[t^k]$ extracts a coefficient. Empty inner sums are zero,
so $M(P)=1$ when $q=0$.

We first estimate polynomial averages on intervals $[T,2T]$,
where the tangent approximation and coefficient growth estimates
apply. Lemma~\ref{lem:smoothing} compares the polynomial average
on such an interval with an average of shorter unipotent flow
segments based at the moving points $P(t)y$, with an explicit
error bound. Once the subsequent arguments give a power estimate
on these intervals above an initial threshold,
Lemma~\ref{lem:dyadic} converts it into the desired estimate on
$[0,T]$, preserving the decay exponent and making the dependence
on that threshold explicit.

\begin{lemma}[Unipotent smoothing on a matrix subgroup]
\label{lem:smoothing}
Let $\mathcal G\le\SL_n(\R)$ be a fixed connected closed matrix
subgroup, $n\in\{2,3\}$, and fix a basis of its Lie algebra.
Let $P(t)\in\mathcal G$ be a normalized matrix-polynomial curve
of degree at most $d$, with leading logarithmic-derivative
coefficient a fixed nonzero nilpotent $N_*\in\Lie(\mathcal G)$.
Put $p=1$ if $N_*^2=0$ and $p=2$ otherwise.
For any discrete $\Lambda\le\mathcal G$, define
\[
 \mathcal U_LF(y)=\frac1{|I_L|}\int_{I_L}
          F(\exp(sN_*)y)\dd s,\qquad
 I_L\in\{[0,L],[-L/2,L/2],[-L,L]\}.
\]
For $T\ge T_d:=(6d+6)^2$, $1\le L\le T$, $y\in\mathcal G/\Lambda$,
and any bounded $C^1$ function with bounded first derivatives,
\begin{equation}\label{eq:smoothing}
 \left|\frac1T\int_T^{2T}F(P(t)y)\dd t -\frac1T\int_T^{2T}\mathcal U_LF(P(t)y)\dd t\right| 
       \le C_d\norm F_{C^1} \left(M(P)T^{-1}(1+L)^{d+p}+L/T\right).
\end{equation}
The $C^1$ norm uses the fixed basis of $\Lie(\mathcal G)$;
$C_d$ may also depend on that basis, $\mathcal G$, and $N_*$,
but not on $\Lambda,P,y,L,T$. In particular the statement
applies inside the affine group and to either sign of a fixed tangent.
If $L=T^{\alpha/[2(d+p)]}$ with $0<\alpha\le1$, the right side
is at most $C_dM(P)\norm F_{C^1}T^{-1/2}$.
\end{lemma}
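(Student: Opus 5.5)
The plan is to swap the order of integration and then compare $F(P(t+u)y)$ with $F(\exp(uN_*)P(t)y)$ through the tangent expansion of Lemma~\ref{lem:leading}. Writing out the definition,
\[
 \frac1T\int_T^{2T}\mathcal U_LF(P(t)y)\dd t
 =\frac1{|I_L|}\int_{I_L}\frac1T\int_T^{2T}F(\exp(sN_*)P(t)y)\dd t\dd s .
\]
For each fixed $s\in I_L$, we substitute $t\mapsto t+st^{-q}$ in the original average. On $[T,2T]$ the map $\phi_s(t)=t+st^{-q}$ has derivative $1-qst^{-q-1}=1+O(L/T)$ and moves the endpoints by at most $L$. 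So the Jacobian and the boundary strips contribute $O(\norm F_\infty L/T)$. This uses $T\ge T_d$ and $L\le T$: then $|s|t^{-q-1}\le L/T\le1$, and when $q=0$ the shift is exactly $s$. We obtain
\[
 \frac1T\int_T^{2T}F(P(t)y)\dd t=\frac1T\int_T^{2T}F(P(t+st^{-q})y)\dd t+O(\norm F_\infty L/T).
\]

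The second step compares $P(t+st^{-q})P(t)^{-1}$ with $\exp(sN_*)$. By the exact Taylor formula \eqref{eq:taylor},
\[
 P(t+st^{-q})P(t)^{-1}=I_n+\sum_{j=1}^d\frac{s^j}{j!}t^{-jq}F_j(t).
\]
Here $F_j$ has degree at most $jq$ with top coefficient $N_*^j$, by the recurrence \eqref{eq:recurrence}. The top-degree terms therefore sum to $\sum_j s^jN_*^j/j!=\exp(sN_*)$, since $N_*^{p+1}=0$ with $p\in\{1,2\}$. This truncation also means the series for $\exp(sN_*)$ ends at degree $p$. What is left over is
\[
 E(s,t)=\sum_{j=1}^d\frac{s^j}{j!}\sum_{k<jq}[t^k]F_j\,t^{k-jq},
\]
and since $k-jq\le-1$ and $t\ge T$, we get $\norm{E}_{\mathrm{op}}\le M(P)(1+L)^dT^{-1}$. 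Write $P(t+st^{-q})=(\exp(sN_*)+E)P(t)=\exp(sN_*)(I+\exp(-sN_*)E)P(t)$. We have $\norm{\exp(-sN_*)}\ll(1+L)^p$, so $g_{s,t}=I+\exp(-sN_*)E$ is within $\ll M(P)(1+L)^{d+p}T^{-1}$ of the identity. It lies in $\mathcal G$, being a product of elements of $\mathcal G$.

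To turn this into an estimate on $F$, write $g_{s,t}=\exp(Z)$ with $Z\in\Lie(\mathcal G)$ and $\norm Z\ll$ the same quantity. The mean value theorem along $r\mapsto\exp(rZ)x$ then gives
\[
 |F(\exp(sN_*)g_{s,t}x)-F(\exp(sN_*)x)|\ll\norm F_{C^1}(1+L)^p\norm Z,
\]
where the extra $\Ad(\exp(sN_*))$ factor is again $\ll(1+L)^p$. Combining both $(1+L)^p$ factors with the $(1+L)^d$ from $E$ produces a power larger than $d+p$. To stay within the stated exponent $d+p$, I would instead put the perturbation on the right, writing $P(t+st^{-q})=\exp(sN_*)P(t)+EP(t)$ as $(I+E\exp(-sN_*))\exp(sN_*)P(t)$, so that $F$ is differentiated in the left-invariant direction at the point $\exp(sN_*)P(t)y$ and only one $(1+L)^p$ factor appears.

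The main obstacle I expect is exactly this bookkeeping of the powers of $(1+L)$, together with making sure the logarithm is defined, which requires the perturbation to be small. When $M(P)(1+L)^{d+p}T^{-1}\ge c$, the claimed bound is trivial because $2\norm F_\infty$ is dominated by the right side; otherwise the logarithm exists by choosing a fixed small neighbourhood of $I$. Averaging over $s\in I_L$ then completes the proof. For the final sentence of the statement, take $L=T^{\alpha/[2(d+p)]}$: this makes $(1+L)^{d+p}\ll T^{\alpha/2}\le T^{1/2}$, and $L/T\le T^{-1/2}$.
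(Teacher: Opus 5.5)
Your proposal is correct and is essentially the paper's own proof. The paper also makes the substitution $t\mapsto t+st^{-q}$ with Jacobian and endpoint errors $O_d(\norm F_\infty L/T)$, bounds $\norm{P(t+st^{-q})P(t)^{-1}-\exp(sN_*)}_{\mathrm{op}}\le M(P)T^{-1}(1+L)^d$ via \eqref{eq:recurrence} and \eqref{eq:taylor}, and uses exactly your final decomposition $A_{t,s}=P(t+st^{-q})P(t)^{-1}\exp(-sN_*)=I_n+E\exp(-sN_*)$ acting on the left of $z=\exp(sN_*)P(t)y$, so only one factor $(1+L)^p$ enters; the differences are cosmetic: the paper handles large perturbations pointwise through $|F(Az)-F(z)|\le C\norm{A-I_n}_{\mathrm{op}}\norm F_{C^1}$ rather than by your global case split, and the derivative you are actually using is the left-action derivative $D_Z$ (a right-invariant field on $\mathcal G$, which is what descends to $\mathcal G/\Lambda$), not a left-invariant one.
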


\begin{proof}
For $q\ge1$, \eqref{eq:recurrence} and \eqref{eq:taylor} give
\[
 \norm{P(t+st^{-q})P(t)^{-1}-\exp(sN_*)}_{\mathrm{op}}
       \le M(P)T^{-1}(1+L)^d
\]
for $t\in[T,2T]$ and $|s|\le L$. The multiplicative error
\[
 A_{t,s}=P(t+st^{-q})P(t)^{-1}\exp(-sN_*)\in\mathcal G
\]
therefore satisfies
$\norm{A_{t,s}-I_n}_{\mathrm{op}}\le
C M(P)T^{-1}(1+L)^{d+p}$.
For $A\in\mathcal G$ and $z\in\mathcal G/\Lambda$,
\begin{equation}\label{eq:leftshift}
 |F(Az)-F(z)|\le C\norm{A-I_n}_{\mathrm{op}}\norm F_{C^1}.
\end{equation}
Near the identity, the matrix logarithm of $A$ belongs to
$\Lie(\mathcal G)$; integrate along $\exp(u\log A)z$ and use
the fixed basis norm. Outside that neighborhood, use
$2\norm F_\infty$. This proves the estimate globally without
an injectivity-radius hypothesis.

The map $t\mapsto t+st^{-q}$ has derivative within
$qLT^{-q-1}<1/2$ of one and moves each endpoint by at most
$LT^{-q}$. Since $q\le3d-1$, its normalized endpoint and
Jacobian errors are at most $C_d\norm F_\infty LT^{-q-1}$.
Average over $I_L$, using \eqref{eq:leftshift} with
$z=\exp(sN_*)P(t)y$ for the Taylor error.
If $q=0$, the increment is exactly $\exp(sN_*)$ and the
endpoint error is $O(\norm F_\infty L/T)$.
This proves \eqref{eq:smoothing} for all three intervals.
Finally,
\[
 T^{-1}(1+L)^{d+p}\le2^{d+p}T^{-1+\alpha/2}
       \le2^{d+p}T^{-1/2},\qquad L/T\le T^{-3/4}.
\]
\end{proof}

\begin{lemma}[Dyadic summation]
\label{lem:dyadic}
Let $y:[0,\infty)\to\mathcal Y$ be a measurable trajectory, let
$\mu$ be a probability measure on $\mathcal Y$, and let $F$ be a
bounded measurable function with $\mathcal N(F)\ge\norm F_\infty$.
Suppose $D\ge0$, $T_*\ge1$, $0<\delta\le1/2$, and
\[
 \left|\frac1S\int_S^{2S}F(y(t))\dd t-\mu(F)\right|
 \le D\mathcal N(F)S^{-\delta}\qquad(S\ge T_*).
\]
Then
\begin{equation}\label{eq:dyadic}
 \left|\frac1T\int_0^T F(y(t))\dd t-\mu(F)\right|
 \le C(D+T_*^\delta)\mathcal N(F)T^{-\delta}\qquad(T\ge2),
\end{equation}
where $C$ is an absolute constant independent of $\delta$.
\end{lemma}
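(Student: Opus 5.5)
The plan is to write $[0,T]$ as a short initial segment followed by dyadic blocks, and to apply the hypothesis on each block. Write $E(a,b)=\int_a^b\bigl(F(y(t))-\mu(F)\bigr)\dd t$. Every integrand value has absolute value at most $2\norm F_\infty\le2\mathcal N(F)$. We need
\[
 \abs{E(0,T)}\le C(D+T_*^\delta)\mathcal N(F)T^{1-\delta}.
\]

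First consider $T\le 2T_*$. The trivial bound gives $\abs{E(0,T)}\le2\mathcal N(F)T$. Since $T\le2T_*$ and $\delta\le1/2$, we have $T^{\delta}\le2^{\delta}T_*^\delta\le2T_*^\delta$. Hence $T=T^{1-\delta}T^\delta\le2T_*^\delta T^{1-\delta}$, which is the claim with $C=4$.

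Next consider $T>2T_*$. Let $k\ge1$ be the largest integer with $S_k:=T2^{-k}\ge T_*$; then $S_k<2T_*$. Split
\[
 [0,T]=[0,S_k]\cup\bigcup_{j=1}^{k}[T2^{-j},T2^{-j+1}].
\]
On the initial piece, the trivial bound gives $\abs{E(0,S_k)}\le2\mathcal N(F)S_k$. As in the first case, $S_k\le2T_*^\delta S_k^{1-\delta}\le2T_*^\delta T^{1-\delta}$. On each block $[S,2S]$ with $S=T2^{-j}\ge S_k\ge T_*$, the hypothesis gives
\[
 \abs{E(S,2S)}\le D\mathcal N(F)S^{1-\delta}=D\mathcal N(F)T^{1-\delta}2^{-j(1-\delta)}.
\]

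Summing these block bounds over $j$ gives a geometric series. Because $1-\delta\ge1/2$, we have $\sum_{j\ge1}2^{-j(1-\delta)}\le\sum_{j\ge1}2^{-j/2}=(\sqrt2-1)^{-1}<3$, and this bound does not depend on $\delta$. Adding the initial-piece bound and dividing by $T$ yields \eqref{eq:dyadic} with an absolute constant, say $C=4$.

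The only point needing attention is that $C$ must be uniform in $\delta$. The hypothesis $\delta\le1/2$ gives this in both places where $\delta$ enters: it bounds the geometric ratio $2^{-(1-\delta)}$ away from $1$, and it bounds $2^\delta$ in the threshold comparison. The threshold $T_*$ enters only through the initial segment, which produces the term $T_*^\delta$. Measurability of $t\mapsto F(y(t))$ and boundedness of $F$ make every integral above well defined.
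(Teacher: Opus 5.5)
Your proposal is correct and follows essentially the same route as the paper: backward dyadic blocks $[T2^{-j},T2^{-j+1}]$ whose left endpoints are at least $T_*$, with their errors summed as a geometric series of ratio at most $2^{-1/2}$. Like the paper, you use the trivial bound $2\norm F_\infty$ both on the initial remainder of length less than $2T_*$ and for $T\le2T_*$, and your bookkeeping gives the explicit constant $C=4$.
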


\begin{proof}
For $T\ge2T_*$, split $[0,T]$ into backward dyadic intervals
$[T/2^{j+1},T/2^j]$ with left endpoint at least $T_*$ and an initial
remainder of length less than $2T_*$. The dyadic errors are bounded
by $D\mathcal N(F)T^{-\delta}$ times a geometric series of ratio
$2^{-(1-\delta)}\le2^{-1/2}$. The remainder contributes at most
$4(T_*/T)\norm F_\infty\le4T_*^\delta T^{-\delta}\mathcal N(F)$.
For $T<2T_*$, the trivial discrepancy bound
$2\norm F_\infty$ gives the same conclusion with a fixed larger
constant.
\end{proof}

\section{Quantitative nondivergence and equidistribution theorems}
\label{sec:inputs}

This section collects the quantitative estimates used in the proof
of Theorem~\ref{thm:intrinsic}. The quantitative nondivergence theorem
of Kleinbock and Margulis \cite[Theorem~5.2]{KM1998}, recorded in
Theorem~\ref{prop:km}, controls exceptional parameter sets where
the relevant lattices have short vectors. The ambient estimates
of Yang \cite[Theorem~1.2]{Yang2025} and Lindenstrauss, Mohammadi,
Wang, and Yang \cite[Theorem~1.1]{LMWY2025}, stated in
Theorems~\ref{thm:yang} and~\ref{prop:regularinput}, will be applied
to singular and regular unipotent tangent averages, respectively.
On compact unipotent supports, the quantitative Leibman theorem
of Green and Tao \cite[Theorem~2.9]{GreenTao2012}, recorded in
Theorem~\ref{prop:green-tao}, provides a horizontal character
obstruction. Its continuous-time form in
Lemma~\ref{lem:continuousleibman} will be combined with the intrinsic
coefficient bounds of Lemma~\ref{lem:intrinsiccoefficients} to
exclude that obstruction.

\iffalse
We state the quantitative nondivergence theorem, Theorem~\ref{prop:km}, and the equidistribution results used in the proof: Theorem~\ref{thm:yang} and Theorems~\ref{prop:regularinput}
and~\ref{prop:green-tao}. The arithmetic bounds are collected in
Lemma~\ref{lem:arithmeticinputs}.
The horocycle estimate is Theorem~\ref{thm:coverinput} in
Section~\ref{subsec:congruencecovers}, and the affine block-flow
estimate is Theorem~\ref{thm:affineblockinput} in
Section~\ref{subsec:affineblock}.
The paragraphs following
the statements explain the specializations, normalization changes,
and deductions needed for the formulations used here.
The rationality assertion in Lemma~\ref{lem:arithmeticinputs}
also uses \cite[Proposition 2.1(2)]{Shah1994}, as explained there.
The polynomial, smoothing, lattice-geometric, and integral-tensor
arguments are proved here. The regular affine argument also uses
the rational finite-field bound and the derivative test stated
in Section~\ref{sec:regularaffine}. The support reduction in
Section~\ref{sec:supportstructure} also records the algebraic-group
structure results it uses.
\fi

\subsection{Quantitative nondivergence}

Use Euclidean norms on $\R^3$ and its exterior powers. For $y=g\Gamma$,
define the exterior systole $\extmin(y)$ by
\begin{equation}\label{eq:systole}
 \extmin(y)=\min\left\{1,
   \inf_{0\ne v\in\Z^3}\norm{gv},
   \inf_{0\ne w\in\wedge^2\Z^3}\norm{(\wedge^2g)w}\right\}>0.
\end{equation}
Here $\wedge^2\Z^3$ means the integer lattice in $\wedge^2\R^3$.
The two infima are the shortest nonzero vector lengths of $g\Z^3$
and its dual lattice $g^{-\mathsf T}\Z^3$, respectively.

%the Euclidean Hodge identification intertwines $\wedge^2g$ with $g^{-\mathsf T}$.

Thus $\extmin(y)$ is the minimum of these two lengths, capped at one.
The definition is independent of the representative $g$.

A subgroup $\Delta\le\Z^n$ of rank $k$ is \emph{primitive} when
$\Delta=\operatorname{span}_{\R}(\Delta)\cap\Z^n$. If
$v_1,\ldots,v_k$ is a lattice basis of $\Delta$, its exterior
representative $w_\Delta=v_1\wedge\cdots\wedge v_k$ is a primitive
integral vector, defined up to sign (Lemma~\ref{app:plucker}).
The quantity $\norm{(\wedge^kh)w_\Delta}$ is the Euclidean covolume
of $h\Delta$ in its real span (Section~\ref{app:tensors}).
The full-rank primitive subgroup is $\Z^n$,
whose covolume is one when $h\in\SL_n(\R)$. Testing all nonzero
integral exterior vectors, as our hypotheses do, in particular tests
these decomposable representatives.

Recall from \cite{KM1998} that a nonnegative function $F$ is called $(C,\gamma)$-\emph{good} on an interval
if every bounded subinterval $I$ and $0<a<1$ satisfy
\[
 \Leb\{t\in I:F(t)<a\sup_I F\}\le Ca^\gamma|I|.
\]
This says that a function cannot be much smaller than its local
maximum on a large fraction of the interval. The following elementary
fact makes the constants uniform over all polynomial coefficients.

\begin{lemma}[Polynomial sublevel bounds]\label{lem:polynomialgoodness}
Let $D\ge1$ be an integer and let $P$ be a polynomial of degree at most $D$ with
values in a finite-dimensional Euclidean space. Then $\norm{P(t)}$
is $(C_D,1/D)$-good on $\R$, independently of the dimension and
coefficients.
More generally, if $P$ is a vector polynomial of total degree at most
$D$ on $\R^n$ and $B$ is a bounded rectangular box of positive volume, then
\begin{equation}\label{eq:boxsublevel}
 \Leb\{z\in B:\norm{P(z)}<a\sup_B\norm P\}
 \le C_{n,D}a^{1/(nD)}\Leb(B)\qquad(0<a<1).
\end{equation}
\end{lemma}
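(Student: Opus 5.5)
We prove Lemma~\ref{lem:polynomialgoodness} by first handling scalar polynomials, then vector-valued ones, then boxes in $\R^n$.

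\emph{Scalar case.} Let $p$ be a real polynomial of degree at most $D$ and $I$ a bounded interval with $\sup_I|p|=M>0$. The set $E=\{t\in I:|p(t)|<aM\}$ is a union of at most $2D+1$ intervals, since its boundary points are roots of $p\pm aM$. Suppose $\Leb(E)=\epsilon|I|$. Then one can pick $D+1$ points $t_0<\dots<t_D$ in $E$ with pairwise gaps at least $\epsilon|I|/(C D)$. Lagrange interpolation at these nodes bounds every point of $I$:
\[
\sup_I|p|\le \sum_i |p(t_i)|\prod_{j\ne i}\frac{|I|}{|t_i-t_j|}\le (D+1)\,aM\,(CD/\epsilon)^D.
\]
This gives $\epsilon\le C_D a^{1/D}$, the classical Remez-type bound.

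\emph{Vector case on $\R$.} The set $\{t\in I:\norm{P(t)}<a\sup_I\norm P\}$ is also a finite union of at most $O(D)$ intervals, because $\norm P^2$ is a polynomial of degree at most $2D$. Choose $t_*\in\bar I$ where $\norm P$ attains its supremum, and let $u=P(t_*)/\norm{P(t_*)}$. The scalar polynomial $p=\langle P,u\rangle$ satisfies $\sup_I|p|=\sup_I\norm P$ and $|p|\le\norm P$ pointwise. Hence the sublevel set of $\norm P$ lies inside that of $|p|$, and the scalar bound gives the result with the same constant. This shows the constant is independent of dimension and coefficients.

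\emph{Boxes in $\R^n$.} Reduce first to scalar $p=\langle P,u\rangle$ as above, and rescale $B$ to the unit cube; both steps preserve degree and the ratio of measures. Then induct on $n$ using Fubini. Let $z^*$ be a maximizer of $|p|$ on $\bar B$. Restrict $p$ to the coordinate line through $z^*$ in the first variable; this is a polynomial $q$ with $\sup|q|=M$. For a fixed small threshold $b$ (to be optimized), the set of first coordinates $x_1$ with $|q(x_1)|<bM$ has measure at most $C_Db^{1/D}$. For each remaining $x_1$, the slice polynomial in the other $n-1$ variables has supremum at least $bM$. Applying the induction hypothesis on that slice with ratio $a/b$ bounds the slice sublevel measure by $C_{n-1,D}(a/b)^{1/((n-1)D)}$. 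Combining the two cases,
\[
\Leb(E)\le C_Db^{1/D}+C_{n-1,D}(a/b)^{1/((n-1)D)}.
\]
Choosing $b=a^{1/n}$ makes both terms of order $a^{1/(nD)}$, which gives \eqref{eq:boxsublevel}.

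The main obstacle is the first step: getting a clean, coefficient-free Remez-type constant via the spacing-and-interpolation argument. The Fubini optimization in the last step is routine bookkeeping.
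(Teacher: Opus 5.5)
Your proof is correct. The one-variable interpolation step and the reduction of vector-valued $P$ to $\langle P,u\rangle$, with $u$ the direction of a maximizing value, are the same as in the paper. For the separation of the $D+1$ nodes, which you assert without proof, the paper uses a greedy choice: with $b=\Leb(E)/[2(D+1)]$, the open $b$-neighborhoods of at most $D$ already chosen points have total measure less than $\Leb(E)$, so a further point of $E$ can always be added.

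Where you genuinely differ is the Fubini induction on boxes.
\begin{itemize}
\item The paper expands $p(z',t)=\sum_j a_j(z')t^j$ in the last variable and picks a coefficient with $\sup|a_j|\ge 1/(D+1)$. It applies the induction hypothesis to $a_j$ on the $(n-1)$-dimensional base, with threshold $a^{(n-1)/n}$. Off that bad set, it uses the equivalence of coefficient and supremum norms for one-variable polynomials to get a lower bound on each fiber. The one-variable estimate is then applied on the fibers.
\item You reverse the roles. You apply the one-variable estimate on the coordinate line through a maximizer $z^*$ to control the bad first coordinates, with threshold $b=a^{1/n}$. You then apply the induction hypothesis on the $(n-1)$-dimensional slices. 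The lower bound $bM$ on each good slice's supremum is immediate, because the slice contains the point $(x_1,z^*_2,\ldots,z^*_n)$, where $p$ equals $q(x_1)$.
\end{itemize}

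Your route avoids both the norm-equivalence constant $c_D$ and the selection of a large coefficient, so it is slightly more elementary and self-contained. The paper's route applies the induction hypothesis to a single auxiliary polynomial rather than to a family of slices. Both balance the two thresholds to give the same exponent $1/(nD)$.
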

\begin{proof}
The one-variable scalar estimate is due to Kleinbock and Margulis
\cite[Proposition 3.2]{KM1998}. We recall its interpolation argument
and then prove the vector-valued and rectangular-box extensions used here.
First let $P$ be scalar and fix a compact interval $I$. Write
$S=\sup_I|P|$ and $E=\{t\in I:|P(t)|<aS\}$. The cases $S=0$
or $|E|=0$ are immediate. Otherwise put
$b=|E|/[2(D+1)]$. One can choose $D+1$ points of $E$ separated
pairwise by at least $b$: after choosing at most $D$ points, the
union of their open $b$-neighborhoods has measure less than $|E|$.
Lagrange interpolation at these points gives
\[
 S\le(D+1)aS\,(|I|/b)^D.
\]
Consequently $|E|\le2(D+1)^{1+1/D}a^{1/D}|I|$.
For a vector polynomial with nonzero maximum, choose $t_0\in I$
where $\norm{P(t_0)}$ is maximal and take the scalar polynomial
$p(t)=\langle P(t),P(t_0)/\norm{P(t_0)}\rangle$. Its absolute
maximum equals that of $\norm P$, and the vector sublevel set is
contained in the scalar one. This proves the same bound. Endpoints
do not affect the measure, so the assertion holds for all bounded
intervals.

For the box assertion, an affine change in each coordinate reduces
to $B=[-1,1]^n$. First take a scalar polynomial of supremum one.
Induct on $n$, the preceding argument giving $n=1$. Write
$P(z',t)=\sum_{j=0}^D a_j(z')t^j$ and choose $j$ with
$\sup|a_j|\ge1/(D+1)$. Put $b=a^{(n-1)/n}$.
By induction, the set where $|a_j(z')|<b/(D+1)$ has measure
$O_{n,D}(b^{1/((n-1)D)})$. Outside it, equivalence of coefficient
and supremum norms for one-variable polynomials gives
$\sup_t|P(z',t)|\ge c_Db$. The one-variable estimate bounds the
relative length of $\{t:|P(z',t)|<a\}$ by
$O_D((a/b)^{1/D})$; if this bound exceeds one, the trivial bound
has the same consequence. Fubini gives \eqref{eq:boxsublevel},
since both exponents equal $1/(nD)$. For vector polynomials,
projection onto the direction of a maximizing value reduces to the
scalar case exactly as above.
\end{proof}

We use the following one-parameter form of the quantitative
nondivergence theorem of Kleinbock and Margulis.

\begin{theorem}[{\cite[Theorem~5.2]{KM1998}}]\label{prop:km}
Fix $n\in\{2,3,6\}$, $0<\gamma\le1$, and $C\ge1$.
Let $I$ be a bounded interval of positive length, and let $3^nI$
denote its concentric dilation by $3^n$. Suppose
$h:3^nI\to\SL_n(\R)$ is continuous. For every nonzero
primitive subgroup $\Delta\le\Z^n$, let $w_\Delta$ be a primitive
exterior vector representing it. Suppose the functions
$t\mapsto\norm{(\wedge^{\rank\Delta}h(t))w_\Delta}$ are
$(C,\gamma)$-good on $3^nI$, and have supremum at least $\varrho$ on $I$,
where $0<\varrho\le1/n$. Then
\begin{equation}\label{eq:km}
 \Leb\{t\in I:\inf_{v\in\Z^n\setminus\{0\}}\norm{h(t)v}<\epsilon\}
    \ll C(\epsilon/\varrho)^\gamma |I|,
    \qquad 0<\epsilon\le\varrho.
\end{equation}
The implicit constant depends only on $n$.
\end{theorem}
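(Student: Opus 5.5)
The statement is a specialization of \cite[Theorem~5.2]{KM1998}, so the plan is to check its hypotheses in the one-dimensional Lebesgue setting rather than to reprove it. Recall the shape of that theorem. Let $X$ be a Besicovitch metric space with constant $N_X$, and $\mu$ a $D_\mu$-Federer measure on $X$. Let $B$ be a ball with $\tilde B=3^kB$, and $h:\tilde B\to\mathrm{GL}_k(\R)$ continuous. Assume that for every nonzero primitive $\Delta\le\Z^k$:
\begin{enumerate}
\item[(i)] the covolume function $x\mapsto\norm{h(x)\Delta}$ is $(C,\alpha)$-good on $\tilde B$ with respect to $\mu$;
\item[(ii)] $\sup_{B\cap\operatorname{supp}\mu}\norm{h(\cdot)\Delta}\ge\varrho$, with $0<\varrho\le1/k$.
\end{enumerate}
Then for $0<\epsilon\le\varrho$ the $\mu$-measure of the set of $x\in B$ where $h(x)\Z^k$ has a nonzero vector of length $<\epsilon$ is at most $kC(N_XD_\mu^2)^k(\epsilon/\varrho)^\alpha\mu(B)$.

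We take $X=\R$ with Lebesgue measure, $k=n\in\{2,3,6\}$ and $\alpha=\gamma$, and verify the hypotheses in order.
\begin{enumerate}
\item \emph{The space and measure.} $\R$ with the usual metric is Besicovitch with $N_{\R}=2$. Lebesgue measure is Federer with $D_\mu=3$, since $\Leb(3J)=3\Leb(J)$ for intervals.
\item \emph{Balls.} Balls are bounded open intervals, and the concentric dilation $3^nI$ is exactly KM's $\tilde B$. Closed or half-open intervals differ from open ones only by endpoints of measure zero, and the continuity of $h$ makes the suprema in (ii) insensitive to endpoints, so every bounded interval $I$ of positive length is allowed.
\item \emph{Support.} $\operatorname{supp}\Leb=\R$, so the supremum over $B\cap\operatorname{supp}\mu$ is just $\sup_I$.
\item \emph{Covolume functions.} The quantity $\norm{(\wedge^{\rank\Delta}h(t))w_\Delta}$ is precisely KM's covolume $\norm{h(t)\Delta}$, for the Euclidean norm on exterior powers. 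This is the norm KM use (their Section~4 allows Euclidean norms), and it matches Section~\ref{app:tensors}. Since $h(t)\in\SL_n(\R)$, the full-rank $\Delta=\Z^n$ has covolume $1\ge\varrho$, so it imposes no extra condition.
\end{enumerate}
With these identifications, the goodness hypothesis on $3^nI$ and the lower bound $\sup_I\ge\varrho$ with $\varrho\le1/n$ are exactly (i) and (ii). The conclusion becomes \eqref{eq:km} with implicit constant $n(2\cdot9)^n$, which depends only on $n$.

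The only point needing care is the normalization bookkeeping. First, the threshold $\varrho\le1/n$ and the dilation factor $3^n$ must match KM's statement for the chosen norm. Second, "shortest vector $<\epsilon$" must agree with KM's formulation via $\delta(h(t)\Z^n)<\epsilon$. If KM's printed version uses a different normalization, one can convert at the cost of a constant depending only on $n$, either by norm equivalence on $\wedge^\ast\R^n$ or by replacing $\varrho$ with $\varrho/c_n$. Such a conversion only changes the implicit constant, which is all that \eqref{eq:km} asserts.

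In applications, hypothesis (i) will be supplied by Lemma~\ref{lem:polynomialgoodness}, with $\gamma=1/D$, when the entries of $h$ are polynomial in $t$.
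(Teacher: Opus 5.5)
Your proposal is correct and follows the same route as the paper: both treat the statement as the one-parameter Lebesgue specialization of Kleinbock--Margulis' Theorem~5.2, with all norm normalizations absorbed into a constant depending only on $n$. The paper takes Kleinbock--Margulis' norms to be coordinate maximum norms and converts explicitly: goodness constant $c_n^\gamma C$, lower-bound parameter $\varrho/c_n$, and the trivial bound $|I|$ for $\varrho/c_n<\epsilon\le\varrho$ using $\gamma\le1$. That is exactly the bookkeeping your fallback remark alludes to, and you should state the trivial-bound range explicitly; your checks of the Besicovitch and Federer constants are detail the paper leaves implicit.
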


\begin{proof}[Deduction]
This is the one-parameter specialization of
\cite[Theorem 5.2]{KM1998}. To compare its coordinate maximum
norms with our Euclidean norms, choose $c_n\ge1$ so that
$\|v\|_\infty\le\|v\|\le c_n\|v\|_\infty$ in every exterior
power. Euclidean goodness with constants $(C,\gamma)$ implies
maximum-norm goodness with constants $(c_n^\gamma C,\gamma)$,
and the maximum-norm suprema are at least $\varrho/c_n$.
Apply the cited theorem with that lower-bound parameter when
$\epsilon\le\varrho/c_n$. For larger $\epsilon\le\varrho$,
the trivial bound $|I|$ gives the same conclusion after increasing
the constant. Since $\gamma\le1$, all changes are bounded in
terms of $n$ alone.
We use $\gamma$ for the goodness exponent and $\varrho$ for this
lower-bound parameter, to distinguish them from the arithmetic
exponent $\alpha=(\tau+1)^{-1}$ and representation symbol $\rho$.
Lemma~\ref{lem:polynomialgoodness} supplies the goodness hypotheses
in every application below; the nondivergence bound follows from the cited theorem.
\end{proof}

\begin{lemma}[Exterior-systole nondivergence]\label{lem:qndboth}
Let $h:\R\to G$ be such that all matrix entries of $h$ and
$\wedge^2h$ are polynomials of degree at most $D\ge1$. Suppose that on a
bounded interval $I$, for $k=1,2$ and every nonzero integer exterior vector $w$,
\begin{equation}\label{eq:qndlower}
 \sup_{t\in I}\norm{(\wedge^kh(t))w}\ge1.
\end{equation}
There are constants $C_D$ and $\epsilon_0>0$ such that
\begin{equation}\label{eq:qndboth}
 \Leb\{t\in I:\extmin(h(t)\Gamma)<\epsilon\}
       \le C_D\epsilon^{1/D}|I|
       \qquad(0<\epsilon\le\epsilon_0).
\end{equation}
\end{lemma}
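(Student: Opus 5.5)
The plan is to apply Theorem~\ref{prop:km} twice, once with $n=3$ to the map $h$ itself and once with $n=3$ to the map $\wedge^2h$ (the dual lattice), and then add the two exceptional sets. Since $\extmin(y)<\epsilon$ with $\epsilon<1$ means that either $g\Z^3$ or $(\wedge^2g)\wedge^2\Z^3$ has a nonzero vector shorter than $\epsilon$, the set in \eqref{eq:qndboth} lies in the union of the two corresponding sublevel sets.

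For the first application, take the continuous map $h$ restricted to $3^3I$, with primitive subgroups $\Delta\le\Z^3$ of rank $k\in\{1,2,3\}$. The functions $t\mapsto\norm{(\wedge^kh(t))w_\Delta}$ are norms of vector polynomials of degree at most $D$. For $k=1,2$ this is the hypothesis; for $k=3$ the function is the constant $1$ because $\det h=1$. Lemma~\ref{lem:polynomialgoodness} then shows that they are $(C_D,1/D)$-good on all of $\R$, hence on $3^3I$. The supremum condition on $I$ comes from \eqref{eq:qndlower} with $w=w_\Delta$ for $k=1,2$, and is trivial for $k=3$. So every supremum is at least $1\ge\varrho$, and I take $\varrho=1/3$. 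Theorem~\ref{prop:km} then bounds the measure of $\{t\in I:\latticemin(h(t)\Z^3)<\epsilon\}$ by $C_D(3\epsilon)^{1/D}|I|$ for $\epsilon\le1/3$.

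For the second application, identify $\wedge^2\R^3$ with $\R^3$ through the integral basis $e_i\wedge e_j$. Then $\wedge^2h(t)$ lies in $\SL_3(\R)$: its determinant is $\det(h)^2=1$, and it is the Hodge dual of $h^{-\mathsf T}$. Its entries are polynomials of degree at most $D$ by hypothesis. The primitive subgroups of $\wedge^2\Z^3$ of rank $1,2,3$ now have to be handled. Rank one is the hypothesis for $k=2$ applied to primitive vectors. Rank three gives the constant covolume $1$. Rank two is the main obstacle, and for it I use $\wedge^2(\wedge^2h)\cong h\cdot\det h$: the canonical isomorphism $\wedge^2(\wedge^2\R^3)\cong\R^3$ (for $\dim3$) is equivariant up to the factor $\det h=1$ and maps integral primitive vectors to integral primitive vectors. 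The rank-two functions are therefore norms $\norm{h(t)v}$ with $v\in\Z^3$ nonzero, up to a fixed constant coming from the identification. These are $(C_D,1/D)$-good and have supremum at least $1$ on $I$ by the $k=1$ hypothesis. After absorbing the identification constant into $\varrho$ (choose $\varrho$ a fixed small constant $\le1/3$), Theorem~\ref{prop:km} applies again and bounds the set where the dual lattice has a vector shorter than $\epsilon$ by $C_D\epsilon^{1/D}|I|$.

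Finally, I take $\epsilon_0$ equal to that fixed $\varrho$, add the two bounds, and enlarge $C_D$ to get \eqref{eq:qndboth}. The dependence on $\epsilon_0$ is only through the fixed dimension constants. If the bookkeeping in the rank-two identification gives a constant depending on the convention for the Euclidean norm on exterior powers, that constant is again absorbed, since it depends only on $n=3$.
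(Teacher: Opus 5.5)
Your proposal is correct and follows essentially the paper's argument. The paper likewise takes a union bound over the two short-vector events and applies Theorem~\ref{prop:km} once to $h$ and once to the dual lattice, using $\wedge^2(\wedge^2 h)\simeq h$ to reduce the dual rank-two covolumes to the $k=1$ hypothesis. The paper phrases the second application through $h^{-\mathsf T}$ and the Hodge identification $\wedge^2h\simeq h^{-\mathsf T}$ rather than $\wedge^2 h$ in the basis $e_i\wedge e_j$; this is the same signed orthogonal identification, so your ``fixed constant'' is in fact exactly one.
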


\begin{proof}
All primitive covolume functions are good with exponent $1/D$;
the rank-three one is the constant $1$. Apply
Theorem~\ref{prop:km} with a fixed sufficiently small $\varrho$.
Polynomiality on all of $\R$ supplies the required goodness on $27I$.
This bounds the short-vector event for $h(t)\Z^3$.

To bound the exterior-square event, apply the same theorem to
$h^*(t)=h(t)^{-\mathsf T}$. The integral Hodge identification in
dimension three gives
\[
 \wedge^2h\simeq h^{-\mathsf T},\qquad
 \wedge^2(h^{-\mathsf T})\simeq h,
\]
through fixed signed orthogonal identifications. Consequently both
polynomial degrees and both lower bounds for $h^*$ are already among
those assumed for $h$. Its short-vector event is precisely the
exterior-square event for $h$. A union bound proves
\eqref{eq:qndboth}. Norm comparisons, if needed for the formulation of
Theorem~\ref{prop:km}, only change the fixed constants.
\end{proof}

The following lemma bounds the injectivity radius below in terms
of the shortest-vector length of the lattice.
\begin{lemma}
\label{lem:injectivity}
Let $n\in\{2,3\}$, $G_n=\SL_n(\R)$, $\Gamma_n=\SL_n(\Z)$,
$\Delta\le\Gamma_n$, and $b\in G_n$. Write
$\latticemin(b\Gamma_n)=\min_{0\ne v\in\Z^n}\norm{bv}$.
For a fixed right-invariant group metric, let $\inj_\Delta(b\Delta)$
be the local injectivity radius of the orbit map, capped at one.
Then
\[
 \inj_\Delta(b\Delta)\ge
 c_n\min\{1,\latticemin(b\Gamma_n)\}^{n(n-1)}.
\]
In particular,
\begin{align}
 \inj_\Delta(b\Delta)&\ge c\min\{1,\latticemin(b\Gamma_2)^2\}
                  &&(\Delta\le\Gamma_2),\label{eq:coverinjectivity}\\
 \inj(y)&\ge c_4\extmin(y)^6 &&(y\in X).
                                                   \label{eq:injfromsystole}
\end{align}
The constants depend only on the fixed metrics, not on $\Delta$.
\end{lemma}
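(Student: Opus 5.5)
The plan is to reduce the statement to a lower bound on how close a conjugate $b\delta b^{-1}$ with $1\ne\delta\in\Gamma_n$ can come to the identity. Reduction theory then puts $b$ in a Siegel set, where this bound reads off from the diagonal part. Throughout, $\lambda=\min\{1,\latticemin(b\Gamma_n)\}$.

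\emph{Reduction to a displacement bound.} Suppose the orbit map $u\mapsto ub\Delta$ fails to be injective on the ball $B_r$ of radius $r$ about $\groupid$. Then there are $u_1\ne u_2$ in $B_r$ and $\delta\in\Delta\setminus\{\groupid\}$ with $u_1b\delta=u_2b$. Hence
\[
 u_2^{-1}u_1=b\delta^{-1}b^{-1},
\]
and since $u_1,u_2$ lie in a fixed small ball, $\norm{u_2^{-1}u_1-I_n}_{\mathrm{op}}\le C_nr$. It therefore suffices to prove
\begin{equation*}
 \norm{b\delta b^{-1}-I_n}_{\mathrm{op}}\ge c_n\lambda^{n}
 \qquad(\groupid\ne\delta\in\Gamma_n).
\end{equation*}
This forces $r\ge c_n'\lambda^n\ge c_n'\lambda^{n(n-1)}$, because $\lambda\le1$ and $n(n-1)\ge n$ for $n\ge2$. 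The bound uses only $\delta\in\Gamma_n$, so the constants cannot depend on $\Delta$.

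\emph{Reduction to a Siegel set.} Take any $\gamma\in\Gamma_n$. Then
\[
 b\delta b^{-1}=(b\gamma)(\gamma^{-1}\delta\gamma)(b\gamma)^{-1},
\]
where $\gamma^{-1}\delta\gamma$ is again a nontrivial element of $\Gamma_n$, and $\latticemin$ is unchanged. So we may replace $b$ by $b\gamma$ and assume $b=kau$ lies in a fixed Siegel set. Here $k\in\SO(n)$, $u$ is upper unipotent with entries bounded by $1/2$, and $a=\diag(a_1,\ldots,a_n)$ with $a_{i+1}\ge c\,a_i$. Minkowski's successive-minima comparison gives $a_1\asymp\latticemin(b\Z^n)\ge\lambda$, and hence $a_i\gg\lambda$ for every $i$. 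Since $\prod a_i=1$, it follows that $a_n\ll\lambda^{-(n-1)}$.

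\emph{The displacement estimate.} Write $X=\delta-I_n$, a nonzero integer matrix, so some entry satisfies $|X_{ij}|\ge1$. Put $Y=uXu^{-1}$. Because $u$ and $u^{-1}$ lie in a compact set, $\norm{Y}\gg\norm{X}\ge1$. Then
\[
 \norm{b\delta b^{-1}-I_n}=\norm{aYa^{-1}}\ge\max_{i,j}(a_i/a_j)|Y_{ij}|\ge\Bigl(\min_{i,j}a_i/a_j\Bigr)\max_{i,j}|Y_{ij}|.
\]
The smallest ratio satisfies $\min_{i,j}a_i/a_j\gg a_1/a_n\gg\lambda\cdot\lambda^{n-1}=\lambda^n$. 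This gives the claimed bound, and the general inequality follows.

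\emph{Specializations.} Taking $n=2$ gives \eqref{eq:coverinjectivity}. Taking $n=3$ and $\Delta=\Gamma$, with $\latticemin(g\Z^3)\ge\extmin(y)$ by \eqref{eq:systole}, gives \eqref{eq:injfromsystole}.

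The main obstacle I expect is bookkeeping for the Siegel-set constants. The comparisons $a_1\asymp\latticemin$ and $a_i\gg\lambda$ must hold uniformly, and the transition from the right-invariant metric ball to the operator-norm bound must be justified. Both only involve constants depending on $n$ and the fixed metric.
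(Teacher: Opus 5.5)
Your argument is correct and follows the paper's proof. Both replace $b$ by $b\gamma$ with $\gamma\in\Gamma_n$, so that its columns form a size-reduced basis whose Gram--Schmidt lengths satisfy $a_{i+1}\ge c\,a_i$, and both then use that $\gamma^{-1}\delta\gamma-I_n$ is a nonzero integer matrix.

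The only difference is the final conjugation estimate. The paper bounds $\|b\delta b^{-1}-I_n\|_{\mathrm{op}}$ below by $(\|\widetilde b\|_{\mathrm{op}}\|\widetilde b^{-1}\|_{\mathrm{op}})^{-1}$ and uses the crude cofactor bound $\|\widetilde b^{-1}\|_{\mathrm{op}}\ll\lambda^{-(n-1)^2}$, which gives the stated exponent $n(n-1)$. Your entrywise estimate in the coordinates $b=kau$ gives the sharper $\lambda^{n}$ (for $n=3$, $\inj(y)\gg\extmin(y)^3$), and this implies the lemma.
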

\begin{proof}
Put $\varrho=\min\{1,\latticemin(b\Gamma_n)\}$. Choose a shortest
primitive lattice vector, then a shortest primitive vector in each
successive orthogonal projection, and lift them to a basis. Reduce
each Gram--Schmidt coefficient to $[-1/2,1/2]$ by subtracting earlier
basis vectors. If $a_1,\ldots,a_n$ are the Gram--Schmidt lengths,
minimality in each projected lattice gives
$a_{i+1}\ge(\sqrt3/2)a_i$. Indeed, in the projection orthogonal
to the first $i-1$ vectors, a reduced lift of the next projected
vector has length at most
$\sqrt{a_{i+1}^2+a_i^2/4}$. This length cannot be less than
the shortest length $a_i$ in that projected lattice.
Since $a_1\ge\varrho$ and
$\prod_i a_i=1$, every $a_i$ lies between
$c_n\varrho$ and $C_n\varrho^{-(n-1)}$.
The reduced coefficients bound all basis vectors by the latter
quantity. Changing a sign if needed gives $\widetilde b=b\gamma$,
$\gamma\in\Gamma_n$, with
\begin{equation}\label{eq:elementaryreduction}
 \norm{\widetilde b}_{\mathrm{op}}\ll\varrho^{-(n-1)},\qquad
 \norm{\widetilde b^{-1}}_{\mathrm{op}}
                  \ll\varrho^{-(n-1)^2}.
\end{equation}
The second bound follows from the cofactor formula.

For $\groupid\ne\delta\in\Delta$, the matrix
$k=\gamma^{-1}\delta\gamma$ is integral and nonidentity. Hence
$\norm{k-I_n}_{\mathrm{op}}\ge1$ and
\[
 \norm{b\delta b^{-1}-I_n}_{\mathrm{op}}
 \ge\bigl(\norm{\widetilde b}_{\mathrm{op}}
          \norm{\widetilde b^{-1}}_{\mathrm{op}}\bigr)^{-1}
 \gg\varrho^{n(n-1)}.
\]
If two elements in a sufficiently small identity ball have the same
image in $G_n/\Delta$, their quotient is such a stabilizer element.
Local comparison of matrix distance and the fixed metric proves the
injectivity bound. For $n=3$, use
$\extmin(y)\le\min\{1,\latticemin(y)\}$ to obtain the last assertion.
\end{proof}

\subsection{Singular unipotent segments}

Set
\begin{align}
 u_{\mathrm S}(s)&=I_3+sE_{23},\label{eq:u}\\
 a_{\mathrm S}(\sigma)&=\diag(1,e^{\sigma/2},e^{-\sigma/2}),\nonumber\\
 a_0(\ell)&=\diag(e^{\ell/3},e^{-\ell/6},e^{-\ell/6}),\nonumber\\
 D_{\sigma,\ell}&=a_{\mathrm S}(-\sigma)a_0(\ell),\nonumber\\
 U_L^{\mathrm S}f(y)&=\frac1L\int_{-L/2}^{L/2}f(u_{\mathrm S}(s)y)\dd s.\label{eq:unipotentaverage}
\end{align}

Yang's theorem \cite[Theorem~1.2]{Yang2025} asserts that at least one
of two alternatives holds:
an effective equidistribution estimate or a short standard or
exterior-square lattice vector after one of the specified diagonal
translations.
We record the sufficient condition obtained by excluding the second
alternative, with constants adjusted to our norm conventions.

\begin{theorem}[{Singular unipotent flow estimate;
  after \cite[Theorem~1.2]{Yang2025}}]
\label{thm:yang}
There are absolute constants $\etaY>0$, $\sigma_Y>1$, $C_Y>0$,
$b_Y\ge2$, and a fixed Sobolev norm $\SY$ such that the following holds.
Let $\sigma\ge\sigma_Y$, $L=e^\sigma$, and $y\in X$. If
\begin{equation}\label{eq:yangcondition}
 \extmin(D_{\sigma,\ell}y)\ge b_Ye^{-|\ell|/3}
 \quad\hbox{for every }\ell\in\R\hbox{ with }
        0.99\sigma\le|\ell|\le1.01\sigma,
\end{equation}
then
\begin{equation}\label{eq:yangestimate}
 \abs{U_L^{\mathrm S}f(y)-m_X(f)}\le C_YL^{-\etaY}\SY(f)
 \qquad(f\in C_c^\infty(X)).
\end{equation}
\end{theorem}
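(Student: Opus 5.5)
This theorem is a specialization of Yang's theorem \cite[Theorem~1.2]{Yang2025}, so the plan is to deduce it from the cited dichotomy rather than prove an equidistribution statement from scratch. Yang's result says: for $\sigma$ large, $L=e^\sigma$, and $y\in X$, either the unipotent average over a segment of length comparable to $L$ is within $L^{-\eta}$ times a Sobolev norm of $m_X(f)$, or some diagonal translate $D_{\sigma,\ell}y$ with $|\ell|$ in a window around $\sigma$ has a short vector. The short vector may lie in the standard lattice or in its exterior square, with length below a threshold of size $e^{-|\ell|/3}$ times a constant. I will state the cited theorem in its original normalization and then translate it in three steps.

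\textbf{Step 1: match the conventions.} I will match the unipotent flow $u_{\mathrm S}$, the diagonal flows $a_{\mathrm S}$ and $a_0$, and the averaging window $[-L/2,L/2]$ with those of the cited paper. If Yang averages over $[0,L]$ or uses a different time parametrization, I will do the following. A rescaling $s\mapsto cs$ of the unipotent parameter corresponds to conjugation by a fixed diagonal element, which is absorbed into a fixed shift of $\sigma$. A change of window is handled by splitting the symmetric interval into two one-sided intervals of length $L/2$ based at $u_{\mathrm S}(\pm L/2)y$. This costs a bounded change in $\sigma$ and in the constant $b_Y$, and it enlarges the $\ell$-window only by an additive constant, which fits inside $[0.99\sigma,1.01\sigma]$ once $\sigma\ge\sigma_Y$ is large enough.

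\textbf{Step 2: compare the exceptional alternative with \eqref{eq:yangcondition}.} Yang's exceptional alternative is stated in terms of $\lambda_{\min}$ of $D_{\sigma,\ell}y$ and of its dual lattice, possibly in sup norms. By the definition \eqref{eq:systole}, $\extmin$ is exactly the minimum of the standard and dual shortest vector lengths, capped at one. Fixed norm comparisons in $\R^3$ and $\wedge^2\R^3$ therefore show the following: if $\extmin(D_{\sigma,\ell}y)\ge b_Ye^{-|\ell|/3}$ with $b_Y$ large enough, then the exceptional alternative fails for every $\ell$ in the window. The cap at one is harmless because $b_Ye^{-|\ell|/3}<1$ for $\sigma\ge\sigma_Y$.

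\textbf{Step 3: fix the Sobolev norm.} I will take $\SY$ to be Yang's Sobolev norm, enlarged by lower-order terms as allowed after \eqref{eq:localsobolev}, so that it dominates the translated norms produced by the fixed diagonal conjugation in Step 1. Exponent changes from the reparametrization are absorbed by shrinking $\etaY$ and enlarging $C_Y$.

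The main obstacle is Step 1: checking that the cited window of $\ell$ values and the exponent in the threshold ($e^{-|\ell|/3}$, matching the $a_0$ weights $\ell/3,-\ell/6$) agree after reparametrization with the factor $0.99$–$1.01$ used here. If the original window is larger, I would first try replacing $\sigma$ by a constant multiple, so that the original window lies inside the stated one. Only constants change, and the resulting estimate is \eqref{eq:yangestimate}.
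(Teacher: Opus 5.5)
Your Steps 2 and 3 are exactly the paper's deduction: choose $\sigma_Y$ above Yang's starting threshold and $b_Y\ge2$ above the norm-comparison constants on $\R^3$ and $\wedge^2\R^3$, so that \eqref{eq:yangcondition} rules out the short-vector alternative for every $\ell$ in the window. Then read off the first alternative with Yang's $\etaY$ and $\SY$. Step 1 is not needed, because the paper uses Yang's theorem with the same $D_{\sigma,\ell}$, the same $\ell$-window and the same threshold $e^{-|\ell|/3}$, and the first alternative becomes $U_L^{\mathrm S}f(y)$ under $s=e^\sigma r$, $r\in[-1/2,1/2]$; this is just as well, since your fallbacks would not work as stated (rescaling $\sigma$ cannot shrink a relatively wider $\ell$-window, and shifting $\sigma$ by a constant makes the window protrude rather than fit inside, which you would repair by passing to a nearby admissible $\ell$ and absorbing the bounded diagonal factor into $b_Y$).
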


\begin{proof}[Deduction]
Take $\sigma_Y$ larger than the starting threshold in
\cite[Theorem~1.2]{Yang2025}. Choose $b_Y\ge2$ strictly larger
than the constants comparing our Euclidean norms on $\R^3$ and
$\wedge^2\R^3$ with the corresponding norms in Yang's theorem.
Under \eqref{eq:yangcondition}, every nonzero vector in either
translated lattice then has norm strictly greater than
$e^{-|\ell|/3}$ in Yang's conventions, for every specified $\ell$.
This excludes the second alternative of his theorem. The first
alternative, with $s=e^\sigma r$ and $r\in[-1/2,1/2]$, gives
\eqref{eq:yangestimate}. The exponent $\etaY$ and the Sobolev norm
$\SY$ are those supplied by Yang's theorem; $C_Y$ incorporates
the fixed normalization constants.
\end{proof}

\subsection{Regular expanding translates}

For the regular tangent, set
\begin{equation}\label{eq:regularnormalization}
 N=N_0=E_{12}+E_{23},\qquad
 v(s)=\exp(sN_0),\qquad a_s^{\mathrm R}=\diag(e^s,1,e^{-s}).
\end{equation}
Set
\begin{equation}\label{eq:regularform}
 Q_0=\begin{pmatrix}0&0&1\\0&-1&0\\1&0&0\end{pmatrix},\qquad
 H_{\mathrm{pr}}=\{h\in G:hQ_0h^{\mathsf T}=Q_0\}^{\circ}.
\end{equation}
Here $Q_0$ is a symmetric tensor, with action
$Q\mapsto hQh^{\mathsf T}$ (Section~\ref{app:tensors}).
By Lemma~\ref{app:symmetric-square}, $H_{\mathrm{pr}}$ is the image
of a principal $\SL_2(\R)$ and contains $v(s)$ and
$a_s^{\mathrm R}$. For example,
$N_0Q_0+Q_0N_0^{\mathsf T}=0$ and
$a_s^{\mathrm R}Q_0(a_s^{\mathrm R})^{\mathsf T}=Q_0$.
The associated $\mathfrak{sl}_2$ triple has raising operator $N_0$,
lowering operator $2(E_{21}+E_{32})$, and diagonal element
$2\diag(1,0,-1)$. Thus the normalization of $a_s^{\mathrm R}$ agrees
with the image of $\diag(e^{s/2},e^{-s/2})$ in Theorem~1.1 of
Lindenstrauss, Mohammadi, Wang, and Yang.
In particular,
\[
 a_s^{\mathrm R}v(r)a_{-s}^{\mathrm R}=v(e^sr).
\]
Use the one-sided average
\begin{equation}\label{eq:regularaverage}
 V_Lf(y)=\frac1L\int_0^L f(v(s)y)\dd s.
\end{equation}
The conjugation relation above gives, for every $L>0$ and $z\in X$,
\begin{equation}\label{eq:regularidentity}
 V_Lf(z)=\int_0^1 f\bigl(a_{\log L}^{\mathrm R}v(r)
                                   a_{-\log L}^{\mathrm R}z\bigr)\dd r.
\end{equation}
This identity relates the long-segment average to the expanding
average in Theorem~\ref{prop:regularinput}, with starting point
$y=a_{-\log L}^{\mathrm R}z$. The threshold
\eqref{eq:regularinputthreshold} must be verified at this point.
The singular average $U_L^{\mathrm S}$ is centered, whereas $V_L$ is one-sided.

Fix the right-invariant metric on $G$ and the quotient metric $d_X$
specified before \cite[Theorem 1.1]{LMWY2025}.
Use the capped local injectivity radius from
Lemma~\ref{lem:injectivity} for this metric. The constants below absorb
the choice of metric and the truncation of the injectivity radius at
one. The orbit volume
uses the fixed normalization of that paper.

We specialize the expanding-translate theorem of Lindenstrauss,
Mohammadi, Wang, and Yang to the principal subgroup fixed above.

\begin{theorem}[{\cite[Theorem~1.1]{LMWY2025}}]\label{prop:regularinput}
There are fixed constants $A\ge1$, $\kappa>0$, $m_0>0$, $C_0\ge1$,
and a Sobolev norm $\SR$ such that, if
\begin{equation}\label{eq:regularinputthreshold}
 R\ge C_0\max\{1,\inj(y)^{-m_0}\},\qquad L\ge R^A,
\end{equation}
then either
\begin{equation}\label{eq:regulargood}
 \left|\int_0^1 f(a_{\log L}^{\mathrm R}v(r)y)\dd r-m_X(f)\right|
       \le C_0\SR(f)R^{-\kappa}
       \quad\text{for every }f\in C_c^\infty(X),
\end{equation}
or there is $z\in X$ with
\begin{equation}\label{eq:regularbad}
 H_{\mathrm{pr}}z\text{ periodic},\qquad \vol(H_{\mathrm{pr}}z)\le R,\qquad
 d_X(y,z)\le\varepsilon,
 \quad\varepsilon=R^A(\log L)^A L^{-2}.
\end{equation}
\end{theorem}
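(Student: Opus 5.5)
This theorem is a specialization of \cite[Theorem~1.1]{LMWY2025}, so the plan is a deduction: check that our data satisfy its hypotheses, then translate its conclusion into the present notation. I will not reprove any dynamics. The ambient data are $G=\SL_3(\R)$ and the arithmetic lattice $\Gamma=\SL_3(\Z)$. Here $G$ is the real points of a split, hence $\R$-quasi-split, simple group of absolute rank two, so it falls within the class treated in \cite{LMWY2025}.

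\emph{Step 1: identify the acting subgroup.} By Lemma~\ref{app:symmetric-square}, $H_{\mathrm{pr}}$ is the image of the principal homomorphism $\SL_2(\R)\to G$. The image of $\begin{pmatrix}1&r\\0&1\end{pmatrix}$ is a regular unipotent element, so $H_{\mathrm{pr}}$ is the subgroup required in \cite{LMWY2025}.

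\emph{Step 2: match the parametrizations.} I would use the explicit $\mathfrak{sl}_2$ triple recorded above: raising operator $N_0$, lowering operator $2(E_{21}+E_{32})$, diagonal element $2\diag(1,0,-1)$.
\begin{itemize}
\item The image of $\diag(e^{s/2},e^{-s/2})$ is $\exp\bigl(\tfrac s2\cdot2\diag(1,0,-1)\bigr)=a_s^{\mathrm R}$.
\item The image of the upper unipotent $u_r$ is $\exp(rN_0)=v(r)$.
\end{itemize}
Hence the expanding translate $\int_0^1 f(a_t u_r y)\dd r$ in their theorem, with $a_t$ the image of $\diag(e^{t/2},e^{-t/2})$, becomes $\int_0^1 f(a_{\log L}^{\mathrm R}v(r)y)\dd r$ with $L=e^t$. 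If the paper normalizes the diagonal flow or the unipotent parameter differently, the only effect is rescaling $t$ by a fixed factor or $r$ by a fixed constant. Such rescalings are absorbed into $A$ and $C_0$, together with a bounded reparametrization of the $r$-interval.

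\emph{Step 3: transfer the threshold and the dichotomy.} Their hypotheses take the form $R\ge C\,\inj(y)^{-m}$ and $t\ge A\log R$; in our notation these read $L\ge R^A$. Replacing their injectivity radius by the capped local injectivity radius of Lemma~\ref{lem:injectivity}, for the same fixed metric, changes only constants, and the extra $\max\{1,\cdot\}$ is harmless. Their conclusion is one of two alternatives:
\begin{itemize}
\item an effective bound with some Sobolev norm, which we name $\SR$, and rate $R^{-\kappa}$, giving \eqref{eq:regulargood};
\item a periodic orbit $H_{\mathrm{pr}}z$ of volume at most $R$ whose distance from $y$ is at most $R^A t^A e^{-2t}$ (or a comparable expression), giving \eqref{eq:regularbad} once $e^{t}=L$ is substituted.
\end{itemize}
The orbit-volume normalization is the one fixed in their paper, as stated. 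If their exponent of $L$ in $\varepsilon$ is weaker than $2$, I would record whatever power appears; the later applications only use some fixed negative power.

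The main obstacle is this bookkeeping: making sure that the exponent $L^{-2}$ in $\varepsilon$ and the threshold form in \eqref{eq:regularinputthreshold} really follow from their statement after the change of time scale. If exact matching fails, I would enlarge $A$ and $C_0$, which weakens both alternatives only harmlessly.
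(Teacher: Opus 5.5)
Your route is the paper's: a direct specialization of \cite[Theorem~1.1]{LMWY2025}, and your Step~2 is the same $\mathfrak{sl}_2$-triple check the paper records just before the statement.

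\emph{The gap.} It lies at the one point in Step~3 with real content: that the periodic-orbit alternative comes with exactly $L^{-2}$. You leave this as a guess (``$R^At^Ae^{-2t}$ or a comparable expression''), and your fallbacks do not repair a mismatch.
\begin{enumerate}
\item Enlarging $A$ and $C_0$ cannot turn a bound $R^A(\log L)^AL^{-\chi}$ with $\chi<2$ into one of the form $R^{A'}(\log L)^{A'}L^{-2}$. The hypothesis only gives $L\ge R^A$, and $L$ is otherwise arbitrarily large, so $L^{2-\chi}$ is not dominated by any power of $R$ or of $\log L$.
\item For the same reason, a fixed rescaling $t\mapsto ct$ of the diagonal time is not absorbed into constants, because it changes the power of $L$. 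A fixed rescaling of $r$ is harmless: conjugating by a fixed $a^{\mathrm R}_{s_0}$ changes $L$ only by a constant factor and moves $y$ by a bounded amount.
\item ``Recording whatever power appears'' proves a different statement. Proposition~\ref{prop:separatedregular} is not indifferent to the exponent. It uses $L^{-2}\le\varepsilon\le L^{-1}$ in $\beta\le L^{-2/3}$, in $2\beta/\varrho_T\le L^{-1/8}$, and in the exponent $14/3$ that enters $\beta_{\mathrm{Ra}}$ and $T_{\mathrm R,*}$.
\end{enumerate}

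\emph{The missing observation.} The paper's deduction supplies it. In \cite[Theorem~1.1 and (10.3)]{LMWY2025}, the power of $L$ in the distance bound is governed by their parameter $\mathsf m$, which is read off from the complementary $H_{\mathrm{pr}}$-module in $\mathfrak{sl}_3$. That complement is the five-dimensional irreducible module spanned by $E_{13}$, $E_{12}-E_{23}$, $\diag(1,-2,1)$, $E_{21}-E_{32}$, $E_{31}$. On it, $\Ad(a^{\mathrm R}_{\log L})$ has eigenvalues $L^2,L,1,L^{-1},L^{-2}$, so $\mathsf m=2$. With the normalization you already verified in Step~2, this gives exactly $\varepsilon=R^A(\log L)^AL^{-2}$. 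After that, only the metric, the cap on the injectivity radius, and the orbit-volume normalization need to be absorbed by enlarging $A$ and $C_0$, as you say.
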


\begin{proof}[Deduction]
This is \cite[Theorem 1.1 and (10.3)]{LMWY2025}, specialized to the
fixed $G,\Gamma,H_{\mathrm{pr}}$ above. Here the complementary
$H_{\mathrm{pr}}$-module in
$\mathfrak{sl}_3$ has dimension five, so the parameter $\mathsf m$
in that theorem equals $2$. We enlarge $A,C_0$ if needed.
The starting threshold is the explicit polynomial bound in
\eqref{eq:regularinputthreshold}.
\end{proof}

\subsection{Quantitative equidistribution on compact nilmanifolds}
\label{subsec:nilinput}

Let $H$ be a nontrivial connected closed subgroup of a conjugate
of the upper unitriangular group in $G$. Suppose that $Y=Hz$
is compact for some $z\in X$, and put
\[
 \Delta_z=\{h\in H:hz=z\}.
\]
The orbit map identifies $Y$ with the compact nilmanifold
$H/\Delta_z$. Write $\mu_Y$ for its $H$-invariant probability
measure, and use Haar probability in integrals over $H/\Delta_z$.

Lemma~\ref{app:malcev-existence} gives a Mal'cev basis
$B=(B_1,\ldots,B_m)$ adapted to the lower central series.
The general existence result is described after
\cite[Definition 2.1]{GreenTao2012}, with proofs in
\cite[Theorem 5.1.8(a), Corollary 5.2.2, and Proposition 5.3.2]{CorwinGreenleaf1990}.
The associated ordered exponential coordinates are
\[
 h=\exp(a_1B_1)\cdots\exp(a_mB_m),\qquad
 \chi_B(h)=(a_1,\ldots,a_m).
\]
They are unique and send $\Delta_z$ onto $\Z^m$. The terminal
spans of the basis are ideals, and the last $m-r$ vectors span
$[\Lie(H),\Lie(H)]$, where $1\le m\le3$ and
$r=\dim H/[H,H]$. Fix $Q_B\ge1$ bounding the numerator and
denominator of each rational structure constant in this basis.
The group, lattice, and basis are fixed in the theorem below.

By Lemma~\ref{app:horizontal}, the first $r$ coordinates define
the homomorphism $\chi_{\mathrm{ab}}:H\to\R^r$ and identify
$H/[H,H]\Delta_z$ with $\R^r/\Z^r$. Every horizontal character
is therefore
\[
 \eta_k(h)=k\cdot\chi_{\mathrm{ab}}(h)\bmod\Z,
 \qquad k\in\Z^r.
\]
These characters vanish on $\Delta_z$.

Use the Mal'cev metric $d_B$ of \cite[Definition 2.2]{GreenTao2012}:
on $H$ it is the infimum, over finite chains, of the sums of
\[
 \min\{\norm{\chi_B(h_i h_{i-1}^{-1})}_\infty,
          \norm{\chi_B(h_{i-1}h_i^{-1})}_\infty\};
\]
on $H/\Delta_z$ take the infimum over representatives. Set
$\norm F_{\mathrm{Lip},B}=\norm F_\infty+\operatorname{Lip}_{d_B}(F)$.
For $F(h\Delta_z)=f(hz)$, telescoping along ordered exponentials
and then along the chains defining $d_B$ gives
\begin{equation}\label{eq:nillipcontrol}
 \norm F_{\mathrm{Lip},B}\le(m+1)\mathcal N_{B,1,Y}(f).
\end{equation}
Indeed a left displacement with coordinates $(t_i)$ changes $f$
by at most $\sum_i|t_i|\max_i\norm{D_{B_i}f}_{\infty,Y}$.
The reverse displacement gives the other term in the minimum.

For a real polynomial $P(n)=\sum_{j=0}^{D}\alpha_j\binom nj$,
write
\[
 \norm{P\bmod\Z}_{C^\infty[N]}
     =\max_{1\le j\le D}N^j\operatorname{dist}(\alpha_j,\Z).
\]
For coordinate degree $D$, use the filtration
\begin{equation}\label{eq:nilfiltration}
 H_0=H,\qquad
 H_i=
 \begin{cases}
 H,&1\le i\le D,\\
 [H,H],&D<i\le2D,\\
 \{\groupid\},&i>2D.
 \end{cases}
\end{equation}
Its degree is at most $2D$ and $B$ is adapted to it.
Lemma~\ref{app:nil-sampling} shows that coordinate-polynomial
sequences, including $n\mapsto\Phi(sn)a$ for a fixed right
translate $a$ and real sampling step $s$, are polynomial for
this filtration. The matrix-to-coordinate degree bound is
Lemma~\ref{app:horizontal}.

We use the following specialization of the quantitative Leibman
theorem of Green and Tao.

\begin{theorem}[{\cite[Theorem~2.9]{GreenTao2012}}]\label{prop:green-tao}
For each integer $D\ge1$, there are constants
$a_D\ge\max\{2,D\}$ and $b_D\ge1$, depending only on $D$,
with the following property. Let $0<\epsilon\le(4Q_B)^{-1}$,
$N\ge2$ be an integer, and let $q:\Z\to H$ have polynomial ordered coordinates
of degree at most $D$. If for some Lipschitz $F$,
\[
 \left|\frac1N\sum_{n=1}^N F(q(n)\Delta_z)
                     -\int_{H/\Delta_z}F\right|
                 >\epsilon\norm F_{\mathrm{Lip},B},
\]
there is $0\ne k\in\Z^r$ such that
\begin{equation}\label{eq:gtobstruction}
 \norm k\le b_D\epsilon^{-a_D},\qquad
 \norm{\eta_k\circ q}_{C^\infty[N]}\le b_D\epsilon^{-a_D}.
\end{equation}
\end{theorem}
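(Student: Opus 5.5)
The plan is to obtain this as a direct specialization of \cite[Theorem~2.9]{GreenTao2012}, checking each of its hypotheses against the normalizations fixed above. Recall the shape of the cited result. Let $G/\Gamma$ be a filtered nilmanifold of degree $s$ and dimension $m$, equipped with a $\delta$-rational Mal'cev basis adapted to the filtration, where $0<\delta<1/2$, and let $g$ be a polynomial sequence. If $(g(n)\Gamma)_{n\le N}$ is not $\delta$-equidistributed, there is a nonzero horizontal character $\eta$ with $|\eta|\le\delta^{-O_{s,m}(1)}$ and $\norm{\eta\circ g}_{C^\infty[N]}\le\delta^{-O_{s,m}(1)}$. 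Here $\delta$-equidistribution means that the discrepancy is at most $\delta\norm F_{\mathrm{Lip}}$ for every Lipschitz $F$. We apply this with $G/\Gamma=H/\Delta_z$ and $\delta=\epsilon$. The hypothesis of the theorem above is exactly the failure of $\epsilon$-equidistribution, with $\norm{\cdot}_{\mathrm{Lip},B}$ as the Lipschitz norm. The metric $d_B$ was chosen to be the Mal'cev metric of \cite[Definition~2.2]{GreenTao2012}, so the two Lipschitz norms agree.

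The hypotheses are verified in order. The filtration \eqref{eq:nilfiltration} has degree at most $2D$, and $B$ is adapted to it, as recorded above. By Lemma~\ref{app:nil-sampling} and Lemma~\ref{app:horizontal}, a sequence $q$ with polynomial ordered coordinates of degree at most $D$ is a polynomial sequence for this filtration. For rationality, the structure constants of $B$ have numerators and denominators bounded by $Q_B$. Since $\epsilon\le(4Q_B)^{-1}$, we have $1/\epsilon\ge Q_B$, so $B$ is $\epsilon$-rational in the sense of Green and Tao, and $\epsilon<1/2$ as required. The assumption $N\ge2$ matches their range.

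It remains to translate the output. Horizontal characters of $H/\Delta_z$ are exactly the maps $\eta_k$ with $k\in\Z^r$, by Lemma~\ref{app:horizontal}. The Green--Tao modulus $|\eta|$ is a fixed norm of $k$ in these coordinates, comparable to $\norm k$ up to a factor depending only on $r\le3$. The $C^\infty[N]$ seminorm used above is written in the binomial basis and is the same as theirs.

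The only point needing care is the uniformity of the constants. The cited exponents depend on the degree $s\le2D$ and the dimension $m$. Since $1\le m\le3$, we may take the maximum over the finitely many possible values of $m$, and this makes $a_D,b_D$ depend only on $D$. The norm-comparison factor can be absorbed because $\epsilon^{-a_D}\ge1$. Finally, enlarging $a_D$ to be at least $\max\{2,D\}$ and $b_D$ to be at least $1$ only weakens the conclusion \eqref{eq:gtobstruction}, so it remains valid.
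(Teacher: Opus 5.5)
Your proposal is correct and is essentially the paper's own deduction: it applies \cite[Theorem~2.9]{GreenTao2012} to the filtration \eqref{eq:nilfiltration} (polynomiality and adaptedness from Lemma~\ref{app:nil-sampling}), checks rationality via $Q_B\le 1/\epsilon$, and takes maxima of the constants over $1\le m\le 3$ and filtration degree at most $2D$, enlarging them for the norm comparison. One small slip: Green--Tao's hypothesis is that the Mal'cev basis be $1/\epsilon$-rational (structure constants of height at most $1/\epsilon$), not ``$\epsilon$-rational''; your inequality $1/\epsilon\ge Q_B$ verifies exactly the correct condition.
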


\begin{proof}[Deduction]
Lemma~\ref{app:nil-sampling} equips each such $q$ with the
filtration \eqref{eq:nilfiltration}, of degree at most $2D$,
to which the chosen basis is adapted. Apply
\cite[Theorem 2.9]{GreenTao2012} to this filtration.
The basis is $1/\epsilon$-rational since
$Q_B\le1/\epsilon$. Take maxima of the constants in Green and Tao's
Theorem~2.9 over $1\le m\le3$ and filtration degrees at most $2D$,
and enlarge them for Euclidean versus coordinate maximum norms.
The character and smoothness conventions are those of
\cite[Definitions 2.6 and 2.7]{GreenTao2012}

%\footnote{Only the one-parameter theorem is used; the authors' erratum explicitly leaves that theorem unaffected \cite[Section 1]{GreenTaoErratum}.}.

\end{proof}

\section{Reduction to model supports}
\label{sec:supportstructure}

Lemma~\ref{lem:orbitnormalization} supplies a connected rational
support group whose radical is unipotent. This section proves
the support classification and the coordinate and covering reductions
needed for the effective estimates. The general algebraic and lattice
facts used below are collected in Appendix~\ref{app:algebraic-background}.
All coordinate systems, parametrizations, and covering levels
below are fixed once the normalized support is fixed.

\subsection{The support types}

Define the two rational affine embeddings by
\begin{equation}\label{eq:affinegroups}
 \mathbf A=
 \left\{\begin{pmatrix}A&v\\0&1\end{pmatrix}:
           A\in\SL_2,\ v\in\mathbb G_a^2\right\},
 \qquad
 \mathbf A^\vee=\{b^{-\mathsf T}:b\in\mathbf A\}.
\end{equation}
Here $\mathbb G_a^2$ is the two-dimensional additive group.
The embeddings are isomorphic as algebraic groups, but are
listed separately as subgroups of $\SL_3$.
Write $A_{\mathrm{aff}}=\mathbf A(\mathbb R)$; its elements
are pairs $(A,v)$ with product
$(A,v)(B,w)=(AB,Aw+v)$.

\begin{proposition}[Classification of the normalized supports]
\label{prop:supportclassification}
Let $\mathbf H\le\SL_3$ be connected and defined over $\mathbb Q$,
and suppose $\mathbf H/R_u(\mathbf H)$ is semisimple.
Then $\mathbf H$ is one of the following:
\begin{enumerate}
\item a unipotent group;
\item $\SL_3$;
\item a rational conjugate of the block $\SL_2$;
\item $\SO(\qform)$ for a nondegenerate rational ternary quadratic form $\qform$;
\item a rational conjugate of $\mathbf A$ or $\mathbf A^\vee$.
\end{enumerate}
Rational conjugacy here means conjugacy by a matrix in
$\mathrm{GL}_3(\mathbb Q)$, and
$\SO(\qform)=\{b\in\SL_3:b^{\mathsf T}Qb=Q\}$ when
$\qform(v)=v^{\mathsf T}Qv$.
\end{proposition}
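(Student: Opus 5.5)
The plan is to split on the Levi quotient. Write $\mathbf U=R_u(\mathbf H)$ and $\mathbf S=\mathbf H/\mathbf U$, which is semisimple by hypothesis. If $\mathbf S$ is trivial, $\mathbf H$ is unipotent and we are in case (1). Otherwise, a Levi decomposition over $\mathbb Q$ (Proposition~\ref{app:levi-decomposition}) gives a rational semisimple subgroup $\mathbf M\le\mathbf H$ with $\mathbf H=\mathbf M\ltimes\mathbf U$. The semisimple subgroups of $\SL_3$ are very limited. Since $\dim\mathbf M\le8$, every simple factor has rank at most $2$, and $\mathbf M$ embeds in $\SL_3$. Over $\mathbb C$, $\mathbf M$ is therefore $\SL_3$ or has type $A_1$: rank considerations exclude $A_1\times A_1$ because of its faithful $3$-dimensional representation, and exclude $\mathfrak{sp}_4$ and $G_2$ by dimension and representation degree. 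The $A_1$ case splits according to how $\mathbf M$ acts on $\mathbb Q^3\otimes\mathbb C$. Either the representation is $2\oplus1$, giving a conjugate of the block $\SL_2$, or it is irreducible of dimension $3$, giving the image of the adjoint form of $\SL_2$, that is, an $\SO$ of an invariant quadratic form.

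The next step is to make these descriptions rational. In the $2\oplus1$ case, the isotypic decomposition of $\mathbb Q^3$ under $\mathbf M$ is Galois-stable. The fixed line and the $2$-dimensional invariant plane are therefore defined over $\mathbb Q$. Choosing rational bases adapted to them conjugates $\mathbf M$ by $\mathrm{GL}_3(\mathbb Q)$ into the block $\SL(W)$ of the plane $W$, and since $\mathbf M$ is $3$-dimensional and acts irreducibly on $W$, it equals that block. In the irreducible case, the space of $\mathbf M$-invariant symmetric bilinear forms on the $3$-dimensional module is one-dimensional over $\mathbb C$ and Galois-stable. It therefore contains a nonzero rational form $Q$, which is nondegenerate because its kernel would be a proper invariant subspace. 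Then $\mathbf M\le\SO(\qform)$, and equality follows because both are connected and $3$-dimensional.

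It remains to add back $\mathbf U$. The group $\mathbf U$ is a connected unipotent group normalized by $\mathbf M$. If $\mathbf M=\SL_3$, then $\mathbf U$ is trivial. If $\mathbf M=\SO(\qform)$, then $\mathbf U$ is a unipotent normal subgroup of $\mathbf H$. Its fixed space $V^{\mathbf U}$ is nonzero and $\mathbf M$-invariant, hence all of $V$ by irreducibility, so $\mathbf U$ is trivial; this is case (4). In the block case, $\mathbf U$ is a unipotent $\mathbf M$-stable subgroup. Its Lie algebra is a unipotent $\mathbf M$-submodule of $\mathfrak{sl}_3=\mathfrak{sl}_2\oplus V_2\oplus V_2^*\oplus\mathbf 1$, where $V_2$ consists of the column entries $E_{13},E_{23}$ and $V_2^*$ of the row entries $E_{31},E_{32}$. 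The trivial summand $\mathbf 1$ is spanned by a semisimple element, and $\mathfrak{sl}_2$ is not unipotent, so $\Lie\mathbf U$ is an $\mathbf M$-submodule of the isotypic component $V_2\oplus V_2^*$ consisting of nilpotent elements. The submodules of $V_2\oplus V_2^*$ are $0$, $V_2$, $V_2^*$, and the diagonal-type submodules $\{(v,\phi(v))\}$ for $\phi$ an $\mathbf M$-isomorphism $V_2\to V_2^*$. Since $V_2\cong V_2^*$ via the symplectic form, the last family consists of $\{(v,\lambda\,\omega(v))\}$ with $\lambda\neq0$. Such a module contains the element $E_{13}+\lambda E_{32}$, and a direct computation shows these elements generate non-nilpotent matrices. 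Moreover, $V_2\oplus V_2^*$ itself is not a subalgebra of nilpotents, since $[E_{13},E_{31}]$ is diagonal. Hence $\Lie\mathbf U$ is $0$, $V_2$, or $V_2^*$. These give the block $\SL_2$, $\mathbf A$, or $\mathbf A^\vee$ after the rational conjugation already performed, and rationality of $\mathbf U$ is automatic because it is $R_u(\mathbf H)$.

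I expect the main obstacle to be the rationality step for the Levi factor: that it is a $\mathbb Q$-form of $A_1$ which is rationally conjugate to the standard block rather than being merely a form of it. In the $2\oplus1$ case, the invariant plane carries an $\SL(W)$ that is automatically split, so this works. In the irreducible case, any $\mathbb Q$-form of $\mathrm{PGL}_2$ is $\SO(\qform)$, which handles anisotropic forms as well. The second delicate point is the case check excluding the diagonal submodules of $V_2\oplus V_2^*$, where the nilpotency has to be verified carefully.
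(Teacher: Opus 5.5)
Your argument is correct, but it differs from the paper's in two places. The first is the semisimple factor. You use the Cartan--Killing list: $\dim\mathbf M\le8$ leaves $A_1$, $A_1\times A_1$, $A_2$. The reason $A_1\times A_1$ drops out is that it has \emph{no} three-dimensional representation with finite kernel; your sentence says the reverse, and rank does not exclude it. The paper avoids the classification. It places a maximal torus inside the diagonal torus, and in rank two the root lines among the $\mathbb C E_{ij}$ contain two independent opposite pairs whose brackets already give $\mathfrak{sl}_3$. Rank one goes through the central isogeny from $\SL_2$, as in your proof, and the two descent steps agree in substance with the paper's rational linear-equation argument.

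The larger difference is how $\mathbf U$ is recovered. You classify $\Ad(\mathbf M)$-stable nilpotent subalgebras of $\mathfrak{sl}_3=\mathfrak{sl}_2\oplus V_2\oplus V_2^*\oplus\mathbf 1$, which forces you to handle the one-parameter family of graph submodules of $V_2\oplus V_2^*$. The paper instead uses $F=V^{\mathbf U}$, which is nonzero, proper, and stable under the Levi factor. This rules out the irreducible Levi factors, forces $F\in\{W,L\}$ in the block case, and places $\mathbf U$ directly in $\operatorname{Hom}(L,W)$ or $\operatorname{Hom}(W,L)$. The graph submodules never arise there, and irreducibility of the translation module finishes the argument. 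Your route is more computational but exhibits every $\Ad$-stable subspace explicitly; the paper's flag argument is shorter and uniform.

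One point in your exclusion of the graphs needs care: every element of a graph submodule is nilpotent. Write $Z=\left(\begin{smallmatrix}0&x\\y&0\end{smallmatrix}\right)$ with a column $x$ and a row $y$ of length two. Then $Z^3=(yx)Z$. On the graph, $y=\lambda x^{\mathsf T}J_0$ with $J_0=\left(\begin{smallmatrix}0&1\\-1&0\end{smallmatrix}\right)$, so $yx=0$; for instance $(E_{13}+\lambda E_{32})^3=0$. The exclusion therefore has to come from closure under the bracket:
\[
 [E_{13}+\lambda E_{32},\,E_{23}-\lambda E_{31}]=-\lambda\,\diag(1,1,-2),
\]
a nonzero semisimple element of the trivial summand. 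For the step to be right, ``generate'' in your sentence must mean Lie-algebra generation.
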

\begin{proof}
Let $V=\mathbb Q^3$ and put $\mathbf U=R_u(\mathbf H)$.
By the characteristic-zero
Levi decomposition in Proposition~\ref{app:levi-decomposition},
there is a semisimple $\mathbb Q$-subgroup $\mathbf S$ such that
\[
 \mathbf H=\mathbf S\ltimes\mathbf U.
\]
The semisimplicity of $\mathbf S$ follows from the hypothesis on
$\mathbf H/\mathbf U$; this quotient, and hence $\mathbf S$,
may be trivial. If $\mathbf S$ is trivial, then $\mathbf H=\mathbf U$
is unipotent, which is the first possibility in the statement.
We therefore assume $\mathbf S$ is nontrivial. We first classify
$\mathbf S$ and then determine which nontrivial radicals
$\mathbf U$ can occur.

\emph{The semisimple factor over $\mathbb C$.}
A maximal torus of $\mathbf S_{\mathbb C}$ is diagonal in a
suitable complex basis of $\mathbb C^3$ (Section~\ref{app:tori}).
It is therefore contained in the determinant-one diagonal torus
of $\SL_3$, so its rank is at most two. Since $\mathbf S$ is
nontrivial and semisimple, its rank is one or two.

Suppose the rank is two. The maximal torus is then the full
diagonal torus of $\SL_3$. The root spaces of
$\Lie(\mathbf S_{\mathbb C})$ are among the six lines
$\mathbb C E_{ij}$, $i\ne j$. By the semisimple root decomposition
\cite[Theorem~21.11 and Corollary~21.12]{Milne2017}, recorded in
Proposition~\ref{app:root-inputs}, the roots occur in opposite
pairs and span a two-dimensional space. Hence at least two
independent opposite pairs occur. After permuting the three
basis vectors, these pairs can be taken to be
$\mathbb C E_{12},\mathbb C E_{21}$ and
$\mathbb C E_{23},\mathbb C E_{32}$. The brackets
\[
 [E_{12},E_{23}]=E_{13},\qquad
 [E_{32},E_{21}]=E_{31}
\]
give the remaining two root spaces. Together with the diagonal
torus, they give all of $\mathfrak{sl}_3(\mathbb C)$.
Thus $\mathbf S_{\mathbb C}=\SL_{3,\mathbb C}$, and consequently
$\mathbf S=\SL_3$.

Suppose instead that the rank is one. By
\cite[Proposition~20.32]{Milne2017}, applied over $\mathbb C$
(Proposition~\ref{app:root-inputs}), there is a central isogeny
$\SL_{2,\mathbb C}\to\mathbf S_{\mathbb C}$.
Composing it with the inclusion in $\SL_{3,\mathbb C}$ gives a
three-dimensional representation with finite kernel.
Lemma~\ref{app:sl2-representations} gives two possibilities:
the standard two-dimensional representation plus a trivial line,
or the irreducible symmetric square. In either case,
$\dim\mathbf S=3$. We now descend these two possibilities to
$\mathbb Q$.

\emph{Descent in the block case.}
Over $\mathbb C$, the invariant-vector
space and the invariant-covector space are one-dimensional and
have nonzero pairing: the vector spans the trivial summand,
and the covector is projection onto that summand.
These fixed spaces are defined by the infinitesimal invariance
equations in Section~\ref{app:lie}. Since $\Lie(\mathbf S)$ has
a rational basis, the equations have rational coefficients.
Their solution spaces therefore have rational bases.
Choose rational generators $v$ and $\ell$ with $\ell(v)\ne0$.
Then
\[
 V=\mathbb Qv\oplus\ker\ell
\]
is a rational $\mathbf S$-invariant decomposition. The action
on $\mathbb Qv$ is trivial, and the action on $\ker\ell$ has
determinant one because $\mathbf S\le\SL_3$.
The latter action is faithful: an element acting trivially on
both summands acts trivially on $V$. Its image is therefore a
dimension-three algebraic subgroup of $\SL(\ker\ell)$
\cite[Theorem~3.34]{Milne2017}, which
also has dimension three, so it is the whole group.
A rational basis adapted to this decomposition conjugates
$\mathbf S$ to the block $\SL_2$.

\emph{Descent in the irreducible case.}
The symmetric-square representation can be realized on matrices
\[
 M=\begin{pmatrix}x&y\\y&z\end{pmatrix},\qquad
 M\longmapsto AMA^{\mathsf T}\quad(A\in\SL_2).
\]
Diagonal invariance restricts an invariant quadratic form to
$a xz+b y^2$. The upper unipotent matrix
$\left(\begin{smallmatrix}1&s\\0&1\end{smallmatrix}\right)$
sends $(x,y,z)$ to $(x+2sy+s^2z,y+sz,z)$; invariance for every
$s$ forces $b=-a$. Thus the invariant quadratic forms are exactly
the multiples of the determinant $xz-y^2$, and every nonzero
such form is nondegenerate.

In the original rational coordinates on $V$, the invariant
symmetric matrices $Q$ satisfy
\[
 X^{\mathsf T}Q+QX=0\qquad(X\in\Lie(\mathbf S)).
\]
Using a rational basis of $\Lie(\mathbf S)$ makes this a linear
system over $\mathbb Q$. Its solution space is one-dimensional,
as shown by the complex symmetric-square calculation above,
so it has a nonzero rational generator $Q$. The corresponding
form $\qform(v)=v^{\mathsf T}Qv$ is nondegenerate.
By connectedness and Lemma~\ref{app:lie-tests}, infinitesimal
invariance implies group invariance, and hence
$\mathbf S\le\SO(\qform)$.
The group $\SO(\qform)$ is connected
\cite[Example~2.43 and Exercise~2-9]{Milne2017} and has dimension
three by Lemma~\ref{app:symmetric-square}.
Since $\dim\mathbf S=3$, this inclusion is an equality:
$\mathbf S=\SO(\qform)$.

\emph{The remaining affine cases.}
If $\mathbf U$ is trivial, then $\mathbf H=\mathbf S$, and the
preceding classification gives the three semisimple possibilities.
It remains to consider $\mathbf S$ and $\mathbf U$ both nontrivial.
Proposition~\ref{app:unipotent-flag} gives a nonzero rational
fixed space $F=V^{\mathbf U}$. It is proper because $\mathbf U$
is nontrivial and its given action on $V$ is faithful.
Normality of $\mathbf U$ makes $F$ stable under $\mathbf S$:
for $v\in F$, $s\in\mathbf S$, and $u\in\mathbf U$,
\[
 u(sv)=s(s^{-1}us)v=sv.
\]
Thus $\mathbf S$ acts reducibly on $V$. Of the semisimple
possibilities just established, only the block $\SL_2$ is
reducible. After a rational change of basis, write
\[
 V=W\oplus L,\qquad \dim W=2,\quad\dim L=1,
\]
where $\mathbf S$ acts on $W$ by its standard representation
and acts trivially on $L$.

The standard module $W$ is irreducible and has no nonzero
invariant covector (Lemma~\ref{app:affine-algebra}).
Consequently the only nonzero proper $\mathbf S$-invariant
subspaces of $V$ are $W$ and $L$. To see this explicitly,
if $\dim F=1$, its projection to $W$ is an invariant subspace
of dimension at most one and must vanish, so $F=L$.
If $\dim F=2$, its projection to $W$ is an isomorphism; hence
$F$ is the graph of an $\mathbf S$-equivariant map $W\to L$.
Since $L$ is trivial and $W$ has no nonzero invariant covector,
this map is zero, so $F=W$.

If $F=W$, then $\mathbf U$ fixes $W$ pointwise. Its action on
the one-dimensional quotient $V/W$ is unipotent and therefore
trivial. Thus $\mathbf U$ is contained in the additive group
$\operatorname{Hom}(L,W)$, embedded in the chosen basis by
\[
 v\longmapsto\begin{pmatrix}I_2&v\\0&1\end{pmatrix}.
\]
Here a basis of $L$ identifies $\operatorname{Hom}(L,W)$ with $W$.

If $F=L$, then $\mathbf U$ fixes $L$ pointwise.
Its induced action on $V/L\simeq W$ is unipotent, so
Proposition~\ref{app:unipotent-flag} gives a nonzero fixed
subspace in this quotient. This subspace is $\mathbf S$-stable
by the same normality argument. Irreducibility of $W$ makes it
the entire quotient, so the quotient action is trivial.
Hence $\mathbf U$ is contained in the additive group
$\operatorname{Hom}(W,L)$, embedded by
\[
 w^{\mathsf T}\longmapsto
 \begin{pmatrix}I_2&0\\w^{\mathsf T}&1\end{pmatrix}.
\]

In either case $\Lie(\mathbf U)$ is a nonzero
$\mathbf S$-invariant subspace of the standard two-dimensional
module or its dual: conjugation by $\operatorname{diag}(A,1)$
acts respectively by $v\mapsto Av$ and
$w^{\mathsf T}\mapsto w^{\mathsf T}A^{-1}$.
These modules are irreducible by
Lemma~\ref{app:affine-algebra}, so $\Lie(\mathbf U)$ is the
entire two-dimensional translation space. Since $\mathbf U$
is connected, it equals that translation group.
The resulting semidirect products are precisely $\mathbf A$
and $\mathbf A^\vee$ from \eqref{eq:affinegroups}.
The changes of basis were rational, giving the stated rational
conjugacies.
\end{proof}

These possibilities lead to the unipotent, rank-one, ambient,
and affine estimates in Sections~\ref{sec:unipotentproof},
\ref{sec:modularproof}, \ref{sec:ambientproof}, and
\ref{sec:leviaffine}, respectively. Definite orthogonal groups
have only constant matrix-polynomial curves, as recorded below,
and therefore do not occur as supports of the nonconstant
trajectories in the main theorem.

\subsection{Coordinates and covers of the model supports}

Unipotent supports are treated in rational Mal'cev coordinates.
For proper semisimple supports, the reduction proceeds through
$\SL_2$ parametrizations. We establish the quotient geometry,
the polynomial lifts, and the congruence covers in separate lemmas.

\begin{lemma}
\label{lem:rationalnilcoordinates}
Let $\mathbf H\le\SL_3$ be a connected unipotent
$\mathbb Q$-group and put $H=\mathbf H(\mathbb R)^\circ$.
Then $H/(H\cap\Gamma)$ is compact and admits a Mal'cev basis
of rational matrices adapted to the lower central series.
Its ordered-coordinate map is polynomial over $\mathbb Q$
of degree at most two in the matrix entries. Consequently,
a matrix curve in $\mathbf H(F[t])$ of degree at most $d$
has ordered coordinates in $F[t]$ of degree at most $2d$,
for every subfield $F\subseteq\mathbb R$.

For $r=\dim H/[H,H]$, the horizontal projection
$\chi_{\mathrm{ab}}:H\to\mathbb R^r$ is a rational polynomial
map of matrix-entry degree at most two. Every horizontal character
trivial on $H\cap\Gamma$ is
$k\cdot\chi_{\mathrm{ab}}\bmod\mathbb Z$, $k\in\mathbb Z^r$.
\end{lemma}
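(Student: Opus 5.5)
Since $\mathbf H$ is a connected unipotent $\mathbb Q$-subgroup of $\SL_3$, the first step is to place it inside a rational conjugate of the upper unitriangular group. Proposition~\ref{app:unipotent-flag} supplies a rational fixed vector. Passing to quotients iteratively then gives a full rational flag stabilized by $\mathbf H$, so there is $\gamma\in\mathrm{GL}_3(\mathbb Q)$ with $\gamma^{-1}\mathbf H\gamma$ contained in the unitriangular group $\mathbf N$. Compactness of $H/(H\cap\Gamma)$ will follow from the standard fact that a unipotent $\mathbb Q$-group has cocompact arithmetic subgroups: the group $\gamma^{-1}(H\cap\Gamma)\gamma$ is commensurable with $\gamma^{-1}H\gamma\cap\mathbf N(\Z)$. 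I would invoke the appendix lattice facts (Malcev/Borel--Harish-Chandra for unipotent groups) rather than reprove this. Since $\dim H\le3$ and $H$ is nilpotent of step at most two, the lower central series has length at most two.

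For the Mal'cev basis, work with the Lie algebra. Because $\Lie(\mathbf H)$ is rational, so is $[\Lie(\mathbf H),\Lie(\mathbf H)]$. Also $\log$ and $\exp$ are polynomial maps over $\mathbb Q$ on nilpotent $3\times3$ matrices: we have $\exp(X)=I+X+X^2/2$, and for unipotent $u$, $\log u=(u-I)-(u-I)^2/2$, both of degree at most two. Hence $\log(H\cap\Gamma)$ generates a lattice in $\Lie(H)$ with rational coordinates up to bounded denominators. I would take Lemma~\ref{app:malcev-existence} to obtain a Mal'cev basis $B_1,\ldots,B_m$ adapted to the lower central series for the lattice $H\cap\Gamma$. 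The point is that such a basis can be chosen with $B_i=\log\gamma_i$ for suitable $\gamma_i\in H\cap\Gamma$, so the $B_i$ are rational matrices.

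Next comes the degree claim, which I expect to be the main (though mild) obstacle. The issue is that ordered coordinates are not simply linear coordinates of $\log h$. If $m-r=0$, then $H$ is abelian and $h=\exp(\sum a_iB_i)$, so $\chi_B(h)$ is a rational linear function of $\log h=(h-I)-(h-I)^2/2$, which has degree at most two. If $m-r=1$, then $B_m$ spans the center $[\Lie H,\Lie H]$. By Baker--Campbell--Hausdorff, which terminates at step two, $\log h=\sum_i a_iB_i+\tfrac12\sum_{i<j\le r}a_ia_j[B_i,B_j]$. The first $r$ coordinates are therefore rational linear in $\log h$, giving degree at most two. The last coordinate equals a rational linear functional of $\log h$ minus a rational quadratic in $a_1,\ldots,a_r$. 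A priori that quadratic could have degree four in the entries of $h$, so I would avoid it. Instead, I would note that in the dimension-three Heisenberg case (a conjugate of $\mathbf N$ itself) the ordered coordinates can be read directly from the matrix entries in the adapted flag. With $B_1,B_2$ rational conjugates of $E_{12},E_{23}$ and $B_3$ a multiple of $E_{13}$, the product $\exp(a_1B_1)\exp(a_2B_2)\exp(a_3B_3)$ has entries $a_1$, $a_2$, and $a_3+c\,a_1a_2$ in the flag basis. Solving gives $a_3=(\text{entry})-c\cdot(\text{entry})(\text{entry})$, which is of degree at most two. Since $\dim H\le3$ and nonabelian forces $H$ to be the whole conjugate Heisenberg group, these two cases exhaust everything. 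Composing with a curve in $\mathbf H(F[t])$ of degree at most $d$ then gives coordinates in $F[t]$ of degree at most $2d$, because the coordinate polynomials have rational coefficients.

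Finally, $\chi_{\mathrm{ab}}$ is given by the first $r$ coordinates, which are rational linear in $\log h$ and hence of degree at most two. It is a homomorphism because it factors through $H/[H,H]$, which is abelian. It maps $H\cap\Gamma$ onto $\Z^r$ by the Mal'cev property. For the characters, a continuous character of $H$ trivial on $H\cap\Gamma$ is trivial on $[H,H]$, since characters kill commutators. It therefore factors through $\R^r$ via $\chi_{\mathrm{ab}}$ as $x\mapsto \xi\cdot x\bmod\Z$. Triviality on $\chi_{\mathrm{ab}}(H\cap\Gamma)=\Z^r$ forces $\xi\in\Z^r$, as recorded in Lemma~\ref{app:horizontal}.
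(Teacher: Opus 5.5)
Your proof is correct, and apart from one step it follows the paper. The rational basis comes from Lemma~\ref{app:malcev-existence} as logarithms of elements of $H\cap\Gamma$. Your flag-basis computation of the ordered coordinates (horizontal ones linear in the entries, the central one linear plus a quadratic in the horizontal ones) is exactly the argument behind Lemma~\ref{app:horizontal}, which also gives the character statement.

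The step that differs is compactness of $H/(H\cap\Gamma)$, which Lemma~\ref{app:malcev-existence} needs as input. You import it from the general theorem that arithmetic subgroups of unipotent $\mathbb Q$-groups are cocompact. That theorem is true, but it is not among the appendix facts (Lemma~\ref{app:lattice-covers} only compares nested lattices), so it would be an external citation. The paper proves it by hand:
\begin{itemize}
\item take a rational basis $B_1,\dots,B_m$ of $\Lie H$ adapted to the lower central series, and scale it so that every $\exp B_i=I_3+B_i+B_i^2/2$ is integral;
\item let $\Lambda_0\le H\cap\Gamma$ be the subgroup these elements generate;
\item note that right multiplication by $\exp(-nB_i)$ lowers the $i$th ordered coordinate by $n$ without changing the earlier ones, so successive reduction places every coset of $\Lambda_0$ in the image of $[0,1]^m$.
\end{itemize}
This keeps the lemma self-contained at the cost of a few lines. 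Your route is shorter and independent of dimension, but rests on a much heavier theorem.

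A minor point in your Heisenberg computation: the basis from Lemma~\ref{app:malcev-existence} uses arbitrary lifts of an arbitrary basis of the horizontal lattice. So you should not assume $B_1,B_2$ are aligned with $E_{12},E_{23}$. Nothing is lost: for any such $B_1,B_2$, the $(1,2)$ and $(2,3)$ flag entries are an invertible linear function of $(a_1,a_2)$. The $(1,3)$ entry is a nonzero multiple of $a_3$ plus a polynomial of degree at most two in $(a_1,a_2)$, which gives the same bound.

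Your quartic worry from Baker--Campbell--Hausdorff also disappears. Since $(h-I_3)^2$ is central, the horizontal part of $\log h$ equals that of $h-I_3$, which is linear in the entries.
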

\begin{proof}
Proposition~\ref{app:unipotent-flag} gives an invariant rational
full flag and hence a rational triangularization of $H$.
By Lemma~\ref{app:unipotent-explog}, its Lie algebra has dimension
at most three and nilpotency class at most two, and exponential
and logarithm are inverse polynomial maps.
Choose a rational basis $B_1,\ldots,B_m$ whose terminal spans
are ideals and whose last vectors span the commutator algebra.
In the abelian case any rational basis suffices. In the
nonabelian case the Lie algebra is three-dimensional with
one-dimensional center, and we take $B_3$ central.
After multiplication by a common positive integer, all
$\exp B_i$ are integral matrices, since
$\exp B_i=I_3+B_i+B_i^2/2$. They generate a discrete subgroup
$\Lambda_0\subseteq\Lambda:=H\cap\Gamma$.
In global ordered coordinates, right multiplication by
$\exp(-nB_i)$ subtracts $n$ from the $i$th coordinate and
leaves the earlier coordinates unchanged. Successive reduction
places every right coset modulo $\Lambda_0$ in the image of
$[0,1]^m$. This proves compactness of $H/\Lambda_0$ and
$H/\Lambda$.

Apply the lattice construction in Lemma~\ref{app:malcev-existence}
to $H/\Lambda$. Its adapted basis is formed from logarithms of
elements of $\Lambda$, using a lattice basis in the abelian case
and horizontal lifts and a central generator in the Heisenberg
case. For $\gamma\in\Lambda$,
\[
 \log\gamma=(\gamma-I_3)-\tfrac12(\gamma-I_3)^2
\]
has rational entries, so the resulting basis matrices are rational.
Lemma~\ref{app:horizontal} gives ordered coordinates over
$\mathbb Q$ of matrix-entry degree at most two, and the same
degree bound for the horizontal projection. Substitution of a
curve in $\mathbf H(F[t])$ gives coordinates in $F[t]$ of
degree at most $2d$. The character assertion is also
Lemma~\ref{app:horizontal}.
\end{proof}

The degree bound is relevant to the intrinsic condition: every
horizontal phase is a rational polynomial of matrix-entry degree
at most two. The factor two in the coordinate degree is needed
in general. Relative to $(E_{12},E_{23},E_{13})$, the curve
$I_3+tE_{12}+tE_{23}$ has ordered coordinates $(t,t,-t^2)$.
The sampling filtration used with the Green--Tao theorem is
specified in \eqref{eq:nilfiltration}; thus matrix degree $d$
allows coordinate degree $2d$ and filtration degree $4d$.

For proper semisimple supports, we first determine the available
parametrizations and the compactness properties of the orthogonal quotients.

\begin{lemma}
\label{lem:semisimpleparametrizations}
Let $\mathbf H\le\SL_3$ be a nontrivial proper connected
semisimple $\mathbb Q$-group, and put
$H=\mathbf H(\mathbb R)^\circ$.
For an orthogonal group $\mathbf H=\SO(\qform)$, where $\qform$
is a nondegenerate rational quadratic form in three variables,
the orbit $H\Gamma/\Gamma$ is closed.
If $\qform$ is anisotropic over $\mathbb Q$, this orbit and
$H/(H\cap\Gamma)$ are compact. If $\qform$ is definite over
$\mathbb R$, $H$ itself is compact and every matrix-polynomial
curve in $H$ is constant.

For such an orthogonal group, if $\qform$ is indefinite over
$\mathbb R$, a symmetric-square
parametrization $\SL_2\to\mathbf H$ exists over $\mathbb R$;
if $\qform$ is isotropic over $\mathbb Q$, it can be chosen
over $\mathbb Q$. A rational block group has a block
parametrization over $\mathbb Q$. Each of these parametrizations
has finite kernel and real image $H$.
\end{lemma}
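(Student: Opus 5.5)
We prove the assertions of Lemma~\ref{lem:semisimpleparametrizations} in the order: orthogonal case (closedness, compactness, definite case, parametrizations), then the block case.

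\emph{Closedness and compactness.} For $\mathbf H=\SO(\qform)$ with $\qform$ rational, $\mathbf H$ is a semisimple $\mathbb Q$-group without nontrivial rational characters. The Borel--Harish-Chandra theorem then makes $H\cap\Gamma$ a lattice in $H$. Closedness of $H\Gamma/\Gamma$ is most directly seen as follows. Clear denominators so that $Q$ is integral. Then $H\Gamma/\Gamma$ is the connected component through $\Gamma$ of the set of $b\Gamma$ with $b^{\mathsf T}Q b=Q$. Alternatively, it is a component of the preimage of the discrete set $\{\gamma^{\mathsf T}Q\gamma:\gamma\in\Gamma\}$ of integral forms under the orbit map $b\Gamma\mapsto$ the form $Q\circ b$. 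Either description shows the orbit is closed.

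For compactness when $\qform$ is $\mathbb Q$-anisotropic, use Mahler's criterion. Points $b\Gamma$ with $b\in H$ satisfy $\qform(bv)=\qform(v)$ for all $v\in\Z^3$, and $\qform(v)\ne0$ is a nonzero integer (after scaling) for $v\ne0$. Hence $|\qform(bv)|\ge1$, which forces $\norm{bv}\ge\norm Q^{-1/2}$. So the shortest vector of $b\Z^3$ is uniformly bounded below on the closed orbit, and Mahler's criterion gives compactness. Then $H/(H\cap\Gamma)$ is compact as well, since it is homeomorphic to the closed orbit.

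\emph{Definite case.} If $\qform$ is $\R$-definite, $H$ is conjugate in $\mathrm{GL}_3(\R)$ into $\SO(3)$, hence bounded. A matrix polynomial with bounded values on $\R$ is constant.

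\emph{Parametrizations.} If $\qform$ is $\R$-indefinite, choose $b\in\mathrm{GL}_3(\R)$ carrying $\qform$ to a real multiple of the determinant form $xz-y^2$ of the symmetric-square model in the proof of Proposition~\ref{prop:supportclassification}. Conjugating the symmetric-square map $\SL_2\to\SO(xz-y^2)$ by $b$ gives a real parametrization $\SL_2\to\mathbf H$. Its kernel is $\{\pm I_2\}$, and its image is connected of dimension three, hence equals $H$.

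If $\qform$ is $\mathbb Q$-isotropic, pick a rational isotropic vector $e$. Choose rational $f$ with $B(e,f)\ne0$ and adjust $f$ to be isotropic. Then take a rational vector $w$ orthogonal to $e$ and $f$. In the rational basis $(e,w,f)$ the form becomes $\alpha(xz)+\beta y^2$ with $\alpha,\beta\in\mathbb Q^\times$. Rescaling $e$ and $w$ rationally gives $\lambda(xz-y^2)$ up to a square factor. This square-factor step is the main obstacle, because the ratio $-\beta/\alpha$ need not be a rational square. I would handle it by noting that $xz-c\,y^2$ is rationally equivalent to $xz-y^2$ via $x\mapsto cx$ (since $xz-cy^2=c^{-1}((cx)z- (cy)^2)$ after scaling $y$ appropriately, adjusting $y\mapsto y$ and $x\mapsto c\cdot x$ so that $c$ absorbs into the hyperbolic pair). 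The point is that a hyperbolic plane absorbs any scalar. This yields a rational conjugacy to $\SO(xz-y^2)$, and hence a rational symmetric-square parametrization.

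For the rational block group $\gamma\,\mathrm{diag}(\SL_2,1)\gamma^{-1}$ with $\gamma\in\mathrm{GL}_3(\mathbb Q)$, the map $A\mapsto\gamma\,\mathrm{diag}(A,1)\gamma^{-1}$ is defined over $\mathbb Q$, injective, and onto $H$ because $\SL_2(\R)$ is connected. Finite kernel and real image $H$ follow in every case from the dimension count and the connectedness of $\SL_2(\R)$. The image is open and closed in $H$, hence all of $H^\circ=H$.
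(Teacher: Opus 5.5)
Your treatment of compactness, of the definite case, and of the parametrizations follows the paper: the bound $1\le|\qform(bv)|\le\norm Q\norm{bv}^2$ with Mahler, boundedness in the definite case, and a rational hyperbolic pair bringing $\qform$ to a rational multiple of $xz-y^2$. Your ``square-factor obstacle'' is not really an obstacle, since scaling one vector of the hyperbolic pair absorbs any scalar; this is exactly Lemma~\ref{app:symmetric-square}. These parts are fine.

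The closedness paragraph, however, does not prove that $H\Gamma/\Gamma$ is closed. The map $b\Gamma\mapsto b^{\mathsf T}Qb$ is not defined on $G/\Gamma$, because the representative $b\gamma$ gives $\gamma^{\mathsf T}(b^{\mathsf T}Qb)\gamma$. Your first description presupposes that $S\Gamma/\Gamma$ is closed, where $S=\SO(\qform)(\R)$. Working on $G$ instead, the preimage of the discrete set $\{\gamma^{\mathsf T}Q\gamma:\gamma\in\Gamma\}$ is $S\Gamma$, and that is indeed closed. But $H\Gamma$ is a union of infinitely many translates $H\gamma$, not a single component, so its closedness does not follow. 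Likewise, to say $H\Gamma/\Gamma$ is the component of $S\Gamma/\Gamma$ through the base point you must already know it is relatively open there. That is the entire issue, since $S$ is disconnected for indefinite $\qform$.

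The missing step has two parts. Each level set $S\gamma$ is relatively open in $S\Gamma$, because the values $b^{\mathsf T}Qb$ lie in a discrete set. And $H$ is open in $S$. Together these make $H\Gamma$ relatively open and closed in the closed set $S\Gamma$.

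The paper carries this out with sequences. If $h_n\gamma_n\to g$, the integral forms $\gamma_n^{\mathsf T}Q\gamma_n$ are eventually equal to a fixed $\gamma_0^{\mathsf T}Q\gamma_0$. Hence $h_n\gamma_n=(h_n\delta_n)\gamma_0$ with $\delta_n\in S\cap\Gamma$. Since $S$ has finitely many components, you may pass to a subsequence with all $\delta_n\in H\delta_*$, and then $g\in H\delta_*\gamma_0\subset H\Gamma$.

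Alternatively, you could complete your Borel--Harish-Chandra remark with the standard fact that $H\Gamma$ is closed whenever $H\cap\Gamma$ is a lattice in $H$. As written, though, that remark is never used.
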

\begin{proof}
By Proposition~\ref{prop:supportclassification}, the proper
semisimple possibilities are rational block groups and orthogonal
groups. In the block case, conjugating
$A\mapsto\operatorname{diag}(A,1)$ by the fixed rational
change of basis gives the required rational parametrization;
its kernel is trivial and its real image is $H$.
In the orthogonal case, Lemma~\ref{app:symmetric-square} gives
the symmetric-square parametrization over any field over which
$\qform$ is isotropic, with kernel $\{I_2,-I_2\}$ and real
image $H$. An indefinite real form is isotropic over $\mathbb R$,
so the real parametrization always exists in that case, and
rational isotropy gives a parametrization over $\mathbb Q$.

Now suppose $\mathbf H=\SO(\qform)$.

Scale $Q$ to be integral. If $h_n\gamma_n\to g$, the integral
matrices $\gamma_n^{\mathsf T}Q\gamma_n$ converge to
$g^{\mathsf T}Qg$ and are eventually constant. Fix an index in
this tail and denote its matrix by $\gamma_0$. Then
$\delta_n=\gamma_n\gamma_0^{-1}$ lies in
$\SO(\qform)(\mathbb R)\cap\Gamma$.
There are finitely many real components, so pass to a subsequence
in one component and fix $\delta_*$ from that subsequence.
Now $\delta_n\delta_*^{-1}\in H$ and
\[
 h_n\delta_n\delta_*^{-1}
 \longrightarrow g\gamma_0^{-1}\delta_*^{-1}\in H,
\]
because $H$ is closed. Thus $g\in H\delta_*\gamma_0\subset H\Gamma$,
proving closedness.
For completeness, finiteness of the real components follows
from polar decomposition after diagonalizing the form to signature
$(p,3-p)$: the orthogonal group retracts onto
$S(O(p)\times O(3-p))$, which has finitely many components.
The positive part is joined to $\groupid$ by its real powers.

In the anisotropic case, for $0\ne v\in\mathbb Z^3$,
\begin{equation}\label{eq:anisotropicsystole}
 1\le |\qform(v)|=|\qform(hv)|\le\norm Q_{\mathrm{op}}\norm{hv}^2.
\end{equation}
Thus these unimodular lattices have a uniform shortest-vector
lower bound. The elementary lattice reduction in
\eqref{eq:elementaryreduction} puts them in a compact part of
$X$. Closedness proves compactness of the orbit. The closed-orbit
identification with $H/(H\cap\Gamma)$ gives compactness of
that quotient as well.
For a definite form, changing real coordinates puts $H$ in the
usual compact orthogonal group. Polynomial matrix entries bounded
on $\mathbb R$ are constant.
\end{proof}

We next lift normalized polynomial curves through these parametrizations.

\begin{lemma}
\label{lem:polynomiallift}
Let $\iota:\SL_2\to\mathbf H\le\SL_3$ be a block or
symmetric-square parametrization defined over
$K\subseteq F\subseteq\mathbb R$, and put $H=\iota(G_2)$.
Every normalized $\Psi\in\mathbf H(F[t])$ of degree at most $d$
has a normalized lift $\psi\in\SL_2(F[t])$ satisfying
$\Psi=\iota\circ\psi$ and $\deg\psi\le d$.
In the symmetric-square case, $\deg\psi\le\lfloor d/2\rfloor$.
Every $h\in H$ has a real lift in $G_2$.
\end{lemma}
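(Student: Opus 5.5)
\emph{Plan.} In both cases I would recover the entries of a lift $\psi$ from the entries of $\Psi$ by fixed linear algebra over $K$. The block case is immediate. In the symmetric-square case one first recovers only the pairwise products of the entries, and then extracts a polynomial square root by a divisibility argument in the unique factorization domain $F[t]$.

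\textbf{Block case.} Here $\iota(A)=c\operatorname{diag}(A,1)c^{-1}$ for a fixed $c\in\mathrm{GL}_3(K)$. I would set $\psi(t)$ equal to the upper left $2\times2$ block of $c^{-1}\Psi(t)c$. Its entries are $K$-linear combinations of the entries of $\Psi$, so they lie in $F[t]$ and have degree at most $d$. Moreover $\det\psi=1$, $\psi(0)=I_2$, and $\iota\circ\psi=\Psi$.

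\textbf{Symmetric-square case: recovering products.} Write $\iota(A)=c\,\mathrm{S}(A)c^{-1}$ with $c$ over $K$, where $\mathrm S(A)$ is the matrix of $M\mapsto AMA^{\mathsf T}$. The entries of $\mathrm S(A)$ are quadratic in the entries $a_{kl}$ of $A$. Conversely, I claim each product $a_{kl}a_{mn}$ restricted to $\SL_2$ is a $K$-linear combination of the entries of $\mathrm S(A)$ and the constant $1$. The reason is that the homogeneous quadratics on $\End(K^2)$ decompose as $\mathrm{Sym}^2\otimes\mathrm{Sym}^2\oplus K\det$, which has dimension $9+1$, and $\det=1$ on $\SL_2$; I would check this by an explicit computation.

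For each $t$, choose any $A_t\in G_2$ with $\iota(A_t)=\Psi(t)$; this is possible since the real image is $H$. Take $A_0=I_2$. The products $r_{kl,mn}(t)=(A_t)_{kl}(A_t)_{mn}$ are independent of the sign ambiguity $\pm A_t$. By the claim they are polynomials in $F[t]$ of degree at most $d$. Put $r=r_{11,11}=a_{11}^2$ and $s_{kl}=a_{kl}^2$. Then $r(0)=1$ and $r_{11,kl}^2=r\,s_{kl}$.

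\textbf{Symmetric-square case: the square root.} I would show that $r=u^2$ with $u\in F[t]$ and $u(0)=1$. Suppose an irreducible $\pi$ divides $r$ to an odd power. Then every nonzero $s_{kl}=r_{11,kl}^2/r$ has odd, hence positive, $\pi$-valuation. Consequently every nonzero product $a_{ij}a_{kl}$ has valuation $\tfrac12(v_\pi(s_{ij})+v_\pi(s_{kl}))\ge1$. The identity $1=a_{11}a_{22}-a_{12}a_{21}$ then gives $\pi\mid1$, a contradiction. So $r=\lambda u^2$ with $\lambda\in F$, and normalizing $u(0)=1$ forces $\lambda=1$.

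Next, $u^2\mid r_{11,kl}^2$, so $u\mid r_{11,kl}$, and I would define $\psi_{kl}=r_{11,kl}/u\in F[t]$. Pointwise, wherever $u\ne0$, we have $\psi_{kl}=\pm(A_t)_{kl}$ with a common sign. Hence $\psi_{kl}\psi_{mn}=r_{kl,mn}$ on a dense set, and therefore identically. Since $\iota$ depends only on these products, $\iota\circ\psi=\Psi$ and $\det\psi=1$. Also $\psi(0)=I_2$, because $r_{11,kl}(0)=\delta_{kl}$. Finally, $\psi_{kl}^2=s_{kl}$ has degree at most $d$, which gives $\deg\psi\le\lfloor d/2\rfloor$.

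\textbf{The last assertion and the main obstacle.} The statement about real lifts of $h\in H$ is just the surjectivity $\iota(G_2)=H$. I expect the main obstacle to be the square-root step: ruling out a lift that is polynomial only up to a branch of a square root. The valuation argument combined with $\det=1$ is the key point. I would also double-check the claim that all products are linear in the $\iota$-entries and $1$.
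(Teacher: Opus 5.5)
Your proposal is correct and follows essentially the same route as the paper. In the block case you take the $2\times2$ block after the rational change of basis. In the symmetric-square case you recover all pairwise products of the entries (using $\det=1$ to separate $a_{11}a_{22}$ from $a_{12}a_{21}$), show $a_{11}^2$ is a square in $F[t]$ by the odd-multiplicity argument against $a_{11}a_{22}-a_{12}a_{21}=1$, and then divide. The only difference is cosmetic: you use pointwise lifts $\pm A_t$ where the paper uses a local real-analytic lift near $t=0$; your version needs $\Psi(t)\in H$ for every $t$, which holds by connectedness since $\Psi(0)=\groupid$.
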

\begin{proof}
By Lemma~\ref{app:sl2-representations}, a change of basis over $K$
puts $\iota$ in block or symmetric-square form.
In the block case the polynomial lift is the two-by-two block.
For the symmetric square, a local real
analytic lift of $\Psi(t)$ near zero exists and starts at $\groupid$.
Denote its entries by $u,v,w,z$. The symmetric-square matrix
determines all their pairwise products in $F[t]$: it gives their
squares and $uv,uw,vz,wz,uz+vw$, while $uz-vw=1$ gives $uz$ and $vw$.

Put $A=u^2\in F[t]$, with $A(0)=1$. For any nonzero entry $q$,
$q^2=(uq)^2/A$. If an irreducible factor had odd multiplicity in
$A$, this identity would give odd multiplicity in every nonzero
polynomial $q^2$, hence multiplicity at least one. Both polynomial
products $uz$ and $vw$ would then be divisible by that factor:
their squares are products of two such squares. A zero product
causes no exception. This contradicts $uz-vw=1$.
Thus $A=cR^2$ for $c\in F^\times$ and $R\in F[t]$.
Since $A(0)=1$, the polynomial $u=R/R(0)$ is its square root with
value one at zero. The other entries $q=(uq)/u$ lie in $F(t)$.
Their squares lie in $F[t]$, so their reduced denominators must be
constant; hence they too lie in $F[t]$. These choices agree with
the local lift, and its identities extend polynomially to give a
normalized lift in $\SL_2(F[t])$. Each entry square has degree at
most $d$, so the lift actually has degree at most $\lfloor d/2\rfloor$
in the symmetric-square case.

Every $h\in H$ has a real lift in $G_2$ by the definition of $H$.
\end{proof}

For rational parametrizations, the lifted trajectory can be studied
on a fixed congruence quotient.

\begin{lemma}
\label{lem:semisimplecongruencecover}
Let $\iota:\SL_2\to\mathbf H\le\SL_3$ be defined over
$\mathbb Q$, with finite kernel, proper image, and real image
$H=\mathbf H(\mathbb R)^\circ$. Suppose $H\Gamma/\Gamma$
is closed. Then there is an integer $N\ge1$, depending only
on $\iota$, such that $\iota(\Gamma_2(N))\subseteq\Gamma$.
The map $G_2/\Gamma_2(N)\to H\Gamma/\Gamma$ is a finite
covering preserving Haar probability.
Here $\Gamma_2(N)=\{\gamma\in\Gamma_2:
\gamma\equiv I_2\pmod N\}$.
\end{lemma}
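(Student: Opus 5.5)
To prove the lemma, I would first produce the integer $N$ by a denominator argument, and then check the covering and measure claims using closedness of $H\Gamma/\Gamma$.

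\emph{Choice of $N$.} Since $\iota$ is a morphism of $\mathbb Q$-varieties, each matrix entry of $\iota(A)$ is a polynomial with rational coefficients in the entries of $A$. Let $N_0$ be a common denominator of these coefficients, and put $N=N_0$ (or any multiple). Write $\gamma=I_2+NX$ with $X$ integral. Each entry of $\iota(\gamma)$ is then a rational polynomial in the entries of $NX$. Expanding around $I_2$, every monomial of positive degree carries a factor $N^k$ with $k\ge1$, which clears the denominators. The constant term is $\iota(I_2)=I_3$. Hence $\iota(\gamma)$ is integral with determinant one, so $\iota(\Gamma_2(N))\subseteq\Gamma$.

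\emph{Covering map.} The map $A\mapsto\iota(A)\Gamma$ is constant on right $\Gamma_2(N)$-cosets, so it induces a continuous $G_2$-equivariant surjection $\pi:G_2/\Gamma_2(N)\to H\Gamma/\Gamma$. Surjectivity uses $\iota(G_2)=H$. Because $H\Gamma/\Gamma$ is closed, it is homeomorphic to $H/(H\cap\Gamma)$. Via $\iota$, this identifies with $G_2/\iota^{-1}(H\cap\Gamma)$, modulo the finite kernel. So $\pi$ is the natural map $G_2/\Gamma_2(N)\to G_2/\Delta$, where $\Delta=\iota^{-1}(\Gamma)\cap G_2\supseteq\Gamma_2(N)$.

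\emph{Finite index.} The main point is that $\Gamma_2(N)$ has finite index in $\Delta$. Here $\Delta$ is discrete: $\iota$ has finite kernel and is a local diffeomorphism onto $H$, and $H\cap\Gamma$ is discrete. Moreover $\Gamma_2(N)$ is a lattice in $G_2$. A discrete subgroup containing a lattice contains it with finite index, because the covolume of $\Gamma_2(N)$ is $[\Delta:\Gamma_2(N)]$ times the covolume of $\Delta$, which is positive. Thus $G_2/\Gamma_2(N)\to G_2/\Delta$ is a finite covering, with fibres given by the finite coset space.

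\emph{Measures.} The pushforward of the $G_2$-invariant probability on $G_2/\Gamma_2(N)$ is a $G_2$-invariant probability on $H\Gamma/\Gamma$, since $\pi$ is $G_2$-equivariant and $H$ acts through $\iota$. It therefore equals the unique $H$-invariant probability, by uniqueness of invariant measure on a homogeneous space of finite volume. The step needing most care is this identification of $H\Gamma/\Gamma$ with $G_2/\Delta$ through closedness, together with the discreteness of $\Delta$. I would handle both using the homeomorphism $H/(H\cap\Gamma)\cong H\Gamma/\Gamma$ for closed orbits.
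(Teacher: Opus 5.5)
Your proposal is correct and follows essentially the same route as the paper. You clear the denominators of a rational polynomial representative of $\iota$, so that $\iota(\Gamma_2(N))\subseteq\Gamma$; then the discrete overgroup $\iota^{-1}(\Gamma)$ of the lattice $\Gamma_2(N)$ has finite index, which gives a finite covering that pushes Haar probability to the invariant probability on $H\Gamma/\Gamma$.
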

\begin{proof}
By Lemma~\ref{app:sl2-representations}, the entries of $\iota$ are rational polynomials
of degree one or two in the four matrix entries on $\SL_2$.
Choose $N$ clearing their denominators. By
Lemma~\ref{app:congruence}, $\iota(\groupid)=\groupid$ implies that
substitution of $\gamma\equiv I_2\pmod N$ gives an integral matrix
$\iota(\gamma)$ of determinant one.
The discrete stabilizer $\iota^{-1}(\Gamma)$ contains the lattice
$\Gamma_2(N)$; the discrete overgroup of this lattice has finite index:
a positive-volume neighborhood in its quotient cannot have
infinitely many disjoint lifts in the finite-volume congruence
quotient. By Lemma~\ref{app:lattice-covers}, the resulting finite
cover pushes Haar probability to Haar probability.
\end{proof}

%Thus the proper semisimple estimates use fixed congruence covers in the rationally split cases and fixed compact rank-one quotients in the rationally anisotropic indefinite case. The latter groups are noncompact over $\mathbb R$; it is their arithmetic quotients that are compact. The standard two-dimensional representation does not descend through the symmetric-square cover, whose kernel is $\{I_2,-I_2\}$. Lemma~\ref{lem:quadratictransfer} supplies the corresponding coefficient bound on the cover. The affine covering reduction is kept with its estimate in Lemma~\ref{lem:affinecover}.

\section{Polynomial estimates on unipotent supports}
\label{sec:unipotentproof}

A unipotent support is a homogeneous orbit $Y=Hz\simeq H/\Delta_z$,
where $H$ is unipotent (every matrix has all eigenvalues equal to one) and
$\Delta_z=\{h\in H:hz=z\}$. The normalized rational supports of this
type are compact nilmanifolds, with rational ordered exponential
coordinates supplied by Lemma~\ref{lem:rationalnilcoordinates}.
Their horizontal torus $H/[H,H]\Delta_z$ records the coordinates
after quotienting out commutators. This section converts the
Green--Tao theorem, Theorem~\ref{prop:green-tao}, into the
continuous-time horizontal obstruction of
Lemma~\ref{lem:continuousleibman}. Proposition~\ref{prop:separatedunipotent}
then gives a power rate of convergence to Haar probability when the
nonconstant coefficients of every nonzero horizontal character phase
satisfy \eqref{eq:toralseparation}; Section~\ref{sec:mainproof} derives
these bounds from the intrinsic condition.

Fix the compact orbit $Y=Hz\simeq H/\Delta_z$, basis $B$, and
constant $Q_B$ of Section~\ref{subsec:nilinput}. Identify
$b\Delta_z$ with $bz\in Y$. Let $\Phi:\R\to H$ be nonconstant,
with $\Phi(0)=\groupid$, and suppose its ordered coordinates
are polynomials of degree at most an integer $D\ge1$.
The horizontal projection is
\begin{equation}\label{eq:horizontalcurve}
 p(t)=\chi_{\mathrm{ab}}(\Phi(t))
     =\sum_{j=1}^D v_jt^j\in\R^r[t].
\end{equation}
The obstruction lemma below also applies when $p$ is constant.
For a matrix-degree bound $d$, Lemma~\ref{app:horizontal}
permits $D=2d$, with filtration degree at most $2D=4d$.

\begin{lemma}[Continuous-time horizontal obstruction]
\label{lem:continuousleibman}
After increasing $b_D$ in Theorem~\ref{prop:green-tao}, the
following holds for the same $a_D$. Let $T\ge1$,
$a\in H$, $x=a\Delta_z$, and $0<\epsilon\le(4Q_B)^{-1}$.
For a Lipschitz function $F$ on $H/\Delta_z$, if
\[
 \left|\frac1T\int_0^T F(\Phi(t)x)\dd t
                 -\int_{H/\Delta_z}F\right|
                  >2\epsilon\norm F_{\mathrm{Lip},B},
\]
then some $0\ne k\in\Z^r$ satisfies
\begin{equation}\label{eq:continuousobstruction}
 \norm k\le b_D\epsilon^{-a_D},\qquad
 \left(\sum_{j=1}^D T^{2j}|k\cdot v_j|^2\right)^{1/2}
                     \le b_D\epsilon^{-a_D}.
\end{equation}
The constants do not depend on $T,x$, or the coefficients of $\Phi$.
\end{lemma}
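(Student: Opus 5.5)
The plan is to discretize the continuous average and apply Theorem~\ref{prop:green-tao} to a sampled sequence, then convert the discrete smoothness bound back to the continuous coefficients $v_j$. Fix a sampling step $s\in[1/2,1]$ (to be averaged over) and an integer $N=\lfloor T/s\rfloor$. For $u\in[0,1]$ write the continuous average as
\[
 \frac1T\int_0^T F(\Phi(t)x)\dd t
 =\int_0^1\frac1N\sum_{n=0}^{N-1}F(\Phi(s(n+u))x)\dd u+O(\norm F_\infty/T).
\]
For each fixed $u$, the sequence $q_u(n)=\Phi(s(n+u))a$ has ordered coordinates polynomial of degree at most $D$ (Lemma~\ref{app:nil-sampling}, with right translate $a$). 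When $T$ is small, say $T\le\epsilon^{-1}$, the conclusion is trivial after enlarging $b_D$: take $k=e_1$, so $\norm k=1$ and the left side of the second inequality in \eqref{eq:continuousobstruction} is at most $T^D\max_j\norm{v_j}$. This is not uniform in the coefficients, however. So the small-$T$ case must instead use the hypothesis to force $T$ large relative to $\epsilon$: the discrepancy bound $2\epsilon\norm F_{\mathrm{Lip},B}$ can only fail if $T\gtrsim\epsilon^{-1}$ or the dynamics is slow. I will handle this by always reducing to the discrete case with $N\ge2$ and absorbing the endpoint error $O(\norm F_\infty/T)$ when $T\ge\epsilon^{-1}$. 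For $T<\epsilon^{-1}$ I use $N=\max\{2,\lfloor T\rfloor\}$ samples at step $T/N$, so that $N\asymp T$ as well.

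Since the continuous discrepancy exceeds $2\epsilon\norm F_{\mathrm{Lip},B}$ and the endpoint error is at most $\epsilon\norm F_{\mathrm{Lip},B}/2$ for $T$ large, some $u$ (by averaging) gives a discrete discrepancy exceeding $\epsilon\norm F_{\mathrm{Lip},B}$. Theorem~\ref{prop:green-tao} then yields $k\ne0$ with $\norm k\le b_D\epsilon^{-a_D}$ and $\norm{\eta_k\circ q_u}_{C^\infty[N]}\le b_D\epsilon^{-a_D}$. Now $\eta_k(q_u(n))=k\cdot p(s(n+u))+k\cdot\chi_{\mathrm{ab}}(a)\bmod\Z$, because $\chi_{\mathrm{ab}}$ is a homomorphism. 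Expanding in the binomial basis $\binom nj$ gives coefficients $\alpha_j$ that are linear in $s^ik\cdot v_i$, $i\ge j$, with an invertible triangular transformation whose constants depend only on $D$ and $u\in[0,1]$.

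The main obstacle is that Green--Tao controls only $\operatorname{dist}(\alpha_j,\Z)$, not $|\alpha_j|$. I plan to remove the integer parts by varying the step $s$. Write $\alpha_j(s)=m_j(s)+\beta_j(s)$ with $m_j\in\Z$ and $|\beta_j|\le b_D\epsilon^{-a_D}N^{-j}$. The $\alpha_j(s)$ are polynomials in $s$, hence continuous, and they remain obstructed on a set of $s$ of positive measure; this follows by running the averaging argument in $(s,u)$ jointly over $[1/2,1]\times[0,1]$. Pigeonholing $k$ among its $\ll\epsilon^{-ra_D}$ possible values then gives a common $k$ on a set of measure $\gg\epsilon^{ra_D}$. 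On that set the integers $m_j(s)$ are locally constant, while $\alpha_j(s)$ is a polynomial of degree $\le D$ in $s$. By Lemma~\ref{lem:polynomialgoodness}, a polynomial lying within $\delta$ of a fixed integer on a set of measure $\mu$ inside $[1/2,1]$ has its nonconstant coefficients bounded by $O(\delta\mu^{-D})$. This forces the $s$-dependent parts, which are exactly $s^ik\cdot v_i$ for $i\ge1$, to be at most $\epsilon^{-O(a_D)}N^{-i}$. I will settle this by taking the top-degree coefficients first and proceeding downward. Since $N\asymp T$, this gives $T^j|k\cdot v_j|\le b'_D\epsilon^{-a'_D}$. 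If the exponent worsens, I replace $a_D$ by a larger constant depending only on $D$; the statement only asks for "the same $a_D$" after enlarging $b_D$, so I will try first to keep the pigeonhole loss inside $b_D$ by instead choosing $s$ from a fixed finite set of $D+1$ rationally independent steps, with constants depending only on $D$.
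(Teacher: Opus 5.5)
Your reduction to sampled sequences and the binomial bookkeeping are fine. The trouble is that you fix the sampling step near one ($N\asymp T$), and this creates an obstacle that your proposal never overcomes.

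First, when $N^D/2\le b_D\epsilon^{-a_D}$, that is for $T\lesssim\epsilon^{-a_D/D}$, every $k$ satisfies $\max_jN^j\operatorname{dist}(\alpha_j,\Z)\le b_D\epsilon^{-a_D}$. So Theorem~\ref{prop:green-tao} carries no information there. Yet \eqref{eq:continuousobstruction} is a genuine constraint in that range: at $T=1$ it bounds $|k\cdot v_j|$ itself, uniformly in the coefficients. Your small-$T$ paragraph does not resolve this.

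Second, for large $T$ the step ``the integers $m_j(s)$ are locally constant, so apply the fixed-integer sublevel bound'' is unjustified. The obstructed set of $(s,u)$ is only measurable. If $k\cdot v_j$ is large, $\alpha_j$ is within $\delta$ of $\Z$ on a set of measure about $\delta$, spread over unboundedly many integers, so pigeonholing an integer costs a factor that depends on the coefficients. Moreover $N=\lfloor T/s\rfloor$ jumps as $s$ varies, so $\alpha_j$ is not one polynomial in $s$ on $[1/2,1]$. Finally, pigeonholing in $k$ and $s$ worsens the exponent, whereas the statement keeps $a_D$. The suggested finite set of rationally independent steps addresses none of these points.

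The missing idea is to refine the mesh instead of varying it. Keep $T$ fixed and let $N\to\infty$ along $q_N(n)=\Phi(Tn/N)a$.
\begin{itemize}
\item The Riemann sums converge to the continuous average, so for all large $N$ the discrete discrepancy exceeds $\epsilon\norm F_{\mathrm{Lip},B}$. Theorem~\ref{prop:green-tao} then applies with constants independent of $N$.
\item The resulting $k_N$ lie in the finite set $\{k\in\Z^r\setminus\{0\}:\norm k\le b_D\epsilon^{-a_D}\}$, so a single $k$ recurs along a subsequence.
\item Write $k\cdot\chi_{\mathrm{ab}}(\Phi(t)a)=\beta_0+\sum_{j\ge1}\beta_jt^j$ with $\beta_j=k\cdot v_j$. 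The binomial coefficients satisfy $\alpha_j(N)=j!\,\beta_j(T/N)^j+O(N^{-j-1})\to0$.
\item Hence eventually $\operatorname{dist}(\alpha_j(N),\Z)=|\alpha_j(N)|$, and $N^j|\alpha_j(N)|\to j!\,T^j|\beta_j|$.
\end{itemize}
Passing to the limit in the uniform Green--Tao bound gives \eqref{eq:continuousobstruction} for every $T\ge1$. This keeps the same $a_D$ and enlarges $b_D$ only by a factor depending on $D$; no integer parts ever have to be removed.
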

\begin{proof}
Fix $T,\epsilon,F$, and take arbitrarily large integers $N$.
The Riemann sums
\[
 \frac1N\sum_{n=1}^N F(\Phi(Tn/N)x)
           \longrightarrow\frac1T\int_0^T F(\Phi(t)x)\dd t
\]
therefore fail the $\epsilon$ discrepancy bound for every
sufficiently large $N$. The coordinate degree and filtration of
$q_N(n)=\Phi(Tn/N)a$ are independent of $N$, by
Lemma~\ref{app:nil-sampling}. Apply Theorem~\ref{prop:green-tao}
to each $q_N$. The vectors $k_N$ lie in a fixed finite set of
nonzero integral vectors. Along an infinite subsequence they equal
one fixed $k$.

Let $P(t)=k\cdot\chi_{\mathrm{ab}}(\Phi(t)a)
       =\beta_0+\sum_{j=1}^D\beta_jt^j$, where
$\beta_j=k\cdot v_j$ for $j\ge1$. Write the real polynomial
$P(Tn/N)=\sum_{j=0}^D\alpha_j(N)\binom nj$.
For each $j\ge1$, finite differences at $n=0$ give
\[
 \alpha_j(N)=j!\,\beta_j(T/N)^j+O_{P,T,D}(N^{-j-1}),
 \qquad N^j\alpha_j(N)\longrightarrow j!\,\beta_jT^j.
\]
This follows by applying the $j$th difference to each monomial:
it annihilates degrees below $j$ and equals $j!$ on $n^j$ at zero.
In particular $\alpha_j(N)\to0$. For large $N$ in the subsequence,
$\operatorname{dist}(\alpha_j(N),\Z)=|\alpha_j(N)|$.
The smoothness bound \eqref{eq:gtobstruction}, followed by the limit,
gives $j!\,T^j|\beta_j|\le b_D\epsilon^{-a_D}$ for every $j$.
Summing the squares and enlarging $b_D$ by a factor depending only
on $D$ proves \eqref{eq:continuousobstruction}.

The number of samples needed for convergence may depend on the fixed
data, but the resulting inequality does not: it is obtained by a limit
of the uniform bounds \eqref{eq:gtobstruction}. No integer-time
equidistribution assertion for the original curve is used.
\end{proof}

Assume that the horizontal coefficients satisfy, for some
$0<c\le1$ and $\tau\ge0$,
\begin{equation}\label{eq:toralseparation}
 \left(\sum_{j=1}^D|k\cdot v_j|^2\right)^{1/2}
       \ge c\norm k^{-\tau}\qquad(0\ne k\in\Z^r).
\end{equation}
These are the nonconstant coefficients of the horizontal character phase
$k\cdot p(t)$. Continuous time requires separation from zero in
$\R$, rather than separation from integer values.

For a change of horizontal coordinates
$p_U(t)=U^{-1}p(t)$, with $U\in\mathrm{GL}_r(\Z)$, the character
phases satisfy
\[
 k\cdot p_U(t)=(U^{-\mathsf T}k)\cdot p(t)
 \qquad(k\in\Z^r).
\]
The map $k\mapsto U^{-\mathsf T}k$ is a bijection of $\Z^r$.
Consequently, \eqref{eq:toralseparation} holds for $p_U$ with
the same exponent $\tau$ and with constant
$c\max\{1,\norm{U^{-\mathsf T}}_{\mathrm{op}}\}^{-\tau}$.
In the abelian case, this coordinate change is induced by the
lattice basis change $B'=BU$.

\begin{proposition}[Equidistribution on unipotent supports]
\label{prop:separatedunipotent}
Assume \eqref{eq:toralseparation} for the horizontal
projection $p=\chi_{\mathrm{ab}}\circ\Phi$, and put
$\gamma_{D,\tau}=1/[a_D(\tau+1)]$. Then for every
$f\in C_c^\infty(X)$ and $x\in Y$,
\begin{equation}\label{eq:separatedunipotent}
 |A_T^\Phi f(x)-\mu_Y(f)|
 \le C_D Q_B\,c^{-\gamma_{D,\tau}}
       \mathcal N_{B,1,Y}(f)T^{-\gamma_{D,\tau}}
       \qquad(T\ge1).
\end{equation}
The estimate is uniform over $x\in Y$. The constant $C_D$ is
independent of $\tau$, the polynomial coefficients, and the orbit
coordinates; the latter enter through $Q_B$, $c$, and the test norm.
\end{proposition}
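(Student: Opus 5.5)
The plan is a contrapositive argument built on Lemma~\ref{lem:continuousleibman}. Fix $x=a\Delta_z\in Y$ and $f$, and put $F(h\Delta_z)=f(hz)$ on $H/\Delta_z$. By \eqref{eq:nillipcontrol}, $\norm F_{\mathrm{Lip},B}\le(m+1)\mathcal N_{B,1,Y}(f)$ with $m\le3$. It therefore suffices to prove a discrepancy bound of the form $2\epsilon\norm F_{\mathrm{Lip},B}$, where $\epsilon=\epsilon(T)$ is chosen as a power of $T$.

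Suppose the discrepancy exceeds $2\epsilon\norm F_{\mathrm{Lip},B}$ for some $\epsilon\le(4Q_B)^{-1}$. Then Lemma~\ref{lem:continuousleibman} produces $0\ne k\in\Z^r$ with $\norm k\le b_D\epsilon^{-a_D}$ and $T\bigl(\sum_j|k\cdot v_j|^2\bigr)^{1/2}\le b_D\epsilon^{-a_D}$. The second inequality uses $T\ge1$, so that $T^{2j}\ge T^2$ for $j\ge1$.

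Next combine this with \eqref{eq:toralseparation}:
\[
 cT(b_D\epsilon^{-a_D})^{-\tau}\le cT\norm k^{-\tau}\le b_D\epsilon^{-a_D}.
\]
This gives $\epsilon^{a_D(\tau+1)}\le b_D^{\tau+1}c^{-1}T^{-1}$. Here I use $\norm k\ge1$ together with the upper bound on $\norm k$, and the $\tau$-power goes the right way.

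Now choose
\[
 \epsilon=\min\bigl\{(4Q_B)^{-1},\,2b_D\,(cT)^{-\gamma_{D,\tau}}\bigr\},
 \qquad \gamma_{D,\tau}=1/[a_D(\tau+1)].
\]
The factor $2>1$ makes $\epsilon^{a_D(\tau+1)}>b_D^{\tau+1}c^{-1}T^{-1}$, so no obstruction can exist. The discrepancy is then at most $2\epsilon(m+1)\mathcal N_{B,1,Y}(f)\ll b_D c^{-\gamma}T^{-\gamma}\mathcal N_{B,1,Y}(f)$.

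If instead the minimum is attained at $(4Q_B)^{-1}$, the argument above gives discrepancy at most $2(4Q_B)^{-1}(m+1)\mathcal N$. In that regime $(cT)^{-\gamma}\ge(8b_DQ_B)^{-1}$, so the same quantity is bounded by $C_DQ_B(cT)^{-\gamma}\mathcal N$. Either way we obtain \eqref{eq:separatedunipotent} with $C_D$ depending only on $D$ through $b_D$ and $m\le3$. Since $c\le1$, $c^{-\gamma}\ge1$ and $Q_B\ge1$ absorb the remaining constants.

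The main point to check is the bookkeeping of the constant. One must verify that $b_D^{\tau+1}$ to the power $\gamma$ equals $b_D^{1/a_D}$, which is bounded independently of $\tau$. This is what makes $C_D$ independent of $\tau$, as claimed. Uniformity in $x$ is automatic, because Lemma~\ref{lem:continuousleibman} is uniform in $a$. The degenerate case where $p$ is constant cannot occur under \eqref{eq:toralseparation} when $r\ge1$.
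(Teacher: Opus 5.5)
Your overall route is the paper's: you use Lemma~\ref{lem:continuousleibman} contrapositively, combine it with \eqref{eq:toralseparation} to get $cT\le b_D^{\tau+1}\epsilon^{-a_D(\tau+1)}$, choose $\epsilon$ as a multiple of $(cT)^{-\gamma_{D,\tau}}$, and finish with \eqref{eq:nillipcontrol}. Your multiple $2b_D$, in place of the paper's $(2b_D)^{1/a_D}$, also works, since $(2b_D)^{a_D(\tau+1)}>b_D^{\tau+1}$.

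\textbf{The gap is in the second regime.} Your sentence ``the factor $2>1$ makes $\epsilon^{a_D(\tau+1)}>b_D^{\tau+1}c^{-1}T^{-1}$'' is stated for the minimum, but it holds only when the minimum equals $2b_D(cT)^{-\gamma_{D,\tau}}$.

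When $\epsilon=(4Q_B)^{-1}<2b_D(cT)^{-\gamma_{D,\tau}}$, you have no such inequality. For instance, with $T$ near $1$ and $c$ small, $b_D^{\tau+1}/(cT)>1>\epsilon^{a_D(\tau+1)}$. A horizontal obstruction may then exist, and Lemma~\ref{lem:continuousleibman} yields nothing. So your claimed bound $2(4Q_B)^{-1}(m+1)\mathcal N_{B,1,Y}(f)$ is unproved. It can actually fail when $Q_B$ is large and the trajectory stays near $x$ on $[0,T]$.

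The repair is the one the paper makes. In this regime, use the trivial bound $2\norm F_\infty\le2\norm F_{\mathrm{Lip},B}$ together with $1<8b_DQ_B(cT)^{-\gamma_{D,\tau}}$. This gives discrepancy at most $16b_DQ_B(cT)^{-\gamma_{D,\tau}}\norm F_{\mathrm{Lip},B}$, which has the required form. With that change your proof is complete.

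A minor point: the combination step does not use $\norm k\ge1$. It needs only the upper bound on $\norm k$ and $\tau\ge0$.
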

\begin{proof}
Put $F(b\Delta_z)=f(bz)$. The coefficient bound and \eqref{eq:continuousobstruction}, using
$T^j\ge T$, would imply
\[
 Tc\norm k^{-\tau}\le b_D\epsilon^{-a_D},\qquad
 \norm k\le b_D\epsilon^{-a_D},
 \quad\text{hence}\quad
 cT\le b_D^{\tau+1}\epsilon^{-a_D(\tau+1)}.
\]
Set $\epsilon_T=(2b_D)^{1/a_D}(cT)^{-\gamma_{D,\tau}}$ and
$\epsilon_0=(4Q_B)^{-1}$. If $\epsilon_T\le\epsilon_0$,
the last inequality is impossible with $\epsilon=\epsilon_T$.
Lemma~\ref{lem:continuousleibman} bounds the discrepancy by
$2\epsilon_T\norm F_{\mathrm{Lip},B}$. If
$\epsilon_T>\epsilon_0$, the trivial discrepancy bound
$2\norm F_\infty$ is at most
$8Q_B\epsilon_T\norm F_{\mathrm{Lip},B}$.
In both cases apply \eqref{eq:nillipcontrol} and $m\le3$.
This proves the stated bound with, for example, a fixed multiple of
$32(2b_D)^{1/a_D}$ for $C_D$.
\end{proof}

\section{Polynomial estimates on rank-one supports}
\label{sec:modularproof}
\label{sec:modularinput}

The rank-one supports, whose maximal tori over $\mathbb C$ have
dimension one, are the proper semisimple cases in
Proposition~\ref{prop:supportclassification}: rational block $\SL_2$
groups and $\SO(\qform)$ for a nondegenerate rational ternary quadratic
form $\qform$. Lemma~\ref{lem:semisimpleparametrizations}
parametrizes their noncompact real identity components by
$G_2=\SL_2(\mathbb R)$; when $\qform$ is definite, polynomial curves
are constant. We therefore prove the estimates on quotients
$G_2/\Delta$: fixed congruence covers for block groups and forms
isotropic over $\mathbb Q$, and compact quotients for indefinite
forms anisotropic over $\mathbb Q$.
Lemma~\ref{lem:modulartransfer} combines tangent averaging and
quantitative nondivergence to transfer horocycle estimates to
polynomial curves. The resulting power bounds are
Proposition~\ref{prop:congruencemodular}, under coefficient separation,
and Proposition~\ref{prop:compactpolynomial}, without an arithmetic
condition on compact quotients. Lemma~\ref{lem:quadratictransfer}
supplies the coefficient bound needed on symmetric-square covers;
the transfer back to the original supports is carried out in
Section~\ref{sec:mainproof}.

For the rank-one models, $E_{ij}$ denotes a $2\times2$ matrix unit.
Set
\[
 u_2(s)=I_2+sE_{12},\qquad
 a_2(L)=\diag(L^{1/2},L^{-1/2}).
\]
For any lattice $\Delta\le G_2$, write $X_\Delta=G_2/\Delta$
and let $m_\Delta$ denote its Haar probability measure.
For $s\in\Z_{\ge0}$ and $F\in C_b^s(X_\Delta)$, let
$\mathcal D_s(F)$ be the sum of supremum norms of all ordered
left-action derivatives of orders at most $s$ in the basis
$E_{12},E_{21},\diag(1,-1)$ of $\mathfrak{sl}_2$, including
$\norm F_\infty$ as the term of order zero. These conventions
apply to both congruence covers and compact rank-one quotients.

\subsection{A common transfer from horocycles to polynomial curves}

Let $\Delta\le\Gamma_2$ have finite index. For $y\in X_\Delta$,
let $\bar y$ denote its image in $X_2$, and put
\[
 U_L^\varepsilon F(y)=\frac1L\int_0^L F(u_2(\varepsilon s)y)\dd s,
 \qquad\varepsilon\in\{1,-1\}.
\]
The next lemma deduces a polynomial estimate from a flow estimate
and a growth bound on the rank-one base. It applies to both the
modular and regular affine cases.

\begin{lemma}[Polynomial transfer through a rank-one base]
\label{lem:modulartransfer}
Let $\mathcal G\le\SL_n(\R)$, $n\in\{2,3\}$, be a fixed
connected closed matrix group with lattice $\Lambda$, and let
$\pi:\mathcal G\to G_2$ be a homomorphism with
$\pi(\Lambda)\subseteq\Gamma_2$. For $y=b\Lambda$ put
$\bar y=\pi(b)\Gamma_2$, and let $m$ be invariant probability
on $\mathcal X=\mathcal G/\Lambda$.
Fix a nonzero nilpotent $N_*\in\Lie(\mathcal G)$ and set
$p=1$ if $N_*^2=0$, $p=2$ otherwise.
Let $\mathcal U_L$ be one of the averages in
Lemma~\ref{lem:smoothing}, and let $\mathcal N_\ell$ be the
sum of supremum Lie-derivative norms through order $\ell\ge1$.

Suppose that, for fixed $0<\omega\le1/2$, $\zeta>0$, $L_0\ge2$,
and $C_0$, the estimate
\begin{equation}\label{eq:transferinput}
 \latticemin(a_2(L)^{-1}\bar y)\ge L^{-\omega}
 \quad\Longrightarrow\quad
 |\mathcal U_LF(y)-m(F)|
       \le C_0\mathcal N_\ell(F)L^{-\zeta}
       \qquad(L\ge L_0)
\end{equation}
holds for every $y$ and every test function in a fixed class
contained in $C_b^\ell(\mathcal X)$.
The class may consist of functions supported in a fixed compact
set, in which case $C_0,L_0$ may depend on that set.
Let $P(t)\in\mathcal G$ be normalized with leading tangent $N_*$,
and suppose both $P$ and $\pi\circ P$ have matrix degree at most $d$.
For $h\in\mathcal G$, assume that $0<\alpha\le1$, $0<K\le1$, and
\begin{equation}\label{eq:separatedmodulargrowth}
 \sup_{t\in[T,2T]}\norm{\pi(P(t)h)w}\ge KT^\alpha
       \qquad(T\ge1,\ 0\ne w\in\Z^2).
\end{equation}
Put
\[
 \theta=\frac{\alpha}{2(d+p)},\qquad
 \nu=\min\{\zeta,\omega/d,1/4\},\qquad
 \delta=\theta\nu,\qquad
 \beta=\frac{2\nu}{4(d+p)-1}.
\]
Then for $T\ge2$,
\begin{equation}\label{eq:modulartransfer}
 \left|\frac1T\int_0^T F(P(t)h\Lambda)\dd t-m(F)\right|
 \le C\bigl(1+M(P)+K^{-\beta}\bigr)\mathcal N_\ell(F)T^{-\delta}.
\end{equation}
The constant $C$ depends only on
$d,\mathcal G,\Lambda,\pi,N_*,\omega,\zeta,\ell,C_0,L_0$
and the chosen derivative norms; it is independent of
$\alpha,K,P,h$.
\end{lemma}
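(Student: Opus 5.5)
We prove \eqref{eq:modulartransfer} on dyadic intervals and then sum. Fix $S\ge1$; we bound the dyadic average $\frac1S\int_S^{2S}F(P(t)h\Lambda)\dd t-m(F)$ and finish with Lemma~\ref{lem:dyadic} at exponent $\delta$ and a threshold $T_*$ to be chosen.

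\emph{Smoothing.} Take $L=S^{\theta}$, so that $L=S^{\alpha/[2(d+p)]}$. Lemma~\ref{lem:smoothing} then replaces the dyadic average by $\frac1S\int_S^{2S}\mathcal U_LF(P(t)h\Lambda)\dd t$. The error is at most $C_dM(P)\mathcal N_1(F)S^{-1/2}$, and since $\delta\le1/8$ this is $\le C_dM(P)\mathcal N_\ell(F)S^{-\delta}$. We need $S\ge T_d$ and $L\ge L_0$; both hold once $S$ exceeds a threshold depending on $d,\alpha,L_0$. To keep the constant independent of $\alpha$, we use instead that $L\ge L_0$ whenever $S^{\theta}\ge L_0$, and put the smaller $S$ into the trivial regime, where the bound is $2\norm F_\infty\le 2L_0^{\nu}\mathcal N_\ell(F)S^{-\delta}$ because $S^{\delta}=(S^\theta)^{\nu}<L_0^{\nu}$.

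\emph{Good and bad times.} Call $t\in[S,2S]$ good if $\latticemin(a_2(L)^{-1}\pi(P(t)h)\Gamma_2)\ge L^{-\omega}$. On good times, hypothesis \eqref{eq:transferinput} gives the error $C_0\mathcal N_\ell(F)L^{-\zeta}\le C_0\mathcal N_\ell(F)S^{-\theta\nu}$. On bad times we use the trivial bound $2\norm F_\infty$, so it remains to bound the measure of the bad set.

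\emph{Nondivergence for the bad set.} Apply Theorem~\ref{prop:km} with $n=2$ to the curve $h_L(t)=a_2(L)^{-1}\pi(P(t)h)$ on $I=[S,2S]$. The primitive subgroups of $\Z^2$ are rank one, represented by primitive vectors $w$, together with $\Z^2$, whose covolume is $1$. Each $\norm{h_L(t)w}$ is the norm of a vector polynomial of degree $\le d$, so by Lemma~\ref{lem:polynomialgoodness} it is $(C_d,1/d)$-good on $\R$. For the lower bound we use $\norm{a_2(L)^{-1}}_{\mathrm{op}}^{-1}$ together with $s_{\min}(a_2(L)^{-1})=L^{-1/2}$ and \eqref{eq:separatedmodulargrowth}. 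This gives
\[
 \sup_I\norm{h_L(t)w}\ge L^{-1/2}KS^\alpha=KS^{\alpha-\theta/2}\ge KS^{3\alpha/4}.
\]
Set $\varrho=\min\{1/2,KS^{3\alpha/4}\}$ and $\epsilon=L^{-\omega}=S^{-\theta\omega}$. Whenever $\epsilon\le\varrho$, Theorem~\ref{prop:km} bounds the bad measure by
\[
 \ll C_d(\epsilon/\varrho)^{1/d}S\le C_dS\bigl(S^{-\theta\omega}(1+K^{-1}S^{-3\alpha/4})\bigr)^{1/d}.
\]
Once $KS^{3\alpha/4}\ge1/2$, i.e.\ $S\ge(2K)^{-4/(3\alpha)}$, the bad proportion is $\ll S^{-\theta\omega/d}\le S^{-\delta}$.

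\emph{The main obstacle: avoiding dependence on $\alpha$.} Taken naively, the threshold $(2K)^{-4/(3\alpha)}$ depends on $\alpha$, and so would $T_*^\delta$. To remove this, we use $\epsilon/\varrho$ directly: for $S\le(2K)^{-4/(3\alpha)}$ the bound holds trivially whenever the proportion $S^{\delta}\cdot$(trivial)$\lesssim$ a power of $K^{-1}$. Concretely, we choose $T_*$ as the point where $K^{-1}S^{-3\alpha/4}=S^{\theta\omega/2}$. There the ratio $\epsilon/\varrho$ is already $\le S^{-\theta\omega/2}$, and the relation $\theta=\alpha/(2(d+p))$ converts $T_*^\delta$ into $K^{-\beta}$. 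This uses $\delta\cdot\frac{1}{3\alpha/4+\theta\omega/2}\le\beta$ with $\omega\le1/2$, which gives exactly the exponent $\beta=2\nu/(4(d+p)-1)$, or a dominating one. Verifying this exponent bookkeeping is the delicate step.

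\emph{Conclusion.} Combining the smoothing error, the good-time error and the bad-time measure gives the dyadic bound $D\le C(1+M(P))$ for $S\ge T_*$, with $T_*^\delta\ll 1+K^{-\beta}$. Lemma~\ref{lem:dyadic} then yields \eqref{eq:modulartransfer}.
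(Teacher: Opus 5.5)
Your overall scheme matches the paper's: smoothing at $L=S^\theta$, a split into good and bad times, Kleinbock--Margulis in dimension two, and dyadic summation. But the step you flag as delicate is wrong, and as written you do not obtain the stated power $K^{-\beta}$.

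\emph{The obstacle is misdiagnosed.} Since $\delta=\theta\nu$ is proportional to $\alpha$, your naive threshold already gives $\bigl((2K)^{-4/(3\alpha)}\bigr)^{\delta}=(2K)^{-2\nu/(3(d+p))}$. This is independent of $\alpha$, so no device is needed to remove $\alpha$. Its exponent is, however, larger than $\beta$.

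\emph{The real loss is the weakening of the growth exponent.} You replace $\alpha-\theta/2$ by $3\alpha/4$. Your modified threshold $T_*=K^{-1/(3\alpha/4+\theta\omega/2)}$ gives
\[
 T_*^\delta=K^{-2\nu/(3(d+p)+\omega)}.
\]
The inequality you then need, $2\nu/(3(d+p)+\omega)\le\beta=2\nu/(4(d+p)-1)$, is equivalent to $d+p\le1+\omega\le3/2$. This is false, since $d+p\ge2$. Because $K\le1$, a larger exponent gives a strictly weaker bound, not ``a dominating one'': $K^{-2\nu/(3(d+p)+\omega)}$ is not $O(K^{-\beta})$ uniformly in $K$.

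\emph{The intermediate range is also not covered.} On $T_*\le S<(2K)^{-4/(3\alpha)}$, your ratio bound $\epsilon/\varrho\le S^{-\theta\omega/2}$ only yields a bad proportion $\ll S^{-\theta\omega/(2d)}$. When $\nu=\omega/d$ this is weaker than $S^{-\delta}$. So the dyadic hypothesis of Lemma~\ref{lem:dyadic} at rate $\delta$ is not verified on that range.

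\emph{The fix is to keep the exact exponent}, as in the paper's threshold \eqref{eq:separatedmodularthreshold}. Take $\varrho=1/2$ fixed and require
\[
 S\ge\max\{T_d,\ L_0^{1/\theta},\ 2^{1/(\omega\theta)},\ K^{-1/(\alpha-\theta/2)}\}.
\]
Then every nonzero integral $w$ satisfies $\sup_{[S,2S]}\norm{a_2(L)^{-1}\pi(P(t)h)w}\ge KS^{\alpha-\theta/2}\ge1$, and also $L^{-\omega}\le1/2$. Theorem~\ref{prop:km} then bounds the bad proportion by $C_dL^{-\omega/d}\le C_dS^{-\delta}$ on the whole range, and the initial-interval cost is
\[
 T_*^\delta\le T_d^{1/2}+L_0^\nu+2^{\nu/\omega}+K^{-\beta}.
\]
This holds because $\delta/(\alpha-\theta/2)=2\nu/(4(d+p)-1)=\beta$ identically in $\alpha$. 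The $\alpha$-uniformity is automatic; what matters is not discarding any part of $\alpha-\theta/2$.
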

\begin{proof}
Set $L=T^\theta$. Lemma~\ref{lem:smoothing} gives
\begin{equation}\label{eq:separatedmodularsmoothing}
 \left|\frac1T\int_T^{2T}(F-\mathcal U_LF)(P(t)h\Lambda)\dd t\right|
 \le C_dM(P)\mathcal N_\ell(F)T^{-1/2}
 \qquad(T\ge T_d).
\end{equation}
The base-lattice vector suprema are at least
\[
 \sup_{[T,2T]}\norm{a_2(L)^{-1}\pi(P(t)h)w}
       \ge KT^{\alpha-\theta/2}.
\]
Take
\begin{equation}\label{eq:separatedmodularthreshold}
 T_*=\max\{T_d,L_0^{1/\theta},2^{1/(\omega\theta)},
                         K^{-1/(\alpha-\theta/2)}\}.
\end{equation}
For $T\ge T_*$ these suprema are at least one, the determinant
is one, and the vector polynomials are $(C_d,1/d)$-good on
$9[T,2T]$. Theorem~\ref{prop:km} in dimension two,
with $\varrho=1/2$, therefore gives
\begin{equation}\label{eq:separatedmodularbadset}
 \frac1T\Leb\{t\in[T,2T]:
 \latticemin(a_2(L)^{-1}\pi(P(t)h)\Z^2)<L^{-\omega}\}
       \le C_dL^{-\omega/d}.
\end{equation}
Apply \eqref{eq:transferinput} off this set and use
$2\norm F_\infty$ on it. Together with smoothing, this gives
\begin{equation}\label{eq:separatedmodulardyadic}
\begin{aligned}
 \left|\frac1T\int_T^{2T}F(P(t)h\Lambda)\dd t-m(F)\right|
 &\le C\mathcal N_\ell(F)
       \bigl(M(P)T^{-1/2}+L^{-\zeta}+L^{-\omega/d}\bigr)\\
 &\le C(1+M(P))\mathcal N_\ell(F)T^{-\delta}.
\end{aligned}
\end{equation}
Since $0<\delta\le1/16$, Lemma~\ref{lem:dyadic} applies.
Its initial-interval cost is uniform in $\alpha$, because
\[
 T_*^\delta\le T_d^{1/2}+L_0^\nu+2^{\nu/\omega}+K^{-\beta},
 \qquad \frac{\delta}{\alpha-\theta/2}=\beta.
\]
This proves \eqref{eq:modulartransfer}.
\end{proof}

\iffalse
For the modular model take $\mathcal G=G_2$, $\Lambda=\Delta$,
$\pi$ the identity, $N_*=\varepsilon E_{12}$, and
$\mathcal N_\ell=\mathcal D_\ell$.
The regular affine application instead uses the two-dimensional
block projection and $p=2$; the nondivergence argument is unchanged.
\fi

\subsection{Coefficient separation on symmetric-square covers}

For a symmetric-square parametrization, the standard
two-dimensional representation does not descend to the orthogonal
group: the kernel element $-I_2$ acts nontrivially on the standard
module (Section~\ref{app:quadratic}). The following coefficient
calculation recovers the test on the cover, with separation
exponent $2\tau+1$ in place of $\tau$.

\begin{lemma}
\label{lem:quadratictransfer}
Let $\psi\in\SL_2(\R[t])$ be normalized with $\deg\psi\le d$,
let $g\in G_2$, and let $\sigma(A)Q=AQA^{\mathsf T}$ on
the symmetric two-by-two matrices. Declare
$E_{11},E_{22},E_{12}+E_{21}$ orthonormal and use their integer span.
Write
\[
 \sigma(\psi(t)g)=\sigma(g)+\sum_{j=1}^{2d}t^j C_j.
\]
Suppose $c>0$, $\tau\ge0$, and
\[
 \left(\sum_{j=1}^{2d}\|C_j Q\|^2\right)^{1/2}
       \ge c\|Q\|^{-\tau}
 \quad\text{for every nonzero integral symmetric matrix }Q.
\]
For $0\ne w\in\Z^2$, write
\[
 u(t)=\psi(t)gw=u_0+\sum_{j=1}^d t^j u_j,
 \qquad b(w)=\left(\sum_{j=1}^d\|u_j\|^2\right)^{1/2}.
\]
Then there is $c'=c'_{\psi,g,d,c}>0$ such that
\begin{equation}\label{eq:quadratictransfer}
 b(w)\ge c'\|w\|^{-(2\tau+1)}
 \qquad(0\ne w\in\Z^2).
\end{equation}
\end{lemma}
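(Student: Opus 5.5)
The plan is to test the hypothesis on the rank-one integral symmetric matrix $Q_w=ww^{\mathsf T}$ and to read off the coefficients of $\sigma(\psi(t)g)Q_w$ in terms of the vectors $u_j$. Fix $0\ne w\in\Z^2$. In the chosen basis $E_{11},E_{22},E_{12}+E_{21}$, the matrix $Q_w$ has coordinates $(w_1^2,w_2^2,w_1w_2)$. It is therefore a nonzero integral symmetric matrix with $\|Q_w\|\le\|w\|^2$. Since $\|w\|\ge1$, the hypothesis gives
\[
 \left(\sum_{j=1}^{2d}\|C_jQ_w\|^2\right)^{1/2}\ge c\|Q_w\|^{-\tau}\ge c\|w\|^{-2\tau}.
\]

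Next I would compute the left side explicitly. By definition,
\[
 \sigma(\psi(t)g)Q_w=\psi(t)g\,ww^{\mathsf T}g^{\mathsf T}\psi(t)^{\mathsf T}=u(t)u(t)^{\mathsf T}.
\]
Expanding $u(t)=\sum_{a=0}^d t^au_a$ and comparing powers of $t$ gives, by uniqueness of the expansion,
\[
 C_jQ_w=\sum_{a+b=j}u_au_b^{\mathsf T}\qquad(1\le j\le2d).
\]
For $j\ge1$, every summand has at least one index $\ge1$. Each $\|u_au_b^{\mathsf T}\|$ is at most a fixed constant times $\|u_a\|\|u_b\|$, by norm equivalence on the three-dimensional symmetric space. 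Using $\|u_a\|\le b(w)$ for $a\ge1$, I expect
\[
 \left(\sum_{j\ge1}\|C_jQ_w\|^2\right)^{1/2}\le C_d\,b(w)\bigl(\|u_0\|+b(w)\bigr).
\]
Here $u_0=gw$, because $\psi(0)=\groupid$, so $\|u_0\|\le\|g\|_{\mathrm{op}}\|w\|$.

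Combining the two bounds gives
\[
 c\|w\|^{-2\tau}\le C_d\,b(w)\bigl(\|g\|_{\mathrm{op}}\|w\|+b(w)\bigr).
\]
I would then split into two cases.
\begin{itemize}
\item If $b(w)\ge1$, then $b(w)\ge\|w\|^{-(2\tau+1)}$ trivially, since $\|w\|\ge1$.
\item If $b(w)<1$, the bracket is at most $(\|g\|_{\mathrm{op}}+1)\|w\|$. This yields $b(w)\ge c\,[C_d(\|g\|_{\mathrm{op}}+1)]^{-1}\|w\|^{-(2\tau+1)}$.
\end{itemize}
Taking $c'=\min\{1,c/(C_d(1+\|g\|_{\mathrm{op}}))\}$ proves \eqref{eq:quadratictransfer}.

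The only point needing care is the product bound for $C_jQ_w$, namely that the constant term $u_0u_0^{\mathsf T}$ never enters a coefficient with $j\ge1$. This accounts for the single extra factor $\|w\|$, and hence for the exponent $2\tau+1$ in place of $2\tau$. Everything else is bookkeeping of fixed norm-comparison constants depending only on $d$.
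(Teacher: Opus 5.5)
Your proof is correct and follows the paper's argument: test the hypothesis on $ww^{\mathsf T}$, observe that the constant term $u_0u_0^{\mathsf T}$ never enters a coefficient with $j\ge1$, and bound the nonconstant coefficients by a constant times $b(w)(\|u_0\|+b(w))$. The paper makes this norm step precise by noting that the Frobenius norm dominates the declared coordinate norm, which also covers your individually non-symmetric summands $u_au_b^{\mathsf T}$. The only difference is the final absorption of the $b(w)^2$ term: the paper uses $b(w)\ll_{\psi,g}\|w\|$, whereas your case split on $b(w)\ge1$ gives a constant that does not depend on $\psi$.
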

\begin{proof}
The integral vector $ww^{\mathsf T}$ has coordinate norm at most
$\|w\|^2$. Thus the nonconstant coefficient norm of
$u(t)u(t)^{\mathsf T}=\sigma(\psi(t)g)(ww^{\mathsf T})$
is at least $c\|w\|^{-2\tau}$.
Set $u_j=0$ outside $0\le j\le d$. Its coefficient of $t^j$,
$j\ge1$, is
\[
 u_0u_j^{\mathsf T}+u_ju_0^{\mathsf T}
       +\sum_{\substack{i,k\ge1\\i+k=j}}u_iu_k^{\mathsf T}.
\]
The Frobenius norm dominates the declared coordinate norm.
The triangle inequality for the coefficient sequence and
$\sum_{j\ge1}\|u_j\|\le\sqrt d\,b(w)$ give the upper bound
\[
 2\|u_0\|b(w)+\sqrt d\,b(w)^2
       \le C_{\psi,g,d}\|w\|b(w).
\]
The last inequality holds because every coefficient of $u$ is a
fixed linear function of $w$. Comparing the bounds proves the claim.
\end{proof}

\subsection{Fixed congruence covers}
\label{subsec:congruencecovers}

The intrinsic theorem requires finite covers of $X_2$, since a
rational embedding need not carry all of $\Gamma_2$ into $\Gamma$
(Section~\ref{app:arithmetic}).
We first record the horocycle estimate for an arbitrary fixed
lattice $\Delta\le G_2$, so it also applies to the compact quotients
below. Let $\inj_\Delta$ be the local injectivity radius of
the quotient orbit map, capped at one.
Constants are allowed to depend on the fixed lattice.

The following fixed-lattice horocycle estimate follows from
Zheng's theorem after changing quotient and metric conventions.

\begin{theorem}[{\cite[Theorem~B.1]{Zheng2015}}]
\label{thm:coverinput}
For every lattice $\Delta\le G_2$, there are
$A_\Delta,B_\Delta>0$, an integer $\ell_\Delta\ge1$,
and $C_\Delta>0$ such that, for $L\ge2$, $\varepsilon\in\{1,-1\}$,
$y\in X_\Delta$, and $F\in C_b^{\ell_\Delta}(X_\Delta)$,
\begin{equation}\label{eq:coverhorocycle}
 \left|\frac1L\int_0^L F(u_2(\varepsilon s)y)\dd s-m_\Delta(F)\right|
 \le C_\Delta\mathcal D_{\ell_\Delta}(F)
 L^{-A_\Delta}\inj_\Delta(a_2(L)^{-1}y)^{-B_\Delta},
\end{equation}
where $a_2(L)=\diag(L^{1/2},L^{-1/2})$.
\end{theorem}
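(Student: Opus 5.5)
This theorem is recorded as a specialization of \cite[Theorem~B.1]{Zheng2015}, so the deduction consists of matching conventions rather than a new argument. I will split it into three steps: reduction to Zheng's normalization, comparison of derivative norms, and comparison of injectivity radii.

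\emph{Step one: reduction to Zheng's normalization.} Zheng's appendix estimates the horocycle average $\frac1L\int_0^L F(u(s)y)\dd s$ on $G_2/\Delta$ for an arbitrary lattice $\Delta$. The error is a power of $L$ times a negative power of the injectivity radius at the point $a_2(L)^{-1}y$. Zheng may state this with a geodesic renormalization by $\log L$ or by time $2\log L$, or with the forward horocycle expanded under $a_t$. I will first check that $a_2(L)u_2(r)a_2(L)^{-1}=u_2(Lr)$. This gives
\[
 \frac1L\int_0^L F(u_2(s)y)\dd s=\int_0^1 F\bigl(a_2(L)u_2(r)a_2(L)^{-1}y\bigr)\dd r,
\]
so the base point in Zheng's statement becomes $a_2(L)^{-1}y$. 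For $\varepsilon=-1$, I will conjugate by $w=\diag(1,-1)\in\mathrm{GL}_2$, or by the element $\diag(i,-i)$ viewed in $\mathrm{PGL}_2$. A cleaner route is to replace $\Delta$ by $w\Delta w^{-1}$ and $F$ by $F\circ L_w$. This exchanges $u_2(s)$ with $u_2(-s)$ and fixes $a_2(L)$. It preserves Haar probability and changes $\mathcal D_\ell$ and $\inj$ only by constants depending on $\Delta$. Taking the maximum of the constants over the two signs then finishes this step.

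\emph{Step two: derivative norms.} Zheng's estimate is stated with an $L^2$ or $L^\infty$ Sobolev norm of some fixed order $\ell$. This norm may be measured in a different basis of $\mathfrak{sl}_2$, or via right rather than left derivatives in the group model. On a finite-volume quotient, every $L^2$ Sobolev norm is bounded by the corresponding supremum norm times $m_\Delta(X_\Delta)^{1/2}=1$. Changing the basis multiplies the norm by a factor of the form \eqref{eq:intrinsicpointwise}. If Zheng's Sobolev norm carries a height weight, which is possible for noncompact $\Delta$, I will absorb it by raising the $\inj$ exponent. The weight is polynomially bounded by the inverse injectivity radius at $y$. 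Since $a_2(L)$ moves $y$ by a bounded amount in the relevant range, this could instead be handled by taking the norm with an extra derivative. This step is the main obstacle. I would first check whether Zheng's Theorem~B.1 already uses an unweighted norm of bounded functions, in which case the issue does not arise.

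\emph{Step three: injectivity radii.} I will compare Zheng's injectivity radius with $\inj_\Delta$ capped at one. Different right-invariant metrics are bi-Lipschitz near the identity, so the two radii agree up to constant factors. Capping at one changes the bound only when Zheng's radius exceeds one, and in that case the factor $\inj^{-B_\Delta}$ is at least one anyway. The final constants $A_\Delta,B_\Delta,\ell_\Delta,C_\Delta$ are then Zheng's constants, enlarged by these comparisons. For $L\ge2$, the restriction to sufficiently large $L$ in Zheng's statement is handled by the trivial bound $2\norm F_\infty$, after enlarging $C_\Delta$. This requires the injectivity factor to be at least one, which holds because of the cap.
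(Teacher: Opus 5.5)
Your plan follows the paper's deduction in outline. You read the statement off Zheng's Theorem~B.1 by a change of conventions, use $a_2(L)u_2(r)a_2(L)^{-1}=u_2(Lr)$ to identify the renormalized base point $a_2(L)^{-1}y$, compare injectivity radii up to constants, and absorb small $L$ by the trivial bound using the cap. Two steps that the paper needs are missing, though.

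\emph{The quotient convention.} The paper applies Theorem~B.1 on the right quotient $\Delta\backslash G_2$, not on $G_2/\Delta$, and passes to $X_\Delta$ by the inversion $g\Delta\mapsto\Delta g^{-1}$. This sends $u_2(\varepsilon s)y$ to $xu_2(-\varepsilon s)$ and Zheng's renormalized point $xa_{\log L}$ to $a_2(L)^{-1}y$. It also turns left-action derivatives into right ones and changes injectivity radii only by constants. The resulting sign flip is harmless. The paper covers both signs by choosing the unit root vector; your conjugation by $w=\diag(1,-1)$ also works, provided it is the diffeomorphism $g\Delta\mapsto wgw^{-1}\,w\Delta w^{-1}$ onto $X_{w\Delta w^{-1}}$ rather than a left translation.

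\emph{The function class.} This is the more substantive omission. Step two compares norms but never addresses which functions the estimate applies to. The statement concerns every $F\in C_b^{\ell_\Delta}(X_\Delta)$, which need not be smooth, while the cited estimate is for smooth $F$. The paper closes this by convolving $F$ with a smooth approximate identity on $G_2$. The approximants converge uniformly to $F$, and their derivatives through order $\ell_\Delta$ are bounded by a fixed multiple of $\mathcal D_{\ell_\Delta}(F)$. One then passes to the limit in the orbit integral and in $m_\Delta$.

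\emph{The weighted-norm fallback.} Your fallback for a height-weighted Sobolev norm would not work. Such a weight sits inside the norm of $F$ and is evaluated over all of $X_\Delta$. So it cannot be bounded by the injectivity radius at the single point $y$, and it can make the norm infinite for a bounded $F$ with noncompact support. Nor is $a_2(L)$ a bounded perturbation: its distance from the identity grows like $\log L$, so injectivity radii at $y$ and at $a_2(L)^{-1}y$ are not comparable. The paper's deduction never meets this contingency. It reads Theorem~B.1 as giving the bound in a fixed-order derivative norm. Your own observations then compare that norm with $\mathcal D_{\ell_\Delta}$: $L^2$ norms on a probability space are dominated by supremum norms, and basis changes cost only constants.
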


\begin{proof}[Deduction]
Apply \cite[Theorem B.1]{Zheng2015} to $G_2$ and its expanding
horospherical group. This theorem is stated for the right quotient
$\Delta\backslash G_2$ and an arbitrary lattice $\Delta$ in a
semisimple group, so it applies to $\SL_2(\R)$ even if $-I_2\notin\Delta$.
If $y=g\Delta$, inversion identifies it with
$x=\Delta g^{-1}$. Under this identification
$u_2(\varepsilon s)y$ becomes $xu_2(-\varepsilon s)$, and the
renormalized point $x a_{\log L}$ in the proof of the cited theorem
becomes $a_2(L)^{-1}y$. Here its diagonal $a_{\log L}$ is
$a_2(L)$, since
$a_2(L)u_2(s)a_2(L)^{-1}=u_2(Ls)$.
The one-dimensional averaging box in Zheng's Theorem~B.1 is $[0,L]$
after choosing the appropriate unit root vector; this covers both
signs of $\varepsilon$.
The injectivity radii on the two quotients are comparable under
inversion, up to constants depending on the fixed metrics.
Thus Theorem~B.1 yields the displayed estimate for smooth $F$,
with possibly renamed positive exponents and a fixed derivative order.
Convolution with a smooth approximate identity on $G_2$ extends
the smooth-function estimate to $C_b^{\ell_\Delta}$: the approximants
converge uniformly, and their derivatives through order
$\ell_\Delta$ are bounded by a fixed multiple of
$\mathcal D_{\ell_\Delta}(F)$. Pass to the limit in the integral
and in the invariant probability mean.
The constants $A_\Delta,B_\Delta,C_\Delta,\ell_\Delta$ are chosen
for each lattice $\Delta$; no bound uniform in $\Delta$ is used.
\end{proof}

For the remainder of this subsection, let $\Delta\le\Gamma_2$
have finite index, and retain the projection $y\mapsto\bar y$ to
$X_2$. For the coefficient test, write
\[
 \psi(t)=I_2+\sum_{j=1}^d t^j\psi_j\in\SL_2(\mathbb R[t]),
 \qquad g\in G_2,
\]
where $\psi$ is nonconstant, $\psi(0)=\groupid$, and $d\ge1$. Assume
\begin{equation}\label{eq:separatedmodularhypothesis}
 \left(\sum_{j=1}^d\norm{\psi_jgw}^2\right)^{1/2}
       \ge c\norm w^{-\tau}
       \qquad(0\ne w\in\mathbb Z^2),
\end{equation}
with $0<c\le1$ and $\tau\ge0$. For this quotient, a change of lift
is $g\mapsto g\gamma$ with $\gamma\in\Delta$. The calculation in
Remark~\ref{rem:coefficientlift} gives coefficient separation for
the new lift with the same exponent $\tau$.
The leading term of $\psi'\psi^{-1}$ is a nonzero square-zero matrix by
Lemma~\ref{lem:leading}. Choose $b_0\in G_2$ and
$\varepsilon\in\{1,-1\}$ as in Lemma~\ref{lem:normalization} and set
\begin{equation}\label{eq:separatedmodularnormalization}
 P=b_0^{-1}\psi b_0,\qquad h=b_0^{-1}g.
\end{equation}
For the constants below put
\begin{align*}
 \alpha&=(\tau+1)^{-1},
 &c_*&=c\min\{1,s_{\min}(b_0^{-1})\},\\
 m_h&=s_{\min}(h)>0,
 &K&=\min\{1,\kappa_d m_h^{1-\alpha}c_*^\alpha\}>0.
\end{align*}
Here $\kappa_d$ is fixed by \eqref{eq:vandermondeconstant}.

Set
\begin{equation}\label{eq:coverexponent}
 b_\Delta=\min\left\{\frac14,
                  \frac{A_\Delta}{4(B_\Delta+1)}\right\},\qquad
 \nu_{\Delta,d}=\min\left\{\frac{A_\Delta}{2},
                         \frac{b_\Delta}{d},\frac14\right\},\qquad
 \delta_{\Delta,d}=\frac{\nu_{\Delta,d}}{2(d+1)}.
\end{equation}

\begin{proposition}
\label{prop:congruencemodular}
Let $\psi\in\SL_2(\R[t])$ be normalized, nonconstant, and of degree
at most $d$, and let $g\in G_2$. Under coefficient separation
\eqref{eq:separatedmodularhypothesis}, for every
$F\in C_b^{\ell_\Delta}(X_\Delta)$,
\begin{equation}\label{eq:coverpolynomial}
 \left|\frac1T\int_0^T F(\psi(t)g\Delta)\dd t-m_\Delta(F)\right|
 \le C\mathcal D_{\ell_\Delta}(F_{b_0})
                 T^{-\delta_{\Delta,d}/(\tau+1)}\qquad(T\ge2).
\end{equation}
Here $b_0$ normalizes the leading tangent as in
\eqref{eq:separatedmodularnormalization}. More explicitly, with
$P,h,K$ as in that normalization, one may take
\[
 C=C_{\Delta,d}\bigl(1+M(P)+K^{-\beta_{\Delta,d}}\bigr),
 \qquad \beta_{\Delta,d}=\frac{2\nu_{\Delta,d}}{4(d+1)-1}.
\]
The constant $C_{\Delta,d}$ is independent of $\psi,g,c,\tau$.
\end{proposition}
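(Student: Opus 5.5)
The plan is to apply Lemma~\ref{lem:modulartransfer} with $\mathcal G=G_2$, $\Lambda=\Delta$, $\pi$ the identity, $N_*=\varepsilon E_{12}$ (so $p=1$), and $\mathcal N_\ell=\mathcal D_{\ell_\Delta}$. We apply it to the normalized curve $P=b_0^{-1}\psi b_0$, the base point $h=b_0^{-1}g$, and the function $F_{b_0}$. Since $P(t)h\Delta=b_0^{-1}\psi(t)g\Delta$, we have $F_{b_0}(P(t)h\Delta)=F(\psi(t)g\Delta)$. Left translation preserves Haar probability, so $m_\Delta(F_{b_0})=m_\Delta(F)$. The stated estimate is therefore exactly \eqref{eq:modulartransfer} for $F_{b_0}$. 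Two hypotheses remain to be verified: the flow input \eqref{eq:transferinput} and the growth bound \eqref{eq:separatedmodulargrowth}.

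\emph{Growth.} By Lemma~\ref{lem:coefficientnormalization} with the standard representation, the nonconstant coefficients $\widetilde A_j=b_0^{-1}\psi_jg$ of $P(t)h$ satisfy the separation bound with constant $c_*$. The constant term is $h$, with $s_{\min}(h)=m_h$. Lemma~\ref{lem:separatedgrowth} with $D=d$ then gives
\[
\sup_{[T,2T]}\norm{P(t)hw}\ge\kappa_dm_h^{1-\alpha}c_*^\alpha T^\alpha\ge KT^\alpha,
\]
where $\alpha=(\tau+1)^{-1}$.

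\emph{Flow input.} This comes from Theorem~\ref{thm:coverinput}. Take $\omega=b_\Delta\le1/4$ and suppose $\latticemin(a_2(L)^{-1}\bar y)\ge L^{-\omega}$. By \eqref{eq:coverinjectivity}, $\inj_\Delta(a_2(L)^{-1}y)\gg L^{-2\omega}$. The error in \eqref{eq:coverhorocycle} is then at most
\[
C\mathcal D_{\ell_\Delta}(F)L^{-A_\Delta+2\omega B_\Delta}.
\]
The choice $b_\Delta\le A_\Delta/(4(B_\Delta+1))$ gives $2\omega B_\Delta\le A_\Delta/2$, so the exponent is at most $-A_\Delta/2$. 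This yields \eqref{eq:transferinput} with $\zeta=A_\Delta/2$ and $L_0=2$. The averaging interval is $[0,L]$ with the sign $\varepsilon$, which is one of the admissible $I_L$ in Lemma~\ref{lem:smoothing} once $\exp(s\varepsilon E_{12})=u_2(\varepsilon s)$ is noted.

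\emph{Exponents.} With these choices, $\nu=\min\{\zeta,\omega/d,1/4\}$ is exactly $\nu_{\Delta,d}$. Also $\theta=\alpha/(2(d+1))$, so
\[
\delta=\theta\nu=\delta_{\Delta,d}/(\tau+1).
\]
The exponent $\beta$ in the lemma equals $\beta_{\Delta,d}$. The constant $C$ from Lemma~\ref{lem:modulartransfer} depends only on $d$, $\Delta$, $N_*$, $\omega$, $\zeta$, $\ell_\Delta$, $C_0$, $L_0$. Since $\varepsilon$ takes only two values, it is independent of $\psi,g,c,\tau$, giving the displayed form of $C$.

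The main point needing care is the flow input: the injectivity-radius loss in Zheng's estimate must be absorbed by the nondivergence threshold $L^{-\omega}$. This is what forces the specific choice of $b_\Delta$. The rest is bookkeeping of constants already arranged in \eqref{eq:coverexponent}.
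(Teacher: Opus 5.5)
Your proposal is correct and follows the paper's proof essentially step for step. The paper also pulls back to $F_{b_0}$ along $P(t)h\Delta$ and derives the flow hypothesis \eqref{eq:transferinput} from Theorem~\ref{thm:coverinput} and \eqref{eq:coverinjectivity}, with $\omega=b_\Delta$, $\zeta=A_\Delta/2$, $L_0=2$. It then obtains the growth bound from $P_jh=b_0^{-1}\psi_jg$ and Lemma~\ref{lem:separatedgrowth}, and concludes with Lemma~\ref{lem:modulartransfer}. Your explicit checks of the exponents and of the constant's independence from the sign $\varepsilon$ are accurate bookkeeping that the paper leaves implicit.
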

\begin{proof}
The identity $\psi(t)g=b_0P(t)h$ shows that the average in
\eqref{eq:coverpolynomial} is the average of $F_{b_0}$ along
$P(t)h\Delta$. Left invariance gives
$m_\Delta(F_{b_0})=m_\Delta(F)$.
We establish the flow hypothesis for an arbitrary test function
and then apply the transfer lemma to $F_{b_0}$.
On the set $\latticemin(a_2(L)^{-1}\bar y)\ge L^{-b_\Delta}$,
Lemma~\ref{lem:injectivity} gives
$\inj_\Delta(a_2(L)^{-1}y)\ge cL^{-2b_\Delta}$. Therefore
Theorem~\ref{thm:coverinput} gives
\[
 |U_L^\varepsilon F(y)-m_\Delta(F)|
 \le C_\Delta\mathcal D_{\ell_\Delta}(F)
       L^{-A_\Delta+2B_\Delta b_\Delta}
 \le C_\Delta\mathcal D_{\ell_\Delta}(F)L^{-A_\Delta/2}.
\]
This is \eqref{eq:transferinput} with $\omega=b_\Delta$,
$\zeta=A_\Delta/2$, $\ell=\ell_\Delta$, and $L_0=2$.
For each nonzero integral $w$, the nonconstant coefficient
norm of $P(t)hw$ is at least $c_*\norm w^{-\tau}$, since
$P_jhw=b_0^{-1}\psi_jgw$. Lemma~\ref{lem:separatedgrowth}
therefore gives \eqref{eq:separatedmodulargrowth} with the
$\alpha,K$ above. Lemma~\ref{lem:modulartransfer}, with
$\omega=b_\Delta$, $\zeta=A_\Delta/2$,
$\ell=\ell_\Delta$, and $L_0=2$, gives the stated
estimate and its explicit constant.
No density assertion on the cover is used.
\end{proof}

\subsection{Compact rank-one quotients}\label{subsec:compactrankone}

The same horocycle estimate gives a simpler polynomial estimate when
the quotient is compact. In this subsection $\Delta$ is any fixed
cocompact lattice in $G_2$, and $m_\Delta$ is Haar probability on
$X_\Delta=G_2/\Delta$.

\begin{proposition}[Equidistribution on compact rank-one quotients]
\label{prop:compactpolynomial}
Let $\psi\in\SL_2(\mathbb R[t])$ be normalized, nonconstant,
and of degree at most $d\ge1$. Choose a matrix $b_0$ normalizing
its leading tangent as in Lemma~\ref{lem:normalization}, and put
$P=b_0^{-1}\psi b_0$.
With $A_\Delta,\ell_\Delta$ from Theorem~\ref{thm:coverinput},
set
\begin{equation}\label{eq:compactmodularexponent}
 \delta_{\Delta,d}^{\mathrm c}
       =\min\left\{\frac12,\frac{A_\Delta}{2(d+1)}\right\}.
\end{equation}
Then, uniformly in $g\in G_2$, for $T\ge2$ and
$F\in C^{\ell_\Delta}(X_\Delta)$,
\begin{equation}\label{eq:compactpolynomial}
 \left|\frac1T\int_0^T F(\psi(t)g\Delta)\dd t-m_\Delta(F)\right|
 \le C_{\Delta,d}(1+M(P))\,
       \mathcal D_{\ell_\Delta}(F_{b_0})
       T^{-\delta_{\Delta,d}^{\mathrm c}}.
\end{equation}
The constant depends on $\Delta,d$ (in particular, on the
injectivity radius of the compact space $X_\Delta$).
No arithmetic condition on $\psi$ or $g$ is imposed.
\end{proposition}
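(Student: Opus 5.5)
The plan is to follow the proof of Proposition~\ref{prop:congruencemodular}, but to replace both the nondivergence step and the coefficient-growth step by the uniform lower bound for the injectivity radius on a compact quotient. Since $X_\Delta$ is compact, there is $r_\Delta>0$ with $\inj_\Delta(y)\ge r_\Delta$ for every $y\in X_\Delta$. Theorem~\ref{thm:coverinput} then gives, for every $y$, $L\ge2$ and $\varepsilon\in\{1,-1\}$,
\[
 |U_L^\varepsilon F(y)-m_\Delta(F)|
 \le C_\Delta r_\Delta^{-B_\Delta}\mathcal D_{\ell_\Delta}(F)L^{-A_\Delta},
\]
with no exceptional set. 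Lemma~\ref{lem:modulartransfer} is not available here, because it assumes $\pi(\Lambda)\subseteq\Gamma_2$ and a growth hypothesis. I will instead argue directly from Lemmas~\ref{lem:smoothing} and~\ref{lem:dyadic}.

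First I reduce to the normalized curve. We have $\psi(t)g=b_0P(t)h$ with $h=b_0^{-1}g$, so the average in \eqref{eq:compactpolynomial} equals the average of $F_{b_0}$ along $P(t)h\Delta$. Left invariance gives $m_\Delta(F_{b_0})=m_\Delta(F)$. By Lemma~\ref{lem:leading} the leading tangent of $P$ is $\varepsilon E_{12}$, which is square-zero, so $p=1$.

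Next comes the dyadic step. Fix $T\ge T_d$ and choose $L=T^{1/(2(d+1))}$, that is, the choice $\alpha=1$ in Lemma~\ref{lem:smoothing} with $\mathcal G=G_2$, $\Lambda=\Delta$, $N_*=\varepsilon E_{12}$ and the one-sided interval $[0,L]$. Then $\mathcal U_L$ is exactly $U_L^\varepsilon$, and the smoothing error is at most $C_dM(P)\mathcal D_1(F_{b_0})T^{-1/2}$. Applying the uniform horocycle bound at every point $P(t)h\Delta$ with $t\in[T,2T]$ gives
\[
 \Bigl|\frac1T\int_T^{2T}F_{b_0}(P(t)h\Delta)\dd t-m_\Delta(F)\Bigr|
 \le C_{\Delta,d}(1+M(P))\mathcal D_{\ell_\Delta}(F_{b_0})
 \bigl(T^{-1/2}+T^{-A_\Delta/(2(d+1))}\bigr).
\]
This is at most a constant times $T^{-\delta^{\mathrm c}_{\Delta,d}}$, with the exponent from \eqref{eq:compactmodularexponent}. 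I will take $\ell_\Delta\ge1$ so that $\mathcal D_1\le\mathcal D_{\ell_\Delta}$.

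Finally I sum over dyadic intervals. Lemma~\ref{lem:dyadic} applies with threshold $T_*=T_d$, $\mathcal N=\mathcal D_{\ell_\Delta}$ and $\delta=\delta^{\mathrm c}_{\Delta,d}\le1/2$. Its initial cost $T_d^{\delta}$ depends only on $d$, so the constant is $C_{\Delta,d}(1+M(P))$ and is uniform in $g$.

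The only delicate point is the claim that no arithmetic input is needed. The step I most want to check is that Zheng's bound, as stated in Theorem~\ref{thm:coverinput}, does hold for an arbitrary cocompact lattice $\Delta$ with injectivity radius bounded below globally. That statement is taken for granted there, so the remaining work is bookkeeping.
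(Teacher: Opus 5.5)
Your proposal is correct and follows the paper's own proof essentially step for step: a uniform injectivity-radius bound on the compact quotient, then the exceptional-set-free horocycle bound from Theorem~\ref{thm:coverinput}, then smoothing with $L=T^{1/(2(d+1))}$ and $p=1$, then Lemma~\ref{lem:dyadic}. One bookkeeping correction: Theorem~\ref{thm:coverinput} needs $L\ge2$, and $T\ge T_d$ alone does not give this when $d\ge4$ (for example $30^{1/5}<2$), so take the threshold $T_*=\max\{T_d,2^{2(d+1)}\}$ as the paper does; this threshold still depends only on $d$, so your conclusion is unaffected.
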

\begin{proof}
Compactness gives $\inf_{y\in X_\Delta}\inj_\Delta(y)>0$.
Theorem~\ref{thm:coverinput} therefore yields
\[
 |U_L^\varepsilon F(y)-m_\Delta(F)|
       \le C_\Delta\mathcal D_{\ell_\Delta}(F)L^{-A_\Delta}
\]
for every $y$, $L\ge2$, and either sign.
Write $h=b_0^{-1}g$ and replace $F$ by $F_{b_0}$.
For $L=T^{1/[2(d+1)]}$, Lemma~\ref{lem:smoothing} bounds the
discrepancy on $[T,2T]$ by
\[
 C_{\Delta,d}\mathcal D_{\ell_\Delta}(F_{b_0})
 \left(M(P)T^{-1/2}+T^{-A_\Delta/[2(d+1)]}\right),
\]
once $T\ge\max\{T_d,2^{2(d+1)}\}$. This is uniform in $h$.
Lemma~\ref{lem:dyadic} gives \eqref{eq:compactpolynomial}.
\end{proof}

\section{Polynomial estimates on the ambient quotient}
\label{sec:ambientproof}

The ambient case in Proposition~\ref{prop:supportclassification}
is the full support
$X=\SL_3(\mathbb R)/\SL_3(\mathbb Z)$, with limiting measure the
Haar probability $m_X$. This section proves power estimates for
polynomial trajectories on $X$ under the coefficient bounds
\eqref{eq:coefficientseparation} in the standard, exterior-square,
and symmetric-square representations. The nonzero leading tangent
$N$ of $\Phi'\Phi^{-1}$ is either singular ($N^2=0$) or regular
($N^2\ne0$), by Lemma~\ref{lem:leading}.
Propositions~\ref{prop:separatedsingular} and~\ref{prop:separatedregular}
treat these two cases by combining tangent averaging and quantitative
nondivergence with the flow estimates of
Theorems~\ref{thm:yang} and~\ref{prop:regularinput}.
The standard and exterior-square tests control excursions into the
cusp, where lattices have short vectors. In the regular case,
Proposition~\ref{prop:extraction} and the symmetric-square test also
control proximity to periodic orbits of the principal $\SL_2$
subgroup, excluding the obstruction in the regular flow estimate.

\subsection{Ambient representation tests}
\label{subsec:separation}

For the ambient quotient, use the representation family
\begin{equation}\label{eq:intermediatefamilies}
 \mathcal R_{\mathrm G}
 =\{\mathrm{Std}_3,\ \wedge^2\mathrm{Std}_3,\
                       \rho_{\mathrm{sym}}\}.
\end{equation}
Here $\mathrm{Std}_n$ is the standard action on $\R^n$.
We regard $Q$ as a symmetric tensor (Section~\ref{app:tensors});
the symmetric-square action is $\rho_{\mathrm{sym}}(a)Q=aQa^{\mathsf T}$.
For a representation $\rho$, write $V_\rho$ for its real vector
space and $V_{\rho,\Z}$ for the specified integral lattice. For a
subgroup $L$, write $V_\rho^L=\{v:\rho(\ell)v=v\text{ for all }\ell\in L\}$.
The action $\wedge^k\rho$ sends
$v_1\wedge\cdots\wedge v_k$ to
$\rho(g)v_1\wedge\cdots\wedge\rho(g)v_k$.
The standard and exterior lattices have their usual orthonormal
integral bases. For $V=\Sym^2(\R^3)$, take the lattice $V_\Z$ of
integral symmetric matrices and declare the basis
\begin{equation}\label{eq:symbasis}
 D_1=E_{11},\ D_2=E_{22},\ D_3=E_{33},\quad
 S_{12}=E_{12}+E_{21},\ S_{13}=E_{13}+E_{31},\ S_{23}=E_{23}+E_{32}
\end{equation}
orthonormal. For the exterior powers used in nondivergence, use
its induced exterior bases.
These lattices are preserved by the corresponding integral group.

Fix a nonconstant normalized ambient curve
$\Phi\in\SL_3(\mathbb R[t])$ of degree at most $d$ and a
starting matrix $g\in G$. Put $\Psi=\Phi$ and expand the
\emph{lifted trajectory}:
\begin{equation}\label{eq:liftedcoefficients}
 \rho(\Psi(t)g)=\rho(g)+\sum_{j=1}^{D_\rho}t^jA_{\rho,j},\qquad
 \mathcal A_{\rho,\Psi,g}w=(A_{\rho,1}w,\ldots,A_{\rho,D_\rho}w).
\end{equation}
Zero coefficients may be appended as needed. The degree bounds are $d$ for the standard
representation and $2d$ for both $\wedge^2\mathrm{Std}_3$ and
$\rho_{\mathrm{sym}}$. The coefficient-separation form is
\begin{equation}\label{eq:coefficientseparation}
 \norm{\mathcal A_{\rho,\Psi,g}w}\ge c\norm w^{-\tau}
 \qquad(\rho\in\mathcal R_{\mathrm G},\quad
                              0\ne w\in V_{\rho,\Z}).
\end{equation}

We call these inequalities $(c,\tau)$-coefficient separation.
They are auxiliary hypotheses for the estimates below;
Lemma~\ref{lem:intrinsictransfer} supplies them from the one
intrinsic condition in the main theorem. An integral vector is
primitive if its coordinates have greatest common divisor one.
It suffices to test primitive vectors, since coefficient norms
scale linearly under integer dilation.

The next two lemmas verify the hypotheses needed to apply
Lemma~\ref{lem:intrinsictransfer} to this family: absence of fixed
vectors and polynomial degree at most two.

\begin{lemma}
\label{lem:allwedges}
For the family in \eqref{eq:intermediatefamilies},
\begin{equation}\label{eq:allwedges}
 V_\rho^G=\{0\}\qquad(\rho\in\mathcal R_{\mathrm G}).
\end{equation}
\end{lemma}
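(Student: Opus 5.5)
The plan is to reduce each case to infinitesimal invariance under $\mathfrak{sl}_3(\R)$ and then compute kernels directly. Since $G=\SL_3(\R)$ is connected, a vector $v\in V_\rho$ is $G$-fixed exactly when $d\rho(X)v=0$ for every $X\in\mathfrak{sl}_3(\R)$, by the Lie criterion recorded in Section~\ref{app:lie} (Lemma~\ref{app:lie-tests}). It therefore suffices to test $v$ against a few elements of $\mathfrak{sl}_3(\R)$: the diagonal subalgebra together with selected root vectors $E_{ij}$. We then treat the three representations of \eqref{eq:intermediatefamilies} in turn.

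For $\mathrm{Std}_3$, a fixed vector $v$ satisfies $E_{ij}v=v_je_i=0$ for every $i\ne j$. This forces every coordinate of $v$ to vanish. For $\wedge^2\mathrm{Std}_3$, we use the Hodge identification $\wedge^2 g\simeq g^{-\mathsf T}$ already used in Lemma~\ref{lem:qndboth}. Under it, a $G$-fixed vector in $\wedge^2\R^3$ corresponds to a vector fixed by every $g^{-\mathsf T}$. Since $g\mapsto g^{-\mathsf T}$ is an automorphism of $G$, this reduces to the standard case. Alternatively, one can argue weights directly: the torus acts on $e_i\wedge e_j$ by the character $t_it_j=t_k^{-1}$, which is nontrivial, so no nonzero weight-zero vector exists.

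For $\rho_{\mathrm{sym}}$, we apply the torus element $\diag(t_1,t_2,t_3)$ to the basis \eqref{eq:symbasis}. It scales $D_i$ by $t_i^2$ and $S_{ij}$ by $t_it_j=t_k^{-1}$. None of these characters is trivial on the determinant-one diagonal torus; for example, $t_1^2$ is nonconstant on $\diag(t,t^{-1},1)$. A fixed vector lies in the zero-weight space, which is trivial here, so every fixed vector is zero.

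No step is a genuine obstacle. The only point requiring care is to justify that it is enough to check invariance under the diagonal torus. This is immediate, because a $G$-fixed vector is in particular fixed by the torus, and the torus alone already forces it to be zero in all three representations. The weight argument therefore suffices uniformly, and the Lie-algebra criterion is not actually needed.
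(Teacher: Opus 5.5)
Your proof is correct and is essentially the paper's argument: the paper also lists the torus weights ($e_i$; $e_i+e_j$; $2e_i$ and $e_i+e_j$), notes that none is trivial on the determinant-one diagonal torus, and concludes that there are no torus-fixed, hence no $G$-fixed, vectors. The Lie-algebra and Hodge-duality steps you give first are valid but, as you observe yourself, unnecessary once the weight argument is in hand.
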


\begin{proof}
For the positive diagonal torus, the weights of the standard
representation are $e_i$, those of its exterior square are
$e_i+e_j$ with $i<j$, and those of the symmetric square are
$2e_i$ and $e_i+e_j$ with $i<j$; see
Lemma~\ref{app:weights}. A weight vanishes on the determinant-one
torus exactly when its three coordinates are equal. None of the
listed weights has this property. Thus these representations have
no nonzero torus-fixed vectors, and hence no nonzero $G$-fixed vectors.
\end{proof}

\begin{lemma}[Degree-two polynomial tests]\label{lem:degreecutoff}
The three representations in \eqref{eq:intermediatefamilies} have
matrix entries represented by rational polynomials of degrees at most
$1,2,2$, respectively. Thus degree two suffices for all the
coefficient-separation hypotheses in this section.
\end{lemma}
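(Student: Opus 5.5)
The plan is to compute the matrix entries of each representation directly in the fixed integral bases and read off their degrees, then relate these entries to the ambient polynomial test space of Lemma~\ref{lem:polynomial-functions} with $m=2$.

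For $\mathrm{Std}_3$, the matrix of $\rho(a)$ in the standard basis is $a$ itself. Its entries are the coordinate functions $x_{ij}$, which are rational polynomials of degree one.

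For $\wedge^2\mathrm{Std}_3$, use the basis $e_i\wedge e_j$ with $i<j$. We have
\[
 (\wedge^2a)(e_k\wedge e_l)=ae_k\wedge ae_l
 =\sum_{i<j}(a_{ik}a_{jl}-a_{jk}a_{il})\,e_i\wedge e_j .
\]
So each entry is a $2\times2$ minor of $a$, a homogeneous integral quadratic polynomial. Equivalently, the Hodge identification $\wedge^2a\simeq a^{-\mathsf T}$ used in Lemma~\ref{lem:qndboth} expresses the entries through the cofactor matrix.

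For $\rho_{\mathrm{sym}}$, use the declared basis \eqref{eq:symbasis}. Then $aD_ka^{\mathsf T}=a e_ke_k^{\mathsf T}a^{\mathsf T}$ has $(i,j)$ entry $a_{ik}a_{jk}$. Likewise $aS_{kl}a^{\mathsf T}$ has $(i,j)$ entry $a_{ik}a_{jl}+a_{il}a_{jk}$. Expanding a symmetric matrix $M=(m_{ij})$ as $\sum_i m_{ii}D_i+\sum_{i<j}m_{ij}S_{ij}$ shows that the coordinates are these products. They are integral homogeneous quadratics, and the integral lattice is preserved.

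Next, connect this to the coefficient-separation hypotheses. In each case the matrix entries are restrictions to $G$ of rational polynomials of total degree at most $2$. This is exactly the hypothesis of Lemma~\ref{lem:intrinsictransfer} with $\mathbf H=\SL_3$. Together with the absence of fixed vectors from Lemma~\ref{lem:allwedges}, that lemma converts the intrinsic condition in $\mathscr P_{\mathbf H}=\mathscr P^{(2)}_{\mathbf H}$ into \eqref{eq:coefficientseparation} for all three representations. Substituting a curve of degree $d$ gives coefficient degrees $d,2d,2d$, matching the stated bounds $D_\rho$.

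There is no real obstacle here. The only point needing care is that the coordinates in the chosen bases are literally these products with no extra denominators. For $S_{ij}$ the off-diagonal coordinate is $m_{ij}$ itself, not $2m_{ij}$, so the coefficients stay rational, indeed integral.
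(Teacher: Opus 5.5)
Your proposal is correct and is the same argument as the paper's: linear entries for $\mathrm{Std}_3$, $2\times2$ minors for $\wedge^2\mathrm{Std}_3$, and quadratic entries for $\rho_{\mathrm{sym}}$ in the basis \eqref{eq:symbasis}. The paper simply cites Lemma~\ref{app:tensor-degrees} for the last case, whereas your explicit coordinate formulas (including the observation that the $S_{ij}$-coordinate is $m_{ij}$, so no denominators arise) verify it directly.
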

\begin{proof}
The standard action has linear entries. The entries of its exterior
square are $2$ by $2$ minors, hence are quadratic. The entries of
$Q\mapsto AQA^{\mathsf T}$ in the basis \eqref{eq:symbasis} are also
quadratic (Lemma~\ref{app:tensor-degrees}).
\end{proof}

\iffalse
The higher exterior powers of $\rho_{\mathrm{sym}}$ still occur in
quantitative nondivergence. Their covolumes will be bounded below
using one point in a fixed thick part of the ambient quotient.
No coefficient-separation hypothesis is imposed on these exterior
powers.
\fi

\subsection{Normalization and growth constants}

Retain the normalized curve $\Phi$ of degree at most $d$
and the specified lift $x=g\Gamma$ from the preceding subsection.
Suppose the lifted coefficient maps satisfy
\eqref{eq:coefficientseparation} with $0<c\le1$ and $\tau\ge0$.
Write
\[
 \Phi'(t)\Phi(t)^{-1}=t^qN+\text{lower powers},\qquad
 \alpha=\frac1{\tau+1}.
\]
Lemma~\ref{lem:leading} gives $0\le q\le3d-1$ and $N^{d+1}=0$.
Choose $b_0\in G$ as in \eqref{eq:mainnormalizers}, and set
\begin{equation}\label{eq:separatedambientnormalization}
 \bar N=b_0^{-1}Nb_0,
 \qquad P(t)=b_0^{-1}\Phi(t)b_0,\qquad h=b_0^{-1}g.
\end{equation}
The letter $b_0$ distinguishes this matrix from the fixed positive
scalar $b$ in \eqref{eq:bchoice}.

Put
\[
 \mathcal F_{\mathrm{std}}=
       \{\mathrm{Std}_3,\wedge^2\mathrm{Std}_3\},\qquad
 \mathcal F_{\mathrm{aux}}=\{\rho_{\mathrm{sym}}\}.
\]
Use the bases and degree bounds $D_\rho$ from
Section~\ref{subsec:separation}. Lemma~\ref{lem:coefficientnormalization}
gives normalized separation with
\begin{equation}\label{eq:separatedambientc}
 c_*=c\min\left\{1,\min_{\rho\in\mathcal R_{\mathrm G}}
                     s_{\min}(\rho(b_0^{-1}))\right\}>0.
\end{equation}
For each $\rho$ define
\begin{align}
 m_\rho&=s_{\min}(\rho(h)),\qquad
 K_\rho=\min\{1,\kappa_{D_\rho}m_\rho^{1-\alpha}c_*^\alpha\},
                                             \label{eq:separatedambientK}\\
 K_{\mathrm{std}}&=\min_{\rho\in\mathcal F_{\mathrm{std}}}K_\rho,
 \qquad
 K_{\mathrm{aux}}=\min_{\rho\in\mathcal F_{\mathrm{aux}}}K_\rho.
 \nonumber
\end{align}
Here $\kappa_{D_\rho}$ is fixed by \eqref{eq:vandermondeconstant}.
Lemma~\ref{lem:separatedgrowth} gives
\begin{equation}\label{eq:separatedambientgrowth}
 \sup_{T\le t\le2T}\norm{\rho(P(t)h)w}\ge K_\rho T^\alpha
 \quad(T\ge1,\ \rho\in\mathcal R_{\mathrm G},\
                         0\ne w\in V_{\rho,\Z}).
\end{equation}
The vector may vary with $T$. The coefficient bound $M(P)$ is
\eqref{eq:incrementM}.

\subsection{The singular tangent}

Set
\begin{equation}\label{eq:separatedsingularrates}
 \nu_{\mathrm S}=\min\{\etaY,1/(16d)\},\qquad
 \theta_{\mathrm S}=\frac{\alpha}{2(d+1)},\qquad
 \delta_{\mathrm S}=\theta_{\mathrm S}\nu_{\mathrm S},\qquad
 \beta_{\mathrm S}=\frac{\nu_{\mathrm S}}{2d+1}.
\end{equation}
Define the singular-tangent exponent by
\begin{equation}\label{eq:exponent}
 \delta_d^{\mathrm{sing}}=\frac{\nu_{\mathrm S}}{2(d+1)}.
\end{equation}
Then $\delta_{\mathrm S}=\alpha\delta_d^{\mathrm{sing}}$ and
$\beta_{\mathrm S}=\delta_{\mathrm S}/(\alpha-\theta_{\mathrm S})$.
In particular, $\beta_{\mathrm S}$ does not depend on $\tau$.

\begin{proposition}[Ambient equidistribution for singular tangents]
\label{prop:separatedsingular}
If the normalized tangent is $E_{23}$, then
\begin{equation}\label{eq:separatedsingular}
 |A_T^P F(h\Gamma)-m_X(F)|
 \le C_d\bigl(1+M(P)+K_{\mathrm{std}}^{-\beta_{\mathrm S}}\bigr)
            \Q(F)T^{-\delta_{\mathrm S}}\qquad(T\ge2).
\end{equation}
Only separation in $\mathcal F_{\mathrm{std}}$ is needed here;
with that weaker hypothesis, take the minimum in
\eqref{eq:separatedambientc} over $\mathcal F_{\mathrm{std}}$.
The constant $C_d$ depends only on $d$, the norms in
\eqref{eq:allnorm}, and the constants in
Theorem~\ref{thm:yang}, Theorem~\ref{prop:km}, and
Lemma~\ref{lem:smoothing}; it is independent of $P,h,c,\tau$.
\end{proposition}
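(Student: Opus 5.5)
The plan is to follow the template of Lemma~\ref{lem:modulartransfer}: smooth along the tangent on dyadic intervals, control cusp excursions by quantitative nondivergence, and apply Theorem~\ref{thm:yang} off the exceptional set.

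Fix $T\ge T_d$ and put $L=T^{\theta_{\mathrm S}}$ with $\theta_{\mathrm S}=\alpha/(2(d+1))$. Since $\bar N=E_{23}$ and the tangent is singular ($p=1$), Lemma~\ref{lem:smoothing} with the centered interval $[-L/2,L/2]$ replaces $\frac1T\int_T^{2T}F(P(t)h\Gamma)\dd t$ by the average of $U_L^{\mathrm S}F(P(t)h\Gamma)$. The error is $C_dM(P)\norm F_{C^1}T^{-1/2}$. It then remains to bound $|U_L^{\mathrm S}F(y_t)-m_X(F)|$ for $y_t=P(t)h\Gamma$, outside a small exceptional set of $t$.

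Write $\sigma=\log L$. Theorem~\ref{thm:yang} applies at $y_t$ once $\sigma\ge\sigma_Y$ and condition \eqref{eq:yangcondition} holds: $\extmin(D_{\sigma,\ell}y_t)\ge b_Ye^{-|\ell|/3}$ for all $|\ell|\in[0.99\sigma,1.01\sigma]$. The operator norm of $D_{\sigma,\ell}$ and of its inverse is at most $e^{C\sigma}$ on this range. So it suffices to discretize $\ell$ into $O(\sigma)$ (or $L^{O(1)}$) points and absorb the rounding error into constants. For each fixed $\ell$ we then bound the measure of $t\in[T,2T]$ with $\extmin(D_{\sigma,\ell}P(t)h\Gamma)<b_Ye^{-|\ell|/3}$. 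We use Lemma~\ref{lem:qndboth} with $h(t)=D_{\sigma,\ell}P(t)h$, whose matrix entries and $\wedge^2$-entries have degree at most $2d$. Its hypothesis \eqref{eq:qndlower} comes from \eqref{eq:separatedambientgrowth} for $\rho\in\mathcal F_{\mathrm{std}}$: the growth $K_{\mathrm{std}}T^\alpha$ beats the contraction $\norm{\wedge^kD_{\sigma,\ell}^{-1}}^{-1}\ge e^{-c\sigma}=L^{-c}$. Here $c$ is an absolute constant (about $1.01/2+1.01/3\le 1$). The choice $\theta_{\mathrm S}\le\alpha/4$ then gives $\sup\ge K_{\mathrm{std}}T^{\alpha/2}\ge1$ once $T\ge T_*:=\max\{T_d,e^{\sigma_Y/\theta_{\mathrm S}},K_{\mathrm{std}}^{-1/(\alpha-\theta_{\mathrm S})}\}$. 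If needed, shrink $\theta_{\mathrm S}$ by a fixed factor; this only changes absolute constants. Lemma~\ref{lem:qndboth} with $\epsilon=b_YL^{-0.99/3}$ gives an exceptional measure at most $C_dTL^{-1/(6d)}$ per $\ell$. A union over the $O(\log L)$ net points is harmless; alternatively one can use the monotonicity of $\extmin$ under bounded diagonal perturbation, so that a net of bounded mesh suffices. Together with $\nu_{\mathrm S}\le1/(16d)$, this gives exceptional mass $\ll L^{-\nu_{\mathrm S}}$.

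Off the exceptional set, Theorem~\ref{thm:yang} gives $C_YL^{-\etaY}\SY(F)$. Summing the three errors gives the bound $C_d(1+M(P))\Q(F)T^{-\theta_{\mathrm S}\nu_{\mathrm S}}$ on $[T,2T]$ for $T\ge T_*$. Lemma~\ref{lem:dyadic} with $\delta=\delta_{\mathrm S}$ then gives the result, with initial cost $T_*^{\delta_{\mathrm S}}\ll 1+K_{\mathrm{std}}^{-\delta_{\mathrm S}/(\alpha-\theta_{\mathrm S})}=1+K_{\mathrm{std}}^{-\beta_{\mathrm S}}$, which is uniform in $\alpha$ exactly as in Lemma~\ref{lem:modulartransfer}.

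The main obstacle is the bookkeeping in the nondivergence step. The lower bound \eqref{eq:qndlower} must hold for every integral $w$ in both $\R^3$ and $\wedge^2\R^3$, uniformly over all $\ell$ in the Yang window. The threshold $\epsilon$ also needs to stay below $\epsilon_0$ while the exponents remain independent of $\tau$. I would first verify the crude exponent bound $\norm{\wedge^kD_{\sigma,\ell}^{-1}}\le L^{1}$ and check that it is compatible with $\theta_{\mathrm S}=\alpha/(2(d+1))$.
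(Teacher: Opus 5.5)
Your proposal is correct and follows the paper's proof essentially step by step. The paper also takes $L=T^{\theta_{\mathrm S}}$, uses $\norm{(\wedge^kD_{\sigma,\ell})^{\pm1}}_{\mathrm{op}}\le L$ for $|\ell|\le2\sigma$, and removes the union over integers $|j|\le2\sigma$ of the sets where $\extmin(D_{\sigma,j}P(t)h\Gamma)<e^{1/3}b_YL^{-1/4}$. It builds a fixed threshold $L_{\mathrm S}$ into $T_{\mathrm S,*}$ to force both $\sigma\ge\sigma_Y$ and this threshold below $\epsilon_0$, which is exactly the bookkeeping you flag.

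The only slips are cosmetic. The threshold $b_YL^{-0.33}$ gives $L^{-0.165/d}$ rather than $L^{-1/(6d)}$, but any such power still beats $L^{-\nu_{\mathrm S}}$ after the $O(\log L)$ union. And no shrinking of $\theta_{\mathrm S}$ is needed, since $\theta_{\mathrm S}\le\alpha/4$ already makes $K_{\mathrm{std}}T^{\alpha}L^{-1}\ge1$ beyond your $T_*$.
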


\begin{proof}
Fix $L=T^{\theta_{\mathrm S}}$ and write $\sigma=\log L$, keeping
this time parameter distinct from the arithmetic exponent $\tau$.
Choose a fixed $L_{\mathrm S}$ such that
\[
 L_{\mathrm S}\ge e^{\max\{2,\sigma_Y\}},\qquad
 e^{1/3}b_YL_{\mathrm S}^{-1/4}\le\epsilon_0,
\]
where $\epsilon_0<1$ is the constant in Lemma~\ref{lem:qndboth}
for degree bound $2d$. Use the explicit threshold
\begin{equation}\label{eq:separatedsingularthreshold}
 T_{\mathrm S,*}=\max\left\{T_d,
 L_{\mathrm S}^{1/\theta_{\mathrm S}},
 K_{\mathrm{std}}^{-1/(\alpha-\theta_{\mathrm S})}\right\}.
\end{equation}

For $|\ell|\le2\sigma$, the logarithms of the diagonal entries
of $D_{\sigma,\ell}$ are
$\ell/3,-\sigma/2-\ell/6,\sigma/2-\ell/6$; their absolute values
are at most $5\sigma/6$. Pairwise sums are their negatives. Thus
\begin{equation}\label{eq:diagonalnorms}
 \norm{\wedge^kD_{\sigma,\ell}}_{\mathrm{op}},\quad
 \norm{(\wedge^kD_{\sigma,\ell})^{-1}}_{\mathrm{op}}\le L
       \qquad(k=1,2).
\end{equation}
For $T\ge T_{\mathrm S,*}$, \eqref{eq:separatedambientgrowth}
therefore gives, for $k=1,2$,
\[
 \sup_{T\le t\le2T}
 \norm{(\wedge^k(D_{\sigma,\ell}P(t)h))w}
       \ge L^{-1}K_{\mathrm{std}}T^\alpha\ge1
       \quad(0\ne w\in\wedge^k\Z^3).
\]
These include every proper primitive-subgroup lower bound in
dimension three; the full-rank covolume is $1$. Entries in the two
exterior powers have degree at most $2d$. Lemma~\ref{lem:qndboth},
including its application to the dual lattice, therefore gives
\[
 \frac1T\Leb\{t\in[T,2T]:
 \extmin(D_{\sigma,\ell}P(t)h\Gamma)<\zeta\}
       \le C_d\zeta^{1/(2d)}\qquad(0<\zeta\le\epsilon_0).
\]

Let $\zeta_L=e^{1/3}b_YL^{-1/4}$ and remove the finite union
\[
 \mathcal B_{\mathrm S}=
 \bigcup_{\substack{j\in\Z\\ |j|\le2\sigma}}
 \{t\in[T,2T]:\extmin(D_{\sigma,j}P(t)h\Gamma)<\zeta_L\}.
\]
This is measurable. If \eqref{eq:yangcondition} fails at some
$0.99\sigma\le|\ell|\le1.01\sigma$, an integer $j$ with
$|j-\ell|\le1$ has $|j|\le2\sigma$, and the corresponding exterior
vector is enlarged by at most $e^{1/3}$. Such a vector exists because
the threshold is below $1$, so the cap in $\extmin$ cannot cause
failure of the condition. Its length becomes less
than $e^{1/3}b_YL^{-33/100}\le\zeta_L$. Thus this finite union
contains every parameter for which \eqref{eq:yangcondition} fails.
The preceding nondivergence bound gives
\[
 \frac{\Leb(\mathcal B_{\mathrm S})}{T}
   \le C_d(1+\log L)L^{-1/(8d)}\le C_dL^{-1/(16d)}.
\]
Consequently, for every
$t\in[T,2T]\setminus\mathcal B_{\mathrm S}$, the point
$y=P(t)h\Gamma$ satisfies \eqref{eq:yangcondition} with
$\sigma=\log L$. We therefore apply Theorem~\ref{thm:yang}
at this point, with averaging length $L=T^{\theta_{\mathrm S}}$.
On $\mathcal B_{\mathrm S}$, use the trivial bound
$2\norm F_\infty$. Integration and the centered version of
Lemma~\ref{lem:smoothing} now give
\begin{align*}
 \left|\frac1T\int_T^{2T}F(P(t)h\Gamma)\dd t-m_X(F)\right|
 &\le C_d\Q(F)
       \left(L^{-\etaY}+L^{-1/(16d)}+M(P)T^{-1/2}\right)\\
 &\le C_d(1+M(P))\Q(F)T^{-\delta_{\mathrm S}}.
\end{align*}
Here $0<\delta_{\mathrm S}\le1/64$, and the smoothing lemma
already includes $q=0$.

Apply Lemma~\ref{lem:dyadic}. Since
\[
 T_{\mathrm S,*}^{\delta_{\mathrm S}}
 \le T_d^{1/2}+L_{\mathrm S}^{\nu_{\mathrm S}}
                      +K_{\mathrm{std}}^{-\beta_{\mathrm S}},
\]
the fixed first two terms are absorbed in $C_d$. This proves
\eqref{eq:separatedsingular} and its asserted uniformity in $\tau$.
\end{proof}

\subsection{Arithmetic extraction for the regular tangent}
\label{subsec:arithmeticextraction}

Use the principal group $H_{\mathrm{pr}}$ in
\eqref{eq:regularform}, distinct from the support group $H=G$.
In the extraction argument write $\rho=\rho_{\mathrm{sym}}$ on
$V=\Sym^2(\R^3)$ with the basis and lattice of \eqref{eq:symbasis}.
This coordinate norm is uniformly equivalent to the Frobenius norm.
The lattice has covolume one, $\rho(\Gamma)$ preserves it, and
$\det\rho(g)=(\det g)^4=1$.

A real subspace is rational if it has a basis of rational vectors.
Fix an ordering $B_1,\ldots,B_8$ of the integral basis of
$\mathfrak{sl}_3(\Z)$ consisting of the six off-diagonal matrix
units and $E_{11}-E_{22},E_{22}-E_{33}$. Equip
$\mathfrak{sl}_3(\R)$ with the inner product for which this basis
is orthonormal, and use the induced Euclidean norms on its
exterior powers.
For a rational three-dimensional subspace $\mathfrak j$ of
$\mathfrak{sl}_3(\R)$, Lemma~\ref{app:plucker} gives a primitive
integral generator $p_{\mathfrak j}$ of its exterior line. Define
\begin{equation}\label{eq:heightdefinition}
 \Height(\mathfrak j)=\norm{p_{\mathfrak j}},\qquad
 p_{\mathfrak j}\text{ primitive integral in }
       \wedge^3\mathfrak j\cap\wedge^3\mathfrak{sl}_3(\Z).
\end{equation}
Either sign is allowed. For $I=\{i_1<i_2<i_3\}\subset\{1,\ldots,8\}$,
put $B_I=B_{i_1}\wedge B_{i_2}\wedge B_{i_3}$ and write
\[
 p_{\mathfrak j}=\sum_{|I|=3}p_I B_I.
\]
The integer coefficients $p_I$ are the Pl\"ucker coordinates of
$\mathfrak j$ in primitive integral normalization. They are
determined up to a common sign. Equivalently, they are the
corresponding $3$ by $3$ minors of a matrix whose columns are the
coordinates, in the basis $B_1,\ldots,B_8$, of a lattice basis of
$\mathfrak j\cap\mathfrak{sl}_3(\Z)$. Thus
$\Height(\mathfrak j)=(\sum_{|I|=3}p_I^2)^{1/2}$.
Different fixed norms only affect constants. Set
$\mathcal N(b)=\max\{1,\norm b_{\mathrm{op}},\norm{b^{-1}}_{\mathrm{op}}\}$.

The following reduction and volume--height bounds combine the
cited arithmetic estimates with the rationality of the stabilizer.

\begin{lemma}[{\cite[Lemmas~B.10 and~11.2]{LMWY2025};
\cite[Proposition~2.1(2)]{Shah1994}}]
\label{lem:arithmeticinputs}
There are fixed constants $C_1,D,E\ge1$ with these properties:
\begin{enumerate}
\item If $\inj(h\Gamma)\ge\eta$, $0<\eta<1$, there is
      $\gamma_0\in\Gamma$ with
      $\mathcal N(h\gamma_0)\le C_1\eta^{-D}$.
\item If $H_{\mathrm{pr}}b\Gamma$ is periodic, then
      $\mathfrak j_b=\Ad(b^{-1})\Lie(H_{\mathrm{pr}})$ is rational, and
      \begin{equation}\label{eq:volumeheightinput}
       \Height(\mathfrak j_b)
       \le C_1\bigl(\mathcal N(b)\max\{1,\vol(H_{\mathrm{pr}}b\Gamma)\}\bigr)^E.
      \end{equation}
\end{enumerate}
\end{lemma}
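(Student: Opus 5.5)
The lemma combines two inputs: a reduction-theory statement for points of $X$ with large injectivity radius, and a height bound for the Lie algebras of periodic $H_{\mathrm{pr}}$-orbits. I plan to prove the two parts separately.

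\emph{Part (1).} I will not cite the reduction lemma of \cite{LMWY2025} as a black box, but instead use the elementary reduction already proved in Lemma~\ref{lem:injectivity}. If $\inj(h\Gamma)\ge\eta$, the shortest vector of $h\Z^3$ is at least $c\eta$. Otherwise a short vector $hv$ gives a unipotent element $\gamma=I_3+vw^{\mathsf T}\in\Gamma$, with $w$ an integral covector satisfying $w^{\mathsf T}v=0$ and $\norm w$ bounded after reducing $h$ in the other directions. Then $h\gamma h^{-1}$ is close to the identity, contradicting the injectivity radius. Rather than optimize this, I will simply quote \cite[Lemma~B.10]{LMWY2025}, which gives the polynomial bound $\mathcal N(h\gamma_0)\ll\eta^{-D}$. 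The only checking needed is that our capped injectivity radius and operator norms match theirs up to fixed constants. The argument in \eqref{eq:elementaryreduction} in fact gives $\norm{\widetilde b}_{\mathrm{op}}\ll\varrho^{-2}$ and $\norm{\widetilde b^{-1}}_{\mathrm{op}}\ll\varrho^{-4}$. It remains to relate $\varrho=\latticemin$ to $\inj$, and here the short-vector unipotent argument gives $\latticemin\gg\inj^{C}$.

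\emph{Part (2), rationality.} Suppose $H_{\mathrm{pr}}b\Gamma$ is periodic. Conjugating by $b^{-1}$, the orbit $(b^{-1}H_{\mathrm{pr}}b)\Gamma/\Gamma$ is closed with finite volume. Since $H_{\mathrm{pr}}$ is generated by unipotents, \cite[Proposition~2.1(2)]{Shah1994} (equivalently \cite[Proposition~3.2]{Shah1991}, as in Lemma~\ref{lem:orbitnormalization}) identifies $b^{-1}H_{\mathrm{pr}}b$ with the identity component of the real points of a $\mathbb Q$-group. Alternatively, Borel density makes $b^{-1}H_{\mathrm{pr}}b\cap\Gamma$ Zariski dense in it. Either way its Lie algebra $\mathfrak j_b$ is a rational subspace, and $\Height(\mathfrak j_b)$ is defined.

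\emph{Part (2), height bound.} This step is the main obstacle, and it is exactly \cite[Lemma~11.2]{LMWY2025}. That lemma bounds the height of the rational Lie algebra attached to a periodic orbit polynomially in the volume, when the base point lies in a fixed compact set. The plan is to reduce to that setting. By part~(1), with $\eta$ comparable to a power of $\mathcal N(b)^{-1}$ (valid since $\inj(b\Gamma)\gg\mathcal N(b)^{-C}$), write $b=b'\gamma_0^{-1}$ with $\mathcal N(b')$ and $\mathcal N(\gamma_0)$ both polynomially bounded in $\mathcal N(b)$. Alternatively, one can bypass the base point entirely. Since $\Ad(b^{-1})$ changes the exterior generator by at most $\norm{\wedge^3\Ad(b^{-1})}\le\mathcal N(b)^{C}$, it suffices to bound the height of $\mathfrak j$ for the standard representative up to this factor. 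Concretely, the discrete group $\Gamma\cap b^{-1}H_{\mathrm{pr}}b$ has covolume $\vol(H_{\mathrm{pr}}b\Gamma)$ in Haar measure transported by conjugation. Minkowski's theorem then yields three integral elements $\gamma_i$ whose logarithms span $\mathfrak j_b$, with norms bounded polynomially in $\mathcal N(b)\vol$. Their Pl\"ucker vector is integral and nonzero, so it is an integer multiple of $p_{\mathfrak j_b}$, which gives \eqref{eq:volumeheightinput}. Making "Minkowski-type generation with polynomial bounds" rigorous is what \cite[Lemma~11.2]{LMWY2025} supplies. Finally, the change-of-norm constants and the conjugation factor $\mathcal N(b)^{C}$ are absorbed into $C_1$ and $E$.
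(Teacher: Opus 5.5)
Part (1) and the rationality step are fine. Your elementary route to (1) is a valid substitute for quoting \cite[Lemma~B.10]{LMWY2025}: a short primitive vector $hv$ and a short dual vector $h^{-\mathsf T}w\perp hv$ give $\groupid\ne I_3+vw^{\mathsf T}\in\Gamma$ with $h(I_3+vw^{\mathsf T})h^{-1}$ near the identity, so $\latticemin(h\Z^3)\gg\eta^{C}$ and \eqref{eq:elementaryreduction} applies. Borel density for $J_b\cap\Gamma$ in $J_b=b^{-1}H_{\mathrm{pr}}b$ does what the paper extracts from \cite[Proposition~2.1(2)]{Shah1994}.

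The height bound, however, does not go through as written. Your reduction through part (1) is vacuous. Replacing $b$ by $b\gamma_0$ does not move the point $b\Gamma$, and with $\eta\asymp\mathcal N(b)^{-C}$ the resulting bound on $\mathcal N(b\gamma_0)$ is no better than the one for $\gamma_0=I_3$. So no base point in a fixed compact set is produced.

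Your bypass fails at three points.
\begin{itemize}
\item $\Height(\mathfrak j_b)$ is not controlled by $\norm{\wedge^3\Ad(b^{-1})}$. Since $\Ad(b^{-1})$ is not integral, the primitive generator of $\wedge^3\mathfrak j_b$ is an uncontrolled real multiple of the transported generator of $\wedge^3\Lie(H_{\mathrm{pr}})$. In fact, elements $b$ with $\mathcal N(b)$ bounded and $H_{\mathrm{pr}}b\Gamma$ periodic give infinitely many distinct rational $\mathfrak j_b$, whereas only finitely many rational subspaces have bounded height.
\item Minkowski's theorem does not apply to the nonabelian lattice $J_b\cap\Gamma$.
\item Logarithms of non-unipotent integral matrices are not rational, so the Pl\"ucker vector of $\log\gamma_1,\log\gamma_2,\log\gamma_3$ is not integral. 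When the associated form is anisotropic over $\mathbb Q$ (compact $H_{\mathrm{pr}}b\Gamma$, for instance $J_b=\SO(x^2+y^2-3z^2)(\R)^\circ$), $J_b\cap\Gamma$ contains no nontrivial unipotent element at all. Integral elements of $\mathfrak j_b$ do exist in the form $\gamma-\gamma^{-1}$, because $J_b$ preserves $b^{\mathsf T}Q_0^{-1}b$, but a polynomial generation bound would still have to be proved.
\end{itemize}

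The missing observation is that no base-point reduction is needed. By the rationality step, $J_b=\mathbf J_b(\R)^\circ$ for a $\mathbb Q$-group $\mathbf J_b$, and the periodic orbit $J_b\Gamma/\Gamma$ passes through the fixed point $\Gamma$. Hence the height--volume comparison in \cite[Lemma~11.2]{LMWY2025} applies directly to $\mathbf J_b$. It bounds $\Height(\mathfrak j_b)$ by a power of $\vol(J_b\Gamma/\Gamma)$, measured with the metric induced from $G$.

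The element $b$ enters only through the Jacobian of $\Ad(b^{-1})|_{\Lie(H_{\mathrm{pr}})}$. This Jacobian compares that measure with the Haar measure transported from $H_{\mathrm{pr}}$ by conjugation. For the transported measure the volume is exactly $\vol(H_{\mathrm{pr}}b\Gamma)$, and the Jacobian is $\ll\norm{\Ad(b^{-1})}_{\mathrm{op}}^3\ll\mathcal N(b)^6$. These are the two comparisons the paper combines to obtain \eqref{eq:volumeheightinput}.
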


\begin{proof}[Deduction]
The first assertion is \cite[Lemma B.10]{LMWY2025}; passing to
$\mathcal N$ only enlarges $D$: for a determinant-one
three-by-three matrix, each inverse entry is a quadratic cofactor.
The injectivity radius used in Lemma~B.10 is capped at a different
fixed constant; the two capped radii are comparable, so the same
power bound applies.

For the second assertion, put $J_b=b^{-1}H_{\mathrm{pr}}b$.
First, \cite[Proposition 2.1(2)]{Shah1994} gives its rationality.
Indeed, apply that result to the regular algebraic map
\[
 (s,t)\longmapsto b^{-1}\exp(sN_0)
                       \exp\bigl(2t(E_{21}+E_{32})\bigr)b.
\]
Its image generates $J_b$, because the two nilpotents generate the
principal $\mathfrak{sl}_2$ Lie algebra. The orbit $J_b\Gamma$ is
closed, being the left translate by $b^{-1}$ of
$H_{\mathrm{pr}}b\Gamma$. The cited proposition implies that the
Zariski closure of $J_b\cap\Gamma$ contains $J_b$
(Proposition~\ref{app:shah-normalization}). Conversely it lies
in the real algebraic group whose identity component of real points
is $J_b$, so these groups have the same Lie algebra. The
Zariski closure of a subgroup of $\SL_3(\mathbb Z)$ is defined over
$\mathbb Q$: in each bounded degree its vanishing ideal is the
solution space of linear equations with rational coefficients
(Lemma~\ref{app:rational-envelope}). Thus
$\mathfrak j_b=\Lie(J_b)$ is rational.

Now use \cite[Lemma 11.2 and height definition (11.1)]{LMWY2025}.
In the present split integral group $G=\SL_3(\mathbb R)$,
$\Gamma=\SL_3(\mathbb Z)$, its auxiliary arithmetic group is
$\SL_3$ itself and its compact factor is trivial. Its first
comparison bounds the height of $J_b$ by a fixed power of the
volume of $J_b\Gamma/\Gamma$; its second bounds this volume by a
fixed power of $\mathcal N(b)$ times the volume of
$H_{\mathrm{pr}}b\Gamma/\Gamma$. Fixed norms and Haar
normalizations are comparable. Combining the two comparisons and
enlarging $C_1,E$ gives \eqref{eq:volumeheightinput}.
\end{proof}

\begin{lemma}
\label{lem:rationalkernel}
Suppose $\mathfrak j=\Ad(b^{-1})\Lie(H_{\mathrm{pr}})$ is rational and has height
$H_0\ge1$. Then there is a primitive nonzero $q\in V_{\Z}$ with
\begin{equation}\label{eq:kernelbound}
 \norm q\ll H_0^5,\qquad
 Xq+qX^{\mathsf T}=0\quad(X\in\mathfrak j).
\end{equation}
Consequently $\rho(b)q=c_qQ_0$ for some $c_q\ne0$ and
\begin{equation}\label{eq:scalarbound}
 |c_q|\ll\mathcal N(b)^2H_0^5.
\end{equation}
\end{lemma}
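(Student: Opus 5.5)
The solution space of the linear system $Xq+qX^{\mathsf T}=0$ ($X\in\mathfrak j$) on $V=\Sym^2(\R^3)$ is one-dimensional and rational, with coefficients controlled by the Pl\"ucker coordinates; a Siegel/Cramer-type bound then produces $q$, and transporting $q$ by $\rho(b)$ identifies it with a multiple of $Q_0$.

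\emph{The kernel is a line.} For $X\in\Lie(H_{\mathrm{pr}})$ the condition $XQ+QX^{\mathsf T}=0$ says $Q$ is $H_{\mathrm{pr}}$-invariant. By the symmetric-square calculation in the proof of Proposition~\ref{prop:supportclassification} (transported through Lemma~\ref{app:symmetric-square}), these solutions are exactly $\R Q_0$. Since $\mathfrak j=\Ad(b^{-1})\Lie(H_{\mathrm{pr}})$, the map $q\mapsto\rho(b)q$ carries the solution space for $\mathfrak j$ bijectively onto that for $\Lie(H_{\mathrm{pr}})$. Indeed,
\[
 \bigl(b^{-1}Yb\bigr)q+q\bigl(b^{-1}Yb\bigr)^{\mathsf T}=0
 \iff Y(bqb^{\mathsf T})+(bqb^{\mathsf T})Y^{\mathsf T}=0 .
\]
So the solution space $\mathcal S$ for $\mathfrak j$ is one-dimensional, and any nonzero $q\in\mathcal S$ satisfies $\rho(b)q=c_qQ_0$ with $c_q\ne0$.

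\emph{A short integral generator.} Let $X_1,X_2,X_3$ be a lattice basis of $\mathfrak j\cap\mathfrak{sl}_3(\Z)$. Then $\mathcal S$ is the kernel of an integral $18\times6$ matrix $M$, which stacks the linear maps $q\mapsto X_iq+qX_i^{\mathsf T}$ written in the basis \eqref{eq:symbasis}. The difficulty is that the entries of $M$ are the coordinates of the $X_i$, and these need not be bounded by a power of $H_0$ for an arbitrary lattice basis. I will first replace $X_1,X_2,X_3$ by a reduced basis. By Minkowski's second theorem, $\prod_i\lambda_i(\mathfrak j\cap\mathfrak{sl}_3(\Z))\asymp H_0$, and each $\lambda_i\ge1$ because the lattice is integral. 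Hence a reduced basis has $\norm{X_i}\ll H_0$, and the entries of $M$ are $O(H_0)$. Because $\operatorname{rank}M=5$, I choose five linearly independent rows. The kernel of that $5\times6$ integral submatrix is spanned by the vector of its signed $5\times5$ minors (Cramer's rule). This vector is nonzero and has norm $\ll H_0^5$. Dividing by the gcd of its entries gives a primitive $q$ with $\norm q\ll H_0^5$, which proves \eqref{eq:kernelbound}.

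\emph{The scalar bound.} Since $\rho(b)q=bqb^{\mathsf T}$ and $\norm{Q_0}\asymp1$,
\[
 |c_q|\asymp\norm{bqb^{\mathsf T}}\le\norm b_{\mathrm{op}}^2\norm q
 \ll\mathcal N(b)^2H_0^5,
\]
which is \eqref{eq:scalarbound}.

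The main obstacle is the height-to-entries step: showing that $\mathfrak j$ has an integral basis with entries polynomially bounded in $\Height(\mathfrak j)$. My plan handles this with Minkowski reduction. An alternative is to write the kernel conditions directly in terms of the Pl\"ucker vector $p_{\mathfrak j}$, where $X\in\mathfrak j$ if and only if $X\wedge p_{\mathfrak j}=0$; this gives linear equations with coefficients linear in the $p_I$. That route avoids reduction, but the rank count is harder to control. The exponent $5$ matches the Cramer step with entries $O(H_0)$.
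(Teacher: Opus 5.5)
Your proof is correct, and most of it coincides with the paper's proof: the one-dimensional kernel, the stacked integral system with six columns and entries $O(H_0)$, the signed minors of five independent rows, division by the gcd, and the bound $\norm{\rho(b)}_{\mathrm{op}}\ll\mathcal N(b)^2$.

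The genuine difference is the step you flagged as the main obstacle, namely producing integral matrices in $\mathfrak j$ with entries $O(H_0)$.

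\emph{Your route.} You take a reduced basis of $\mathfrak j\cap\mathfrak{sl}_3(\Z)$. This uses three facts: the covolume equals $H_0$ (Lemma~\ref{app:plucker}), Minkowski's second theorem, and $\lambda_i\ge1$. The argument is valid, and it yields an actual $\Z$-basis together with the finer information $\norm{X_i}\ll\lambda_i$.

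\emph{The paper's route.} The paper avoids geometry of numbers entirely. It picks a nonzero Pl\"ucker coordinate $p_I$ and observes that projection onto the three coordinates in $I$ is an isomorphism $\mathfrak j\to\R^3$. By Cramer's rule, the preimages of the standard basis have coordinates $\pm p_J/p_I$. Multiplying by $p_I$ gives three integral matrices spanning $\mathfrak j$ over $\R$, with entries bounded by $H_0$; this is the last assertion of Lemma~\ref{app:plucker}. Only the real kernel matters, so a spanning set suffices and no lattice basis is needed. The paper's route is therefore more elementary, while yours gives more than the lemma requires.

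It is also different from the alternative you sketched using $X\wedge p_{\mathfrak j}=0$. The paper uses the Pl\"ucker vector to write down spanning vectors, not to impose wedge equations, so no rank count is involved.

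One minor imprecision concerns the claim that the kernel is a line. You transport the calculation from Proposition~\ref{prop:supportclassification}, but that calculation treats forms with $g^{\mathsf T}Qg=Q$, not tensors with $gQg^{\mathsf T}=Q$. The transfer needs the self-duality of the principal module. Alternatively, use the paper's direct check: invariance under $\diag(1,0,-1)$ restricts $Q$ to $aS_{13}+bD_2$, and invariance under $N_0$ then forces $a+b=0$.
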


\begin{proof}
Choose a nonzero coordinate $p_I$ of the primitive Pl\"ucker vector
$p=p_{\mathfrak j}$. Projection onto those three coordinates gives an
isomorphism from $\mathfrak j$ onto $\R^3$. The basis whose projected
coordinates are the standard basis has entries equal, up to signs,
to $p_J/p_I$. Multiplication by $p_I$ therefore gives three integral
matrices $X_1,X_2,X_3$ spanning $\mathfrak j$, each with entries
$O(H_0)$. A lattice basis of $\mathfrak j\cap\mathfrak{sl}_3(\Z)$
is not needed here.

In the six coordinates \eqref{eq:symbasis}, combine the equations
$X_iQ+QX_i^{\mathsf T}=0$ into a single integral matrix system with
six columns and entries $O(H_0)$. Its kernel is one-dimensional.
Indeed, for $\Lie(H_{\mathrm{pr}})$ itself, invariance under
$\diag(1,0,-1)$ first restricts $Q$ to
$aS_{13}+bD_2$. Invariance under $N_0$ then gives $a+b=0$, leaving
exactly $\R Q_0$. Conversely $Q_0$ is invariant under all of $H_{\mathrm{pr}}$.
Conjugating by $b$ preserves this kernel dimension.
The equivalence of the Lie algebra equations with group
invariance is Lemma~\ref{app:lie-tests}, with the tensor
convention in Section~\ref{app:tensors}.

The equation matrix thus has rank five. Choose five independent rows.
Their six signed maximal minors form a nonzero integral kernel vector,
each coordinate bounded by $O(H_0^5)$. These rows span the full row
space, so this vector solves every equation. Divide by the greatest
common divisor of its coordinates to obtain $q$ satisfying
\eqref{eq:kernelbound}. Since the invariant space for $H_{\mathrm{pr}}$ is
$\R Q_0$, we have $\rho(b)q=c_qQ_0$. Invertibility of $\rho(b)$
ensures $c_q\ne0$, and $\norm{\rho(b)}_{\mathrm{op}}\ll\mathcal N(b)^2$
gives \eqref{eq:scalarbound}.
\end{proof}

\begin{proposition}[Integral witness for a nearby periodic principal orbit]
\label{prop:extraction}
There are fixed $B,C_2\ge1$ and $c_2>0$ such that the following holds.
Let $0<\eta<1$, $R\ge2$, $y=h\Gamma$, and $\inj(y)\ge\eta$.
Suppose $H_{\mathrm{pr}}z$ is periodic, $\vol(H_{\mathrm{pr}}z)\le R$, and
$d_X(y,z)\le\varepsilon\le c_2$. Then some nonzero $w\in V_{\Z}$
and some $c_w\ne0$ satisfy
\begin{equation}\label{eq:extractedwitness}
 \rho(h)w=c_wQ_0+E_w,\qquad
 |c_w|\le M,\qquad \norm{E_w}\le M\varepsilon,
 \quad M=C_2(R/\eta)^B.
\end{equation}
All constants are independent of the chosen representative $h$.
\end{proposition}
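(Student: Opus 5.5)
The plan is to combine the reduction bound of Lemma~\ref{lem:arithmeticinputs}(1), the volume--height bound (2), and the integral kernel vector of Lemma~\ref{lem:rationalkernel}. The error term comes from moving $z$ back to $y$.

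\emph{Step 1: reduce $h$.} By Lemma~\ref{lem:arithmeticinputs}(1), choose $\gamma_0\in\Gamma$ with $\mathcal N(h\gamma_0)\le C_1\eta^{-D}$. Replacing $h$ by $h\gamma_0$ and $w$ by $\gamma_0 w$ does not change the conclusion, since $\rho(\Gamma)$ preserves $V_\Z$ and $\rho(h\gamma_0)\rho(\gamma_0)^{-1}w=\rho(h)w$. So we may assume $\mathcal N(h)\ll\eta^{-D}$; this also gives the claimed independence of the representative.

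\emph{Step 2: lift $z$ near $h$.} Since $d_X(y,z)\le\varepsilon\le c_2$ with $c_2$ small, write $z=gh\Gamma$ with $g\in G$ and $\norm{g-I_3}_{\mathrm{op}}\ll\varepsilon$. This uses the quotient metric and the fact that $\varepsilon$ is below a fixed scale, so the distance is realized by a small left displacement. Put $b=gh$. Then $\mathcal N(b)\ll\mathcal N(h)\ll\eta^{-D}$, and $H_{\mathrm{pr}}b\Gamma$ is periodic with volume at most $R$.

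\emph{Step 3: extract the integral vector.} Lemma~\ref{lem:arithmeticinputs}(2) shows that $\mathfrak j_b$ is rational, with height
\[
 H_0\le C_1(\eta^{-D}R)^E .
\]
Lemma~\ref{lem:rationalkernel} then gives a primitive $q\in V_\Z$ with $\rho(b)q=c_qQ_0$, where $c_q\ne0$ and $|c_q|\ll\mathcal N(b)^2H_0^5\le C(R/\eta)^{B}$ for a suitable fixed $B$. Set $w=q$ and $c_w=c_q$.

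\emph{Step 4: bound the error.} We have
\[
 \rho(h)w=\rho(g)^{-1}c_wQ_0=c_wQ_0+E_w,\qquad E_w=c_w\bigl(\rho(g^{-1})-I\bigr)Q_0 .
\]
Since $\rho$ has quadratic entries and $g$ is near the identity, $\norm{\rho(g^{-1})-I}\ll\varepsilon$. Hence $\norm{E_w}\ll|c_w|\varepsilon\le M\varepsilon$ after enlarging $C_2$.

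The main obstacle is Step 2. We must make precise that $d_X(y,z)\le\varepsilon$ gives a representative $b$ of $z$ of the form $gh$ with $g$ close to the identity in matrix norm, uniformly in the fixed metric from \cite{LMWY2025}. I would do this by taking $c_2$ less than a fixed radius on which the right-invariant metric and the matrix distance $\norm{g-I_3}$ are comparable. The injectivity hypothesis is needed only for Step 1, not here. Everything else is bookkeeping of exponents into $B$.
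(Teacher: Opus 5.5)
Your proposal is correct and follows the paper's proof essentially step for step. You reduce the representative via Lemma~\ref{lem:arithmeticinputs}(1), write $z=ky$ with $k$ near the identity using right invariance of the metric, and apply the volume--height bound and Lemma~\ref{lem:rationalkernel} at $b=kh\gamma_0$. You then take $w=\rho(\gamma_0)q$ and absorb the exponents with $B\ge\max\{1,5E,D(5E+2)\}$. The only difference is that you reduce before lifting rather than after, which changes nothing.
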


\begin{proof}
By lifting a short path for the quotient metric, choose $k\in G$
such that $z=ky$ and $\norm{k-I_3}\ll\varepsilon$.
The bound is uniform: right invariance gives
$d_G(kh,h)=d_G(k,\groupid)$. Decrease $c_2$ so that $k,k^{-1}$ stay in a
fixed identity neighborhood.

Use Lemma~\ref{lem:arithmeticinputs}(1) at $y$ to choose
$a=h\gamma_0$ with $\mathcal N(a)\ll\eta^{-D}$, and set $b=ka$.
Then $z=b\Gamma$ and $\mathcal N(b)\ll\eta^{-D}$.
This choice avoids needing to transfer an injectivity-radius bound
from $y$ to $z$. By \eqref{eq:volumeheightinput},
\[
 H_0:=\Height(\mathfrak j_b)\ll R^E\eta^{-DE}.
\]
Lemma~\ref{lem:rationalkernel} gives $q\in V_{\Z}\setminus\{0\}$ and
\[
 \rho(b)q=c_qQ_0,\qquad
 |c_q|\ll R^{5E}\eta^{-D(5E+2)}.
\]
Put $w=\rho(\gamma_0)q$. This vector is integral and nonzero because
$\rho(\gamma_0)$ is a lattice automorphism. Since $h\gamma_0=k^{-1}b$,
\[
 \rho(h)w=c_q\rho(k^{-1})Q_0
         =c_qQ_0+O(|c_q|\varepsilon).
\]
Take any fixed
\begin{equation}\label{eq:extractionB}
 B\ge\max\{1,5E,D(5E+2)\}
\end{equation}
and enlarge $C_2$ to get \eqref{eq:extractedwitness}.
No bound on $\gamma_0$ or on $\norm w$ is asserted or needed.
\end{proof}

\subsection{Cusp geometry and compression}

For $0<\varepsilon<1$, let $b_\varepsilon\in\SL(V)$ act by
$\varepsilon^{5/6}$ on $\R Q_0$ and by $\varepsilon^{-1/6}$ on its
orthogonal complement, using the norm \eqref{eq:symbasis}. Then
\[
 \det b_\varepsilon
       =\varepsilon^{5/6}(\varepsilon^{-1/6})^5=1.
\]
The fixed line is compressed more strongly than its five-dimensional
complement is expanded. This turns a vector close to that line into
a short vector in a unimodular six-dimensional lattice. Specifically,
\begin{align}
 s_{\min}(\wedge^kb_\varepsilon)
     &=\varepsilon^{(6-k)/6}\ge\varepsilon^{5/6}
                      &&(1\le k\le5),\label{eq:compressionweights}\\
 \norm{\wedge^k\rho(a_{\pm\log L}^{\mathrm R})}_{\mathrm{op}}
     &\le L^3 &&(1\le k\le5),\label{eq:diagonalaux}
\end{align}
where $s_{\min}$ denotes the smallest singular value.
For the second assertion, the six diagonal weights are
$L^2,L,1,1,L^{-1},L^{-2}$; the sum of their positive exponents is $3$.

Fix
\begin{equation}\label{eq:compressionconstant}
 C_3=\max\{1,C_2(\norm{Q_0}+1)\}.
\end{equation}
This constant is used in the compression estimate for the vector $w$
from Proposition~\ref{prop:extraction}. The proof below includes all
such short-vector conditions in one measurable exceptional set.

\subsection{The regular tangent}

Fix $A,\kappa,m_0,C_0$ from Theorem~\ref{prop:regularinput},
and $B$ as in \eqref{eq:extractionB}. The following $r,b$ are fixed
positive exponents; this $r$ is unrelated to the torus dimension.
Define
\begin{align}
 b&=\frac1{2(m_0+1)},\label{eq:bchoice}\\
 r&=\min\left\{\frac1{4(A+1)},\frac1{4(b+1)},
                    \frac1{48(B+1)(1+b)}\right\}.\label{eq:rchoice}
\end{align}
Then
\begin{equation}\label{eq:parameterslack}
 bm_0<\tfrac12,\qquad Ar<\tfrac14,\qquad br<\tfrac14,
 \qquad Br(1+b)<\tfrac1{48}.
\end{equation}
Set
\begin{align}
 \nu_{\mathrm R}&=\min\{r\kappa,rb/(12d),1/(80d)\},
 \qquad \theta_{\mathrm R}=\frac{\alpha}{2(d+2)},
 \qquad \delta_{\mathrm R}=\theta_{\mathrm R}\nu_{\mathrm R},
                                              \label{eq:separatedregularrates}\\
 \beta_{\mathrm{Rc}}&=\frac{\nu_{\mathrm R}}{2d+3},\qquad
 \beta_{\mathrm{Ra}}=\frac{\nu_{\mathrm R}}{2d+4-14/3}.
                                              \label{eq:separatedregularbetas}
\end{align}
Define
\begin{equation}\label{eq:regularexponent}
 \delta_d^{\mathrm{reg}}=\frac{\nu_{\mathrm R}}{2(d+2)},
\end{equation}
and the common ambient exponent
\begin{equation}\label{eq:bothdelta}
 \delta_d=\min\{\delta_d^{\mathrm{sing}},\delta_d^{\mathrm{reg}}\}>0.
\end{equation}
The subscripts $\mathrm{c}$ and $\mathrm{a}$ refer to the cusp and
auxiliary thresholds. Their denominators are positive for $d\ge1$.
The powers $\beta_{\mathrm{Rc}}$ and $\beta_{\mathrm{Ra}}$ are
independent of $\tau$, and
\[
 \delta_{\mathrm R}=\alpha\delta_d^{\mathrm{reg}},\qquad
 \beta_{\mathrm{Rc}}=\frac{\delta_{\mathrm R}}{\alpha-\theta_{\mathrm R}},
 \qquad
 \beta_{\mathrm{Ra}}=\frac{\delta_{\mathrm R}}
                                  {\alpha-14\theta_{\mathrm R}/3}.
\]

\begin{proposition}[Ambient equidistribution for regular tangents]
\label{prop:separatedregular}
If the normalized tangent is $N_0=E_{12}+E_{23}$, then
\begin{equation}\label{eq:separatedregular}
\begin{aligned}
 |A_T^P F(h\Gamma)-m_X(F)|
 &\le C_d\left(1+M(P)+K_{\mathrm{std}}^{-\beta_{\mathrm{Rc}}}
                    +K_{\mathrm{aux}}^{-\beta_{\mathrm{Ra}}}\right)\\
 &\qquad{}\cdot\bigl(\SR(F)+\norm F_{C_G^1}\bigr)T^{-\delta_{\mathrm R}}
       \qquad(T\ge2).
\end{aligned}
\end{equation}
The constant $C_d$ depends only on $d$, the norms in
\eqref{eq:allnorm}, and the constants in
Theorems~\ref{prop:regularinput} and~\ref{prop:km},
Lemma~\ref{lem:arithmeticinputs}, and Lemma~\ref{lem:smoothing}; it is
independent of $P,h,c,\tau$.
\end{proposition}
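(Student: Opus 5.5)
We follow the template of Proposition~\ref{prop:separatedsingular}, with Theorem~\ref{prop:regularinput} in place of Yang's estimate and Proposition~\ref{prop:extraction} excluding its periodic alternative. Set $L=T^{\theta_{\mathrm R}}$ and apply the one-sided version of Lemma~\ref{lem:smoothing} with $p=2$. This reduces the dyadic average over $[T,2T]$ to averages of $V_LF(P(t)h\Gamma)$, with error $C_dM(P)\norm F_{C^1}T^{-1/2}$. By \eqref{eq:regularidentity}, $V_LF(z)$ is the expanding-translate average at the base point $y_t=a^{\mathrm R}_{-\log L}P(t)h\Gamma$. We then apply Theorem~\ref{prop:regularinput} at $y_t$ with $R=L^r$. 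The choice $Ar<1/4$ gives $L\ge R^A$, and the good alternative yields an error $C_0\SR(F)L^{-r\kappa}$. The work is to show that the parameters $t$ at which either hypothesis or conclusion of that theorem fails form a set of relative measure at most a power of $L$.

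\emph{Cusp set.} The threshold \eqref{eq:regularinputthreshold} requires $\inj(y_t)\ge L^{-b}$ up to constants, since then $C_0\inj^{-m_0}\le C_0L^{bm_0}\le R$ by \eqref{eq:parameterslack}. By \eqref{eq:injfromsystole} it suffices that $\extmin(y_t)\ge L^{-b/6}$. The diagonal element $a^{\mathrm R}_{-\log L}$ distorts $\wedge^k$ by at most $L$. Hence, for $T\ge K_{\mathrm{std}}^{-1/(\alpha-\theta_{\mathrm R})}$, the growth bound \eqref{eq:separatedambientgrowth} gives suprema at least $1$ for the translated curves. Lemma~\ref{lem:qndboth} in degree $2d$ then bounds the exceptional set by $C_dL^{-b/(12d)}$. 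This accounts for the cusp threshold behind $\beta_{\mathrm{Rc}}$.

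\emph{Periodic set.} This is the main obstacle. Suppose $t$ is not in the cusp set, but the bad alternative \eqref{eq:regularbad} holds. Proposition~\ref{prop:extraction}, applied with $\eta\asymp L^{-b}$ and $\varepsilon=R^A(\log L)^AL^{-2}$, produces a nonzero $w\in V_\Z$ with
\[
\rho(a^{\mathrm R}_{-\log L}P(t)h)w=c_wQ_0+E_w,\qquad |c_w|\le M,\quad \norm{E_w}\le M\varepsilon,
\]
where $M=C_2L^{B r(1+b)}$ up to constants. We apply the compression $b_{\varepsilon'}$ with $\varepsilon'$ a small power of $L$, say $\varepsilon'=L^{-1}$. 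The vector $b_{\varepsilon'}\rho(a^{\mathrm R}_{-\log L}P(t)h)w$ then has norm at most $C_3(M\varepsilon'^{5/6}+M\varepsilon\varepsilon'^{-1/6})$, which is a negative power of $L$ by \eqref{eq:parameterslack}. So $t$ lies in the short-vector set of the six-dimensional unimodular lattice $b_{\varepsilon'}\rho(a^{\mathrm R}_{-\log L}P(t)h)V_\Z$. Its vector and exterior-power entries are polynomial in $t$ of bounded degree.

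To bound this set by Theorem~\ref{prop:km} with $n=6$, we need lower bounds on the suprema of all primitive covolumes. For $k=1$, this comes from \eqref{eq:separatedambientgrowth} for $\rho_{\mathrm{sym}}$ combined with \eqref{eq:compressionweights} and \eqref{eq:diagonalaux}. The resulting requirement, $K_{\mathrm{aux}}T^\alpha\ge L^{14/3}$ roughly, is the source of the threshold exponent $14\theta_{\mathrm R}/3$ in $\beta_{\mathrm{Ra}}$. For $2\le k\le5$ no separation hypothesis is available. Instead we will bound these covolumes below at the point $t$ near $T$ where $P(t)h\Gamma$ is thick, which holds off the cusp set, using $\mathcal N$ of a reduced representative together with the same distortion bounds. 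I expect this higher-exterior-power step to be the delicate point, and would first try to make it work by the thick-point argument just described.

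\emph{Assembly.} Combining the three contributions gives, on $[T,2T]$, an error
\[
C_d(1+M(P))\bigl(\SR(F)+\norm F_{C_G^1}\bigr)\bigl(L^{-r\kappa}+L^{-rb/(12d)}+L^{-\mathrm{power}}+T^{-1/2}\bigr)\le C_d(1+M(P))\bigl(\SR(F)+\norm F_{C_G^1}\bigr)T^{-\delta_{\mathrm R}}.
\]
Lemma~\ref{lem:dyadic} with threshold $T_*=\max\{T_d,L_{\mathrm R}^{1/\theta_{\mathrm R}},K_{\mathrm{std}}^{-1/(\alpha-\theta_{\mathrm R})},K_{\mathrm{aux}}^{-1/(\alpha-14\theta_{\mathrm R}/3)}\}$ then gives \eqref{eq:separatedregular}, exactly as in the singular case.
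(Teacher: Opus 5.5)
Your outline is the paper's proof: one-sided smoothing at $L=T^{\theta_{\mathrm R}}$, the identity \eqref{eq:regularidentity}, cusp control by Lemma~\ref{lem:qndboth} for the translated standard and dual lattices, exclusion of the periodic alternative by Proposition~\ref{prop:extraction} plus compression, six-dimensional nondivergence via Theorem~\ref{prop:km}, and Lemma~\ref{lem:dyadic}. The paper compresses at the proximity scale $\varepsilon=R^A(\log L)^AL^{-2}$ of \eqref{eq:regularbad}, which balances $|c_w|\varepsilon^{5/6}$ against $\varepsilon^{-1/6}\norm{E_w}$. Your choice $\varepsilon'=L^{-1}$ is harmless: your $k=1$ condition becomes $K_{\mathrm{aux}}T^\alpha\ge L^{23/6}$, which the paper's $L^{14/3}$ threshold implies.

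Two steps need repair. The first is minor. You must take $\eta=R^{-b}=L^{-rb}$, not $\eta\asymp L^{-b}$. The threshold \eqref{eq:regularinputthreshold} needs $C_0\eta^{-m_0}\le R=L^r$. For $\eta=L^{-b}$ this means $bm_0<r$, which fails in general: for $m_0=1$ one has $bm_0=1/4>r$. For $\eta=R^{-b}$ it only needs $bm_0<1$. Your later bounds $M\asymp L^{Br(1+b)}$ and $L^{-rb/(12d)}$ already presuppose this choice.

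The second is a genuine gap, in the exterior powers $2\le k\le5$, the step you flag. As you phrase it, you use thickness of the untranslated point $P(t)h\Gamma$ and transport it by the distortion bounds. Note first that your cusp set controls $y_t$, not $P(t)h\Gamma$. More seriously, \eqref{eq:diagonalaux} then costs $L^{3}$, so you only get $\varrho_T\approx\varepsilon'^{2/3}L^{-3}$. No compression scale survives this. The error term $\varepsilon'^{-1/6}\norm{E_w}\le M\varepsilon\varepsilon'^{-1/6}$ alone would have to lie below $\varrho_T$. That forces $\varepsilon'^{5/6}\gtrsim M\varepsilon L^3\ge L$, which is impossible since $\varepsilon'<1$.

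The lower bound must instead be taken directly for $(\wedge^k\rho_{\mathrm{sym}})(a^{\mathrm R}_{-\log L}P(t_0)h)$, at a time $t_0$ where the \emph{translated} point $y_{t_0}$ is thick. Then only the compression loss $\varepsilon^{(6-k)/6}$ enters. The paper does this as follows.
\begin{itemize}
\item It fixes $\zeta_d$ with $C_{2d}\zeta_d^{1/(2d)}\le1/2$ in Lemma~\ref{lem:qndboth}, whose hypotheses for the translated curve were already checked in the cusp step. Hence some $t_0\in[T,2T]$ has $\extmin(y_{t_0})\ge\zeta_d$.
\item Lattice reduction gives a fixed $c_d$ with $\norm{(\wedge^k\rho_{\mathrm{sym}})(a^{\mathrm R}_{-\log L}P(t_0)h)w}\ge c_d$.
\item Hence $\varrho_T=c_d\varepsilon^{2/3}$, and $2\beta/\varrho_T\le L^{-1/8}$ follows from $Br(1+b)<1/48$.
\end{itemize}

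Your own idea also closes, provided you use the translated point: take a time off the cusp set and the reduced representative of $y_t$ from Lemma~\ref{lem:arithmeticinputs}(1). The extra loss is $\mathcal N(a\gamma_0)^{-10}\gg\eta^{10D}=L^{-10Drb}$. Since $B\ge7D$ and $Br(1+b)<1/48$, one has $10Drb<1/100$, so this loss is absorbed.
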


\begin{proof}
On $[T,2T]$, put
\begin{equation}\label{eq:separatedregularparameters}
 L=T^{\theta_{\mathrm R}},\qquad R=L^r,\qquad
 \eta=R^{-b},\qquad \varepsilon=R^A(\log L)^AL^{-2},\qquad
 y_t=a_{-\log L}^{\mathrm R}P(t)h\Gamma.
\end{equation}
We first specify the fixed constants and thresholds. Let $C_{2d}$
and $\epsilon_0$ be the constants in Lemma~\ref{lem:qndboth} for
degree bound $2d$. Choose $0<\zeta_d\le\min\{1,\epsilon_0\}$ with
$C_{2d}\zeta_d^{1/(2d)}\le1/2$. There is a fixed $0<c_d\le1/6$
such that
\[
 \extmin(a\Gamma)\ge\zeta_d\quad\Longrightarrow\quad
 \norm{(\wedge^k\rho_{\mathrm{sym}})(a)w}\ge c_d
 \quad(2\le k\le5,\ 0\ne w\in\wedge^kV_\Z).
\]
Indeed, the reduction bound \eqref{eq:elementaryreduction} gives
$\gamma\in\Gamma$ for which $a\gamma$ and its inverse have norms
bounded in terms of $\zeta_d$. The inverse exterior actions of
$\rho_{\mathrm{sym}}(a\gamma)$ therefore have bounded norms.
Moreover, $(\wedge^k\rho_{\mathrm{sym}})(\gamma^{-1})w$ is a nonzero
integral vector and has norm at least one. This proves the displayed
bound, uniformly in $a,k,w$.

Put $\chi=1/6-Br(1+b)>0$. A valid fixed choice of threshold is
\begin{equation}\label{eq:separatedregularL}
 L_{\mathrm R}=\max\left\{e,\ 2^{1/r},\ C_0^{1/[r(1-bm_0)]},\ 
       \exp\!\left(\left[\frac{2A}{1-Ar}\right]^2\right),\ 2/c_2,
       C_3^{1/\chi},\ (2C_3/c_d)^{48}\right\}.
\end{equation}
The following estimates remain valid if $L_{\mathrm R}$ is increased.
For $L\ge L_{\mathrm R}$, \eqref{eq:parameterslack} gives
\begin{gather}
 R\ge2,\quad L\ge R^A,\quad R\ge C_0\eta^{-m_0},\quad
 L^{-2}\le\varepsilon\le L^{-1}<\min\{1,c_2\},
                                             \label{eq:separatedregularfixed}\\
 \beta:=C_3(R/\eta)^B\varepsilon^{5/6}
       \le L^{-2/3}.\nonumber
\end{gather}
For the logarithmic inequality, with $u=\log L$ use
$\log u\le\sqrt u$ and
$\sqrt u\ge2A/(1-Ar)$ to get
$(\log L)^A\le L^{(1-Ar)/2}\le L^{1-Ar}$.
For the last line, first bound
$\beta\le C_3L^{Br(1+b)-5/6}$ and then use
$C_3\le L^{\chi}$. Thus $L\ge L_{\mathrm R}$ implies all the
conditions in \eqref{eq:separatedregularfixed}, with $L_{\mathrm R}$
independent of the curve and starting point.

The two data-dependent growth exponents are positive:
\[
 \alpha-\theta_{\mathrm R}>0,\qquad
 \alpha-14\theta_{\mathrm R}/3
   =\alpha\left(1-\frac7{3(d+2)}\right)\ge2\alpha/9.
\]
Take
\begin{equation}\label{eq:separatedregularthreshold}
 T_{\mathrm R,*}=\max\left\{T_d, L_{\mathrm R}^{1/\theta_{\mathrm R}}, K_{\mathrm{std}}^{-1/(\alpha-\theta_{\mathrm R})}, K_{\mathrm{aux}}^{-1/(\alpha-14\theta_{\mathrm R}/3)} \right\}.
\end{equation}
In the rest of the dyadic argument suppose $T\ge T_{\mathrm R,*}$.

\emph{Cusp control.}
Both proper exterior powers of $a_{\pm\log L}^{\mathrm R}$ have
operator norm at most $L$. Hence
\eqref{eq:separatedambientgrowth} gives, for $k=1,2$ and every
nonzero integer $k$-vector,
\[
 \sup_{T\le t\le2T}
 \norm{(\wedge^k(a_{-\log L}^{\mathrm R}P(t)h))w}
       \ge K_{\mathrm{std}}T^\alpha/L\ge1.
\]
Lemma~\ref{lem:qndboth} and
$\inj(y)\ge c_4\extmin(y)^6$ from
Lemma~\ref{lem:injectivity} therefore imply
\begin{equation}\label{eq:separatedregularcusp}
 \frac1T\Leb\{t\in[T,2T]:\inj(y_t)<\eta\}
       \le C_d\eta^{1/(12d)}=C_dL^{-rb/(12d)}.
\end{equation}
For values of $\eta$ outside the range obtained from \eqref{eq:qndboth},
the same statement follows by enlarging the fixed constant and using
$\Leb([T,2T])=T$.

\emph{One measurable set covering all periodic-orbit exceptions.}
Use the compression $b_\varepsilon\in\SL(V)$ defined before
\eqref{eq:compressionweights}, and set
\[
 h_6(t)=b_\varepsilon\rho_{\mathrm{sym}}
                     (a_{-\log L}^{\mathrm R}P(t)h),\qquad
 \mathcal B_{\mathrm R}=
 \{t\in[T,2T]:\inf_{0\ne w\in V_\Z}\norm{h_6(t)w}\le\beta\}.
\]
Let $t\in[T,2T]$ satisfy $\inj(y_t)\ge\eta$, and suppose there is
$z\in X$ such that $H_{\mathrm{pr}}z$ is periodic and
\[
 \vol(H_{\mathrm{pr}}z)\le R,\qquad d_X(y_t,z)\le\varepsilon.
\]
Proposition~\ref{prop:extraction}, applied with the representative
$a_{-\log L}^{\mathrm R}P(t)h$ of $y_t$, gives a nonzero $w\in V_\Z$ with
\begin{gather*}
 \rho_{\mathrm{sym}}(a_{-\log L}^{\mathrm R}P(t)h)w=c_wQ_0+E_w,
 \\
 \quad |c_w|\le C_2(R/\eta)^B,\quad
 \norm{E_w}\le C_2(R/\eta)^B\varepsilon.
\end{gather*}
Compression gives
\[
 \norm{h_6(t)w}
 \le |c_w|\norm{Q_0}\varepsilon^{5/6}
                  +\varepsilon^{-1/6}\norm{E_w}\le\beta.
\]
The choice \eqref{eq:compressionconstant} ensures the last inequality.
Thus every $t$ satisfying these conditions belongs to
$\mathcal B_{\mathrm R}$. This set is Borel: the integer
lattice is countable and each individual non-strict short-vector
condition is closed; the shortest length is attained at each $t$.

For $0\ne w\in V_\Z$, the singular-value and diagonal estimates
\eqref{eq:compressionweights}--\eqref{eq:diagonalaux}, together with
the growth bound for $\rho_{\mathrm{sym}}$, give
\begin{align*}
 \sup_{T\le t\le2T}\norm{h_6(t)w}
 &\ge\varepsilon^{5/6}L^{-3}
       \sup_{T\le t\le2T}\norm{\rho_{\mathrm{sym}}(P(t)h)w}\\
 &\ge\varepsilon^{5/6}L^{-3}K_{\mathrm{aux}}T^\alpha
  \ge K_{\mathrm{aux}}T^\alpha L^{-14/3}\ge1.
\end{align*}
Here $\varepsilon\ge L^{-2}$ gives the penultimate inequality.

For the higher exterior powers, use the standard growth bounds
already established in the cusp argument. Lemma~\ref{lem:qndboth}
and the choice of $\zeta_d$ give
\[
 \frac1T\Leb\{t\in[T,2T]:\extmin(y_t)<\zeta_d\}\le\frac12.
\]
Hence some $t_0\in[T,2T]$ satisfies $\extmin(y_{t_0})\ge\zeta_d$.
The defining property of $c_d$ and \eqref{eq:compressionweights}
imply, for every $2\le k\le5$ and $0\ne w\in\wedge^kV_\Z$,
\begin{align*}
 \sup_{T\le t\le2T}\norm{(\wedge^kh_6(t))w}
 &\ge\norm{(\wedge^kh_6(t_0))w}\\
 &\ge c_d\varepsilon^{(6-k)/6}
  \ge c_d\varepsilon^{2/3}.
\end{align*}
Thus all proper primitive-subgroup covolumes have supremum at least
\[
 \varrho_T=c_d\varepsilon^{2/3}\le1/6.
\]
The full-rank covolume is $1$, since $\det h_6(t)=1$. For
$1\le k\le5$, polynomial coordinates in the $k$th exterior power
have degree at most $2kd\le10d$, so all these covolume functions are
$(C_d,1/(10d))$-good. The curve is polynomial on the entire real
line, including the enlarged interval required by nondivergence.
The lower-bound parameter in Theorem~\ref{prop:km} is allowed
to depend on $T$. Its ratio with the short-vector threshold satisfies
\[
 \frac{2\beta}{\varrho_T}
 =\frac{2C_3}{c_d}(R/\eta)^B\varepsilon^{1/6}
 \le\frac{2C_3}{c_d}L^{Br(1+b)-1/6}
 \le L^{-1/8}.
\]
The last inequality uses $Br(1+b)<1/48$ and
$L\ge(2C_3/c_d)^{48}$; in particular, $2\beta\le\varrho_T$.
Theorem~\ref{prop:km}, with lower-bound parameter $\varrho_T$
and short-vector threshold $2\beta$, now gives
\begin{equation}\label{eq:separatedregularavoidance}
 \frac{\Leb(\mathcal B_{\mathrm R})}{T}
       \le C_d(2\beta/\varrho_T)^{1/(10d)}
       \le C_dL^{-1/(80d)}.
\end{equation}
Since $\mathcal B_{\mathrm R}$ contains every periodic-orbit exception
with $\inj(y_t)\ge\eta$, this bounds their total measure. The bound
does not require counting the periodic orbits or bounding $\norm w$.

\emph{The analytic estimate and its constant.}
Remove the cusp set and $\mathcal B_{\mathrm R}$. At every remaining
parameter, the starting-point threshold in
Theorem~\ref{prop:regularinput} follows from
$\inj(y_t)\ge\eta$ and \eqref{eq:separatedregularfixed}.
Its periodic-orbit alternative is impossible by the preceding
containment. The identity \eqref{eq:regularidentity}, with $z=P(t)h\Gamma$, yields
\[
 |V_LF(P(t)h\Gamma)-m_X(F)|\le C_0\SR(F)L^{-r\kappa}.
\]
Use $2\norm F_\infty$ on the two removed sets. By
\eqref{eq:separatedregularcusp}, \eqref{eq:separatedregularavoidance},
and the one-sided version of Lemma~\ref{lem:smoothing}, the dyadic discrepancy
is bounded by
\begin{align*}
 &C_d\bigl(\SR(F)+\norm F_{C_G^1}\bigr)
 \bigl(L^{-r\kappa}+L^{-rb/(12d)}+L^{-1/(80d)}
                          +M(P)T^{-1/2}\bigr)\\
 &\quad\le C_d(1+M(P))\bigl(\SR(F)+\norm F_{C_G^1}\bigr)
                          T^{-\delta_{\mathrm R}}.
\end{align*}
The argument includes $q=0$ and $0<\delta_{\mathrm R}\le1/90$.
Finally,
\[
 T_{\mathrm R,*}^{\delta_{\mathrm R}}
 \le T_d^{1/2}+L_{\mathrm R}^{\nu_{\mathrm R}}
       +K_{\mathrm{std}}^{-\beta_{\mathrm{Rc}}}
       +K_{\mathrm{aux}}^{-\beta_{\mathrm{Ra}}}.
\]
Lemma~\ref{lem:dyadic} proves
\eqref{eq:separatedregular}, absorbing only fixed quantities in
$C_d$. The contribution of the fixed lower threshold $L_{\mathrm R}$
to $C_d$ is independent of $\tau$.
\end{proof}

In both ambient tangent cases, the curve and starting lift enter the
bound only through $M(P)$, the displayed growth constants, and the
pulled-back test norm. The powers of these constants and
the ratio $\delta/\theta$ are independent of $\tau$.

\section{Polynomial estimates on affine supports}
\label{sec:leviaffine}

An affine support is a homogeneous orbit whose normalized group is
a rational conjugate of one of the embeddings $\mathbf A,\mathbf A^\vee$
in \eqref{eq:affinegroups}, as classified in
Proposition~\ref{prop:supportclassification}. The model group
$A_{\mathrm{aff}}=\SL_2(\mathbb R)\ltimes\mathbb R^2$ consists of
the affine transformations $w\mapsto Aw+v$ with $\det A=1$.
Its quotient by $\Gamma_2(N)\ltimes\mathbb Z^2$ is a torus bundle
over $\SL_2(\mathbb R)/\Gamma_2(N)$, with the fibers recording
the translation coordinate; Lemma~\ref{lem:affinequotient} establishes
this geometry and the invariant probability. We prove power rates
of equidistribution
for polynomial curves in each of the three leading tangent cases---
translation, block, and regular---identified in
Lemma~\ref{lem:regularaffinenormalization}.
The regular case uses Proposition~\ref{prop:regularaffineflow}, whose
Fourier proof is given in Section~\ref{sec:regularaffine}.
Finally, the rational cover of Lemma~\ref{lem:affinecover} transfers
these model estimates to arbitrary affine supports, yielding the
intrinsic estimate of Proposition~\ref{prop:affineintrinsic}.

\subsection{The three tangent types}

\begin{lemma}[Classification of nilpotent affine tangents]
\label{lem:regularaffinenormalization}
Write a nonzero nilpotent affine matrix as
$N=\left(\begin{smallmatrix}B&v\\0&0\end{smallmatrix}\right)$.
Write $N_{\mathrm r}=E_{12}+E_{23}$.
The three disjoint cases have the following normal forms under
conjugation by $A_{\mathrm{aff}}$:
\[
 \begin{array}{c|c}
 B=0 & E_{13}\\
 B\ne0,\ Bv=0 & \varepsilon E_{12},\quad\varepsilon\in\{1,-1\}\\
 Bv\ne0 & cN_{\mathrm r},\quad c\ne0 .
 \end{array}
\]
In the regular case, put
\[
 c=\sqrt[3]{\det(Bv,v)},\qquad
 S=(c^{-2}Bv,c^{-1}v)\in\SL_2(\mathbb R).
\]
Then $\diag(S,1)^{-1}N\diag(S,1)=cN_{\mathrm r}$.
\end{lemma}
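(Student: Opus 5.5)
The plan is to conjugate by an element of the form $(S,u)\in A_{\mathrm{aff}}$ and track how $B$ and $v$ transform. For $b=\left(\begin{smallmatrix}S&u\\0&1\end{smallmatrix}\right)$ one has $b^{-1}=\left(\begin{smallmatrix}S^{-1}&-S^{-1}u\\0&1\end{smallmatrix}\right)$, and a direct multiplication gives
\[
 b^{-1}Nb=\begin{pmatrix}S^{-1}BS&S^{-1}(Bu+v)\\0&0\end{pmatrix}.
\]
So the linear part transforms by $\SL_2$-conjugation, and the translation part by $v\mapsto S^{-1}(Bu+v)$. The conditions $B=0$, $Bv=0$, and $Bv\ne0$ are invariant under this action. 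This holds because $B^2=0$, since $B$ is a nilpotent $2\times2$ matrix ($N$ nilpotent forces $B$ nilpotent). Hence $B(Bu+v)=Bv$, and the new quantity is $S^{-1}BS\cdot S^{-1}(Bu+v)=S^{-1}Bv$. The three cases are therefore disjoint and exhaustive, and it only remains to produce a normal form in each.

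\emph{Case $B=0$.} Here $v\ne0$. Choose $S\in\SL_2(\R)$ with first column $v$, i.e.\ $S^{-1}v=e_1$, and take $u=0$. This gives $E_{13}$.

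\emph{Case $B\ne0$, $Bv=0$.} Lemma~\ref{lem:normalization} with $n=2$ gives $S$ such that $S^{-1}BS=\varepsilon E_{12}$. Write $v'=S^{-1}v$. Since $\ker(\varepsilon E_{12})=\R e_1$ and $\varepsilon E_{12}v'=S^{-1}Bv=0$, the vector $v'$ lies in $\R e_1$. Because $\varepsilon E_{12}$ maps $e_2$ onto $\varepsilon e_1$, a second conjugation with linear part $I_2$ and translation $u'$ replaces $v'$ by $\varepsilon E_{12}u'+v'$. Choosing $u'=-\varepsilon v'_1e_2$ kills it, leaving $\varepsilon E_{12}$ in the $3\times3$ sense.

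\emph{Regular case $Bv\ne0$.} Since $B^2=0$ and $Bv\ne0$, the vectors $Bv$ and $v$ are independent, so $\det(Bv,v)\ne0$. Its real cube root $c$ is nonzero. With $S=(c^{-2}Bv,\,c^{-1}v)$ we get $\det S=c^{-3}\det(Bv,v)=1$. Now compute in this basis:
\[
 BS=(0,\;c^{-1}Bv)=S\,(cE_{12}),\qquad S^{-1}v=c\,e_2.
\]
Therefore $\diag(S,1)^{-1}N\diag(S,1)$ has linear part $cE_{12}$ and translation column $ce_2$, which together are $c(E_{12}+E_{23})=cN_{\mathrm r}$.

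The only step needing care is the invariance argument that makes the cases disjoint, and in particular the use of $B^2=0$. The constants $\varepsilon$ and $c$ are not claimed to be canonical, so they need no further normalization.
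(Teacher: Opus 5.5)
Your proof is correct and follows the paper's argument: you use the same explicit block conjugations in the translation and regular cases, and in the case $B\ne0$, $Bv=0$ you only reverse the order of the two steps. The paper first removes the translation by solving $Bx=-v$, which is possible because $v\in\ker B=\operatorname{im}B$, and then applies Lemma~\ref{lem:normalization}. Your invariance computation $B'v'=S^{-1}Bv$ is a harmless addition; the paper instead records $N^2=\left(\begin{smallmatrix}0&Bv\\0&0\end{smallmatrix}\right)$, which identifies the regular case with $N^2\ne0$.
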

\begin{proof}
If $B=0$, a determinant-one block matrix sends the nonzero
translation vector to $e_1$. Otherwise $B$ is a nonzero
nilpotent two-by-two matrix, so $B^2=0$ and
$\ker B=\operatorname{im}B$ is a line
(Section~\ref{app:unipotent}), and
$N^2=\left(\begin{smallmatrix}0&Bv\\0&0\end{smallmatrix}\right)$.
When $Bv=0$, one has $v\in\ker B=\operatorname{im}B$.
Choose $x$ with $Bx=-v$. Then $(I_2,x)^{-1}N(I_2,x)$ has
zero translation component. Lemma~\ref{lem:normalization}
puts its block in the form $\varepsilon E_{12}$.
When $Bv\ne0$, the vectors $Bv,v$ are independent. The displayed
matrix $S$ has determinant one, $S^{-1}BS=cE_{12}$, and
$S^{-1}v=ce_2$, proving the regular normal form.
\end{proof}

We treat these three cases in the same order as the table above.
Each estimate states explicitly which coefficient bounds it needs.

\subsection{Averaging in the torus fibers}

Fix an integer $N\ge1$ and, for $\Delta_2=\Gamma_2(N)$, put
\[
 \Delta_{\mathrm{aff}}=\Delta_2\ltimes\mathbb Z^2,
 \qquad X_{\mathrm{aff}}=A_{\mathrm{aff}}/\Delta_{\mathrm{aff}}.
\]

\begin{lemma}[Affine quotient and fiber integration]
\label{lem:affinequotient}
The group $\Delta_{\mathrm{aff}}$ is a lattice in $A_{\mathrm{aff}}$.
The projection $(A,v)\mapsto A$ induces a torus bundle
$X_{\mathrm{aff}}\to G_2/\Delta_2$, with fiber
$\mathbb R^2/A\mathbb Z^2$ over $A\Delta_2$.
If $m_{\Delta_2}$ denotes invariant probability on the base,
the invariant probability $m_{\mathrm{aff}}$ on $X_{\mathrm{aff}}$
is given, for $F\in C_c(X_{\mathrm{aff}})$, by
\[
 m_{\mathrm{aff}}(F)
 =\int_{G_2/\Delta_2}\int_{\mathbb R^2/\mathbb Z^2}
       F((A,A\eta)\Delta_{\mathrm{aff}})\dd\eta
       \dd m_{\Delta_2}(A\Delta_2),
\]
where $\dd\eta$ is Haar probability on $\mathbb R^2/\mathbb Z^2$.
\end{lemma}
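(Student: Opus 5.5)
The plan is to treat $A_{\mathrm{aff}}=\SL_2(\R)\ltimes\R^2$ as an algebraic group whose unipotent radical $U=\{(I_2,v)\}$ is normal with Levi quotient $G_2$. The lattice assertion then follows from the standard criterion for semidirect products. First, $\Delta_{\mathrm{aff}}\cap U=\Z^2$ is a lattice in $U$. Second, the image of $\Delta_{\mathrm{aff}}$ under the projection $\pi(A,v)=A$ is $\Delta_2=\Gamma_2(N)$, which is a lattice in $G_2$. Discreteness is immediate because $\Delta_{\mathrm{aff}}\subset\SL_3(\Z)$, as it consists of integral matrices of the block form. To get finite covolume, I would exhibit a fundamental domain of the form $\{(A,A\eta):A\in\mathcal F_2,\ \eta\in[0,1)^2\}$, where $\mathcal F_2$ is a fundamental domain for $\Delta_2$. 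Every coset $(A,v)\Delta_{\mathrm{aff}}$ has a unique such representative. Indeed, we first move $A$ into $\mathcal F_2$ by right multiplication by $(\gamma,0)$, using $(A,v)(\gamma,0)=(A\gamma,v)$. We then use $(A,v)(I_2,m)=(A,v+Am)$ to reduce $A^{-1}v$ modulo $\Z^2$.

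The bundle structure comes from the same computation. The map $(A,v)\Delta_{\mathrm{aff}}\mapsto A\Delta_2$ is well defined because $\pi$ is a homomorphism and $\pi(\Delta_{\mathrm{aff}})=\Delta_2$. For fixed $A$, two points $(A,v)$ and $(A,v')$ represent the same coset in the fiber exactly when $v'-v\in A\Z^2$. This uses the fact that the stabilizer is $\Z^2$, together with a $\gamma\in\Delta_2$ with $A\gamma=A$, which forces $\gamma=I_2$. Hence the fiber is $\R^2/A\Z^2$. A different base representative $A\gamma$ gives $A\gamma\Z^2=A\Z^2$, so the fiber is intrinsic. Local triviality follows from choosing a local section of $G_2\to G_2/\Delta_2$.

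For the measure formula, Haar measure on $A_{\mathrm{aff}}$ is $\dd m_{G_2}(A)\dd v$. Left invariance holds because $(B,w)\cdot(A,v)=(BA,Bv+w)$ and $\det B=1$ preserves $\dd v$; the group is unimodular. Substituting $v=A\eta$ has Jacobian $\det A=1$, so Haar measure becomes $\dd m_{G_2}(A)\dd\eta$. Restricting to the fundamental domain above and normalizing gives the displayed iterated integral, since the inner integral is a well-defined function of $A\Delta_2$. Replacing $A$ by $A\gamma$ changes $\eta$ to $\gamma^{-1}\eta$, which preserves Haar probability on $\R^2/\Z^2$ because $\gamma\in\SL_2(\Z)$. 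The resulting functional is a probability measure, and it is left $A_{\mathrm{aff}}$-invariant by the invariance of the Haar measure on the group. Uniqueness of the invariant probability on a finite-volume homogeneous space identifies it with $m_{\mathrm{aff}}$.

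I expect the only delicate point to be the well-definedness of the inner integral under the change of base representative, that is, the $\gamma^{-1}$-invariance of torus Haar measure. Everything else is routine bookkeeping with the product law $(A,v)(B,w)=(AB,Aw+v)$.
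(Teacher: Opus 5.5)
Your proposal is correct. It agrees with the paper on the bundle structure: fibers come from right multiplication by $(I_2,m)$, the inner integral is independent of the base representative because $\eta\mapsto\gamma^{-1}\eta$ preserves Haar probability, and local sections give the trivializations. Where you differ is in how you obtain the measure formula and the lattice property.

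You write Haar measure on $A_{\mathrm{aff}}$ as $\mathrm{d}m_{G_2}(A)\,\mathrm{d}v$, note unimodularity, and integrate over the explicit fundamental domain $\{(A,A\eta):A\in\mathcal F_2,\ \eta\in[0,1)^2\}$. Finite covolume and left invariance then come directly from the group's Haar measure, and the displayed formula appears as the normalized pushforward of that measure.

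The paper never uses Haar measure on $A_{\mathrm{aff}}$ or a fundamental domain. Instead it checks directly that the iterated integral is left invariant. Since $(B,w)(A,A\eta)=(BA,\,BA(\eta+(BA)^{-1}w))$, left translation moves the base point $A\Delta_2\mapsto BA\Delta_2$ and acts on each fiber by a translation of $\mathbb R^2/\mathbb Z^2$. Invariance of $m_{\Delta_2}$ and of torus Haar probability then give invariance of the whole functional. Discreteness of $\Delta_{\mathrm{aff}}$ together with this finite invariant measure yields the lattice property.

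Your route relies on two standard facts: a measurable strict fundamental domain for $\Delta_2$ exists in $G_2$, and Haar measure on a unimodular group induces an invariant measure on the quotient. In exchange you get an explicit fundamental domain. The paper's route is shorter and more intrinsic. Both arguments end by appealing to uniqueness of the invariant probability, which the paper uses implicitly.
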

\begin{proof}
The group law is $(A,v)(B,w)=(AB,v+Aw)$.
In particular, right multiplication by $(I_2,m)$ sends
$(A,v)$ to $(A,v+Am)$, giving the stated fiber.
Changing the representative $A$ to $A\gamma$, with
$\gamma\in\Delta_2$, replaces the fiber coordinate $\eta$
by $\gamma^{-1}\eta$ modulo $\mathbb Z^2$.
This change preserves Haar probability, so the displayed integral
is independent of the representative. Local sections of the base
quotient give the torus-bundle trivializations.

The base has finite invariant volume
(Section~\ref{app:arithmetic}). Each fiber has Euclidean covolume
one because $\det A=1$. Left multiplication by $(B,w)$ sends
the fiber over $A\Delta_2$ to that over $BA\Delta_2$ by
$v\mapsto Bv+w$. Since $\det B=1$, it preserves the fiber
probabilities. Together with invariance of $m_{\Delta_2}$,
this proves that the displayed integral defines an invariant
probability on $X_{\mathrm{aff}}$.
Finally, $\Delta_{\mathrm{aff}}$ is discrete, since its matrix
and translation entries are integral. It is therefore a lattice.
\end{proof}

Let $\mathcal D_s^{\mathrm{aff}}$ be the sum of supremum norms
of left-action derivatives up to order $s$ in the basis
$E_{12},E_{21},E_{11}-E_{22},E_{13},E_{23}$.
For a smooth function $F$ with bounded derivatives, define
\[
 \overline F(A\Delta_2)
   =\int_{\mathbb R^2/\mathbb Z^2}F((A,A\eta)\Delta_{\mathrm{aff}})
          \dd\eta,
 \qquad
 V_LF(y)=\frac1L\int_0^L F(\exp(sE_{13})y)\dd s.
\]
An integral change of representative in $\Delta_2$ preserves the
fiber integral, so $\overline F$ is well-defined. Left translation
by a block element commutes with fiber integration. Thus
$\mathcal D_s(\overline F)\le\mathcal D_s^{\mathrm{aff}}(F)$
and $m_{\Delta_2}(\overline F)=m_{\mathrm{aff}}(F)$.

\begin{lemma}\label{lem:affinefiber}
Let $L\ge4$ and $y=(A,v)\Delta_{\mathrm{aff}}$.
If
\begin{equation}\label{eq:affinefibergood}
 \latticemin(A^{-\mathsf T}\Gamma_2)\ge L^{-1/16},
 \qquad
 \latticemin(a_2(L)A^{-\mathsf T}\Gamma_2)\ge L^{-1/16},
\end{equation}
then
\begin{equation}\label{eq:affinefiberestimate}
 |V_LF(y)-\overline F(A\Delta_2)|
       \le C\mathcal D_6^{\mathrm{aff}}(F)L^{-1/8}.
\end{equation}
The constant is absolute and the estimate is uniform in $v$.
\end{lemma}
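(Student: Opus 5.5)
The plan is to reduce the estimate to a linear flow on the torus fiber over $A\Delta_2$, expand $F$ in that fiber by Fourier series, and bound each nonzero mode. Oscillation along the flow controls the modes with a large component in the flow direction; fiber-derivative decay controls the rest.

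\emph{Reduction to the fiber.} Write $v=A\eta$. Since $\exp(sE_{13})(A,v)=(A,v+se_1)$, the fiber coordinate moves as $\eta\mapsto\eta+sA^{-1}e_1$. Put $\Phi_A(\eta)=F((A,A\eta)\Delta_{\mathrm{aff}})$, a smooth function on $\R^2/\Z^2$, and expand $\Phi_A(\eta)=\sum_{k\in\Z^2}c_k(A)e(k\cdot\eta)$. The zero coefficient is $\overline F(A\Delta_2)$. Set $w=A^{-\mathsf T}k$, so that $k\cdot A^{-1}e_1=w_1$. Averaging over $s\in[0,L]$ multiplies the $k$th mode by a factor of modulus at most $\min\{1,(\pi L|w_1|)^{-1}\}$. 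For decay of the coefficients, note that $\exp(rE_{i3})(A,A\eta)=(A,A(\eta+rA^{-1}e_i))$. Hence $D_{E_{13}}^aD_{E_{23}}^b$ acts on the $k$th mode by $(2\pi i)^{a+b}w_1^aw_2^b$. Applying these derivatives up to total order $6$ gives
\[
 |c_k(A)|\ll\mathcal D_6^{\mathrm{aff}}(F)\min\{1,\norm w^{-6}\},
\]
so the Fourier series converges absolutely and may be averaged termwise. Therefore
\[
 |V_LF(y)-\overline F(A\Delta_2)|
 \ll\mathcal D_6^{\mathrm{aff}}(F)\sum_{0\ne w\in A^{-\mathsf T}\Z^2}
 \min\{1,\norm w^{-6}\}\min\{1,(L|w_1|)^{-1}\},
\]
which is uniform in $v$.

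\emph{Lattice counting.} The lattice $\Lambda=A^{-\mathsf T}\Z^2$ is unimodular, and by the first condition in \eqref{eq:affinefibergood} its minimum satisfies $\lambda\ge L^{-1/16}$. By the elementary reduction used in Lemma~\ref{lem:injectivity}, the number of its points in a ball of radius $R$ is $\ll1+R/\lambda+R^2$.

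\emph{Case split.} The second condition says that $(L^{1/2}w_1,L^{-1/2}w_2)$ has norm at least $L^{-1/16}$. Hence for each $w$ either $|w_1|\ge L^{-9/16}/2$ or $|w_2|\ge L^{7/16}/2$.
\begin{itemize}
\item In the first case the oscillation factor is $\ll L^{-7/16}$. The points with $\norm w\le1$ number $\ll L^{1/16}$ and contribute $\ll L^{-6/16}$. For the points with $\norm w\ge1$, dyadic shells give $\sum R^{-6}(RL^{1/16}+R^2)\ll L^{1/16}$, again contributing $\ll L^{-6/16}$.
\item In the second case $\norm w\ge L^{7/16}/2$, and the tail sum of $\norm w^{-6}$ over such points is $\ll L^{1/16}L^{-28/16}$. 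This is much smaller.
\end{itemize}
Altogether the error is $\ll\mathcal D_6^{\mathrm{aff}}(F)L^{-3/8}\le C\mathcal D_6^{\mathrm{aff}}(F)L^{-1/8}$. The constants involve only the counting bound and the Fourier estimates, so they are absolute.

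The main obstacle is the bookkeeping in the lattice sum: the lattice may be skewed, so a single Sobolev weight does not suffice near the origin. The first condition is exactly what caps the number of short vectors by $L^{1/16}$. I would check there that the sup bound $|c_k|\le\norm F_\infty$ is used for $\norm w\le1$, rather than the derivative bound, which grows there.
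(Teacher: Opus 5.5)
Your proof is correct and follows the paper's route. Both arguments use a fiberwise Fourier expansion, sixth-order decay of the coefficients, the oscillation factor $\min\{1,(\pi L|w_1|)^{-1}\}$, lattice counting from the first condition, and the second condition to keep $|w_1|$ away from zero. The difference is only bookkeeping: you apply the dichotomy from the second condition to every mode and use the sharper count $\ll 1+R/\lambda+R^2$, whereas the paper splits at $\|\xi\|=L^{1/16}$ and uses the cruder disk count $CL^{1/8}r^2$, so you obtain the stronger bound $L^{-3/8}$.
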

\begin{proof}
Expand $\eta\mapsto F((A,A\eta)\Delta_{\mathrm{aff}})$ in its
torus Fourier series. Index its modes by
$\xi=A^{-\mathsf T}k$, $k\in\mathbb Z^2$.
Integration by parts with $(1-D_{E_{13}}^2-D_{E_{23}}^2)^3$ gives
\[
 |\widehat F_A(k)|
       \le C\mathcal D_6^{\mathrm{aff}}(F)(1+\norm\xi)^{-6}.
\]
The $V_L$-average multiplies the $k$th mode by a phase times
$L^{-1}\int_0^L e^{2\pi i s\xi_1}\dd s$, whose absolute value is
at most $\min\{1,(\pi L|\xi_1|)^{-1}\}$.

Put $R=L^{1/16}$. The first condition in
\eqref{eq:affinefibergood} and disjoint disks give
\[
 \#\{\xi\in A^{-\mathsf T}\mathbb Z^2:\norm\xi\le r\}
       \le C L^{1/8}r^2\qquad(r\ge1).
\]
For a nonzero mode with $\norm\xi\le R$, the second condition gives
\[
 L\xi_1^2+L^{-1}\xi_2^2\ge L^{-1/8},
 \qquad |\xi_1|\ge 2^{-1/2}L^{-9/16}.
\]
Here $L^{-1}R^2\le\tfrac12L^{-1/8}$ for $L\ge4$.
There are at most $CL^{1/4}$ such modes, so their total contribution
is at most $C\mathcal D_6^{\mathrm{aff}}(F)L^{-3/16}$.
Summing the remaining modes in dyadic annuli gives
\[
 \sum_{\norm\xi>R}(1+\norm\xi)^{-6}
       \le C L^{1/8}R^{-4}=CL^{-1/8}.
\]
This also proves absolute convergence and justifies the
termwise integration. The zero mode is $\overline F$, which
proves the estimate.
\end{proof}

For a vector polynomial $Q(t)=\sum_{j=0}^D t^jQ_j$, write
$\operatorname{coeff}_{>0}Q=(Q_1,\ldots,Q_D)$, with the Euclidean
direct-sum norm. Appending zero coefficients does not change this norm.

\begin{proposition}
\label{prop:affinepolynomial}
Let $\psi(t)=(P_2(t),v(t))\in\mathbf A(\mathbb R[t])$ be normalized,
of degree at most $D\ge1$, with nonzero leading coefficient of
$\psi'(t)\psi(t)^{-1}$ in $\Lie(R_u(\mathbf A)(\mathbb R))$.
Let $g=(B,w)\in A_{\mathrm{aff}}$.
Suppose $c>0$, $\tau\ge0$, and
\[
 \norm{\operatorname{coeff}_{>0}(P_2(t)Bz)}
 \ge c\norm z^{-\tau}\qquad(0\ne z\in\mathbb Z^2).
\]
Set
\begin{equation}\label{eq:affineexponent}
 \eta_{\Delta_2,D}
    =\min\left\{\delta_{\Delta_2,D},\frac1{32D(D+1)}\right\},
 \qquad s=\max\{6,\ell_{\Delta_2}\}.
\end{equation}
Then, for every smooth $F$ on $X_{\mathrm{aff}}$ with
$\mathcal D_s^{\mathrm{aff}}(F)<\infty$,
\begin{equation}\label{eq:affinepolynomial}
 \left|\frac1T\int_0^T F(\psi(t)g\Delta_{\mathrm{aff}})\dd t
                       -m_{\mathrm{aff}}(F)\right|
 \le C\mathcal D_s^{\mathrm{aff}}(F)
                T^{-\eta_{\Delta_2,D}/(\tau+1)}\qquad(T\ge2).
\end{equation}
The constant depends on $N,D,\psi,B,c,\tau$ and can be chosen
independently of $w$.
\end{proposition}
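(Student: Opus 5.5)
The plan is to split the trajectory into its base component in $G_2/\Delta_2$ and its torus-fiber component. Lemma~\ref{lem:affinefiber} handles the fiber, and Proposition~\ref{prop:congruencemodular} handles the base. The leading tangent lies in $\Lie(R_u(\mathbf A))$, so it is a nonzero translation matrix $\left(\begin{smallmatrix}0&u\\0&0\end{smallmatrix}\right)$. By Lemma~\ref{lem:regularaffinenormalization} there is a block element $b_0=\diag(S_0,1)$ conjugating it to $E_{13}$. As in Proposition~\ref{prop:congruencemodular}, we replace $F$ by $F_{b_0}$, $\psi$ by $P=b_0^{-1}\psi b_0$, and $g$ by $h=b_0^{-1}g$. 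This changes $\mathcal D_s^{\mathrm{aff}}$ by a fixed factor, and by Lemma~\ref{lem:coefficientnormalization} (applied to the standard $\SL_2$ block) it changes the constant $c$ but not the exponent $\tau$.

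\emph{Step 1: smoothing to fiber averages.} Apply Lemma~\ref{lem:smoothing} in $\mathcal G=A_{\mathrm{aff}}$ with $N_*=E_{13}$ (so $p=1$) and a length $L=T^{\theta}$, where $\theta=\alpha/(2(D+1))$ and $\alpha=(\tau+1)^{-1}$. This replaces $\frac1T\int_T^{2T}F(P(t)h\Delta_{\mathrm{aff}})\dd t$ by the average of $V_LF$ at the same points, with error $O(M(P)\mathcal D_1^{\mathrm{aff}}(F)T^{-1/2})$. Write $P(t)h=(A(t),v(t))$, where $A(t)=P_2(t)B'$ and $B'$ is the block of $h$.

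\emph{Step 2: fiber equidistribution off a small bad set.} Wherever condition \eqref{eq:affinefibergood} holds for $A(t)$, Lemma~\ref{lem:affinefiber} gives $V_LF\approx\overline F(A(t)\Delta_2)$ with error $O(\mathcal D_6^{\mathrm{aff}}(F)L^{-1/8})$, uniformly in $v(t)$. This uniformity is why the constant does not depend on $w$. The bad set is where one of the lattices $A(t)^{-\mathsf T}\Z^2$ or $a_2(L)A(t)^{-\mathsf T}\Z^2$ has a vector shorter than $L^{-1/16}$. In dimension two we have $A^{-\mathsf T}=JAJ^{-1}$ with $J$ the rotation by a right angle. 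The coefficient hypothesis on $P_2(t)Bz$ therefore transfers to $A(t)^{-\mathsf T}$ acting on $J\Z^2=\Z^2$ with the same $\tau$. Lemma~\ref{lem:separatedgrowth} then gives $\sup_{[T,2T]}\norm{A(t)^{-\mathsf T}z}\ge KT^\alpha$, and hence growth at least $KT^{\alpha}L^{-1/2}\ge1$ after applying $a_2(L)$ once $T$ exceeds a threshold $T_*$ polynomial in $K^{-1}$. Theorem~\ref{prop:km} with $n=2$ and goodness exponent $1/D$ then bounds the relative measure of the bad set by $C L^{-1/(16D)}$. This matches the term $1/(32D(D+1))$ in $\eta_{\Delta_2,D}$ after multiplying by $\theta$, up to constants.

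\emph{Step 3: base equidistribution.} The remaining term is $\frac1T\int_T^{2T}\overline F(P_2(t)B'\Delta_2)\dd t$. We have $\mathcal D_s(\overline F)\le\mathcal D_s^{\mathrm{aff}}(F)$ and $m_{\Delta_2}(\overline F)=m_{\mathrm{aff}}(F)$. Moreover $P_2$ is a normalized $\SL_2$ curve of degree at most $D$ satisfying \eqref{eq:separatedmodularhypothesis}, because its coefficients at $B'z$ are those in the hypothesis. If $P_2$ is constant the hypothesis forces $c\le0$, which is impossible, so $P_2$ is nonconstant. Proposition~\ref{prop:congruencemodular} therefore gives decay $T^{-\delta_{\Delta_2,D}/(\tau+1)}$. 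This holds over $[0,T]$, so we either apply it on the dyadic pieces or rerun its proof directly on $[T,2T]$. Collecting the three errors gives a dyadic bound $C\mathcal D_s^{\mathrm{aff}}(F)T^{-\eta_{\Delta_2,D}/(\tau+1)}$ for $T\ge T_*$, and Lemma~\ref{lem:dyadic} finishes.

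The main obstacle is Step 2. The two short-vector conditions must be controlled simultaneously, with exponents compatible with $L=T^\theta$. The bookkeeping needed to land exactly on the stated exponent $1/(32D(D+1))$ may require adjusting $\theta$, for example taking $\theta=\alpha/(2(D+1))$ and absorbing the factor $16$ from $L^{-1/(16D)}$.
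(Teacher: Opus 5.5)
Your proposal is correct and follows the paper's proof essentially step for step. You normalize the translation tangent to $E_{13}$ by a block element and smooth with $L=T^{\alpha/[2(D+1)]}$. You control the failure set of \eqref{eq:affinefibergood} by the identity $A^{-\mathsf T}=J_2AJ_2^{-1}$ and two-dimensional nondivergence. This gives exactly the exponent $\alpha/(32D(D+1))$, so no adjustment of $\theta$ is needed. You then apply Lemma~\ref{lem:affinefiber} uniformly in the translation and finish with Proposition~\ref{prop:congruencemodular} for $\overline F$ and Lemma~\ref{lem:dyadic}.

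In Step~3 the paper obtains the $[T,2T]$ base estimate simply by subtracting the $[0,T]$ estimate from the $[0,2T]$ one. It also absorbs the base-tangent normalization of $\overline F$ into a fixed derivative factor, which is all that ``apply it on the dyadic pieces'' needs to mean.
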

\begin{proof}
A fixed block element $b_0\in\SL_2(\mathbb R)\le A_{\mathrm{aff}}$
conjugates the leading translation to $E_{13}$.
Replace $\psi,g,F$ by $b_0^{-1}\psi b_0,b_0^{-1}g,F_{b_0}$.
Coefficient separation is preserved, with a changed constant.
All derivative comparisons under this fixed translation are
absorbed into $C$. We continue to write $\psi,g,F$ for these
transformed objects.

Put $\alpha=(\tau+1)^{-1}$ and $A(t)=P_2(t)B$.
Lemma~\ref{lem:separatedgrowth} supplies $K>0$ such that
\[
 \sup_{t\in[T,2T]}\norm{A(t)z}\ge KT^\alpha
       \qquad(T\ge1,\ 0\ne z\in\mathbb Z^2).
\]
The same lower bound holds for $A(t)^{-\mathsf T}$, because
$A^{-\mathsf T}=J_2AJ_2^{-1}$ with
$J_2=\left(\begin{smallmatrix}0&-1\\1&0\end{smallmatrix}\right)$
integral and orthogonal.
Both matrices have polynomial entries of degree at most $D$.

Take $L=T^{\alpha/[2(D+1)]}$.
Lemma~\ref{lem:smoothing}, inside the affine group with tangent
$E_{13}$ and $p=1$, gives
\begin{equation}\label{eq:affinesmoothing}
 \left|\frac1T\int_T^{2T}(F-V_LF)(\psi(t)g\Delta_{\mathrm{aff}})\dd t\right|
       \le C_D M(\psi)\mathcal D_1^{\mathrm{aff}}(F)T^{-1/2}
\end{equation}
for $T\ge T_D$.

For the two lattice curves in \eqref{eq:affinefibergood}, every
nonzero integer vector has supremum at least
$KT^\alpha L^{-1/2}$ on $[T,2T]$, and the determinant is one.
For all sufficiently large $T$, this is at least one and
$L^{-1/16}\le1/2$. Their vector norms are
$(C_D,1/D)$-good on the required enlarged interval, by
Lemma~\ref{lem:polynomialgoodness}.
Apply Theorem~\ref{prop:km} in dimension two with
$\varrho=1/2$ to each curve.
The proportion of times when \eqref{eq:affinefibergood} fails is
at most $C_D L^{-1/(16D)}$.
On this exceptional set, use $2\norm F_\infty$; on its complement,
use Lemma~\ref{lem:affinefiber}. Together with
\eqref{eq:affinesmoothing}, this proves
\[
 \left|\frac1T\int_T^{2T}
  \bigl(F(\psi(t)g\Delta_{\mathrm{aff}})
                -\overline F(P_2(t)B\Delta_2)\bigr)\dd t\right|
 \le C\mathcal D_6^{\mathrm{aff}}(F)
       \left(T^{-1/2}+L^{-1/8}+L^{-1/(16D)}\right).
\]
All lower bounds on $T$ used here depend on $K,D,\alpha$ and
are independent of $w$.
Coefficient separation forces $P_2$ to be nonconstant.
Proposition~\ref{prop:congruencemodular} applies to the base curve
and $\overline F$. Its estimate on $[T,2T]$ follows by subtracting
the estimates on $[0,2T]$ and $[0,T]$.
The pullback by the matrix normalizing the base tangent changes
derivative norms only by a fixed factor, and the mean of $\overline F$ is
$m_{\mathrm{aff}}(F)$.
Combining the estimates gives the exponent in
\eqref{eq:affineexponent}. Lemma~\ref{lem:dyadic} handles the
initial threshold and proves \eqref{eq:affinepolynomial}.
\end{proof}

\subsection{The affine block tangent}
\label{subsec:affineblock}

Keep $X_{\mathrm{aff}}=A_{\mathrm{aff}}/\Delta_{\mathrm{aff}}$ with
$\Delta_{\mathrm{aff}}=\Gamma_2(N)\ltimes\mathbb Z^2$.
Write $S_0=\{(A,0):A\in\SL_2(\mathbb R)\}$,
$u(s)=\exp(sE_{12})$, and
$a(L)=\diag(L^{1/2},L^{-1/2},1)$.
Fix a right-invariant Riemannian metric on $A_{\mathrm{aff}}$ and
the resulting quotient distance $d_{\mathrm{aff}}$.
Write $\inj_{\mathrm{aff}}$ for the local injectivity radius,
capped at one. Volumes of $S_0$-orbits use one fixed Haar measure
on $S_0$.

The following affine block-flow estimate follows from Lin's theorem
on the congruence quotient constructed above.

\begin{theorem}[{\cite[Theorem~2.6 and Remark~1.2]{Lin2026}}]\label{thm:affineblockinput}
For each fixed $N$ there are constants $A_*,B_*\ge1$,
$\kappa_*>0$, $0<\chi_*\le1$, $C_*>1$, and a fixed finite-order
Sobolev norm $\mathcal S_N$ with the following property.
Let $z\in X_{\mathrm{aff}}$, $R\ge C_*\inj_{\mathrm{aff}}(z)^{-B_*}$,
and $L\ge\max\{2,R^{A_*}\}$.
At least one of the following holds:
\begin{enumerate}
\item For every $F\in C_c^\infty(X_{\mathrm{aff}})$,
\[
 \left|\frac1{2L}\int_{-L}^L F(u(s)a(L)z)\dd s
                          -m_{\mathrm{aff}}(F)\right|
       \le C_*\mathcal S_N(F)R^{-\kappa_*}.
\]
\item There is $z'$ on a closed $S_0$-orbit of volume at most
$C_*R$ with $d_{\mathrm{aff}}(z,z')\le C_*L^{-\chi_*}$.
\end{enumerate}
Fix an integer $\ell_N$ such that, for every compact
$K\subset X_{\mathrm{aff}}$,
$\mathcal S_N(F)\le C_{N,K}\mathcal D_{\ell_N}^{\mathrm{aff}}(F)$
when $\operatorname{supp}F\subset K$.
\end{theorem}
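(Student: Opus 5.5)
The deduction will specialize Lin's theorem \cite[Theorem~2.6]{Lin2026} to the perfect group $A_{\mathrm{aff}}=\SL_2(\R)\ltimes\R^2$ with lattice $\Delta_{\mathrm{aff}}=\Gamma_2(N)\ltimes\Z^2$, much as Theorems~\ref{prop:regularinput} and~\ref{thm:coverinput} were deduced from their sources.

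First I will check the standing hypotheses of the cited theorem. The group $A_{\mathrm{aff}}$ is perfect, because $[\mathfrak{sl}_2,\R^2]=\R^2$ and $\mathfrak{sl}_2$ is perfect. By Lemma~\ref{lem:affinequotient}, $\Delta_{\mathrm{aff}}$ is a lattice, and it is arithmetic since it is the integral points of the rational group $\mathbf A$ up to the congruence condition. For the unipotent subgroup I take $\{u(s)\}\subset S_0$ together with the diagonal flow $a(L)$, which satisfies $a(L)u(s)a(L)^{-1}=u(Ls)$. The representation hypothesis of \cite[Remark~1.2]{Lin2026} concerns the adjoint action of the block $\SL_2$ on $\Lie(A_{\mathrm{aff}})=\mathfrak{sl}_2\oplus\R^2$. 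Its complement to $\Lie(S_0)$ is the standard two-dimensional module, which has no nonzero $S_0$-fixed vectors (Lemma~\ref{app:affine-algebra}). This is exactly the nondegeneracy Lin requires. Since $N$ is fixed, all constants in the cited theorem are fixed; I rename them $A_*,B_*,\kappa_*,\chi_*,C_*$ and take $\mathcal S_N$ to be Lin's Sobolev norm.

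Next I will match the conventions. If Lin works on a right quotient, I pass through inversion $g\Delta\mapsto\Delta g^{-1}$ exactly as in the deduction of Theorem~\ref{thm:coverinput}. This reverses the sign of the unipotent parameter, which is harmless because the averaging window $[-L,L]$ is symmetric. I will then rewrite Lin's expanding-translate average in the displayed form with $a(L)$ and $u(s)$, and identify his starting point with $z$. Three further comparisons only enlarge constants: Lin's metric versus our fixed right-invariant metric (local bi-Lipschitz comparison), his injectivity radius versus the capped $\inj_{\mathrm{aff}}$, and his Haar normalization of $S_0$-orbit volumes versus ours.

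The step I expect to be the main obstacle is the obstruction alternative. Lin's exceptional alternative is stated for intermediate closed orbits of subgroups containing the relevant $\SL_2$-type subgroup. I must show that in $A_{\mathrm{aff}}$ every proper such subgroup is a translation conjugate $(I_2,x)S_0(I_2,x)^{-1}$. This follows because a Lie subalgebra containing $\mathfrak{sl}_2$ meets $\R^2$ in an invariant subspace, hence in $0$ or $\R^2$; in the first case it is a Levi complement, and Levi complements are conjugate under $R_u$. A closed orbit of a translation conjugate is a translate of a closed $S_0$-orbit. Moving the base point by the bounded conjugating element, or absorbing the conjugation into $z'$, gives alternative (2) with volume at most $C_*R$ and distance at most $C_*L^{-\chi_*}$. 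If Lin's distance bound is stated with a different power of $L$ or $R$, I will absorb it using $L\ge R^{A_*}$ and shrink $\chi_*$.

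Finally, the integer $\ell_N$ comes from the structure of $\mathcal S_N$: it is a finite sum of weighted $L^2$ or supremum norms of Lie derivatives of bounded order, with weights bounded on compact sets. The argument of \eqref{eq:localsobolev} therefore gives $\mathcal S_N(F)\le C_{N,K}\mathcal D^{\mathrm{aff}}_{\ell_N}(F)$ for $\operatorname{supp}F\subset K$.
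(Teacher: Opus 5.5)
The gap is the lattice. You apply \cite[Theorem~2.6]{Lin2026} directly to $\Delta_{\mathrm{aff}}=\Gamma_2(N)\ltimes\Z^2$ on the grounds that it is ``the integral points of $\mathbf A$ up to the congruence condition.'' The cited theorem, however, requires an integral lattice: the full group of integer points of a faithful rational embedding. In the given embedding $\mathbf A\le\SL_3$ the integer points are $\SL_2(\Z)\ltimes\Z^2$, which is strictly larger than $\Delta_{\mathrm{aff}}$ once $N>1$. The statement on the finite cover $X_{\mathrm{aff}}$ does not follow formally from the statement on $A_{\mathrm{aff}}/(\SL_2(\Z)\ltimes\Z^2)$, because the latter only controls deck-invariant test functions and only produces closed orbits downstairs.

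The missing idea is to change the rational embedding so that the congruence lattice becomes exactly the integer points. The paper takes $C_N=I_5+N^{-1}(E_{14}+E_{25})$ and $\rho_N(b)=C_N\diag(b,I_2)C_N^{-1}\in\SL_5$. The top-left block of $\rho_N(b)$ is $b$, its bottom-right block is $I_2$, and the first three entries of its two extra columns are $N^{-1}(e_i-be_i)$. Hence $\rho_N(b)$ is integral exactly when $b=(A,v)$ is integral with $A\equiv I_2\pmod N$. This gives $\rho_N(\mathbf A)(\R)\cap\SL_5(\Z)=\rho_N(\Gamma_2(N)\ltimes\Z^2)$, so Lin's theorem applies to $X_{\mathrm{aff}}$ itself.

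There are three smaller points. First, the representation hypotheses to verify are Lin's \emph{Irreducible} and \emph{Schubert} conditions \cite[Lemmas~2.3--2.4]{Lin2026}: the complement of $\Lie S_0$ must be the irreducible standard module, and its $u(\R)$-fixed space must be a line. You check absence of $S_0$-fixed vectors, which is a different condition and does not give the Schubert condition. Both required facts do follow from Lemma~\ref{app:affine-algebra}. Also, Remark~1.2 is what supplies the polynomial threshold $R\ge C_*\inj_{\mathrm{aff}}(z)^{-B_*}$, not a representation hypothesis.

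Second, you cannot simply identify Lin's starting point with $z$. His unit averaging box is $\{u(s):|s|\le r_N\}$ and his diagonal flow has root exponent $\lambda_N$. So one applies his theorem at $a(r_N)z$ with $T=(L/r_N)^{1/\lambda_N}$ and uses $a(L/r_N)u(s)a(r_N)z=a(L)u(s/r_N)z$. One then converts $T\ge R^{A_3}$ and the distance $T^{-1/A_4}$ into $L\ge R^{A_*}$ and $L^{-\chi_*}$, and moves a nearby periodic orbit back by $a(r_N)^{-1}\in S_0$.

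Third, your classification of intermediate subgroups is harmless but unnecessary. Your own argument shows that a proper subgroup containing $S_0$ is $S_0$ itself, so no translation conjugates arise.
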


\begin{proof}[Deduction]
This is the specialization of \cite[Theorem 2.6, Remark 1.2,
and Example 1.9]{Lin2026}.
The polynomial injectivity threshold is from Remark~1.2;
the Sobolev norm is the fixed finite-order norm used in
\cite[Section 14.1]{Lin2026}. Its restriction to a fixed compact
set has the displayed derivative bound.
We now verify the group and lattice hypotheses, including the cover.

By Lemma~\ref{app:affine-algebra}, the group $\mathbf A$ is
perfect: $\SL_2$ is perfect and its commutators with
$\mathbb G_a^2$ span $\mathbb G_a^2$. The complement of $\Lie S_0$
is the standard irreducible two-dimensional module; its
$u(\mathbb R)$-fixed space is a line, by the same lemma.
This verifies Lin's \emph{Irreducible} and \emph{Schubert}
hypotheses; see \cite[Lemmas 2.3--2.4]{Lin2026}.

To realize the integral lattice required in Lin's Theorem~2.6,
use five-by-five matrix units and set
\[
 C_N=I_5+N^{-1}(E_{14}+E_{25}),\qquad
 \rho_N(b)=C_N\diag(b,I_2)C_N^{-1}\in\SL_5.
\]
This is a faithful rational representation. Its top-left block is
$b$. The first three entries of its two extra columns are
$N^{-1}(e_i-be_i)$, so its integral points are exactly
\[
 \rho_N(\mathbf A)(\mathbb R)\cap\SL_5(\mathbb Z)
       =\rho_N(\Gamma_2(N)\ltimes\mathbb Z^2).
\]
Here $e_i$ denotes the $i$th standard column in $\mathbb R^3$.
In Lin's normalization, the unit averaging box in $U$ has the form
$\{u(s):|s|\le r_N\}$, with $r_N>0$ fixed. Let $\lambda_N>0$
be the root exponent of Lin's unit-speed diagonal flow. At time
$\lambda_N^{-1}\log(L/r_N)$, this flow equals $a(L/r_N)$.
Apply Lin's Theorem~2.6 with parameter $T=(L/r_N)^{1/\lambda_N}$
to the starting point $a(r_N)z$. The identity
\[
 a(L/r_N)u(s)a(r_N)z=a(L)u(s/r_N)z
       \qquad(|s|\le r_N)
\]
shows that its normalized average is exactly the average in the
first alternative above. The point $a(r_N)z$ has injectivity
radius comparable to that of $z$; a nearby periodic $S_0$-orbit
can be translated back by $a(r_N)^{-1}\in S_0$ with comparable
distance and unchanged orbit volume. For the constants $A_3,A_4$
in Lin's Theorem~2.6, the threshold $T\ge R^{A_3}$ follows from
$L\ge R^{A_*}$ after increasing $A_*,C_*$. The distance bound is
$T^{-1/A_4}=r_N^{1/(\lambda_N A_4)}L^{-1/(\lambda_N A_4)}$.
Enlarge $B_*,C_*$ and choose
$0<\chi_*\le\min\{1,1/(\lambda_N A_4)\}$.
The pulled-back metric, orbit-volume normalization, and derivative
norms differ from the fixed ones by constants depending on $N$.
This proves the stated dichotomy for the exact congruence lattice.
\end{proof}

\begin{lemma}\label{lem:affineleviorbits}
If the orbit $S_0z'$ is closed and has volume at most $V$, then
every representative $g'=(A,v)$ of $z'$ has an integral vector
$w\ne0$ with
\[
 g'w=q e_3,\qquad 1\le q\le C_N V^{1/2}.
\]
Consequently, there is $\varepsilon_N>0$ such that if
$d_{\mathrm{aff}}(z,z')\le\varepsilon\le\varepsilon_N$,
every representative $g$ of $z$
has an integral vector $w\ne0$ for which
\begin{equation}\label{eq:affinelevivector}
 gw=(x_1,x_2,q)^{\mathsf T},\qquad
 1\le q\le C_N V^{1/2},\qquad
 \norm{(x_1,x_2)}\le C_N q\varepsilon.
\end{equation}
\end{lemma}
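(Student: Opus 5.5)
We reduce the lemma to a statement about the stabilizer lattice of $z'$ in $S_0$. For the first assertion, fix a representative $g'=(A,v)$ of $z'$. The stabilizer of $z'$ in $S_0$ is
\[
 \Lambda'=S_0\cap g'\Delta_{\mathrm{aff}}g'^{-1}.
\]
Closedness of $S_0z'$ with finite volume $\le V$ makes $\Lambda'$ a lattice in $S_0\simeq\SL_2(\mathbb R)$ of covolume $\le V$. In three-dimensional coordinates, $S_0$ fixes $e_3$ and preserves the plane $\mathbb R^2\oplus0$.

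The key step is to produce an integral vector $w$ with $g'w\in\mathbb R e_3$. Let $\gamma=(\gamma_2,m)\in\Delta_{\mathrm{aff}}$ be such that $g'\gamma g'^{-1}=(A\gamma_2A^{-1},0)\in\Lambda'$. Then $\gamma$ is an integral matrix whose $3\times3$ form fixes the vector $g'^{-1}e_3=(-A^{-1}v,1)^{\mathsf T}$. Because $\Lambda'$ is Zariski dense in $S_0$ (Borel density), the common fixed space of the integral matrices $g'^{-1}\Lambda'g'\subset\Delta_{\mathrm{aff}}$ equals the line through $g'^{-1}e_3$. A common fixed space of integral matrices is rational. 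Hence this line contains a primitive integral vector $w$, and $g'w=qe_3$ for some $q\neq0$; replacing $w$ by $-w$ gives $q\ge1$.

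To bound $q$, note that $q$ is the third coordinate of $w$ and $w=q(-A^{-1}v,1)$. So $A^{-1}v\in q^{-1}\mathbb Z^2$ with exact denominator $q$ modulo the lattice. The stabilizer condition then forces $\Lambda'$ to be conjugate to a subgroup of $\Gamma_2(N)$ fixing the point $A^{-1}v$ of $(q^{-1}\mathbb Z^2)/\mathbb Z^2$ under the affine action. The orbit of a point of exact order $q$ under $\Gamma_2(N)$ has size $\gg_N q^2$, since the orbit of primitive vectors mod $q$ has size $\asymp q^2\prod(1-p^{-2})$. Consequently
\[
 [\Gamma_2(N):\Lambda'']\gg_N q^2,
\]
where $\Lambda''$ is the relevant subgroup. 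Comparing covolumes, $V\ge\vol(\Lambda')\gg_N q^2$, which gives $q\le C_NV^{1/2}$.

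I expect this index count, together with checking that $\vol(S_0z')$ equals the covolume of exactly this finite-index subgroup, to be the main obstacle. I would do it explicitly by writing $g'=(A,v)$ and reducing via $(A,0)\in S_0$ to $A=I$; the condition becomes $\gamma_2u-u\in\mathbb Z^2$ for $u=-v$, which is the stabilizer of $u$ modulo $\mathbb Z^2$.

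For the consequence, choose $\varepsilon_N$ so that $d_{\mathrm{aff}}(z,z')\le\varepsilon$ yields $k\in A_{\mathrm{aff}}$ with $z=kz'$ and $\norm{k-I_3}\ll\varepsilon$, as in the proof of Proposition~\ref{prop:extraction}. Given a representative $g$ of $z$, the element $g'=k^{-1}g$ represents $z'$. The first part then gives an integral $w\neq0$ with $g'w=qe_3$, so
\[
 gw=kqe_3=qe_3+q(k-I_3)e_3.
\]
Every element of $A_{\mathrm{aff}}$ has bottom row $(0,0,1)$, so the third coordinate of $gw$ is exactly $q$. The first two coordinates are bounded by $C_Nq\varepsilon$, which proves \eqref{eq:affinelevivector}.
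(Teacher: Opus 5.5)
Your proof is correct and follows the paper's argument: you compute the stabilizer $\Lambda_\xi=\{\gamma\in\Gamma_2(N):(\gamma-I_2)\xi\in\mathbb Z^2\}$ with $\xi=A^{-1}v$, take the primitive vector $w=(-q\xi,q)$, bound $q$ by the $q^2/\zeta(2)$ count of primitive residues modulo $q$, and for the second assertion perturb by some $k$ with $\norm{k-I_3}\ll\varepsilon$. The only difference is in the rationality step: you obtain rationality of $\xi$ from Borel density together with rationality of fixed spaces of integral matrices. The paper instead uses that the finite-index subgroup $\Lambda_\xi$ contains nontrivial powers of both elementary unipotents.
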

\begin{proof}
Put $\xi=A^{-1}v$. The stabilizer of $z'$ in $S_0$, conjugated
by $A^{-1}$, is
\[
 \Lambda_\xi=\{\gamma\in\Gamma_2(N):
                         (\gamma-I_2)\xi\in\mathbb Z^2\}.
\]
By Lemma~\ref{app:lattice-covers}, finite covolume makes
$\Lambda_\xi$ finite index in $\SL_2(\mathbb Z)$. It contains
positive powers of both elementary unipotents, forcing both
coordinates of $\xi$ to be rational (Lemma~\ref{app:primitive-vectors}).
Let $q\ge1$ be their least common denominator and put $p=q\xi$.
The orbit of $p$ under $\SL_2(\mathbb Z/q\mathbb Z)$ consists of
all primitive pairs modulo $q$. Elementary matrices and the Chinese
remainder theorem give transitivity; by Lemma~\ref{app:primitive-vectors},
their number is
\[
 q^2\prod_{\mathfrak p\mid q}(1-\mathfrak p^{-2})
       \ge q^2/\zeta(2).
\]
The product runs over prime divisors of $q$, and $\zeta(2)=\sum_{n\ge1}n^{-2}$.
Thus $[\SL_2(\mathbb Z):\Lambda_\xi]\ge q^2/\zeta(2)$.
Conjugation by $A$ preserves Haar measure on $\SL_2(\mathbb R)$,
so the orbit volume is at least $c_Nq^2$.
The vector $w=(-p_1,-p_2,q)^{\mathsf T}$ satisfies $g'w=q e_3$.
Changing representatives multiplies $w$ by an integral matrix.

For the last assertion, lift a quotient path of length at most
$2\varepsilon$ from $z$ to $z'$. Right invariance gives
$z'=kz$ with $\norm{k-I_3}\le C_N\varepsilon$.
Apply the first assertion to the representative $kg$ of $z'$.
Then $gw=qk^{-1}e_3$. An affine matrix has last row $(0,0,1)$,
and $k^{-1}e_3=e_3+O(\varepsilon)$, which gives
\eqref{eq:affinelevivector}.
\end{proof}

\begin{proposition}
\label{prop:affineblockpolynomial}
Let $\psi(t)=(P_2(t),v(t))\in\mathbf A(\mathbb R[t])$ be normalized
of degree at most $D\ge1$, and let $g=(B,w)\in A_{\mathrm{aff}}$.
Assume its leading tangent is $\varepsilon E_{12}$ with
$\varepsilon\in\{1,-1\}$. Suppose, with $c_2,c_3>0$ and
$\tau\ge0$, that
the nonconstant coefficient norms satisfy
\begin{equation}\label{eq:affinetwotests}
 \norm{\operatorname{coeff}_{>0}(P_2(t)Bz)}
       \ge c_2\norm z^{-\tau},\qquad
 \norm{\operatorname{coeff}_{>0}(\psi(t)gk)}
       \ge c_3\norm k^{-\tau}
\end{equation}
for all nonzero $z\in\mathbb Z^2$, $k\in\mathbb Z^3$.
There are $s_N\ge1$ and $\eta_{N,D}^{\mathrm b}>0$, independent
of $\tau$ and the curve coefficients, such that for every compact
$K\subset X_{\mathrm{aff}}$,
\begin{equation}\label{eq:affineblockpolynomial}
 \left|\frac1T\int_0^T F(\psi(t)g\Delta_{\mathrm{aff}})\dd t
                     -m_{\mathrm{aff}}(F)\right|
 \le C\mathcal D_{s_N}^{\mathrm{aff}}(F)
               T^{-\eta_{N,D}^{\mathrm b}/(\tau+1)}\qquad(T\ge2)
\end{equation}
for $F\in C_c^\infty(X_{\mathrm{aff}})$ supported in $K$.
The constant $C$ may depend on $N,D,K,\psi,g,c_2,c_3,\tau$.
\end{proposition}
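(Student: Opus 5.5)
The plan mirrors the regular ambient argument of Proposition~\ref{prop:separatedregular}, with Theorem~\ref{thm:affineblockinput} replacing Theorem~\ref{prop:regularinput} and Lemma~\ref{lem:affineleviorbits} replacing Proposition~\ref{prop:extraction}. First conjugate by a fixed block element $b_0\in S_0$ so the leading tangent is exactly $\varepsilon E_{12}$. Lemma~\ref{lem:coefficientnormalization} keeps both bounds in \eqref{eq:affinetwotests} with changed constants and the same $\tau$. Put $\alpha=(\tau+1)^{-1}$. Lemma~\ref{lem:separatedgrowth} then gives
\[
 \sup_{[T,2T]}\norm{P_2(t)Bz}\ge K_2T^\alpha,\qquad
 \sup_{[T,2T]}\norm{\psi(t)gk}\ge K_3T^\alpha
\]
for nonzero integral $z,k$. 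On $[T,2T]$ take $L=T^{\theta}$ with $\theta=\alpha/[2(D+1)]$. Apply the centered $[-L,L]$ version of Lemma~\ref{lem:smoothing} with $p=1$. This replaces the polynomial average by $\frac1T\int_T^{2T}\frac1{2L}\int_{-L}^LF(u(\varepsilon s)\psi(t)g\Delta_{\mathrm{aff}})\,ds\,dt$, with error $O(M(P)T^{-1/2})$.

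Next write $u(s)y_t'=u(s)a(L)z_t$ with $z_t=a(L)^{-1}\psi(t)g\Delta_{\mathrm{aff}}$, and choose $R=L^r$ for a small fixed $r$. I intend to apply Theorem~\ref{thm:affineblockinput} at $z_t$; only the sign $\varepsilon=1$ is written, the other follows by conjugating with $\diag(-1,-1,1)$.

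Two exceptional sets must be removed.
\begin{itemize}
\item \emph{Cusp set.} This is where $\inj_{\mathrm{aff}}(z_t)<R^{-b}$ with $b\le 1/(2B_*)$. The injectivity radius of $X_{\mathrm{aff}}$ is bounded below by a power of $\latticemin$ of the base lattice $a_2(L)^{-1}P_2(t)B\Gamma_2$, since the fibers $\R^2/A\Z^2$ are unimodular tori whose systole is also controlled by that base lattice. So Theorem~\ref{prop:km} in dimension two, fed by the $K_2$ growth, gives measure $O(L^{-c/D})$.
\item \emph{Periodic-orbit set.} This is where alternative (2) holds. Lemma~\ref{lem:affineleviorbits}, applied to the representative $a(L)^{-1}\psi(t)g$, produces a nonzero $k\in\Z^3$ with
\[
 a(L)^{-1}\psi(t)gk=(x_1,x_2,q),\qquad q\le C_NR^{1/2},\qquad \norm{(x_1,x_2)}\ll qL^{-\chi_*}.
\]
Compose with the compression $b_\epsilon=\diag(\epsilon^{-1/3},\epsilon^{-1/3},\epsilon^{2/3})$, where $\epsilon=L^{-\chi_*}$. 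Then $b_\epsilon a(L)^{-1}\psi(t)gk$ is a short vector of length $\ll R^{1/2}L^{-\chi_*/3}$ in a unimodular lattice in $\R^3$. All such $t$ therefore lie in one short-vector set $\mathcal B$.
\end{itemize}

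Quantitative nondivergence (Theorem~\ref{prop:km} with $n=3$) bounds $\Leb(\mathcal B)/T$.
\begin{itemize}
\item For single vectors, the lower bound follows from the $K_3$ growth of $\psi(t)gk$ and the operator bounds of $a(L)^{\pm1}$ and $b_\epsilon$, provided $T^\alpha$ beats a fixed power of $L$. This holds because $\theta$ is a small multiple of $\alpha$.
\item For rank-two primitive subgroups, the lower bound comes from a single thick time $t_0$, as in the regular ambient proof. There the $K_2$ base growth and the unimodular fiber make the covolume at least a fixed power of $\epsilon$.
\end{itemize}
The ratio of threshold to lower bound is then a negative power of $L$ once $r$ is small. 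Off both sets, alternative (1) gives the bound $C_*\mathcal S_N(F)R^{-\kappa_*}$, and $\mathcal S_N(F)\le C_{N,K}\mathcal D^{\mathrm{aff}}_{\ell_N}(F)$ for $F$ supported in $K$. Summing these contributions and applying Lemma~\ref{lem:dyadic} gives \eqref{eq:affineblockpolynomial}. The exponent is of the form $\eta^{\mathrm b}_{N,D}\alpha$, and the threshold $T_*$ enters only through powers of $K_2^{-1}$, $K_3^{-1}$ and fixed constants.

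I expect the main obstacle to be the rank-two covolume lower bound in the nondivergence step for the compressed curve. The compression is anisotropic and only one integral test, the $k$-test, is available, with no exterior-square hypothesis. To handle it, I would identify the rank-two primitive subgroups meeting the fiber direction through the affine structure: the last row $(0,0,1)$ makes $\wedge^2$ of an affine matrix block-triangular, with blocks the base matrix and its transpose-inverse. This reduces their covolume to base-lattice vectors controlled by $K_2$, so no new hypothesis is needed.
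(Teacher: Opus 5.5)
Your proposal is correct and follows the paper's proof essentially step by step: growth from Lemma~\ref{lem:separatedgrowth}, centered smoothing on $[-L,L]$, Lin's dichotomy (Theorem~\ref{thm:affineblockinput}) at $a(L)^{-1}\psi(t)g\Delta_{\mathrm{aff}}$, two-dimensional nondivergence for the base cusp, Lemma~\ref{lem:affineleviorbits} followed by an anisotropic compression of the form $\diag(L^{\sigma},L^{\sigma},L^{-2\sigma})$, and three-dimensional nondivergence in which the exterior square reduces to the base test through the fixed line $e_1\wedge e_2$ and the quotient action of $P_2(t)B$. One small correction: conjugation by $\diag(-1,-1,1)$ commutes with $E_{12}$ and does not change $\varepsilon$ (indeed $\pm E_{12}$ are not conjugate in $A_{\mathrm{aff}}$). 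No conjugation is needed, however, because the average over the symmetric interval $[-L,L]$ is the same for both signs.
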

\begin{proof}
Write $\alpha=(\tau+1)^{-1}$, $M(t)=\psi(t)g$, and
$A(t)=P_2(t)B$. Lemma~\ref{lem:separatedgrowth} gives constants
$K_2,K_3>0$ with
\begin{equation}\label{eq:affineblockgrowth}
 \sup_{[T,2T]}\norm{A(t)z}\ge K_2T^\alpha,
 \qquad \sup_{[T,2T]}\norm{M(t)k}\ge K_3T^\alpha
\end{equation}
for all nonzero integer vectors in the respective spaces.

Fix the constants from Theorem~\ref{thm:affineblockinput}
and choose
\begin{equation}\label{eq:affineblockparameters}
 r=\min\{(2A_*)^{-1},\chi_*/8\},\qquad
 b=\frac{r}{12(B_*+1)},\qquad
 \sigma=\chi_*/4,\qquad
 L=T^{\alpha/[2(D+1)]},\quad R=L^r.
\end{equation}
Increase the initial threshold in $T$ by constants determined by
$N,D,K,\psi,g,c_2,c_3,\tau$ as needed below.
For the moving point $y(t)=M(t)\Delta_{\mathrm{aff}}$, set
$z(t)=a(L)^{-1}y(t)$.
First exclude the times for which
\begin{equation}\label{eq:affineblockbasegood}
 \latticemin(a_2(L)^{-1}A(t)\Gamma_2)<L^{-b}.
\end{equation}
The vector suprema for this two-dimensional lattice curve are
at least $K_2T^\alpha L^{-1/2}$, and its determinant is one.
Theorem~\ref{prop:km} bounds the excluded proportion by
$C_D L^{-b/D}$.

At each remaining time, the three-dimensional lattice underlying
$z(t)$ has shortest-vector length at least $\min\{1,L^{-b}\}$:
an integer vector with nonzero last coordinate has image length
at least one, and the other vectors are governed by the base.
Lemma~\ref{lem:injectivity}, restricted to the affine group,
therefore gives $\inj_{\mathrm{aff}}(z(t))\ge c_NL^{-6b}$.
The choices in \eqref{eq:affineblockparameters} give
$6bB_*<r$ and $rA_*\le1/2$, so both starting conditions of
Theorem~\ref{thm:affineblockinput} hold for sufficiently large $T$.

If the second alternative gives a nearby closed orbit, apply
Lemma~\ref{lem:affineleviorbits} with $V=C_*R$ and
$\varepsilon=C_*L^{-\chi_*}$.
For some nonzero integer $k$ it gives
\[
 a(L)^{-1}M(t)k=(x_1,x_2,q)^{\mathsf T},\quad
 q\ll_N L^{r/2},\quad \norm{(x_1,x_2)}\ll_N qL^{-\chi_*}.
\]
Define the determinant-one matrix
\begin{equation}\label{eq:affineblockdilation}
 D_L=\diag(L^{\sigma-1/2},L^{\sigma+1/2},L^{-2\sigma}).
\end{equation}
The displayed bounds imply
\[
 \norm{D_LM(t)k}
 \ll_N L^{\sigma+r/2-\chi_*}+L^{-2\sigma+r/2}
       <L^{-\chi_*/4}
\]
after a fixed threshold: $r\le\chi_*/8$ makes both exponents
strictly smaller than $-\chi_*/4$.

We bound these times by three-dimensional nondivergence.
Since $\chi_*\le1$, $s_{\min}(D_L)\ge L^{-1}$, so the
standard-vector suprema are at least $K_3T^\alpha L^{-1}$.
For the exterior square, use the basis
$e_1\wedge e_2,e_1\wedge e_3,e_2\wedge e_3$ and let
$\pi$ discard the first coordinate. Affine matrix multiplication gives
\[
 (\wedge^2M(t))(e_1\wedge e_2)=e_1\wedge e_2,
 \qquad
 \pi((\wedge^2M(t))\omega)=A(t)\pi(\omega).
\]
If $\pi(\omega)\ne0$ for an integral exterior vector $\omega$,
the quotient coordinates after $\wedge^2D_L$ are scaled by
$L^{-\sigma-1/2}$ and $L^{-\sigma+1/2}$.
Their supremum is at least $K_2T^\alpha L^{-1}$.
If $\pi(\omega)=0$ and $\omega\ne0$, its norm after this map
is at least $L^{2\sigma}$. The determinant is one.
All vector and exterior-vector polynomials have degree at most $2D$,
so Lemma~\ref{lem:polynomialgoodness} supplies goodness with
exponent $1/(2D)$ on the required enlarged interval.
The preceding suprema exceed one for all sufficiently large $T$.
Theorem~\ref{prop:km}, with $n=3$ and $\varrho=1/3$, now bounds
the proportion of parameters for which the periodic-orbit alternative
holds by $C_D L^{-\chi_*/(8D)}$.
This treats the fixed exterior line explicitly; no Diophantine
condition on a representation with a fixed vector was invoked.

On the remaining set of parameters, the block-flow average differs
from $m_{\mathrm{aff}}(F)$ by at most
$C\mathcal S_N(F)L^{-r\kappa_*}$.
On the exceptional set, use $2\norm F_\infty$.
Lemma~\ref{lem:smoothing}, inside the affine group with
$p=1$ and $I_L=[-L,L]$, gives error at most
$C_DM(\psi)\mathcal D_1^{\mathrm{aff}}(F)T^{-1/2}$.
The sign $\varepsilon$ disappears by symmetry.
Thus the dyadic discrepancy is at most
\[
 C\mathcal D_{s_N}^{\mathrm{aff}}(F)
 \left(T^{-1/2}+L^{-b/D}+L^{-\chi_*/(8D)}+L^{-r\kappa_*}\right),
 \qquad s_N=\max\{1,\ell_N\}.
\]
One may take
\begin{equation}\label{eq:affineblockexponent}
 \eta_{N,D}^{\mathrm b}
 =\frac{\min\{1,b/D,\chi_*/(8D),r\kappa_*\}}{2(D+1)}.
\end{equation}
Lemma~\ref{lem:dyadic} absorbs this initial threshold and proves
\eqref{eq:affineblockpolynomial}.
\end{proof}

\subsection{Regular affine polynomial curves}
\label{subsec:regularaffinepolynomial}

For a congruence level $N\ge1$, set $\Delta_2=\Gamma_2(N)$ and
\begin{equation}\label{eq:regularaffineflowconstants}
 \epsilon_0=10^{-12},\qquad
 \omega_N=\min\{b_{\Delta_2},\epsilon_0/8\},\qquad
 \zeta_N=\min\{\epsilon_0/2,A_{\Delta_2}/2\},\qquad
 \ell_N^{\mathrm r}=\max\{64,\ell_{\Delta_2}\}.
\end{equation}
Here $b_{\Delta_2}$ is defined in \eqref{eq:coverexponent}, and
$A_{\Delta_2},\ell_{\Delta_2}$ are supplied by
Theorem~\ref{thm:coverinput}. Recall
$N_{\mathrm r}=E_{12}+E_{23}$ and
$a_2(L)=\diag(\sqrt L,1/\sqrt L)$.
The following estimate is uniform in the starting translation $v$
and allows $A$ to vary with $L$. It is exactly the flow estimate
needed to apply the common rank-one transfer lemma.

\begin{proposition}[Regular affine flow estimate]
\label{prop:regularaffineflow}
For each congruence level $N\ge1$, use the constants in
\eqref{eq:regularaffineflowconstants}.
For every compact $K\subset X_{\mathrm{aff}}$ there are
$C_{N,K},L_{N,K}\ge1$ such that
\begin{equation}\label{eq:regularaffineflowinput}
 \begin{split}
 &\latticemin(a_2(L)^{-1}A\mathbb Z^2)\ge L^{-\omega_N}
 \\[-2pt]
 &\quad\Longrightarrow\quad
 \left|\frac1L\int_{-L/2}^{L/2}
    F(\exp(sN_{\mathrm r})(A,v)\Delta_{\mathrm{aff}})\dd s
       -m_{\mathrm{aff}}(F)\right|
 \le C_{N,K}\mathcal D_{\ell_N^{\mathrm r}}^{\mathrm{aff}}(F)
                      L^{-\zeta_N}
 \end{split}
\end{equation}
for all $L\ge L_{N,K}$, $A\in G_2$, $v\in\R^2$, and smooth $F$ supported in $K$.
\end{proposition}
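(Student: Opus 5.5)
We will follow the strategy announced in the introduction: Fourier expansion in the torus fibers of $X_{\mathrm{aff}}\to G_2/\Delta_2$. For $y=(A,v)\Delta_{\mathrm{aff}}$, write $\exp(sN_{\mathrm r})=(u_2(s),(s^2/2,s)^{\mathsf T})$, so that
\[
 \exp(sN_{\mathrm r})(A,v)=\bigl(u_2(s)A,\ u_2(s)v+(s^2/2,s)^{\mathsf T}\bigr).
\]
Expand $\eta\mapsto F((A',A'\eta)\Delta_{\mathrm{aff}})$ along the fiber as in the proof of Lemma~\ref{lem:affinefiber}. The modes are $\xi=A'^{-\mathsf T}k$ with $k\in\Z^2$, and the coefficients satisfy $|\widehat F_{A'}(k)|\ll\mathcal D_{\ell}^{\mathrm{aff}}(F)(1+\norm\xi)^{-\ell}$ for large $\ell$ (this is why $\ell_N^{\mathrm r}\ge64$). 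The average then splits into two parts.

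The first part is the zero mode. It equals the centered horocycle average of $\overline F$ at $A\Delta_2$. Under the hypothesis $\latticemin(a_2(L)^{-1}A\Z^2)\ge L^{-\omega_N}$ with $\omega_N\le b_{\Delta_2}$, we apply Lemma~\ref{lem:injectivity} and Theorem~\ref{thm:coverinput} exactly as in the proof of Proposition~\ref{prop:congruencemodular}. This yields an error $\ll\mathcal D_{\ell_{\Delta_2}}(\overline F)L^{-A_{\Delta_2}/2}$, and $m_{\Delta_2}(\overline F)=m_{\mathrm{aff}}(F)$.

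The second part consists of the nonzero modes. Since $F$ is supported in $K$, only base points in a fixed compact set of $G_2/\Delta_2$ contribute. We cut the $s$-interval into pieces of length $L^{1-\epsilon}$, and on each piece consider the points $u_2(s)A$ whose base lies in the compact projection of $K$. For such points, a reduced representative $A'$ has bounded norm. The mode $k$ then contributes an oscillatory integral with phase
\[
 2\pi\, k\cdot A'^{-1}\bigl(u_2(s)v+(s^2/2,s)\bigr),
\]
which is quadratic in $s$. Its coefficient of $s^2/2$ is $(A^{-\mathsf T}k)_1=\xi_1$, after rewriting in terms of the original representative; note that $u_2(s)$ changes $A^{-\mathsf T}$ through the factor $u_2(-s)^{\mathsf T}$, which fixes the first coordinate. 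The nonnegative quantity $|\xi_1|$ is independent of $v$. By van der Corput's second-derivative test, the integral over a window of length $L'$ is $\ll L'|\xi_1|^{-1/2}$ relative to the trivial bound $L'\cdot L'$. Whenever $|\xi_1|\ge L^{-2+\epsilon}$, this gives a saving of order $L^{-\epsilon/2}$, uniformly in $v$. The linear coefficient is handled by the first-derivative test when the quadratic one is small.

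The main obstacle is the set of modes with $|\xi_1|$ very small, namely $k$ nearly orthogonal to the first column direction of $A^{-\mathsf T}$. This is where the lattice hypothesis must be used in its full strength, and we expect this step to require exponential-sum bounds, as hinted in the introduction. We would first try to show that the shortness condition on $a_2(L)^{-1}A\Z^2$ forces $|\xi_1|\gg L^{-1/2-\omega_N}$ for every nonzero $\xi$ with $\norm{\xi}\le L^{\epsilon_0}$. This would follow from the computation in Lemma~\ref{lem:affinefiber}, applied to the dual lattice $a_2(L)A^{-\mathsf T}\Z^2$, since $\latticemin$ of a two-dimensional unimodular lattice equals that of its dual via $J_2$. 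The small-mode count is controlled by the same disk-packing argument. Large modes with $\norm\xi>L^{\epsilon_0}$ are summed using the Fourier decay, exactly as in the proof of Lemma~\ref{lem:affinefiber}.

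If the moving reduction of $u_2(s)A$ causes difficulties with the change of representative (the term $\gamma^{-1}\eta$), we would instead expand directly in the original representative $A$. Here we would use that $u_2(s)$ acts unimodularly on the fiber torus, and estimate the remaining $s$-dependence of $\widehat F$ by its derivatives in the block directions. This costs the additional derivative orders built into $\ell_N^{\mathrm r}$. Collecting the three contributions gives an error $\ll L^{-\min\{\epsilon_0/2,A_{\Delta_2}/2\}}=L^{-\zeta_N}$ for $L\ge L_{N,K}$, uniformly in $A$ and $v$.
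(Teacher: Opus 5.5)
Your treatment of the zero fiber mode matches the paper (Lemma~\ref{lem:injectivity} and Theorem~\ref{thm:coverinput}, using $\omega_N\le b_{\Delta_2}$). The nonzero-mode argument, however, has a genuine gap, and the obstacle you single out is not the real one.

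Since $(u_2(s)A)^{-\mathsf T}k=(\xi_1,\xi_2-s\xi_1)$, the coefficient $\widehat F_{u_2(s)A}(k)$ decays like $(1+\abs{\xi_1}+\abs{\xi_2-s\xi_1})^{-h}$. So each mode is supported on an $s$-window of length about $1/\abs{\xi_1}$ centred at $s_*=\xi_2/\xi_1$, which is exactly the stationary point of its phase $-\xi_1s^2/2+\xi_2s$. Two things follow.

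First, the static decay of Lemma~\ref{lem:affinefiber} is unavailable here, because there the flow does not move the base. A mode with $\abs{\xi_1}\asymp1$ and $\abs{\xi_2}$ as large as $L/2$ contributes however large $\norm\xi$ is at $s=0$. Your lower bound for $\abs{\xi_1}$ on $\norm\xi\le L^{\epsilon_0}$ is therefore correct but concerns the wrong quantity.

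Second, and more seriously, for such modes $\abs{c_k}$ is typically of size $L^{-1}$. The second-derivative test gives only $O(L^{-1}\abs{\xi_1}^{-1/2})=O(L^{-1})$, since the amplitude varies on the same unit scale as the phase. Under the lattice hypothesis there are of order $L$ such modes, so summing any per-mode bound gives $O(1)$, not a power saving. This is exactly what Lemmas~\ref{lem:shearmajorant} and~\ref{lem:regularfrequencywindow} quantify: $\sum_k b(w_k)\ll H^2$, with essentially all of that mass in the window $L^{-\varepsilon}\ll\abs{\xi_1}\ll L^{\varepsilon}$, $\abs{\xi_2/\xi_1}\le L/2+L^{1-\varepsilon}$. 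The small-$\abs{\xi_1}$ modes you worry about are discarded trivially by this majorant.

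What is actually needed is cancellation \emph{between} modes in $\sum_k c_k e(k\cdot\eta)$, uniformly in $\eta=A^{-1}v$. By Parseval the mean square over $\eta$ is already small; the supremum is an arithmetic problem. The paper handles it as follows.

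\begin{itemize}
\item It writes $k=mJ_0(q,c)$ with $(q,c)$ primitive and passes to congruence charts.
\item It parametrizes the base point at the stationary time by the coordinates $D(x,d,t)$ of Lemma~\ref{lem:congruenceinversephase}. The periodic variable $t$ is where the modular inverse $\overline q$ modulo $Nc$ enters.
\item Fourier expansion in $t$ then produces phases $mb^2/(2d)-n\overline q/(Nc)+\cdots$, to be summed over $q$ for each modulus $c$.
\item For $n\ne0$ this requires fourfold van der Corput differencing, the Weil bound for rational phases, and a discard of moduli with large square-full part (Lemma~\ref{lem:fourdiffinverse}, Corollary~\ref{cor:fourdiffuniformrange}, Lemma~\ref{lem:squarefulldiscard}).
\item For $n=0$ it requires third- to fifth-derivative tests and mean bounds for quadratic sums averaged over $c$ (Lemmas~\ref{lem:averagedrealquadratic} and~\ref{lem:movingzeroinverse}).
\end{itemize}

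Your proposal neither supplies nor replaces any of this. Your fallback of expanding in the fixed representative $A$ and differentiating $\widehat F$ in block directions only controls the smoothness of the amplitude, which is not where the difficulty lies.
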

The proof is given in Section~\ref{sec:regularaffine}, after
the cancellation estimate for the nonzero fiber Fourier modes.
We first deduce its consequence for polynomial trajectories.

\begin{proposition}
\label{prop:regularaffinereduction}
Fix $N\ge1$ and
$\Delta_{\mathrm{aff}}=\Gamma_2(N)\ltimes\mathbb Z^2$.
Let $\psi(t)=(P_2(t),v(t))$ be normalized and polynomial of
degree at most $D\ge1$, with regular leading tangent, and let
$g=(B,w)\in A_{\mathrm{aff}}$.
Assume the two-dimensional coefficient test in
\eqref{eq:affinetwotests} with constants $c_2>0,\tau\ge0$.
For every compact $K\subset X_{\mathrm{aff}}$,
\[
 \left|\frac1T\int_0^T F(\psi(t)g\Delta_{\mathrm{aff}})\dd t
             -m_{\mathrm{aff}}(F)\right|
 \le C\mathcal D_{\ell_N^{\mathrm r}}^{\mathrm{aff}}(F)
             T^{-\eta_{N,D}^{\mathrm r}/(\tau+1)}
 \qquad(T\ge2)
\]
for smooth $F$ supported in $K$, where
\begin{equation}\label{eq:regularaffineexponent}
 \eta_{N,D}^{\mathrm r}
   =\frac{\min\{1,\zeta_N,\omega_N/D\}}{2(D+2)}>0.
\end{equation}
The constant can be chosen as
$C=C(N,D,K,\psi,B,c_2,\tau)>0$, independently of $w$ and $F$.
\end{proposition}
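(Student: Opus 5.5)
The plan is to apply the common rank-one transfer, Lemma~\ref{lem:modulartransfer}, with $\mathcal G=A_{\mathrm{aff}}$, $\Lambda=\Delta_{\mathrm{aff}}$, $\pi(A,v)=A$ the block projection, and $\pi(\Lambda)=\Gamma_2(N)\subseteq\Gamma_2$. We first normalize the regular leading tangent. By Lemma~\ref{lem:regularaffinenormalization} there is $\diag(S,1)$ conjugating it to $cN_{\mathrm r}$ with $c\ne0$. A further diagonal block element $\diag(\lambda,\lambda^{-1},1)$ rescales $cN_{\mathrm r}$ to $N_{\mathrm r}$: it multiplies $E_{12}$ by $\lambda^2$ and $E_{23}$ by $\lambda^{-1}$, and these two factors can be matched by a real choice of $\lambda$ (up to a sign, which we absorb through the centered interval). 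We then replace $\psi,g,F$ by $b_0^{-1}\psi b_0$, $b_0^{-1}g$, $F_{b_0}$. This changes $\mathcal D^{\mathrm{aff}}$ norms and the support compact set only by fixed factors depending on $\psi$. By Lemma~\ref{lem:coefficientnormalization} applied to the standard two-dimensional representation of the block, the coefficient test for $P_2(t)B$ is preserved with a changed constant $c_*$.

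With $N_*=N_{\mathrm r}$ we have $p=2$, and we use the centered average $I_L=[-L/2,L/2]$. Proposition~\ref{prop:regularaffineflow} supplies hypothesis \eqref{eq:transferinput} with $\omega=\omega_N$, $\zeta=\zeta_N$, $\ell=\ell_N^{\mathrm r}$, $L_0=L_{N,K}$ and $C_0=C_{N,K}$, uniformly in $v$ and $A$. Uniformity in $v$ matters here because the transfer lemma evaluates the flow estimate at the moving points $P(t)h\Delta_{\mathrm{aff}}$, whose translation parts are uncontrolled. The class of test functions is those supported in the fixed (transformed) compact set. For the growth hypothesis \eqref{eq:separatedmodulargrowth}, note that $\pi(P(t)h)=\widetilde P_2(t)\widetilde B$ is a polynomial in $\SL_2$ of degree at most $D$. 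Its nonconstant coefficients satisfy $c_*\norm z^{-\tau}$ separation, so Lemma~\ref{lem:separatedgrowth} gives growth with $\alpha=(\tau+1)^{-1}$ and $K=\min\{1,\kappa_Dm^{1-\alpha}c_*^\alpha\}$, where $m=s_{\min}(\widetilde B)$. Both $P$ and $\pi\circ P$ have degree at most $D$.

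The transfer lemma then yields the exponent $\delta=\theta\nu$ with $\theta=\alpha/(2(D+2))$ and $\nu=\min\{\zeta_N,\omega_N/D,1/4\}$. Since $\nu\ge\min\{1,\zeta_N,\omega_N/D\}$ up to the cap $1/4$, which is inactive because $\zeta_N$ is tiny, this gives exactly $\eta^{\mathrm r}_{N,D}/(\tau+1)$. The constant is $C(1+M(P)+K^{-\beta})$, which depends only on $N,D,K,\psi,B,c_2,\tau$ and not on $w$. The main subtlety is verifying that the transfer lemma's nondivergence step uses only the base lattice $\pi(P(t)h)\Z^2$, with no control needed on the translation part. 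This holds because \eqref{eq:transferinput} is conditioned only on $\latticemin(a_2(L)^{-1}\bar y)$. A second point to check is that the $C^1$ smoothing norm used in Lemma~\ref{lem:smoothing} is dominated by $\mathcal D^{\mathrm{aff}}_{\ell_N^{\mathrm r}}$.
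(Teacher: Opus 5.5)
Your overall route is the paper's. You normalize the leading tangent and then apply Lemma~\ref{lem:modulartransfer} with $\mathcal G=A_{\mathrm{aff}}$, $\pi(A,v)=A$, $p=2$ and $I_L=[-L/2,L/2]$. The flow hypothesis comes from Proposition~\ref{prop:regularaffineflow}, uniformly in $v$, and the growth hypothesis from the base coefficient test via Lemma~\ref{lem:separatedgrowth}. Finally $\zeta_N<1/4$ identifies the exponent.

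The gap is in the normalization step. Conjugation by $\diag(\lambda,\lambda^{-1},1)$ sends $cN_{\mathrm r}$ to $c\lambda^2E_{12}+c\lambda^{-1}E_{23}$. This is proportional to $N_{\mathrm r}$ only when $\lambda^3=1$, i.e.\ $\lambda=1$, so it cannot remove $c$. In fact no element of $A_{\mathrm{aff}}$ can. For $N=\left(\begin{smallmatrix}B&v\\0&0\end{smallmatrix}\right)$, conjugation by $(S,x)$ gives $B'=S^{-1}BS$ and $v'=S^{-1}(v+Bx)$. Since $B^2=0$ and $\operatorname{im}B$ is a line, $\det(B'v',v')=\det(Bv,v)=c^3$. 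Thus $c$ is a conjugacy invariant inside the affine group. The translation rescaling that would change $c$ does not preserve $\Delta_{\mathrm{aff}}$. So unless $c=\pm1$, you cannot feed Proposition~\ref{prop:regularaffineflow}, which is stated for $\exp(sN_{\mathrm r})$, into the transfer lemma as you set it up.

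The missing idea is a linear change of time. Let $q$ be the degree of the logarithmic derivative, and put $\widetilde\psi(u)=\psi(\beta u)$ with $\beta=|c|^{-1/(q+1)}$. Then $\widetilde\psi'\widetilde\psi^{-1}(u)=\beta\,\psi'\psi^{-1}(\beta u)$, whose leading coefficient is $\beta^{q+1}cN_{\mathrm r}=\operatorname{sgn}(c)N_{\mathrm r}$. This change has three effects:
\begin{itemize}
\item the degree stays at most $D$;
\item the $j$th nonconstant coefficient of $P_2(t)B$ is multiplied by $\beta^j$, so $\tau$ is unchanged and only $c_2$ changes;
\item the average over $[0,T]$ becomes the average of $\widetilde\psi$ over $[0,T/\beta]$, which affects only the constant.
\end{itemize}
The remaining sign is absorbed by the centered interval, as you say.

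Alternatively, you could take $N_*=cN_{\mathrm r}$ and rescale the flow time in Proposition~\ref{prop:regularaffineflow}. But its shortest-vector hypothesis would then be needed at scale $|c|L$ instead of $L$. The two differ by a constant factor that the proposition, as stated with the sharp threshold $L^{-\omega_N}$, does not absorb.

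With the time change, the rest of your argument goes through and yields the stated exponent and the stated dependence of $C$.
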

\begin{proof}
Use Lemma~\ref{lem:regularaffinenormalization} to put the leading
tangent in the form $cN_{\mathrm r}$. If its logarithmic-derivative
degree is $q$, replacing $\psi(t)$ by $\psi(\beta t)$ with
$\beta=|c|^{-1/(q+1)}$ puts it in the form
$\pm N_{\mathrm r}$. This preserves the degree and the separation
exponent, changing only the coefficient constants and the time
scale. Let $b_0\in A_{\mathrm{aff}}$ be the fixed conjugating element.
The base coefficient vectors are multiplied on the left by a fixed
invertible matrix, and for every fixed derivative order $\ell$,
\[
 \mathcal D_\ell^{\mathrm{aff}}(F_{b_0})
 \le C_{b_0,\ell}\mathcal D_\ell^{\mathrm{aff}}(F).
\]
Thus both changes preserve the claimed form of the estimate.
Centered flow averages are identical for these two signs.

Apply Lemma~\ref{lem:modulartransfer} with
\[
 \mathcal G=A_{\mathrm{aff}},\quad
 \Lambda=\Delta_{\mathrm{aff}},\quad
 \pi(A,v)=A,\quad p=2,\quad
 \mathcal N_\ell=\mathcal D_{\ell_N^{\mathrm r}}^{\mathrm{aff}}.
\]
Its flow hypothesis is Proposition~\ref{prop:regularaffineflow},
with $(\omega,\zeta)=(\omega_N,\zeta_N)$ and
$I_L=[-L/2,L/2]$. Its growth hypothesis follows from the base
coefficient test and Lemma~\ref{lem:separatedgrowth}, with
$\alpha=(\tau+1)^{-1}$ and some $0<K_2\le1$.
Both polynomial degrees are at most $D$.
Since $\zeta_N\le10^{-12}/2<1/4$, the exponent supplied by the
transfer lemma is exactly
$\eta_{N,D}^{\mathrm r}/(\tau+1)$ in
\eqref{eq:regularaffineexponent}.
The fixed normalization and time change affect only the constant.
\end{proof}

\subsection{Transfer to affine supports in arithmetic coordinates}

\begin{lemma}\label{lem:affinecover}
Suppose $\mathbf H$ is a rational conjugate of $\mathbf A$ or
$\mathbf A^\vee$, and $H\Gamma/\Gamma$ is closed.
There is a rational algebraic isomorphism
$\kappa:\mathbf A\to\mathbf H$ and an integer $N\ge1$ such that
\[
 \kappa(\Gamma_2(N)\ltimes\mathbb Z^2)\subseteq\Gamma.
\]
The induced map $A_{\mathrm{aff}}/(\Gamma_2(N)\ltimes\mathbb Z^2)
\to H\Gamma/\Gamma$ is a finite covering preserving Haar probability.
The inverse image under $\kappa$ of a normalized matrix-polynomial
curve of degree at most $d$ has degree at most $2d$.
The representation $\rho(b)$ obtained by taking the two-by-two
block of $\kappa^{-1}(b)$ has rational matrix entries of degree
at most one on $H$, and no nonzero fixed vector.
\end{lemma}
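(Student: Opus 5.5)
We plan to write $\mathbf H=\beta\mathbf A\beta^{-1}$ or $\mathbf H=\beta\mathbf A^\vee\beta^{-1}$ with $\beta\in\mathrm{GL}_3(\mathbb Q)$, and to define $\kappa$ by $\kappa(b)=\beta b\beta^{-1}$ or $\kappa(b)=\beta b^{-\mathsf T}\beta^{-1}$, respectively. Both are rational algebraic isomorphisms $\mathbf A\to\mathbf H$. The real image of $\kappa$ is $H$, because $A_{\mathrm{aff}}$ is connected.

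\emph{Congruence level.} In the conjugation case, choose $N$ clearing the denominators of $\beta$ and $\beta^{-1}$, replaced by a suitable multiple. For $\gamma\in\Gamma_2(N)\ltimes\mathbb Z^2$ with $N$ divisible by $M^2$, write $\gamma=I_3+MX$ with $X$ integral. Then $\beta\gamma\beta^{-1}=I_3+M\beta X\beta^{-1}$ is integral once $M\beta$ and $M\beta^{-1}$ are integral. In the contragredient case, $\gamma^{-\mathsf T}$ is again integral and congruent to $I_3$ modulo the same level, since $\gamma^{-1}$ has integral entries. The same argument then applies; this is essentially Lemma~\ref{app:congruence}.

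\emph{Covering.} The determinant is one, so $\kappa(\Gamma_2(N)\ltimes\mathbb Z^2)\subseteq\Gamma\cap H$. The stabilizer $\kappa^{-1}(\Gamma)$ is discrete and contains the lattice $\Gamma_2(N)\ltimes\mathbb Z^2$ of Lemma~\ref{lem:affinequotient}. We argue exactly as in Lemma~\ref{lem:semisimplecongruencecover}. A discrete overgroup of a lattice has finite index, since a small neighborhood in the finite-volume quotient cannot have infinitely many disjoint lifts. Closedness of $H\Gamma/\Gamma$ identifies it with $H/(H\cap\Gamma)$. Lemma~\ref{app:lattice-covers} then shows that the induced finite cover pushes Haar probability to Haar probability.

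\emph{Degrees.} In the conjugation case, $\kappa^{-1}(b)=\beta^{-1}b\beta$ is linear in the entries of $b$, so curve degrees are preserved. In the contragredient case, $\kappa^{-1}(b)=(\beta^{-1}b\beta)^{-\mathsf T}$. For $\det b=1$ this is the cofactor matrix, with quadratic entries, giving degree at most $2d$ for a curve of degree $d$.

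\emph{The representation $\rho$.} The degree-one claim needs more care. Write $c=\beta^{-1}b\beta\in\mathbf A^\vee$ in the second case. Then $c^{-\mathsf T}$ has upper-left block equal to the inverse transpose of the upper-left block $c_{11}\in\SL_2$ of $c$. For $2\times2$ unimodular matrices, $c_{11}^{-\mathsf T}=J_2c_{11}J_2^{-1}$, which is linear in the entries of $c_{11}$. Hence the block of $\kappa^{-1}(b)$ is linear in $b$ in both cases, with rational coefficients. As a representation of $H$, $\rho$ factors through $H\to\SL_2(\mathbb R)$ followed by the standard action. The standard action of $\SL_2(\mathbb R)$ on $\mathbb R^2$ has no nonzero fixed vector (Lemma~\ref{app:affine-algebra}), so $\rho$ has none either.

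The main obstacle is this degree-one claim in the contragredient case, and the $J_2$ identity resolves it. Everything else reuses the covering argument already given.
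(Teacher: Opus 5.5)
\textbf{Gap in the congruence step.} The inclusion $\kappa(\Gamma_2(N)\ltimes\mathbb Z^2)\subseteq\Gamma$ does not follow from your argument. An element of $\Gamma_2(N)\ltimes\mathbb Z^2$ is $\left(\begin{smallmatrix}A&v\\0&1\end{smallmatrix}\right)$ with $A\equiv I_2\pmod N$, but the translation $v\in\mathbb Z^2$ is \emph{arbitrary}. The congruence condition never touches the translation part, so $\gamma\not\equiv I_3\pmod N$ in general. You therefore cannot write $\gamma=I_3+MX$ with $X$ integral. (Your integrality step would in any case need $I_3+M^2X$, since only $M\beta$ and $M\beta^{-1}$ are integral.) The contragredient case has the same defect: $\gamma^{-\mathsf T}$ has the arbitrary integral bottom row $-v^{\mathsf T}A^{-\mathsf T}$.

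With $\kappa$ a plain conjugation, the inclusion can genuinely fail for every $N$. For example, take $\beta=\diag(1,1,2)$. Then $\beta\mathbf A\beta^{-1}=\mathbf A$, but $\kappa(I_2,e_1)=I_3+\tfrac12E_{13}\notin\Gamma$. So Lemma~\ref{app:congruence} cannot be applied to $\Gamma_2(N)\ltimes\mathbb Z^2$ as it stands.

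\textbf{The fix.} The missing idea, used in the paper, is to change $\kappa$ by a rational automorphism of the source. First choose $N$ by Lemma~\ref{app:congruence} for the fixed map $b\mapsto\beta b\beta^{-1}$ (respectively $b\mapsto\beta b^{-\mathsf T}\beta^{-1}$). Then precompose with $(A,v)\mapsto(A,Nv)$. For $A\in\Gamma_2(N)$ and $v\in\mathbb Z^2$, the matrix $\left(\begin{smallmatrix}A&Nv\\0&1\end{smallmatrix}\right)$ is integral and congruent to $I_3$ modulo $N$, and so is its inverse transpose. Hence its image is integral.

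The translation scaling and its inverse are linear and leave the two-by-two block unchanged. Consequently the rest of your proposal goes through for the modified $\kappa$:
\begin{itemize}
\item the cofactor degree bound $2d$;
\item the identity $c_{11}^{-\mathsf T}=J_2c_{11}J_2^{-1}$ for $\rho$, which is the paper's linear two-by-two adjugate;
\item the fixed-vector argument;
\item the finite-index and Haar-probability argument via Lemma~\ref{app:lattice-covers}.
\end{itemize}
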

\begin{proof}
Start with $b\mapsto CbC^{-1}$ or
$b\mapsto Cb^{-\mathsf T}C^{-1}$, where
$C\in\mathrm{GL}_3(\mathbb Q)$ is fixed.
These maps and their inverses have matrix-entry degrees at most
two on $\SL_3$, since $b^{-1}=\operatorname{adj}(b)$ there.
By Lemma~\ref{app:congruence}, choose $N$ such that the chosen
map sends integral source matrices congruent to $I_3$ modulo $N$
to integral matrices.
Precompose with the rational automorphism $(A,v)\mapsto(A,Nv)$.
For $A\in\Gamma_2(N)$ and $v\in\mathbb Z^2$, the matrix
$\left(\begin{smallmatrix}A&Nv\\0&1\end{smallmatrix}\right)$
is integral and congruent to $I_3$ modulo $N$.
The resulting $\kappa$ therefore has the asserted inclusion;
the translation scaling and its inverse are linear, so the same
degree bounds hold.

The discrete group $\kappa^{-1}(\Gamma)$ contains
$\Gamma_2(N)\ltimes\mathbb Z^2$, which is a lattice by
Lemma~\ref{lem:affinequotient}. A discrete overgroup of
a lattice has finite index by Lemma~\ref{app:lattice-covers},
which also proves the covering and Haar-probability assertions.
In the dual case the top block of $C^{-1}bC$ is $A^{-\mathsf T}$;
recovering $A$ uses the two-by-two adjugate, which is linear.
This proves the assertion about $\rho$ in both cases.
Its image is $\SL_2$, so it has no nonzero fixed vector by
Lemma~\ref{app:affine-algebra}.
\end{proof}

The canonical estimates now imply a single intrinsic estimate
on every affine support. The only arithmetic step is the application
of Lemma~\ref{lem:intrinsictransfer} to two representations.

\begin{proposition}[Intrinsic estimate on an affine support]
\label{prop:affineintrinsic}
Let $\mathbf H$ be a rational conjugate of $\mathbf A$ or
$\mathbf A^\vee$, put $H=\mathbf H(\R)^\circ$, and suppose
$H\Gamma/\Gamma$ is closed with finite invariant volume.
Fix $a\in G$, $h\in H$, and a nonconstant normalized curve
$\Psi\in\mathbf H(\R[t])$ of degree at most $d$.
Put $J=\overline{\langle\Psi(\R)\rangle}$ and $Y=aH\Gamma/\Gamma$,
and denote by $\mu_Y$ the $aHa^{-1}$-invariant probability on $Y$.
If $(J,h)$ has intrinsic type $(c_0,\tau)$, there are
$s=s(\mathbf H,d)\ge1$ and $\vartheta_{\mathbf H,d}>0$ such that
\begin{equation}\label{eq:affineintrinsicestimate}
 \left|\frac1T\int_0^T f(a\Psi(t)h\Gamma)\dd t-\mu_Y(f)\right|
 \le C\norm f_{C_G^s}T^{-\vartheta_{\mathbf H,d}/(\tau+1)}
 \qquad(T\ge2).
\end{equation}
For each compact $\Omega\subset X$, the constant may be chosen as
\[
 C=C(\mathbf H,a,\Psi,h,c_0,\tau,\Omega,\mathscr L_{\mathbf H})>0,
\]
uniformly for $T\ge2$ and $f\in C_c^\infty(X)$ supported in $\Omega$.
\end{proposition}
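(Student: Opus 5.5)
We pull everything back to the model affine quotient through Lemma~\ref{lem:affinecover}. Let $\kappa:\mathbf A\to\mathbf H$ and $N$ be as there, and let $\pi_N:X_{\mathrm{aff}}\to H\Gamma/\Gamma$ be the induced finite cover. Lift $h$ to $g=\kappa^{-1}(h)=(B,w)\in A_{\mathrm{aff}}$ and put $\psi=\kappa^{-1}\circ\Psi$, a normalized curve of degree at most $2d$. Define $F=(f\circ L_a)\circ\pi_N$ on $X_{\mathrm{aff}}$. Then
\[
 f(a\Psi(t)h\Gamma)=F(\psi(t)g\Delta_{\mathrm{aff}}),
\]
and since $\pi_N$ preserves Haar probability, $m_{\mathrm{aff}}(F)=\mu_Y(f)$. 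The support of $f\circ L_a$ restricted to the closed orbit $H\Gamma/\Gamma$ is compact, and its preimage $K$ under the finite cover is compact in $X_{\mathrm{aff}}$. Derivatives of $F$ along the fixed basis of $\Lie(A_{\mathrm{aff}})$ are derivatives of $f$ along $\Ad(a)\,d\kappa$ of that basis. By \eqref{eq:intrinsicpointwise}, this gives $\mathcal D_s^{\mathrm{aff}}(F)\le C_{a,\kappa,s}\norm f_{C_G^s}$.

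Next we produce the coefficient hypotheses. For the two-dimensional test we apply Lemma~\ref{lem:intrinsictransfer} to $\rho(b)=$ the $2\times2$ block of $\kappa^{-1}(b)$. By Lemma~\ref{lem:affinecover} this representation has degree one, rational entries, and no fixed vector, with lattice $\Z^2$. The transfer gives, with exponent $\tau$,
\[
 \norm{\operatorname{coeff}_{>0}(P_2(t)Bz)}\ge c_2\norm z^{-\tau}.
\]
The three-dimensional test $\norm{\operatorname{coeff}_{>0}(\psi(t)gk)}\ge c_3\norm k^{-\tau}$ is needed only for block tangents. Here the lemma cannot be applied directly, because the standard affine representation fixes $e_3$ (resp.\ the dual covector). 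I would instead run the argument of Lemma~\ref{lem:intrinsictransfer} by hand. Use the coordinate functions $p_{i,k}(b)=\ell_i(\kappa^{-1}(b)k)-\ell_i(k)$, which have degree at most two and are integral after a fixed denominator. For $k$ with nonzero $\Z^2$-part $z$, some $p_{i,k}$ with $i\le2$ is nonzero. Its coefficient functions $B_\nu(p_{i,k})$ are bounded by the coefficient vectors of $\psi(t)gk$, so the intrinsic bound gives the desired inequality. For $k=(0,0,q)$, the coefficients of $\psi(t)gk$ equal $q$ times the coefficients of the translation part $v(t)$. These are nonzero exactly when the curve is not contained in a conjugate of $S_0$, and a fixed positive lower bound times $|q|\ge1$ suffices. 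If $v$ stays in a conjugate Levi, the orbit closure would be proper, contradicting $J$-generation of $H$; I would check this via the test functions $p_{3,k}$.

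Finally, we split on the leading tangent by Lemma~\ref{lem:regularaffinenormalization}. For a translation tangent, apply Proposition~\ref{prop:affinepolynomial}. For a block tangent, apply Proposition~\ref{prop:affineblockpolynomial}. For a regular tangent, apply Proposition~\ref{prop:regularaffinereduction}. Each is used with degree $D=2d$ and compact $K$. Take $s$ as the maximum of $6,\ell_{\Delta_2},s_N,\ell_N^{\mathrm r}$ and $\vartheta_{\mathbf H,d}=\min\{\eta_{\Delta_2,2d},\eta^{\mathrm b}_{N,2d},\eta^{\mathrm r}_{N,2d}\}$, both depending only on $\mathbf H,d$. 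The constants then depend on $\psi,g,c_2,c_3,\tau,K$, which are in turn determined by the listed data.

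The main obstacle is the three-dimensional test in the block case, because of the fixed vector. I expect to settle the $q$-direction by the degenerate-translation argument above. If that argument fails, the fallback is to observe that the proof of Proposition~\ref{prop:affineblockpolynomial} only uses growth of $M(t)k$ for $k$ with nonzero base part. The vectors $(0,0,q)$ have norm at least $1$ trivially, and it appears that nondivergence needs nothing more for them.
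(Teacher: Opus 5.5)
Your reduction through Lemma~\ref{lem:affinecover}, the derivative comparison, the three-way tangent split, and the choices of $s$ and $\vartheta_{\mathbf H,d}$ agree with the paper. The step you single out as the main obstacle, however, rests on a false premise. On column vectors $(A,v)e_3=(v,1)^{\mathsf T}$, so translations move $e_3$. A vector $(x,z)$ fixed by all of $A_{\mathrm{aff}}$ has $x=0$ by invariance under $(A,0)$, and then $z=0$ by invariance under $(I_2,v)$. What is fixed is the row covector $(0,0,1)$, i.e.\ a vector of the dual representation. Hence $\rho_3(b)=\kappa^{-1}(b)$ has no nonzero fixed vector, and its entries are rational of degree at most two on $H$. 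Lemma~\ref{lem:intrinsictransfer} therefore applies to it directly. It gives the three-dimensional test in \eqref{eq:affinetwotests}, with exponent $\tau$, for every nonzero $k\in\Z^3$, including $k=(0,0,q)$. This is exactly how the paper obtains it.

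Your replacement argument for $k=(0,0,q)$ does not go through as written. With $g=(B,w)$ the relevant polynomial is $q\bigl(P_2(t)w+v(t)\bigr)$, not $q\,v(t)$. Its nonconstancy cannot be deduced from density or ``$J$-generation of $H$'': neither is a hypothesis of the proposition, and density is a consequence of the estimate. Moreover $p_{3,k}$ vanishes identically, so it detects nothing. The functions that work are $p_{1,k},p_{2,k}$, which for $k=(0,0,q)$ are $q$ times the translation coordinates of $\kappa^{-1}(b)$. If $P_2(t)w+v(t)$ were constant, then $g^{-1}\kappa^{-1}(J)g\subset S_0$, so these functions would lie in $\mathscr K_{J,h}$, contradicting intrinsic type. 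That is just Lemma~\ref{lem:intrinsictransfer} again, with no case split.

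Your fallback is false. In the proof of Proposition~\ref{prop:affineblockpolynomial}, the last coordinate of $D_LM(t)(0,0,q)^{\mathsf T}$ is $L^{-2\sigma}q\to0$. So the nondivergence lower bound for these vectors needs growth of $P_2(t)w+v(t)$. Concretely, take $\psi=(P_2,0)$ and $g=(I_2,0)$, with a block tangent and $P_2$ satisfying the two-dimensional test. The trajectory then stays in the closed proper orbit $S_0\Delta_{\mathrm{aff}}$, and the block estimate fails.

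Finally, Proposition~\ref{prop:affineblockpolynomial} needs the tangent to be exactly $\varepsilon E_{12}$. In the block case you must first conjugate by a fixed affine element and carry both tests through Lemma~\ref{lem:coefficientnormalization}.
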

\begin{proof}
Use Lemma~\ref{lem:affinecover} and write $\Psi=\kappa\circ\psi$,
$h=\kappa(g)$, with $\deg\psi\le2d$.
The differential $d\kappa$ is a conjugation or a conjugation
composed with $X\mapsto-X^{\mathsf T}$; the translation scaling
in that lemma is also a conjugation. Thus the leading coefficient
$N_\psi$ of $\psi'\psi^{-1}$ is nonzero and nilpotent of the same singular or regular type.

The base representation $\rho_2$ from Lemma~\ref{lem:affinecover}
has degree one. The three-dimensional representation
$\rho_3(b)=\kappa^{-1}(b)$ has rational matrix entries of degree
at most two. Neither representation has a nonzero fixed vector:
the base image is $\SL_2$, while the affine standard representation
has none because the translations move $e_3$ and $\SL_2$ fixes
no vector in its first two coordinates.
Lemma~\ref{lem:intrinsictransfer} applied to both representations
gives exactly \eqref{eq:affinetwotests}, with the original $\tau$.
These tests therefore impose no new arithmetic hypothesis.

Lemma~\ref{lem:regularaffinenormalization} gives exactly three
tangent cases. Proposition~\ref{prop:regularaffinereduction}
treats the regular case using the base test alone, and
Proposition~\ref{prop:affinepolynomial} treats the translation case.
In the remaining case, a fixed affine conjugation puts the tangent
in the form $\varepsilon E_{12}$.
By Lemma~\ref{lem:coefficientnormalization}, both coefficient
tests hold after this conjugation with the same exponent $\tau$.
Proposition~\ref{prop:affineblockpolynomial} therefore applies
with $D=2d$.

In all three cases use
$F(b\Delta_{\mathrm{aff}})=f(a\kappa(b)\Gamma)$.
The finite covering of the closed support is proper, so fixed
compact support for $f$ gives compact support for $F$.
Its derivative norms, including after the fixed tangent
normalization, are bounded by a constant times $\norm f_{C_G^s}$.
Haar probability is preserved by the covering.
With $N$ now denoting its fixed congruence level, one may take
\[
 \vartheta_{\mathbf H,d}
   =\min\{\eta_{\Gamma_2(N),2d},\eta_{N,2d}^{\mathrm b},\eta_{N,2d}^{\mathrm r}\},
 \qquad
 s=\max\{6,\ell_{\Gamma_2(N)},s_N,\ell_N^{\mathrm r}\}.
\]
This proves the assertion. The normalization affects the constant,
not this exponent or derivative order.
\end{proof}

\section{Proof of the regular affine flow estimate}
\label{sec:regularaffine}
\label{subsec:generalpointwisecancellation}

We prove Proposition~\ref{prop:regularaffineflow}.
The zero fiber Fourier mode is the base horocycle average and is
handled by Theorem~\ref{thm:coverinput}. It remains to bound the
sum of the nonzero fiber modes uniformly in the translation $v\in\R^2$.

Retain $X_{\mathrm{aff}}=A_{\mathrm{aff}}/\Delta_{\mathrm{aff}}$,
$\Delta_2=\Gamma_2(N)$, $\overline F$, and
$\mathcal D_s^{\mathrm{aff}}$ from Section~\ref{sec:leviaffine}.
Put $N_{\mathrm r}=E_{12}+E_{23}$,
$a_2(L)=\diag(\sqrt L,1/\sqrt L)$, and
$\mathbb T^2=\mathbb R^2/\mathbb Z^2$ with Haar probability.
Throughout this section $F$ is smooth and compactly supported.
Constants in the flow and chart estimates may depend on the fixed
congruence level $N$; the arithmetic lemmas state their dependence
separately. We use two Fourier indices. The original fiber frequency
is $k\in\Z^2$, which we write as $m$ times a primitive frequency,
with $m\ge1$. The second index $n\in\Z$ comes from a periodic
coordinate in a congruence chart. In particular, $n=0$ is part of
the contribution from $k\ne0$.

Frequency localization first restricts the original Fourier sum.
The congruence charts then express each retained term using a real
phase and a modular inverse. For $n\ne0$, four differences reduce
the arithmetic estimate to complete rational exponential sums
(Section~\ref{subsec:nonzeroinverse}). For $n=0$, derivative estimates
and a quadratic estimate averaged over the moduli give the required
cancellation (Section~\ref{subsec:zeroinverse}). The normalization
and summation of these bounds are carried out in
Section~\ref{subsec:regularflowassembly}.

\subsection{Fourier reduction and the cancellation estimate}

Write $e(x)=\exp(2\pi i x)$ and
\[
 \widehat F_A(k)=\int_{\mathbb T^2}
        F((A,A\eta)\Delta_{\mathrm{aff}})e(-k\cdot\eta)\dd\eta.
\]
For $\xi=A^{-\mathsf T}k$ put
\begin{equation}\label{eq:regularfouriercoefficient}
 c_k(L,A;F)=\frac1L\int_{-L/2}^{L/2}
   \widehat F_{u_2(s)A}(k)
       e\left(-\frac{\xi_1s^2}{2}+\xi_2s\right)\dd s.
\end{equation}
Indeed,
\[
 \exp(sN_{\mathrm r})=(u_2(s),(s^2/2,s)),\qquad
 (u_2(s)A)^{-1}\bigl(u_2(s)v+(s^2/2,s)\bigr)
    =A^{-1}v+A^{-1}(-s^2/2,s).
\]
Consequently the regular-flow average minus its zero fiber mode is
$\mathcal R_{L,A}F(A^{-1}v)$, where
\begin{equation}\label{eq:fiberremainder}
 \mathcal R_{L,A}F(\eta)=\sum_{k\ne0}c_k(L,A;F)e(k\cdot\eta).
\end{equation}
The omitted mode is
$L^{-1}\int_{-L/2}^{L/2}\overline F(u_2(s)A\Delta_2)\dd s$.
Integration by parts with translation derivatives gives, for
every nonnegative integer $h$,
\begin{equation}\label{eq:fiberfourierdecay}
 |\widehat F_A(k)|
 \le C_h\mathcal D_h^{\mathrm{aff}}(F)
                  (1+\|A^{-\mathsf T}k\|)^{-h}.
\end{equation}
For example, even orders follow from powers of
$1-D_{E_{13}}^2-D_{E_{23}}^2$; directional integration by parts
gives every integer order. For fixed $L,A$ this proves absolute
convergence uniformly on the compact averaging interval.

For $L\ge2$ and $A\in G_2$, define the normalized cusp height
\begin{equation}\label{eq:regularaffineheight}
 \mathcal H_L(A)=
 \max\{1,\latticemin(a_2(L)^{-1}A\mathbb Z^2)^{-1}\}.
\end{equation}
The required cancellation estimate is as follows.

\begin{proposition}
\label{prop:generalregularpointwise}
Use $\epsilon_0$ from \eqref{eq:regularaffineflowconstants}.
For each congruence level $N$
and compact $K\subset X_{\mathrm{aff}}$, there are
constants $C_{N,K},L_{N,K}\ge1$ such that
\[
 \mathcal H_L(A)\le L^{\epsilon_0/8},\qquad L\ge L_{N,K},
 \qquad \operatorname{supp}F\subset K
\]
imply
\[
 \sup_{\eta\in\mathbb T^2}
       |\mathcal R_{L,A}F(\eta)|
 \le C_{N,K}\mathcal D_{64}^{\mathrm{aff}}(F)
                         L^{-\epsilon_0/2}.
\]
The derivative order and decay exponent are independent of $A,L,K,F$.
\end{proposition}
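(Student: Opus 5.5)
We bound $\sup_\eta|\mathcal R_{L,A}F(\eta)|\le\sum_{k\ne0}|c_k(L,A;F)|$, so it suffices to estimate $\sum_{k\ne0}|c_k|$ uniformly in $\eta$. The plan is to localize frequencies, rewrite each retained coefficient as an oscillatory integral with a quadratic phase, and extract cancellation either from the phase alone or, when the phase is nearly stationary, from arithmetic averaging over $k$.

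\textbf{Step 1: frequency localization.} By \eqref{eq:fiberfourierdecay} with $h=64$ and $\xi(s)=(u_2(s)A)^{-\mathsf T}k$, each integrand is bounded by $\mathcal D_{64}^{\mathrm{aff}}(F)(1+\norm{\xi(s)})^{-64}$. The hypothesis $\mathcal H_L(A)\le L^{\epsilon_0/8}$, combined with compact support in $K$ (which bounds the base point $A\Delta_2$ in a fixed thick set up to the cusp height), controls $\norm{A^{-\mathsf T}}$ by a small power of $L$. A disjoint-disk count, as in Lemma~\ref{lem:affinefiber}, then shows that frequencies with $\sup_{|s|\le L/2}\norm{\xi(s)}\ge L^{\epsilon_0}$ contribute at most $L^{-10}$. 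The retained $k$ number $O(L^{C\epsilon_0})$, and all satisfy $\norm k\le L^{1+C\epsilon_0}$.

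\textbf{Step 2: phase analysis for each retained $k$.} Since $u_2(s)^{-\mathsf T}$ fixes the second coordinate direction, we have $\xi_1(s)=\xi_1$ and $\xi_2(s)=\xi_2-s\xi_1$. The full phase in \eqref{eq:regularfouriercoefficient} is therefore the quadratic $-\xi_1s^2/2+\xi_2s$, multiplied by the smooth amplitude $\widehat F_{u_2(s)A}(k)$. The amplitude varies in $s$ only through its $s$-derivatives, which cost one more derivative of $F$ each.
\begin{enumerate}
\item If $|\xi_1|\ge L^{-2+3\epsilon_0}$, the van der Corput second-derivative test gives $|c_k|\ll (L^2|\xi_1|)^{-1/2}\cdot(\text{amplitude variation})\le L^{-\epsilon_0}$. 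Summing over the $O(L^{C\epsilon_0})$ retained modes is acceptable only if $C\epsilon_0$ is small relative to this gain, so the exponents must be tracked with that in mind.
\item If $|\xi_1|$ is smaller, the phase is essentially linear, and the first-derivative test handles the case $|\xi_2|\ge L^{-1+2\epsilon_0}$.
\end{enumerate}

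\textbf{Step 3: the near-stationary frequencies.} The main obstacle is the set of $k$ with $|\xi_1|\le L^{-2+3\epsilon_0}$ and $|\xi_2|\le L^{-1+2\epsilon_0}$, where the phase gives no cancellation. There I would use the arithmetic of the base lattice. The second coordinate of $\xi$ corresponds to a vector of $a_2(L)^{-1}A^{-\mathsf T}\Z^2$ of length about $L^{-1/2+2\epsilon_0}$. By the cusp-height bound, which also controls the dual lattice, there are only $O(1)$ such primitive directions, with multiples $m\ge1$. This is where the congruence charts enter: the primitive frequency $k_0$ is written through a $\Gamma_2(N)$-coordinate chart in which the amplitude $\widehat F_{u_2(s)A}(mk_0)$ becomes periodic in a chart variable. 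Expanding in the second index $n$ produces a modular inverse in the phase. For $n\ne0$, I would apply four Weyl differencing steps to reach complete rational exponential sums and bound them by the Weil-type estimate. For $n=0$, I would combine derivative tests with a quadratic large-sieve or mean-value estimate averaged over the moduli $m$.

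I expect this arithmetic step to be the hardest, because one must extract a uniform $L^{-\epsilon_0/2}$ saving while losing only $L^{O(\epsilon_0)}$ from the counts. The tiny choice $\epsilon_0=10^{-12}$ is what absorbs all these losses.

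Finally, the three regimes are collected: each contributes $\ll C_{N,K}\mathcal D_{64}^{\mathrm{aff}}(F)L^{-\epsilon_0/2}$ once $L\ge L_{N,K}$, which is the claimed bound.
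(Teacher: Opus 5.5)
Your proposal has a genuine gap, and it is already in the first line. The reduction $\sup_\eta|\mathcal R_{L,A}F(\eta)|\le\sum_{k\ne0}|c_k(L,A;F)|$ discards exactly the cancellation the proposition needs. By Lemma~\ref{lem:shearmajorant} the absolute sum is only $O(\mathcal H_L(A)^2)$, and for generic $F$ it does not tend to zero.

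The reason is visible in your own Step~2. The derivative of the phase $-\xi_1s^2/2+\xi_2s$ is $\xi_2-s\xi_1$, which is precisely the second coordinate of the physical frequency $u_2(s)^{-\mathsf T}\xi$ that controls the amplitude $\widehat F_{u_2(s)A}(k)$. So the amplitude is concentrated at the stationary point. Consider the frequencies with $\norm{w_k}\asymp\sqrt L$, that is, $|\xi_1|\asymp1$ and $\xi_2/\xi_1\in[-L/2,L/2]$; these are the window and cone of Lemma~\ref{lem:regularfrequencywindow}. For each of them $c_k$ is a Fresnel-type integral of size $\asymp L^{-1}$. The second-derivative test gives $\ll L^{-1}|\xi_1|^{-1/2}\asymp L^{-1}$, and no per-mode bound can do better.

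There are $\asymp L$ such $k$: they are the points of a unimodular lattice with controlled cusp height lying in a sector of radius $\asymp\sqrt L$. So your count ``the retained $k$ number $O(L^{C\epsilon_0})$'' is false; the retained set has $L^{1+O(\epsilon_0)}$ elements. Step~2(1) therefore cannot close, since summing $L^{-1}$ over $L$ modes gives no decay, and no choice of $\epsilon_0$ absorbs a loss of $L$.

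Two further problems in Step~1. With $\sup_s\norm{\xi(s)}$ as the discarding criterion, you would throw away exactly these bulk modes, because for them $\sup_s\norm{\xi(s)}\asymp L$. The hypothesis also does not control $\norm{A^{-\mathsf T}}$: take $A=a_2(L)$. Only $a_2(L)^{-1}A$ modulo $\Gamma_2$ is controlled, which is why one reduces to $B_0=a_2(L)^{-1}A\gamma_{\mathrm{red}}$ with $\norm{B_0}+\norm{B_0^{-1}}\ll\mathcal H_L(A)$.

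Conversely, the regime you call the main obstacle is empty. The conditions $|\xi_1|\le L^{-2+3\epsilon_0}$ and $|\xi_2|\le L^{-1+2\epsilon_0}$ give $\norm{w_k}\ll L^{-3/2+3\epsilon_0}$. But every nonzero vector of the lattice formed by the $w_k$ has length at least $\mathcal H_L(A)^{-1}\ge L^{-\epsilon_0/8}$.

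The arithmetic tools you list are the right ones: congruence charts, periodicity in the chart variable, Fourier expansion in $n$, fourfold differencing with the Weil bound for $n\ne0$, and derivative tests with mean values of quadratic sums for $n=0$. They must, however, be aimed at the bulk, and they are incompatible with taking absolute values in $k$.

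Concretely, keep the phases $e(k\cdot\eta)$ and the stationary values $\xi_2^2/(2\xi_1)$. Write $k=m(c,-q)$ in a chart, so that each retained term carries the phase \eqref{eq:congruenceinversephase} times a weight of controlled variation. Then extract cancellation from the sum over $q$ for each modulus $c$ with $|c|=L^{1/2+O(\epsilon_0)}$. For $n\ne0$ use Corollary~\ref{cor:fourdiffuniformrange} together with Lemma~\ref{lem:squarefulldiscard} for the exceptional moduli. For $n=0$ use Lemma~\ref{lem:movingzeroinverse}, which averages over $c$. The averaging is over these moduli $c$, not over the multiplicity $m$; that multiplicity is truncated at a small power of $L$ and summed trivially.
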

Its proof is completed in Section~\ref{subsec:regularflowassembly}.
The intervening lemmas provide the localization and arithmetic bounds
used there.

\subsection{Frequency localization}

\begin{lemma}\label{lem:shearmajorant}
Let $L\ge2$, $A\in G_2$, and put $H=\mathcal H_L(A)$
as in \eqref{eq:regularaffineheight}.
For $0\ne k\in\Z^2$, put
$\xi=A^{-\mathsf T}k$ and
$w=(p,q)=(\sqrt L\,\xi_1,\xi_2/\sqrt L)$, and define
\[
 b(w)=\int_{-1/2}^{1/2}
       \left(1+\sqrt{p^2/L+L(q-u p)^2}\right)^{-4}\dd u.
\]
For the coefficient $c_k(L,A;F)$ of
\eqref{eq:regularfouriercoefficient},
$|c_k|\le C\mathcal D_4^{\mathrm{aff}}(F)b(w)$, and
\begin{equation}\label{eq:shearweightbound}
 b(w)\le C(1+\sqrt L\,\|w\|)^{-1}
                       (1+\|w\|/\sqrt L)^{-3},
 \qquad \sum_{k\ne0}b(w_k)\le CH^2.
\end{equation}
Here $C>0$ is absolute.
\end{lemma}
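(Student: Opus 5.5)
The proof has three steps: bound $c_k$ by the majorant $b(w)$, estimate $b(w)$ by splitting the $u$-integral at the near-zero of $q-up$, and sum over the lattice $\{w_k\}$ using the cusp height.

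\emph{Step 1: the bound $|c_k|\le C\mathcal D_4^{\mathrm{aff}}(F)b(w)$.} Discard the oscillating phase in \eqref{eq:regularfouriercoefficient} and bound each integrand by \eqref{eq:fiberfourierdecay} with $h=4$ at the matrix $u_2(s)A$. The relevant frequency is
\[
 (u_2(s)A)^{-\mathsf T}k=u_2(s)^{-\mathsf T}\xi=(\xi_1,\xi_2-s\xi_1).
\]
Substitute $s=Lu$ with $u\in[-1/2,1/2]$ and write $\xi_1=p/\sqrt L$ and $\xi_2=\sqrt L\,q$. Then $\xi_2-s\xi_1=\sqrt L(q-up)$, so the norm is $\sqrt{p^2/L+L(q-up)^2}$. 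This gives the integrand of $b(w)$ exactly, and the factor $1/L$ is absorbed by $ds=L\,du$.

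\emph{Step 2: the pointwise bound for $b(w)$.} Split the $u$-integral according to the size of $|q-up|$.
\begin{itemize}
\item If $|p|\gtrsim|q|$, the function $q-up$ is linear in $u$ with slope $p$. Substituting $r=\sqrt L(q-up)$ gives $du=dr/(\sqrt L|p|)$. The integrand is at most $(1+|p|/\sqrt L+|r|)^{-4}$, and integrating in $r$ yields
\[
 \ll(\sqrt L|p|)^{-1}(1+|p|/\sqrt L)^{-3}.
\]
Combined with the trivial bound $b\le1$, this is $\ll(1+\sqrt L\|w\|)^{-1}(1+\|w\|/\sqrt L)^{-3}$, since $\|w\|\asymp|p|$ in this regime.
\item If $|q|\ge2|p|$, then $|q-up|\ge|q|/2\gtrsim\|w\|$ for every $|u|\le1/2$. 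The integrand is then at most $(1+\sqrt L\|w\|)^{-4}$, which is smaller than the claimed bound.
\end{itemize}

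\emph{Step 3: the sum over $k$.} The vectors $w_k$ range over the nonzero points of the unimodular lattice
\[
 \Lambda=a_2(L)A^{-\mathsf T}\Z^2=J_2\,a_2(L)^{-1}A\Z^2
\]
(up to the orthogonal $J_2$, as in the proof of Proposition~\ref{prop:affinepolynomial}). Hence $\latticemin(\Lambda)=\latticemin(a_2(L)^{-1}A\Z^2)\ge H^{-1}$. Disjoint disks of radius $H^{-1}/2$ give
\[
 \#\{w\in\Lambda:\|w\|\le r\}\ll1+H^2r^2 .
\]
Sum the majorant over dyadic shells $\|w\|\sim r$ with $r\ge H^{-1}$:
\begin{itemize}
\item for $H^{-1}\le r\le\sqrt L$, each shell contributes $\ll H^2r^2(\sqrt Lr)^{-1}=H^2r/\sqrt L$, and the geometric sum is $\ll H^2$;
\item for $r\ge\sqrt L$, each shell contributes $\ll H^2r^2(\sqrt Lr)^{-1}(r/\sqrt L)^{-3}=H^2L\,r^{-2}$, which sums to $\ll H^2$.
\end{itemize}

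The step that needs most care is Step 2 in the regime of small $|p|$, where the change of variables degenerates. The remedy is to switch to the trivial bound $b\le1$ exactly when $\sqrt L\|w\|\lesssim1$. If $\sqrt L|p|$ is small but $|q|$ is not, we are in the second case, and the bound $|q-up|\ge|q|/2$ holds uniformly. All constants are absolute, as claimed.
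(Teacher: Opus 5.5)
Your proposal is correct and follows the paper's proof essentially step for step. Both arguments bound $c_k$ by applying \eqref{eq:fiberfourierdecay} at $u_2(s)A$. Both split into $|q|\lesssim|p|$ (the substitution $x=\sqrt L(q-up)$ together with the trivial bound $b\le1$) and $|q|\gtrsim|p|$, where $|q-up|\ge|q|/2$. Both sum over dyadic shells, using that $\Lambda=a_2(L)A^{-\mathsf T}\Z^2$ is a right-angle rotation of $a_2(L)^{-1}A\Z^2$, so $\latticemin(\Lambda)\ge H^{-1}$.

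The only differences are cosmetic: you use the threshold $2|p|$ instead of $|p|$, and you merge the paper's shells $r\le L^{-1/2}$ and $L^{-1/2}<r\le L^{1/2}$ into one range. That merger is harmless because the geometric sum is dominated by its largest term.
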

\begin{proof}
The coefficient bound follows from \eqref{eq:fiberfourierdecay}.
The vectors $w_k$ form the nonzero vectors of
$\Lambda=(a_2(L)^{-1}A)^{-\mathsf T}\mathbb Z^2$.
For a unimodular planar lattice the dual is a rotation by a right
angle. Thus $\lambda=\latticemin(\Lambda)$ also equals
$\latticemin(a_2(L)^{-1}A\Z^2)$.
Disjoint disks of radius $\lambda/2$ give
\[
 \#\{w\in\Lambda:0<\|w\|\le R\}\le C(R/\lambda)^2
 \quad(R\ge\lambda),\qquad \lambda^{-2}\le H^2.
\]
Write $r=\|w\|$. If $|q|\le|p|$, then $r\asymp|p|$ and the
substitution $x=\sqrt L(q-up)$ gives
\[
 b(w)\le \frac{C}{\sqrt L\,|p|}
                         (1+|p|/\sqrt L)^{-3}.
\]
For $\sqrt L\,|p|<1$ use $b(w)\le1$.
If $|q|>|p|$, then $|q-up|\ge|q|/2$ for $|u|\le1/2$, and
$b(w)\le C(1+\sqrt L\,r)^{-4}$.
These estimates prove the radial bound.
Summing it over dyadic shells, the ranges
$r\le L^{-1/2}$, $L^{-1/2}<r\le L^{1/2}$, and
$r>L^{1/2}$ contribute respectively at most
$CH^2/L$, $CH^2$, and $CH^2$.
\end{proof}

\begin{lemma}
\label{lem:regularfrequencywindow}
Keep $L,A,H$ as in Lemma~\ref{lem:shearmajorant}, and let
$c_k=c_k(L,A;F)$ be defined by
\eqref{eq:regularfouriercoefficient}.  For $0<\varepsilon<1/4$
and $w_k=(p_k,q_k)=(\sqrt L\,\xi_1,\xi_2/\sqrt L)$,
$\xi=A^{-\mathsf T}k$, put
\[
 \mathcal W_{L,\varepsilon}(A)=
 \left\{k\ne0:
 L^{1/2-\varepsilon}<\|w_k\|<L^{1/2+\varepsilon},\quad
 |q_k|\le(1/2+L^{-\varepsilon})|p_k|\right\}.
\]
Then, uniformly in $\eta\in\mathbb T^2$,
\begin{equation}\label{eq:regularfrequencywindow}
 \begin{split}
 &\left|\mathcal R_{L,A}F(\eta)
       -\sum_{k\in\mathcal W_{L,\varepsilon}(A)}c_k e(k\cdot\eta)\right|\\
 &\qquad\le C\mathcal D_4^{\mathrm{aff}}(F)
       \left(H^2L^{-\varepsilon}+H^2L^{-2\varepsilon}
                         +H^4L^{-2+4\varepsilon}\right).
 \end{split}
\end{equation}
The constant is absolute.  Every retained mode satisfies
\[
 \tfrac12 L^{-\varepsilon}<|\xi_1|<L^\varepsilon,
 \qquad
 \left|\frac{\xi_2}{\xi_1}\right|
                       \le L/2+L^{1-\varepsilon}.
\]
Thus its quadratic phase has its stationary point in the displayed
enlargement of the averaging interval.
\end{lemma}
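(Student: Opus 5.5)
The plan is to split $\mathcal R_{L,A}F=\sum_{k\ne0}c_ke(k\cdot\eta)$ into the retained window $\mathcal W_{L,\varepsilon}(A)$ and three discarded regions, and to bound each discarded region by summing the majorant $b(w_k)$ of Lemma~\ref{lem:shearmajorant}. The bound $|c_k|\le C\mathcal D_4^{\mathrm{aff}}(F)b(w_k)$ holds for every $k$, so no cancellation is used outside the window. The counting input is the one already available from that lemma. The vectors $w_k$ are the nonzero points of a unimodular lattice $\Lambda$ with $\latticemin(\Lambda)^{-1}\le H$. Hence $\#\{w\in\Lambda:0<\|w\|\le R\}\le CH^2R^2$ for $R\ge\latticemin(\Lambda)$, and there are no points below $\latticemin(\Lambda)$.

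The discarded regions are handled as follows.

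\emph{Small frequencies}, $\|w\|\le L^{1/2-\varepsilon}$. Use the radial bound $b(w)\le C(1+\sqrt L\|w\|)^{-1}(1+\|w\|/\sqrt L)^{-3}$. For $\|w\|\le L^{-1/2}$ the count gives $CH^2L^{-1}$. For $L^{-1/2}<r\le L^{1/2-\varepsilon}$, a dyadic shell of radius $r$ contributes $CH^2r^2(\sqrt Lr)^{-1}=CH^2r/\sqrt L$. Summing over shells gives $CH^2L^{-\varepsilon}$.

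\emph{Large frequencies}, $\|w\|\ge L^{1/2+\varepsilon}$. Here the bound is $b\le C(\sqrt Lr)^{-1}(r/\sqrt L)^{-3}=CL\,r^{-4}$. A shell therefore contributes $CH^2Lr^{-2}$. Summing from $r=L^{1/2+\varepsilon}$ gives $CH^2L^{-2\varepsilon}$.

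\emph{Steep directions inside the annulus}, $|q|>(1/2+L^{-\varepsilon})|p|$. I would refine the second case of the proof of Lemma~\ref{lem:shearmajorant}. For $|u|\le1/2$ one has $|q-up|\ge|q|-|p|/2\ge L^{-\varepsilon}|p|$, and also $|q-up|\ge c|q|\cdot L^{-\varepsilon}$. Split this region further:
\begin{itemize}
\item If $|q|\ge 2|p|$, then $b\le C(1+\sqrt L r)^{-4}$, which is negligible in the annulus.
\item Otherwise $|q|\asymp|p|\asymp r$, and the integrand is at most $(1+\sqrt L\,L^{-\varepsilon}r)^{-4}$. This gives $b\le C(L^{1/2-\varepsilon}r)^{-4}$.
\end{itemize}
With $r\ge L^{1/2-\varepsilon}$ this is at most $CL^{-4+8\varepsilon}r^{-4}\cdot r^{4}$ scaled appropriately. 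Summing over the annulus against the count $H^2r^2$ should then yield a term dominated by $H^4L^{-2+4\varepsilon}$, after possibly also using $r\ge\latticemin\ge H^{-1}$. I expect the exact bookkeeping here, in particular how the $H^4$ power arises from small-$r$ lattice points against $L^{1-2\varepsilon}r^2$, to be the main obstacle. If the clean bound fails, I would first retry with $b\le(1+L^{1-2\varepsilon}r^2)^{-1}$-type estimates and Cauchy--Schwarz on the count.

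Combining the three bounds gives \eqref{eq:regularfrequencywindow} uniformly in $\eta$, since the phases have modulus one.

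It remains to check the two properties of retained modes. First, $|q|\le(1/2+L^{-\varepsilon})|p|$ gives $\|w\|\le C|p|$ and $|p|\ge\|w\|/\sqrt{1+(1/2+L^{-\varepsilon})^2}>\|w\|/2$. Since $\xi_1=p/\sqrt L$, the annulus bounds give $\tfrac12L^{-\varepsilon}<|\xi_1|<L^{\varepsilon}$. Second, $\xi_2/\xi_1=Lq/p$, so $|\xi_2/\xi_1|\le L(1/2+L^{-\varepsilon})=L/2+L^{1-\varepsilon}$. The phase $-\xi_1s^2/2+\xi_2s$ has its stationary point at $s=\xi_2/\xi_1$, so this is exactly the statement that the stationary point lies in the enlarged interval $|s|\le L/2+L^{1-\varepsilon}$.
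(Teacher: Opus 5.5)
Your proof is correct and follows the paper's argument: the majorant $b(w_k)$ from Lemma~\ref{lem:shearmajorant}, the dyadic lattice count $C(r/\lambda)^2\le CH^2r^2$, and the discarded inner, outer and steep regions. Your small- and large-frequency estimates and your check of the retained-mode properties match the paper's.

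The one difference is that you intersect the steep cone with the annulus. The step you flag as the main obstacle then closes at once. Replace the line ``$CL^{-4+8\varepsilon}r^{-4}\cdot r^4$ scaled appropriately'' by the following computation.

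On the whole region $|q|>(1/2+L^{-\varepsilon})|p|$ you have $|q-up|\ge cL^{-\varepsilon}\|w\|$ for $|u|\le1/2$, so the split at $|q|=2|p|$ is unnecessary. Hence $b(w)\le C(L^{1/2-\varepsilon}\|w\|)^{-4}$ there. Each dyadic shell of radius $r\ge L^{1/2-\varepsilon}$ contains at most $CH^2r^2$ points, so
\[
 \sum_{\substack{\|w\|\ge L^{1/2-\varepsilon}\\ |q|>(1/2+L^{-\varepsilon})|p|}}b(w)
 \le CH^2L^{-2+4\varepsilon}\sum_{r\ge L^{1/2-\varepsilon}}r^{-2}
 \le CH^2L^{-3+6\varepsilon}.
\]
This is at most $CH^4L^{-2+4\varepsilon}$, and even at most $CH^2L^{-2\varepsilon}$ since $\varepsilon<1/4$. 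The appeal to $r\ge\latticemin\ge H^{-1}$ is not needed.

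Your version never produces small-$r$ lattice points, so the factor $H^4$ never arises. In the paper it appears because the cone majorant $C\delta^{-4}(1+\sqrt L\|w\|)^{-4}$ is summed over all radii starting at the shortest vector, which gives $CL^{-2}\lambda^{-4}\le CL^{-2}H^4$. Your route is therefore marginally sharper in the third term, while the paper's avoids intersecting regions.
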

\begin{proof}
The proof of Lemma~\ref{lem:shearmajorant} gives
$|c_k|\le C\mathcal D_4^{\mathrm{aff}}(F)b(w_k)$.
Use the lattice $\Lambda$ from that proof and put $\lambda=\latticemin(\Lambda)$.
Dyadic shells, using $\lambda^{-2}\le H^2$, show that for
$0<R\le\sqrt L$,
\[
 \sum_{0<\|w\|\le R}b(w)
 \le C H^2\sum_{r\le R}\frac{r^2}{1+\sqrt L\,r}
 \le C H^2 R/\sqrt L.
\]
Here and below $r$ ranges over geometric shell radii, so the last
sum is bounded by a constant times its largest summand.  If
$R<\lambda$ the sum is empty.  For $R\ge\sqrt L$, the radial
bound gives $b(w)\le CL\|w\|^{-4}$, hence
\[
 \sum_{\|w\|\ge R}b(w)\le C H^2 L R^{-2}.
\]
These prove the first two error terms with
$R=L^{1/2-\varepsilon}$ and $R=L^{1/2+\varepsilon}$, respectively.

For $0<\delta\le1$, if $|q|>(1/2+\delta)|p|$, then for every
$|u|\le1/2$ one has
$|q-up|\ge c\delta\|(p,q)\|$ with an absolute $c>0$.
Therefore
\[
 b(w)\le C\delta^{-4}(1+\sqrt L\,\|w\|)^{-4}.
\]
Summing from the shortest vector in geometric shells gives
\[
 \sum_{|q|>(1/2+\delta)|p|}b(w)
 \le C\delta^{-4}L^{-2}\lambda^{-4}
 \le C\delta^{-4}H^4L^{-2}.
\]
Take $\delta=L^{-\varepsilon}$ and use the triangle inequality.
Finally the cone condition implies $\|w\|<2|p|$ and
$|q/p|\le1/2+L^{-\varepsilon}$; together with the radial window
these give the last two assertions.
\end{proof}

\subsection{Congruence charts and the inverse phase}

\begin{lemma}
\label{lem:congruenceprimitivecharts}
Fix $N\ge1$ and put
$\Delta_N=\Gamma_2(N)\ltimes\mathbb Z^2$ and
$J_0=\left(\begin{smallmatrix}0&1\\-1&0\end{smallmatrix}\right)$.
The nonzero fiber contribution on $A_{\mathrm{aff}}/\Delta_N$
is a sum of finitely many expressions of the following form:
\begin{equation}\label{eq:congruencechartsum}
 \sum_{m\ge1}\ \sum_{\substack{(q,c)\in\mathbb Z^2\ \mathrm{primitive}\\
                        q\equiv1\ (N),\ c\equiv0\ (N)}}
 \chi(q,c)\,c^{G}_{m(c,-q)}(L,B)\,
                           e\bigl(m(c\theta_1-q\theta_2)\bigr).
\end{equation}
Here $B=A\tau$, $\theta=\tau^{-1}\eta$, and
$G=F\circ R_{(\tau^{-1},0)}$ for a fixed
$\tau\in\SL_2(\mathbb Z)$ depending only on the chart and $N$.
The function $\chi$ is smooth away from the origin, homogeneous
of degree zero, with derivatives controlled in terms of $N$.
It is supported where $|q|\le |c|$, so $c\ne0$.
Here $c_k^G(L,B)=c_k(L,B;G)$ is the coefficient defined in
\eqref{eq:regularfouriercoefficient}.
The number of charts and all chart constants depend only on $N$.
Right deck maps commute with left-action derivatives, so
$\mathcal D_s^{\mathrm{aff}}(G)=\mathcal D_s^{\mathrm{aff}}(F)$.
Their supports range over finitely many fixed compact sets when
$\operatorname{supp}F$ is fixed.  Moreover
$\mathcal H_L(B)=\mathcal H_L(A)$.
\end{lemma}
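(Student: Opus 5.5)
The plan is to organize the Fourier sum \eqref{eq:fiberremainder} by primitive directions and then cut the space of primitive directions into finitely many cusp regions, one for each cusp of $\Gamma_2(N)$.

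\emph{Step 1: split by multiplicity.} Every nonzero $k\in\Z^2$ can be written uniquely as $k=mk_0$, with $m\ge1$ and $k_0$ primitive. So the remainder is
\[
 \mathcal R_{L,A}F(\eta)=\sum_{m\ge1}\sum_{k_0}c_{mk_0}(L,A;F)\,e(mk_0\cdot\eta).
\]
The primitive vectors correspond to $\SL_2(\Z)$ acting on primitive vectors, equivalently to the cosets $\SL_2(\Z)/\Gamma_\infty$. Reducing modulo $N$, they fall into the finitely many orbits of $\Gamma_2(N)$. Each orbit is the set of primitive vectors congruent modulo $N$ to a fixed primitive residue. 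I will choose $\tau_j\in\SL_2(\Z)$ carrying the standard residue $(0,-1)$ modulo $N$ to each class. In the $\tau$-coordinates the class becomes $\{(c,-q):q\equiv1,\ c\equiv0\ (N)\}$.

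\emph{Step 2: change of variables.} Put $B=A\tau$, $\theta=\tau^{-1}\eta$, and $G=F\circ R_{(\tau^{-1},0)}$. Then I will check directly that
\[
 \widehat F_A(k)=\widehat G_{B}(\tau^{\mathsf T}k).
\]
The point is that $(A,A\eta')$ equals $(B,B\tau^{-1}\eta')$ times $(\tau^{-1},0)$. The same identity passes through $c_k$, because $u_2(s)A\tau=u_2(s)B$ and $\xi=A^{-\mathsf T}k=B^{-\mathsf T}\tau^{\mathsf T}k$, so $\xi$ is unchanged. It also passes to the phase, since $k\cdot\eta=\tau^{\mathsf T}k\cdot\theta$.

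Two further facts are immediate:
\begin{itemize}
\item $\mathcal H_L(B)=\mathcal H_L(A)$, because $\tau\Z^2=\Z^2$.
\item Right translation by the fixed element $(\tau^{-1},0)$ commutes with left-invariant derivatives, which gives the norm identity. It also moves compact supports to finitely many fixed compact sets.
\end{itemize}

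\emph{Step 3: cone restriction.} The condition $|q|\le|c|$ is imposed by a partition of unity on the circle of directions. I will take finitely many smooth functions, homogeneous of degree zero, subordinate to the cones $|q|<2|c|$ and $|c|<2|q|$. Each cone is then mapped into the cone $|q|\le|c|$ by composing with a further fixed element, either $J_0$ or $\tau$ itself. This enlarges the chart family by a bounded factor depending only on $N$. I take $\chi$ to be the transported cutoff. Since $\chi$ is a fixed function on a finite set of cones, its derivative bounds depend only on $N$.

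\emph{Main obstacle.} The hard part is bookkeeping, not analysis. I need to get the sign and transpose conventions right, so that the phase comes out exactly as $e(m(c\theta_1-q\theta_2))$ and the congruence conditions land on $(q,c)$ as stated. Only absolute convergence, already given by \eqref{eq:fiberfourierdecay}, is needed to justify the rearrangement.
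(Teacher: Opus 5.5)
Your Steps 1 and 2 are sound and agree with the paper. The covariance $\widehat F_A(k)=\widehat G_B(\tau^{\mathsf T}k)$, its passage to $c_k$ and to the phase, and your claims about $\mathcal H_L$, derivative norms and supports are exactly what is needed. Two small additions: $R_{(\tau^{-1},0)}$ is well defined on $A_{\mathrm{aff}}/\Delta_N$ because $\Gamma_2(N)$ is normal in $\SL_2(\Z)$ and $\tau\Z^2=\Z^2$; and the operators $D_Z$ are left-action derivatives, which is why right translations commute with them.

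The genuine gap is Step 3, which carries the real content of the lemma. You must reach $|q|\le|c|$ \emph{while preserving} $q\equiv1$, $c\equiv0\pmod N$, and your mechanism achieves neither.
\begin{itemize}
\item Geometrically, the cone $|q|<2|c|$ is not contained in $|q|\le|c|$. Replacing $\tau$ by $\tau J_0^{\pm1}$ turns the frequency $(c,-q)$ into $(q,c)$ or $(-q,-c)$, so it maps the cone $|c|<2|q|$ onto $|q'|<2|c'|$, again not inside $|q'|\le|c'|$. A coordinate swap cannot repair cones that straddle the lines $|q|=|c|$, and ``$\tau$ itself'' specifies no move at all.
\item Arithmetically, the same replacement gives $q'\equiv0$ and $c'\equiv\pm1\pmod N$, so the required congruences fail for every $N\ge2$. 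Re-choosing the chart matrix to restore the residue class would undo the cone placement.
\end{itemize}
A move $\tau\mapsto\tau\sigma$ acts by $w\mapsto\sigma^{-1}w$ on $w=(q,c)$, so any admissible move must satisfy $\sigma e_1\equiv e_1\pmod N$.

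The missing idea, which the paper uses, is to move only by congruence-preserving unipotents.
\begin{itemize}
\item Since $\max\{|c|,|c-Nq|\}\ge\|w\|/\sqrt5$, take a homogeneous partition $\chi_0+\chi_1=1$ with $\chi_0$ supported where $|c|\ge\|w\|/(2\sqrt5)$ and $\chi_1$ where $|c-Nq|\ge\|w\|/(2\sqrt5)$.
\item On the second piece substitute $w=v_Nw'$ with $v_N=\left(\begin{smallmatrix}1&0\\N&1\end{smallmatrix}\right)\in\Gamma_2(N)$. Then $w'\equiv w\equiv e_1\pmod N$ and $|q'|\le2\sqrt5|c'|$, as already holds on the first piece.
\item Next take a smooth partition in the slope $q/c$ into pieces supported where $|q/c-j|\le1$, $|j|\le5$, and substitute $w=u_2(j)w'$, i.e.\ $q=q'+jc$. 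This keeps $q'\equiv1\pmod N$ precisely because $c\equiv0\pmod N$, and it gives $|q'|\le|c|$.
\end{itemize}
In your frequency coordinates these are the replacements $\tau\mapsto\tau v_N$ and $\tau\mapsto\tau u_2(j)$. With this in place of your Step 3, the rest of your argument goes through.
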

\begin{proof}
Right multiplication by $(\tau,0)$ is well-defined on the quotient:
$\Gamma_2(N)$ is normal in $\SL_2(\mathbb Z)$ and the integer
translation lattice is preserved by $\tau$.  It depends only on
$\tau\bmod N$ and commutes with left multiplication.
Directly in fiber coordinates,
\[
 \widehat{(F\circ R_{(\tau^{-1},0)})}_{A\tau}(k)
                     =\widehat F_A(\tau^{-\mathsf T}k).
\]
The same identity holds for the averaged coefficients $c_k$,
since their physical dual frequency is unchanged.  Also
$\tau^{-\mathsf T}J_0=J_0\tau$, and therefore the translation
phase transforms by $\eta\mapsto\tau^{-1}\eta$.

Write the primitive column indexing an original nonzero mode as
$z$, so that $k=mJ_0z$.  Split the $z$ by their unimodular
residue classes $a\bmod N$.  For each such class choose a fixed
$\tau_a\in\SL_2(\mathbb Z)$ whose first column is congruent
to $a$. Such a choice exists: complete the unimodular residue
column to a matrix in $\SL_2(\mathbb Z/N\mathbb Z)$ and lift
that matrix using the surjectivity proved in
Lemma~\ref{app:primitive-vectors}. Writing $z=\tau_aw$
now gives $w\equiv e_1\pmod N$.

To ensure that the second coordinate has absolute value comparable
to the vector norm, put
$v_N=\left(\begin{smallmatrix}1&0\\N&1\end{smallmatrix}\right)$.
For $w=(q,c)$,
\[
 \max\{|c|,|c-Nq|\}\ge\|w\|/\sqrt5.
\]
Choose a smooth homogeneous partition $\chi_0(w)+\chi_1(w)=1$
such that $\chi_0$ is supported where
$|c|\ge\|w\|/(2\sqrt5)$ and $\chi_1$ where
$|c-Nq|\ge\|w\|/(2\sqrt5)$.
For the first part keep $w$ and $\tau_a$.
For the second part set $w'=v_N^{-1}w$ and use
$\tau_av_N$, with weight $\chi_1(v_Nw')$.
Since $v_N\equiv I_2\pmod N$, both new columns remain congruent
to $e_1$.  In both charts the first coordinate has absolute
value at most $2\sqrt5$ times the second.
Finally use a finite smooth partition of unity in $q/c$, with
the $j$th part supported where $|q/c-j|\le1$ and $|j|\le5$.
On that part substitute $q=q'+jc$, equivalently
$w=u_2(j)w'$, and replace the current $\tau$ by $\tau u_2(j)$.
The column $w'$ remains congruent to $e_1\pmod N$ because
$c\equiv0\pmod N$, and it now satisfies $|q'|\le|c|$.
The weights remain smooth and homogeneous, with controlled
derivatives.  After renaming $q'$, this gives the asserted support.
The covariance already established proves the exact decomposition
\eqref{eq:congruencechartsum}.  Norm, support, and height claims
follow from the finite deck action and $B\mathbb Z^2=A\mathbb Z^2$.
\end{proof}

\begin{lemma}
\label{lem:congruenceinversephase}
Consider a chart from Lemma~\ref{lem:congruenceprimitivecharts},
and write
$B=\left(\begin{smallmatrix}\alpha&\beta\\\gamma_0&\delta\end{smallmatrix}\right)$.
For $q\equiv1\pmod N$, $c\equiv0\pmod N$, $(q,c)=1$, $c\ne0$,
choose an inverse $\overline q$ of $q$ modulo $N|c|$ and set
\[
 r\equiv-\overline q\pmod{N|c|},\qquad
 p=(1+qr)/c,\qquad
 \gamma=\begin{pmatrix}p&q\\r&c\end{pmatrix}.
\]
Then $p\in\mathbb Z$, $p\equiv0\pmod N$, and
$\gamma\equiv J_0\pmod N$.
Put $G^\sharp=G\circ R_{(J_0^{-1},0)}$ and
$f_m(D)=\widehat{G^\sharp}_D(me_1)$. For $d\ne0$, write
\[
 D(x,d,t)=\begin{pmatrix}d^{-1}+xt&x\\dt&d\end{pmatrix},
 \qquad \Phi_m(x,d,t)=f_m(D(x,d,t)).
\]
The function $\Phi_m$ is periodic in $t$ with period $N$. Write
\[
 \Phi_m(x,d,t)=\sum_{n\in\mathbb Z}a_{m,n}(x,d)e(nt/N).
\]
With $b=\alpha q+\beta c$, $d=\gamma_0q+\delta c$, and $d\ne0$, the
$m,n,q,c$ term in the unfolded coefficient has the following phase
factor outside the integral
\begin{equation}\label{eq:congruenceinversephase}
 e\left(\frac{m b^2}{2d}
       -\frac{n\overline q}{Nc}
       +\frac{n\gamma_0}{Ncd}
       +m(c\theta_1-q\theta_2)\right)
\end{equation}
and the corresponding integral
\[
 \frac1{Ld}\int_{b-Ld/2}^{b+Ld/2}
             a_{m,n}(x,d)e(-mx^2/(2d))\,\mathrm dx.
\]
The integral is oriented for $d<0$.  The formulas are independent
of the chosen inverse representative. Since $\xi_1=md$, the case
$d=0$ is excluded by Lemma~\ref{lem:regularfrequencywindow}.
For all nonnegative integers $j,h$,
\begin{equation}\label{eq:congruenceamplitudes}
 \begin{split}
 |a_{m,n}(x,d)|
 &\le C_{N,j,h}\mathcal D_{j+h}^{\mathrm{aff}}(F)
                         (1+\sqrt{x^2+d^2})^{2j}\\
 &\qquad\times(1+m\sqrt{x^2+d^2})^{-h}(1+|n|)^{-j}.
 \end{split}
\end{equation}
\end{lemma}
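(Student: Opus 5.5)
The plan is to verify the claims in three steps: the arithmetic of $\gamma$, then the unfolding of the coefficient $c^G_{m(c,-q)}(L,B)$ into an integral over $x$, and finally the amplitude bounds by integration by parts. The main obstacle is keeping the right-multiplication covariance consistent, with the deck map depending only on residues modulo $N$.

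\emph{Arithmetic of $\gamma$.} Since $q\overline q\equiv1\pmod{N|c|}$, we get $1+qr\equiv0\pmod{N|c|}$. Hence $p=(1+qr)/c$ is an integer divisible by $N$, and $\det\gamma=pc-qr=1$. The congruences $q\equiv1$, $c\equiv0$, $p\equiv0$, and $r\equiv-\overline q\equiv-1\pmod N$ give $\gamma\equiv J_0\pmod N$. A direct computation gives $\gamma^{-\mathsf T}e_1=(c,-q)$, so the frequency in the chart is $k=m(c,-q)=m\gamma^{-\mathsf T}e_1$.

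\emph{Unfolding.} Apply the covariance identity from the proof of Lemma~\ref{lem:congruenceprimitivecharts} with $\gamma$ in place of $\tau$:
\[
 \widehat G_{u_2(s)B}(m\gamma^{-\mathsf T}e_1)
 =\widehat{(G\circ R_{(\gamma^{-1},0)})}_{u_2(s)B\gamma}(me_1).
\]
Because $\gamma J_0^{-1}\in\Gamma_2(N)$ and the deck map depends only on $\gamma\bmod N$, we have $G\circ R_{(\gamma^{-1},0)}=G^\sharp$, so the integrand equals $f_m(u_2(s)B\gamma)$.

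Next I identify $u_2(s)B\gamma$ with $D(x,d,t)$. Its bottom row is $(\gamma_0p+\delta r,\ d)$ and its top-right entry is $b+sd$. Put $x=b+sd$, so that $\mathrm dx=d\,\mathrm ds$ and the interval $s\in[-L/2,L/2]$ becomes $x\in[b-Ld/2,b+Ld/2]$, oriented when $d<0$. Put $t=(\gamma_0p+\delta r)/d$; determinant one then forces the top-left entry to be $d^{-1}+xt$. Substituting $p=(1+qr)/c$ gives
\[
 t=\frac{\gamma_0+rd}{cd}=\frac rc+\frac{\gamma_0}{cd}.
\]

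For periodicity, note that $D(x,d,t+N)=D(x,d,t)\left(\begin{smallmatrix}1&0\\N&1\end{smallmatrix}\right)$. The lower unipotent matrix lies in $\Gamma_2(N)$ and fixes $e_1$ under inverse transpose, so $f_m$ is invariant under it. Thus $\Phi_m$ is $N$-periodic in $t$. Expanding in $e(nt/N)$ produces the factor $e\bigl(nr/(Nc)+n\gamma_0/(Ncd)\bigr)$. Since $r\equiv-\overline q\pmod{N|c|}$, this becomes $e\bigl(-n\overline q/(Nc)+n\gamma_0/(Ncd)\bigr)$. Changing the representative of $\overline q$ shifts $t$ by a multiple of $N$, which proves independence of the choice.

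For the quadratic phase, compute $\xi=B^{-\mathsf T}k$ to get $\xi_1=md$ and $\xi_2$ expressed through $b$. Completing the square in $x=b+sd$ then turns $-\xi_1s^2/2+\xi_2s$ into $mb^2/(2d)-mx^2/(2d)$. The sign bookkeeping for $\xi_2$ is the step I would check most carefully. The chart phase $e\bigl(m(c\theta_1-q\theta_2)\bigr)$ passes through unchanged. Together these give \eqref{eq:congruenceinversephase} and the stated integral with prefactor $1/(Ld)$.

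\emph{Amplitude bounds.} For decay in $m$, compute $D^{-\mathsf T}e_1=(d,-x)$, whose norm is $\sqrt{x^2+d^2}$. Then \eqref{eq:fiberfourierdecay} bounds $f_m(D)$ by $C_h\mathcal D_h^{\mathrm{aff}}(F)\bigl(1+m\sqrt{x^2+d^2}\bigr)^{-h}$. For decay in $n$, integrate by parts $j$ times in $t$. Here $\partial_t$ is right differentiation by a multiple of $E_{21}$, which equals a left derivative in the direction $\Ad(D)(dE_{21})$. In the basis this direction has coefficients $O\bigl((1+\sqrt{x^2+d^2})^2\bigr)$, uniformly for $t\in[0,N]$; this uniformity is what needs checking. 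The same bound applies to $\mathcal D_h^{\mathrm{aff}}$ of derivatives of $G^\sharp$, since right deck maps commute with left derivatives. Combining the two estimates yields \eqref{eq:congruenceamplitudes}.
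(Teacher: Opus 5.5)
Your proposal is correct and follows the paper's proof essentially step by step: the arithmetic of $\gamma$, the covariance identity giving $f_m(u_2(s)B\gamma)$, the identification $u_2(s)B\gamma=D(x,d,t)$ with $t=r/c+\gamma_0/(cd)$, periodicity via $\left(\begin{smallmatrix}1&0\\N&1\end{smallmatrix}\right)$, the substitution $x=b+sd$ (your sign check works out, since $\xi=m(B\gamma)^{-\mathsf T}e_1=m(d,-b)$), and integration by parts in $t$. One small correction is needed: $\partial_tD(x,d,t)=D(x,d,t)E_{21}$ exactly, so the left direction is $\Ad(D)E_{21}=\left(\begin{smallmatrix}xd&-x^2\\ d^2&-xd\end{smallmatrix}\right)$, which is independent of $t$ (so no uniformity check is needed) and has entries $O(x^2+d^2)$, whereas with the extra factor $d$ you wrote you would only obtain $(1+\sqrt{x^2+d^2})^{3j}$.
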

\begin{proof}
Since $q\equiv1\pmod N$ and $(q,c)=1$, the integer $q$ is
invertible modulo $N|c|$.  The congruence defining $r$ makes
$1+qr$ divisible by $Nc$, proving the assertions about $p$.
Reducing the determinant equation modulo $N$ gives
$r\equiv-1\pmod N$, and hence $\gamma\equiv J_0\pmod N$.
Right deck maps depend only on their residue modulo $N$, so
\[
 \widehat G_{u_2(s)B}(m(c,-q))
                     =f_m(u_2(s)B\gamma).
\]
For $j\in\Z$, put
$\ell_j=\left(\begin{smallmatrix}1&0\\Nj&1\end{smallmatrix}\right)
\in\Gamma_2(N)$. Since
\[
 D(x,d,t+Nj)=D(x,d,t)\ell_j,\qquad
 \ell_j^{-\mathsf T}e_1=e_1,
\]
the Fourier covariance from Lemma~\ref{lem:congruenceprimitivecharts}
gives the asserted period.
Write $B\gamma=\left(\begin{smallmatrix}a&b\\c_B&d\end{smallmatrix}\right)$.
Its physical dual frequency is $m(d,-b)$, so the flow phase is
$-m(ds^2/2+bs)$. The substitution $x=b+sd$ changes this to
$mb^2/(2d)-mx^2/(2d)$ and gives the stated oriented integral.
The lower-left entry $c_B$ of $B\gamma$ satisfies
\[
 \frac{c_B}{d}=\frac rc+\frac{\gamma_0}{cd}.
\]
Inserting the period-$N$ Fourier expansion gives
\eqref{eq:congruenceinversephase}.  Changing $r$ by $N|c|$
changes $c_B/d$ by an integer multiple of $N$, as required.
It remains to prove the amplitude estimate. Differentiation in $t$
is the left-action derivative associated with
\[
 Y_{x,d}=\begin{pmatrix}xd&-x^2\\d^2&-xd\end{pmatrix}.
\]
This matrix is independent of $t$, and hence
\[
 \partial_t^j\Phi_m(x,d,t)
 =\widehat{D_{Y_{x,d}}^jG^\sharp}_{D(x,d,t)}(me_1).
\]
Put $r=(x^2+d^2)^{1/2}$. The physical frequency is
$D(x,d,t)^{-\mathsf T}(me_1)=m(d,-x)$, and the coefficients of
$Y_{x,d}$ are $O(r^2)$. Applying
\eqref{eq:fiberfourierdecay} after the $j$ derivatives in $t$ gives
\[
 |\partial_t^j\Phi_m(x,d,t)|
 \le C_{j,h}\mathcal D_{j+h}^{\mathrm{aff}}(F)
                            (1+r)^{2j}(1+mr)^{-h}.
\]
Integration by parts in the period-$N$ Fourier coefficient gives
\eqref{eq:congruenceamplitudes} for $n\ne0$. For $n=0$, use
$j=0$ in the last estimate and enlarge the derivative norm.
\end{proof}

\paragraph{Reduction to arithmetic sums.}
For $c>0$ the inverse modulus is $M=Nc$.  On the set
$q\equiv1\pmod N$, primitivity is equivalent to $(q,M)=1$.
The remaining residue restriction is
\[
 1_{q\equiv1\ (N)}=
       \frac1N\sum_{j=0}^{N-1}e\bigl(j(q-1)/N\bigr).
\]
It changes only the real linear coefficient in the exponential
sum.  Thus arithmetic bounds uniform in that linear coefficient
apply at modulus $M=Nc$, with a bounded number of terms and
constants depending only on $N$.  A chart restricts $q$ to an
interval of length $O_N(|c|)$; it can be split into $O_N(1)$
intervals of length at most $M$.  Smooth chart weights are
handled by partial summation, their derivatives being controlled
in the ratio $q/c$. For $c<0$, the same reduction applies with
the sign of the inverse coefficient reversed. The extra real term is
$n\gamma_0/(Ncd)$, the correction required by the determinant
identity. For $n=0$ it disappears entirely.
The estimates below sum nonnegative bounds over the moduli.
Restricting these sums to $c\equiv0\pmod N$ can only decrease them.
This proves the reduction at fixed congruence level used below.

\subsection{Nonzero inverse coefficients}
\label{subsec:nonzeroinverse}

For $n\ne0$, four differencing steps give a rational exponential sum
multiplied by a controlled real-phase weight.
The following estimate and its corollary give the bound used in the
final assembly; the last lemma controls the exceptional moduli.

Write $\tau(n)$ for the number of positive divisors of $n$.
For every $\epsilon>0$, we use the elementary bound
\begin{equation}\label{eq:elementarydivisorbound}
 \tau(n)\le C_\epsilon n^\epsilon\qquad(n\ge1).
\end{equation}
For completeness, if $p\ge2^{1/\epsilon}$ and $a\ge1$, then
$a+1\le2^a\le p^{\epsilon a}$. For each of the finitely many
smaller primes the supremum of $(a+1)p^{-\epsilon a}$ over
$a\ge0$ is finite. Multiplication over
the prime powers dividing $n$ proves
\eqref{eq:elementarydivisorbound}.

\begin{lemma}[Fourfold differencing for inverse phases]
\label{lem:fourdiffinverse}
Let $c\ge1$ be an integer and write $c=uv$, where
$u=\prod_{\operatorname{ord}_{\mathfrak p}(c)=1}\mathfrak p$ and
$v=\prod_{\operatorname{ord}_{\mathfrak p}(c)\ge2}
\mathfrak p^{\operatorname{ord}_{\mathfrak p}(c)}$.
Thus $u$ is square-free, $v$ is square-full, and $(u,v)=1$;
we call $v=v(c)$ the square-full part of $c$.
(An integer is square-full if every prime dividing it has exponent
at least two; the empty product is one.) Let $n\in\mathbb Z\setminus\{0\}$,
let $I$ be a half-open real interval of length at most $c$, and let
$f$ be a real $C^6$ function on its closure. For $(q,c)=1$, write
$\overline q$ for the inverse of $q$ modulo $c$. Suppose
\[
 Z\ge1,\qquad \sup_I|f^{(6)}|\le Zc^{-2}.
\]
For each integer $1\le H\le c$ and each $\epsilon>0$,
\begin{equation}\label{eq:fourdiffinverse}
 \begin{split}
 &\frac1c\left|\sum_{\substack{q\in I\cap\mathbb Z\\(q,c)=1}}
                   e\left(f(q)+\frac{n\overline q}{c}\right)\right|\\
 &\qquad\le C_\epsilon\left\{H^{-1}
       +Z|n|^{1/2}H^{64}c^{-1/2+\epsilon}v^{1/2}\right\}^{1/16}.
 \end{split}
\end{equation}
The constant is independent of the position of $I$ and of all
derivatives of $f$ of order at most five.
\end{lemma}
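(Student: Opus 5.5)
The plan is to apply Weyl--van der Corput differencing four times and then bound the resulting complete sums of inverse phases by Weil--Kloosterman-type estimates.

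\textbf{Step 1: differencing.} Write $S=\sum_{q\in I,(q,c)=1}e(f(q)+n\overline q/c)$. Apply the van der Corput inequality with shift parameter $H$ four times, each time with shifts $h_i\in[1,H]$. After four steps this gives
\[
 |S/c|^{16}\ll H^{-1}+H^{-4}\sum_{h_1,\ldots,h_4\le H}
 \Bigl|\frac1c\sum_{q}e\bigl(\Delta_{h}f(q)+n\,\Delta_h(\overline q)/c\bigr)\Bigr|.
\]
The $H^{-1}$ term collects the diagonal losses at each step, which is where the final exponent $1/16$ comes from. Here $\Delta_h=\Delta_{h_1}\cdots\Delta_{h_4}$ is the fourfold difference operator, and $q$ is restricted to $q+\epsilon\cdot h$ coprime to $c$ for all $\epsilon\in\{0,1\}^4$.

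\textbf{Step 2: the real phase becomes nearly linear.} The fourth difference $\Delta_hf$ has second derivative bounded by $H^4\sup|f^{(6)}|\le ZH^4c^{-2}$. Over an interval of length $c$ it is therefore a linear function plus a smooth weight $w(q)=e(\Delta_hf(q)-\text{linear part})$. This weight has total variation $O(ZH^{O(1)}c^{-1})\cdot c$; more precisely, its derivative is $O(ZH^4c^{-1})$.

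\textbf{Step 3: complete the sum.} By partial summation, the sum is bounded by $(1+ZH^4)$ times the maximal incomplete sum of $e(\beta q+n\Delta_h(\overline q)/c)$. Completing modulo $c$ reduces this to complete sums
\[
 \sum_{q\bmod c}e\Bigl(\frac{aq+n\sum_\epsilon(-1)^{|\epsilon|}\overline{q+\epsilon\cdot h}}{c}\Bigr),
\]
at a cost of a factor $\log c$.

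\textbf{Step 4: factor and bound the complete sums (main obstacle).} Factor the complete sum by CRT into its square-free part $u$ and its square-full part $v$. On the $u$ part, apply Weil's bound to the rational function $\sum\pm1/(x+\epsilon\cdot h)$ modulo each prime $\mathfrak p$. This gives $\mathfrak p^{1/2}$ times a bounded constant, unless the rational function degenerates modulo $\mathfrak p$. Degeneration happens only when $\mathfrak p$ divides $n$ or some combination of the $h_i$, and that contributes a factor $(\mathfrak p,n\prod h)^{1/2}$.

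On the $v$ part, use the trivial bound $v$. This yields a complete-sum bound of
\[
 c^{1/2+\epsilon}\,v^{1/2}\,\bigl(n\textstyle\prod h,c\bigr)^{1/2}.
\]
Averaging over $h$ and bounding the gcd by $|n|^{1/2}H^{2}$ crudely, then dividing by $c$, gives a term $Z|n|^{1/2}H^{64}c^{-1/2+\epsilon}v^{1/2}$ after generous exponent bookkeeping, with divisor functions absorbed by \eqref{eq:elementarydivisorbound}. Substituting this into the $16$th-power inequality from Step 1 proves the bound.

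The key difficulty is the nondegeneracy check for the differenced rational inverse phase modulo primes. This is the point where I would verify that the poles at $x=-\epsilon\cdot h$ do not all cancel. They cannot cancel for $h_i\not\equiv0$, since distinct poles survive; when some $h_i\equiv0\pmod{\mathfrak p}$, that prime's contribution is simply absorbed into the gcd factor.
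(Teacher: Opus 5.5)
Your outline follows the paper's proof step for step: four van der Corput differencings, linearization of the differenced real phase via $\sup_I|f^{(6)}|\le Zc^{-2}$, completion at a $\log c$ cost, and the CRT split $c=uv$, with Weil at the good primes of $u$ and the trivial bound on $v$ and on the bad primes. But the step you yourself identify as the crux is justified incorrectly.

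The poles $-\sigma_\nu$, $\sigma_\nu=\nu\cdot h$, are \emph{not} distinct modulo $\mathfrak p$ just because $\mathfrak p\nmid h_1h_2h_3h_4$. Two subset sums collide whenever $\mathfrak p$ divides a signed sum $\sum_i\varepsilon_ih_i$ with $\varepsilon_i\in\{-1,0,1\}$, and colliding poles may carry opposite signs. For instance, $\mathfrak p\mid h_1-h_2-h_3$ gives $\sigma_{(1,0,0,0)}\equiv\sigma_{(0,1,1,0)}$ with signs $-1,+1$, and those two terms cancel identically. So ``distinct poles survive'' is false. Your gcd with $n\prod_ih_i$ is also inconsistent with your own remark that degeneration occurs when $\mathfrak p$ divides ``some combination of the $h_i$''. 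As written, nothing excludes total cancellation of the residues at a prime you count as good, and there the complete sum can have size $\mathfrak p$ rather than $O(\sqrt{\mathfrak p})$.

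The paper closes this gap as follows.
\begin{itemize}
\item It discards the $O(H^3)$ tuples whose sixteen integer subset sums are not distinct; they contribute $O(H^{-1})$.
\item For the remaining tuples it puts $\Delta(h)=\prod_{\nu<\nu'}(\sigma_\nu-\sigma_{\nu'})\neq0$, with $|\Delta(h)|\le(4H)^{120}$.
\item At primes $\mathfrak p\mid u$ with $\mathfrak p\nmid n\Delta(h)$ there are sixteen distinct simple poles with residues $\pm n\not\equiv0$, so Cochrane--Pinner gives $32\sqrt{\mathfrak p}$.
\item The trivial bound at the other primes costs $(u,n\Delta(h))^{1/2}\le|n|^{1/2}(4H)^{60}$. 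Together with the factor $ZH^4$ from partial summation, this is the source of $H^{64}$.
\end{itemize}

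Alternatively, you can keep your gcd with $nh_1\cdots h_4$ if you actually prove non-cancellation. The merged residues are $n$ times the coefficients of the image of $\prod_{i=1}^4(1-z^{h_i})=\sum_\nu(-1)^{|\nu|}z^{\sigma_\nu}$ in $\mathbb F_{\mathfrak p}[z]/(z^{\mathfrak p}-1)=\mathbb F_{\mathfrak p}[z]/((z-1)^{\mathfrak p})$. For $\mathfrak p\nmid h_i$, each factor is $(1-z)$ times a unit, so the product is $(1-z)^4$ times a unit and is nonzero once $\mathfrak p\ge5$; send $\mathfrak p=2,3$ to the trivial bound. A surviving simple pole rules out an Artin--Schreier degeneration, so Weil applies with an absolute constant. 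You then obtain the lemma with a smaller power of $H$ than $H^{64}$.
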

\begin{proof}
Extend the summand by zero outside $I$ and outside the units
modulo $c$. Four applications of the elementary van der Corput
inequality, using Cauchy--Schwarz between applications, give
\[
 \left|\frac1c\sum_q a_q\right|^{16}
 \le C\left\{\frac1H+
 \frac1{H^4}\sum_{1\le h_1,\ldots,h_4\le H}
 \left|\frac1c\sum_q
       \prod_{\nu\in\{0,1\}^4}
          \mathcal C^{|\nu|}a_{q+\nu\cdot h}\right|\right\},
\]
where $\mathcal C$ denotes complex conjugation. To obtain it, start from
\[
 \left|\frac1c\sum_q a_q\right|^2
 \le C\left(\frac1H+\frac1H\sum_{h=1}^H
               \left|\frac1c\sum_q a_{q+h}\overline{a_q}\right|\right).
\]
This follows by averaging $H$ translates and applying
Cauchy--Schwarz; the enlarged support has length at most $2c$.
Apply it successively to the correlation sequences, which still
have modulus at most one and support in an interval of at most
$c$ integers. Jensen's inequality at each squaring averages the
previous shift variables. This gives the displayed fourfold bound,
including the total $O(H^{-1})$ diagonal contribution.
For a fixed tuple $h=(h_1,\ldots,h_4)$, the condition that all
$q+\nu\cdot h$ lie in $I$ defines the interval
\[
 J=I\cap\bigl(I-h_1-h_2-h_3-h_4\bigr).
\]
An empty interval contributes zero. The coprimality conditions on
the shifted integers remain part of the periodic arithmetic factor.

There are $O(H^3)$ tuples $h$ for which the sixteen subset sums
$\sigma_\nu=\nu\cdot h$ are not distinct as integers. Indeed,
each collision is a nonzero equation
$\sum_i\varepsilon_i h_i=0$ with
$\varepsilon_i\in\{-1,0,1\}$; after fixing three variables the
fourth is determined. These tuples contribute $O(H^{-1})$.
For every other tuple define the nonzero integer
\[
 \Delta(h)=\prod_{\nu<\nu'}(\sigma_\nu-\sigma_{\nu'}),
 \qquad |\Delta(h)|\le(4H)^{120}.
\]
Here the product uses any fixed ordering of $\{0,1\}^4$.
The rational part of the correlation is
\[
 R_h(X)=n\sum_{\nu\in\{0,1\}^4}
                    \frac{(-1)^{|\nu|}}{X+\sigma_\nu}.
\]
The corresponding exponential is evaluated using inverses modulo
$c$ and is set to zero unless every $q+\sigma_\nu$ is a unit.
This defines the $c$-periodic factor whose complete Fourier
coefficients we now estimate.
For a prime $p\mid u$ with $p\nmid n\Delta(h)$, $R_h$ has
sixteen distinct simple poles with nonzero residues. Adding a
linear polynomial $kX$ does not change them. The rational Weil
bound, in the pure additive case of
\cite[Theorem 1.1]{CochranePinner2006}, therefore gives
\[
 \left|\sum_{\substack{q\bmod p\\
                   p\nmid\prod_\nu(q+\sigma_\nu)}}
       e\left(\frac{R_h(q)+kq}{p}\right)\right|\le32\sqrt p.
\]
Here rational expressions inside an additive character are
evaluated in the finite field. The sixteen simple finite poles
and a polynomial part of degree at most one satisfy the hypotheses
of the cited theorem: in its notation, $L=32$ and $M\le1$.
Thus $M+L-1\le32$. In particular, a simple pole precludes an
Artin--Schreier coboundary. Use the trivial bound at primes dividing
$n\Delta(h)$ and on the entire square-full factor $v$.
The Chinese remainder
theorem then bounds every complete Fourier coefficient of the
periodic rational correlation by
\[
 32^{\omega(u)}u^{1/2}v\,(u,n\Delta(h))^{1/2}
 \le C_\epsilon c^{1/2+\epsilon}v^{1/2}|n|^{1/2}H^{60}.
\]
The factors produced by the Chinese remainder theorem are units,
so the exceptional primes still divide $n\Delta(h)$. We have also used
$32^{\omega(u)}\ll_\epsilon c^\epsilon$, where $\omega(u)$ is the
number of distinct prime divisors of $u$. This follows by absorbing
$32$ into $p^\epsilon$ for all sufficiently large primes and
bounding the product over the finitely many remaining primes.

It remains to control the real phase
$\phi_h(q)=\sum_\nu(-1)^{|\nu|}f(q+\sigma_\nu)$.
The integral formula for a finite difference gives
\[
 \sup_J|\phi_h''|\le h_1h_2h_3h_4\sup_I|f^{(6)}|
                         \le ZH^4c^{-2}.
\]
Subtract the first-order Taylor polynomial of $\phi_h$ at a point of $J$.
The derivative of the remainder is at most $ZH^4/c$, so its
exponential has total variation at most $CZH^4$ on $J$.
Here $\operatorname{Var}_J(W)$ denotes total variation on $J$;
for a $C^1$ weight it is $\int_J|W'|$.

To bound the factor introduced by completion, for any real $\theta$
and any interval $J'$ of at most $c$ consecutive integers, the
finite geometric-series bound gives
\[
 \frac1c\sum_{j=0}^{c-1}
 \left|\sum_{q\in J'}e((\theta-j/c)q)\right|
 \le C\log(2c).
\]
Indeed the summands are bounded by
$\min\{c,C\|\theta-j/c\|_{\R/\Z}^{-1}\}$; ordering the
distances to the $c$ equally spaced points gives the harmonic
sum $C\sum_{j\le c}j^{-1}$ after division by $c$.
This is the normalized discrete Fourier $\ell^1$ norm of
the restricted linear phase. Multiplying it by the largest
complete Fourier coefficient of the periodic rational factor,
and then using partial summation for the remaining weight,
completes the correlation with a factor at most
$CZH^4\log(2c)$, uniformly in all lower derivatives of $f$.
Each correlation with distinct subset sums is consequently at most
\[
 C_\epsilon Z|n|^{1/2}H^{64}
               c^{1/2+\epsilon}v^{1/2}.
\]
Absorb the logarithm into an initially smaller epsilon and
substitute in the fourfold inequality.
\end{proof}

\begin{corollary}
\label{cor:fourdiffuniformrange}
Let $S\ge2$ and $0<a\le1/100$. Suppose
\[
 S^{1-a}\le c\le S^{1+a},\quad
 0<|n|\le S^a,\quad Z\le S^a,\quad v\le S^{1/100}.
\]
Under the hypotheses of Lemma~\ref{lem:fourdiffinverse},
\[
 \left|\sum_{\substack{q\in I\cap\mathbb Z\\(q,c)=1}}
       e\left(f(q)+\frac{n\overline q}{c}\right)\right|
       \le C c S^{-1/16000}.
\]
The same bound multiplied by
$\|W\|_\infty+\operatorname{Var}_I(W)$ holds after inserting
any complex weight $W$ of bounded variation. Here $C>0$ is
absolute.
\end{corollary}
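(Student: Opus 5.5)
The plan is to apply Lemma~\ref{lem:fourdiffinverse} with a fixed choice of $H$ and $\epsilon$, check that the hypotheses of the corollary make both terms inside the braces of \eqref{eq:fourdiffinverse} powers of $S$ with negative exponent, and then multiply by $c$.

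\emph{Choice of parameters.} Take $H=\lfloor S^{h}\rfloor$ for a small fixed exponent $h$, say $h=1/500$. Since $c\ge S^{1-a}\ge S^{99/100}$, we have $1\le H\le c$ for $S\ge2$; a bounded range of small $S$ is absorbed into $C$ by the trivial bound $c$. The first term in the braces is then $H^{-1}\ll S^{-1/500}$.

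\emph{The second term.} Using $Z\le S^a$, $|n|^{1/2}\le S^{a/2}$, $H^{64}\le S^{64/500}$, $c^{-1/2+\epsilon}\le S^{-(1-a)/2+\epsilon(1+a)}$ (this uses $c\ge S^{1-a}$ for the negative power and $c\le S^{1+a}$ for the $\epsilon$ power), and $v^{1/2}\le S^{1/200}$, the exponent of $S$ is at most
\[
 a+\tfrac a2+\tfrac{64}{500}-\tfrac12+\tfrac a2+\tfrac1{200}+2\epsilon .
\]
With $a\le1/100$ this equals $-0.5+0.02+0.128+0.005+2\epsilon=-0.347+2\epsilon$. Fixing $\epsilon=1/100$ gives an exponent below $-1/3$. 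So the braces are $\ll S^{-1/500}$, and after the sixteenth root and multiplication by $c$ the sum is $\ll cS^{-1/8000}$. This is stronger than the claimed $cS^{-1/16000}$; the weaker exponent leaves slack for the constants and for how $H$ is rounded.

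\emph{Inserting the weight $W$.} Apply Abel summation over $q\in I$ in increasing order. Each partial sum runs over an initial subinterval $I'\subset I$, which is still half-open, has length at most $c$, and satisfies the same bound on $f^{(6)}$. The bound just proved holds for every such $I'$, uniformly. Summation by parts then gives
\[
 \Bigl|\sum W(q)\,e(\cdots)\Bigr|\le\bigl(\norm W_\infty+\operatorname{Var}_I(W)\bigr)\sup_{I'}\Bigl|\sum_{q\in I'}e(\cdots)\Bigr|.
\]

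\emph{Main obstacle.} Only the exponent bookkeeping needs care, and in particular the sign of the $\epsilon$-power of $c$. The point to watch is that the factor $H^{64}$ must not exceed the saving $c^{-1/2}$; that is why $H$ is taken as a small power of $S$. The constant is absolute because $\epsilon$ and $h$ are fixed numerically.
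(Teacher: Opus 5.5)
Your proposal is correct and is the paper's argument: apply Lemma~\ref{lem:fourdiffinverse} with $H$ a small fixed power of $S$ and a fixed $\epsilon$, check that both terms in the braces are negative powers of $S$, and get the weighted version by summation by parts over initial subintervals. The paper takes $H=\lfloor S^{1/1000}\rfloor$ and $\epsilon=1/1000$ rather than your $H=\lfloor S^{1/500}\rfloor$ and $\epsilon=1/100$. One harmless slip: with $\epsilon=1/100$ your second-term exponent is $-0.327$, not below $-1/3$. This does not matter, since any exponent below $-1/500$ suffices.
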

\begin{proof}
Take $H=\lfloor S^{1/1000}\rfloor$ and use
\eqref{eq:fourdiffinverse} with $\epsilon=1/1000$.
The second term inside braces is at most
\[
 C S^{a+a/2+64/1000+1/200-(1-a)(1/2-1/1000)}
       \le C S^{-2/5}.
\]
The first is $O(S^{-1/1000})$. Taking the sixteenth root
proves the claim; summation by parts proves the weighted version,
since the unweighted estimate applies to every initial subinterval.
\end{proof}

\begin{lemma}
\label{lem:squarefulldiscard}
For $X,Y\ge1$, the number of positive integers $c\le X$ whose
square-full factor $v(c)$ exceeds $Y$ is at most $CXY^{-1/2}$.
In particular, among $c\le S^{1+a}$, the condition
$v(c)>S^{1/100}$ excludes at most $CS^{1+a-1/200}$ moduli.
Here $C>0$ is absolute.
\end{lemma}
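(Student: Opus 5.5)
The plan is a direct count of square-full numbers. Every $c$ factors uniquely as $c=uv$ with $u$ square-free, $v=v(c)$ square-full and $(u,v)=1$. The number of $c\le X$ with $v(c)>Y$ is therefore at most
\[
 \sum_{\substack{Y<v\le X\\ v\ \text{square-full}}}\#\{u\le X/v\}
 \le X\sum_{\substack{v>Y\\ v\ \text{square-full}}}\frac1v .
\]
So it suffices to show that the tail of the reciprocal sum over square-full integers beyond $Y$ is $O(Y^{-1/2})$. This tail bound is the only real step.

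For the tail I would use the standard representation of square-full integers: every square-full $v$ can be written as $v=a^2b^3$ with $a,b\ge1$. Write $v=\prod\mathfrak p^{e_{\mathfrak p}}$ with all $e_{\mathfrak p}\ge2$. Put $b=\prod_{e_{\mathfrak p}\ \mathrm{odd}}\mathfrak p$; then $v/b^3$ has all exponents even and nonnegative. Hence the number $Q(t)$ of square-full $v\le t$ is at most
\[
 \sum_{b\le t^{1/3}}\sqrt{t/b^3}\le\zeta(3/2)\,t^{1/2}.
\]

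Partial (Abel) summation then finishes the tail:
\[
 \sum_{v>Y}\frac1v=\int_Y^\infty\frac{\mathrm dQ(t)}{t}
 \le\int_Y^\infty\frac{Q(t)}{t^2}\,\mathrm dt
 \le\zeta(3/2)\int_Y^\infty t^{-3/2}\,\mathrm dt
 =2\zeta(3/2)\,Y^{-1/2},
\]
where the boundary term at $Y$ has been dropped because it is nonpositive. Alternatively, one can bound $\sum_{a^2b^3>Y}(a^2b^3)^{-1}$ directly: for each $b$, sum over $a>\sqrt{Y/b^3}$ and over the $b$ with $b^3>Y$ separately. This gives $C\sum_b b^{-3}\min\{1,(b^3/Y)^{1/2}\}\ll Y^{-1/2}$. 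Either way $C$ is absolute, which proves the first claim.

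The particular case follows by taking $X=S^{1+a}$ and $Y=S^{1/100}$, which gives at most $CS^{1+a}S^{-1/200}$ moduli. I see no serious obstacle. The only point needing care is that the bound $X/v$ on the number of admissible $u$ is used trivially, so no coprimality or square-freeness condition on $u$ has to be tracked.
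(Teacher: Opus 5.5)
Your proof is correct and follows the paper's argument. You bound the count by $X\sum_{v>Y}v^{-1}$ over square-full $v$, use the representation $v=a^2b^3$ to show there are $O(\sqrt t)$ square-full numbers up to $t$, and then sum the tail; using Abel summation where the paper uses dyadic summation is an inessential difference.
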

\begin{proof}
Every square-full integer can be written uniquely as $r^2t^3$
with $t$ square-free, so the number up to $R$ is at most
$\sqrt R\sum_{t\ge1}t^{-3/2}\le C\sqrt R$.
Dyadic summation gives
$\sum_{v>Y,\ v\ {\rm square\text{-}full}}v^{-1}\le CY^{-1/2}$.
Since $v(c)$ divides $c$, the asserted count is at most
$\sum_{v>Y}\lfloor X/v\rfloor$ over square-full $v$.
\end{proof}

\subsection{Zero inverse coefficient}
\label{subsec:zeroinverse}

For $n=0$, the rational inverse term is absent. We first prove an
estimate for real quadratic sums averaged over the modulus, then
combine it with derivative estimates to obtain a bound uniform in
the base matrices specified in Lemma~\ref{lem:movingzeroinverse}.
These remain nonzero fiber Fourier modes.

\begin{lemma}[Mean bounds for quadratic sums with real coefficients]
\label{lem:averagedrealquadratic}
For an integer $c\ge1$ and a real number $\kappa$, put
\[
 M_c(\kappa)=\sup_{\substack{\beta\in\mathbb R\\
                         I\text{ half-open},\ |I|\le c}}
 \left|\sum_{\substack{q\in I\cap\mathbb Z\\(q,c)=1}}
                         e(\kappa q^2/c+\beta q)\right|.
\]
For every $\epsilon>0$ there is $C_\epsilon>0$ such
that, whenever $R\ge1$ and $|\kappa|\ge8R$,
\begin{align}
 \sum_{\substack{c\in\mathbb Z\\R\le c<2R}}M_c(\kappa)^2
     &\le C_\epsilon(|\kappa|R)^\epsilon R^2,
                       \label{eq:averagedquadraticsquare}\\
 \sum_{\substack{c\in\mathbb Z\\R\le c<2R}}M_c(\kappa)
     &\le C_\epsilon(|\kappa|R)^\epsilon R^{3/2}.
                       \label{eq:averagedquadraticabsolute}
\end{align}
The same estimates hold without the restriction $(q,c)=1$.
In particular the interval and the real linear coefficient may
vary independently with the modulus.
\end{lemma}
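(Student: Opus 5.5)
Fix $R$ and $\kappa$ with $|\kappa|\ge8R$. I would square out $M_c(\kappa)$ by Weyl differencing, which removes the linear coefficient $\beta$. The resulting inner sums are geometric series, so the average over the moduli becomes a divisor-type count. The bound \eqref{eq:averagedquadraticabsolute} then follows from \eqref{eq:averagedquadraticsquare} by Cauchy--Schwarz over the $R$ moduli $R\le c<2R$: $\sum_c M_c\le R^{1/2}(\sum_c M_c^2)^{1/2}$. So only the square mean needs proof.

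\textbf{Step 1: pointwise differencing.} Fix $c$, an interval $I$ with $|I|\le c$, and $\beta$. Write the summand as $a_q=1_{I}(q)1_{(q,c)=1}e(\kappa q^2/c+\beta q)$. Expanding $|\sum_q a_q|^2=\sum_{h}\sum_q a_{q+h}\overline{a_q}$ with $|h|<c$ gives
\[
 \Bigl|\sum_q a_q\Bigr|^2\le\sum_{|h|<c}\Bigl|\sum_{q\in J_h,\ (q(q+h),c)=1}e(2\kappa hq/c)\Bigr|,
\]
with $J_h$ an interval. The phase $\beta$ and the constant $\kappa h^2/c$ drop out, so the bound is uniform in $\beta$ and in $I$.

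\textbf{Step 2: removing the coprimality conditions.} Without the coprimality conditions, the inner sum is a geometric series, bounded by $\min\{c,\|2\kappa h/c\|_{\R/\Z}^{-1}\}$. With them, I would remove the conditions by Möbius inversion over $d\mid c$ and $d'\mid c$ (equivalently, over divisors of $c$ dividing $q$ or $q+h$). This restricts $q$ to arithmetic progressions and costs a factor $\tau(c)^2\ll_\epsilon c^\epsilon$ by \eqref{eq:elementarydivisorbound}. On a progression of modulus $e\mid c^2$, the geometric sum is bounded by $\min\{c/e,\|2\kappa he/c\|^{-1}\}$.

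\textbf{Step 3: the averaged count over $c$.} The task reduces to bounding
\[
 \sum_{R\le c<2R}\sum_{|h|<c}\min\{c,\|2\kappa h/c\|^{-1}\}\ll_\epsilon(|\kappa|R)^\epsilon R^2,
\]
and similarly for the progression-modified sums. Split according to the integer $m$ nearest to $2\kappa h/c$, with $|m|\ll|\kappa|$. The terms with $\|2\kappa h/c\|\ge$ some level $\delta$ are grouped dyadically in $\delta$. The key count is the number of triples $(c,h,m)$ with $|2\kappa h-mc|\le\delta c$.

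\textbf{Main obstacle.} This counting is the step I expect to be hardest. The hypothesis $|\kappa|\ge8R$ is where it should enter. Since $|2\kappa h|\ge 16R|h|>|h|c$ whenever $h\ne0$, the integer $m$ is nonzero and essentially determined by $(c,h)$. When $\kappa$ is close to a rational, however, many pairs $(c,h)$ give small $\|2\kappa h/c\|$.

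I would handle this by the substitution $x=2\kappa h-mc$. For fixed $m$ and $h$, the count in $c$ is at most $1+\delta R/|m|$. Summing over $|h|\le 2R$ and $|m|\ll|\kappa|R/R$ gives a total
\[
 \ll R^2\log(|\kappa|R),
\]
after weighting by $\delta^{-1}$ and summing dyadically. The diagonal terms $h=0$ contribute $O(R^2)$. The logarithms and divisor factors are absorbed into $(|\kappa|R)^\epsilon$.

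The version without $(q,c)=1$ is the same argument with the Möbius step omitted.
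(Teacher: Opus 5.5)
\textbf{The gap is the count in Step 3.} The rest of your outline is sound: Step 1 is uniform in $\beta$ and $I$, you use $|\kappa|\ge8R$ in the right place (to keep the resonant integer nonzero), and Cauchy--Schwarz over the moduli gives \eqref{eq:averagedquadraticabsolute} from \eqref{eq:averagedquadraticsquare}.

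Bounding the number of $c$ by $1+\delta R/|m|$ for each fixed pair $(m,h)$, and then summing over all $m$, cannot give the lemma. Since $m\approx2\kappa h/c$ with $R\le c<2R$, for fixed $h$ the admissible $m$ fill an interval of length about $|\kappa||h|/R$. So the ``$+1$'' terms alone contribute $\gg|\kappa|R$ to
\[
N(\delta)=\#\{(c,h):R\le c<2R,\ 1\le|h|<c,\ \|2\kappa h/c\|_{\R/\Z}\le\delta\},
\]
and this holds for every $\delta$. After weighting by $\delta^{-1}$ down to $\delta\approx1/(2R)$, this gives $\gg|\kappa|R^2$. Since $|\kappa|$ may be arbitrarily large compared with $R$, the total $R^2\log(|\kappa|R)$ you assert does not follow. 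A length count in $c$ for fixed $m$ cannot detect that, for most $m$, the very short admissible $c$-interval contains no integer. That is an arithmetic fact, not a metric one.

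\textbf{The missing idea is to count the product rather than the factors.} Fix $h$ and put $\ell=mc$. Then $\ell$ is an integer with $|\ell-2\kappa h|\le\delta c\le2\delta R$, and $0<|\ell|\ll|\kappa|R$; nonvanishing is exactly where $|\kappa|\ge8R$ enters. So there are at most $1+4\delta R$ choices of $\ell$. Each gives at most $\tau(|\ell|)\ll_\epsilon(|\kappa|R)^\epsilon$ moduli $c\mid\ell$, by \eqref{eq:elementarydivisorbound}. Hence
\[
N(\delta)\ll_\epsilon(|\kappa|R)^\epsilon R(1+\delta R),
\]
and summing $\delta^{-1}N(\delta)$ over dyadic $\delta\in[1/(2R),1]$ gives $(|\kappa|R)^\epsilon R^2\log(2R)$. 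This is the paper's argument.

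\textbf{Step 2 then needs more than a factor $\tau(c)^2$.} With this count in place your Step 2 can be completed, but the progression sums differ from the unrestricted one. The progression has modulus $e=\mathrm{lcm}(d,d')$, which divides $c$ (not merely $c^2$). The frequency becomes $2\kappa h/(c/e)$, and the divisor count must be rerun with $c/e$ in place of $c$. That gives $O_\epsilon((|\kappa|R)^\epsilon R^2/e)$ for each pair $(d,d')$, which you then sum over pairs.

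The paper avoids this bookkeeping by applying M\"obius inversion before differencing. Let $U_m$ denote the supremum without the coprimality condition. Writing $c=dm$ and $q=dr$ gives
\[
M_c(\kappa)^2\le\tau(c)\sum_{d\mid c}U_{c/d}(\kappa d)^2 .
\]
The unrestricted bound is then reused at scale $R/d$ with parameter $\kappa d$. Both the size hypothesis $|\kappa d|\ge8R/d$ and the product identity $|\kappa d|\cdot R/d=|\kappa|R$ hold automatically.
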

\begin{proof}
First omit primitivity, and write $U_c(\kappa)$ for the resulting
supremum. For any of its defining sums, expansion by the difference
of the two indices and the finite geometric-series estimate give
\[
 |Q_c|^2\le Cc+C\sum_{1\le h<c}
      \min\{c,\|2\kappa h/c\|_{\mathbb R/\mathbb Z}^{-1}\}.
\]
The reciprocal is interpreted as infinity when its denominator
vanishes. The bound is independent of the interval's position and
of $\beta$, so it also bounds $U_c(\kappa)^2$.
Choose a power of two $K_*\in[2R,4R)$.
Dyadic decomposition bounds the reciprocal sum by a constant times
\[
 \sum_{\substack{K=2^j\\1\le K\le K_*}}K\,
 \#\{(c,h):R\le c<2R,\ 1\le h<c,
                       \|2\kappa h/c\|_{\mathbb R/\mathbb Z}\le K^{-1}\}.
\]
For each counted pair choose an integer $a$ realizing this
distance and put $\ell=ac$. Then
\[
 c\mid\ell,\qquad
 |\ell-2\kappa h|\le2R/K,\qquad
 0<|\ell|\le C|\kappa|R.
\]
The strict lower bound follows from $h\ge1$ and
$|\kappa|\ge8R$; it is the reason for this hypothesis.
For fixed $h<2R$, the displayed interval contains at most
$C(1+R/K)$ integers. Each nonzero integer in it admits at most
$C_\epsilon(|\kappa|R)^\epsilon$ possible divisors $c$.
Consequently the number of resonant pairs is at most
\[
 C_\epsilon(|\kappa|R)^\epsilon R(1+R/K).
\]
Summing the dyadic weights gives
\[
 \sum_{R\le c<2R}U_c(\kappa)^2
 \le C_\epsilon(|\kappa|R)^\epsilon R^2\log(2R).
\]
Use a smaller epsilon in the divisor bound and absorb the logarithm
to obtain \eqref{eq:averagedquadraticsquare} for $U_c$.

For primitivity, M\"obius inversion followed by $c=dm$ and $q=dr$
changes each inner sum to
\[
 \sum_{r\in (I/d)\cap\mathbb Z}
        e\left(\frac{\kappa d\,r^2}{m}+\beta d\,r\right),
 \qquad |I/d|\le m.
\]
Thus Cauchy--Schwarz over the divisors of $c$ gives
\[
 M_c(\kappa)^2\le\tau(c)
                  \sum_{d\mid c}U_{c/d}(\kappa d)^2.
\]
Fix a small $\epsilon'>0$ and sum over $R\le c<2R$.
For $d\le R$, apply the unrestricted estimate at scale $R/d$
and quadratic parameter $\kappa d$. Its size hypothesis holds
because $|\kappa|d\ge8R/d$, and the product of the two parameters
is $|\kappa|R$. Therefore
\[
 \sum_{R\le c<2R}M_c(\kappa)^2
 \le C_{\epsilon'}R^{\epsilon'}
 \left((|\kappa|R)^{\epsilon'}
           \sum_{1\le d\le R}(R/d)^2+R\right).
\]
The last $R$ accounts for $R<d<2R$, when the only possible
quotient $m$ is one and $U_1\le1$.
Since $\sum_{d\ge1}d^{-2}<\infty$, choosing $\epsilon'$ smaller
than $\epsilon/2$ proves \eqref{eq:averagedquadraticsquare}.
Finally Cauchy--Schwarz over the $O(R)$ moduli proves
\eqref{eq:averagedquadraticabsolute}, using $\epsilon'<\epsilon/2$
as above.
\end{proof}

\begin{lemma}[Cancellation without the modular-inverse term]
\label{lem:movingzeroinverse}
There is an absolute constant $C>0$ with the following
property. Let $S,P\ge2$ be real, let $m$ be an integer with
$1\le m\le P$, and let
\[
 B=\begin{pmatrix}\alpha&\beta\\\gamma&\delta\end{pmatrix}
       \in\SL_2(\mathbb R),\qquad
 \max\{|\alpha|,|\beta|,|\gamma|,|\delta|\}\le P.
\]
For an integer $c>0$ put $D_c(q)=\gamma q+\delta c$ and
\[
 \Theta_{c,x}(q)=\frac{mS^3}{2}
              \frac{(\alpha q+\beta c)^2}{D_c(q)}
                -\frac{mSx^2}{2D_c(q)}.
\]
Let $Z_c$ be the supremum of
\[
 \left|\sum_{\substack{q\in J\cap\mathbb Z\\(q,c)=1}}
                  e\bigl(\Theta_{c,x}(q)+\xi q\bigr)\right|
\]
over $|x|\le P$, $\xi\in\mathbb R$, and half-open intervals
$J\subset[-5c,5c]$ on which
\begin{equation}\label{eq:movingzerodenominator}
                    S/P\le |D_c(q)|\le PS.
\end{equation}
Set $Z_c=0$ if there is no nonempty admissible interval.
Then
\begin{equation}\label{eq:movingzeroestimate}
 \sum_{S/P\le c\le PS}Z_c
                      \le C P^{1000}S^{2-1/1000}.
\end{equation}
The same bound holds if, independently for each $c$, the sum is
restricted to one residue class modulo a positive integer.
For a choice of admissible intervals $J_c$ and weights $w_c$ of
bounded variation on $J_c$, inserting $w_c(q)$ in each sum
multiplies the right side by at most
\[
 \sup_c\bigl(\|w_c\|_\infty+\operatorname{Var}_{J_c}(w_c)\bigr).
\]
The intervals, residue classes, real linear coefficients, and
parameters $x$ may all vary with $c$.
\end{lemma}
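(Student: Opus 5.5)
The plan is to split the range of $c$ according to the size of the quadratic coefficient that $\Theta_{c,x}$ induces on the integer variable $q$. On each admissible interval, a Taylor expansion around a reference point $q_0$ shows that $q\mapsto\Theta_{c,x}(q)$ behaves like a real quadratic in $q$ plus a smooth remainder with small higher derivatives. Write $D=D_c(q)$ and $b=\alpha q+\beta c$. Then
\[
 \partial_q^2\frac{b^2}{D}=\frac{2(\alpha D-\gamma b)^2}{D^3}=\frac{2c^2}{D^3}.
\]
This uses $\alpha D-\gamma b=(\alpha\delta-\beta\gamma)c=c$, and it is the key algebraic identity. So the second derivative of the main term is $mS^3c^2/D^3$. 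With $c\asymp_P S$ and $|D|\asymp_P S$, this has size $\asymp_P m\,S^2$ per unit $q$-step, or equivalently $\kappa q^2/c$ with $\kappa\asymp_P mS^3\cdot c/D^3\asymp_P S$ up to $P$-powers. The second term $mSx^2/(2D)$ has second derivative $O(P^{O(1)}S^{-2})$ and contributes only a weight of bounded variation.

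First I would cover each admissible $J$ (length $O(c)$) by $O(P^{O(1)}S^{\varepsilon_1})$ short subintervals of length $S^{1-\varepsilon_1}$. On each one I freeze the second derivative at a point. The third derivative of the main term is $O(P^{O(1)}mS^3c^2/D^4)=O(P^{O(1)}S)$ per unit, so over length $\ell=S^{1-\varepsilon_1}$ the cubic Taylor remainder is $O(P^{O(1)}S\ell^3/S^{2})$, hmm, in normalized terms. To make the remainder small I would instead scale as follows: write $q=q_0+r$. The remainder beyond the quadratic term is $\sum_{k\ge3}\Theta^{(k)}(q_0)r^k/k!$ with $\Theta^{(k)}\ll P^{O(1)}mS^3c^2/S^{k+1}$. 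For $|r|\le\ell$ this is $\ll P^{O(1)}S^{2}(\ell/S)^{3}$ times $\ell^{0}$, which is $O(S^{-1/2})$-controlled once $\ell\le S^{1/2}$. That is too short to be useful, so this naive freezing fails. The better route is to keep the exact phase and instead use Lemma~\ref{lem:averagedrealquadratic} only after passing to the Weyl-differenced form.

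Concretely, I would apply one van der Corput step with shift $h\le H$. This turns the sum over $q$ into sums of $e(\Theta(q+h)-\Theta(q)+\xi h)$. The differenced phase has first derivative $h\Theta''(q)+O(h^2\Theta''')$ and second derivative $\asymp h\,mS^3c^2/D^3\cdot c^{-?}$. I would then apply the Kusmin--Landau / second-derivative test (van der Corput's $\lambda_2$ bound) to each differenced sum, with the coprimality condition removed by M\"obius inversion as in the proof of Lemma~\ref{lem:averagedrealquadratic}.

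The genuinely arithmetic input is needed where the first derivative $h\Theta'(q)$ is close to an integer, since there the second-derivative test loses. This is where summation over $c$ helps. The resonance condition reads $\|h\,\Theta''\|$ small, and the quantity $h\,mS^3c^2/D^3$ depends on $c$ through a rational expression. I would count resonant $(c,h)$ pairs by the divisor argument of Lemma~\ref{lem:averagedrealquadratic}: multiply by the integer $D^3$ approximant, and use that $c\mid$ an integer of size $P^{O(1)}S^{O(1)}$ together with the divisor bound \eqref{eq:elementarydivisorbound}. That gives average square-root cancellation over $c$, i.e.
\[
 \sum_c Z_c\ll P^{O(1)}S^{\epsilon}\,S\cdot S^{1/2}\cdot(\text{loss }S^{O(\varepsilon_1)}),
\]
which is far better than $S^{2-1/1000}$.

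The residue-class restriction is inserted by additive characters, exactly as in the paragraph ``Reduction to arithmetic sums,'' which only changes $\xi$. The supremum over $\xi$, $J$ and $x$ is already built into all the bounds used. Weights are handled by partial summation over initial subintervals, since every estimate holds uniformly over subintervals. The large exponent $1000$ on $P$ absorbs all the polynomial losses from $c\asymp_P S$, $|D|\asymp_P S$ and $m\le P$.

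The main obstacle is making the reduction from the rational phase $b^2/D$ to a quadratic-in-$q$ exponential sum rigorous with uniform control. Concretely, I would try the substitution $q\mapsto D=\gamma q+\delta c$: then $b=(\alpha D-c)/\gamma$ when $\gamma\ne0$, and
\[
 \frac{b^2}{D}=\frac{\alpha^2D}{\gamma^2}-\frac{2\alpha c}{\gamma^2}+\frac{c^2}{\gamma^2D},
\]
so the phase is linear in $q$ plus $mS^3c^2/(2\gamma^2 D)$. Averaging over $c$ then reduces to the resonance count above. The degenerate regime $|\gamma|$ small (including $\gamma=0$) must be treated separately; there $D\approx\delta c$ is nearly constant and the phase is a genuine quadratic $\kappa q^2/c$ with $|\kappa|\asymp mS^3\alpha^2/|\delta|\gg S$. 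In that regime Lemma~\ref{lem:averagedrealquadratic} applies directly and gives $\sum_c Z_c\ll S^{3/2+\epsilon}P^{O(1)}$.
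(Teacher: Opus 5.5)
There is a genuine gap in the main range of $\gamma$. Your identity $\alpha D-\gamma b=c$ and the resulting form ``linear $+\,A_c/D_c(q)$'' are correct. Iterating it gives
\[
 \Theta_{c,x}^{(r)}(q)=\frac{(-1)^r r!\,mS^3c^2\gamma^{r-2}}{2D_c(q)^{r+1}}
 \Bigl(1-\frac{x^2\gamma^2}{S^2c^2}\Bigr)\qquad(r\ge2).
\]
With $c,|D_c|\asymp_P S$ this means $\Theta''\asymp_P mS^2$ and $\Theta'''\asymp_P m|\gamma|S$. Your $\kappa$ should therefore be $\asymp mS^3$, not $\asymp S$.

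One Weyl shift $h$ followed by the second-derivative test is just the third-derivative test. The differenced phase has second derivative $\approx h\Theta'''\asymp h|\gamma|S$, so you get a saving only when $|\gamma|S$ is a negative power of $S$. For $|\gamma|$ larger than about $S^{-1}$, up to $P$, this step is vacuous. (The second-derivative test also does not care whether $h\Theta'$ is near an integer; that is the Kusmin--Landau condition.)

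The resonance count cannot rescue this. The divisor argument in Lemma~\ref{lem:averagedrealquadratic} works because the frequency is $2\kappa h/c$ with a real $\kappa$ independent of $c$ and $q$. Smallness of $\|2\kappa h/c\|$ then produces an integer $\ell=ac$ in a short interval depending only on $h$, and $c$ must divide $\ell$. Here the frequency $h\,mS^3c^2/D_c(q)^3$ moves with $q$ and depends nonlinearly on $c$ through the real numbers $\gamma,\delta$. There is no ``integer $D^3$ approximant'' and no divisibility relation. So for most of the parameter range you have no estimate, and the claimed average bound $S^{3/2+\epsilon}$ is unsupported.

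The missing idea is to use the factor $\gamma^{r-2}$ above and choose the derivative order according to $|\gamma|$. On $[S^{-2/3},P]$, $[S^{-3/2},S^{-2/3}]$ and $[S^{-15/4},S^{-3/2}]$, use $r=5,4,3$ respectively. Then on $J$:
\begin{itemize}
\item $\Theta^{(r)}$ has constant sign and is at most $P^{O(1)}S^{-1/2}$;
\item it varies by a factor at most $3P^{12}$;
\item after discarding intervals of length below $cS^{-1/100}$, $\inf|\Theta^{(r)}|\,|J|^r$ is a positive power of $S$.
\end{itemize}
After M\"obius inversion, with divisors $d>S^{1/100}$ bounded trivially, the $k$-th derivative test for $k=3,4,5$ gives $Z_c\ll P^{101}cS^{-1/200}$ for each modulus separately. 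No averaging over $c$ is needed there.

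Averaging over $c$ enters only for $|\gamma|<S^{-15/4}$, and there your sketch must be made precise. Expand at $q=0$, where the quadratic term is exactly $\kappa_0q^2/c$ with $\kappa_0=mS^3/(2\delta^3)$. This $\kappa_0$ is independent of $c$, which is what Lemma~\ref{lem:averagedrealquadratic} requires. The cutoff $S^{-15/4}$ is what bounds the variation of the Taylor remainder by $O(P^{30}S^{1/4})$. This yields $\sum_cZ_c\ll P^{O(1)}S^{1/4+3/2+1/20}$.
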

\begin{proof}
We use the following derivative estimate of
\cite[Theorem 11, equation (15), and Table 1]{AriasReyna2024}.
Let $k\in\{3,4,5\}$ and let $f$ be a real $C^k$ function
on a half-open interval $I$ of length $Y$. If
$0<\lambda\le |f^{(k)}|\le\Lambda$ on $I$ and
$\lfloor Y\rfloor>k$, then
\begin{equation}\label{eq:explicitderivativeinput}
 \frac1Y\left|\sum_{n\in I\cap\mathbb Z}e(f(n))\right|
 \le 11\max\left\{
  \left(\frac{\Lambda}{\lambda Y}\right)^{2/2^k},
  \left(\frac{\Lambda^2}{\lambda}\right)^{1/(2^k-2)},
  (\lambda Y^k)^{-2/2^k}\right\}.
\end{equation}
The derivative has constant sign by continuity, and changing
$f$ to $-f$ covers the negative sign. The three constants in
the cited estimate are all less than $11$ for these orders.
All other estimates below are elementary or are supplied by
Lemma~\ref{lem:averagedrealquadratic}.

The trivial estimate $Z_c\le10c+1$ gives
$\sum_{S/P\le c\le PS}Z_c\ll P^2S^2$, which proves the result
when $S<(10P)^{10}$, after increasing $C$. Assume henceforth that
$S\ge(10P)^{10}$ and restrict attention to $S/P\le c\le PS$.
All powers of $P$ arising in the estimates below have exponents
at most $1000$.
We distinguish the range where a derivative of order three, four,
or five gives cancellation from the range where a quadratic
approximation can be averaged over $c$.

\medskip
\noindent\emph{The range $|\gamma|\ge S^{-15/4}$.}
For $r\ge2$ direct differentiation, using
$\alpha\delta-\beta\gamma=1$, gives
\begin{equation}\label{eq:movingzeroderivatives}
 \Theta_{c,x}^{(r)}(q)=
 \frac{(-1)^r r!mS^3c^2\gamma^{r-2}}{2D_c(q)^{r+1}}
               \left(1-\frac{x^2\gamma^2}{S^2c^2}\right).
\end{equation}
The factor in parentheses lies between $1/2$ and $1$, since
\[
 0\le\frac{x^2\gamma^2}{S^2c^2}\le P^6S^{-4}\le\frac12.
\]
For each of the following ranges choose the indicated derivative:
\[
\begin{array}{c|c|c}
 \text{range of }|\gamma|&r&
          \text{bounds for }|\Theta_{c,x}^{(r)}|\\ \hline
 [S^{-2/3},P]&5&
          P^{-20}S^{-3}\ \ll |\Theta^{(5)}|
                                  \ll P^{20}S^{-1}\\
 {[S^{-3/2},S^{-2/3}]}&4&
          P^{-20}S^{-3}\ \ll |\Theta^{(4)}|
                                  \ll P^{20}S^{-4/3}\\
 {[S^{-15/4},S^{-3/2}]}&3&
          P^{-20}S^{-11/4}\ \ll |\Theta^{(3)}|
                                  \ll P^{20}S^{-1/2}.
\end{array}
\]
The implicit constants are absolute. For fixed $c,x$ and the chosen
derivative order $r$,
\[
 \frac{\sup_{q\in J}|\Theta_{c,x}^{(r)}(q)|}
      {\inf_{q\in J}|\Theta_{c,x}^{(r)}(q)|}\le3P^{12},
\]
since only the factor $D_c(q)^{-r-1}$ varies. The derivative has a
constant sign on $J$.

If $|J|\le cS^{-1/100}$, the trivial bound is sufficient.
Otherwise remove primitivity by M\"obius inversion. For a divisor
$d\mid c$, the inner sum is over $q=du$, with interval
$I_d=J/d$ of length $Y_d=|J|/d$.
Divisors $d>S^{1/100}$ contribute at most
\[
 \sum_{\substack{d\mid c\\d>S^{1/100}}}(10c/d+1)
                       \le11cS^{-1/100}\tau(c).
\]
For $d\le S^{1/100}$, set
$f_d(u)=\Theta_{c,x}(du)+\xi du$ and define
\[
 \lambda_d=\inf_{u\in I_d}|f_d^{(r)}(u)|,\qquad
 \Lambda_d=\sup_{u\in I_d}|f_d^{(r)}(u)|.
\]
Then
\[
 Y_d\ge P^{-1}S^{98/100},\qquad
 f_d^{(r)}(u)=d^r\Theta_{c,x}^{(r)}(du),\qquad
 \frac{\Lambda_d}{\lambda_d}\le3P^{12}.
\]
The upper derivative bounds give
\[
 \Lambda_d\le CP^{20}
 \begin{cases}
 S^{-95/100},&r=5,\\
 S^{-4/3+4/100},&r=4,\\
 S^{-47/100},&r=3.
 \end{cases}
\]
In the product $\lambda_dY_d^r$ the factors $d^r$ cancel, so
\[
 \lambda_dY_d^r
 =\left(\inf_{q\in J}|\Theta_{c,x}^{(r)}(q)|\right)|J|^r
 \gg P^{-25}
 \begin{cases}
 S^{195/100},&r=5,\\
 S^{96/100},&r=4,\\
 S^{22/100},&r=3.
 \end{cases}
\]
The bounds on $Y_d$ and $S$ ensure $\lfloor Y_d\rfloor>r$,
so \eqref{eq:explicitderivativeinput} applies to $f_d$.
For its middle term we use
\[
 \frac{\Lambda_d^2}{\lambda_d}
 =\frac{\Lambda_d}{\lambda_d}\Lambda_d
 \le3P^{12}\Lambda_d.
\]
Each term on the right side of \eqref{eq:explicitderivativeinput}
is bounded by $CP^{100}$ times the corresponding entry below:
\[
\begin{array}{c|c|c|c}
 r&
 \left(\dfrac{\Lambda_d}{\lambda_dY_d}\right)^{2/2^r}&
 \left(\dfrac{\Lambda_d^2}{\lambda_d}\right)^{1/(2^r-2)}&
 (\lambda_dY_d^r)^{-2/2^r}\\ \hline
 5&S^{-49/800}&S^{-19/600}&S^{-39/320}\\
 4&S^{-49/400}&S^{-97/1050}&S^{-3/25}\\
 3&S^{-49/200}&S^{-47/600}&S^{-11/200}
\end{array}
\]
Every saving exponent in the table is at least $19/600>1/40$.
Since $Y_d\le10c/d$, it follows that
\[
 \left|\sum_{u\in I_d\cap\mathbb Z}e(f_d(u))\right|
 \le CP^{100}\frac cd S^{-1/40}.
\]
The elementary estimate
$\tau(c)\ll c^{1/1000}$ now gives, including the large divisors
and the initially short intervals,
\begin{equation}\label{eq:movingzeronontiny}
                  Z_c\le CP^{101}cS^{-1/200}.
\end{equation}
Since $\sum_{c\le PS}c\ll P^2S^2$, this proves
\eqref{eq:movingzeroestimate} in the present range.

\medskip
\noindent\emph{The range $|\gamma|<S^{-15/4}$.}
Since $5|\gamma|c\le5PS^{-11/4}\le S/(4P)$, any admissible
pair $(c,J)$ satisfies $|\delta|c\ge S/(2P)$. Consequently
\[
 (2P^2)^{-1}\le|\delta|\le P,\qquad
 |D_c(q)|\ge S/(4P)\quad (|q|\le5c).
\]
Thus the phase is smooth on the whole interval $[-5c,5c]$,
even when $0\notin J$. Taylor expansion at $q=0$ gives
\begin{equation}\label{eq:tinylowerleftquadratic}
 \Theta_{c,x}(q)=C_{c,x}+B_{c,x}q+\frac{\kappa_0q^2}{c}
                                        +R_{c,x}(q),
 \qquad \kappa_0=\frac{mS^3}{2\delta^3}.
\end{equation}
Here one may take the constant and linear terms from the Taylor
polynomial of the first rational expression in $\Theta_{c,x}$,
and include the constant term of the second expression in
$C_{c,x}$.  Taylor's formula applied to the first expression,
and direct differentiation of the second, give
\[
 \operatorname{Var}_{[-5c,5c]}(R_{c,x})
    \le CP^{30}(S^4|\gamma|+|\gamma|)
    \le CP^{30}S^{1/4}.
\]
For clarity, the third derivative of the first rational term is bounded by
$CP^{20}S|\gamma|$, so its quadratic Taylor remainder has
variation at most $CP^{20}S|\gamma|c^3$; the second expression
has variation at most $CP^{10}|\gamma|$.
The parameter $\kappa_0$ in \eqref{eq:tinylowerleftquadratic} is
independent of $c,J,x$, so Lemma~\ref{lem:averagedrealquadratic}
can be applied after summing over the moduli.

Partial summation, splitting $[-5c,5c]$ into ten intervals of
length $c$ when necessary, bounds $Z_c$ by
\[
 CP^{30}S^{1/4}M_c(\kappa_0).
\]
Moreover $|\kappa_0|\ge S^3/(2P^3)\ge8PS$ and
$|\kappa_0|\le4P^7S^3$.  On the dyadic blocks meeting
$[S/P,PS]$, Lemma~\ref{lem:averagedrealquadratic}, with its
epsilon chosen as $1/100$, therefore gives
\[
\begin{aligned}
 \sum_{S/P\le c\le PS}M_c(\kappa_0)
 &\le C|\kappa_0|^{1/100}(PS)^{3/2+1/100}\\
 &\le CP^{79/50}S^{77/50}
 \le CP^{20}S^{3/2+1/20}.
\end{aligned}
\]
The first inequality sums a geometric series over the dyadic
scales; the second uses $|\kappa_0|\le4P^7S^3$.
Consequently the left side of \eqref{eq:movingzeroestimate} is
at most $CP^{50}S^{9/5}\le CP^{1000}S^{2-1/1000}$, as required.

Finally, the identity
\[
 1_{q\equiv a\, (\mathrm{mod}\, M)}
       =\frac1M\sum_{j=0}^{M-1}e\bigl(j(q-a)/M\bigr)
\]
introduces only real linear phases and proves the assertion about
residue classes. Every initial subinterval of an admissible interval
is again admissible, so partial summation proves the weighted
assertion.
\end{proof}

\subsection{Assembly of the flow estimate}
\label{subsec:regularflowassembly}

\begin{proof}[Proof of Proposition~\ref{prop:generalregularpointwise}]
Put
\[
 S=\sqrt L,\qquad H=\mathcal H_L(A)\le S^{\epsilon_0/4},\qquad
 Q=S^{\epsilon_0},\qquad P=S^{10\epsilon_0}.
\]
The parameter $Q$ sets the truncation bounds for the Fourier
indices and the integration variable; $P$ bounds the remaining
chart parameters.
We enlarge the lower threshold for $S$, depending only on $N,K$,
whenever necessary. In particular, we assume $P\ge2$.

\medskip\noindent\emph{Lattice reduction and frequency localization.}
Choose a shortest primitive vector of
$a_2(L)^{-1}A\Z^2$, of length $\lambda$, and complete it to
a positively oriented lattice basis. Reducing the second vector
modulo the first bounds its parallel component by $\lambda/2$;
its perpendicular component has length $\lambda^{-1}$.
The disk of radius $\lambda/2$ embeds in the quotient by this
unimodular lattice, so $\pi\lambda^2/4\le1$ and
$\lambda\le2/\sqrt\pi$. Together with $\lambda^{-1}\le H$,
the reduced basis bounds therefore give
$\gamma_{\mathrm{red}}\in\SL_2(\Z)$ such that
\[
 B_0=a_2(L)^{-1}A\gamma_{\mathrm{red}},\qquad
 \|B_0\|+\|B_0^{-1}\|\le CH.
\]
Replace $A,\eta,F$ by
$A\gamma_{\mathrm{red}},\gamma_{\mathrm{red}}^{-1}\eta,
F\circ R_{(\gamma_{\mathrm{red}}^{-1},0)}$.
This preserves the averaged Fourier sum exactly by the covariance
in Lemma~\ref{lem:congruenceprimitivecharts}.
The right deck map depends only on
$\gamma_{\mathrm{red}}\bmod N$. It preserves the left-action
derivative norms, and the support of the pullback is contained in
one of finitely many fixed compact sets.
The same lemma now gives finitely many charts with $|q|\le|c|$.
In each chart the actual base matrix is $a_2(L)B$, where
$B=B_0\tau$ for a fixed integral chart matrix $\tau$; hence
$\|B\|+\|B^{-1}\|\ll_N H$.

Use Lemma~\ref{lem:regularfrequencywindow} with
$\varepsilon=\epsilon_0$.  Its discarded contribution is
\[
 O\left(\mathcal D_4^{\mathrm{aff}}(F)
       (H^2S^{-2\epsilon_0}
        +H^2S^{-4\epsilon_0}
        +H^4S^{-4+8\epsilon_0})\right)
       =O\left(\mathcal D_4^{\mathrm{aff}}(F)S^{-3\epsilon_0/2}\right).
\]
\medskip\noindent\emph{Fourier truncation.}
The localized frequency set is finite. Moreover, its modes have
$\xi_1\ne0$, so the corresponding chart coordinate $d=\xi_1/m$
is nonzero. We may therefore use
Lemma~\ref{lem:congruenceinversephase} and truncate
$m,|n|,|x|\le Q$.

Compact support supplies
$r=(x^2+d^2)^{1/2}\ge\rho_{N,K}>0$ whenever an amplitude is
nonzero: the second column $(x,d)$ is a nonzero lattice vector
on the compact projection of one of the finitely many deck
translates of $K$. If $r<\rho_{N,K}$, the chart function vanishes
for every $t$, so all of its coefficients $a_{m,n}(x,d)$ vanish.
With $j=12,h=40$, \eqref{eq:congruenceamplitudes} reads
\[
 |a_{m,n}(x,d)|\ll_{N,K}\mathcal D_{52}^{\mathrm{aff}}(F)
 (1+mr)^{-4}(1+|n|)^{-12}
 \frac{(1+r)^{24}}{(1+mr)^{36}}.
\]
The final ratio is bounded on $r\ge\rho_{N,K}$.
It is $O_{N,K}(Q^{-10})$ if $m>Q$ or $|x|>Q$:
in either case extract ten powers of $1+mr$, and bound the
remaining ratio using $m\ge1$ and $r\ge\rho_{N,K}$.
Also $\sum_{|n|>Q}(1+|n|)^{-12}\ll Q^{-11}$.
In the original flow variable the retained factor
$(1+mr)^{-4}$ is precisely the integrand of the shear majorant,
whose sum is at most $CH^2$ by Lemma~\ref{lem:shearmajorant}.
The factors $(1+|n|)^{-12}$ are summable, and the change of
variables $x=b+sd$ has $\dd s/L=\mathrm dx/(L|d|)$ when
the integral is written with its endpoints in increasing order. Thus all three
tails together are
\[
 O_{N,K}\left(\mathcal D_{52}^{\mathrm{aff}}(F)
                       H^2S^{-10\epsilon_0}\right).
\]

\medskip\noindent\emph{The retained range and its weights.}
Enlarge the threshold in $S$ so that all fixed chart constants
are absorbed by the powers of $S$ below.
For $B=\left(\begin{smallmatrix}\alpha&\beta\\
\gamma&\delta\end{smallmatrix}\right)$ write
$X(q)=\alpha q+\beta c$ and $D(q)=\gamma q+\delta c$.
In this normalization $b=SX$, $d=D/S$, and the normalized physical
frequency is $w=m(D,-X)$. The frequency bounds therefore become
\[
 S^{1-2\epsilon_0}<m\sqrt{D^2+X^2}<S^{1+2\epsilon_0},
 \qquad |X|\le(1/2+S^{-2\epsilon_0})|D|.
\]
The cone condition gives $\sqrt{D^2+X^2}\asymp|D|$.
Also $B(q,c)^{\mathsf T}=(X,D)^{\mathsf T}$ and
$|q|\le|c|$. Combining these facts with
$m\le Q$ and $\|B\|+\|B^{-1}\|\ll_N H$ gives
\[
 \begin{gathered}
 |D|\gg S^{1-3\epsilon_0},\qquad |D|\le S^{1+2\epsilon_0},\\
 S^{1-13\epsilon_0/4}\ll_N|c|
                  \ll_N S^{1+9\epsilon_0/4}.
 \end{gathered}
\]
In particular, all retained terms satisfy
\[
 |m|,|n|,|x|,\|B\|,|d|,|d|^{-1}\le P,\qquad
 S/P\le |c|,|D|\le PS .
\]
Reversing the integral when $d<0$ and including the chart factor
gives the weight
\[
 W(q)=\frac{\chi(q,c)a_{m,n}(x,D(q)/S)}{S|D(q)|}.
\]
The integral endpoint condition is
$|x/S-X(q)|\le |D(q)|/2$.
The window, cone, integral endpoint, and $|q|\le|c|$ restrictions
are inequalities in polynomials of degree at most two in $q$.
Their intersection is therefore a union of intervals and isolated
points whose total number is bounded independently of all parameters.
On each interval the denominator has constant sign and is bounded
away from zero. The smooth homogeneous factor $\chi(q,c)$ has
bounded supremum and total variation, with bounds depending only
on $N$. We claim that
\[
 \|W\|_\infty+\operatorname{Var}(W)
 \ll_N\mathcal D_1^{\mathrm{aff}}(F)P^6S^{-2}.
\]
To verify it directly, differentiating the chart matrix
in $d$ at fixed $x,t$ gives the left-action derivative
\[
 -d^{-1}\begin{pmatrix}1&0\\0&-1\end{pmatrix}
                       +xd^{-2}E_{12}.
\]
Consequently
$|\partial_d a_{m,n}|\ll P^3\mathcal D_1^{\mathrm{aff}}(F)$.
Since $d'(q)=\gamma/S$ and
$|\partial_q\chi(q,c)|\ll_N|c|^{-1}$, we obtain
\[
 \|W\|_\infty\ll P S^{-2}\mathcal D_0^{\mathrm{aff}}(F),
 \qquad
 \|W'\|_\infty\ll P^5S^{-3}\mathcal D_1^{\mathrm{aff}}(F).
\]
The interval length is at most $2PS$, proving the variation bound.

We may replace each interval by one with half-open endpoints,
accounting separately for $O(1)$ integers per component. Isolated
points are treated in the same way. There are $O(PS)$ possible
moduli, while the sums over $m,n$ and the integration over $x$
contribute a factor $O(P^3)$. The resulting endpoint contribution is at most
\[
 C_N\mathcal D_0^{\mathrm{aff}}(F)P^5S^{-1}.
\]
It will be absorbed by the exceptional-modulus bound below.

\medskip\noindent\emph{Nonzero inverse coefficients.}
For $n\ne0$, the inverse modulus is $M=N|c|$.
The coefficient of $\overline q/M$ is $-\operatorname{sgn}(c)n$,
so its absolute value is $|n|$ for either sign of $c$.
The congruence restriction $q\equiv1\pmod N$ is removed
by the finite additive-character identity in
Lemma~\ref{lem:congruenceinversephase}; it only changes
the real linear coefficient. Apart from constant and real linear terms,
the real phase is
\[
 f(q)=\frac{mS^3(\alpha q+\beta c)^2-mSx^2}{2D(q)}
                       +\frac{n\gamma}{NcD(q)}.
\]
The determinant identity gives, for $\gamma\ne0$,
a linear polynomial plus a constant multiple of $D(q)^{-1}$.
Direct differentiation, also valid by continuity at
$\gamma=0$, yields
\[
 |f^{(6)}(q)|
 \le C_N\left(
   \frac{mS^3c^2|\gamma|^4}{|D|^7}
  +\frac{mSx^2|\gamma|^6}{|D|^7}
  +\frac{|n||\gamma|^7}{N|c||D|^7}\right)
 \le C_NP^{16}S^{-2}.
\]
Set $Z=\max\{1,M^2\sup|f^{(6)}|\}$. Then
$Z\le C_NP^{18}$. Since $\epsilon_0=10^{-12}$, increasing the
threshold for $S$ ensures
\[
 S^{1-1/100}\le M\le S^{1+1/100},\qquad
 0<|n|\le P\le S^{1/100},\qquad Z\le S^{1/100}.
\]
Indeed $M=N|c|$, $S/P\le|c|\le PS$, and
$P^{18}=S^{180\epsilon_0}$. Thus all size and derivative
hypotheses of Corollary~\ref{cor:fourdiffuniformrange} hold
with $a=1/100$.
Split intervals if necessary into a bounded number of pieces
of length at most $M$.
For moduli $M$ whose square-full part is at most $S^{1/100}$,
the corollary gives the per-modulus estimate
$O_N(|c|S^{-1/16000})$.
Lemma~\ref{lem:squarefulldiscard}, applied to $M=N|c|$,
shows that there are
$O_N(PS^{1-1/200})$ exceptional values of $c$.
For each such modulus there are $O(PS)$ possible $q$.
The supremum weight bound $O(P S^{-2}\mathcal D_0^{\mathrm{aff}}(F))$,
together with the sums over $m,n$ and the integration over $x$
(contributing a factor $O(P^3)$), therefore gives an exceptional contribution
$O_N(P^6S^{-1/200}\mathcal D_0^{\mathrm{aff}}(F))$.
This also absorbs the endpoint contribution, since
$P^5S^{-1}\le P^6S^{-1/200}$.

For the remaining nonzero-$n$ moduli, partial summation uses the
variation bound $O(P^6S^{-2}\mathcal D_1^{\mathrm{aff}}(F))$ and
$\sum_{|c|\le PS}|c|\ll P^2S^2$; the $m,n$ sums and
$x$ integral add $O(P^3)$.  This gives
$O_N(P^{11}S^{-1/16000}\mathcal D_1^{\mathrm{aff}}(F))$.

\medskip\noindent\emph{The zero inverse coefficient and final summation.}
For $n=0$, the phase is exactly $\Theta_{c,x}(q)$ from
Lemma~\ref{lem:movingzeroinverse}, up to constant and real linear
terms. The displayed retained range verifies its hypotheses, and
the lemma allows the congruence restrictions and the weights $W$.
For negative $c$, replace $(q,c)$ by $(-q,-c)$ and conjugate the
exponential; the same estimate applies, with a conjugated weight.

Partial summation multiplies the averaged arithmetic
bound by $O(P^6S^{-2}\mathcal D_1^{\mathrm{aff}}(F))$, while only the
$m$ sum and $x$ integral remain, contributing a factor $O(P^2)$.
Thus the retained contribution is at most
\[
 C_{N,K}\mathcal D_1^{\mathrm{aff}}(F)
 \left(P^{11}S^{-1/16000}
       +P^6S^{-1/200}
       +P^{1008}S^{-1/1000}\right)
 \ll_{N,K}\mathcal D_1^{\mathrm{aff}}(F)S^{-\epsilon_0}.
\]
For the last inequality, substitute $P=S^{10\epsilon_0}$ and use
\[
 111\epsilon_0<\frac1{16000},\qquad
 61\epsilon_0<\frac1{200},\qquad
 10081\epsilon_0<\frac1{1000}.
\]
The discarded-window error is $O(S^{-3\epsilon_0/2})$ and the
Fourier tails are $O(H^2S^{-10\epsilon_0})$ with their displayed
derivative norms. Both are smaller. All derivative norms used
have order at most $52$ and are therefore bounded by
$\mathcal D_{64}^{\mathrm{aff}}(F)$.
Since $S^{-\epsilon_0}=L^{-\epsilon_0/2}$, the proposition
follows, with the stated common derivative order $64$.
\end{proof}

\begin{proof}[Proof of Proposition~\ref{prop:regularaffineflow}]
The shortest-vector hypothesis and $\omega_N\le\epsilon_0/8$
give $\mathcal H_L(A)\le L^{\epsilon_0/8}$.
Proposition~\ref{prop:generalregularpointwise} therefore bounds the
nonzero fiber contribution by
$C_{N,K}\mathcal D_{64}^{\mathrm{aff}}(F)L^{-\epsilon_0/2}$,
uniformly in $A^{-1}v$.

For the zero fiber mode, put $T=L/2$ and enlarge the initial
threshold so that $T\ge2$. Since
\[
 a_2(T)^{-1}A=\diag(\sqrt2,1/\sqrt2)\,a_2(L)^{-1}A,
\]
the shortest-vector length at scale $T$ is at least
$2^{-1/2}L^{-\omega_N}$. Lemma~\ref{lem:injectivity} gives
\[
 \inj_{\Delta_2}(a_2(T)^{-1}A\Delta_2)
                              \gg L^{-2\omega_N}.
\]
Apply Theorem~\ref{thm:coverinput} separately to the positive
and negative halves of the time interval. Their normalized average
is the zero fiber contribution, and its error is at most
\[
 \begin{aligned}
 &C_N\mathcal D_{\ell_{\Delta_2}}(\overline F)
       L^{-A_{\Delta_2}+2\omega_NB_{\Delta_2}}\\
 &\qquad\le C_N\mathcal D_{\ell_{\Delta_2}}^{\mathrm{aff}}(F)
                                      L^{-A_{\Delta_2}/2}.
 \end{aligned}
\]
Here $\omega_N\le b_{\Delta_2}$ and
\eqref{eq:coverexponent} give the exponent inequality.
Fiber integration gives
$\mathcal D_s(\overline F)\le\mathcal D_s^{\mathrm{aff}}(F)$ and
$m_{\Delta_2}(\overline F)=m_{\mathrm{aff}}(F)$, as in
Section~\ref{sec:leviaffine}. Adding the nonzero-mode estimate
and using the definitions of $\zeta_N$ and $\ell_N^{\mathrm r}$
in \eqref{eq:regularaffineflowconstants} proves the proposition.
\end{proof}

\section{Proof of the main theorem}
\label{sec:mainproof}

All support estimates are now available. We apply the intrinsic
coefficient bounds, select the estimate for the classified support,
and compare derivative norms. The converse is proved separately in
Section~\ref{subsec:intrinsicnecessity}; it uses no effective flow estimate.

\subsection{Assembly of the proof}
\label{subsec:mainassembly}

We prove an estimate on a fixed closed support before applying it
to the support in Theorem~\ref{thm:intrinsic}. Let $\mathbf H\le\SL_3$
be a connected $\mathbb Q$-subgroup such that
$\mathbf H/R_u(\mathbf H)$ is semisimple and
$H\Gamma/\Gamma$ is closed with finite invariant volume, where
$H=\mathbf H(\R)^\circ$. Fix $a\in G$, $h\in H$, and a nonconstant
$\Psi\in\mathbf H(\R[t])$ with $\Psi(0)=\groupid$ and $\deg\Psi\le d$.
Write $J=\overline{\langle\Psi(\R)\rangle}$ and
$Y=aH\Gamma/\Gamma$, and let $\mu_Y$ be the $aHa^{-1}$-invariant
probability on $Y$. We call $(\mathbf H,a,\Psi,h)$ the
\emph{prescribed-support data}. The closed orbit $Y$ contains the
trajectory; the estimate below will identify its actual closure.

For $(J,h)$ of intrinsic type $(c_0,\tau)$, we establish
\begin{equation}\label{eq:prescribedestimate}
 \left|\frac1T\int_0^T f(a\Psi(t)h\Gamma)\dd t-\mu_Y(f)\right|
 \le C\norm f_{C_G^s}T^{-\vartheta_{\mathbf H,d}/(\tau+1)}
 \qquad(T\ge2).
\end{equation}
Here $s=s(\mathbf H,d)\ge1$ and $\vartheta_{\mathbf H,d}>0$;
for each compact $\Omega\subset X$, the bound holds for
$f\in C_c^\infty(X)$ supported in $\Omega$, with
$C=C(\mathbf H,a,\Psi,h,c_0,\tau,\Omega,
\mathscr L_{\mathbf H})>0$ independent of $f,T$.

\begin{proof}[Proof of Theorem~\ref{thm:intrinsic}]
Assume $(J,h)$ has intrinsic type $(c_0,\tau)$ for the data above.
For the main theorem, take $(a,\Psi,h)=(g,\Psi_g,\groupid)$.
Lemma~\ref{lem:orbitnormalization} supplies the structural
hypotheses on $\mathbf H_g$, without any arithmetic assumption
on $\Phi$ or $x$.

Write $f_a(y)=f(ay)$. Proposition~\ref{prop:supportclassification}
reduces the support to the semisimple, unipotent, and affine cases.
Proposition~\ref{prop:affineintrinsic} treats every affine tangent type.
It remains to treat the semisimple and unipotent cases. All coefficient constants below
may be decreased to at most one.

\medskip\noindent\emph{Unipotent supports.}
If $\mathbf H$ is unipotent, Lemma~\ref{lem:rationalnilcoordinates}
provides rational adapted coordinates of degree at most $2d$ on
the compact quotient, with a basis as in
Lemma~\ref{app:malcev-existence}. Let
$\chi_{\mathrm{ab}}:H\to\R^r$ be the horizontal coordinate
homomorphism of Lemma~\ref{app:horizontal}. It is a rational linear
function of $\log b=(b-I_3)-(b-I_3)^2/2$, since projection to the
abelianization annihilates every commutator term. For $0\ne k\in\Z^r$,
the function $p_k=k\cdot\chi_{\mathrm{ab}}$ is therefore a nonzero
element of $\mathscr P_{\mathbf H}$. A fixed integer $Q\ge1$
makes $Qp_k\in\mathscr L_{\mathbf H}$ and
$\norm{p_k}\le C\norm k$. Additivity gives
\[
 p_k(b\Psi(t)h)-p_k(bh)=p_k(\Psi(t)).
\]
Thus \eqref{eq:intrinsiccoefficientbound} is precisely a lower
bound $c\norm k^{-\tau}$ for the horizontal coefficient norm,
up to fixed factors. Proposition~\ref{prop:separatedunipotent},
with coordinate-degree bound $D=2d$, gives exponent
$1/[a_{2d}(\tau+1)]$.

\medskip\noindent\emph{The ambient quotient.}
If $\mathbf H=\SL_3$, apply Lemma~\ref{lem:intrinsictransfer}
to the three representations in \eqref{eq:intermediatefamilies}.
Their matrix-entry degree bounds are $1,2,2$ by
Lemma~\ref{lem:degreecutoff}, and none has a nonzero $H$-fixed
vector by Lemma~\ref{lem:allwedges}. Thus the degree-two intrinsic
condition gives all ambient coefficient bounds with exponent $\tau$.
In the regular case, the higher exterior-power lower bounds follow
from a uniformly thick point and the compression estimates in
Proposition~\ref{prop:separatedregular}; they require no further
arithmetic tests. Propositions~\ref{prop:separatedsingular}
and~\ref{prop:separatedregular} give rate $T^{-\delta_d/(\tau+1)}$.

\medskip\noindent\emph{Proper semisimple supports.}
Proposition~\ref{prop:supportclassification} gives a rational block group
or $\SO(\qform)$. If $\qform$ is definite, polynomial curves are constant,
so this case is excluded by the hypothesis.

Suppose first that $\qform$ is indefinite over $\mathbb R$ and
anisotropic over $\mathbb Q$ (Section~\ref{app:quadratic}).
Lemma~\ref{lem:semisimpleparametrizations} gives a compact quotient and a
real symmetric-square parametrization $\iota$.
Lemma~\ref{lem:polynomiallift}, with $K=F=\mathbb R$,
gives $\Psi=\iota\circ\psi$, $\deg\psi\le d$, and $h=\iota(g)$.
Put $\Delta=\iota^{-1}(\Gamma)$.
This is a discrete subgroup, and
$G_2/\Delta\simeq H/(H\cap\Gamma)$ is compact.
The identification preserves Haar probability.
For $F_0(b\Delta)=f(a\iota(b)\Gamma)$,
Proposition~\ref{prop:compactpolynomial} gives rate
$T^{-\delta_{\Delta,d}^{\mathrm c}}$, uniformly in $h$,
without using the Diophantine condition.
Its injectivity-radius constant is controlled by
\eqref{eq:anisotropicsystole}, Lemma~\ref{lem:injectivity},
and the fixed local coordinate comparison under $\iota$.
Since $\tau\ge0$, this also gives the required weaker rate with
exponent $\delta_{\Delta,d}^{\mathrm c}/(\tau+1)$.

The other proper semisimple supports are split over $\mathbb Q$,
by Proposition~\ref{prop:supportclassification} and
Lemma~\ref{lem:semisimpleparametrizations}.
Fix their rational map $\iota$ and the congruence subgroup
$\Delta=\Gamma_2(N)$ of
Lemma~\ref{lem:semisimplecongruencecover}. Lemma~\ref{lem:polynomiallift},
applied over $\R$, gives $\Psi=\iota\circ\psi$ with $\deg\psi\le d$ and
$h=\iota(g)$ for $g\in G_2$.
For a block embedding, projection to the two-by-two block after
the fixed rational change of basis gives a representation of $H$
with degree-one matrix entries and no fixed vector. Lemma~
\ref{lem:intrinsictransfer} supplies coefficient separation for
$\psi(t)g$ with exponent $\tau$.

For the symmetric-square embedding, the standard two-dimensional
representation does not descend to $H$, since the kernel element
$-I_2$ acts nontrivially on it (Section~\ref{app:quadratic}).
Use its three-dimensional symmetric square, which does descend and
whose entries are rational linear functions of the ambient matrix.
It has no fixed vector: diagonal invariance leaves only a multiple
of $E_{12}+E_{21}$, and upper-unipotent invariance forces its
coefficient to vanish. Lemma~\ref{lem:intrinsictransfer} gives its coefficient
separation with exponent $\tau$ in the basis
$E_{11},E_{22},E_{12}+E_{21}$ of symmetric matrices.

Lemma~\ref{lem:quadratictransfer} now gives the standard
two-dimensional coefficient bound with exponent $2\tau+1$.
No arithmetic hypothesis on the real lift $g$ is needed.

Apply Proposition~\ref{prop:congruencemodular} to
$F_0(b\Delta)=f(a\iota(b)\Gamma)$. By
Lemma~\ref{app:lattice-covers}, the covering pushes Haar
probability to the required probability on the support.
The exponent is $\delta_{\Delta,d}/(\tau+1)$ for a block
embedding and $\delta_{\Delta,d}/[2(\tau+1)]$ for a principal
embedding, by \eqref{eq:quadratictransfer}.

\medskip\noindent\emph{Exponents and derivative norms.}
For the semisimple and unipotent cases, one may therefore choose
\[
 \vartheta_{\mathbf H,d}=
 \begin{cases}
 a_{2d}^{-1},&\mathbf H\text{ unipotent},\\
 \delta_d,&\mathbf H=\SL_3,\\
 \delta_{\Delta,d},&\mathbf H\text{ a rational block group},\\
 \delta_{\Delta,d}/2,&\mathbf H\text{ a split principal group},\\
 \delta_{\Delta,d}^{\mathrm c},&
       \mathbf H=\SO(\qform),\ \qform\text{ indefinite and }\mathbb Q\text{-anisotropic}.
 \end{cases}
\]
All coordinates, parametrizations, and induced lattices are fixed
once $\mathbf H$ is fixed.

To compare norms, derivatives of each pullback are fixed linear
combinations of the $D_{Z_i}$. This also applies after normalizing
the tangent by $b_0$, including to $F_0(b_0b\Delta)$ on either
rank-one quotient.
In the ambient case the pullback is $f_{ab_0}$, with support in
$(ab_0)^{-1}\Omega$; \eqref{eq:localsobolev} bounds its Sobolev
norms by supremum derivatives. Take $s$ to be the maximum of
$s_*$ in \eqref{eq:localsobolev} and the derivative orders in
the model estimates invoked above. Then every test norm is at most
$C_{\Omega,a,\Psi,h,\mathbf H}\norm f_{C_G^s}$, independently
of $\tau$. The coefficient bounds above use only $c_0,\tau$,
finite-dimensional operator norms and positive singular values.
Together with the displayed model estimates this proves
\eqref{eq:prescribedestimate} with the stated dependencies.

\medskip\noindent\emph{Identification of the support.}
The estimate also identifies a prescribed support: if
$\overline{\{\Psi(t)h\Gamma:t\ge0\}}$ were proper in
$H\Gamma/\Gamma$, a nonnegative compactly supported smooth
function could vanish on its translate by $a$ and have positive
integral on $Y$. Its trajectory averages would vanish,
contradicting \eqref{eq:prescribedestimate} as $T\to\infty$.
Thus the polynomial trajectory, and hence $Jh\Gamma/\Gamma$,
is dense in the prescribed support whenever intrinsic type holds.

Finally, substituting $(a,\Psi,h)=(g,\Psi_g,\groupid)$ gives
$a\Psi(t)h\Gamma=\Phi(t)x$ and exactly the measure in
\eqref{eq:actualsupport}. Fixed conjugation preserves the degree
bound. This gives \eqref{eq:intrinsicmain}, with no loss in the
exponent or derivative order from the support normalization.
\end{proof}

\subsection{Necessity of finite intrinsic type}
\label{subsec:intrinsicnecessity}

\begin{proof}[Proof of Proposition~\ref{prop:intrinsicnecessity}]
Fix $m\ge1$ and write $\mathbf H=\mathbf H_g$, $H=H_g$,
$\Lambda=H\cap\Gamma$,
$\Psi=\Psi_g$, and
$\mathcal Y=H/\Lambda$, with Haar probability $\mu$.
The map $b\Lambda\mapsto gb\Gamma$ identifies $\mathcal Y$ with
the closed embedded orbit $Y$. On any fixed compact part of
$\mathcal Y$, smooth functions admit a fixed linear extension to
$C_c^\infty(X)$, supported in one compact neighborhood, with
$C_G^s$ norm bounded by a constant times their $C_H^s$ norm.
For a fixed basis $W_1,\ldots,W_n$ of $\Lie H$, this norm is
\[
 \norm F_{C_H^s}
 =\sum_{j=0}^s\ \sum_{i_1,\ldots,i_j=1}^n
       \norm{D_{W_{i_1}}\cdots D_{W_{i_j}}F}_{\infty,\mathcal Y}.
\]
The term $j=0$ is $\norm F_{\infty,\mathcal Y}$.
For the extension, choose finitely many charts straightening the
embedded orbit, extend a partition of unity and each localized
function constantly in the normal coordinates, and multiply by
fixed smooth cutoff functions there. Coordinate and Lie-derivative norms are
equivalent on these compact sets. Thus \eqref{eq:conversepower}
holds on $\mathcal Y$ for tests in any one such compact set,
with the same $s,\delta$.

Put $W=\R1\oplus\mathscr P_{\mathbf H}^{(m)}$, with the constant
function of norm one orthogonal to $\mathscr P_{\mathbf H}^{(m)}$,
and let $R(b)p(z)=p(zb)$. Its entries have degree at most $m$.
The integer span $M$ of the matrix-entry monomials of degree at
most $m$, viewed as functions on $H$, is a full lattice in $W$,
preserved by $R(\Lambda)$: both a matrix in $\Lambda$ and its
inverse have integral entries.
This is the invariant monomial lattice of
Lemma~\ref{lem:polynomial-functions}.
Clearing a fixed denominator, it suffices to prove
\eqref{eq:converseintrinsic} for nonzero $p$ in a fixed integer
multiple of $\mathscr L_{\mathbf H}^{(m)}$ contained in $M$.
For such $p$ write
\[
 P=\norm p\ge1,\qquad
 \Delta_p=\operatorname{dist}(p,\mathscr K_{J_g,\groupid}^{(m)}).
\]
Subtracting the orthogonal projection onto this invariant subspace
and using the degree of $R(\Psi(t))$ gives
\begin{equation}\label{eq:nearfixedduration}
 \sup_{0\le t\le T}\norm{R(\Psi(t))p-p}
 \le C_\Psi T^{md}\Delta_p\qquad(T\ge2).
\end{equation}

\emph{A fixed polynomial chart.}
For the supports classified in Proposition~\ref{prop:supportclassification}, Lemma~\ref{app:nilpotent-charts}
gives a basis $N_1,\ldots,N_n$ of $\Lie H$ consisting of
nilpotent matrices.
For $\mathfrak{sl}_2$ and $\mathfrak{sl}_3$, use the off-diagonal
matrix units and the additional matrices
$E_{ii}-E_{jj}+E_{ij}-E_{ji}$ for adjacent indices.
The block and principal embeddings preserve nilpotence; the
unipotent and affine radicals add nilpotent directions.
The definite orthogonal case has no nonconstant polynomial curve
and does not occur. Hence, on a sufficiently small box
$B=[-a,a]^n$, the map
\[
 b(z)=\exp(z_1N_1)\cdots\exp(z_nN_n)
\]
is a polynomial coordinate chart whose projection to $\mathcal Y$
is injective. Its Haar density is bounded above and below.
Choose $0\le\chi\in C_c^\infty(\mathcal Y)$ supported in the
interior of this chart, with $\mu(\chi)>0$.

For every fixed $C_1$, compactness gives a constant $c_1>0$ such that
\begin{equation}\label{eq:chartseparation}
 \sup_{z\in B}\norm{R(b(z))v-p}\ge c_1P
 \quad(p\in\mathscr P_{\mathbf H}^{(m)},\ \norm p=P>0,
                                      \norm v\le C_1P).
\end{equation}
Indeed, divide by $P$ and consider the compact parameter set
$\norm p=1$, $p\in\mathscr P_{\mathbf H}^{(m)}$, $\norm v\le C_1$.
If the supremum vanished, evaluation at $z=0$ would give $v=p$;
differentiation in every $N_i$ would make $p$ right-$H$-invariant.
Such a function is constant, and $p(\groupid)=0$ would force $p=0$.
Since each $N_i^3=0$, the vector polynomial in
\eqref{eq:chartseparation} has degree at most $D=2mn$.
Lemma~\ref{lem:polynomialgoodness} therefore gives, for
$\beta=1/(nD)>0$,
\begin{equation}\label{eq:chartsmalltarget}
 \Leb\{z\in B:\norm{R(b(z))v-p}\le\varepsilon\}
 \ll (\varepsilon/P)^\beta
 \qquad(0<\varepsilon\le1,\ \norm v\le C_1P).
\end{equation}
The non-strict inequality follows by increasing the threshold by
an arbitrarily small amount; larger thresholds are covered by the
trivial bound.

\emph{Smooth tests for an almost fixed lattice vector.}
Fix $\varphi\in C_c^\infty(W)$ with $0\le\varphi\le1$, equal
to one on the ball of radius $1/2$ and supported in the unit ball.
For $0<\varepsilon\le\varepsilon_0$, define
\begin{equation}\label{eq:latticedetectortest}
 F_{p,\varepsilon}(b\Lambda)
 =\chi(b\Lambda)\sum_{v\in M}
       \varphi\left(\frac{R(b)v-p}{\varepsilon}\right).
\end{equation}
The sum is locally finite and is independent of the lift $b$,
since $R(\Lambda)M=M$. On the fixed chart, the lattices $R(b)M$
have shortest-vector length bounded below. Choose
$\varepsilon_0$ so small that at most one summand is nonzero on
$\operatorname{supp}\chi$. Every relevant $v$ then has
$\norm v\le C_1P$ for a fixed $C_1$, by boundedness of $R(b)^{-1}$
on the chart. If $r=\dim W$, there are $O(P^r)$ such lattice vectors.
Equation~\eqref{eq:chartsmalltarget} and differentiation of
\eqref{eq:latticedetectortest} give
\begin{equation}\label{eq:detectormeanandnorm}
 0\le\mu(F_{p,\varepsilon})\ll P^r(\varepsilon/P)^\beta,
 \qquad
 \norm{F_{p,\varepsilon}}_{C_H^s}\ll(P/\varepsilon)^s.
\end{equation}
For the derivative bound, each derivative of $R(b)v$ is $O_s(P)$
on the chart. The separation of lattice vectors ensures that at most
one term contributes at each point, so no multiplicity factor occurs.

Choose a fixed $B_0>0$ with $\beta(B_0+1)>r$, and put
$\varepsilon=\varepsilon_*P^{-B_0}$. Decreasing the fixed
$\varepsilon_*>0$ gives
\[
 \mu(F_{p,\varepsilon})\le\tfrac14\mu(\chi),\qquad
 \norm{F_{p,\varepsilon}}_{C_H^s}\ll P^{s(B_0+1)}
\]
for all these $p$. If
$C_\Psi T^{md}\Delta_p\le\varepsilon/2$, then
\eqref{eq:nearfixedduration} and the summand $v=p$ show that
$F_{p,\varepsilon}(\Psi(t)\Lambda)=\chi(\Psi(t)\Lambda)$
for every $0\le t\le T$. At points in the support, all other
summands vanish; outside it both functions are zero.
Applying \eqref{eq:conversepower} to the fixed extensions of
$F_{p,\varepsilon}$ and $\chi$ would therefore give
\[
 \tfrac34\mu(\chi)
 \le\mu(\chi)-\mu(F_{p,\varepsilon})
 \ll P^{s(B_0+1)}T^{-\delta}.
\]
Take $T=K P^{s(B_0+1)/\delta}$ with fixed $K\ge2$ sufficiently
large. This is impossible, so
\[
 \Delta_p>\frac{\varepsilon}{2C_\Psi T^{md}}
 \gg P^{-B_0-md\,s(B_0+1)/\delta}.
\]
Put $\tau_m=B_0+md\,s(B_0+1)/\delta$. If the earlier lattice
multiple is $Q$, apply the estimate to $Qp$ for
$0\ne p\in\mathscr L_{\mathbf H}^{(m)}$ and use
$\Delta_{Qp}=Q\Delta_p$ and $\|Qp\|=Q\|p\|$.
The resulting constant is multiplied by $Q^{-(\tau_m+1)}$.
This proves \eqref{eq:converseintrinsic}.
For $m=2$ it gives necessity; sufficiency is
Theorem~\ref{thm:intrinsic}.
\end{proof}

\iffalse
The converse concerns a bound in one fixed finite-order derivative
norm, uniform over test functions. Its proof allows the test function to
depend on $p$ and $T$, while keeping its support in a single compact
set. It gives no sharp value of $\tau_m$ and does not assert
necessity for qualitative equidistribution.
\fi

\section{Applications and arithmetic limits}
\label{sec:applications}

This section emphasizes two consequences of the main theorem:
power estimates for polynomial curves generating the full affine
or ambient group, and the arithmetic characterization of a power
rate. The classical toral calculation tests the intrinsic condition
and the degree cutoff. Effective estimates on unipotent quotients
and compact rank-one quotients come from established results
\cite[Theorem 2.9]{GreenTao2012} and
\cite[Appendix B, Theorem B.1]{Zheng2015}; the common intrinsic
condition and the regular affine argument connect these estimates to
the general polynomial theorem. None of the applications is used
in its proof.

\subsection{Polynomial curves generating their full support}

\begin{corollary}[Equidistribution under full support generation]
\label{cor:intrinsicfullgeneration}
Use the notation of Theorem~\ref{thm:intrinsic}, without initially
assuming intrinsic type. If $J_g=H_g$, the intrinsic condition
holds with $(c_0,\tau)=(1,0)$ and
\eqref{eq:intrinsicmain} has rate $T^{-\vartheta_{\mathbf H_g,d}}$.
For the prescribed-support data of
Section~\ref{subsec:mainassembly}, $J=H$ likewise gives
\eqref{eq:prescribedestimate} with $\tau=0$ for every $h\in H$.
\end{corollary}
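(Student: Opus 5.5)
The proof only has to verify the intrinsic condition with $(c_0,\tau)=(1,0)$. Once that is done, Theorem~\ref{thm:intrinsic} and the prescribed-support estimate \eqref{eq:prescribedestimate} give the stated rates. I will show that the invariant subspace is trivial, and then show that every nonzero lattice vector has norm at least one.

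\emph{Triviality of the kernel.} Suppose $J_g=H_g$ and let $p\in\mathscr K_{J_g,\groupid}$. Then $p(bj)=p(b)$ for all $b,j\in H_g$. Evaluating at $b=\groupid$ gives $p(j)=p(\groupid)=0$ for every $j\in H_g$, so $p=0$ as a function on $H_g$. This is the remark after Lemma~\ref{lem:polynomial-functions}: a normalized right-invariant function vanishes. Hence $\mathscr K_{J_g,\groupid}=\{0\}$, and $\operatorname{dist}(p,\mathscr K_{J_g,\groupid})=\norm p$ for all $p$.

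\emph{Lattice lower bound.} The lattice $\mathscr L_{\mathbf H_g}$ is the integer span of an orthonormal basis. A nonzero element therefore has some integer coordinate of absolute value at least one, so $\norm p\ge1=1\cdot\norm p^{-0}$. This gives type $(1,0)$. Substituting $\tau=0$ into \eqref{eq:intrinsicmain} gives the rate $T^{-\vartheta_{\mathbf H_g,d}}$.

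\emph{Prescribed-support data.} If $J=H$ and $h\in H$ is arbitrary, the kernel $\mathscr K_{J,h}$ of \eqref{eq:intrinsickernel} consists of $p$ with $p(bh^{-1}jh)=p(b)$ for all $b\in H$, $j\in J$. Since $h^{-1}Jh=H$, such $p$ is right-$H$-invariant and vanishes at $\groupid$, hence $p=0$. The same lattice argument then gives type $(1,0)$ for $(J,h)$, and \eqref{eq:prescribedestimate} holds with $\tau=0$.

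Every step is routine. The only point needing care is that the kernel is trivial for every representative $h$, and conjugation invariance of $H$ settles that.
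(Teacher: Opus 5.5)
Your proposal is correct and matches the paper's proof essentially verbatim. Since $h^{-1}Hh=H$, every element of $\mathscr K_{H,h}$ is right-$H$-invariant, and the normalization at $\groupid$ then forces $\mathscr K_{H,h}=\{0\}$; nonzero vectors of the orthonormal integer lattice have norm at least one, which gives type $(1,0)$, and Theorem~\ref{thm:intrinsic} and \eqref{eq:prescribedestimate} then apply with $\tau=0$.
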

\begin{proof}
For every $h\in H$, right invariance under $h^{-1}Hh=H$
makes each function in $\mathscr K_{H,h}$ constant. The
normalization at $\groupid$ then gives $\mathscr K_{H,h}=\{0\}$.
This elementary invariant-function argument is recorded after
Lemma~\ref{lem:polynomial-functions}.
Every nonzero element of the chosen orthonormal integer lattice
has norm at least one, so the intrinsic condition holds with $(c_0,\tau)=(1,0)$.
The main theorem and its prescribed-support estimate give the
conclusions.
\end{proof}

The following examples give explicit curves to which this
corollary applies at every real starting point on the support.
For a normalized curve $\Psi$ and
$J=\overline{\langle\Psi(\R)\rangle}$, differentiating
$\Psi(t+s)\Psi(t)^{-1}\in J$ at $s=0$ shows that
$\Psi'(t)\Psi(t)^{-1}$ belongs to $\Lie J$. Polynomial interpolation
therefore implies that all its coefficient matrices belong to $\Lie J$.
It suffices to check that these coefficients generate $\Lie H$;
equality of the Lie algebras gives $J=H$ because $H$ is connected
(Lemma~\ref{app:lie-tests}).

\subsubsection{A regular affine curve}
\label{subsec:regularaffineexample}

Let $\mathbf H=\mathbf A$ be the affine group in
\eqref{eq:affinegroups}, and put $H=A_{\mathrm{aff}}$.
The orbit $H\Gamma/\Gamma$ is closed: $H$ stabilizes the row
covector $(0,0,1)$, whose $\Gamma$-orbit is discrete.
Moreover
\[
 H\cap\Gamma=\SL_2(\Z)\ltimes\Z^2,
\]
and Lemma~\ref{lem:affinequotient} with $N=1$ gives finite
invariant volume.

Consider the degree-six curve
\[
 \Psi_{\mathrm r}(t)=
 \begin{pmatrix}
 1+t^2&t&t^4/12-t^6/20\\
 t&1&t^3/3-t^5/20\\
 0&0&1
 \end{pmatrix}.
\]
Its determinant is one, it equals $\groupid$ at zero, and
\begin{equation}\label{eq:regularaffineexamplelog}
 \Psi_{\mathrm r}'\Psi_{\mathrm r}^{-1}
   =(E_{12}+E_{21})+t(E_{11}-E_{22})
          +t^2(-E_{12}+E_{23}).
\end{equation}
The leading tangent $N=-E_{12}+E_{23}$ satisfies
$N^2=-E_{13}\ne0$ and $N^3=0$, so it is regular.
Bracketing $E_{12}+E_{21}$ with $E_{11}-E_{22}$ separates
$E_{12}$ and $E_{21}$; the last coefficient supplies $E_{23}$,
and $[E_{12},E_{23}]=E_{13}$. The coefficients thus generate
the five-dimensional affine Lie algebra, giving $J=H$.

In particular, for every $a\in G$, $h\in H$, and compact
$\Omega\subset X$, there is $C=C(a,h,\Omega)>0$ such that
\begin{equation}\label{eq:regularaffineapplication}
 \left|\frac1T\int_0^T f(a\Psi_{\mathrm r}(t)h\Gamma)\dd t
           -\mu_{aH\Gamma/\Gamma}(f)\right|
 \le C\norm f_{C_G^s}T^{-\vartheta_{\mathbf A,6}}
 \qquad(T\ge2)
\end{equation}
for all $f\in C_c^\infty(X)$ supported in $\Omega$.
Here $s=s(\mathbf A,6)$, and
$\mu_{aH\Gamma/\Gamma}$ is the $aHa^{-1}$-invariant probability.
Full generation verifies the intrinsic condition even when
$a$ and $h$ have arbitrary real entries.

This curve generates a five-dimensional group and is therefore
not contained in any one-parameter subgroup.
The conclusion concerns a fixed polynomial curve and arbitrary
specified starting points. The related regular affine results
\cite[Theorems 1.1 and 1.2]{BV2016} and
\cite[Theorem 2]{Vinogradov2016} concern expanding translates
and nonlinear horocycle lifts, respectively, as discussed in the
introduction. The additional estimate used here is
Proposition~\ref{prop:regularaffineflow}, whose uniformity allows
the moving initial points arising from polynomial smoothing.

\subsubsection{A degree-three curve generating the ambient group}
\label{sec:regularexample}

Put $M=E_{31}$ and $N_0=E_{12}+E_{23}$. Consider
\begin{equation}\label{eq:regularexample}
 \Phi_{\mathrm{reg}}(t)=(I_3+tM)\exp(tN_0)
 =\begin{pmatrix}
  1&t&t^2/2\\0&1&t\\t&t^2&1+t^3/2
 \end{pmatrix}.
\end{equation}
It has degree three, determinant one, and value $\groupid$ at zero.
For $A=N_0+M$ and $C=E_{32}-E_{21}$, the identities
$M^2=0$ and $MN_0M=0$ give
\begin{equation}\label{eq:regularexamplederivative}
 \Phi_{\mathrm{reg}}'\Phi_{\mathrm{reg}}^{-1}
  =M+(I_3+tM)N_0(I_3-tM)=A+tC.
\end{equation}
Since $C^2=-E_{31}\ne0$ and $C^3=0$, the leading tangent is
regular. To verify full generation, compute
\begin{align*}
 D:=[C,A]&=\diag(1,-2,1),\\
 [D,[D,A]]&=9N_0,\\
 [D,N_0]&=3(E_{12}-E_{23}).
\end{align*}
Thus $E_{12},E_{23}$ and $M=A-N_0=E_{31}$ belong to $\Lie J$.
Their brackets give $E_{13},E_{21},E_{32}$ and the two diagonal
directions, so $\Lie J=\mathfrak{sl}_3(\R)$ and $J=G$.

For every $x\in X$ and compact $\Omega\subset X$, the corollary
therefore gives $C=C(x,\Omega)>0$ such that
\begin{equation}\label{eq:ambientapplication}
 \left|\frac1T\int_0^T f(\Phi_{\mathrm{reg}}(t)x)\dd t-m_X(f)\right|
 \le C\norm f_{C_G^s}T^{-\vartheta_{\SL_3,3}}
 \qquad(T\ge2)
\end{equation}
for all $f\in C_c^\infty(X)$ supported in $\Omega$.
Here $s=s(\SL_3,3)$ and $m_X$ is Haar probability on $X$.
Conjugation by any lift of $x$ preserves full generation,
so the conclusion applies to every starting lattice without
an algebraicity or Diophantine assumption on that lattice.

\subsubsection{The two singular affine tangents}
\label{subsec:affineexample}

For comparison with the regular affine application, the same
support admits the two degree-three curves
\[
 \begin{aligned}
 \Psi_{\mathrm{tr}}(t)&=
       \exp(t^3E_{13})\exp(tE_{12})\exp(tE_{21}),\\
 \Psi_{\mathrm b}(t)&=
       \exp(tE_{13})\exp(t^2E_{12})\exp(tE_{21}).
 \end{aligned}
\]
Both equal $\groupid$ at zero. Their logarithmic derivatives are
\[
 \begin{aligned}
 \Psi_{\mathrm{tr}}'\Psi_{\mathrm{tr}}^{-1}
  &=(E_{12}+E_{21})+t(E_{11}-E_{22})
       +t^2(-E_{12}+3E_{13})-t^3E_{23}-t^4E_{13},\\
 \Psi_{\mathrm b}'\Psi_{\mathrm b}^{-1}
  &=(E_{21}+E_{13})+t(2E_{12}-E_{23})
       +t^2(E_{11}-E_{22})-t^3E_{13}-t^4E_{12}.
 \end{aligned}
\]
In each line the coefficient matrices span the full affine Lie
algebra, so $J=H$ and type $(1,0)$ holds for every $h\in H$.
The leading tangents are $-E_{13}$ and $-E_{12}$, respectively.
Both have square zero; the first lies in the unipotent radical
and the second lies outside it. Thus the same intrinsic
criterion gives \eqref{eq:prescribedestimate} with rate
$T^{-\vartheta_{\mathbf A,3}}$ for both remaining affine tangent cases,
on every real translate and at every specified starting point.

\subsection{An algebraic criterion for intrinsic type}

\begin{corollary}[Equidistribution for algebraic normalized data]\label{cor:intrinsicalgebraic}
Let $\Phi,x,g,\Psi_g,J_g,\mathbf H_g$ be as in
Theorem~\ref{thm:intrinsic}, without initially assuming intrinsic type.
If $F\subset\R$ is a number field of degree $e$ and
$\Psi_g\in\mathbf H_g(F[t])$, then $(J_g,\groupid)$ has intrinsic type
with $\tau=e-1$ and some
$c_0=c_0(F,\Psi_g,\mathscr L_{\mathbf H_g})>0$. Consequently
\eqref{eq:intrinsicmain} has rate $T^{-\vartheta_{\mathbf H_g,d}/e}$.
In particular, this applies when the entries of the coefficients
of $\Phi$ and of a lift $g$ lie in $F$.
\end{corollary}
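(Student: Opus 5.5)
The plan is to use Lemma~\ref{lem:intrinsiccoefficients} with $h=\groupid$ to turn the intrinsic condition into a lower bound for a linear map with entries in $F$, evaluated at integral vectors. The bound itself will come from a Liouville-type argument based on norms from $F$ to $\mathbb Q$. The last step is to apply Theorem~\ref{thm:intrinsic} with $\tau=e-1$, which gives the rate $T^{-\vartheta_{\mathbf H_g,d}/e}$.

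\textbf{Step 1: reduce to the coefficient map.} Fix the rational bases of $\mathscr P_{\mathbf H_g}$ and $\mathscr W_{\mathbf H_g}$. By the last assertion of Lemma~\ref{lem:intrinsiccoefficients}, the coefficient map $\mathscr C=\mathscr C_{\Psi_g,\groupid}$ is then a matrix with entries in $F$. By the converse part of the same lemma, it suffices to prove
\[
 \norm{\mathscr C p}\ge c\norm p^{-(e-1)}
\]
for every $p\in\mathscr L_{\mathbf H_g}$ with $\mathscr C p\ne0$. For $p\in\ker\mathscr C=\mathscr K$ we need the distance condition directly, so we also need integral points of $\mathscr K$ to be far from nonzero lattice points outside it. The cleanest way to handle both cases together is to replace $\mathscr C$ by a $\mathbb Q$-rational map with the same kernel.

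\textbf{Step 2: pass to a rational map with the same kernel.} Choose a $\mathbb Q$-basis $1=\omega_1,\dots,\omega_e$ of $F$ and write $\mathscr C=\sum_i\omega_i\mathscr C_i$ with each $\mathscr C_i$ rational. Two cases arise.
\begin{itemize}
\item If $\mathscr C p=0$ for integral $p$, then every $\mathscr C_i p=0$, because the $\omega_i$ are linearly independent over $\mathbb Q$ and $\mathscr C_i p$ is rational.
\item Otherwise, consider a nonzero coordinate $\lambda=\ell(\mathscr C p)\in F$ of $\mathscr C p$. After clearing a fixed denominator, $\lambda$ is an algebraic integer of $F$ whose Galois conjugates are $O(\norm p)$. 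Since its norm is a nonzero integer, $1\le|N_{F/\mathbb Q}(\lambda)|\ll|\lambda|\norm p^{e-1}$, which gives $|\lambda|\gg\norm p^{-(e-1)}$.
\end{itemize}
The lattice $\mathscr L\cap\mathscr K$ need not span $\mathscr K$ over $\R$, so the distance condition is not yet immediate. To fix this, let $\mathscr K_{\mathbb Q}=\bigcap_i\ker\mathscr C_i$, a rational subspace contained in $\mathscr K$.

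\textbf{Step 3 (the main obstacle): relate $\mathscr K$ to $\mathscr K_{\mathbb Q}$.} One might hope that $\mathscr K$ is defined over $\mathbb Q$, for instance by arguing that $\mathscr K$ is stable under the embeddings of $F$. That argument is not available: $\mathscr K$ is the kernel of an $F$-matrix, so it is an $F$-subspace, generally not a rational one. I would therefore handle it with a compactness argument.
\begin{itemize}
\item Apply Step~2 to the restriction of $\mathscr C$ to a rational complement $V$ of $\mathscr K_{\mathbb Q}$. Write $p=p_0+p_1$ with $p_0\in\mathscr K_{\mathbb Q}$ and $p_1\in V$; after a fixed denominator, $p_1$ is integral.
\item If $p_1=0$, then $p\in\mathscr K$ and the required lower bound must come from the other direction of the comparison.
\item If $p_1\ne0$, Step~2 gives $\norm{\mathscr C p_1}\gg\norm{p_1}^{-(e-1)}$. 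Here $\mathscr C p_1$ can vanish only when $p_1\in\mathscr K\cap V$. The subspace $\mathscr K\cap V$ is defined over $F$ but may contain no rational points at all.
\end{itemize}
This is the delicate point. I would treat it by applying the norm argument to a Pl\"ucker coordinate. Concretely, take the distance from $p$ to $\mathscr K$ to be the ratio of the norm of $p\wedge k_1\wedge\dots\wedge k_j$ to the norm of $k_1\wedge\dots\wedge k_j$, where $k_1,\dots,k_j$ is an $F$-basis of $\mathscr K$. The numerator is an $F$-linear form in the integral vector $p$, so its coordinates are handled exactly as $\lambda$ was in Step~2. If all those coordinates vanish, then $p\in\mathscr K$. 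In that case $p$ must be $0$: its Galois conjugate spaces all contain the rational vector $p$, so $p$ lies in $\mathscr K_{\mathbb Q}$, and a normalized integral test in $\mathscr K_{\mathbb Q}$ is right-$J$-invariant for all of $\mathbf H_g$ by minimality of the $\mathbb Q$-envelope, hence constant, hence $0$. This Galois/minimality argument, namely that rational invariant tests are invariant under the full $\mathbb Q$-envelope $\mathbf H_g$, is the key step I expect to require the most care.
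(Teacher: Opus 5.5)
Your proposal is correct. It differs from the paper in how it proves the key point, and most of Step~3 is a detour you do not need.

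\textbf{What actually has to be shown.} The whole content of the corollary is that $\mathscr K=\ker\mathscr C_{\Psi_g,\groupid}$ contains no nonzero rational vector. Once that is known, the proof finishes in two lines. Your Step~2 norm argument (this is Lemma~\ref{lem:algebraicseparation}) gives $\norm{\mathscr C p}\ge c\norm p^{-(e-1)}$ for every nonzero $p\in\mathscr L_{\mathbf H_g}$. The inequality $\norm{\mathscr C(p)}\le\norm{\mathscr C}\operatorname{dist}(p,\mathscr K)$ from Lemma~\ref{lem:intrinsiccoefficients} holds for all $p$, and it turns this directly into the distance bound. So the worry that $\mathscr L\cap\mathscr K$ need not span $\mathscr K$, the splitting $p=p_0+p_1$, and the Pl\"ucker formula for the distance (correct, but redundant) can all be dropped.

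One remark in your outline is wrong. For a nonzero integral $p\in\mathscr K$ the distance is $0$, so no lower bound can come ``from the other direction''. Such $p$ simply have to be shown not to exist.

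\textbf{How the paper does it.} The paper rules them out with Lemma~\ref{lem:intrinsicdensity}. It uses density of $J_g\Gamma/\Gamma$ in $H_g\Gamma/\Gamma$, supplied by Lemma~\ref{lem:orbitnormalization} and ultimately by Shah's theorems. Combined with discreteness of the $R(H_g\cap\Gamma)$-stable monomial lattice, this forces a rational $J_g$-invariant test to be $H_g$-fixed.

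\textbf{How you do it.} You use minimality of $\mathbf H_g$ instead, and this is valid. Take a rational $p\in\mathscr K$ and consider its stabilizer $\{b\in\mathbf H_g:R(b)p=p\}$. It is cut out by polynomial equations with rational coefficients, because the matrix entries of $R$ are rational polynomials (Lemma~\ref{lem:polynomial-functions}). It is a subgroup, and it contains $J_g$. By minimality it equals $\mathbf H_g$. Hence $p$ is right-$H_g$-invariant, so constant, and being normalized it is $0$.

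\textbf{Comparison.} Your argument is purely algebraic and does not use the orbit-closure identification. The paper's lemma covers more: it works at an arbitrary base point $h$ with dense orbit. There $R(h)p$ is no longer rational, and the minimality argument does not apply directly.
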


The algebraic criterion uses the following two elementary facts.
They separate the qualitative consequence of density from the
number-field estimate that supplies a power bound.

\begin{lemma}
\label{lem:intrinsicdensity}
Let $\mathbf H\le\SL_3$ be a connected $\mathbb Q$-group,
$H=\mathbf H(\R)^\circ$, $J\le H$ a closed subgroup, and $h\in H$.
Assume $H\Gamma/\Gamma$ is closed.
If $\overline{Jh\Gamma/\Gamma}=H\Gamma/\Gamma$, then
$\mathscr K_{J,h}\cap\mathscr P_{\mathbf H}(\mathbb Q)=\{0\}$,
where rationality refers to the chosen rational basis.
\end{lemma}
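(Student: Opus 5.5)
We argue by contradiction. Suppose some nonzero rational $p\in\mathscr K_{J,h}\cap\mathscr P_{\mathbf H}(\mathbb Q)$ exists; after clearing denominators we may take $p$ with rational coefficients in the monomial lattice $M_2$ of Lemma~\ref{lem:polynomial-functions}. By \eqref{eq:intrinsickernel}, $p(bh^{-1}jh)=p(b)$ for all $b\in H$, $j\in J$. The plan is to turn this into a continuous function on $X$ that is invariant along $Jh\Gamma/\Gamma$, and then use density to make it invariant along all of $H\Gamma/\Gamma$. That would force $p$ to be right-$H$-invariant, and hence zero by the normalization $p(\groupid)=0$.

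The function is built from the finite-dimensional representation $R$ on $W_2=\R1\oplus\mathscr P_{\mathbf H}$. Put $q=R(h^{-1})p$, so that $q(b)=p(bh^{-1})$. The kernel condition then reads $R(j)q=q$ for every $j\in J$. Define $\Theta:H\to W_2$ by $\Theta(b)=R(b)^{-1}q$; it satisfies $\Theta(jb)=\Theta(b)$ for $j\in J$.

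The main obstacle is that $\Theta$ is not $\Lambda$-periodic, where $\Lambda=H\cap\Gamma$. To get around this we pass to the lattice orbit: for $b\in H$ consider the discrete set $\mathcal O(b)=R(b)^{-1}R(\Lambda)q$. Since $R(\Lambda)$ preserves $M_2$ and $q$ is rational, after scaling $R(\Lambda)q\subset M_2'$ for a fixed lattice. We then use the one-sided invariance and the closure. The points $b\in Jh\Lambda$ (thinking of $H/\Lambda$) satisfy $q\in R(b)^{-1}$-translated orbit data, namely $R(jh\lambda)^{-1}q'\ni$ a fixed vector. Concretely, the vector $p=R(h)q$ belongs to the set $R(b)^{-1}M_2$ for every $b=jh\lambda$, because $R(jh\lambda)^{-1}\cdots$.

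A cleaner formulation is the following closed condition on $b\Lambda$: the condition that ``the lattice $R(b)M_2$ contains the fixed vector $q$'' is well defined on $H/\Lambda$ and closed, since lattices vary continuously and lattice points cannot accumulate. It holds at $b=jh$ for every $j\in J$, because $R(jh)^{-1}q=R(h)^{-1}R(j)^{-1}q$. Here I will instead use $v_0=R(h)^{-1}$-adjusted data chosen so that membership holds at $b=h$; the $J$-invariance of $q$ then propagates it to all of $Jh$.

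By density, the condition holds on all of $H$. So for every $b\in H$ there is $v_b\in M_2$ with $R(b)v_b=q$. Continuity and discreteness make $v_b$ locally constant, hence constant on the connected group $H$. Thus $R(b)v=q$ for all $b$, and evaluating at $b=\groupid$ gives $v=q$. Therefore $q$, and hence $p=R(h)q$, is right-$H$-invariant. An $H$-invariant function is constant, and since $p(\groupid)=0$ we conclude $p=0$.

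The step needing the most care is arranging $h$ correctly so that membership holds at the base point. The fix is to run the argument with the base point $h\Lambda$, which is legitimate because $Jh\Lambda$ is dense.
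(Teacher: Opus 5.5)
Your strategy is the same as the paper's proof, but the key step fails as written because your auxiliary vector has the wrong inverse. The intended argument is: the condition $R(b)^{-1}v\in M_2$ defines a well-defined closed subset of $H/\Lambda$, since $R(\Lambda)M_2=M_2$. It contains a dense orbit, hence is everything. Then $b\mapsto R(b)^{-1}v$ is a continuous map from the connected group $H$ into a discrete lattice, so $v$ is $H$-fixed and $p$ is constant, hence zero.

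\emph{The error.} Take $q=R(h^{-1})p$, i.e.\ $q(b)=p(bh^{-1})$. The claim $R(j)q=q$ would require $p(b\,hjh^{-1})=p(b)$ for all $b$. But \eqref{eq:intrinsickernel} gives invariance of $p$ under right translation by $h^{-1}Jh$, not $hJh^{-1}$; what actually holds is that $q$ is fixed by $h^{-2}Jh^{2}$.

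The membership claim at $b=jh$ also fails. Already at $b=h$ it asks for $R(h)^{-1}q=R(h^{-2})p\in M_2$, which has no reason to hold for a real $h$. Your $q$ could be rescued differently: membership holds at $b=h^{-1}$, where $R(h^{-1})^{-1}q=p$, and propagates along $h^{-2}Jh^{2}$ to $h^{-2}Jh\Lambda$. That set is dense as a left translate of $Jh\Lambda$. But this is not what you wrote, and your proposed base point $h\Lambda$ is the wrong one for this $q$.

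For the same reason $q$ is not rational, since $h$ is an arbitrary real element. The middle paragraph about $R(\Lambda)q$ lying in a fixed lattice should therefore be discarded.

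\emph{The repair.} Make the paper's choice $v=R(h)p$, i.e.\ $v(b)=p(bh)$. Then \eqref{eq:intrinsickernel} says $R(h^{-1}jh)p=p$, which is exactly $R(j)v=v$ for $j\in J$. Moreover,
\[
 R(jh)^{-1}v=R(h)^{-1}R(j)^{-1}v=R(h)^{-1}v=p\in M_2\qquad(j\in J).
\]
So the closed set $\{b\Lambda:R(b)^{-1}v\in M_2\}$ contains $Jh\Lambda/\Lambda$ directly, and no adjustment of the base point is needed. From there your final paragraph goes through unchanged, with $p=R(h)^{-1}v$ in place of $p=R(h)q$.
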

\begin{proof}
Let $\Lambda=H\cap\Gamma$. The integer span $M$ of the restrictions
of all matrix-entry monomials of degree at most $2$ is a full lattice
in $\mathscr W_{\mathbf H}$: these finitely many vectors span the
space and have rational coordinates. Integral matrices and their
integral inverses preserve this monomial span under right translation,
so $R(\Lambda)$ preserves $M$.
Lemma~\ref{lem:polynomial-functions} supplies this rational
polynomial-lattice construction.

If $p\in\mathscr K_{J,h}$ is rational, multiply it by an integer
to place it in $M$. The kernel identity in
Lemma~\ref{lem:intrinsiccoefficients} says $R(h)p\in\mathscr W_{\mathbf H}^J$.
Put $v=R(h)p$ and consider
\[
 Z_v=\{b\Lambda:R(b^{-1})v\in M\}.
\]
This is well defined because $R(\Lambda)$ preserves $M$.
It is closed: on each neighborhood admitting a continuous lift
to $H$, it is the inverse image of the closed lattice $M$
under the continuous map $b\mapsto R(b^{-1})v$.
For $b=jh$, with $j\in J$, one has
$R(b^{-1})v=R(h^{-1})R(j^{-1})R(h)p=p\in M$.
Thus it contains $Jh\Lambda/\Lambda$.
Density gives $Z_v=H/\Lambda$. Thus the continuous map
$b\mapsto R(b^{-1})v$ sends the connected group $H$ into the
discrete lattice $M$ and is constant. Hence $v$ is $H$-fixed,
and $p=R(h^{-1})v=v$ is right-$H$-invariant. It is therefore
constant; the normalization $p(\groupid)=0$ makes it zero.
\end{proof}

\begin{lemma}
\label{lem:algebraicseparation}
Let $A$ be a real matrix whose entries lie in a number field
$K\subset\R$ of degree $e$, and suppose
$\ker A\cap\mathbb Q^n=\{0\}$. Then
\[
 \norm{Aw}\ge c_A\norm w^{-(e-1)}\qquad(0\ne w\in\Z^n)
\]
for an explicit $c_A>0$ determined by denominators and conjugates
of the entries.
\end{lemma}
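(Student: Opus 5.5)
This is a Liouville-type bound, and I would prove it with the standard norm argument. First reduce to the case where $A$ is nonzero and has entries in $K$. Choose a $\mathbb Q$-basis $\omega_1,\ldots,\omega_e$ of $K$. After clearing a common denominator $q$ of the coordinates of the entries, I may assume each entry of $qA$ is an integer combination of the $\omega_i$. Replacing $\omega_i$ by its integral multiple, I may also assume the $\omega_i$ lie in the ring of integers $\mathcal O_K$. Then for $w\in\Z^n$ every coordinate $(qAw)_j$ lies in $\mathcal O_K$. Since $\ker A\cap\mathbb Q^n=\{0\}$ and $w\neq0$, the vector $Aw$ is nonzero. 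So some coordinate $\alpha=(qAw)_j$ is a nonzero algebraic integer in $K$.

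Next I apply the norm. Let $\sigma_1=\mathrm{id},\sigma_2,\ldots,\sigma_e$ be the embeddings of $K$ into $\mathbb C$. Because $\alpha\neq0$ is an algebraic integer, $N_{K/\mathbb Q}(\alpha)=\prod_i\sigma_i(\alpha)$ is a nonzero rational integer, so $\prod_i|\sigma_i(\alpha)|\ge1$. For $i\ge2$, write $\sigma_i(\alpha)=\sum_k\sigma_i(qA_{jk})w_k$, which gives $|\sigma_i(\alpha)|\le C_A\norm w$. Here $C_A$ is the maximum over $i$ and $j$ of the Euclidean norms of the conjugated rows of $qA$; this constant is determined by the denominators and conjugates of the entries. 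Hence
\[
 |\alpha|\ge\prod_{i\ge2}|\sigma_i(\alpha)|^{-1}\ge (C_A\norm w)^{-(e-1)}.
\]
Since $\norm{Aw}\ge|(Aw)_j|=|\alpha|/q$, I obtain the claim with $c_A=q^{-1}C_A^{-(e-1)}$.

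The one point that needs care is making sure $Aw$ has a nonzero coordinate. This is exactly what the hypothesis $\ker A\cap\mathbb Q^n=\{0\}$ gives for integral $w\ne0$, and no other step is a genuine obstacle. If $e=1$, the same argument gives the bound $\norm{Aw}\ge1/q$ directly. To state the constant explicitly, I would take $q$ to be the least positive integer such that all entries of $qA$ lie in $\mathcal O_K$. Then the $\omega_i$ are not needed: each entry $qA_{jk}$ is itself an algebraic integer, and the estimate above uses only its conjugates.
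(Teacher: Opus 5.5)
Your argument is correct and is essentially the paper's proof: clear denominators so that $qA$ has algebraic-integer entries, pick a nonzero coordinate of $Aw$, and combine $|N_{K/\mathbb Q}(q(Aw)_j)|\ge1$ with the bound $\norm{w}$ times a fixed constant on each nontrivial conjugate. The paper's constant $c_A=(Q\prod_{\sigma\ne\mathrm{id}}M_\sigma)^{-1}$, with each $M_\sigma$ capped below by $1$, differs from your $q^{-1}C_A^{-(e-1)}$ only cosmetically.
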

\begin{proof}
Choose an integer $Q\ge1$ so that $QA$ has algebraic-integer entries.
For each embedding $\sigma:K\to\mathbb C$ other than the specified
real embedding, let
$M_\sigma=\max\{1,\max_i\norm{\sigma((QA)_{i,*})}\}$.
For $w\ne0$, some coordinate $\ell_i(w)$ of $Aw$ is nonzero.
The algebraic integer $Q\ell_i(w)$ has nonzero integer field norm,
of absolute value at least one. All its other conjugates have
absolute value at most $M_\sigma\norm w$. Consequently one may take
\[
 c_A=\left(Q\prod_{\sigma\ne\mathrm{id}}M_\sigma\right)^{-1}.
\]
Use a common field and the minimum constant for a finite family.
\end{proof}

\begin{proof}[Proof of Corollary~\ref{cor:intrinsicalgebraic}]
Lemma~\ref{lem:orbitnormalization} supplies density in $H_g\Gamma/\Gamma$.
In the algebraic case, Lemma~\ref{lem:intrinsicdensity}, with $h=\groupid$,
therefore excludes nonzero rational vectors in
$\ker\mathscr C_{\Psi_g,\groupid}$.
Lemma~\ref{lem:intrinsiccoefficients} places its matrix entries
in $F$, without passing to a field containing a modular lift.
Lemma~\ref{lem:algebraicseparation} gives the coefficient bound
with exponent $e-1$. The upper singular-value bound in
Lemma~\ref{lem:intrinsiccoefficients} converts it into
\eqref{eq:intrinsicdiophantine}, with a positive constant
reduced to at most one. The main theorem gives the rate.
\end{proof}

\subsection{Necessity and polynomial test degree}
\label{subsec:degreecutoffs}

\subsubsection{Necessity of finite intrinsic type: a toral example}
\label{subsec:liouvilleobstruction}

The following example recalls the classical Diophantine obstruction
to effective equidistribution for linear flows on tori. For
background on toral examples and Diophantine error estimates, see
\cite[Sections~2.2--2.3]{Einsiedler2010}; related quantitative
estimates for discrete linear sequences on tori are given in
\cite[Section~3, Proposition~3.1]{GreenTao2012}.
We express the toral criterion in the matrix setting of this paper
and identify it with finite intrinsic type, illustrating
Proposition~\ref{prop:intrinsicnecessity}. The Fourier calculation
below makes explicit why Liouville slopes admit no power estimate
uniform over smooth tests in a fixed derivative norm of finite order.

Let $\mathbf U\le\SL_3$ be the $\mathbb Q$-algebraic subgroup
whose real points are
\[
 U=\{I_3+uE_{12}+vE_{13}:u,v\in\R\}.
\]
The coordinates $(u,v)$ identify $U$ with the additive group
$\R^2$ and $\Lambda:=U\cap\Gamma$ with $\Z^2$. Thus
\[
 Y:=U\Gamma/\Gamma\simeq U/\Lambda\simeq\R^2/\Z^2
\]
is a compact embedded orbit: the natural map $U/\Lambda\to X$
is an injective immersion with compact domain, hence an embedding.
Write $\mu_Y$ for its Haar probability measure. For an irrational
$\alpha$, put
\[
 \Psi_\alpha(t)=I_3+t(E_{12}+\alpha E_{13}),\qquad
 J=\overline{\langle\Psi_\alpha(\R)\rangle}
   =\Psi_\alpha(\R).
\]
Here $J$ is the closed line $v=\alpha u$ in $U$.
Fix $h=I_3+u_0E_{12}+v_0E_{13}\in U$. The trajectory
$\Psi_\alpha(t)h\Gamma$ has torus coordinates
$(u_0+t,v_0+\alpha t)$.

For $(m,n)\in\Z^2\setminus\{0\}$, the character
$\chi_{m,n}(u,v)=e^{2\pi\mathrm i(mu+nv)}$ has nonzero
frequency $\omega_{m,n}=m+\alpha n$ along this trajectory, and
\[
 \frac1T\int_0^T\chi_{m,n}(\Psi_\alpha(t)h\Gamma)\dd t
 =e^{2\pi\mathrm i(mu_0+nv_0)}
   \frac{e^{2\pi\mathrm i T\omega_{m,n}}-1}
        {2\pi\mathrm i T\omega_{m,n}}
 \longrightarrow0.
\]
The constant character has average one. Uniform approximation
by trigonometric polynomials therefore proves convergence to
$\mu_Y$ for every continuous function on $Y$. Every nonempty
open subset of $Y$ supports a nonnegative continuous function
with positive $\mu_Y$ integral, so the convergence also shows
that the trajectory is dense in $Y$. In particular, the
curve-generated group $J$ is strictly smaller than its
orbit-closure group $U$.

\begin{proposition}[Diophantine criterion for the linear torus flow]
\label{prop:toralintrinsictype}
For the above irrational slope $\alpha$ and any $h\in U$, the
following conditions are equivalent:
\begin{enumerate}
\item There exist $c>0$ and $\tau\ge0$ such that
\[
 |m+\alpha n|\ge c(m^2+n^2)^{-\tau/2}
 \qquad((m,n)\in\Z^2\setminus\{0\}).
\]
\item The pair $(J,h)$ has finite intrinsic type in
$\mathscr P_{\mathbf U}$.
\item There exist an integer $s\ge1$ and $\delta>0$ such that,
for every compact $\Omega\subset X$, some $C_\Omega>0$ satisfies
\[
 \left|\frac1T\int_0^T f(\Psi_\alpha(t)h\Gamma)\dd t
                    -\mu_Y(f)\right|
 \le C_\Omega\norm f_{C_G^s}T^{-\delta}\qquad(T\ge2)
\]
for all $f\in C_c^\infty(X)$ supported in $\Omega$.
\end{enumerate}
Conditions (1) and (2) hold with the same admissible exponent
$\tau$, although their positive constants may differ.
\end{proposition}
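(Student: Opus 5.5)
Prove the three implications $(1)\Leftrightarrow(2)$, $(1)\Rightarrow(3)$ and $(3)\Rightarrow(2)$, using explicit toral computations for the first two and Proposition~\ref{prop:intrinsicnecessity} for the last.

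\emph{Step 1: $(1)\Leftrightarrow(2)$.} Compute $\mathscr P_{\mathbf U}$ and $\mathscr K_{J,h}$ directly. Since $\mathbf U$ is the group $I_3+uE_{12}+vE_{13}$, the restrictions of degree-two matrix-entry polynomials vanishing at $\groupid$ are the polynomials in $(u,v)$ of degree at most two without constant term. The monomials $u,v,u^2,uv,v^2$ form a rational basis. Right translation by $h$ is a coordinate shift, so $\mathscr K_{J,h}$ consists of the $p$ with $p(b\,h^{-1}\Psi_\alpha(t)h)=p(b)$. Because $U$ is abelian, this means $p(u+t,v+\alpha t)=p(u,v)$ for all $t$, i.e.\ $(\partial_u+\alpha\partial_v)p=0$. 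For $\alpha$ irrational, among polynomials of degree at most two with no constant term this kernel is spanned by $v-\alpha u$ and $(v-\alpha u)^2$. The distance of an integral $p=au+bv+Q(u,v)$ from this kernel is comparable to $|a\alpha+b|$ plus the distance of the quadratic part $Q$ from $\R(v-\alpha u)^2$.

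For the quadratic part, write $Q=xu^2+yuv+zv^2$. Distance to $\R(u^2\alpha^2-2\alpha uv+v^2)$ is comparable to the norm of the linear map $(x,y,z)\mapsto(y+2\alpha x,\,z\alpha^2-x,\dots)$. More cleanly, apply $(\partial_u+\alpha\partial_v)$ to get the linear form $(2x+\alpha y)u+(y+2\alpha z)v$. The Vandermonde-style coefficient map of Lemma~\ref{lem:intrinsiccoefficients} shows that the distance is comparable to $|2x+\alpha y|+|y+2\alpha z|$. Each of these is a linear form $m+\alpha n$ with integers of size $\ll\norm p$, so (1) gives a lower bound $\gg\norm p^{-\tau}$, with the same exponent. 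Conversely, testing (2) on $p=nv+mu$ yields (1) with the same $\tau$.

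\emph{Step 2: $(1)\Rightarrow(3)$.} Use the Fourier estimate. Extend nothing: restrict $f$ to $Y$ as a function on $\R^2/\Z^2$. By \eqref{eq:intrinsicpointwise} and integration by parts, $|\widehat f(m,n)|\ll\norm f_{C_G^s}(1+|m|+|n|)^{-s}$, since $E_{12},E_{13}$ are fixed directions. The character computation above bounds each nonzero mode's average by $\min\{1,(\pi T|m+\alpha n|)^{-1}\}$. Split the modes at $\norm{(m,n)}\le R$. Below the cutoff, (1) gives a bound $\ll T^{-1}R^{\tau+2}$; above it, the tail is $\ll R^{2-s}$. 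Choosing $s=\tau+5$, say, and $R=T^{1/(2\tau+4)}$ gives a power $T^{-\delta}$ with $\delta>0$ depending on $\tau$. The statement permits $s,\delta$ to depend on the pair, so this suffices. Alternatively, (2) together with Theorem~\ref{thm:intrinsic} in the unipotent case gives (3) directly; I would cite that and keep the Fourier argument as the explicit version.

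\emph{Step 3: $(3)\Rightarrow(2)$.} This is exactly Proposition~\ref{prop:intrinsicnecessity} with $m=2$, applied to $\Phi=\Psi_\alpha$ at the point $h\Gamma$. The one point to check is normalization: the proposition is stated for $(J_g,\groupid)$ with $g=h$. Since $U$ is abelian, $\Psi_g=h^{-1}\Psi_\alpha h=\Psi_\alpha$ and $\mathscr K_{J,h}=\mathscr K_{J,\groupid}$. Also $\mathbf H_g=\mathbf U$, because $Y$ is the orbit closure (density shown above) and $\mathbf U$ is the smallest rational group containing the irrational line. The main care is this identification of $\mathbf H_g$: the minimality requires that no proper rational subgroup of $\mathbf U$ contains $J$, which holds because rational subgroups are rational lines and $\alpha\notin\mathbb Q$. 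The remaining steps are routine.
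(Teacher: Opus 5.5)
Your argument is correct, but it routes the implications differently from the paper. Your $(1)\Leftrightarrow(2)$ is essentially the paper's argument: the operator $D_\alpha=\partial_u+\alpha\partial_v$ into $\R1\oplus\mathscr P_{\mathbf U}$, whose kernel is spanned by $v-\alpha u$ and $(v-\alpha u)^2$, together with the bound $\norm{D_\alpha p}\le\norm{D_\alpha}_{\mathrm{op}}\operatorname{dist}(p,\mathscr K_{J,h})$.

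After that the two proofs diverge.
\begin{itemize}
\item The paper obtains $(2)\Rightarrow(3)$ from the prescribed-support estimate \eqref{eq:prescribedestimate}, i.e.\ the Green--Tao input of the unipotent case; this is your ``alternative''. It then closes the cycle with an explicit $(3)\Rightarrow(1)$. The cosine tests $F_{m,n}$ have average exactly $2/\pi$ at $T_{m,n}=(4|m+\alpha n|)^{-1}$, and their extensions to $X$ have $C_G^s$ norm $\ll(1+|m|+|n|)^s$. This gives $\tau=s/\delta$.
\item You put the elementary work on the other side. Your Fourier proof of $(1)\Rightarrow(3)$ is self-contained and avoids Green--Tao. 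It even works with a fixed $s=3$ and $R=T^{1/(\tau+3)}$. As written, $s=\tau+5$ must be an integer, say $\lceil\tau\rceil+5$.
\item For $(3)\Rightarrow(2)$ you invoke Proposition~\ref{prop:intrinsicnecessity} with $m=2$. Your hypothesis checks are the right ones: $\Psi_h=\Psi_\alpha$, $\mathbf H_h=\mathbf U$ by minimality, and $\mathscr K_{J,h}=\mathscr K_{J,\groupid}$ because $U$ is abelian.
\end{itemize}

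What each route buys: the paper's choice exhibits the example as an instance of Theorem~\ref{thm:intrinsic}. Its cosine tests give the transparent exponent $s/\delta$ and are reused immediately for the Liouville obstruction. Your route makes sufficiency completely elementary. However, it inherits from the general necessity proof the much cruder exponent $B_0+2s(B_0+1)/\delta$, along with its lattice-detector and sublevel-set machinery.

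Two small corrections to Step 1.
\begin{itemize}
\item The distance of $au+bv$ from the kernel is $|a+\alpha b|/\sqrt{1+\alpha^2}$, the constant term of $D_\alpha p$. It is not comparable to $|a\alpha+b|$. With this correction your argument goes through verbatim.
\item In $(1)\Rightarrow(2)$, say explicitly that for $p\ne0$ at least one of the integer pairs $(a,b)$, $(2x,y)$, $(y,2z)$ is nonzero. Otherwise (1) has nothing to apply to.
\end{itemize}
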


\begin{proof}
\emph{Equivalence of (1) and (2).}
In the coordinates $(u,v)$, restriction of matrix-entry
polynomials gives
\[
 \mathscr P_{\mathbf U}
 =\operatorname{span}_{\R}\{u,v,u^2,uv,v^2\}.
\]
Equip this space with the inner product for which the displayed
monomials are orthonormal, and take their integer span as
$\mathscr L_{\mathbf U}$. This lattice consists of the polynomials
with integral coefficients and zero constant term. This choice does not change
the admissible intrinsic exponents, by
Lemma~\ref{lem:intrinsicconjugacy}.
Since $U$ is abelian, $h^{-1}Jh=J$, so
$\mathscr K_{J,h}$ is independent of $h$. Put
\[
 D_\alpha=\partial_u+\alpha\partial_v:
 \mathscr P_{\mathbf U}\longrightarrow\mathscr W_{\mathbf U},
 \qquad
 \mathscr W_{\mathbf U}=\R1\oplus\mathscr P_{\mathbf U},
\]
with $1$ of norm one and orthogonal to
$\mathscr P_{\mathbf U}$; in particular, the constant coefficient
of $D_\alpha p$ is retained. Differentiating
$p(u+t,v+\alpha t)$ shows that right-$J$-invariance is
equivalent to $D_\alpha p=0$. In the linear coordinates
$(u,w)=(u,v-\alpha u)$, this differential operator is
$\partial_u$, with $w$ held fixed. Hence
\[
 \mathscr K_{J,h}=\ker D_\alpha
   =\operatorname{span}_{\R}
       \{v-\alpha u,(v-\alpha u)^2\}.
\]
The linear and quadratic homogeneous subspaces are orthogonal
in the chosen norm. Orthogonal projection onto the linear
kernel therefore gives
\[
 \operatorname{dist}(mu+nv,\mathscr K_{J,h})
 =\frac{|m+\alpha n|}{\sqrt{1+\alpha^2}}.
\]
Applying the intrinsic condition to $mu+nv$ proves (1) from
(2), with the same exponent.

Conversely, assume (1) and write a nonzero lattice polynomial as
$p=a_1u+a_2v+a_3u^2+a_4uv+a_5v^2$, with $a_i\in\Z$.
Then
\[
 D_\alpha p=(a_1+\alpha a_2)
             +(2a_3+\alpha a_4)u
             +(a_4+2\alpha a_5)v.
\]
At least one of the integer pairs
$(a_1,a_2)$, $(2a_3,a_4)$, $(a_4,2a_5)$ is nonzero, and
each has norm at most $2\norm p$. Condition (1), applied to
such a pair, gives
\[
 \norm{D_\alpha p}\ge c\,2^{-\tau}\norm p^{-\tau}.
\]
For every $k\in\ker D_\alpha$, one has
$D_\alpha p=D_\alpha(p-k)$. Taking the infimum of the
operator-norm bound over all such $k$ yields
\[
 \norm{D_\alpha p}
 \le\norm{D_\alpha}_{\mathrm{op}}
            \operatorname{dist}(p,\mathscr K_{J,h}).
\]
These two inequalities prove (2) with the same exponent,
after reducing the resulting positive constant to at most one.

\emph{From (2) to (3).}
The group $\mathbf U$ is connected, rational, and unipotent;
its quotient by $R_u(\mathbf U)$ is therefore trivial and hence
semisimple. Also, $Y$ is compact. Thus the prescribed-support estimate
\eqref{eq:prescribedestimate}, established in the proof of
Theorem~\ref{thm:intrinsic}, applies with
$(\mathbf H,a,\Psi,h)=(\mathbf U,\groupid,\Psi_\alpha,h)$
and degree $d=1$. It gives (3) with
$\delta=\vartheta_{\mathbf U,1}/(\tau+1)>0$.
For this compact unipotent support, the estimate comes from
quantitative polynomial equidistribution on compact nilmanifolds
\cite[Theorem 2.9]{GreenTao2012}, through its continuous-time
consequence in Lemma~\ref{lem:continuousleibman}.

\emph{From (3) to (1).}
For $(m,n)\in\Z^2\setminus\{0\}$, define the torus function
\[
 F_{m,n}(u,v)
 =\cos\bigl(2\pi[m(u-u_0)+n(v-v_0)]\bigr).
\]
It is well defined modulo $\Z^2$ and has Haar mean zero.
Along the trajectory it equals
$\cos(2\pi\omega_{m,n}t)$. At
$T_{m,n}=(4|\omega_{m,n}|)^{-1}$, direct integration gives
\[
 \frac1{T_{m,n}}\int_0^{T_{m,n}}
       F_{m,n}(\Psi_\alpha(t)h\Gamma)\dd t
 =\frac{\sin(2\pi\omega_{m,n}T_{m,n})}
        {2\pi\omega_{m,n}T_{m,n}}
 =\frac2\pi.
\]
Because $Y$ is compact and embedded, there is a fixed linear
smooth extension operator from $Y$ to $X$, with image supported
in one compact set $\Omega_0$. To construct it, take finitely
many charts straightening $Y$ and a partition of unity on $Y$,
extend each localized function constantly in the normal
coordinates, and multiply by fixed smooth cutoff functions. On these compact
charts, coordinate derivative norms and the Lie derivative
norms are comparable. This is the extension construction used
in the proof of Proposition~\ref{prop:intrinsicnecessity}.
In particular, it gives functions
$\widetilde F_{m,n}\in C_c^\infty(X)$ with
$\widetilde F_{m,n}|_Y=F_{m,n}$, all supported in $\Omega_0$,
and
\[
 \norm{\widetilde F_{m,n}}_{C_G^s}
 \le C_s(1+|m|+|n|)^s.
\]
Indeed, a derivative of order $j\le s$ of the cosine is
$O_j((1+|m|+|n|)^j)$, and the extension uses only fixed
charts and smooth cutoff functions. The constants are independent of $(m,n)$.

If $T_{m,n}\ge2$, apply (3) to this extension in the fixed
compact set $\Omega_0$. Since its mean on $Y$ is zero,
\[
 \frac2\pi
 \le C_{\Omega_0}C_s(1+|m|+|n|)^s
                 (4|m+\alpha n|)^\delta.
\]
Rearrangement gives
\[
 |m+\alpha n|\ge c_1(1+|m|+|n|)^{-s/\delta}.
\]
If $T_{m,n}<2$, then $|m+\alpha n|>1/8$, so the same
bound holds after decreasing $c_1$. Finally,
$1+|m|+|n|\le(1+\sqrt2)(m^2+n^2)^{1/2}$ for nonzero
integer pairs. This proves (1) with $\tau=s/\delta$.
\end{proof}

For completeness, we give the standard Fourier argument for the
Liouville obstruction in the derivative norm used here.
Assume here that $\alpha$ is Liouville. Fix any integer $s\ge1$
and any exponent $\delta>0$, and choose
$K>s/\delta$. There are integers $p_j$ and positive integers
$q_j\to\infty$ such that
\[
 0<\varepsilon_j:=|p_j-q_j\alpha|<q_j^{-K}.
\]
Set $M_j=1+|p_j|+q_j$ and $T_j=(4\varepsilon_j)^{-1}$.
Then $M_j\asymp_\alpha q_j$ and $T_j>q_j^K/4\to\infty$.
Using the cosine tests and their extensions constructed in the
proof, define
\[
 f_j=\frac{\widetilde F_{p_j,-q_j}}{C_sM_j^s}.
\]
All these smooth functions are supported in the same compact set
$\Omega_0\subset X$, and they satisfy
\[
 \norm{f_j}_{C_G^s}\le1,\qquad \mu_Y(f_j)=0.
\]
Their frequency along the trajectory is $p_j-\alpha q_j$.
The calculation above, evaluated at $T_j$, therefore gives
\[
 \left|\frac1{T_j}\int_0^{T_j}
          f_j(\Psi_\alpha(t)h\Gamma)\dd t-\mu_Y(f_j)\right|
 =\frac{2}{\pi C_sM_j^s}.
\]
Consequently, for a constant $c_{\alpha,s,\delta}>0$ independent
of $j$,
\[
 T_j^\delta
 \left|\frac1{T_j}\int_0^{T_j}
          f_j(\Psi_\alpha(t)h\Gamma)\dd t-\mu_Y(f_j)\right|
 \ge c_{\alpha,s,\delta}q_j^{K\delta-s}
 \longrightarrow\infty.
\]
If an estimate as in condition (3) held for this $s$ and $\delta$,
its left-hand discrepancy at $f=f_j$ and $T=T_j$ would instead
be at most $C_{\Omega_0}T_j^{-\delta}$ for all sufficiently
large $j$. The preceding divergence rules out every such
constant $C_{\Omega_0}$. Since $s$ and $\delta$ were arbitrary,
no power estimate in any fixed derivative norm of finite order
is possible, even for tests supported in this one compact set.

The qualitative convergence proved above concerns each fixed
continuous test function. The functions $f_j$ vary with $j$,
but they all lie in the unit ball of the same derivative norm;
a uniform power estimate would have to apply to all of them.
Thus the trajectory is equidistributed although its averaging
errors admit no such power bound. By the proposition, finite
intrinsic type also fails. For slopes of finite ordinary
Diophantine type, the power estimate holds without requiring
$\alpha$ to be algebraic. The next subsection uses the same
Fourier obstruction in a different matrix embedding to show
why the common degree cutoff for the intrinsic condition must be $2$.

\subsubsection{Optimality of the degree cutoff}
\label{subsec:degreeonecounterexample}

Theorem~\ref{thm:intrinsic} gives a power estimate under the
intrinsic condition with degree cutoff $2$ on every
support. The following example shows that degree one does not
suffice as a common degree cutoff: for a Liouville slope, the degree-one
intrinsic condition holds, but no power estimate uniform over test
functions in a fixed finite-order derivative norm is possible.
The construction re-embeds the classical toral flow of
Section~\ref{subsec:liouvilleobstruction} in a rational unipotent subgroup.

Let $N=E_{12}+E_{23}$ and let $\mathbf H$ be the rational
subgroup with real points
\[
 H=\{I_3+uN+vN^2:u,v\in\R\},\qquad
 \Psi_\alpha(t)=I_3+tN+
       \left(\frac{t^2}{2}+\left(\alpha-\frac12\right)t\right)N^2,
\]
where $\alpha$ is irrational, and put
$J=\overline{\langle\Psi_\alpha(\R)\rangle}$. The group law is
$(u,v)(u',v')=(u+u',v+v'+uu')$, and $H\cap\Gamma$ consists
exactly of the pairs with integral $u,v$. The coordinates
\[
 (u,z)=\left(u,v-\frac{u(u-1)}2\right)
\]
are additive and send this lattice onto $\Z^2$, so they are
lattice-adapted coordinates in the sense of Section~\ref{app:malcev}.
Thus $H/(H\cap\Gamma)\simeq\mathbb T^2$, and $\Psi_\alpha(t)$ is the
linear toral trajectory $(t,\alpha t)$. Its actual support is all
of this torus. If $\alpha$ is Liouville, the Fourier tests in
Section~\ref{subsec:liouvilleobstruction} exclude every power rate
in a fixed finite-order norm, also after this smooth coordinate change.

On the other hand, every normalized degree-one polynomial on $H$
is $p(u,v)=au+bv$. If it is invariant under the subgroup generated
by $\Psi_\alpha$, then evaluating along the trajectory gives
\[
 0=p(\Psi_\alpha(t))
   =at+b\left(\frac{t^2}{2}+\left(\alpha-\frac12\right)t\right),
\]
and hence $a=b=0$. Therefore $\mathscr K_{J,\groupid}^{(1)}=\{0\}$,
so the degree-one condition holds with $(c_0,\tau)=(1,0)$ in the
basis $u,v$, including for Liouville $\alpha$.
The degree-two invariant $z-\alpha u$ detects rational approximations to $\alpha$:
for $(p,q)\in\Z^2$,
\[
 \operatorname{dist}(pu-qz,\mathscr K_{J,\groupid}^{(2)})
 \le |p-q\alpha|\,\norm u.
\]
Here a fixed integer multiple of $pu-qz$ belongs to any chosen
rational test lattice, and $\norm{pu-qz}\asymp |p|+|q|$.
Liouville approximation therefore violates every finite degree-two
type. Together with Theorem~\ref{thm:intrinsic}, this proves that
two is the smallest common degree cutoff for the intrinsic hypothesis.

\subsection{Compact supports and horocycle estimates}

The following corollary is a consequence of the established
compact horocycle estimate
\cite[Appendix B, Theorem B.1]{Zheng2015}, through
Proposition~\ref{prop:compactpolynomial}. It records that no
arithmetic hypothesis is needed in the polynomial theorem
on these supports.

\begin{corollary}[Equidistribution on compact semisimple quotients]
\label{cor:compactsemisimple}
Use the data of Section~\ref{subsec:mainassembly}, and suppose
$\mathbf H$ is semisimple and $H\Gamma/\Gamma$ is compact.
Then for every nonconstant normalized
$\Psi\in\mathbf H(\mathbb R[t])$ of degree at most $d$
and every $h\in H$, the left-hand side of
\eqref{eq:prescribedestimate} is at most
$C\norm f_{C_G^s}T^{-\vartheta_{\mathbf H,d}}$ for $T\ge2$,
without an intrinsic Diophantine assumption. For every compact
$\Omega\subset X$, the constant may be chosen as
$C=C(\mathbf H,a,\Psi,\Omega)>0$, uniformly for $h\in H$
and $f\in C_c^\infty(X)$ supported in $\Omega$.
\end{corollary}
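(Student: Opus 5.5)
The plan is to reduce the corollary to the case analysis already carried out in the proof of Theorem~\ref{thm:intrinsic}, applied to the prescribed-support data, and to check that the only branch that can occur never uses the intrinsic condition.

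First, apply Proposition~\ref{prop:supportclassification}. A semisimple $\mathbf H$ has trivial unipotent radical, and a nontrivial unipotent group is not semisimple; the existence of a nonconstant $\Psi$ rules out the trivial group. So $\mathbf H$ is one of $\SL_3$, a rational conjugate of the block $\SL_2$, or $\SO(\qform)$. The first two are excluded by compactness. For $\SL_3$, $X$ itself is noncompact. For the block group, a rational invariant line $\mathbb Qv$ with $v$ integral after scaling is fixed pointwise by $H$, so every point of $H\Gamma/\Gamma$ contains the fixed nonzero lattice vector $v$. The unipotent elements of $H$, acting on a complementary integral vector, then produce lattice vectors $u\cdot w$ with $u\in H$ unipotent, and these give arbitrarily short vectors. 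Equivalently, the unipotent $\Gamma\cap H$-orbit is not cocompact by Mahler's criterion, since $H\cap\Gamma$ contains nontrivial unipotents and a compact quotient of $\SL_2(\R)$ has no unipotent lattice elements. So $\mathbf H=\SO(\qform)$. Definite forms admit only constant curves, by Lemma~\ref{lem:semisimpleparametrizations}. If $\qform$ were $\mathbb Q$-isotropic, a rational symmetric-square parametrization exists, and $H\cap\Gamma$ contains a nontrivial unipotent from the image of a congruence subgroup of $\Gamma_2$, by Lemma~\ref{lem:semisimplecongruencecover}. That again contradicts compactness, because unipotent lattice elements force short vectors along their powers conjugated by a diagonal flow. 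Hence $\qform$ is indefinite and $\mathbb Q$-anisotropic.

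In that case, follow the anisotropic branch of the main proof. Lemma~\ref{lem:semisimpleparametrizations} gives a real symmetric-square parametrization $\iota$. Lemma~\ref{lem:polynomiallift} gives $\Psi=\iota\circ\psi$ and $h=\iota(g)$. Put $\Delta=\iota^{-1}(\Gamma)$, a cocompact lattice with $G_2/\Delta\simeq H/(H\cap\Gamma)$ preserving Haar probability. Apply Proposition~\ref{prop:compactpolynomial} to $F_0(b\Delta)=f(a\iota(b)\Gamma)$. It yields rate $T^{-\delta^{\mathrm c}_{\Delta,d}}=T^{-\vartheta_{\mathbf H,d}}$ with constant $C_{\Delta,d}(1+M(P))\mathcal D_{\ell_\Delta}(F_{0,b_0})$, and this is uniform in $g$, hence in $h$.

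The remaining step is the norm comparison. Derivatives of $F_0$ pulled back by $b_0$ are fixed linear combinations of the $D_{Z_i}$ applied to $f$ translated by $a\iota(b_0)$. Since $f$ is supported in $\Omega$, \eqref{eq:intrinsicpointwise} and the chain rule give $\mathcal D_{\ell_\Delta}(F_{0,b_0})\le C(\mathbf H,a,\Psi)\norm f_{C_G^s}$. Here $b_0$ and $M(P)$ depend only on $\psi$, hence on $\Psi$, up to the fixed choice of lift. No dependence on $h$ enters, which gives the stated constant.

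The main obstacle is the clean exclusion of the block and isotropic cases via compactness. I would handle it by exhibiting a nontrivial unipotent element in $H\cap\Gamma$ in each case and invoking the standard fact that a cocompact lattice in $\SL_3(\R)$ contains no nontrivial unipotents, through Mahler's criterion applied to the iterates $u^n$.
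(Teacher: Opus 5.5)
Your proposal is correct and follows the paper's proof. You classify via Proposition~\ref{prop:supportclassification}, discard $\SL_3$, the rational block groups, the $\mathbb Q$-isotropic orthogonal groups and the definite forms, and then run the anisotropic branch of the main proof through Lemma~\ref{lem:polynomiallift} and Proposition~\ref{prop:compactpolynomial}, whose constant is uniform in the real lift of $h$.

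The only variation is how you exclude the block and $\mathbb Q$-isotropic supports. You use the nontrivial unipotents in $\iota(\Gamma_2(N))\subseteq H\cap\Gamma$ (Lemma~\ref{lem:semisimplecongruencecover}); the paper instead observes that the noncompact congruence quotient $G_2/\Gamma_2(N)$ finitely covers $H\Gamma/\Gamma$ (Lemma~\ref{app:lattice-covers}). Your route is equally valid, but the standard fact you invoke should be stated for cocompact lattices in $H$, not for lattices in $\SL_3(\R)$, since $H\cap\Gamma$ is not a lattice in $\SL_3(\R)$. The right mechanism is the contraction $\iota(a_2(L))\,\iota(u_2(N))\,\iota(a_2(L))^{-1}=\iota(u_2(LN))\to\groupid$ as $L\to0$. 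This puts stabilizer elements arbitrarily close to $\groupid$, which is impossible on a compact orbit; iterating $u^n$ does not do this.
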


\begin{proof}
The ambient support $\mathbf H=\SL_3$ is excluded because $X$
is noncompact. By Lemma~\ref{lem:semisimplecongruencecover}, a rational
block or split principal support has a finite covering by a
congruence quotient. This quotient is noncompact
(Section~\ref{app:arithmetic}), so the support cannot be compact
by Lemma~\ref{app:lattice-covers}. A definite orthogonal group
has no nonconstant polynomial curve. The remaining case is an
anisotropic indefinite orthogonal support. Its real symmetric-square
parametrization and compact quotient are given by
Lemma~\ref{lem:semisimpleparametrizations}; applying
Proposition~\ref{prop:compactpolynomial} as in the compact case
of Section~\ref{subsec:mainassembly} proves the stated estimate,
uniformly in $h$ and without an intrinsic hypothesis.
\end{proof}

For each fixed curve and starting point,
Proposition~\ref{prop:intrinsicnecessity} also gives finite
intrinsic type after normalization at that point. Thus the
arithmetic condition is automatic on these compact supports.

\paragraph{A nonsplit compact support.}
\label{subsec:nonsplitexample}
For example, take $\qform(x,y,z)=x^2+y^2-3z^2$,
$\mathbf H=\SO(\qform)$, and $H=\mathbf H(\R)^\circ$.
The form is indefinite over $\R$ and
anisotropic over $\mathbb Q$: an integral solution of
$x^2+y^2=3z^2$ has $3\mid x,y$ by reduction modulo $3$, then
$3\mid z$, and infinite descent excludes a nonzero rational
solution. Lemma~\ref{lem:semisimpleparametrizations} therefore gives a
compact quotient $H\Gamma/\Gamma$ although $H$ is noncompact.
The corollary records the polynomial and intrinsic consequences
of the compact rank-one estimate on this support.

\appendix
\section{Preliminaries on algebraic groups and auxiliary results}
\label{app:algebraic-background}

This appendix supplies the general algebraic group and lattice facts
used in the classification and reduction arguments of
Section~\ref{sec:supportstructure} and elsewhere in the paper.
Throughout, we work with matrix groups in characteristic zero, mainly over
$\mathbb Q$, $\mathbb R$, and $\mathbb C$.

\iffalse
\begin{center}
\renewcommand{\arraystretch}{1.15}
\begin{tabular}{@{}p{0.43\textwidth}p{0.52\textwidth}@{}}
\hline
Topic & Main uses\\
\hline
Algebraic groups, closure, and rationality
& Normalization and intrinsic functions, Section~\ref{sec:refinement}.\\
Lie algebras and invariant vectors
& Coefficient tests, subgroup classification, and generation arguments.\\
Unipotent groups and exponential coordinates
& Leading tangents and the compact nilmanifold model.\\
Radicals, Levi subgroups, tori, and roots
& Proof of Proposition~\ref{prop:supportclassification}.\\
Ternary forms and the symmetric-square cover
& Proper semisimple supports and the principal group.\\
Mal'cev bases and horizontal characters
& Sections~\ref{subsec:nilinput} and~\ref{sec:unipotentproof}.\\
Lattices, rational envelopes, and finite covers
& Normalization, arithmetic coordinates, and affine quotients.\\
Tensor representations and rational heights
& Sections~\ref{subsec:separation} and~\ref{subsec:arithmeticextraction}.\\
\hline
\end{tabular}
\end{center}
\fi

\subsection{Matrix algebraic groups, connectedness, and rationality}
\label{app:basic}

A \emph{matrix algebraic group over a field $K$} is a subgroup of
$\mathrm{GL}_n$ defined by polynomial equations over $K$.
In $\mathrm{GL}_n$, one allows $\det^{-1}$ as a coordinate; equivalently,
one adds a variable $z$ with $z\det A=1$.
In $\SL_n$ the inverse is the adjugate, so all its entries are
polynomials. The notation $\mathbf H(F)$ means solutions of the
defining equations with entries in an extension field $F$ of $K$.
A homomorphism or representation is \emph{defined over $K$} when
its coordinate functions have coefficients in $K$.
The maps are required to be regular: on a subgroup of $\SL_n$,
their coordinates are restrictions of polynomials in matrix
entries. An algebraic representation is also often called a
\emph{rational representation}; its field of definition is a
separate condition. The representations called rational in our
arithmetic arguments are defined over $\mathbb Q$.

Examples are the additive group $\mathbb G_a$, the multiplicative
group $\mathbb G_m$, $\SL_n$, and the upper unitriangular group
\[
 \mathbf U_3=
 \left\{u(x,y,z)=
 \begin{pmatrix}1&x&z\\0&1&y\\0&0&1\end{pmatrix}\right\}.
\]
For example, $\mathbb G_a(\mathbb R)=(\mathbb R,+)$ and
$\mathbb G_m(\mathbb R)=(\mathbb R^\times,\cdot)$.

The \emph{Zariski closure} of a set of matrices is the zero set of
all polynomials vanishing on it, with the coefficient field
specified. It differs from closure in the usual real topology.
We use bold letters for algebraic groups and usually put
$H=\mathbf H(\mathbb R)^\circ$, where $\circ$ on a real Lie group
means the connected component containing the identity.
Algebraic connectedness is connectedness in the Zariski topology
after passage to an algebraic closure. An algebraically connected
group can have disconnected real points: $\mathbb G_m$ is connected,
whereas $\mathbb R^\times$ has two components.

\begin{lemma}
\label{app:components}
The algebraic identity component of a matrix algebraic group in
characteristic zero is defined over its field of definition.
If $\mathbf H$ is a connected real algebraic group, then
$\mathbf H(\mathbb R)^\circ$ is Zariski dense in $\mathbf H$.
\end{lemma}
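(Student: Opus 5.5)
Both assertions are classical facts about linear algebraic groups in characteristic zero. I would prove them separately, using Galois descent for the first and the Zariski structure of real points of a smooth connected variety for the second.

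\emph{Rationality of the identity component.} Let $\mathbf H$ be defined over $K$, with $\operatorname{char}K=0$, and let $\bar K$ be an algebraic closure. The group $\mathbf H(\bar K)$ has finitely many irreducible components. They are pairwise disjoint cosets of the component $\mathbf H^\circ$ containing $\groupid$. Each $\sigma\in\operatorname{Gal}(\bar K/K)$ acts on $\mathbf H(\bar K)$ coordinatewise, since the defining equations have coefficients in $K$. This action is a homeomorphism for the Zariski topology, so it permutes the irreducible components. It fixes $\groupid$, which lies in exactly one component, hence $\sigma(\mathbf H^\circ)=\mathbf H^\circ$. Consequently the ideal of $\mathbf H^\circ$ in $\bar K[x_{ij}]$ is Galois stable. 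In each bounded degree this ideal is a finite-dimensional Galois-stable $\bar K$-subspace of polynomials. Such a subspace has a basis with coefficients in $K$: this is Speiser's lemma, a form of Hilbert 90, and it uses that $\bar K/K$ is Galois, which is where characteristic zero enters. Therefore the ideal is generated by polynomials over $K$, and $\mathbf H^\circ$ is defined over $K$.

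\emph{Zariski density of $\mathbf H(\R)^\circ$.} Let $\mathbf H$ be a connected real algebraic group and put $n=\dim\mathbf H$. Algebraic groups in characteristic zero are smooth. The real points form a Lie group whose Lie algebra is $\Lie(\mathbf H)(\R)$, of real dimension $n$. Because the exponential map is a local diffeomorphism, $\mathbf H(\R)^\circ$ contains a neighborhood $U$ of $\groupid$ in $\mathbf H(\R)$. Let $\mathbf Z$ be the Zariski closure of $\mathbf H(\R)^\circ$. It is an algebraic subgroup, since the closure of a subgroup is a subgroup, and it is defined over $\R$. The plan is to show $\dim\mathbf Z=n$. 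In the exponential chart, any real polynomial vanishing on $U$ becomes a real-analytic function vanishing on an open subset of $\Lie(\mathbf H)(\R)$. Its Taylor coefficients at $0$ therefore vanish. This forces the polynomial to vanish identically on a complex-analytic neighborhood of $\groupid$ in $\mathbf H(\mathbb C)$, because $\Lie(\mathbf H)(\R)$ is totally real of full dimension in $\Lie(\mathbf H)(\mathbb C)$. Hence $\mathbf Z(\mathbb C)$ contains an open neighborhood of $\groupid$ in $\mathbf H(\mathbb C)$, and so $\dim\mathbf Z=n$. Since $\mathbf H$ is connected, it is irreducible. Then $\mathbf Z$ is a closed subvariety of the same dimension that contains the identity component, so $\mathbf Z=\mathbf H$.

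The step I expect to be most delicate is this identity-principle passage from real vanishing to complex vanishing near $\groupid$. I would handle it by working in a complex chart $\exp:\Lie(\mathbf H)(\mathbb C)\to\mathbf H(\mathbb C)$ and noting that a holomorphic function vanishing on a real form of its domain vanishes identically. Everything else is formal. Alternatively, one can cite the standard references for both facts, for example \cite[Example~2.43 and Chapter~1]{Milne2017}.
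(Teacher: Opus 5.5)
Your proposal is correct, but it proves both parts differently from the paper.

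\textbf{Rationality of the identity component.} Here the paper simply cites \cite[Proposition~1.34]{Milne2017}. Your Galois-descent argument is a sound, self-contained replacement: the Galois group permutes the irreducible components and fixes the one through $\groupid$, so the ideal of $\mathbf H^\circ$ is Galois-stable. By Speiser's lemma, applied in each bounded degree, it is then spanned by $K$-rational polynomials. This works because characteristic zero makes $\bar K/K$ Galois, and each bounded-degree piece is already defined over a finite Galois subextension.

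\textbf{Zariski density.} Both arguments take the Zariski closure $\mathbf Z$ of $\mathbf H(\R)^\circ$, show $\dim\mathbf Z=\dim\mathbf H$, and conclude by irreducibility. The dimension step is where they differ.

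The paper stays algebraic. Every polynomial vanishing on $\mathbf Z$ vanishes on $\mathbf H(\R)^\circ$, so $T_{\groupid}\mathbf Z\supseteq\Lie(\mathbf H(\R)^\circ)$, a space of dimension $\dim\mathbf H$. Since $\mathbf Z$ is itself an algebraic group in characteristic zero, it is smooth, so $\dim\mathbf Z=\dim T_{\groupid}\mathbf Z\ge\dim\mathbf H$.

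You instead use the identity theorem to pass from vanishing on a real neighborhood to vanishing on a complex neighborhood of $\groupid$ in $\mathbf H(\mathbb C)$. This is valid: a real basis of $\Lie(\mathbf H)(\R)$ is a complex basis of $\Lie(\mathbf H)(\mathbb C)$, and the power series of $f\circ\exp$ in these coordinates vanishes identically. However, it needs an extra input: that $\exp$ maps $\Lie(\mathbf H)(\mathbb C)$ onto a neighborhood of $\groupid$ in the complex Lie group $\mathbf H(\mathbb C)$, which has complex dimension $\dim\mathbf H$.

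The paper's tangent-space comparison needs only smoothness of $\mathbf Z$, so the step you flagged as most delicate becomes unnecessary. Both arguments rely equally on the fact that the real Lie algebra of $\mathbf H(\R)$ has dimension $\dim\mathbf H$.
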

\begin{proof}
The assertion about the algebraic identity component is
\cite[Proposition 1.34]{Milne2017}. Matrix algebraic groups in
characteristic zero are smooth by \cite[Theorem 3.23]{Milne2017};
their dimension therefore equals the dimension of the tangent space
at the identity. Let $\mathbf Z$ be the real Zariski closure of
$\mathbf H(\mathbb R)^\circ$. The closure of a subgroup is an
algebraic subgroup: multiplication and inversion preserve the
polynomial identities defining its closure. Its tangent space
contains the tangent space of $\mathbf H(\mathbb R)^\circ$,
which has dimension $\dim\mathbf H$. Thus $\mathbf Z$ and
$\mathbf H$ have the same dimension. A connected smooth algebraic
group is irreducible, and a proper closed subvariety has smaller
dimension, so $\mathbf Z=\mathbf H$. Here irreducibility follows
also from smoothness: the irreducible components are disjoint and
their translates are the connected components.
\end{proof}

\begin{lemma}
\label{app:rational-envelope}
The real Zariski closure of a subgroup of $\SL_n(\mathbb Q)$ is
defined over $\mathbb Q$. Every subgroup $J\le\SL_n(\mathbb R)$
is contained in a smallest $\mathbb Q$-algebraic subgroup. If $J$
is connected in the real topology, this smallest group is
algebraically connected.
\end{lemma}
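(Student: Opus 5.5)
The statement has three assertions. We prove them in order: rationality of the Zariski closure of a rational subgroup, existence of a smallest $\mathbb Q$-algebraic envelope, and connectedness of that envelope when $J$ is connected.

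\emph{Rationality of the closure.} Let $S\le\SL_n(\mathbb Q)$ and fix a degree $m$. The space of real polynomials of degree at most $m$ in the $n^2$ entries that vanish on $S$ is the kernel of the evaluation map. Each evaluation at a point of $S$ is a linear functional with rational coefficients in the monomial basis. An intersection of kernels of rational functionals on a finite-dimensional $\mathbb Q$-structured space is cut out by finitely many such functionals. Its kernel therefore has a rational basis: it is the real span of its rational points. Taking the union over $m$, the vanishing ideal of $S$ is generated by polynomials with rational coefficients. The real Zariski closure is a group, since multiplication and inversion preserve the vanishing identities, exactly as in the proof of Lemma~\ref{app:components}. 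It is therefore a $\mathbb Q$-algebraic subgroup.

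\emph{Existence of a smallest envelope.} For arbitrary $J\le\SL_n(\R)$, consider the family of all $\mathbb Q$-algebraic subgroups of $\SL_n$ whose real points contain $J$. The family is nonempty, since it contains $\SL_n$. An intersection of $\mathbb Q$-algebraic subgroups is defined by the union of their rational equations, so it is again a $\mathbb Q$-algebraic subgroup. By Noetherianity the whole intersection equals a finite sub-intersection, so it is a member of the family. This intersection is the smallest $\mathbb Q$-algebraic subgroup containing $J$.

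\emph{Connectedness.} Let $\mathbf H$ be the smallest envelope and suppose $J$ is connected. By Lemma~\ref{app:components}, the identity component $\mathbf H^\circ$ is defined over $\mathbb Q$. The real points $\mathbf H(\R)$ are covered by finitely many translates of $\mathbf H^\circ(\R)$. These translates are closed in the real topology, and each is also open in $\mathbf H(\R)$ because its complement is a finite union of the other closed translates. The connected set $J$ contains the identity, so it lies in $\mathbf H^\circ(\R)$. Minimality then gives $\mathbf H=\mathbf H^\circ$.

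The only delicate step is the first one, namely the linear-algebra fact that a real solution space of a rational linear system is spanned by rational vectors. This follows from Gaussian elimination over $\mathbb Q$.
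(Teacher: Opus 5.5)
Your proposal is correct and follows the paper's proof essentially step for step: degree-by-degree rational linear algebra for the first claim, intersection plus Noetherianity for the envelope, and Lemma~\ref{app:components} together with minimality for connectedness. Your argument that the connected set $J$ lies in $\mathbf H^\circ(\R)$, because the finitely many real cosets are closed and hence open, supplies a step the paper states without justification.
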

\begin{proof}
In any bounded degree, vanishing on a set of rational matrices
is a system of linear equations with rational coefficients for
the polynomial coefficients. Even if there are infinitely many
equations, finitely many span their row space. Gaussian elimination
therefore gives a rational basis for its solution space. Applying
this in every degree proves the first assertion.

For the second assertion, intersect all $\mathbb Q$-algebraic
subgroups containing $J$. The intersection is an algebraic subgroup
over $\mathbb Q$: its equations generate an ideal in a polynomial
ring, which is finitely generated. If $J$ is connected, it lies
in the real points of the algebraic identity component of this
intersection. That component is defined over $\mathbb Q$ by
Lemma~\ref{app:components}. Minimality therefore makes the
intersection connected.
\end{proof}

The smallest rational group is sometimes called a \emph{rational
envelope}. It need not equal the real Zariski closure.
For example, in the additive group $\mathbb G_a^2$, the line
$J=\{(t,\sqrt2t):t\in\mathbb R\}$ is real Zariski closed, but
the smallest $\mathbb Q$-algebraic subgroup containing $J$ is $\mathbb G_a^2$.
Indeed, algebraic subgroups of an additive group in characteristic
zero are linear subspaces, as follows from the exponential
description below, and no rational line contains $J$.
The rational envelope is defined by minimality among
$\mathbb Q$-algebraic subgroups. The common zero set of rational
polynomials vanishing on $J$ need not be a subgroup. In this example,
$y^2-2x^2$ vanishes on $J$, but its zero set is a union of two
lines and is not an additive subgroup.

\subsection{Lie algebras, representations, and rational invariant spaces}
\label{app:lie}

For a matrix group $H$, its Lie algebra $\mathfrak h=\Lie(H)$ is
its tangent space at $\groupid$, with bracket $[X,Y]=XY-YX$.
The adjoint action is $\Ad(g)X=gXg^{-1}$.
For a differentiable representation $\rho:H\to\mathrm{GL}(V)$,
its differential $\mathrm d\rho$ satisfies
\[
 \rho(\exp(tX))=\exp(t\,\mathrm d\rho(X)).
\]
This identity follows by differentiating the one-parameter group
on the left and solving the resulting linear differential equation.
The fixed space $V^H$ consists of the vectors fixed by every
element of $H$. A representation on a nonzero vector space $V$
is irreducible if $V$ has no invariant subspaces other than $\{0\}$
and $V$; it is faithful if its kernel is trivial.

\begin{lemma}
\label{app:lie-tests}
Let $H$ be a connected real matrix Lie group.
A vector is fixed by $H$ if and only if it is annihilated by
$\mathrm d\rho(X)$ for every $X\in\Lie(H)$.
A closed subgroup $J\le H$ with $\Lie(J)=\Lie(H)$ equals $H$.
If a differentiable curve $\Psi(t)$ lies in $J$, then
$\Psi'(t)\Psi(t)^{-1}\in\Lie(J)$.
For a polynomial logarithmic derivative, each of its coefficient
matrices also belongs to $\Lie(J)$.
\end{lemma}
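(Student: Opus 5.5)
The plan is to prove the four assertions of Lemma~\ref{app:lie-tests} in order, using only the exponential map and standard facts on closed subgroups of matrix groups.

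\emph{Fixed vectors.} Suppose $v$ is fixed by $H$. Then $\rho(\exp(tX))v=v$ for every $X\in\Lie(H)$ and every $t$. Differentiating at $t=0$ gives $\mathrm d\rho(X)v=0$. Conversely, suppose $\mathrm d\rho(X)v=0$ for all $X$. The identity $\rho(\exp(tX))=\exp(t\,\mathrm d\rho(X))$ from Section~\ref{app:lie} shows that $v$ is fixed by every $\exp(X)$. Since $H$ is connected, it is generated by $\exp(\Lie H)$, because the image of the exponential map contains an identity neighborhood. Hence $v$ is fixed by all of $H$.

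\emph{Equal Lie algebras.} Let $J\le H$ be closed with $\Lie J=\Lie H$. By Cartan's closed-subgroup theorem, $J$ is an embedded Lie subgroup, and $\exp$ maps a neighborhood of $0$ in $\Lie J=\Lie H$ onto a neighborhood of the identity contained in $J$. Thus $J$ is open in $H$. An open subgroup is also closed, since its complement is a union of cosets. Connectedness of $H$ then gives $J=H$.

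\emph{Logarithmic derivative.} Fix $t$. The curve $s\mapsto\Psi(t+s)\Psi(t)^{-1}$ lies in $J$, is differentiable, and equals $\groupid$ at $s=0$. A differentiable curve in the closed subgroup $J$ through the identity has velocity in $\Lie J$; this is where closedness and the embedded structure are used. Differentiating at $s=0$ therefore gives $\Psi'(t)\Psi(t)^{-1}\in\Lie J$.

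\emph{Coefficient matrices.} Write $\Psi'(t)\Psi(t)^{-1}=\sum_{k=0}^K t^kN_k$. Choose $K+1$ distinct values $t_0,\dots,t_K$. The Vandermonde matrix built from these values is invertible, so each $N_k$ is a real linear combination of the values $\Psi'(t_i)\Psi(t_i)^{-1}$. These values lie in the linear subspace $\Lie J$, so each $N_k$ does as well.

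The main obstacle is the step that uses Cartan's theorem, namely the tangent-vector property for differentiable curves in a closed subgroup. If needed, I would verify it directly through the characterization $\Lie J=\{X:\exp(tX)\in J\ \text{for all}\ t\}$. Take $X=c'(0)$ for such a curve $c$. Then $\exp(X)=\lim_n c(1/n)^n$, and this limit lies in $J$ because $J$ is closed.
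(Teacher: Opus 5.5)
Your proposal is correct and follows the paper's proof essentially step for step. The exponential identity, together with generation of a connected group by an identity neighborhood, handles the fixed-vector and equal-Lie-algebra claims; differentiating $\Psi(t+s)\Psi(t)^{-1}$ at $s=0$ gives the logarithmic derivative; and polynomial interpolation gives the coefficients. Your appeal to Cartan's theorem, or to the limit $c(t/n)^n\to\exp(tX)$ for all real $t$, only makes explicit the tangent-vector step for curves in a closed subgroup, which the paper leaves implicit.
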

\begin{proof}
The exponential map gives a neighborhood of $\groupid$ from a neighborhood
of zero in the Lie algebra. A connected Lie group is generated by
any identity neighborhood: the generated subgroup is open, and
its other cosets would disconnect the group. The displayed
exponential identity proves the fixed-vector assertion. The same
neighborhood argument proves the assertion about $J$.
Differentiating $\Psi(t+s)\Psi(t)^{-1}$ at $s=0$ gives the
logarithmic derivative. Finally, its coefficients are linear
combinations of finitely many of its values, by polynomial
interpolation.
\end{proof}

If $\mathbf H$ is defined over $\mathbb Q$, its Lie algebra is
the common kernel of the differentials at $\groupid$ of its
defining equations over $\mathbb Q$. Thus it has a rational basis.
For a representation defined over $\mathbb Q$, the equations
$\mathrm d\rho(X)v=0$, with $X$ ranging over such a basis, are
rational linear equations. Their common kernel has a rational basis.
By Lemmas~\ref{app:components}
and~\ref{app:lie-tests}, this kernel is the invariant space for
the connected algebraic group. The same reasoning applies to
covectors, bilinear forms, and tensor representations.
It is the rational descent argument used in
Proposition~\ref{prop:supportclassification} and
Lemma~\ref{lem:arithmeticinputs}.

\subsection{Unipotent groups, nilpotent matrices, and polynomial coordinates}
\label{app:unipotent}

An $n$ by $n$ matrix $u$ is \emph{unipotent} if $u-I_n$ is nilpotent, or
equivalently all its eigenvalues are one. A matrix $X$ is
\emph{nilpotent} if $X^r=0$ for some $r$.
A unipotent algebraic group is a matrix algebraic group whose
elements over an algebraic closure are unipotent.
For $X^3=0$ and $(u-I_n)^3=0$,
\begin{equation}\label{eq:appendix-explog}
 \exp X=I_n+X+\tfrac12X^2,\qquad
 \log u=(u-I_n)-\tfrac12(u-I_n)^2.
\end{equation}
These formulas are inverse polynomial operations.

For unipotent groups we use the following triangularization result.

\begin{proposition}[{\cite[Proposition~14.3]{Milne2017}}]
\label{app:unipotent-flag}
Every representation over $K$ of a unipotent algebraic $K$-group
has a full $K$-rational invariant flag on whose successive
one-dimensional quotients the action is trivial. In particular,
a nonzero representation space has a nonzero fixed vector.
A unipotent $K$-subgroup of $\SL_3$ is conjugate over $K$ into
$\mathbf U_3$.
\end{proposition}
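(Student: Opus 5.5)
The plan is the classical flag argument by induction on the dimension of the representation space $V$. The first step is to establish the fixed-vector assertion: every nonzero representation $\rho:\mathbf U\to\mathrm{GL}(V)$ of a unipotent $K$-group has a nonzero $K$-rational fixed vector. I would pass to an algebraic closure $\bar K$ and work with the subalgebra $\mathcal A\subset\End(V_{\bar K})$ spanned by $\rho(\mathbf U(\bar K))$. Every element $\rho(u)$ is unipotent, so $\rho(u)-I$ is nilpotent. Kolchin's theorem then gives a common fixed vector. To prove it, take a nonzero $\mathcal A$-stable subspace $W$ of minimal dimension. Burnside's theorem gives $\mathcal A|_W=\End(W)$. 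Since $\operatorname{tr}(\rho(u)|_W)=\dim W$ for all $u$, one gets $\operatorname{tr}((\rho(u)-I)\rho(u'))=0$ for all $u,u'$. The trace form on $\End(W)$ is nondegenerate, which forces $\rho(u)|_W=I$. Minimality then gives $\dim W=1$.

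The next step is to descend the fixed vector to $K$. Choose a $K$-basis of the coordinate ring of $\mathbf U$. The equations $\rho(u)v=v$ for all $u$ are then linear in $v$ with coefficients in $K$: expand $\rho$ in its matrix-entry functions and equate coefficients. So $V^{\mathbf U}$ is the $\bar K$-span of a $K$-subspace, by the same Gaussian-elimination argument as in Lemma~\ref{app:rational-envelope}. This subspace is nonzero by the first step.

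The flag is built by induction. Put $V_1=V^{\mathbf U}\cap V_K\neq0$ and pick a $K$-line $L_1\subset V_1$. The quotient $V/L_1$ is again a $K$-representation of $\mathbf U$, so induction gives a full rational invariant flag there with trivial successive quotients. Pulling this flag back and prepending $L_1$ gives the full flag for $V$. The action on each one-dimensional quotient is trivial because the construction only ever uses fixed lines.

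For a unipotent $K$-subgroup of $\SL_3$, apply the flag result to the standard representation on $K^3$. Take a $K$-basis $f_1,f_2,f_3$ adapted to the flag, with $f_1$ spanning the first step. In this basis every element is upper triangular with ones on the diagonal. Rescale $f_3$ so that the change-of-basis matrix has determinant one; this is possible over $K$ after scaling by $\det^{-1}$. The conjugate then lies in $\mathbf U_3$.

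I expect the main obstacle to be the rationality step. The fixed space must be defined over $K$ and not merely over $\bar K$, and justifying that the invariance conditions are $K$-linear requires using that $\rho$ and $\mathbf U$ are defined over $K$. Alternatively one can simply cite \cite[Proposition~14.3]{Milne2017}, as the statement does.
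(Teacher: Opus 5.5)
Your proof is correct, but it does much more than the paper. The paper proves none of the flag or fixed-vector assertions. It imports them wholesale from \cite[Proposition~14.3]{Milne2017} and writes out only the last step: a basis adapted to the invariant flag makes the given three-dimensional action upper unitriangular. That step is exactly your final paragraph. Your rescaling of $f_3$ to get determinant one is harmless but unnecessary, since the paper's rational conjugacy allows $\mathrm{GL}_3(\mathbb Q)$.

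You instead derive the flag from the elementwise definition of unipotence in Section~\ref{app:unipotent}, in three steps:
\begin{enumerate}
\item Kolchin's theorem over $\bar K$, via Burnside and nondegeneracy of the trace form;
\item $K$-descent of $V^{\mathbf U}$, by expanding $\rho(u)-I$ in a $K$-basis of matrix coefficients;
\item induction on quotients by fixed $K$-lines.
\end{enumerate}
This buys two things. The $K$-rationality of the flag becomes an explicit linear-algebra fact; the paper relies on it later for rational triangularization in Lemma~\ref{lem:rationalnilcoordinates} and for rational fixed spaces in Proposition~\ref{prop:supportclassification}. Your Kolchin step also makes visible the bridge between \emph{all elements are unipotent} and \emph{every nonzero representation has a nonzero fixed vector}, which the citation leaves implicit.

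One input you assert without justification is that $\rho(u)$ is unipotent for an arbitrary representation $\rho$, not just in the defining embedding. This is preservation of unipotence under algebraic homomorphisms. In characteristic zero it is quick to supply:
\begin{itemize}
\item As in the proof of Lemma~\ref{app:unipotent-explog}, the curve $t\mapsto\exp(t\log u)$ lies in $\mathbf U$.
\item Hence $t\mapsto\rho(\exp(t\log u))$ is a one-parameter subgroup with polynomial entries.
\item Comparing coefficients in $P'=PY$, as in the $q=0$ case of Lemma~\ref{lem:leading}, gives $P(t)=\exp(tY)$ with $Y$ nilpotent, so $\rho(u)=\exp Y$ is unipotent.
\end{itemize}
With that filled in, your argument is a complete, self-contained replacement for the citation.
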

\begin{proof}
For the final assertion, choose a basis adapted to the invariant
flag provided by the cited result. The action is then upper
triangular with diagonal entries one. Applying this to the given
three-dimensional representation gives the required conjugation.
If a group $S$ normalizes the unipotent group $U$, then $V^U$ is
$S$-stable: for $v\in V^U$, one has
$u(sv)=s(s^{-1}us)v=sv$.
This last observation is used in the affine classification.
\end{proof}

Here a full invariant flag is a chain
$0=V_0\subset V_1\subset\cdots\subset V_n=V$ with
$\dim V_j=j$, each preserved by the group. Trivial action on
the quotients $V_j/V_{j-1}$ means that each induced matrix is $1$.

\begin{lemma}
\label{app:unipotent-explog}
For a connected unipotent subgroup of $\SL_3$, exponential and
logarithm identify its Lie algebra and its real points by polynomial
maps. The real group is connected and simply connected.
Its Lie algebra has dimension at most three, and its commutator
algebra is central. A nonabelian such Lie algebra is the full
three-dimensional Heisenberg algebra after conjugation.
\end{lemma}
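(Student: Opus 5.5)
The plan is to reduce to the upper unitriangular group and read everything off the explicit exponential. Let $\mathbf H\le\SL_3$ be a connected unipotent group and $H$ its real points. By Proposition~\ref{app:unipotent-flag}, after a real (even rational) conjugation we may assume $\mathbf H\le\mathbf U_3$, so every element of $\Lie(H)$ is strictly upper triangular. Hence $\Lie(H)\subseteq\mathfrak n_3=\operatorname{span}\{E_{12},E_{23},E_{13}\}$, which gives $\dim\Lie(H)\le 3$. Every $X\in\mathfrak n_3$ satisfies $X^3=0$, and every $u\in\mathbf U_3(\R)$ satisfies $(u-I_3)^3=0$. The truncated formulas \eqref{eq:appendix-explog} are therefore mutually inverse polynomial bijections $\mathfrak n_3\leftrightarrow\mathbf U_3(\R)$; the check is the routine one that $\log\exp X=X$ and $\exp\log u=u$ as polynomial identities in nilpotents of cube zero.

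Next I identify $H$ with $\exp(\Lie(H))$. The group $\mathbf U_3(\R)$ is homeomorphic to $\R^3$ via $\exp$, so it is connected and simply connected. Let $\mathfrak h=\Lie(H)$, a subalgebra of $\mathfrak n_3$. The Baker--Campbell--Hausdorff series truncates in class two to $X+Y+\tfrac12[X,Y]$, so $\exp(\mathfrak h)$ is a closed subgroup homeomorphic to $\mathfrak h\cong\R^k$. It is connected with Lie algebra $\mathfrak h$, so it equals the identity component of the real points, which is $H$ by Lemma~\ref{app:lie-tests}. If one wants real points of $\mathbf H$ rather than only the identity component, note that $\log$ maps $\mathbf H(\R)$ into $\mathfrak h$. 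The reason is that the Zariski closure of $\{u^t\}$ is a one-parameter subgroup inside $\mathbf H$, using that $\mathbf H$ is algebraic and $t\mapsto\exp(t\log u)$ is polynomial. Thus $\mathbf H(\R)=\exp\mathfrak h$ is itself connected and simply connected.

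Finally comes the bracket structure. In $\mathfrak n_3$ we have $[\mathfrak n_3,\mathfrak n_3]=\R E_{13}$, which is central in $\mathfrak n_3$, so $[\mathfrak h,\mathfrak h]\subseteq\R E_{13}$ is central in $\mathfrak h$. If $\mathfrak h$ is nonabelian, then $[\mathfrak h,\mathfrak h]=\R E_{13}\subseteq\mathfrak h$, and there are $X,Y\in\mathfrak h$ with $[X,Y]\ne0$. Such $X,Y$ must have independent images in $\mathfrak n_3/\R E_{13}$, because the bracket factors through that quotient as a determinant. Hence $\dim\mathfrak h=3$, $\mathfrak h=\mathfrak n_3$ is the Heisenberg algebra, and $H=\mathbf U_3(\R)$ after the conjugation.

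The main (mild) obstacle is the identification of $\exp\mathfrak h$ with the group: that it is closed and a subgroup, and that the algebraic real points contain no extra components. I would handle this through the two-step Baker--Campbell--Hausdorff formula and the polynomial one-parameter subgroups $\exp(t\log u)$, as above. Everything else is direct computation in $\mathfrak n_3$.
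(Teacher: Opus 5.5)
Your proposal is correct and essentially the paper's proof. You conjugate into $\mathbf U_3$ by Proposition~\ref{app:unipotent-flag}, use that a polynomial in $t$ vanishing at every integer vanishes identically to get $\log u\in\Lie(H)$, read off the inverse polynomial maps \eqref{eq:appendix-explog}, and then use $[\mathfrak n_3,\mathfrak n_3]=\R E_{13}$ for the bracket structure. Your Baker--Campbell--Hausdorff step identifying $\exp\mathfrak h$ with the identity component is a harmless detour. The paper skips it: once $\exp$ maps $\mathfrak h$ into the group and $\log$ maps all real points back into $\mathfrak h$, the truncated formulas already give mutually inverse homeomorphisms.
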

\begin{proof}
Use Proposition~\ref{app:unipotent-flag} to work inside
$\mathbf U_3$. The exponential of a Lie algebra element lies
in the Lie group. Conversely, for $u$ in the algebraic group,
apply any defining polynomial to $\exp(t\log u)$. This is a
polynomial in $t$ vanishing at every integer, since
$\exp(n\log u)=u^n$. It therefore vanishes identically.
Differentiation at zero gives $\log u\in\Lie(H)$.
Formula~\eqref{eq:appendix-explog} now gives inverse maps between
a real vector space and the group, including the asserted
connectedness and simple connectedness.

The strictly upper triangular algebra has basis
$E_{12},E_{23},E_{13}$ and only one nonzero basic bracket,
$[E_{12},E_{23}]=E_{13}$. Two elements with nonzero bracket
have independent images modulo $\mathbb RE_{13}$ and generate
this whole algebra. This proves the last assertions.
\end{proof}

For these groups the Baker--Campbell--Hausdorff formula is the
finite identity
\[
 \log(\exp X\exp Y)=X+Y+\tfrac12[X,Y].
\]
It can be checked by multiplying \eqref{eq:appendix-explog};
all products of three strictly upper triangular matrices vanish.
The same exponential and logarithm description applies to a
connected closed Lie subgroup of $\mathbf U_3(\mathbb R)$.
Indeed, the formula makes the exponential of its Lie algebra a
subgroup. It is closed, since logarithm is a homeomorphism,
and it contains an identity neighborhood in the given group.
Connectedness therefore makes the two groups equal.
In entry coordinates the multiplication and logarithm are
\[
 u(x,y,z)u(x',y',z')
   =u(x+x',y+y',z+z'+xy'),\qquad
 \log u=xE_{12}+yE_{23}+(z-xy/2)E_{13}.
\]
These formulas underlie the degree bounds in
Lemma~\ref{app:horizontal}.

\begin{lemma}
\label{app:nilpotent-charts}
A nonzero nilpotent $3$ by $3$ matrix has either $X^2=0$ and
rank one, or $X^2\ne0$ and a single Jordan block of size three.
The real Lie algebra of every support occurring in this paper
has a basis of nilpotent matrices. For such a basis $X_1,\ldots,X_m$,
the map
\[
 (t_1,\ldots,t_m)\longmapsto
 \exp(t_1X_1)\cdots\exp(t_mX_m)
\]
is polynomial and is a local coordinate chart at zero.
\end{lemma}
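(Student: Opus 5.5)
The first assertion, the dichotomy for nonzero nilpotent $3\times3$ matrices, is linear algebra. For nilpotent $X\neq0$ the Jordan form has blocks of sizes summing to $3$, with at least one block of size at least two. The possible partitions are $(2,1)$ and $(3)$. In the first case $X^2=0$ and $\operatorname{rank}X=1$. In the second $X^2\neq0$ and $X$ is a single Jordan block. Equivalently, this is the normal form already obtained in Lemma~\ref{lem:normalization}.

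For the second assertion I will go through the supports in Proposition~\ref{prop:supportclassification}, excluding the definite orthogonal groups, which carry no nonconstant polynomial curve.
\begin{itemize}
\item \emph{Unipotent supports.} Every element of the Lie algebra is nilpotent, since it is conjugate into the strictly upper triangular matrices by Proposition~\ref{app:unipotent-flag}. Any basis therefore works.
\item \emph{$\mathfrak{sl}_2$ and $\mathfrak{sl}_3$.} The off-diagonal units $E_{ij}$ are nilpotent. For adjacent indices, $Y_{ij}=E_{ii}-E_{jj}+E_{ij}-E_{ji}$ satisfies $Y_{ij}^2=0$, because its $2\times2$ block $\left(\begin{smallmatrix}1&1\\-1&-1\end{smallmatrix}\right)$ squares to zero. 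Modulo the off-diagonal units, $Y_{ij}\equiv E_{ii}-E_{jj}$, so these matrices together span the algebra.
\item \emph{Block and principal supports.} Take the images of this $\mathfrak{sl}_2$ basis under the differential of the parametrization from Lemma~\ref{lem:semisimpleparametrizations}. For the indefinite orthogonal groups the parametrization is real. Images of nilpotent elements under a Lie algebra representation are nilpotent: by Jacobson--Morozov the element lies in an $\mathfrak{sl}_2$-triple, and finite-dimensional $\mathfrak{sl}_2$-modules have nilpotent raising operators. In the present cases one can also check this directly in the block and symmetric-square forms.
\item \emph{Affine supports.} The Lie algebra is a Levi part $\mathfrak{sl}_2$ plus the translation ideal. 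Adjoin the two translation directions, which square to zero, to the nilpotent $\mathfrak{sl}_2$ basis transported by the rational conjugation.
\end{itemize}

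For the final assertion, each $X_i$ satisfies $X_i^3=0$, so $\exp(t_iX_i)=I_3+t_iX_i+t_i^2X_i^2/2$ is polynomial in $t_i$. The ordered product is then polynomial in $(t_1,\dots,t_m)$, with values in the connected group because each factor lies there. Its differential at $0$ sends the $i$th coordinate vector to $X_i$, which is a basis of the Lie algebra. The inverse function theorem, applied on the real points of the support group, makes the map a diffeomorphism from a neighborhood of $0$ onto a neighborhood of $\groupid$. The only point needing care is the nilpotency of the transported $\mathfrak{sl}_2$ basis in the symmetric-square case. I would settle it by direct computation: $d\iota(E_{12})$ acts on $(x,y,z)$ by the map $(x,y,z)\mapsto(2y,z,0)$, which is visibly nilpotent, and the image of $Y$ is nilpotent since $Y$ is conjugate to $E_{12}$ in $\SL_2$.
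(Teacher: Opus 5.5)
Your proposal is correct and follows the paper's proof essentially step by step. The shared steps are the Jordan-form dichotomy, the basis of off-diagonal units together with $E_{ii}-E_{jj}+E_{ij}-E_{ji}$, transport through the block and symmetric-square parametrizations, adjoining the translation directions in the affine case, and the inverse function theorem. The only difference is cosmetic: the paper gets nilpotency of $\mathrm d\iota(X)$ from the fact that $\exp(t\,\mathrm d\iota(X))=\iota(\exp tX)$ is polynomial in $t$, while you use Jacobson--Morozov or the explicit computation of $\mathrm d\iota(E_{12})$.
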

\begin{proof}
The first assertion follows from the possible nilpotent Jordan
blocks. The determinant-one conjugacy normalizations, including
the sign choices in dimension two, are established in
Lemma~\ref{lem:normalization}. For $\mathfrak{sl}_n(\mathbb R)$,
$n=2,3$, use
the off-diagonal matrix units together with
$E_{ii}-E_{i+1,i+1}+E_{i,i+1}-E_{i+1,i}$ for adjacent indices.
The latter matrices square to zero, and their diagonal parts span
the traceless diagonal matrices. The differentials of the real
block and symmetric-square representations send nilpotent elements
to nilpotent endomorphisms, as follows from their polynomial
one-parameter subgroups. The Lie algebras of the unipotent and
affine radicals are spanned by nilpotent matrices. The real indefinite
orthogonal groups use the
symmetric-square description in Lemma~\ref{app:symmetric-square}.
Definite orthogonal groups have no nonconstant polynomial curve,
so do not occur as supports here. Finally, the derivative of the
displayed map at zero sends the standard basis to $X_1,\ldots,X_m$.
The inverse function theorem proves the local chart assertion.
\end{proof}

\subsection{Radicals, reductive groups, and Levi decomposition}
\label{app:levi}

The derived subgroup is generated algebraically by commutators
$aba^{-1}b^{-1}$. A group is \emph{solvable} if its repeated
derived groups reach the identity. The lower central series is
$H_1=H$, $H_{j+1}=[H,H_j]$; a group is \emph{nilpotent} if
this series reaches the identity.
The group $\mathbf U_3$ is nilpotent of class two, whereas an
upper triangular group with nontrivial diagonal entries is
solvable but need not be nilpotent.

For a connected algebraic group $\mathbf H$, its \emph{radical}
$R(\mathbf H)$ is its largest connected normal solvable subgroup,
and its \emph{unipotent radical} $R_u(\mathbf H)$ is its largest
connected normal unipotent subgroup. Over a characteristic-zero
field these are defined over that field.
The group is \emph{semisimple} if $R(\mathbf H)=1$ and
\emph{reductive} if $R_u(\mathbf H)=1$.
These definitions and the existence of the radicals are recorded in
\cite[6.46 and Sections 19a--19b]{Milne2017}.
Thus a torus is reductive but, unless trivial, is not semisimple.
The groups $\SL_2$ and $\SL_3$ are semisimple.
Here is a direct verification. Over an algebraic closure, a
nonzero ideal of $\mathfrak{sl}_n$, $n=2,3$, contains a matrix
unit $E_{ij}$ with $i\ne j$: projections to the weight spaces
for diagonal commutators give such a unit unless the ideal
contains a nonzero diagonal matrix, whose commutator with some
$E_{ij}$ does so. From $E_{ij}$, taking commutators with
$E_{ji}$ and the other matrix units gives all of
$\mathfrak{sl}_n$. Thus this Lie algebra is simple and is not
solvable. The Lie algebra of a connected normal solvable subgroup
is a solvable ideal, so it is zero. A connected algebraic group
of dimension zero is trivial, proving the claim.

Mostow's characteristic-zero Levi theorem yields the following
statement, including the consequences for the two radicals.

\begin{proposition}[{\cite[25.48--25.49]{Milne2017}}]
\label{app:levi-decomposition}
For a connected algebraic group $\mathbf H$ over a field $K$ of
characteristic zero, there is a reductive $K$-subgroup $\mathbf S$
mapping isomorphically onto $\mathbf H/R_u(\mathbf H)$.
Consequently
\[
 \mathbf H=\mathbf S\ltimes R_u(\mathbf H).
\]
If $R(\mathbf H)$ is unipotent, then
$R(\mathbf H)=R_u(\mathbf H)$ and $\mathbf S$ is semisimple.
Conversely, a semisimple quotient $\mathbf H/R_u(\mathbf H)$
forces $R(\mathbf H)=R_u(\mathbf H)$.
\end{proposition}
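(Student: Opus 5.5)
The statement is Mostow's Levi theorem together with its two structural consequences, so the plan is to cite the existence part and derive the rest by short algebraic arguments. I would proceed in three steps.

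\emph{Step 1: existence.} First I would quote \cite[25.48--25.49]{Milne2017}. It gives, over a field of characteristic zero, a closed $K$-subgroup $\mathbf S$ such that the quotient map $\mathbf H\to\mathbf H/R_u(\mathbf H)$ restricts to an isomorphism on $\mathbf S$. Since $\mathbf H/R_u(\mathbf H)$ is reductive, so is $\mathbf S$. The decomposition $\mathbf H=\mathbf S\ltimes R_u(\mathbf H)$ then follows formally:
\begin{itemize}
\item $\mathbf S\cap R_u(\mathbf H)$ is trivial, because it lies in the kernel of the quotient map restricted to $\mathbf S$, which is injective;
\item $\mathbf S R_u(\mathbf H)=\mathbf H$, because every $x$ has the same image as some $s\in\mathbf S$, so $s^{-1}x\in R_u(\mathbf H)$;
\item $R_u(\mathbf H)$ is normal, so the product is semidirect.
\end{itemize}
Over the algebraic closure this argument works on points. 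The product map $\mathbf S\times R_u(\mathbf H)\to\mathbf H$ is then a bijective homomorphism of smooth groups in characteristic zero, hence an isomorphism.

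\emph{Step 2: if $R(\mathbf H)$ is unipotent.} Every connected normal unipotent subgroup is solvable, so $R_u(\mathbf H)\subseteq R(\mathbf H)$. If $R(\mathbf H)$ is unipotent, maximality of $R_u$ gives the reverse inclusion, so $R(\mathbf H)=R_u(\mathbf H)$. To see that $\mathbf S$ is semisimple, let $\mathbf N$ be a connected normal solvable subgroup of $\mathbf S$. Then $\mathbf N R_u(\mathbf H)$ is connected and normal in $\mathbf H$: conjugation by $\mathbf S$ preserves both factors, and conjugation by $R_u(\mathbf H)$ maps $\mathbf N$ into $\mathbf N R_u(\mathbf H)$ modulo the normal subgroup $R_u(\mathbf H)$. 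It is also solvable, being an extension of the solvable group $\mathbf N$ by $R_u(\mathbf H)$. Hence $\mathbf N R_u(\mathbf H)\subseteq R(\mathbf H)=R_u(\mathbf H)$, so $\mathbf N\subseteq\mathbf S\cap R_u(\mathbf H)$, which is trivial. Thus $R(\mathbf S)=1$.

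\emph{Step 3: the converse.} Suppose $\mathbf H/R_u(\mathbf H)$ is semisimple. The image of $R(\mathbf H)$ in this quotient is connected, normal and solvable, so it is trivial. Hence $R(\mathbf H)\subseteq R_u(\mathbf H)$, and the inclusion from Step 2 gives equality.

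The main obstacle I expect is bookkeeping rather than substance: checking that the cited statement really provides the $K$-rational complement mapping isomorphically onto the quotient, and that images and products of connected normal solvable subgroups stay connected, normal and solvable. Both are standard facts in characteristic zero, and I would cite \cite[Sections 19a--19b]{Milne2017} for them.
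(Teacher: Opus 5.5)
Your proposal is correct and follows the paper's argument: you cite the Levi theorem for $\mathbf S$, get $R(\mathbf H)=R_u(\mathbf H)$ from the two inclusions, and use images of connected normal solvable subgroups in $\mathbf H/R_u(\mathbf H)$ for the converse. Your direct check that $\mathbf N R_u(\mathbf H)$ is connected, normal and solvable is the preimage half of the paper's image--preimage remark, transported to $\mathbf S$ through the isomorphism $\mathbf S\simeq\mathbf H/R_u(\mathbf H)$; one small correction is that the product map is an isomorphism from the semidirect product $\mathbf S\ltimes R_u(\mathbf H)$, not from $\mathbf S\times R_u(\mathbf H)$.
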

\begin{proof}
The cited Levi theorem gives the existence of $\mathbf S$.
The inclusions between the two radicals give their equality
when the radical is unipotent. The image of the radical in
$\mathbf H/R_u(\mathbf H)$ is its connected solvable radical;
this follows by taking images and preimages of connected normal
solvable groups. Thus that quotient is semisimple exactly when
the two radicals agree. The isomorphism defining $\mathbf S$
gives the remaining assertion.
\end{proof}

The notation $\mathbf S\ltimes\mathbf U$ means that $\mathbf U$
is normal and every element has a unique factorization into an
element of $\mathbf S$ and one of $\mathbf U$. The factors need
not commute. The basic example in this paper is
\[
 \mathbf A=\SL_2\ltimes\mathbb G_a^2,
 \qquad (A,v)(B,w)=(AB,Aw+v).
\]
Its unipotent radical is the translation group and its Levi
subgroup is $\{(A,0)\}$. To check the radical assertion, its
quotient by translations is the semisimple group $\SL_2$.
The image of any connected normal solvable subgroup in that
quotient is trivial. Since translations themselves form a
connected normal unipotent subgroup, they are both radicals.
The two embeddings
$\mathbf A$ and $\mathbf A^\vee$ in \eqref{eq:affinegroups}
have these same abstract group properties. The map
$b\mapsto b^{-\mathsf T}$ is an algebraic automorphism of $\SL_3$
whose coordinate degree is at most two.

\begin{lemma}
\label{app:affine-algebra}
The group $\mathbf A$ is perfect, meaning that its derived group
is itself. The standard two-dimensional $\SL_2$-module and its
dual are irreducible and have no nonzero fixed vector.
The fixed space of the upper unipotent subgroup in the standard
module is a line.
\end{lemma}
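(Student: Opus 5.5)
The lemma has three assertions, and I will prove them separately, working throughout inside $\mathbf A=\SL_2\ltimes\mathbb G_a^2$ with the product $(A,v)(B,w)=(AB,Aw+v)$ and arguing on Lie algebras where convenient.

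\emph{Perfectness.} First, $\SL_2$ is its own derived group. Over $\mathbb C$ its Lie algebra is simple (Section~\ref{app:levi}), so $[\mathfrak{sl}_2,\mathfrak{sl}_2]=\mathfrak{sl}_2$. Hence the derived group of $\mathbf A$ contains $\SL_2=\{(A,0)\}$; this is legitimate because the algebraic derived group of a connected group is connected and its Lie algebra contains the bracket algebra. Next I compute the commutator of $(A,0)$ with $(I_2,w)$:
\[
(A,0)(I_2,w)(A,0)^{-1}(I_2,w)^{-1}=(I_2,Aw-w).
\]
Taking $A=\diag(2,1/2)$, the vectors $(A-I_2)w$ fill all of $\mathbb G_a^2$. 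So the derived group contains both $\SL_2$ and the translations, and these generate $\mathbf A$.

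\emph{Irreducibility and fixed vectors.} An invariant line in the standard module must be stable under both $u_2(s)$ and its transpose. The only line stable under $u_2(s)$ is $\R e_1$, and the only line stable under the transpose is $\R e_2$; since these differ, no invariant line exists. A fixed vector spans an invariant line or is zero, so it must be zero.

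For the dual module I will use $A^{-\mathsf T}=J_2AJ_2^{-1}$ with $J_2=\left(\begin{smallmatrix}0&-1\\1&0\end{smallmatrix}\right)$. This shows the dual is isomorphic to the standard module, so irreducibility and the absence of fixed vectors transfer directly.

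\emph{The upper unipotent fixed space.} The fixed space is $\ker E_{12}=\R e_1$, which is a line.

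None of these steps is a real obstacle. The only point needing care is passing from the Lie algebra to the group in the perfectness claim, and I handle it by writing out the explicit group commutators above together with connectedness (Lemma~\ref{app:lie-tests}).
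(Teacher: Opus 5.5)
Your proposal is correct. For the translations, irreducibility, the dual, and the upper-unipotent fixed line it is essentially the paper's argument. You use the same commutator $(I_2,(A-I_2)w)$ with $A=\diag(2,1/2)$, the same observation that an invariant line would have to be both coordinate axes, and an explicit version ($A^{-\mathsf T}=J_2AJ_2^{-1}$) of the paper's ``by inverse transpose.''

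The genuine difference is how you place $\SL_2$ in the derived group. The paper does it by hand. One has $\diag(2,1/2)\,u_2(s)\,\diag(2,1/2)^{-1}u_2(s)^{-1}=u_2(3s)$, and similarly for lower unipotents, so every elementary unipotent is a commutator. These generate $\SL_2(K)$ by Gaussian elimination. Hence the abstract commutator subgroup of $\mathbf A(K)$ is everything, for every field $K$ of characteristic zero, with no structure theory.

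You instead combine simplicity of $\mathfrak{sl}_2$ with the general facts that $\mathcal D\mathbf G$ is connected and $\Lie(\mathcal D\mathbf G)\supseteq[\mathfrak g,\mathfrak g]$. That is valid and suffices for the paper's algebraic notion of derived group. But it imports a nontrivial theorem, where the diagonal element you already used for the translations would have finished the job directly.

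Your closing sentence also slightly misdescribes your own argument. The explicit commutators you wrote cover only the translations. Moreover, Lemma~\ref{app:lie-tests} is a statement about real Lie groups, so to use it you would pass to real points, or else use the dimension argument for connected algebraic groups.

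Finally, read your lines $\R e_1$ and $\R e_2$ over an arbitrary field $K$, since the classification applies the lemma over $\mathbb Q$. The argument is unchanged.
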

\begin{proof}
Upper and lower elementary unipotent matrices generate $\SL_2(K)$
by Gaussian elimination. Each is a commutator: conjugation by
$\diag(2,1/2)$ multiplies its upper parameter by $4$ and its
lower parameter by $1/4$, so the resulting commutator parameters
range over $K$. For translations, the commutator with this same
diagonal matrix is translation by $(\diag(2,1/2)-I_2)v$, which
ranges over $K^2$. Thus all of $\mathbf A$ is generated by
commutators. A line invariant under both upper and lower
unipotents in $K^2$ would have to be both coordinate axes,
which is impossible. Direct multiplication gives their respective
fixed axes and proves the assertions about invariant vectors.
The dual has the same properties by inverse transpose.
\end{proof}

\subsection{Tori, roots, and three-dimensional representations}
\label{app:tori}

A \emph{torus} over $K$ is an algebraic group that becomes
$(\mathbb G_m)^r$ over an algebraic closure. It is \emph{split over
$K$} if this isomorphism is already defined over $K$.
The diagonal determinant-one matrices form a split torus in
$\SL_n$. The circle $x^2+y^2=1$, with complex multiplication,
is a real torus that is not split over $\mathbb R$.
Here ``torus'' is an algebraic notion; the compact horizontal torus
$\mathbb R^r/\mathbb Z^r$ in a nilmanifold is a quotient Lie group.

An algebraic character of $(\mathbb G_m)^r$ is an algebraic
group homomorphism to $\mathbb G_m$. Every such character has the
form $t\mapsto t_1^{a_1}\cdots t_r^{a_r}$, with $a_i\in\mathbb Z$.
A \emph{weight space} is a subspace on which a torus acts by one
such character. A maximal torus is a torus not contained in a
larger one. The absolute rank is its dimension over an algebraic
closure; the $K$-split rank is the largest dimension of a torus
split over $K$. Maximal tori over an algebraic closure are
conjugate \cite[Theorem 17.10]{Milne2017}, so the absolute rank
does not depend on the choice of maximal torus.

\begin{lemma}
\label{app:weights}
A representation over $K$ of $(\mathbb G_m)^r$ is a direct sum
of weight spaces defined over $K$. Weights add under tensor
products and exterior products. On the determinant-one diagonal
torus, exponent vectors differing by a multiple of $(1,\ldots,1)$
define the same character.
\end{lemma}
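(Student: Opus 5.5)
The plan is to prove the three assertions of Lemma~\ref{app:weights} by working over an algebraic closure $\overline K$ first and then descending, since the torus $(\mathbb G_m)^r$ in the statement is split.

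\textbf{Step 1: decomposition into weight spaces.} Let $\rho:(\mathbb G_m)^r\to\mathrm{GL}(V)$ be defined over $K$, with $V=V_K\otimes_K\overline K$.

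1. Each matrix entry of $\rho(t)$ is a Laurent polynomial in $t_1,\ldots,t_r$ with coefficients in $K$. Collecting monomials, I will write
\[
 \rho(t)=\sum_{a\in\mathbb Z^r}t^aP_a,
\]
where only finitely many $P_a$ are nonzero and every $P_a$ is a matrix with entries in $K$.

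2. The homomorphism identity $\rho(ts)=\rho(t)\rho(s)$ is an identity of Laurent polynomials in the independent variables $t,s$. Comparing the coefficients of $t^as^b$ gives $P_aP_b=\delta_{ab}P_a$.

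3. Setting $t=\groupid$ gives $\sum_aP_a=I$.

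So the $P_a$ are orthogonal idempotents summing to the identity, and they are defined over $K$. Hence $V_K=\bigoplus_a P_aV_K$. On each summand $P_aV_K$, the torus acts by the character $t^a$, so these are $K$-rational weight spaces. I expect this coefficient comparison to be the only step that needs care: one must use that the equation is a polynomial identity in the formal variables, not merely a pointwise identity at $K$-points. This is legitimate because $K$ is infinite in characteristic zero, so a Laurent polynomial vanishing at all $K$-points vanishes identically.

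\textbf{Step 2: additivity of weights.} I will choose bases of weight vectors $v_i$ and $w_j$, with $t$ acting on them by $t^{a_i}$ and $t^{b_j}$ respectively.

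1. For tensor products, $t\cdot(v_i\otimes w_j)=t^{a_i+b_j}(v_i\otimes w_j)$.
2. For exterior products, $t\cdot(v_{i_1}\wedge\cdots\wedge v_{i_k})=t^{a_{i_1}+\cdots+a_{i_k}}(v_{i_1}\wedge\cdots\wedge v_{i_k})$.

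These products of basis vectors form bases of the tensor and exterior powers, so the weights of those representations are exactly the stated sums.

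\textbf{Step 3: characters of the determinant-one torus.} On the determinant-one diagonal torus we have $t_1\cdots t_n=1$, so $t^{a+m(1,\ldots,1)}=t^a(t_1\cdots t_n)^m=t^a$. Thus exponent vectors differing by a multiple of $(1,\ldots,1)$ define the same character, which is the final assertion.
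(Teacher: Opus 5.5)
Your proposal is correct and follows essentially the same route as the paper. Both arguments use the Laurent expansion $\rho(t)=\sum_a t^aP_a$, compare coefficients in $\rho(ts)=\rho(t)\rho(s)$ to obtain orthogonal idempotents summing to the identity, and multiply characters directly for tensor and exterior products. The paper also proves the converse of the last assertion, which is not part of the statement but is the direction Lemma~\ref{lem:allwedges} later relies on: substituting $t_n=(t_1\cdots t_{n-1})^{-1}$ shows that a character trivial on the determinant-one torus has all exponents equal.
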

\begin{proof}
Let $V$ be the representation space. The coordinate functions
are Laurent polynomials, so write $\rho(t)=\sum_a t^aP_a$.
Comparing coefficients in $\rho(t)\rho(s)=\rho(ts)$ gives
$P_aP_b=0$ for $a\ne b$, $P_a^2=P_a$, and $\sum_aP_a=\operatorname{id}_V$.
Their images are the required weight spaces. Tensor and exterior
products multiply the characters. On determinant-one matrices
$\prod_i t_i=1$; conversely, putting $t_n=(t_1\cdots t_{n-1})^{-1}$
shows that a trivial character has all its exponents equal.
\end{proof}

The roots of a semisimple group relative to a maximal torus are
the nonzero weights of its adjoint representation on its Lie
algebra. For $\SL_3$ these are particularly concrete:
\[
 \diag(t_1,t_2,t_3)E_{ij}\diag(t_1,t_2,t_3)^{-1}
      =(t_i/t_j)E_{ij}.
\]
Thus its root spaces are the six lines $KE_{ij}$, $i\ne j$.

We use the following root-decomposition and rank-one structure results.

\begin{proposition}[{\cite[Theorem~21.11, Corollary~21.12,
and Proposition~20.32]{Milne2017}}]
\label{app:root-inputs}
Over an algebraically closed characteristic-zero field, a
semisimple group has a root decomposition relative to a maximal
torus. Roots occur in opposite pairs, span the character space
over $\mathbb Q$, and have one-dimensional root spaces.
A connected semisimple group of rank one admits a central
isogeny from $\SL_2$.
\end{proposition}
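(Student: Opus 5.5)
This proposition is a cited input, recording the root decomposition and the rank-one structure results from \cite{Milne2017}. The plan is therefore to reduce it to the cited statements rather than reprove the structure theory.

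For the first part, fix a maximal torus $\mathbf T$ of the semisimple group $\mathbf S$ over the algebraically closed field. Lemma~\ref{app:weights} splits $\Lie(\mathbf S)$ into weight spaces for the adjoint action of $\mathbf T$. The zero weight space is $\Lie(\mathbf T)$, because $\mathbf T$ is its own centralizer in a connected reductive group. The nonzero weights are by definition the roots. \cite[Theorem~21.11]{Milne2017} states that this is a root datum whose roots form a reduced root system. Three consequences follow directly:
\begin{itemize}
\item the root spaces are one-dimensional;
\item $-\alpha$ is a root whenever $\alpha$ is;
\item for a semisimple group the roots span $X^*(\mathbf T)\otimes\mathbb Q$, since the radical, being trivial, corresponds to the annihilator of the roots \cite[Corollary~21.12]{Milne2017}.
\end{itemize}
The only step needing care is translating these into the form the paper uses: opposite pairs, one-dimensional root spaces, and rational span.

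For the rank-one statement, the root system of a rank-one semisimple group is reduced and one-dimensional, so it is $\{\pm\alpha\}$ and has type $A_1$. \cite[Proposition~20.32]{Milne2017} then identifies the group as $\SL_2$ or $\mathrm{PGL}_2$. In either case the universal cover $\SL_2\to\mathbf S$ is a central isogeny; for $\mathrm{PGL}_2$ this is the quotient by $\{\pm I_2\}$.

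The main point to check is that Milne's hypotheses match ours: a smooth connected group over an algebraically closed field of characteristic zero. Smoothness holds by \cite[Theorem~3.23]{Milne2017}, as already used in Lemma~\ref{app:components}. No further argument is needed beyond these citations.
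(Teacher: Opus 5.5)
Your proposal is correct and takes the same approach as the paper, which also simply cites Milne's Theorem 21.11, Corollary 21.12, and Proposition 20.32. The paper justifies the spanning claim by the same observation you make: a positive-dimensional common kernel of the roots would be a nontrivial central torus. The only imprecision is that one-dimensionality of the root spaces is not a formal consequence of the root datum being reduced; it is a separate part of the cited structure theory, coming from the semisimple-rank-one analysis.
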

The root assertions are the semisimple case of
\cite[Theorem 21.11 and Corollary 21.12]{Milne2017}.
The spanning assertion also follows from semisimplicity: a
positive-dimensional common kernel of the roots would be a
central torus. The rank-one assertion is
\cite[Proposition 20.32]{Milne2017}.
An isogeny is a surjective algebraic homomorphism with finite
kernel; it is central if its kernel lies in the center.

\begin{lemma}
\label{app:sl2-representations}
A three-dimensional algebraic representation of $\SL_2$ over
a characteristic-zero field $K$, with finite kernel, is isomorphic
to either the direct sum of the standard representation and a
one-dimensional trivial representation, or the symmetric square
of the standard representation. The latter is called the principal
representation here.
\end{lemma}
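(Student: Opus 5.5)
The plan is to reduce the classification to the representation theory of $\mathfrak{sl}_2$. Let $\rho:\SL_2\to\mathrm{GL}(V)$ be the representation, with $\dim V=3$ and finite kernel. The first step is to pass to the algebraic closure $\overline K$ and to the differential $\mathrm d\rho:\mathfrak{sl}_2\to\mathfrak{gl}(V)$. Because the kernel is finite and we are in characteristic zero, $\mathrm d\rho$ is injective. Indeed, its kernel is an ideal of the simple Lie algebra $\mathfrak{sl}_2$, which is simple by the direct verification in Section~\ref{app:levi}. A nonzero kernel would therefore be all of $\mathfrak{sl}_2$, making $\rho$ trivial on the connected group $\SL_2$ by Lemma~\ref{app:lie-tests}, and this contradicts the finite kernel.

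Next I invoke complete reducibility. I would derive it from Weyl's theorem for $\mathfrak{sl}_2$ together with the equivalence between group and Lie-algebra invariant subspaces for the connected group, which is the algebraic analogue of Lemma~\ref{app:lie-tests}. The classification of irreducible $\mathfrak{sl}_2$-modules by highest weight says that the irreducible ones are exactly the symmetric powers $\Sym^n$ of the standard module, of dimension $n+1$. On the diagonal torus $\diag(s,s^{-1})$ I would read off the weights using Lemma~\ref{app:weights}: $\Sym^n$ has weights $s^n,s^{n-2},\ldots,s^{-n}$.

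The dimension partitions of $3$ are then $3$, $2+1$ and $1+1+1$. The last case is a trivial representation, which is excluded by the finite kernel. The partition $2+1$ gives $\mathrm{Std}\oplus\mathbf 1$. The partition $3$ gives $\Sym^2$. This proves the statement over $\overline K$.

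The remaining step is to descend to $K$, and this is where I expect the main obstacle. Two $K$-representations that become isomorphic over $\overline K$ must be shown isomorphic over $K$. For this I would use that $\operatorname{Hom}_{\SL_2}(V,W)$ is the solution space of rational linear equations (Section~\ref{app:lie}) and is defined over $K$. The set of isomorphisms is a nonempty Zariski-open subset of this $K$-vector space, because it is the complement of the determinant locus. Since $K$ is infinite, this open set has a $K$-point, and that point gives the isomorphism over $K$.

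A second, more hands-on route would check the weight spaces directly. The diagonal torus is split over $K$, so by Lemma~\ref{app:weights} the weight spaces are defined over $K$. In the case $2+1$, the fixed vectors and fixed covectors form rational lines, as in the block-case descent of Proposition~\ref{prop:supportclassification}. In the case $3$, a highest-weight vector over $K$ generates the module by applying the lowering operator.
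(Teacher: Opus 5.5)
Your proof is correct, but it takes a different route from the paper. You pass to $\overline K$ and use Weyl's complete reducibility together with the highest-weight classification of irreducible $\mathfrak{sl}_2$-modules. This reduces you to the partitions $3$, $2+1$, $1+1+1$. You then descend by a Noether--Deuring argument: the intertwiners form a $K$-rational subspace, $\det$ is a nonzero polynomial over $K$ on it, and since $K$ is infinite it has a nonvanishing $K$-point.

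The paper never leaves $K$. The torus $\diag(\xi,\xi^{-1})$ is split, so its weight spaces are $K$-rational. For a vector $v$ of highest weight $m$, the identity $EF^kv=k(m-k+1)F^{k-1}v$ shows that the chain $v,Fv,\dots,F^mv$ has exactly $m+1\le 3$ distinct weights. The condition $\operatorname{tr}D=0$ then rules out $m=0$. When $m=1$, it forces the remaining weight to be $0$, and $E,F$ kill that line because the weights $\pm2$ are absent. In dimension three this computation replaces both complete reducibility and descent. It also produces the $K$-rational adapted basis explicitly, which is the form used in Lemmas~\ref{lem:polynomiallift} and~\ref{lem:semisimplecongruencecover}. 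Your route is more general but imports heavier standard theory.

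One small point: Lemma~\ref{app:lie-tests} is stated for real Lie groups. Over $\overline K$ you therefore need its algebraic counterpart in characteristic zero, both for the triviality step and for identifying $\mathfrak{sl}_2$-invariant subspaces and intertwiners with $\SL_2$-ones. Alternatively, as the paper does at the end, use the identity $\rho(\exp X)=\exp(\mathrm d\rho(X))$ for nilpotent $X$ together with generation of $\SL_2$ by elementary unipotents. Your second, hands-on route is essentially the paper's proof.
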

\begin{proof}
Algebraicity gives a finite Laurent expansion
(the weight decomposition of Lemma~\ref{app:weights})
\[
 a(\xi):=\iota\bigl(\diag(\xi,\xi^{-1})\bigr)
      =\sum_j\xi^j\Pi_j\qquad(\xi\in K^\times).
\]
Comparing coefficients in $a(\xi)a(\zeta)=a(\xi\zeta)$ shows that
$\Pi_j\Pi_k=0$ for $j\ne k$, $\Pi_j^2=\Pi_j$, and
$\sum_j\Pi_j=I_3$. Thus $K^3=\bigoplus_jV_j$, where
$V_j=\operatorname{im}\Pi_j$ and $a(\xi)$ acts on $V_j$ by $\xi^j$. Put
\[
 E=\mathrm d\iota(E_{12}),\qquad F=\mathrm d\iota(E_{21}),\qquad
 D=\mathrm d\iota(\diag(1,-1)).
\]
The relations $[D,E]=2E$, $[D,F]=-2F$, and $[E,F]=D$ imply
$EV_j\subseteq V_{j+2}$ and $FV_j\subseteq V_{j-2}$.
Let $m$ be the largest index with $V_m\ne0$, and choose $0\ne v\in V_m$.
Then $Ev=0$ and
induction on $k$ gives
\[
 EF^kv=k(m-k+1)F^{k-1}v\qquad(k\ge1).
\]
If $F^rv\ne0$ and $F^{r+1}v=0$, this identity gives $m=r$.
The chain $v,Fv,\ldots,F^mv$ has $m+1\le3$ distinct weights.
The case $m=0$ would force all weights to be zero, since
$\operatorname{tr}D=0$; then $E=F=0$, contradicting finite kernel.
If $m=2$, the chain is a basis and its displayed action is the
symmetric square. If $m=1$, the two-dimensional chain is standard
and the remaining weight is zero. The operators $E,F$ annihilate that
weight space because weights $2,-2$ are absent, so the remaining
line is trivial. All weight projections and chain vectors are
defined over $K$, so this gives a change of basis in $\mathrm{GL}_3(K)$.
The resulting representations agree on the upper and lower
unipotent subgroups: their images are the exponentials of the
displayed nilpotent operators, using the representation identity
$\iota(\exp X)=\exp(\mathrm d\iota(X))$ from Section~\ref{app:lie}.
By Gaussian elimination these subgroups generate $\SL_2(K)$,
and hence determine the algebraic representation.
\end{proof}

\subsection{Ternary quadratic forms and symmetric-square covers}
\label{app:quadratic}

A rational ternary quadratic form is $\qform(v)=v^{\mathsf T}Qv$
for a symmetric $3$ by $3$ matrix $Q$ with rational entries.
It is nondegenerate if $\det Q\ne0$. It is isotropic over a field
$K$ if $\qform(v)=0$
for some $0\ne v\in K^3$, and anisotropic otherwise.
Definiteness and indefiniteness refer to the signature of the
associated real quadratic form.
An indefinite real form can still be anisotropic over $\mathbb Q$.
Multiplying $Q$ by a nonzero scalar does not change
\[
 \SO(\qform)=\{g\in\SL_3:g^{\mathsf T}Qg=Q\}.
\]

\begin{lemma}[Symmetric-square parametrization]
\label{app:symmetric-square}
If a nondegenerate ternary form over $K\subseteq\mathbb R$ is
isotropic over $K$, it is, after a $K$-linear change of variables
and a scalar multiplication, the determinant form on symmetric
$2$ by $2$ matrices. The action $S\mapsto ASA^{\mathsf T}$ gives
a central isogeny $\SL_2\to\SO(\qform)$ with kernel $\{I_2,-I_2\}$.
On real points its image is $\SO(\qform)(\mathbb R)^\circ$.
For every nondegenerate ternary form, whether isotropic over $K$
or not, the algebraic group $\SO(\qform)$ is connected and has
dimension three.
\end{lemma}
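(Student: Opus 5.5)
The plan is to normalize the form first and then check each assertion directly. Isotropy over $K$ gives a nonzero $v_0\in K^3$ with $\qform(v_0)=0$. Nondegeneracy then yields a hyperbolic plane. Choose $w_0\in K^3$ with $B(v_0,w_0)\ne0$, where $B$ is the associated bilinear form, and adjust $w_0$ by a $K$-multiple of $v_0$ so that $\qform(w_0)=0$. The orthogonal complement of $\operatorname{span}(v_0,w_0)$ is a $K$-line spanned by a vector $u_0$ with $\qform(u_0)\ne0$.

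In the basis $v_0,w_0,u_0$ the form becomes $a\,xz+b\,y^2$ with $a,b\in K^\times$. Rescaling the variables over $K$ (replacing $y$ by a multiple, or $x$ by a multiple) and multiplying the form by a scalar reduces this to $xz-y^2$. This is the determinant of the symmetric matrix $\left(\begin{smallmatrix}x&y\\y&z\end{smallmatrix}\right)$. The scalar multiplication is permitted since it does not change $\SO(\qform)$.

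For the isogeny, the map $S\mapsto ASA^{\mathsf T}$ preserves $\det S$ when $\det A=1$, so it lands in the orthogonal group of the determinant form. Its determinant as a $3\times3$ matrix is a character of the connected group $\SL_2$, and $\SL_2$ has no nontrivial characters, so the determinant is one. The entries are quadratic in $A$ and defined over $K$. The kernel consists of those $A$ fixing all symmetric $S$; testing $S=I_2$ and $S=E_{11}$ forces $A$ to be diagonal and then $A=\pm I_2$. The kernel $\{\pm I_2\}$ is central.

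For dimension, the differential $X\mapsto XS+SX^{\mathsf T}$ is injective on $\mathfrak{sl}_2$, since the kernel is finite. The infinitesimal invariance equations $X^{\mathsf T}Q+QX=0$ cut out a three-dimensional space of $3\times3$ matrices for every nondegenerate $Q$: over $\mathbb C$, $Q$ is congruent to the identity and this space is $\mathfrak{so}_3$. Therefore $\dim\SO(\qform)=3$ over any field, by smoothness (Lemma~\ref{app:components} and the tangent-space dimension count). The image of $\SL_2$ is then a closed connected subgroup of the same dimension, hence equals $\SO(\qform)^\circ$.

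Connectedness of $\SO(\qform)$ for every nondegenerate form is the step I expect to be the main obstacle. It can be checked after extension to $\mathbb C$, where every nondegenerate form is isotropic. By the above, the image of $\SL_{2,\mathbb C}$ is the identity component, and it remains to show that $\SO_3(\mathbb C)$ has no other component. I would use the Cartan--Dieudonn\'e argument: every element of $\mathrm O_3$ is a product of at most three reflections, and an element of determinant one is a product of two. Alternatively, $-I_3\notin\SO_3$ and $\mathrm O_3=\SO_3\times\{\pm I_3\}$. For the component count I would instead cite \cite[Example~2.43]{Milne2017}, as the paper already does in Proposition~\ref{prop:supportclassification}.

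For the real-points statement, $\SL_2(\mathbb R)$ is connected, so its image is a connected open subgroup of $\SO(\qform)(\mathbb R)$; open because the differential is bijective. Hence the image equals the identity component $\SO(\qform)(\mathbb R)^\circ$.
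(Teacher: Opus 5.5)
Your proposal is correct and follows essentially the same route as the paper. You normalize via a hyperbolic plane to the determinant form and identify the kernel $\{\pm I_2\}$ by testing on particular symmetric matrices. You then combine injectivity of the differential with the three-dimensional Lie algebra of $\SO(\qform)$ and closedness of the image, cite \cite[Example~2.43]{Milne2017} for connectedness, and conclude on real points because the image of the connected group $\SL_2(\mathbb R)$ is an open connected subgroup.

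One small caution: your aside $\mathrm O_3=\SO_3\times\{\pm I_3\}$ does not by itself show that $\SO_3$ is connected. The Cartan--Dieudonn\'e route does, once you note that $(u,w)\mapsto s_us_w$ maps the irreducible variety of pairs of anisotropic vectors onto $\SO_3$. As written, it is the citation that carries this step, exactly as in the paper.
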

\begin{proof}
Let $e$ be isotropic. For the associated bilinear form
$B(v,w)=v^{\mathsf T}Qw$, choose $f$ with $B(e,f)=1$ and
replace it by $f-\qform(f)e/2$. Now $\qform(f)=0$. Their orthogonal
complement is a nondegenerate line, so the form is $2xy+az^2$,
$a\ne0$. This equals $-a\det\left(\begin{smallmatrix}x&z\\z&-2y/a\end{smallmatrix}\right)$.

The stated action preserves determinant. If it fixes every
rank-one symmetric matrix $vv^{\mathsf T}$, then $Av=\pm v$
for every $v$, which forces $A=\pm I_2$. Its differential is
$X:S\mapsto XS+SX^{\mathsf T}$ and is injective: applying a
zero differential first to $S=I_2$ makes $X$ skew-symmetric,
and then to $S=\operatorname{diag}(1,0)$ makes $X=0$. The Lie
algebra of $\SO(\qform)$ is given by $X^{\mathsf T}Q+QX=0$ and has
dimension three. Connectedness of the algebraic orthogonal
group in determinant one is \cite[2.43 and Exercise 2-9]{Milne2017}.
The image of an algebraic group homomorphism is an algebraic
subgroup (\cite[Theorem 3.34]{Milne2017}); equality of dimensions
now proves surjectivity. On real points the differential is an
isomorphism, so the image contains an identity neighborhood.
Since $\SL_2(\mathbb R)$ is connected, that image is exactly
the real identity component.
\end{proof}

An indefinite real ternary form is isotropic over $\mathbb R$,
so the real covering exists in that case. Let $T$ be a nontrivial
$\mathbb Q$-split torus in $\SO(\qform)$. Its standard representation
has a nonzero rational vector $v$ of nontrivial weight $\chi$.
Invariance of $\qform$ gives
\[
 \chi(t)^2\qform(v)=\qform(v)\qquad(t\in T(\mathbb Q)),
\]
hence $\qform(v)=0$. Thus an anisotropic rational ternary form
has no nontrivial $\mathbb Q$-split torus. In the isotropic case,
the diagonal torus from the displayed $\SL_2$ supplies one.
This explains the field dependence of ``split'' in the main proof.

Orthogonal orbit closedness and quotient compactness are proved in
Lemma~\ref{lem:semisimpleparametrizations}. The distinction matters in the example
$\qform=x^2+y^2-3z^2$: the group $H$ is noncompact, but its arithmetic
quotient is compact. The rational anisotropy is proved by descent
modulo $3$ in Section~\ref{subsec:nonsplitexample}.
A representation descends through an isogeny precisely when the
kernel of the isogeny acts trivially. The standard representation
of $\SL_2$ does not descend through the symmetric-square isogeny,
since $-I_2$ acts as $-I_2$ on $\mathbb R^2$; its symmetric square does.
This is the reason for Lemma~\ref{lem:quadratictransfer}.

\subsection{Mal'cev bases, horizontal tori, and polynomial sequences}
\label{app:malcev}

A \emph{lattice} in a Lie group is a discrete subgroup whose
quotient has finite invariant volume. It is cocompact if the
quotient is compact. A nilmanifold is a compact quotient
$H/\Lambda$ with $H$ connected, simply connected, and nilpotent.
The unipotent groups used here have these properties by
Lemma~\ref{app:unipotent-explog}.

A Mal'cev basis $B_1,\ldots,B_m$ for $H/\Lambda$ is a Lie
algebra basis whose terminal spans are ideals, for which the
ordered coordinates
\[
 (a_1,\ldots,a_m)\longmapsto
 \exp(a_1B_1)\cdots\exp(a_mB_m)
\]
are unique and identify $\Lambda$ with $\mathbb Z^m$.
Being adapted to a filtration means that each subgroup in the
filtration has Lie algebra spanned by terminal basis vectors.
For the lower central series in our groups, this requires the
commutator algebra to be spanned by the last vectors.
These are the conventions of \cite[Definition 2.1]{GreenTao2012}.
The general existence result is described immediately after that
definition, with sources
\cite[Theorem 5.1.8(a), Corollary 5.2.2, and Proposition 5.3.2]{CorwinGreenleaf1990}.
Here an elementary proof suffices in every dimension that occurs.

\begin{lemma}
\label{app:malcev-existence}
Let $H$ be a connected closed Lie subgroup of a real conjugate
of $\mathbf U_3(\mathbb R)$, and let $\Lambda\le H$ be a
discrete subgroup with compact quotient.
There is a Mal'cev basis adapted to its lower central series.
\end{lemma}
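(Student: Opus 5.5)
The plan is to reduce first to the model group $\mathbf U_3(\mathbb R)$ and then build the basis by hand, separating the abelian and Heisenberg cases. By Lemma~\ref{app:unipotent-explog}, $H$ is connected and simply connected. The exponential and logarithm are inverse polynomial maps on $\mathfrak h=\Lie(H)$, and $\dim\mathfrak h\le3$ with $[\mathfrak h,\mathfrak h]$ central. Conjugating by the fixed real matrix, we may assume $H\le\mathbf U_3(\mathbb R)$, where the finite Baker--Campbell--Hausdorff identity $\log(\exp X\exp Y)=X+Y+\tfrac12[X,Y]$ holds. The lower central series is either $H\supset\{\groupid\}$ in the abelian case, or $H\supset Z=\exp[\mathfrak h,\mathfrak h]\supset\{\groupid\}$ in the nonabelian (three-dimensional Heisenberg) case.

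\emph{Abelian case.} Here $\exp:\mathfrak h\to H$ is an isomorphism of groups, by the BCH identity with vanishing bracket. Hence $\log\Lambda$ is a discrete subgroup of the vector space $\mathfrak h$ with compact quotient, so it is a full lattice. I will take $B_1,\dots,B_m$ to be a $\mathbb Z$-basis of $\log\Lambda$. Ordered coordinates are then ordinary linear coordinates, which gives uniqueness and identifies $\Lambda$ with $\mathbb Z^m$. Terminal spans are trivially ideals.

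\emph{Heisenberg case.} The first step, which I expect to be the main obstacle, is to show that $\Lambda\cap Z$ is a lattice in the one-dimensional center $Z$ and that $\Lambda Z/Z$ is a lattice in $H/Z\cong\mathbb R^2$. For the image, I will use compactness of $H/\Lambda$: the quotient $H/\Lambda Z$ is compact, and since $\Lambda$ cannot lie in an abelian proper subgroup, I would first argue that $\Lambda$ is Zariski (equivalently Lie-) dense. For this I plan to use that, if $\log\Lambda$ spanned a proper subspace $V$ of $\mathfrak h$, then $\Lambda$ would lie in the closed subgroup with Lie algebra generated by $V$, and compactness of the quotient would force dimension $3$. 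Given density, choose $\gamma_1,\gamma_2\in\Lambda$ with noncommuting logarithms. Their commutator $[\gamma_1,\gamma_2]=\exp([\log\gamma_1,\log\gamma_2])$ is then a nontrivial element of $\Lambda\cap Z$, and discreteness makes $\Lambda\cap Z$ infinite cyclic. Next I will show that the projection of $\Lambda$ to $H/Z\cong\mathbb R^2$ is discrete. Suppose projections $\bar\gamma_n\to0$. Multiplying by central lattice elements puts the central coordinate in a fundamental interval, so the $\gamma_n$ lie in a compact set, and discreteness forces $\bar\gamma_n$ to be eventually zero. Compactness of $H/\Lambda Z$ then makes the projection a full lattice.

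With these facts, I will choose $\gamma_1,\gamma_2\in\Lambda$ whose images form a basis of the projected lattice, and a generator $\gamma_3$ of $\Lambda\cap Z$, and set $B_i=\log\gamma_i$. Now let $h\in H$. Subtract the horizontal coordinates to write $h=\exp(a_1B_1)\exp(a_2B_2)\exp(a_3B_3)$; this decomposition is unique because the horizontal coordinates are determined modulo $Z$ and $\exp(\mathbb RB_3)=Z$. For $\gamma\in\Lambda$, the horizontal part $(a_1,a_2)$ is integral. Then $\gamma\gamma_2^{-a_2}\gamma_1^{-a_1}$, suitably ordered, lies in $\Lambda\cap Z$, so $a_3\in\mathbb Z$. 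Conversely, integral coordinates give a product of elements of $\Lambda$. The terminal span $\mathbb RB_3=[\mathfrak h,\mathfrak h]$ is central and hence an ideal, and $\operatorname{span}(B_2,B_3)$ contains the commutator algebra, so it is also an ideal. Therefore the basis is adapted to the lower central series, which completes the proof.
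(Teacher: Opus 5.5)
Your proposal is correct and follows the paper's proof essentially step for step. You take a lattice basis of $\log\Lambda$ in the abelian case. In the Heisenberg case you get a nontrivial central lattice element from a commutator, prove discreteness of $\pi(\Lambda)$ by reducing the central coordinate, and use the logarithms of lifted basis elements together with a central generator.

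The one variation is how you show that $\Lambda$ is nonabelian. The paper observes that a linear functional on $H/Z\simeq\mathbb R^2$ killing $\pi(\Lambda)$ would give an unbounded continuous function on $H/\Lambda$. Your subgroup argument instead relies on the standard fact that proper connected closed subgroups have noncompact quotient. What it then yields is that $\log\Lambda$ generates $\mathfrak h$ as a Lie algebra, not that it spans $\mathfrak h$. That is still enough, because pairwise commuting elements span an abelian, hence proper, subalgebra.
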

\begin{proof}
In the abelian case, logarithm identifies $H$ with a vector space
and $\Lambda$ with a Euclidean lattice, so use a lattice basis.
Otherwise $H$ is a Heisenberg group by
Lemma~\ref{app:unipotent-explog}. Write $Z=[H,H]\simeq\mathbb R$
and let $\pi:H\to H/Z\simeq\mathbb R^2$ be the quotient.
Compactness implies that $\pi(\Lambda)$ spans $\mathbb R^2$:
otherwise a nonzero linear functional annihilating its span would
give an unbounded continuous image of $H/\Lambda$.
Two lattice elements with independent projections have a nontrivial
central commutator. Thus $\Lambda\cap Z$ is a nontrivial discrete
subgroup of $Z$; choose its generator $\gamma_3$.
The group $\pi(\Lambda)$ is discrete. Indeed, lift a sequence
approaching zero and reduce its central coordinate modulo
$\gamma_3$ to a bounded interval. The lifts then stay in a compact
set, which contains only finitely many elements of $\Lambda$.
Its quotient is compact as an image of $H/\Lambda$, so it is a
full lattice. Lift a basis to $\gamma_1,\gamma_2\in\Lambda$.
Every element of $\Lambda$ has a unique expression
$\gamma_1^{n_1}\gamma_2^{n_2}\gamma_3^{n_3}$, by first matching
its horizontal projection. The logarithms of these three elements
give the desired basis. Its only possible nonzero structure
constant is an integer, since
$[\gamma_1,\gamma_2]=\gamma_3^k$ for an integer $k$.
\end{proof}

A rational basis of the Lie algebra need not be a Mal'cev basis
relative to a prescribed lattice: the condition that the lattice
have exactly integer ordered coordinates is additional.
For the standard Heisenberg lattice,
$(E_{12},E_{23},E_{13})$ is a Mal'cev basis and
\[
 \exp(aE_{12})\exp(bE_{23})\exp(cE_{13})=u(a,b,ab+c).
\]
Thus ordered coordinates are $(x,y,z-xy)$, whereas logarithmic
coordinates are $(x,y,z-xy/2)$. In particular, these two types
of coordinates must be distinguished.

\begin{lemma}[Horizontal characters and polynomial coordinates]
\label{app:horizontal}
Let a Mal'cev basis be as above and put $r=\dim H/[H,H]$.
The first $r$ ordered coordinates define a homomorphism
$\chi_{\mathrm{ab}}:H\to\mathbb R^r$ sending $\Lambda$ to
$\mathbb Z^r$. The quotient
$H/[H,H]\Lambda$ is $\mathbb R^r/\mathbb Z^r$.
Every continuous homomorphism $H\to\mathbb R/\mathbb Z$
vanishing on $\Lambda$ is
$h\mapsto k\cdot\chi_{\mathrm{ab}}(h)\bmod\mathbb Z$ for
some $k\in\mathbb Z^r$.
The ordered-coordinate map is polynomial of degree at most two
in matrix entries. Thus a matrix curve of degree $d$ has ordered
coordinates of degree at most $2d$. Conversely, polynomial ordered
coordinates define a matrix-polynomial curve.
\end{lemma}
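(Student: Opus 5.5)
The plan treats the four assertions of Lemma~\ref{app:horizontal} in turn, working in the two cases that can occur by Lemma~\ref{app:unipotent-explog}: $H$ is abelian or a three-dimensional Heisenberg group. In both cases the basis is the one built in Lemma~\ref{app:malcev-existence}. The last $m-r$ vectors of the basis span $\Lie[H,H]$, and $[H,H]$ is central.

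\emph{The horizontal homomorphism and the quotient torus.} In the abelian case, ordered coordinates coincide with logarithmic coordinates, and $\log$ is a linear isomorphism onto $\Lie H$. In this case $\chi_{\mathrm{ab}}$ is the full coordinate map, so it is additive. In the Heisenberg case, write $\exp(a_1B_1)\exp(a_2B_2)\exp(a_3B_3)$. Projecting to $H/[H,H]\simeq\R^2$ via the finite Baker--Campbell--Hausdorff formula kills the bracket term, and the image of this element is $a_1\bar B_1+a_2\bar B_2$. Since projection to the abelianization is a homomorphism, $\chi_{\mathrm{ab}}=(a_1,a_2)$ is additive. Next, $\Lambda$ has integer ordered coordinates, so $\chi_{\mathrm{ab}}(\Lambda)\subseteq\Z^r$; the elements $\exp(B_i)$, $i\le r$, show that this image is all of $\Z^r$. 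Finally, $\chi_{\mathrm{ab}}$ is surjective with kernel $[H,H]$. Hence $H/[H,H]\Lambda\simeq\R^r/\chi_{\mathrm{ab}}(\Lambda)=\R^r/\Z^r$.

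\emph{Characters.} A continuous homomorphism $\eta:H\to\R/\Z$ kills the commutator group $[H,H]$, because the target is abelian. It therefore factors through $\chi_{\mathrm{ab}}$ as a continuous homomorphism $\R^r\to\R/\Z$. Any such homomorphism lifts to a linear map $x\mapsto k\cdot x$, using simple connectivity of $\R^r$. Vanishing on $\Z^r$ then forces $k\in\Z^r$.

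\emph{Degree bounds.} Conjugate into $\mathbf U_3$ by the fixed real matrix; this preserves matrix-entry degree. The logarithm formula \eqref{eq:appendix-explog} is quadratic in the entries. Ordered coordinates are obtained from logarithmic coordinates by a polynomial correction: in the Heisenberg case it is $a_3=\ell_3-\tfrac12c\,\ell_1\ell_2$, for the basis structure constant $c$. This is at most quadratic in the logarithmic coordinates, which are themselves quadratic in entries, so the naive composite has degree four.

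This degree count is the main obstacle. The remedy I would try is to compute directly in entry coordinates rather than composing. Using $u(x,y,z)$ and the displayed product rule, the map $(a_1,a_2,a_3)\mapsto$ entries is polynomial of degree at most two. Solving back in triangular order expresses $a_1,a_2$ linearly in $x,y$, and $a_3$ as $z$ minus a bilinear term in $x,y$, exactly as in the example $(x,y,z-xy)$. This gives degree at most two, and the example shows the bound is attained. Substituting a degree-$d$ curve then gives coordinate degree at most $2d$. Conversely, if the ordered coordinates are polynomial, the ordered product of exponentials has polynomial entries, because each $\exp(a_iB_i)=I+a_iB_i+\tfrac12a_i^2B_i^2$ is polynomial in $a_i$; this proves the converse.
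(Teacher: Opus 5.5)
Your proposal is correct and follows essentially the same route as the paper. Like the paper, you get additivity of the first coordinates because commutators lie in the central last coordinate, you factor characters through $H/[H,H]\simeq\R^r$ and get integrality from $\Lambda$, and you compute directly in entry coordinates to see that the horizontal coordinates are linear and the central one is linear plus quadratic in them. One small correction: for a general adapted basis (not $E_{12},E_{23},E_{13}$) the correction to $z$ is a general quadratic form plus linear terms in $x,y$, not a bilinear $xy$-term, and the degree-four composite you worried about never actually occurs, because the first two logarithmic coordinates are already linear in the entries; neither point affects the degree bound.
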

\begin{proof}
Commutators lie in the final coordinates, so the first coordinates
add. The adapted lattice basis identifies their lattice with
$\mathbb Z^r$. A continuous homomorphism to $\mathbb R/\mathbb Z$
vanishes on commutators and therefore factors through
$H/[H,H]\simeq\mathbb R^r$. The induced homomorphism is a real
linear functional modulo $\mathbb Z$. Vanishing on $\mathbb Z^r$
makes the coefficients of this functional integral. These are the
horizontal characters of
\cite[Definition 2.6]{GreenTao2012}.
For the degree assertion, logarithm has degree two by
\eqref{eq:appendix-explog}. In the abelian case its coefficients
are the ordered coordinates. In the Heisenberg case, the first
two coordinates are linear in the matrix entries, and the final
coordinate is linear plus a quadratic expression in the first
two, as in the displayed example. Rational basis matrices give
rational polynomial formulas. Conversely, each ordered exponential is
quadratic in its coordinate, so their product has polynomial
matrix entries whenever the coordinates are polynomial.
\end{proof}

A filtration $H_\bullet$ is a descending family with
$[H_i,H_j]\subseteq H_{i+j}$. A sequence $q:\mathbb Z\to H$
is polynomial for this filtration if its iterated multiplicative
differences of order $j$ lie in $H_j$, where
$\partial_aq(n)=q(n+a)q(n)^{-1}$.

\begin{lemma}
\label{app:nil-sampling}
Let $H/\Lambda$ and its adapted Mal'cev basis be as in
Lemma~\ref{app:malcev-existence}. For every integer $D\ge1$,
the groups in \eqref{eq:nilfiltration} form a filtration of
degree at most $2D$ to which the basis is adapted.
Every sequence $q:\mathbb Z\to H$ whose ordered coordinates
are polynomials of degree at most $D$ is polynomial for this
filtration. If $\Phi:\mathbb R\to H$ has polynomial ordered
coordinates of degree at most $D$, then
$n\mapsto\Phi(sn)a$ is polynomial for the same filtration,
for every $a\in H$ and every real $s$.
\end{lemma}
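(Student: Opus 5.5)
The plan is to verify the three assertions of Lemma~\ref{app:nil-sampling} in order, working in the ordered coordinates of Lemma~\ref{app:horizontal}. There the group law is explicit: in the abelian case it is addition, and in the Heisenberg case it is $(a,b,c)(a',b',c')=(a+a',b+b',c+c'+\kappa\,ba')$ for a fixed constant $\kappa$. This formula is read off from the Baker--Campbell--Hausdorff identity recorded after Lemma~\ref{app:unipotent-explog}.

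\emph{Filtration and adaptation.} The family \eqref{eq:nilfiltration} is descending with $H_0=H_1=H$. To check $[H_i,H_j]\subseteq H_{i+j}$, note that if $i+j\le2D$ then $H_{i+j}\supseteq[H,H]$, which contains every commutator. If $i+j>2D$, then one of $i,j$ exceeds $D$. That group lies in $[H,H]$, which is central by Lemma~\ref{app:unipotent-explog}, so the commutator is trivial. Hence the degree is at most $2D$. Each $H_i$ is $H$, $[H,H]$ or trivial, and these are spanned by terminal basis vectors by the choice in Lemma~\ref{app:malcev-existence}. This proves adaptation.

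\emph{Polynomial sequences.} I would show that the set $\mathcal P$ of sequences with horizontal coordinates of degree at most $D$ and central coordinate of degree at most $2D$ is closed under $q\mapsto\partial_aq$, and that differencing behaves as follows. In coordinates, $\partial_aq(n)=q(n+a)q(n)^{-1}$ has horizontal part $x(n+a)-x(n)$, whose degree drops by one. Its central part is $z(n+a)-z(n)$ plus a bilinear correction in horizontal values. This correction again has degree at most $2D$, and it drops as the horizontal degrees drop.

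The cleaner route is to argue by the quantity $\deg_{\mathrm{hor}}$: after $k$ differences the horizontal coordinates have degree at most $D-k$. So for $k>D$ the differenced sequence lies in $[H,H]=H_k$. For $j\le D$ the requirement $H_j=H$ is vacuous. In the range $D<k\le 2D$ the sequence is central, so further differences act additively on a scalar polynomial of degree at most $2D$. Differences of order above $2D$ therefore vanish, giving membership in $H_{k}=\{\groupid\}$ for $k>2D$.

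The main obstacle I expect is bookkeeping the central degree through the bilinear corrections when shifts $a_1,\dots,a_k$ vary. To handle it, I would use the equivalent characterization of Green--Tao polynomial sequences via Taylor coefficients, or simply note that every step stays in $\mathcal P$ with horizontal degrees decreasing.

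Finally, $n\mapsto\Phi(sn)$ has coordinates that are polynomials in $n$ of degree at most $D$. Right multiplication by a fixed $a$ changes the horizontal coordinates by constants and the central coordinate by a term linear in the horizontal coordinates. It therefore stays in $\mathcal P$, and the same argument applies.
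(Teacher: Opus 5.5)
Your verification of the filtration and of adaptation is the paper's. Your sampling step is also fine: right multiplication by $a$ adds to the central coordinate a term linear in the horizontal coordinates, so $n\mapsto\Phi(sn)a$ again has all ordered coordinates of degree at most $D$.

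The gap is in the middle assertion, at the inference ``further differences act additively on a scalar polynomial of degree at most $2D$; differences of order above $2D$ therefore vanish.'' Closure of $\mathcal P$ under $q\mapsto\partial_aq$ only tells you that the $(D+1)$-fold difference is central, with central coordinate of degree at most $2D$. Purely additive differencing kills such a polynomial only after $2D+1$ further steps, that is, at total order $3D+2$ rather than $2D+1$. So the bookkeeping you flagged as the main obstacle is not resolved by ``every step stays in $\mathcal P$ with horizontal degrees decreasing.'' You need the central degree to drop at every step as well.

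The missing invariant is this: after $k$ differences, with arbitrary shifts, the horizontal coordinates have degree at most $D-k$ and the central coordinate has degree at most $2D-k$. It propagates as follows. Write the law in ordered coordinates as $(x,z)(x',z')=(x+x',z+z'+\mathcal B(x,x'))$, which is your $\kappa ba'$. Then
\[
q(n+a)q(n)^{-1}=\bigl(x(n+a)-x(n),\ z(n+a)-z(n)-\mathcal B(x(n+a)-x(n),x(n))\bigr).
\]
The correction term has degree at most $(D-k-1)+(D-k)\le 2D-k-1$. The invariant holds at $k=0$ even for your enlarged class $\mathcal P$. With it, your direct argument gives vanishing of the horizontal part from order $D+1$ on, and total vanishing from order $2D+1$ on, as the filtration requires.

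The paper avoids this computation. It invokes the one-parameter case of Green and Tao's polynomial-coordinate characterization (their Lemma~6.7). Coordinates of degree at most $D$ involve only $\binom nj$ with $j\le D$, and $H_j=H$ for those $j$, so no correction terms need tracking. This is the ``Taylor coefficient'' fallback you mention. Your route, once repaired with the invariant above, is self-contained; the paper's is shorter but rests on the cited lemma.
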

\begin{proof}
Since $[H,H]$ is central,
$[H_i,H_j]\subseteq H_{i+j}$ for the displayed filtration.
The last basis vectors span the commutator algebra, so the
basis is adapted. The polynomial-coordinate characterization
\cite[Lemma 6.7, one-parameter case]{GreenTao2012} applies:
only terms $\binom nj$ with $0\le j\le D$ occur in each
coordinate, and $H_j=H$ at these indices.

Write the ordered coordinates as $(u,v)$, where $u$ is
horizontal and $v$ is central. Their multiplication law is
\begin{equation}\label{eq:nilgrouplaw}
 (u,v)(u',v')=(u+u',v+v'+\mathcal B(u,u')),
\end{equation}
with $\mathcal B$ bilinear; the second component is absent
in the abelian case. Indeed the finite Baker--Campbell--Hausdorff
identity of Section~\ref{app:unipotent}, converted from
logarithmic to ordered coordinates, cancels the purely
quadratic terms in $u$ and $u'$. Right multiplication by
a fixed $a$ therefore preserves coordinate degree, as does
the substitution $t=sn$. This proves the sampling assertion.
\end{proof}

\subsection{Arithmetic orbits, congruence covers, and normalization}
\label{app:arithmetic}

For $x=g\Gamma$, the stabilizer of the action of $H$ on $x$ is
$H\cap g\Gamma g^{-1}$. The orbit is modeled by the quotient
by this stabilizer. A closed orbit of finite invariant volume
is called periodic in this paper. In particular, ``periodic''
does not necessarily mean the orbit of a periodic one-parameter
flow. We normalize the invariant measure on each periodic orbit
to have total mass one.
For a closed orbit the usual homogeneous-space identification
is the one used for its smooth and measure structures.

\begin{lemma}
\label{app:lattice-covers}
Let $L$ be a Lie group and let $\Lambda\subseteq\Lambda'$ be
discrete subgroups such that $L/\Lambda$ and $L/\Lambda'$ admit
$L$-invariant measures.
If $\Lambda$ is a lattice, then $[\Lambda':\Lambda]<\infty$.
If $\Lambda'$ is a lattice, then $\Lambda$ is a lattice exactly
when this index is finite. In the finite-index case the natural
quotient map is a finite covering and sends invariant probability
to invariant probability.
\end{lemma}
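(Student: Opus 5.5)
The plan is to verify the three assertions of Lemma~\ref{app:lattice-covers} in order, all from the fibration $\Lambda'/\Lambda\to L/\Lambda\to L/\Lambda'$ with discrete fibers, together with a local comparison of invariant measures.

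\emph{Normalization of measures.} Fix a left Haar measure on $L$. Since $\Lambda,\Lambda'$ are discrete, the invariant measures on $L/\Lambda$ and $L/\Lambda'$ can be normalized to be locally the pushforward of Haar measure. Concretely, on a sufficiently small open set $U\subset L$ injecting into $L/\Lambda'$, and hence into $L/\Lambda$, both quotient measures restrict to the Haar measure of $U$. Uniqueness of invariant measures on homogeneous spaces up to scalars justifies this normalization.

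\emph{Unfolding identity.} For nonnegative measurable $f$ on $L/\Lambda$, the main identity is
\[
 \int_{L/\Lambda}f=\int_{L/\Lambda'}\sum_{\lambda\in\Lambda'/\Lambda}f(x\lambda\Lambda)\dd x .
\]
I would prove it first for $f$ supported in the image of a small injective set $U$, where both sides reduce to Haar integrals over translates $U\lambda$ of $U$ for $\lambda$ in a set of coset representatives. The general case then follows by a countable partition of $L/\Lambda'$ into such sets.

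Applying the identity with $f\equiv1$ gives $\vol(L/\Lambda)=[\Lambda':\Lambda]\vol(L/\Lambda')$, with the convention $\infty\cdot a=\infty$ for $a>0$. If $\Lambda$ is a lattice, the left side is finite, so the index is finite, since $\vol(L/\Lambda')>0$. If $\Lambda'$ is a lattice, the left side is finite exactly when the index is finite. This gives the first two assertions.

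\emph{Covering and measures.} Finally, in the finite-index case the natural map $L/\Lambda\to L/\Lambda'$ is a finite covering: over the image of a small injective $U$, the preimage is the disjoint union of the images of $U\lambda_i$ for the finitely many coset representatives $\lambda_i$. These pieces are disjoint because $U$ injects into $L/\Lambda'$. The unfolding identity applied to $f=g\circ\pi$ shows that the pushforward of the normalized probability on $L/\Lambda$ is $[\Lambda':\Lambda]\cdot(\text{probability on }L/\Lambda')$ divided by the total mass $[\Lambda':\Lambda]\vol(L/\Lambda')$, which is the invariant probability.

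The main obstacle is justifying the unfolding identity carefully, in particular measurability of the fundamental-domain partition and the consistent normalization of the two quotient measures. I would handle both by working with a Borel fundamental domain for $\Lambda'$ assembled from small injective sets.
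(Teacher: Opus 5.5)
Your proposal is correct and follows essentially the same route as the paper. The paper takes the disjoint, equal-volume lifts to $L/\Lambda$ of a small set injecting into $L/\Lambda'$, indexed by $\Lambda'/\Lambda$, and uses finitely many translates of a fundamental domain for the converse; your unfolding identity makes this precise, giving $\vol(L/\Lambda)=[\Lambda':\Lambda]\vol(L/\Lambda')$, which yields both directions, the covering property, and the pushforward of invariant probabilities at once.
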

\begin{proof}
Choose a small open set of positive volume in the quotient by
$\Lambda'$ that lifts injectively to the group. Its distinct
lifts to the quotient by $\Lambda$ are disjoint, are indexed by
$\Lambda'/\Lambda$, and all have the same positive volume.
An infinite index is incompatible with finite total volume.
Conversely, finitely many translates of a fundamental domain
give a finite-volume domain. The quotient measures differ by
the index before normalization, proving the probability assertion.
The same local lifts give the covering property. In particular,
a finite cover is compact if and only if its base is compact.
\end{proof}

The group $\SL_2(\mathbb Z)$ is a lattice in $\SL_2(\mathbb R)$.
One elementary verification uses the upper-half-plane domain
$|x|\le1/2$, $x^2+y^2\ge1$, $y>0$: its hyperbolic area is
\[
 \int_{-1/2}^{1/2}\int_{\sqrt{1-x^2}}^\infty y^{-2}\,\dd y\,\dd x
 =\pi/3.
\]
Translations and inversion reduce any point to this domain;
equivalently choose a shortest primitive vector in the associated
two-dimensional lattice, complete it to a basis, and reduce the
second vector. The compact rotation factor gives finite Haar
volume in the group. Its quotient is noncompact, as seen from
$\diag(\varepsilon,\varepsilon^{-1})\mathbb Z^2$ with
$\varepsilon\to0$. The principal congruence subgroup
$\Gamma_2(N)$ has finite index since reduction modulo $N$ has
finite image. Lemma~\ref{app:lattice-covers} therefore gives a
finite-volume, noncompact congruence quotient.

\begin{lemma}
\label{app:congruence}
For a rational matrix-polynomial homomorphism $\iota$ with
$\iota(\groupid)=\groupid$, there is an integer $N\ge1$ such that
$\iota(\gamma)$ is integral for every integral matrix $\gamma$ in its
source satisfying $\gamma\equiv I_n\pmod N$, where $n$ is the size
of the source matrices.
\end{lemma}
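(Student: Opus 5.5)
The plan is to clear denominators in the coefficients of $\iota$ and then expand each entry of $\iota(\gamma)$ around the identity. Write $\iota(\gamma)_{ab}=P_{ab}(\gamma)$, where each $P_{ab}$ is a polynomial with rational coefficients in the $n^2$ source entries. Since only finitely many polynomials occur, a common denominator $Q\ge1$ gives $QP_{ab}\in\Z[x_{ij}]$ for all $a,b$. The integer $N$ will be a suitable multiple of $Q$, namely $N=Q$ itself.

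For an integral $\gamma\equiv I_n\pmod N$, write $\gamma=I_n+N\beta$ with $\beta$ integral. Expand
\[
 P_{ab}(I_n+N\beta)=\sum_{\mu}c_{ab,\mu}N^{|\mu|}\beta^\mu
\]
by Taylor expansion at the identity. The coefficients $c_{ab,\mu}$ are obtained from the coefficients of $P_{ab}$ by binomial expansion of $(1+Nx)^k$ and of $(Nx)^k$. Hence $Qc_{ab,\mu}\in\Z$: substituting $x_{ij}=\delta_{ij}+y_{ij}$ into an integral polynomial gives an integral polynomial in $y$.

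The constant term ($\mu=0$) is $P_{ab}(I_n)=\delta_{ab}$, which is an integer because $\iota(\groupid)=\groupid$. Every other term carries a factor $N^{|\mu|}$ with $|\mu|\ge1$, so it equals $N^{|\mu|}c_{ab,\mu}\beta^\mu=(N/Q)N^{|\mu|-1}(Qc_{ab,\mu})\beta^\mu$. This is an integer, since $Q\mid N$.

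Therefore $\iota(\gamma)$ is integral, and moreover $\iota(\gamma)\equiv I\pmod{N/Q}$ if a stronger congruence is wanted.

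No step is a real obstacle. The only point to check is that the Taylor coefficients remain $Q^{-1}$-integral. This holds because the shift $x\mapsto I+y$ has integer coefficients.
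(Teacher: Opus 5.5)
Your proof is correct and is essentially the paper's argument. The paper takes $N$ divisible by all denominators of the coordinate polynomials and observes that $M(\gamma)-M(I_n)$ is divisible by $N$ for every monomial $M$; this is the same observation as your expansion at the identity with $\gamma=I_n+N\beta$, together with the fact that the constant term is $\iota(\groupid)=\groupid$.
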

\begin{proof}
Choose $N$ divisible by all denominators of the coordinate
polynomials. If an integral $n$ by $n$ matrix $\gamma$ satisfies
$\gamma\equiv I_n\pmod N$, then $M(\gamma)-M(I_n)$ is divisible
by $N$ for every monomial $M$ in the matrix entries. Subtracting
$\iota(\groupid)$ proves integrality.
\end{proof}

For a finite-kernel surjective Lie homomorphism, the inverse image
of a discrete subgroup is discrete: the map is locally invertible
at the identity. Its quotient by that inverse image identifies
with the target quotient.

\begin{lemma}
\label{app:primitive-vectors}
A finite-index subgroup of $\SL_2(\mathbb Z)$ contains a positive
power of each elementary unipotent. The action of
$\SL_2(\mathbb Z/q\mathbb Z)$ on primitive pairs modulo $q$
is transitive, and the number of these pairs is
$q^2\prod_{p\mid q}(1-p^{-2})$.
\end{lemma}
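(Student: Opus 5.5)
We treat the two assertions of Lemma~\ref{app:primitive-vectors} separately and directly, using only elementary number theory and the structure of $\SL_2(\mathbb Z)$.

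\emph{Powers of elementary unipotents.} Let $\Lambda\le\SL_2(\mathbb Z)$ have finite index and put $u=\left(\begin{smallmatrix}1&1\\0&1\end{smallmatrix}\right)$. The cosets $u^k\Lambda$, $k\ge0$, lie in a finite set, so $u^j\Lambda=u^k\Lambda$ for some $j<k$. Then $u^{k-j}\in\Lambda$ with $k-j>0$. The same pigeonhole argument applies to the lower unipotent.

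\emph{Counting primitive pairs.} A pair $(a,b)\in(\mathbb Z/q)^2$ is primitive when its coordinates generate the unit ideal. We first show that this is equivalent to being nonzero modulo every prime $p\mid q$. By the Chinese remainder theorem, $(\mathbb Z/q)^2=\prod(\mathbb Z/p^e)^2$, and primitivity holds exactly when it holds in each factor. In $\mathbb Z/p^e$, the ideal $(a,b)$ is the whole ring iff one of $a,b$ is a unit, iff $(a,b)\not\equiv0\pmod p$. Hence there are $p^{2e}-p^{2(e-1)}=p^{2e}(1-p^{-2})$ primitive pairs modulo $p^e$. Multiplying over the prime powers dividing $q$ gives $q^2\prod_{p\mid q}(1-p^{-2})$.

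\emph{Transitivity.} By the CRT decomposition it suffices to show that $\SL_2(\mathbb Z/p^e)$ moves any primitive pair to $(1,0)$; the product group then acts transitively on the product. This is the main step, but it is simple. If $a$ is a unit, apply the lower elementary matrix with parameter $-ba^{-1}$ to reach $(a,0)$. If $a$ is not a unit, then $b$ is a unit, and one upper elementary move $(a,b)\mapsto(a+b,b)$ makes the first coordinate a unit, since $a+b\equiv b\not\equiv0\pmod p$. Finally, $\diag(a^{-1},a)$ sends $(a,0)$ to $(1,0)$.

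Alternatively, one can work with integers directly: lift the pair to integers $(a,b)$ with $\gcd(a,b,q)=1$, adjust $a$ by multiples of $q$ to obtain $\gcd(a,b)=1$ (a Dirichlet-free choice is possible by CRT), and complete $(a,b)$ to a matrix in $\SL_2(\mathbb Z)$. The same construction also gives surjectivity $\SL_2(\mathbb Z)\to\SL_2(\mathbb Z/q)$ after composing elementary moves, which the paper uses in Lemma~\ref{lem:congruenceprimitivecharts}.
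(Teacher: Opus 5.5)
Your proof is correct and follows the paper's argument: the same pigeonhole on cosets, the same prime-power count combined by the Chinese remainder theorem, and for transitivity your local row reduction is a harmless variant of the paper's one-line Bezout completion of a primitive pair $(a,b)$ to a determinant-one matrix modulo $q$ (the paper uses row reduction only for its surjectivity remark). One small caution about your parenthetical alternative: adjusting only $a$ by multiples of $q$ cannot work when the lift has $b=0$ (e.g.\ $(2,0)$ modulo $5$), so you should first take a nonzero lift of $b$, such as $b=q$.
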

\begin{proof}
Among the finitely many cosets, two powers of a fixed element
have the same coset. Their quotient is a positive power in the
subgroup. A primitive pair $(a,b)$ modulo $q$ means
$\gcd(a,b,q)=1$. Bezout's identity gives $d,c$ with
$ad-bc=1\pmod q$, so it is the first column of a determinant-one
matrix. This proves transitivity. Over each $\mathbb Z/p^r\mathbb Z$
the excluded pairs are those with both entries divisible by $p$,
giving $p^{2r}-p^{2r-2}$ possibilities; the Chinese remainder
theorem gives the formula. Reduction from $\SL_2(\mathbb Z)$
is surjective: over $\mathbb Z/p^r\mathbb Z$ an invertible entry
allows elementary row reduction, and the Chinese remainder
theorem combines the elementary matrices. This also proves the
orbit count used in Lemma~\ref{lem:affineleviorbits}.
\end{proof}

The following qualitative results identify the rational group
associated with a polynomial orbit.

\begin{proposition}[{\cite[Proposition~2.1]{Shah1994};
\cite[Proposition~3.2]{Shah1991}}]
\label{app:shah-normalization}
For a regular algebraic map $\Theta:\mathbb R^k\to\mathbf G(\mathbb R)$
with $\Theta(0)=\groupid$, the closed subgroup generated by its image is generated
by algebraic unipotent one-parameter subgroups. In the quotient
$\SL_n(\R)/\SL_n(\Z)$, if the closure of such a subgroup orbit is a
connected homogeneous orbit, its group is the real identity
component of the smallest $\mathbb Q$-algebraic subgroup containing
the generated subgroup; the radical of that rational group is unipotent.
\end{proposition}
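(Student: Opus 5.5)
This is a citation of Shah's results, so the proof will be a deduction from \cite[Proposition~2.1]{Shah1994} and \cite[Proposition~3.2]{Shah1991}, after checking that our conventions match theirs. No new argument is needed beyond that matching.

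\emph{Step 1: generation by unipotents.} For the first assertion we apply \cite[Proposition~2.1(1)]{Shah1994} directly to $\Theta$, viewed as a regular algebraic map into $\mathbf G(\R)\le\SL_n(\R)$. Two normalizations must be matched.
\begin{itemize}
\item Shah requires the image to contain the identity; this holds because $\Theta(0)=\groupid$.
\item ``Closed subgroup generated'' in Shah means the closure in the real topology, which is our convention for $J$.
\end{itemize}
The connectedness argument in the proof of Lemma~\ref{lem:orbitnormalization} shows that this subgroup is connected. Each generating one-parameter subgroup has the form $\exp(tX)$ with $X$ nilpotent. By \eqref{eq:appendix-explog} it is then polynomial in $t$, so it is algebraic.

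\emph{Step 2: arithmetic identification.} Let $F$ be the group of the closed connected homogeneous orbit. We apply \cite[Proposition~3.2]{Shah1991} to $F$, which is generated by unipotent one-parameter subgroups by Step~1 together with Ratner's orbit closure theorem. Shah's result gives $F=\mathbf M(\R)^\circ$, where $\mathbf M$ is the Zariski closure of $F\cap\Gamma$ (a $\mathbb Q$-group by Lemma~\ref{app:rational-envelope}), and it shows that the radical of $\mathbf M$ is unipotent.

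It remains to show that $\mathbf M$ is the smallest $\mathbb Q$-algebraic subgroup $\mathbf H$ containing the generated subgroup $J$. The two inclusions are as follows.
\begin{itemize}
\item $\mathbf M$ is defined over $\mathbb Q$ and contains $F\supseteq J$, so minimality of $\mathbf H$ gives $\mathbf H\subseteq\mathbf M$.
\item Conversely, $J\Gamma$ is dense in $F\Gamma$. Since $\mathbf H(\R)\Gamma$ is closed (Shah's closedness of orbits of rational groups of this type), we get $F\subseteq\mathbf H(\R)$. Then $F\cap\Gamma\subseteq\mathbf H$, so $\mathbf M\subseteq\mathbf H$.
\end{itemize}

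\emph{Main obstacle.} The step I expect to be hardest is the closedness of $\mathbf H(\R)^\circ\Gamma$ used in the second inclusion. If Shah's statement does not directly give it, I would instead use Shah's characterization, in the same proposition, of $F$ as the smallest $\mathbb Q$-envelope. In either case the identification $F=\mathbf H(\R)^\circ$ follows from the combination of the two inclusions.
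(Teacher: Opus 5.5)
Your outline matches the paper, which gives no argument beyond citing \cite[Proposition~2.1(1)]{Shah1994} for the first assertion and \cite[Proposition~3.2]{Shah1991} for the entire arithmetic identification, including the identification with the smallest $\mathbb Q$-group. Your fallback in the last paragraph is therefore exactly the paper's proof. The bridge you add, from the Zariski-closure form $F=\mathbf M(\R)^\circ$ to the rational-envelope form, is worth having, but its second inclusion has a gap as written. Closedness of $\mathbf H(\R)\Gamma$ is not a property of $\mathbb Q$-subgroups in general. For the upper triangular $\mathbf B\le\SL_2$, the set $\mathbf B(\R)\Gamma_2/\Gamma_2$ consists of the unimodular lattices containing a nonzero horizontal vector, and it is dense and proper in $X_2$. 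You also cannot use that $\mathbf H$ has unipotent radical: you know this only for $\mathbf M$, and $\mathbf M=\mathbf H$ is what is being proved.

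The missing idea is that minimality forces $\mathbf H$ to have no nontrivial $\mathbb Q$-character. Such a character $\chi$ is trivial on every algebraic unipotent one-parameter subgroup, because there is no nontrivial algebraic homomorphism $\mathbb G_a\to\mathbb G_m$. By Step~1 it is trivial on $J$, so $\ker\chi$ is a $\mathbb Q$-subgroup containing $J$ and hence equals $\mathbf H$. By Chevalley's theorem, $\mathbf H$ is the stabilizer of a rational line $\mathbb Q v$ in some $\mathbb Q$-representation. It acts on that line through a $\mathbb Q$-character, hence trivially, so $\mathbf H$ is the stabilizer of $v$ itself. Scale $v$ into a lattice stable under $\Gamma$, or under a finite-index subgroup of $\Gamma$ if necessary. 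Discreteness of $\Gamma v$ then gives closedness of $\mathbf H(\R)\Gamma$, by the argument the paper uses for $\SO(\qform)$ in Lemma~\ref{lem:semisimpleparametrizations}. Your containments then go through, with connectedness of $F$ placing it in $\mathbf H(\R)^\circ$.

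A smaller correction: for general $n$ the orbit group $F$ need not be generated by unipotent one-parameter subgroups. A $\mathbb Q$-simple group with real points $\SL_2(\R)\times\mathrm{SU}(2)$ gives an example. What you should extract from Ratner's theorem and feed into the arithmetic step is that the subgroup generated by the unipotents of $F$, which contains $J$, acts ergodically on $F\Gamma/\Gamma$. That, not unipotent generation of $F$ itself, is the available input.
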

The first assertion is \cite[Proposition 2.1(1)]{Shah1994}.
The arithmetic identification is \cite[Proposition 3.2]{Shah1991}.
For the curves considered here, the results of
\cite[Theorem 1.1 and Proposition 2.1(2)]{Shah1994} imply that the
orbit closure is homogeneous and has finite invariant volume.
Apply these statements to $g^{-1}\Phi(t)g$ at $\Gamma$;
left translation by $g$ then gives \eqref{eq:actualsupport}.
Lemma~\ref{lem:orbitnormalization} writes out this deduction.
For a periodic principal orbit,
\cite[Proposition 2.1(2)]{Shah1994} also gives the Zariski-density
statement used in Lemma~\ref{lem:arithmeticinputs}.
Lemma~\ref{app:rational-envelope} then gives rationality of
its Lie algebra from its integral stabilizer.
The conclusions above require Shah's orbit-closure theorems;
finite invariant volume alone does not imply them for arbitrary
real algebraic orbits.

\subsection{Tensor representations, fixed forms, and rational heights}
\label{app:tensors}

For a representation on $V$, the dual action on a covector is
$\ell\mapsto\ell\circ g^{-1}$, and the exterior action is
\[
 g(v_1\wedge\cdots\wedge v_k)=gv_1\wedge\cdots\wedge gv_k.
\]
Its matrix entries are minors. Relative to a volume form,
$\wedge^{n-1}V$ identifies with $V^*$ for determinant-one
matrices, giving the inverse-transpose action. This explains
the dual-lattice tests and the cofactor degree bounds.

The symmetric-square representation can be realized on symmetric
matrices by $Q\mapsto gQg^{\mathsf T}$. A quadratic form on
vectors is instead a symmetric covariant tensor, with invariance
equation $g^{\mathsf T}Qg=Q$. For an invertible $Q$,
\[
 gQg^{\mathsf T}=Q
 \quad\Longleftrightarrow\quad
 g^{\mathsf T}Q^{-1}g=Q^{-1}.
\]
This relates the convention for $H_{\mathrm{pr}}$ in
\eqref{eq:regularform} to the convention for $\SO(\qform)$.
Differentiation gives respectively
$XQ+QX^{\mathsf T}=0$ and $X^{\mathsf T}Q+QX=0$.
By Lemma~\ref{app:lie-tests}, these equations characterize
invariant tensors for a connected group.

\begin{lemma}
\label{app:tensor-degrees}
For $S(A):Q\mapsto AQA^{\mathsf T}$ on symmetric $3$ by $3$
matrices, $\det S(A)=(\det A)^4$. On $\SL_3$, the entries of
$\wedge^kS(A)$ have polynomial representatives of degrees
$2,4,6,5,4$ for $k=1,\ldots,5$. All the indicated standard,
symmetric, and exterior integral lattices are preserved by
$\SL_3(\mathbb Z)$.
\end{lemma}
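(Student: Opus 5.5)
The plan is to work with the symmetric-square representation $S(A)Q=AQA^{\mathsf T}$ on $\Sym^2(\R^3)$, with the basis \eqref{eq:symbasis}, and to prove the three assertions separately.

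\emph{Determinant.} It suffices to compute $\det S(A)$ on a Zariski dense set and then extend by polynomial identity. For diagonal $A=\diag(t_1,t_2,t_3)$, the basis vectors $D_i$ and $S_{ij}$ are eigenvectors with eigenvalues $t_i^2$ and $t_it_j$. The product of these six eigenvalues is $(t_1t_2t_3)^4$. By multiplicativity of $\det S$, this also covers matrices conjugate to diagonalizable ones. Diagonalizable matrices are Zariski dense in $\mathrm{GL}_3$, and both sides are polynomial, so $\det S(A)=(\det A)^4$ everywhere.

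\emph{Degrees.} Each entry of $S(A)$ is a quadratic polynomial in the entries of $A$ (Lemma~\ref{lem:degreecutoff}). Hence the entries of $\wedge^kS(A)$, being $k\times k$ minors, have degree at most $2k$. This already gives degrees $2,4,6$ for $k=1,2,3$. For $k=4,5$ I would use the Hodge-type identification $\wedge^kV\simeq(\wedge^{6-k}V)^*\otimes\wedge^6V$. Since $\det S(A)=1$ on $\SL_3$, this identifies $\wedge^kS(A)$ with the transpose-inverse of $\wedge^{6-k}S(A)$. In turn $\wedge^{6-k}S(A)^{-1}=\wedge^{6-k}S(A^{-1})$.

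The entries of $A^{-1}=\operatorname{adj}(A)$ are quadratic in those of $A$, which naively gives degree $2\cdot2(6-k)$, too large. So I would instead argue with weights. The entries of $\wedge^kS(A)$ are matrix coefficients of a representation of $\SL_3$. A matrix coefficient of $\wedge^k\Sym^2$ is a polynomial, homogeneous of degree $2k$ in $A$, and we are free to reduce it modulo $\det A-1$. Rather than chase explicit representatives, the cleanest route for $k=5$ is this: $\wedge^5\Sym^2\simeq(\Sym^2)^*$, and $(\Sym^2)^*\simeq\Sym^2$ of the dual representation $A\mapsto A^{-\mathsf T}$. That gives entries quadratic in the cofactors, of degree $4$. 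For $k=4$, $\wedge^4\Sym^2\simeq\wedge^2(\Sym^2)^*$, i.e.\ $2\times2$ minors of a matrix with quadratic cofactor entries, of degree $8$. That is too big, so I must refine.

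I would decompose $\wedge^2\Sym^2\R^3$ as an $\SL_3$-module of highest weight $3\varpi_1+\varpi_2$. Each irreducible with highest weight $a\varpi_1+b\varpi_2$ has matrix coefficients realizable in degree $a+2b$, using $A$ for $\varpi_1$ and minors of $A$ for $\varpi_2$. The dual of $\wedge^2\Sym^2$ then has highest weight $\varpi_1+3\varpi_2$, which realizes in degree $1+6=7$. This does not match the claimed $5$, so I expect I am misidentifying the degree convention. The likely claimed entries are actually $2,4,6,5,4$, where $6-k$ complements give $\wedge^4\leftrightarrow\wedge^2$ realized via minors of $A^{-\mathsf T}$ combined with $\det=1$ reductions.

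I expect this degree bookkeeping for $k=4$ to be the main obstacle. I would first try explicit reduction of the $4\times4$ minors using the Plücker and Cramer relations with $\det A=1$, and verify the result on a generic example by computer algebra if needed.

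\emph{Lattices.} $\SL_3(\Z)$ preserves $\Z^3$ and the exterior lattices because minors of integer matrices are integers. It preserves the integral symmetric matrices since $\gamma Q\gamma^{\mathsf T}$ is integral and symmetric. Exterior powers of lattice-preserving maps preserve the induced exterior lattices. Inverses are integral as well, so each lattice is preserved exactly.
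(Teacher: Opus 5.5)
Your arguments for the determinant, for $k=1,2,3$, for $k=5$ (via $\wedge^5S(A)\simeq S(A)^{-\mathsf T}$ on $\SL_3$ and $S(A)^{-1}=S(\operatorname{adj}A)$), and for the lattices are fine and essentially match the paper. The genuine gap is $k=4$, which you leave unproven. The Hodge identification only gives $2\times2$ minors of $S(\operatorname{adj}A)$, of degree $8$.

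Your weight computation also contains an error. The highest weight of $\wedge^2\Sym^2\R^3$ is $2\epsilon_1+(\epsilon_1+\epsilon_2)=3\epsilon_1+\epsilon_2$, which in fundamental weights is $2\varpi_1+\varpi_2$, not $3\varpi_1+\varpi_2$. You have mixed $\epsilon$-coordinates with $\varpi$-coordinates. A dimension check confirms this: $\dim V(2\varpi_1+\varpi_2)=15=\binom62$, whereas $\dim V(3\varpi_1+\varpi_2)=24$. With the correct weight, your own route closes. Since $\det S(A)=1$ on $\SL_3$, one has $\wedge^4S\simeq(\wedge^2S)^*\simeq V(\varpi_1+2\varpi_2)\subset V\otimes\Sym^2(\wedge^2V)$, whose matrix coefficients have degree $1+2\cdot2=5$. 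Isomorphic representations have the same span of matrix coefficients, which transfers this bound to the entries of $\wedge^4S(A)$ in the given basis. The claimed list $2,4,6,5,4$ is correct as stated, so there is no degree convention to reinterpret.

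The paper's argument for $k=4$ avoids representation theory. If $\det A=0$, then $AQA^{\mathsf T}$ lies in $\Sym^2(\operatorname{im}A)$, which has dimension at most $3$. So $S(A)$ has rank at most $3$, and every $4\times4$ minor of $S(A)$ vanishes on the hypersurface $\det A=0$. Each such minor is a polynomial of degree $8$ and $\det$ is irreducible, so each minor equals $\det A$ times a polynomial of degree $5$. On $\SL_3$ this quotient represents the minor. No explicit Pl\"ucker or Cramer reduction and no computer check is needed.
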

\begin{proof}
For diagonal $A$, the six eigenvalues are
$a_1^2,a_2^2,a_3^2,a_1a_2,a_1a_3,a_2a_3$, whose product is
$(a_1a_2a_3)^4$. Matrices with distinct complex eigenvalues form
a dense open set, so polynomial identity proves the formula for
all $A$. Entries of $S(A)$ are quadratic, and its $k$ by $k$
minors have degree $2k$.

If $\det A=0$, then $S(A)$ has image in
$\Sym^2(\operatorname{im}A)$, of dimension at most three.
Every $4$ by $4$ minor therefore vanishes on $\det A=0$ and
is divisible by $\det A$, leaving degree five. To prove the
divisibility, write the determinant as $c x_{33}+d$ in the
polynomial ring over the other eight entries.
Its cofactor $c=x_{11}x_{22}-x_{12}x_{21}$ is irreducible and does
not divide $d$: a permutation matrix can have $c=0$ and determinant
$-1$. Gauss's lemma therefore makes $c x_{33}+d$ irreducible.
Divide a vanishing minor by this linear polynomial over the
fraction field of the other entries. Substitution
$x_{33}=-d/c$ shows that the remainder is identically zero.
Clearing denominators and using irreducibility gives divisibility
in the polynomial ring, with rational quotient. On $\det A=1$
this quotient represents the original minor.

Complementary minors identify the $5$ by $5$ minors with entries
of $\det S(A)S(A)^{-1}$, up to fixed signs and transposition.
Since $A^{-1}=\operatorname{adj}(A)/\det A$,
\[
 \det S(A)S(A)^{-1}=(\det A)^2S(\operatorname{adj}(A)),
\]
which has degree four on $\SL_3$.
Finally, integral matrices and their integral inverses act
integrally on all these tensor bases, so preserve the lattices.
The quadratic degree bound is used in
Lemma~\ref{lem:degreecutoff}. The higher exterior powers occur in
the polynomial-goodness argument in
Proposition~\ref{prop:separatedregular}, where their unreduced
degree bounds $2k$ suffice.
\end{proof}

Fixed vectors can be computed from the zero-weight space for the
diagonal torus and the infinitesimal equations of
Lemma~\ref{app:lie-tests}. For a representation $\rho$ of
$G=\SL_3(\mathbb R)$ on $V$,
\[
 V^G=\bigcap_{i\ne j}\ker\bigl(\mathrm d\rho(E_{ij})\bigr),
\]
since the matrices $E_{ij}$, $i\ne j$, generate $\mathfrak{sl}_3$
as a Lie algebra. One can first restrict these equations to the
zero-weight space in Lemma~\ref{app:weights}. The three ambient
representations used for coefficient separation have no fixed
vectors by Lemma~\ref{lem:allwedges}. For the principal $\mathfrak{sl}_2$
acting on symmetric $3$ by $3$ matrices, the fixed space is
$\mathbb RQ_0$ by Lemma~\ref{lem:rationalkernel}.

\begin{lemma}
\label{app:plucker}
If $W\le\mathbb R^n$ has dimension $r$ and a rational basis,
then $W\cap\mathbb Z^n$ is a full lattice in $W$.
The line $\wedge^rW$ has a primitive integral generator,
unique up to sign. The wedge of any basis of $W\cap\mathbb Z^n$
is such a generator, and its Euclidean norm is the covolume of
this lattice in $W$. If its coordinates have absolute value
at most $H_0$, then $W$ has an integral spanning set of $r$
vectors with coordinates bounded by $H_0$.
\end{lemma}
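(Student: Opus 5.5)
The statement has four parts: the lattice property of $W\cap\Z^n$, existence and uniqueness of a primitive generator of $\wedge^rW$, identification of that generator with the wedge of a lattice basis together with its norm, and the bounded spanning set. I will prove them in that order by elementary linear algebra over $\mathbb Q$ and $\Z$.

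\emph{Lattice and primitive generator.} Clear denominators in the rational basis of $W$ to obtain $r$ independent integral vectors in $W\cap\Z^n$. The subgroup $W\cap\Z^n$ is discrete, since it lies in $\Z^n$, and it spans $W$, so it is a full lattice in $W$. Let $v_1,\ldots,v_r$ be a lattice basis; then $\omega=v_1\wedge\cdots\wedge v_r$ is a nonzero integral vector generating the line $\wedge^rW$. This line is rational, so its integral points form a cyclic group with a generator unique up to sign.

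\emph{The wedge is primitive.} This is the main step. Suppose $\omega=k\omega'$ with $\omega'$ integral and $k\ge2$, and fix a prime $p\mid k$. Because $W\cap\Z^n$ is saturated in $\Z^n$, the quotient $\Z^n/(W\cap\Z^n)$ is torsion-free, so the basis $v_1,\ldots,v_r$ extends to a basis of $\Z^n$. Their images mod $p$ are therefore linearly independent in $\mathbb F_p^n$. Hence some $r\times r$ minor is nonzero mod $p$, which contradicts $p\mid\omega$. So $\omega$ is primitive.

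\emph{Norm equals covolume.} By Cauchy--Binet, $\norm{\omega}^2=\det(v_i\cdot v_j)$, which is the square of the covolume of the lattice in $W$.

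\emph{Bounded spanning set.} This follows the argument of Lemma~\ref{lem:rationalkernel}. Write $\omega=\sum_I p_Ie_I$ and choose $I$ with $p_I\ne0$. For $j\notin I$ and $i\in I$, Cramer's rule shows that the vector of $W$ with $I$-coordinates $e_i$ has its remaining coordinates equal, up to sign, to $p_J/p_I$, where $J=I\setminus\{i\}\cup\{j\}$. Multiplying by $p_I$ gives $r$ integral vectors in $W$ with coordinates bounded by $H_0$. Their projections onto the $I$-coordinates are $p_I$ times the standard basis, so they are independent and span $W$.
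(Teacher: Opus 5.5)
Your proof is correct and follows essentially the paper's route: saturation of $W\cap\Z^n$ lets a lattice basis extend to a basis of $\Z^n$, which gives primitivity of the wedge. The Gram determinant then gives the covolume, and Cramer's rule at a nonzero Pl\"ucker coordinate gives the bounded spanning set. The only cosmetic difference is that you conclude primitivity by a mod-$p$ rank argument rather than by carrying the wedge to $e_1\wedge\cdots\wedge e_r$ with an element of $\mathrm{GL}_n(\mathbb Z)$.
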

\begin{proof}
Clearing denominators of a rational basis gives independent
integral vectors; discreteness follows from $\mathbb Z^n$.
The lattice $W\cap\mathbb Z^n$ is saturated: if an integer
multiple of an integer vector lies in it, so does the vector.
Integer row and column reduction, obtained from the Euclidean
algorithm, shows that a basis of a saturated lattice extends
to a basis of $\mathbb Z^n$. Its wedge is therefore primitive,
since an invertible integer change of basis takes it to
$e_1\wedge\cdots\wedge e_r$. Call this wedge $p$.
Its squared norm is the Gram determinant of the lattice basis,
which is the squared covolume. Every integral vector on this
line is an integer multiple of $p$, proving uniqueness up to
sign of the primitive generator.
Choose a nonzero coordinate $p_I$. Projection onto the $r$
coordinates in $I$ identifies $W$ with $\mathbb R^r$.
The basis projecting to the standard basis has coordinates
$p_J/p_I$, up to signs, by Cramer's rule. Multiplying these
vectors by $p_I$ proves the bound. They need not be a lattice
basis.
\end{proof}

\end{document}